\documentclass[reqno]{amsart}
\usepackage{geometry}
\geometry{left=3.5cm, right=3.5cm}
\usepackage{enumitem}

\textheight 21cm%

\usepackage{amsmath,amssymb,amsthm,amsfonts}
\usepackage{physics}
\allowdisplaybreaks[4]
\usepackage{hyperref}
\usepackage{mathrsfs}
\usepackage{appendix}
\usepackage{graphicx}
\usepackage{setspace}
\usepackage{mathtools}
\usepackage{latexsym}
\usepackage{mleftright}
\usepackage{cite}
\usepackage{color}

\newtheorem{remark}{Remark}[section]

\numberwithin{equation}{section}
\arraycolsep=1.5pt


\newtheorem{Theorem}{Theorem}[section]
\newtheorem{Lemma}{Lemma}[section]
\newtheorem{Proposition}{Proposition}[section]
\newtheorem{Remark}{Remark}[section]

\allowdisplaybreaks

\begin{document}
\title[Navier-Stokes/Allen-Cahn system with moving contact line]
{well-posedness of the Navier-Stokes/Allen-Cahn system with moving contact lines}
\thanks{$^*$Corresponding author}
\thanks{{\it Keywords}: Navier-Stokes/Allen-Cahn, generalized Navier boundary condition, relaxation boundary condition,
dynamic boundary condition, strong solutions}
\thanks{{\it AMS Subject Classification}: 35A01, 35Q35, 76D03, 76T05}%
\author[Yinghua Li]{Yinghua Li}
\address[Y. Li]{School of Mathematical Sciences, South China Normal University,
Guangzhou, 510631, China}
\email{yinghua@scnu.edu.cn}
\author[Yuanxiang Yan]{Yuanxiang Yan}
\address[Y. Yan]{School of Mathematical Sciences, South China Normal University,
Guangzhou, 510631, China}
\email{yuanxiangyan@m.scnu.edu.cn}
\author[Xijun Yin]{Xijun Yin$^*$}
\address[X. Yin]{School of Mathematical Sciences, South China Normal University,
Guangzhou, 510631, China}
\email{yiner\_61@163.com}

\date{\today}

\begin{abstract}
In this paper, we study a diffuse interface model for two-phase immiscible flows coupled by Navier-Stokes equations
and mass-conserving Allen-Cahn equations.
The contact line (the intersection of the fluid-fluid interface with the solid wall) moves along the wall when one fluid replaces the other, such as in liquid spreading or oil-water displacement.
The system is equipped with the generalized Navier boundary conditions (GNBC)
for the fluid velocity ${\boldsymbol u}$, and dynamic boundary condition
or relaxation boundary condition  for the phase field variable $\phi$.
We first obtain the local-in-time existence of unique strong solutions
to the 2D and 3D Navier-Stokes/Allen-Cahn (NSAC) system with generalized Navier boundary conditions  and dynamic boundary condition.
For the 2D case in channels, we further show these solutions can be extended to any large time $T$.
Additionally, we prove the local-in-time strong solutions for systems with generalized Navier boundary conditions and relaxation boundary condition in 3D channels. Finally, we establish a global unique strong solution accompany with some exponential decay estimates when the fluids are near phase separation states and the contact angle closes to 90 degrees or the fluid-fluid interface tension constant is small.
\end{abstract}

\maketitle
\tableofcontents
\vspace{-5mm}

\section{Introduction}
Diffuse interface models for two-phase flows have attracted increasing attention due to both theoretical advancements and their successful application in computational simulations. In these models, the interface between phases is treated as a diffuse region, where sharp interfaces are replaced by narrow transition layers. The distribution of the phases and their components is characterized by a phase field variable $\phi$, governed by either the Cahn-Hilliard equations or the Allen-Cahn equations. The fluid dynamics of the system are described by the Navier-Stokes equations. This coupled framework results in either the Navier-Stokes/Cahn-Hilliard (NSCH) system or the Navier-Stokes/Allen-Cahn (NSAC) system.
This paper is concerned with the following incompressible NSAC system
\begin{equation}
\label{1}
\begin{cases}
{\bf u}_t+({\bf u}\cdot\nabla){\bf u}+\nabla p-{\rm div}{\mathbb S} ({\bf u})=-\rm div(\nabla\phi\otimes\nabla\phi) ,&\rm in~Q,
\\
{\rm div}{\bf u}=0,&\rm in~Q,
\\
\phi_t+{\bf u}\cdot\nabla\phi=\bar\mu-\mu,&\rm in~Q,
\\
\mu=-\Delta\phi+f(\phi),&\rm in~Q,
\end{cases}
\end{equation}
where $Q=\Omega\times(0, T)$, $\Omega\subset\mathbb R^N (N=2,3)$ denotes a bounded domain with smooth boundary $\Gamma$
and $T>0$ is a given time. $\bf u$ is the average velocity of the mixed fluids, the order parameter $\phi$ represents the relative concentration difference of the two components, $p$ is the pressure and $\bar\mu$ is the spatial average defined by
$\bar\mu=\frac{1}{|\Omega|}\int_\Omega\mu {\rm d}x$.
The Newtonian viscous stress tensor ${\mathbb S}({\bf u})=\eta(\nabla{\bf u}+\nabla{\bf u}^T)$ with viscosity coefficient $\eta>0$.
$f(s)=F^{\prime}(s)$ and we choose the double-well potential $F$ as Landau type, i.e. $F(s)=\frac14(s^2-1)^2$ for $s\in\mathbb R$.
The coupled nonlinear system (\ref{1}) is supplemented with the initial conditions
\begin{align} \label{I}
{\bf u}\big|_{t=0}={\bf u}_0, \quad \phi\big|_{t=0}=\phi_0, \qquad {\rm in}~ \Omega.
\end{align}

When modeling the dynamics of two immiscible fluids flowing over a solid surface, it was demonstrated in \cite{D-D-74} that the classical no-slip boundary condition results in nonphysical contact-line singularities or infinite viscous dissipation near moving contact lines. To address this issue, Qian-Wang-Shen \cite{25,26,27} proposed the following generalized Navier boundary conditions and relaxation boundary condition
\begin{equation} \label{2*}
\begin{cases}
{\bf u}\cdot{\bf n}=0,\quad &\rm on~\Gamma\times(0,T),
\\
\beta{\bf u}_{\boldsymbol {\tau}}+(\mathbb S({\bf u})\cdot{\bf n})_{\boldsymbol{\tau}}=L(\phi)\nabla_{\boldsymbol\tau}\phi,
\quad &\rm on~\Gamma\times(0,T),
\\
\phi_t+{\bf u}_{\boldsymbol{\tau}}\cdot\nabla_{\boldsymbol{\tau}}\phi=- L(\phi),&\rm on~\Gamma\times(0,T),
\end{cases}
\end{equation}
where $\beta>0$ is the slip coefficient, ${\boldsymbol {\tau}}$ and ${\bf n}$ are the unit tangential vector and the unit normal vector on the boundary $\Gamma$, respectively.
The subscript ${\boldsymbol {\tau}}$ denotes the tangential component of a vector $\boldsymbol w$, i.e.
${\boldsymbol w}_{\boldsymbol\tau}={\boldsymbol w}-({\boldsymbol w}\cdot {\bf n}){\bf n}$,
$\nabla_{\boldsymbol\tau}=\nabla-({\bf n}\cdot\nabla){\bf n}$ is the tangential gradient on $\Gamma$.
The term $L(\phi)\nabla_{\boldsymbol\tau}\phi$ on the right hand of (\ref{2*})$_2$ represents the uncompensated Young stress, and
\begin{align} \label{L-phi}
L(\phi):=\partial_{\bf n}\phi+\gamma^\prime_{fs}(\phi),
\end{align}
in which $\gamma_{fs}(\phi)$ is the fluid-solid interfacial free energy per unit area.
(\ref{2*})$_1$ and (\ref{2*})$_2$ are the {\it generalized Navier boundary conditions} (GNBC) for velocity $\bf u$,
and (\ref{2*})$_3$ with (\ref{L-phi}) is called the {\it relaxation boundary condition} for phase field variable $\phi$ (cf. \cite{C-W-X}).

\vskip1mm
The boundary conditions (\ref{2*}) with (\ref{L-phi}) have been successfully used to simulate moving contact lines,
see \cite{M-C-Y-Z, C-Y-Z, S-Y-Y, X-D-Y, Y-Y} for example.
However, due to the lack of diffusion effects for $\phi$ on the boundary, theoretical analysis of this case remains challenging.
To our knowledge, the existing analytical results for (\ref{2*})--(\ref{L-phi}) are \cite{D-L-L-Y} and \cite{C-H-H-S}, which established the existence of local strong solutions for the incompressible NSCH system and compressible NSAC system, respectively.
When surface diffusion is taken into account, $L(\phi)$ is adjusted as follows
\begin{align}\label{L-phi-D}
\mathcal L(\phi):=-\gamma\Delta_{\boldsymbol\tau}\phi+\partial_{\bf n}\phi+G^\prime(\phi),
\end{align}
for some $\gamma>0$, where $G(\cdot)$ is a prescribed function,
and $\Delta_{\boldsymbol\tau}$ denotes the Laplace-Beltrami operator on $\Gamma$.
In this case, (\ref{2*})$_3$ with (\ref{L-phi-D}) is referred to as the {\it dynamic boundary condition} for $\phi$.
In this paper, we choose $G(\phi)=\gamma_{fs}(\phi)$.

\vskip1mm
Assuming sufficient regularity of the solutions to the NSAC system with GNBC and dynamic boundary condition (i.e. the problem (\ref{1})--(\ref{2*}), (\ref{L-phi-D})), we can derive the total energy balance as follows
\begin{align} \label{energy-0}
\frac{\rm d}{{\rm d}t}E(t)
+\int_\Omega\frac12|\mathbb S({\bf u})|^2+\int_\Omega|\mu-\bar\mu|^2{\rm d}x
+\int_\Gamma\beta|{\bf u}_{\boldsymbol\tau}|^2{\rm d}S+\int_\Gamma|\mathcal L(\psi)|^2{\rm d}S=0,
\end{align}
where the {\it total energy} $E(t)$ is defined as
\begin{align*}
E(t)=E_{kin}({\bf u}(t))+E_{free}(\phi(t), \psi(t)),
\end{align*}
with
$$
E_{free}(\phi, \psi)=E_{bulk}(\phi)+E_{surf}(\psi).
$$
The individual energy components are
\begin{align*}
E_{kin}({\bf u})=&\int_\Omega\frac12|{\bf u}|^2{\rm d}x, \qquad
E_{bulk}(\phi)=\int_\Omega\left(\frac12|\nabla\phi|^2+F(\phi)\right){\rm d}x, \quad
\\
&E_{surf}(\psi)=\int_\Gamma\left(\frac\gamma2|\nabla_\tau\psi|^2+\gamma_{fs}(\psi)\right){\rm d}S,
\end{align*}
representing the {\it kinetic energy}, the {\it bulk free energy} and the {\it surface free energy}, respectively.
Here $\psi=\phi\big|_\Gamma$ is the trace of the phase field variable $\phi$ on the boundary, describing the material distribution on the surface,
the constant $\gamma\ge0$ acts as a weight for surface diffusion effects.
Evidently, when $\gamma=0$, (\ref{energy-0}) reduces to the energy balance of the NSAC system with GNBC and relaxation boundary condition (i.e. the problem (\ref{1})--(\ref{2*}), (\ref{L-phi})).

\vskip1mm
It is worth noting that the presence of the nonlocal term $\bar\mu$,
combined with the boundary condition (\ref{2*})$_1$ and the divergence free condition for $\bf u$,
ensures the Allen-Cahn equations (\ref{1})$_{3}$ and (\ref{1})$_{4}$ have the property of mass conservation, i.e.
$$
\frac{\rm d}{{\rm d}t}\int_\Omega\phi(x,t){\rm d}x=0, \qquad t\ge0,
$$
which is differ from the classical Allen-Cahn equations.
Unlike the classical case, the mass-conserving Allen-Cahn equations lack a comparison principle.
For further discussion of their distinct properties, see \cite{B-S, C-H-L, G-97} and the references therein.
Additionally, both the mass-conserving Allen-Cahn and Cahn-Hilliard equations satisfy the mass conservation.
Their dynamics maintain the integrity of the interface: the
mixing-demixing mechanism (which also translates into $\mu$) establishes a balance which avoids uncontrolled
expansion or shrinkage of the interface layer (cf. \cite{D-F-20, F-L-S-Y-05}).
Furthermore, in \cite{G-G-W}, the authors investigated the mass-conserving NSAC system with unmatched densities and established the existence and uniqueness of both global weak and strong solutions under classical boundary conditions.

For single fluid (i.e. $\phi=\pm1$), the boundary conditions (\ref{2*}) reduce to the following {\it Navier boundary conditions} (NBC) with friction,
which were proposed in \cite{Navier}
\begin{equation}
\label{NBC}
\begin{cases}
{\bf u}\cdot{\bf n}=0,\quad &\rm on~\Gamma\times(0,T),
\\
\beta{\bf u}_{\boldsymbol {\tau}}+(\mathbb S({\bf u})\cdot{\bf n})_{\boldsymbol{\tau}}=0,&\rm on~\Gamma\times(0,T).
\end{cases}
\end{equation}
Formally, if the slip coefficient $\beta=\infty$, $(\ref{NBC})$ are reduced to the {\it no-slip} condition,
and when $\beta=0$, $(\ref{NBC})$ is referred as the {\it Navier slip conditions without friction}.
For detailed discussion of Navier-Stokes equations with NBC, see \cite{2, 6, 16, 40, 41, 45, 48}.
Actually, a number of molecular dynamics studies have shown that for single-phase flows or away from the contact line region, the NBC is valid in describing the fluid slipping at solid surface, see \cite{C-K-B} for instance. It can account for a small amount of relative slip between the fluid and the solid surface is detected at a high flow rate.
However, in the molecular-scale vicinity of the contact line the NBC failed totally to account for the near-complete slip.
Under the usual hydrodynamic assumptions, namely, incompressible Newtonian fluids, no-slip boundary condition and smooth rigid walls, there is a velocity discontinuity at the moving contact line and the tangential force exerted by the fluids on the solid surface in the vicinity of the contact line becomes infinite which will result in non-integrability.
In order to remove the stress singularity at the moving contact line, while retaining the Newtonian behavior of stress and restoring the continuity of velocity field,
Qian-Wang-Sheng \cite{27, 25, 26} proposed GNBC as a generalization of the Navier boundary conditions.

\vskip1mm
When the fluids flow very slow, one can ignore the velocity, i.e. ${\bf u}=0$.
Then (\ref{2*})$_3$ with (\ref{L-phi}) reduces to the relaxation boundary condition
\begin{align}\label{RBC}
\phi_t+\partial_{\bf n}\phi+\gamma^\prime_{fs}(\phi)=0, \quad \rm on~\Gamma\times(0,T).
\end{align}
To date, the only rigorous analysis of (\ref{RBC}) appears in the seminal work by Chen-Wang-Xu \cite{C-W-X}, who established the global existence of solutions for the Cahn-Hilliard equations with relaxation boundary condition and justified the sharp interface limit through matched asymptotic expansions.
Substituting (\ref{L-phi-D}) for (\ref{L-phi}) in (\ref{2*})$_3$ yields the (Allen-Cahn type) dynamic boundary condition
\begin{align}\label{DBC}
\phi_t-\gamma\Delta_{\boldsymbol\tau}\phi+\partial_{\bf n}\phi+\gamma^\prime_{fs}(\phi)=0, \quad \rm on~\Gamma\times(0,T).
\end{align}
Obviously, the supplementary diffusion term introduces smoothing regularization along the boundary, a mechanism explaining why this class of boundary conditions have garnered significant attention.
The theoretical research for the Cahn-Hilliard equations with dynamic boundary condition (\ref{DBC})
or fourth order Cahn-Hilliard type dynamic boundary conditions can be found in \cite{L-W, M-Z, G-M-S, K-S, P-R-Z, R-Z}.
The Allen-Cahn equations equipped with (\ref{DBC}) have been studied in \cite{C-C, C-G-N-S, G-G}.

\vskip1mm
The NSCH/NSAC system with no-slip boundary condition and Neumann boundary condition have been extensively
studied, such as \cite{P-G-S, G-G-10, A-09, B-99, X-Z-L, G-M-T} and the references therein.
For diffuse interface models supplemented by the GNBC and relaxation boundary condition (i.e. (\ref{2*}) with (\ref{L-phi})),
we just find two rigorous studies:
Ding et al. \cite{D-L-L-Y} addressed the NSCH system in 2D channels, establishing the existence of unique local strong solutions.
And Chen et al. \cite{C-H-H-S} deduced a compressible NSAC system and proved the existence and uniqueness of local strong solutions in 3D bounded domains.
Then let us turn to review some investigations on diffuse interface models with the GNBC and dynamic boundary condition
(i.e. (\ref{2*}) with (\ref{L-phi-D})).
Gal-Grasselli-Miranville \cite{G-G-M} first proved the existence of a suitable global energy solution of NSCH system and also established the convergence of any such solution to a single equilibrium.
Gal-Grasselli-Wu \cite{G-G-W-19} obtained the existence of a global weak solution for the NSCH system with arbitrary initial data in both 2D and 3D.
Meanwhile, Cherfils et al. \cite{C-F-M-M-P-P} considered the compressible NSCH system and demonstrated the existence of global weak solutions for any finite energy initial data.
Recently, Giorgini-Knopf \cite{G-K} proposed a Chan-Hilliard type dynamic boundary conditions for NSCH models with unmatched densities, then proved the existence result for fluids with matched densities.
For NSAC system with GNBC and dynamic boundary condition, Gal-Grasselli-Poiatti \cite{G-G-P} proposed the Voigt approximation, based on which
a global weak solution satisfying the energy inequality was established.
In summary, existing literature has only established the existence of weak solutions, with no uniqueness results reported.

In this paper, we are interested in the existence and uniqueness of strong solutions to the NSAC system (\ref{1})
under the conditions (\ref{I})--(\ref{2*}) with (\ref{L-phi}) or (\ref{L-phi-D}).
Roughly speaking, we aim to achieve the following results:
\vskip1mm
\noindent
$\bullet$\quad Let the initial data satisfy $({\bf u}_0, \phi_0, \psi_0)\in H^2\times H^3\times H^3(\Gamma)$. Then the NSAC system with
\vskip1mm
~ GNBC and dynamic boundary condition, i.e. (\ref{1})--(\ref{2*}) with (\ref{L-phi-D})
\vskip1mm
\quad~ (i)~has a unique local-in-time strong solution in $\Omega\subset \mathbb R^N(N=2,3)$;
\vskip1mm
\quad~ (ii)~admits a unique global strong solution in $\Omega=\mathbb{T}\times(-1,1)\subset\mathbb R^2$.
\vskip1mm
\noindent
$\bullet$\quad The NSAC system with GNBC and relaxation boundary condition, i.e. (\ref{1})--(\ref{2*}) with (\ref{L-phi})
\vskip1mm
\quad~ (i)~has a unique local-in-time strong solution in $\Omega=\mathbb{T}^2\times(-1, 1)\subset\mathbb R^3$, if the initial data
\vskip1mm
\qquad~ \quad
satisfy $({\bf u}_0, \phi_0, \psi_0)\in H^2\times H^3\times H^{\frac52}(\Gamma)$;
\vskip1mm
\quad~ (ii)~admits a unique global strong solution in $\Omega=\mathbb{T}^2\times(-1, 1)\subset\mathbb R^3$, provided the initial
\vskip1mm
\qquad~ \quad
data $({\bf u}_0, \nabla\phi_0, \nabla_{\boldsymbol \tau}\psi_0)\in H^2\times H^2\times H^{\frac32}(\Gamma)$, $\phi_0^2-1\in L^2$
and for a small constant $\varepsilon_0>0$,
\begin{align*}
\|{\bf u}_0\|_{H^2}^2+\|\nabla\phi_0\|_{H^2}^2+\|\mu_0-\bar\mu_0\|_{H^1}^2+\|\phi_0^2-1\|_{L^2}^2
+|\nu\cos\theta_s|\le \varepsilon_0.
\end{align*}

\qquad~ \quad
Moreover, we also obtain some exponential decay estimates.

We first consider the NSAC system with GNBC and dynamic boundary condition (the problem (\ref{1})-(\ref{2*}), (\ref{L-phi-D})).
Due to the interfacial diffusion effect, we can discuss the local well-posedness on any bounded domain in both 2D and 3D cases.
Given the generality of the domain, when handling higher-order spatial derivative a priori estimates, we perform interior and boundary estimates separately by using cut-off functions (see Lemma \ref{L-222} and \ref{Lemma4}), which differs from the NSCH system case in \cite{D-L-L-Y}.
After obtaining the local existence of a unique strong solution via the contraction mapping principle, we aim to extend it globally.
The obstacle arises from the lack of information on pressure $p$ and its derivatives, which we seek to eliminate by ensuring terms containing them equal to zero, such as
\begin{align} \label{p-equality}
\int_\Omega\nabla\nabla_ {\boldsymbol\tau}p\cdot\nabla_{\boldsymbol\tau}{\bf u}{\rm d}x=-\int_\Omega\nabla_ {\boldsymbol\tau}p\cdot\nabla_{\boldsymbol\tau}{\rm div}{\bf u}{\rm d}x+\int_\Gamma\nabla_{\boldsymbol\tau}p\cdot\nabla_{\boldsymbol\tau}{\bf u}\cdot{\bf n}{\rm d}S=0.
\end{align}
Then the a priori estimates will proceed smoothly.
From boundary condition (\ref{2*})$_1$, we derive
$\nabla_{\boldsymbol\tau}{\bf u}\cdot{\bf n}+{\bf u}\cdot\nabla_{\boldsymbol\tau}{\bf n}=\nabla_{\boldsymbol\tau}({\bf u}\cdot{\bf n})=0$ on $\Gamma$.
Hence if $\nabla_{\boldsymbol\tau}{\bf n}\cdot{\bf u}=0$ holds on $\Gamma$, the identity (\ref{p-equality}) is valid.
We also note that in 2D case, $\nabla_{\boldsymbol\tau}{\bf n}=\kappa{\boldsymbol\tau}$ holds, where $\kappa$ denotes the boundary curvature
(Lemma \ref{poincare U}).
Consequently, for 2D channel domains, identity (\ref{p-equality}) naturally holds due to the geometric property $\kappa\equiv0$.
This processing parallels that in \cite{D-L-Y} for the NSCH system.
Unlike the fourth-order NSCH system in \cite{D-L-Y}, where standard approaches can obtain the regularity of the gradient of chemical potential $\nabla\mu$ and the regularity can be directly transferred to the phase field variable $\phi$, our regularity for $\mu$ remains limited.
To resolve this problem, we reformulate the governing equations into an elliptic problem for $\phi$. With the help of the elliptic regularity estimate in Lemma \ref{estimates}, we derive the required bounds for $\phi$, see Lemma \ref{Phi2}--\ref{4*}.
Unfortunately, since the global bounds depend on the surface diffusion coefficient $\gamma$, we cannot achieve global existence for relaxation boundary condition via limiting processes.

\vskip1mm
For the NSAC system with GNBC and relaxation boundary condition (the problem (\ref{1})-(\ref{2*}), (\ref{L-phi})),
the boundary condition for phase field variable $\phi$ becomes hyperbolic, which prevents regularity improvements on the boundary.
To overcome this difficulty, we introduce a surface diffusion term $\delta\Delta_{\boldsymbol\tau}\phi$ into the boundary condition to formulate a $\delta$-approximation problem.
However, this modification alone fails to yield uniform estimates for the approximate solutions independent of the surface diffusion coefficient $\delta$.
Inspired by the approach in \cite{D-L-L-Y} for the NSCH system, we incorporate the tangential Laplace operator $\delta\Delta_{\boldsymbol\tau}\phi=\delta(\partial_x^2+\partial_y^2)\phi$ into the equation.
By restricting the problem to a channel $\Omega=\mathbb{T}^2\times(-1,1)$, this operator counteracts boundary diffusion when the interior equation is restricted to the boundary, preserving the critical boundary structure:
\begin{align*}
\bar\mu-\mu=-L(\phi)=-\partial_{\bf n}\phi-\gamma_{fs}^\prime(\phi),\quad  \rm on~\Gamma.
\end{align*}
This equality links the chemical potential $\mu$ to the normal derivative of $\phi$ and interfacial forces, providing essential tools for deriving higher-order estimates of $\phi$.
Consequently, we establish uniform-in-$\delta$ estimates for the approximate solutions.
The key of proving the existence and uniqueness of local solutions lies in Proposition \ref{TH im}, which also serves as the foundation for subsequent global analysis.
To extend local solutions, we impose small initial value assumptions so that the system perturbs near the phase separation state ($\phi=\pm1$).
This ensures the phase field variable $\phi$ satisfies $3\phi^2-1\ge 1>0$.
Using this property, we first derive time-independent uniform estimates, then extend local solutions to global ones by using continuation method, and obtain exponential decay properties.
The main distinction between NSAC and NSCH systems with GNBC and relaxation condition is that NSAC results apply to 3D channels, while NSCH conclusions are for 2D channels.

\vskip1mm
We want to point out that, the mass-conserving property of the coupled Allen-Chan equations is crucial to our analysis.
The following properties will frequently be used throughout this paper:
$$
\int_\Omega\phi_t{\rm d}x=0, \qquad \int_\Omega(\mu-\bar\mu){\rm d}x=0, \qquad\int_\Omega(\mu-\bar\mu)_t{\rm d}x=0.
$$

\vskip1mm
The paper is organized as follows. In Section 2, we introduce the notations and lemmas required for the proofs.
Subsequently, we address the well-posedness of the NSAC system with GNBC under dynamic boundary condition and relaxation boundary condition in Sections 3 and 4, respectively.

\setcounter{section}{1}
\setcounter{equation}{0}
\section{Notations and preliminaries}

We first explain some notations and conventions used throughout the paper.
For a domain $\Omega\subset \mathbb R^N(N=2,3), 1\le p\le\infty$, and $s>0$, we set $L^p=L^p(\Omega)$, $W^{s,p}=W^{s,p}(\Omega)$ and $H^s=W^{s,2}$ while $\|\cdot\|$ indicates the induced norm. In the sequel, we will note the spatial average of a generic function $f$ defined over $\Omega$ as $\bar f=\frac{1}{|\Omega|}\int_\Omega f {\rm d}x$ and define the inner product of two functions that $\langle f,g\rangle=\int_\Omega fg {\rm d}x$. Moreover, we set
\begin{align}
\mathbb D_1&=\left\{({\bf u},\phi,\psi)\in H^2\times H^3\times H^3(\Gamma)\big|{\bf u},\phi,\psi\;\rm satisfy\; (\ref{NSAC})_2\; and\; (\ref{dynamic})_{1,2,4}
\right\}, \nonumber
\\
 \mathbb D_2&= \left\{({\bf u},\phi, \psi)\in H^2\times H^3 \times H^{\frac{5}{2}}(\Gamma)\big|{\bf u},\phi, \psi \;\rm satisfy\; (\ref{NSAC1})_2\; and\; (\ref{boundary})_{1,2,4}
\right\} , \nonumber
\\
\mathbb D_3&= \left\{({\bf u},\nabla\phi, \nabla_{\boldsymbol\tau}\psi )\in H^2\times H^2 \times H^{\frac{3}{2}}(\Gamma) \big|{\bf u},\phi, \psi\;\rm satisfy\; (\ref{NSAC1})_2\; and\; (\ref{boundary})_{1,2,4}
\right\}. \nonumber
\end{align}
\begin{Remark}
When $s$ is non-integer, $W^{s,p}$ is fractional Sobolev space.
\end{Remark}

In this paper, we take the fluid-solid interfacial free energy $\gamma_{fs}(\phi)$ as follows (\cite{24, 26, 27})
\[
\gamma_{fs}(\phi)=-\frac{\nu}{2}\cos\theta_s\sin\left(\frac{\pi\phi}{2}\right),
\]
where $\theta_s$ denotes the static contact angle constant and $\nu>0$ is the fluid-fluid interface tension constant.

\allowdisplaybreaks
\vskip2mm
Next, we list some auxiliary lemmas and inequalities that will be used in subsequent sections.
\begin{Lemma}
\label{Trace} {\rm(Trace Imbedding Theorem, \cite{9})}
Let $\Omega$ be a domain in ${\mathbb R}^N$ that satisfying the $C^{k-1,1}$-regularity condition, and suppose $u\in W^{k,p}(\Omega), p>1$, $k$ is an integer. If $l\le k-1$, then there is a trace operator $\rm Tr\big|_{\Gamma}$ such that
\begin{equation}
\label{L1}
\rm Tr\big|_\Gamma:W^{k,p}(\Omega)\hookrightarrow W^{k-l-\frac{1}{p},p}(\Gamma)
\end{equation}
holds. If $u\big|_\Gamma=\varphi$, we denote $\rm Tr\big|_\Gamma=\varphi$.
\end{Lemma}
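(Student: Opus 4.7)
The plan is to prove the trace embedding by the classical localization-and-straightening argument, reducing the problem on the curved boundary $\Gamma$ to the flat model case on the hyperplane $\partial \mathbb{R}^N_+ = \mathbb{R}^{N-1}$. First I would cover $\Gamma$ by finitely many open sets $U_1,\dots,U_M$, each admitting a $C^{k-1,1}$-diffeomorphism $\Phi_i$ that straightens $\Omega\cap U_i$ onto a neighborhood in the upper half-space $\mathbb{R}^N_+$. The $C^{k-1,1}$-regularity hypothesis on $\Omega$ is precisely what guarantees that such $\Phi_i$ together with their inverses preserve the space $W^{k,p}$ (one needs Lipschitz continuity of derivatives up to order $k-1$ for the chain rule to close in $W^{k,p}$). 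Pick a partition of unity $\{\chi_i\}$ subordinate to $\{U_i\}$ and decompose $u = \sum_i \chi_i u$.

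Next, in each straightened chart I would prove the model estimate on the half-space: for every $v \in C^\infty_c(\overline{\mathbb{R}^N_+})$ and every multi-index order $j \le l$,
\begin{equation*}
\|\partial_N^{\,j} v(\cdot,0)\|_{W^{k-j-1/p,\,p}(\mathbb{R}^{N-1})} \le C\,\|v\|_{W^{k,p}(\mathbb{R}^N_+)}.
\end{equation*}
For the integer-order part this follows from the fundamental-theorem-of-calculus identity $\partial_N^{\,j} v(x',0) = -\int_0^\infty \partial_N^{\,j+1} v(x',t)\,dt$ combined with Hölder's inequality in the normal direction, plus tangential differentiation which commutes with taking the trace. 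For the fractional part $W^{k-j-1/p,p}$ I would use the Gagliardo seminorm characterization and estimate the boundary double integral by a bulk double integral via the segment inequality, or equivalently use the Besov-space interpolation $W^{s,p} = (L^p,W^{m,p})_{s/m,p}$ together with the known endpoint cases (continuous surjectivity from $W^{1,p}(\mathbb{R}^N_+)$ to $W^{1-1/p,p}(\mathbb{R}^{N-1})$ and boundedness from $L^p$ into itself after composition with bounded extension).

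Finally I would assemble the local pieces: applying the model estimate to $(\chi_i u)\circ \Phi_i^{-1}$ in each chart and using that $\Phi_i$ is a $C^{k-1,1}$-diffeomorphism so the change of variables is bounded on both $W^{k,p}$ and on the boundary space $W^{k-l-1/p,p}$, then summing over $i$ yields the global trace estimate for smooth $u$. A standard density argument (smooth functions on $\overline{\Omega}$ are dense in $W^{k,p}(\Omega)$ under the stated boundary regularity) then defines $\mathrm{Tr}|_\Gamma$ as the unique continuous extension and establishes the embedding.

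The main obstacle is the fractional boundary estimate: interior derivatives of order $k$ must be converted into a Slobodeckij seminorm of order $k-l-1/p$ on the boundary, and the loss of exactly $1/p$ is sharp and dimension-independent, reflecting the scaling $\|u(\cdot,0)\|_{L^p(\mathbb{R}^{N-1})} \lesssim \|u\|_{L^p}^{1-1/p}\|\partial_N u\|_{L^p}^{1/p}$. Once this sharp trace inequality is in hand on the half-space, all remaining steps are routine bookkeeping with the partition of unity and the Lipschitz change of variables.
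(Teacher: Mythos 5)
The paper does not prove this lemma: it is quoted as a known result with a citation to Ne\v{c}as's monograph, so there is no in-paper argument to compare against. Your outline -- cover $\Gamma$ by $C^{k-1,1}$ charts, flatten, prove the half-space model estimate (fundamental theorem of calculus in the normal variable for the integer part, Gagliardo seminorm or real interpolation $W^{s,p}=(L^{p},W^{m,p})_{s/m,p}$ for the fractional part), then patch with a partition of unity and conclude by density of $C^{\infty}(\overline{\Omega})$ -- is precisely the classical proof found in the cited reference, and it is correct as a sketch; the only step you would need to write out carefully is the $L^{p}$ trace bound, where the bare identity $\partial_{N}^{\,j}v(x',0)=-\int_{0}^{\infty}\partial_{N}^{\,j+1}v(x',t)\,dt$ plus H\"older must be combined with a cutoff in $t$ (or the standard trick of differentiating $|v|^{p}\eta(t)$) to land on the full $W^{k,p}$ norm rather than just the top derivative.
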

\begin{Lemma}
\label{GN} {\rm(Gagliardo-Nirenberg inequality, \cite{3})}
Let $\Omega$ be either ${\mathbb R}^N$ or a half space or a Lipschitz bounded domain in ${\mathbb R}^N$. Assume that the real number $0\le s_1< s\le s_2$, $1\le p_1,p_2,p\le\infty$ and $0<\theta<1$ satisfy the relations
\begin{equation}
s=\theta s_1+(1-\theta)s_2
\;and\;\frac{1}{p}=\frac{\theta}{p_1}+\frac{1-\theta}{p_2}.
\end{equation}
Then there exists a constant $C$ depending on $s_1,s_2,p_1,p_2,\theta$ and $\Omega$ such that
\begin{equation}
\label{L2}
\|f\|_{W^{s,p}}\le C\|f\|_{W^{s_1,p_1}}^\theta\|f\|_{W^{s_2,p_2}}^{1-\theta},\;
\forall f\in W^{s_1,p_1}(\Omega)\cap W^{s_2,p_2}(\Omega)
\end{equation}
holds if and only if
\begin{equation}
s_2 \;is\;an\;integer\ge1,p_2=1\;and\;s_2-s_1\le 1-\frac{1}{p_1}
\end{equation}
fail.
\vskip2mm
In particular, for $k\in\mathbb N$, it holds that
\begin{equation}
\label{L21}
\|f\|_{H^{k+\frac{1}{2}}}^2\le C\|f\|_{H^k}\|f\|_{H^{k+1}}
\end{equation}
\end{Lemma}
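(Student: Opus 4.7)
The plan is to derive (\ref{L21}) as an immediate corollary of the general Gagliardo--Nirenberg inequality (\ref{L2}) already stated in the lemma. I would specialize to $p = p_1 = p_2 = 2$, $s = k+\tfrac12$, $s_1 = k$, $s_2 = k+1$, with interpolation parameter $\theta = \tfrac12$; the first step would be checking the two scaling identities, which reduce to $\tfrac12 k + \tfrac12(k+1) = k+\tfrac12 = s$ and $\tfrac{\theta}{p_1}+\tfrac{1-\theta}{p_2} = \tfrac14+\tfrac14 = \tfrac12 = \tfrac1p$. Since $p_2 = 2 \neq 1$, the exclusion clause of the Gagliardo--Nirenberg statement is not triggered, so (\ref{L2}) applies and yields $\|f\|_{H^{k+1/2}} \le C\,\|f\|_{H^k}^{1/2}\|f\|_{H^{k+1}}^{1/2}$. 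Squaring both sides would then close the argument.

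For a self-contained route I would instead argue by Fourier analysis: on $\mathbb{R}^N$ or on the torus, write $\|f\|_{H^{k+1/2}}^2 = \int (1+|\xi|^2)^{k+1/2}|\hat f(\xi)|^2\,d\xi$, split the weight as $(1+|\xi|^2)^{k/2}\cdot(1+|\xi|^2)^{(k+1)/2}$, and apply Cauchy--Schwarz in $\xi$ to get $\|f\|_{H^{k+1/2}}^2 \le \|f\|_{H^k}\,\|f\|_{H^{k+1}}$ with constant $1$. For a general Lipschitz bounded $\Omega$, the natural step is to precompose with a Stein extension operator; this preserves the shape of the estimate at the price of a domain-dependent constant. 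Given that the subsequent sections of the paper work on smooth bounded domains and on the channels $\mathbb{T}\times(-1,1)$ and $\mathbb{T}^2\times(-1,1)$, both cases are covered by this strategy.

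I do not anticipate any hard step here. The only conceptual subtlety worth flagging in a careful write-up is the identification of $H^{k+1/2}(\Omega)$ with the real interpolation space $[H^k(\Omega),H^{k+1}(\Omega)]_{1/2,2}$, since this is what ultimately justifies viewing (\ref{L21}) as an interpolation estimate; I would simply cite this as standard for the Lipschitz and smooth domains in question rather than reprove it, because the novel content of the lemma for the paper's purposes is the quantitative form (\ref{L21}) itself, which will be invoked repeatedly in the a priori estimates of Sections 3--4 (e.g.\ to control boundary traces of $\phi$ and tangential derivatives of $\mathbf{u}$ in the higher-order estimates).
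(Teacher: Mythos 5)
Your specialization ($p=p_1=p_2=2$, $s_1=k$, $s_2=k+1$, $\theta=\tfrac12$, with the exclusion clause failing because $p_2=2\neq 1$) is exactly how the paper obtains \eqref{L21} from the cited general inequality \eqref{L2}; the parameter checks are correct and the argument is complete. The Fourier/extension alternative you sketch is a valid self-contained backup but is not needed, since the paper treats the general statement as a black box from the reference.
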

\begin{Lemma}
\label{Gronwall} {\rm(Gronwall inequality, \cite{4})}
Let $\eta(\cdot)$ be a nonnegative, absolutely continuous function on $[0,T]$, which satisfies for a.e. t the differential inequality
\begin{equation}
\eta^\prime(t)\le\phi(t)\eta(t)+\psi(t),
\end{equation}
where $\phi(t)$ and $\psi(t)$ are nonnegative, summable function on $[0,T]$. Then
\begin{equation}
\eta(t)\le e^{\int_{0}^{T}\phi(s){\rm d}S}\left[\eta(0)+\int_{0}^{T}\psi(s){\rm d}S
\right].
\end{equation}
In particular, if
\begin{equation}
\eta^\prime(t)\le\phi(t)\eta(t)\;{\rm on}\;[0,T]\;and\;\eta(0)=0,
\end{equation}
then
\begin{equation}
\eta=0\;{\rm on}\;[0,T].
\end{equation}
\end{Lemma}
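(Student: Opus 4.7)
The plan is to exploit the integrating factor technique that is classical for Gronwall-type inequalities. I would introduce the auxiliary function $\xi(t) = \eta(t)\exp\bigl(-\int_0^t \phi(s)\,ds\bigr)$, whose defining role is to transform the differential inequality into one that can be integrated termwise. A preliminary observation is that since $\eta$ is absolutely continuous and the exponential factor is absolutely continuous in $t$ (because $\phi$ is summable), the product $\xi$ is itself absolutely continuous on $[0,T]$ and hence differentiable almost everywhere.

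Next I would differentiate $\xi$ by the product rule and substitute the hypothesis $\eta'(t) \le \phi(t)\eta(t) + \psi(t)$ to obtain, for almost every $t$,
\[
\xi'(t) = \bigl(\eta'(t) - \phi(t)\eta(t)\bigr)\exp\Bigl(-\int_0^t \phi(s)\,ds\Bigr) \le \psi(t)\exp\Bigl(-\int_0^t \phi(s)\,ds\Bigr).
\]
Integrating from $0$ to $t$ via the fundamental theorem of calculus for absolutely continuous functions yields
\[
\xi(t) \le \eta(0) + \int_0^t \psi(s)\exp\Bigl(-\int_0^s \phi(r)\,dr\Bigr)\,ds.
\]
Using $\phi \ge 0$ to bound the inner exponential factor above by $1$, then multiplying through by $\exp\bigl(\int_0^t \phi(s)\,ds\bigr)$ and bounding this prefactor by $\exp\bigl(\int_0^T \phi(s)\,ds\bigr)$, recovers precisely the asserted estimate.

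The particular case follows immediately by setting $\psi \equiv 0$ and $\eta(0)=0$ in the general bound: one obtains $\eta(t) \le 0$, which combined with the nonnegativity hypothesis $\eta \ge 0$ forces $\eta \equiv 0$ on $[0,T]$.

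The only technical point, rather than a genuine obstacle, is the need to invoke the product rule and the fundamental theorem of calculus for \emph{absolutely continuous} functions rather than $C^1$ functions. This is a standard fact from Lebesgue theory: absolute continuity suffices for the almost-everywhere product rule and for the identity $\xi(t) - \xi(0) = \int_0^t \xi'(s)\,ds$, so no additional regularity on $\eta$, $\phi$, or $\psi$ beyond what is assumed is required.
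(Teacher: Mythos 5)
Your argument is correct and complete: the integrating-factor function $\xi(t)=\eta(t)\exp\bigl(-\int_0^t\phi(s)\,{\rm d}s\bigr)$ is absolutely continuous, the a.e.\ product rule and the fundamental theorem of calculus for absolutely continuous functions apply exactly as you say, and the nonnegativity of $\phi$ and $\psi$ justifies the two final crude bounds. The paper itself gives no proof of Lemma \ref{Gronwall} (it is quoted from the literature), and what you have written is precisely the standard textbook proof of this statement, so there is nothing to reconcile.
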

\begin{Lemma}
\label{Korn} {\rm (Korn's inequality, \cite{2})}
There exists a constant $C>0$, there holds
\begin{equation}
\label{L4}
\|{\mathbb S}({\bf u})\|_{L^2(\Omega)}\ge C\|\nabla{\bf u}\|_{L^2(\Omega)},
\end{equation}
for any ${\bf u}\in V=\{{\bf u}\in H^1(\Omega)\:\big|\:\rm div{\bf u}=0,{\bf u}\cdot{\bf n}=0$ {\rm on} $\Gamma$\}.
\end{Lemma}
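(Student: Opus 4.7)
The plan is to combine an integration-by-parts identity with a standard compactness/contradiction argument. First, expanding
\begin{align*}
\|\mathbb{S}({\bf u})\|_{L^2}^2 = \eta^2 \int_\Omega |\nabla{\bf u} + \nabla{\bf u}^T|^2 \, {\rm d}x = 2\eta^2\|\nabla{\bf u}\|_{L^2}^2 + 2\eta^2 \int_\Omega \partial_i u_j \partial_j u_i \, {\rm d}x
\end{align*}
reduces the problem to controlling the cross term from below. I would integrate by parts in $x_j$, placing the derivative on $\partial_i u_j$; the resulting interior term vanishes because $\partial_j u_j = {\rm div}\,{\bf u}=0$, so only the boundary contribution $\int_\Gamma (\partial_i u_j)\, u_i\, n_j\,{\rm d}S = \int_\Gamma (({\bf u}\cdot\nabla){\bf u})\cdot{\bf n}\,{\rm d}S$ survives. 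Since ${\bf u}\cdot{\bf n}=0$ on $\Gamma$, tangentially differentiating this identity gives $(({\bf u}\cdot\nabla_{\boldsymbol\tau}){\bf u})\cdot{\bf n} = -{\bf u}\cdot(({\bf u}\cdot\nabla_{\boldsymbol\tau}){\bf n})$, converting the boundary integral into a curvature-weighted quadratic form in ${\bf u}|_\Gamma$, pointwise bounded by $C(\Gamma)\,|{\bf u}|^2$ thanks to the smoothness of $\Gamma$.

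Next, by Lemma \ref{Trace} together with the interpolation inequality $\|{\bf u}\|_{L^2(\Gamma)}^2 \le C\|{\bf u}\|_{L^2}\|{\bf u}\|_{H^1}$ and Young's inequality, this boundary term can be absorbed into $\epsilon\|\nabla{\bf u}\|_{L^2}^2 + C_\epsilon\|{\bf u}\|_{L^2}^2$ for any $\epsilon>0$. Taking $\epsilon$ small and combining with the expansion above yields Korn's first inequality
\begin{align*}
\|\nabla{\bf u}\|_{L^2}^2 \le C_1\|\mathbb{S}({\bf u})\|_{L^2}^2 + C_2\|{\bf u}\|_{L^2}^2.
\end{align*}

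To remove the $\|{\bf u}\|_{L^2}^2$ term, I would argue by contradiction: if the stated inequality fails, there exists a sequence $\{{\bf u}_n\}\subset V$ with $\|\nabla{\bf u}_n\|_{L^2}=1$ and $\|\mathbb{S}({\bf u}_n)\|_{L^2}\to 0$. The first inequality renders $\{{\bf u}_n\}$ bounded in $H^1$, so by Rellich's theorem a subsequence converges in $L^2$; applying the first inequality again to differences shows the sequence is Cauchy in $H^1$, with limit ${\bf u}_\infty\in V$ satisfying $\mathbb{S}({\bf u}_\infty)=0$. Such a field must be an infinitesimal rigid motion, and combining with ${\rm div}\,{\bf u}_\infty=0$ and ${\bf u}_\infty\cdot{\bf n}=0$ forces ${\bf u}_\infty=0$ in a generic smooth domain, contradicting $\|\nabla{\bf u}_\infty\|_{L^2}=1$. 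The main obstacle lies in exactly this last step: for symmetric domains such as the channel $\mathbb{T}\times(-1,1)$ appearing later in the paper, constant tangential translations belong to $V$, so the inequality can only hold after quotienting out this finite-dimensional kernel of rigid motions (or imposing, e.g., a mean-zero side condition on ${\bf u}$).
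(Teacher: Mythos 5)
The paper offers no proof of this lemma; it is quoted directly from \cite{2}, so there is nothing internal to compare your argument against. Your strategy --- expand $\|{\mathbb S}({\bf u})\|_{L^2}^2$, integrate the cross term by parts using ${\rm div}\,{\bf u}=0$, convert the surviving boundary integral into a curvature-weighted quadratic form via ${\bf u}\cdot{\bf n}=0$, absorb it by trace--interpolation to obtain Korn's second inequality, and then remove the $\|{\bf u}\|_{L^2}^2$ term by compactness --- is the standard route and is sound in outline.

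Two points need repair. First, in the contradiction step the normalization $\|\nabla{\bf u}_n\|_{L^2}=1$ together with $\|{\mathbb S}({\bf u}_n)\|_{L^2}\to 0$ does \emph{not} make $\{{\bf u}_n\}$ bounded in $H^1$: your second inequality controls $\|\nabla{\bf u}_n\|_{L^2}$ \emph{by} $\|{\bf u}_n\|_{L^2}$, not conversely, and $\|{\bf u}_n\|_{L^2}$ is a priori uncontrolled. Since both sides of \eqref{L4} are unchanged by adding constants, you should first replace ${\bf u}_n$ by ${\bf u}_n-\bar{\bf u}_n$ (Korn's second inequality needs no boundary condition, so it still applies to the shifted fields), get $L^2$-boundedness from Poincar\'e--Wirtinger, and only then invoke Rellich and the Cauchy-in-$H^1$ argument, reinstating the boundary condition when identifying the limit. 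Second, your closing caveat points at the wrong kernel: constant tangential translations in the channel have $\nabla{\bf u}=0$, so they satisfy \eqref{L4} trivially and need not be quotiented out. The genuine obstruction is a rigid \emph{rotation} ${\bf u}=Bx+a$ with $B\neq 0$ antisymmetric lying in $V$; this occurs exactly for rotationally symmetric domains (e.g.\ a disk centered at the axis, where ${\bf u}=(-y,x)$ is divergence-free, tangent to the boundary, and has ${\mathbb S}({\bf u})=0$ but $\nabla{\bf u}\neq 0$, so \eqref{L4} as stated actually fails), while periodicity rules out $B\neq 0$ in the channel and the tangency condition rules it out for domains without such symmetry. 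The lemma therefore carries an implicit restriction on $\Omega$ that neither its statement nor your proof makes explicit.
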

\begin{Lemma}
 \label{product} {\rm(Estimates of product of functions, \cite{7})}
The following hold on sufficiently smooth subsets of ${\mathbb R}^N$.
\\
{\rm 1.} Let $0\le r\le s_1\le s_2$ be a such that $s_1>\frac{N}{2}$. Let $f\in H^{s_1}$, $g\in H^{s_2}$. Then $fg\in H^r$ and
\begin{equation}
\label{L5.1}
\|fg\|_{H^r}\le C\|f\|_{H^{s_1}}\|g\|_{H^{s_2}}.
\end{equation}
{\rm 2.} Let $0\le r\le s_1\le s_2$ be a such that $s_2>r+\frac{N}{2}$. Let $f\in H^{s_1}$, $g\in H^{s_2}$. Then $fg\in H^r$ and
\begin{equation}
\label{L5.2}
\|fg\|_{H^r}\le C\|f\|_{H^{s_1}}\|g\|_{H^{s_2}}.
\end{equation}
\end{Lemma}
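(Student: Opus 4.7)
\medskip

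The plan is to establish both parts by first treating the case of integer $r$ via the Leibniz rule combined with Sobolev embeddings, then passing to fractional $r$ by complex interpolation between consecutive integer endpoints.

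For integer $r$, I would write
\begin{equation*}
\|fg\|_{H^r}^2 = \sum_{|\alpha|\le r}\|\partial^\alpha(fg)\|_{L^2}^2
\end{equation*}
and apply the Leibniz formula to obtain a finite sum of terms of the form $\partial^\beta f \cdot \partial^\gamma g$ with $|\beta|+|\gamma|\le r$. The task is then to estimate $\|\partial^\beta f\,\partial^\gamma g\|_{L^2}$ by $\|f\|_{H^{s_1}}\|g\|_{H^{s_2}}$. By Hölder's inequality with a pair $(p,q)$ satisfying $\tfrac{1}{p}+\tfrac{1}{q}=\tfrac{1}{2}$, this reduces to choosing $p,q$ so that the Sobolev embeddings $H^{s_1-|\beta|}\hookrightarrow L^p$ and $H^{s_2-|\gamma|}\hookrightarrow L^q$ both hold. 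For Part~1, the hypothesis $s_1>\tfrac{N}{2}$ yields $H^{s_1}\hookrightarrow L^\infty$, so whenever $|\beta|=0$ one can place $f$ in $L^\infty$ and $\partial^\gamma g$ in $L^2$; in all other cases one distributes derivatives and uses $H^{s_1-|\beta|}\cap H^{s_1}\hookrightarrow L^p$ for the appropriate $p\in[2,\infty]$ dictated by $|\beta|$, relying crucially on $s_1\le s_2$ so that the $g$-factor can absorb the remaining derivatives in $L^q$. For Part~2, the weaker hypothesis $s_2>r+\tfrac{N}{2}$ guarantees $H^{s_2-|\gamma|}\hookrightarrow L^\infty$ for every $|\gamma|\le r$, so one places the $g$-derivatives in $L^\infty$ and the $f$-derivatives in $L^2$ using $|\beta|\le r\le s_1$.

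To pass to non-integer $r$, I would fix the integer endpoints $k=\lfloor r\rfloor$ and $k+1$ (both of which fall under the integer case provided the hypotheses on $s_1,s_2$ remain valid at $r=k+1$, which is the point at which the conditions $s_1\ge r$ and $s_2>r+\tfrac{N}{2}$ are actually used), and invoke the complex interpolation identity $[H^k,H^{k+1}]_\theta = H^{k+\theta}$ with $\theta=r-k$. Viewing the bilinear map $(f,g)\mapsto fg$ as bounded into $H^k$ and $H^{k+1}$ respectively, bilinear complex interpolation yields the bound into $H^r$ with the same structure of norms on the right-hand side.

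The main obstacle is the sharp accounting of derivatives in the borderline situation where neither $H^{s_1-|\beta|}$ nor $H^{s_2-|\gamma|}$ embeds into $L^\infty$ and the Sobolev exponents on the two factors must combine exactly to $L^2$. This is where the assumption $s_1>\tfrac{N}{2}$ in Part~1 (respectively $s_2>r+\tfrac{N}{2}$ in Part~2) enters decisively: it ensures that for every decomposition $|\beta|+|\gamma|\le r$ at least one of the two factors retains strictly more than $\tfrac{N}{2}$ derivatives and can therefore be placed in $L^\infty$ (or, in marginal decompositions, in some $L^p$ with $p<\infty$ complemented by $L^q$-control of the other factor). A cleaner, coordinate-free alternative — which I would mention as a remark — is Bony's paraproduct decomposition $fg=T_fg+T_gf+R(f,g)$, whose three pieces are controlled by exactly the same hypotheses and immediately deliver the fractional case without a separate interpolation step.
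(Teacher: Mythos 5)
The paper does not prove this lemma at all: it is quoted verbatim from Guo--Tice \cite{7} and used as a black box, so there is no in-paper argument to compare against. Judged on its own merits, your proposal is sound for integer $r$ (Leibniz plus H\"older plus Sobolev embedding works exactly as you describe: in Part~1 one checks $(s_1-|\beta|)+(s_2-|\gamma|)\ge s_1+s_2-r\ge s_2>\frac{N}{2}$, and in Part~2 the factor $\partial^\gamma g$ always lands in $L^\infty$), but the passage to fractional $r$ has a genuine gap. Your interpolation is between the target spaces $H^k$ and $H^{k+1}$ with the source spaces $H^{s_1}\times H^{s_2}$ held fixed, so it needs the integer case to hold at $r=k+1=\lceil r\rceil$; you flag this yourself in a parenthesis, but the hypotheses of the lemma do not guarantee it. They only give $r\le s_1$ and $s_2>r+\frac{N}{2}$, not $\lceil r\rceil\le s_1$ or $s_2>\lceil r\rceil+\frac{N}{2}$. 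This is not a corner case: both invocations of the lemma in this paper violate it. Part~1 is used with $r=s_1=s_2=\frac32$, $N=2$, where $k+1=2>s_1$; Part~2 is used with $r=s_1=\frac12$, $s_2=2$, $N=2$, where $s_2>k+1+\frac{N}{2}=2$ fails. One cannot repair this by also interpolating the source spaces: for $r=s_1=s_2=\frac32$, $N=2$, writing $\frac32$ as a convex combination of integer endpoints forces one endpoint to have $\sigma\le 1=\frac{N}{2}$, where the integer version of Part~1 is unavailable. So no bilinear interpolation from the integer cases reaches the full stated range.

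The correct route is the one you relegate to a closing remark: a Littlewood--Paley or paraproduct argument (estimate $T_fg$, $T_gf$, and $R(f,g)$ separately), which treats all fractional indices on the same footing and is, in substance, how such product estimates are proved in the cited source. If you want to keep the elementary flavor, you could at least reduce Part~1 to the single exponent $r=s_1$ via $\|fg\|_{H^r}\le\|fg\|_{H^{s_1}}$, but the resulting algebra-type bound $\|fg\|_{H^{s}}\le C\|f\|_{H^{s}}\|g\|_{H^{s_2}}$ for fractional $s>\frac{N}{2}$ still requires a genuinely fractional tool (paraproducts or a Kato--Ponce-type inequality); it does not follow from the integer cases by interpolation. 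As written, your proof establishes the lemma only when $r$ is an integer, or when the stronger hypotheses happen to hold at $\lceil r\rceil$.
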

\begin{Lemma} 
\label{estimates} {\rm(Elliptic estimates, \cite{M-Z})} Let $\Omega\subset\mathbb R^N(N=2,3)$ be a bounded domain with smooth boundary $\Gamma$. Consider the following linear elliptic problem
\begin{equation}
\begin{cases}
\label{problem}
-\Delta\phi=H,&{\rm on}\;\Omega,
\\
-\Delta_{\boldsymbol\tau}\psi+\psi+\partial_{\bf n}\phi=h,&{\rm on}\;\Gamma,
\\
\phi\big|_\Gamma=\psi,&{\rm on}\;\Gamma,
\end{cases}
\end{equation}
where $(H,h)\in H^s(\Omega)\times H^s(\Gamma)$ for any $s\ge 0$ and $s+\frac{1}{2}\notin\mathbb N$. Then every solution $(\phi,\psi)$ to problem $(\ref{problem})$ satisfies the estimates
\begin{align}
\|\phi\|_{H^{s+2}(\Omega)}+\|\psi\|_{H^{s+2}(\Gamma)}\le C\left(
\|H\|_{H^s(\Omega)}+\|h\|_{H^s(\Gamma)}
\right)
\end{align}
for some constant $C>0$ that may depend on $s,\Omega$ and $\Gamma$, but is independent of the solution $(\phi,\psi)$.
\end{Lemma}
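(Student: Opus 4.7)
The plan is to view (\ref{problem}) as a coupled bulk--boundary elliptic system and reduce it to two standard elliptic problems: a Dirichlet problem for $\phi$ in $\Omega$, and a second-order elliptic equation for $\psi$ on the closed manifold $\Gamma$. The crucial observation is that the boundary condition $-\Delta_{\boldsymbol\tau}\psi+\psi+\partial_{\bf n}\phi=h$, once $\partial_{\bf n}\phi$ is expressed through the Dirichlet-to-Neumann operator of $-\Delta$ in $\Omega$, becomes itself an elliptic equation on $\Gamma$ of order two for $\psi$; this is what permits a gain of two derivatives on the boundary, in contrast with the one-derivative gain of a pure Neumann problem.

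I would first establish the base case $s=0$ by a variational argument. Consider the bilinear form
\begin{align*}
a((\phi,\psi),(v,w))=\int_\Omega\nabla\phi\cdot\nabla v\,\mathrm{d}x+\int_\Gamma\nabla_{\boldsymbol\tau}\psi\cdot\nabla_{\boldsymbol\tau}w\,\mathrm{d}S+\int_\Gamma\psi w\,\mathrm{d}S
\end{align*}
on the Hilbert space $\mathbb V=\{(\phi,\psi):\phi\in H^1(\Omega),\ \psi=\phi|_\Gamma\in H^1(\Gamma)\}$. Testing the weak form associated with (\ref{problem}) by $(\phi,\psi)$ yields an identity whose right-hand side is $\langle H,\phi\rangle+\langle h,\psi\rangle_\Gamma$; coercivity on $\mathbb V$ follows from the zero-order boundary term together with a trace-Poincaré inequality, giving the $H^1$ bound. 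To upgrade to $H^2$, I would pass to local coordinates flattening $\Gamma$ via a finite partition of unity, apply Nirenberg's tangential difference-quotient method to control $\nabla_{\boldsymbol\tau}\nabla\phi$ and $\nabla_{\boldsymbol\tau}^2\psi$, and recover the normal second derivatives from the rewritten bulk equation $-\partial_{\bf n}^2\phi=H+\Delta_{\boldsymbol\tau}\phi+(\text{lower order})$.

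For $s>0$ the cleanest route is to split $\phi=\phi_H+\phi_\psi$, where $\phi_H$ solves $-\Delta\phi_H=H$ with $\phi_H|_\Gamma=0$ and $\phi_\psi$ is the harmonic lift of $\psi$. Writing $\Lambda_0\psi:=\partial_{\bf n}\phi_\psi$ for the harmonic Dirichlet-to-Neumann operator, which is a classical pseudodifferential operator of order one on $\Gamma$, the boundary condition rewrites as
\begin{align*}
(-\Delta_{\boldsymbol\tau}+1+\Lambda_0)\psi=h-\partial_{\bf n}\phi_H\qquad\text{on }\Gamma.
\end{align*}
Since $-\Delta_{\boldsymbol\tau}+1+\Lambda_0$ is elliptic of order two on the closed manifold $\Gamma$, and since the standard Dirichlet estimate yields $\|\partial_{\bf n}\phi_H\|_{H^{s+1/2}(\Gamma)}\le C\|H\|_{H^s(\Omega)}$, elliptic regularity on $\Gamma$ produces $\|\psi\|_{H^{s+2}(\Gamma)}\le C(\|H\|_{H^s(\Omega)}+\|h\|_{H^s(\Gamma)})$. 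Plugging this trace information into the Dirichlet problem $-\Delta\phi=H$, $\phi|_\Gamma=\psi$ and invoking the standard $H^{s+2}$-regularity of the Dirichlet Laplacian then closes the estimate for $\phi$.

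The main technical obstacle is the verification that $-\Delta_{\boldsymbol\tau}+1+\Lambda_0$ is genuinely elliptic of order two with the sharp mapping property $H^{s+2}(\Gamma)\to H^s(\Gamma)$; this amounts to checking that the principal symbol of $\Lambda_0$, which is $|\xi|$ in local tangential coordinates, is a positive lower-order perturbation that cannot cancel the Laplace--Beltrami leading term. The excluded indices $s+\tfrac12\in\mathbb N$ are exactly the borderline cases of the fractional trace theory, where the sharp estimates must be obtained by interpolation between the non-exceptional exponents rather than applied directly, which is why they are ruled out in the statement.
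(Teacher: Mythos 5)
Your argument is sound, but note that the paper does not prove this lemma at all: it is quoted verbatim from the reference \cite{M-Z} (Miranville--Zelik), so there is no in-paper proof to compare against. Your reduction is a legitimate self-contained route: the splitting $\phi=\phi_H+\phi_\psi$ turns the boundary condition into $(-\Delta_{\boldsymbol\tau}+1+\Lambda_0)\psi=h-\partial_{\bf n}\phi_H$ with $\Lambda_0$ the harmonic Dirichlet-to-Neumann map, and since $\Lambda_0$ is a nonnegative first-order classical pseudodifferential operator on the smooth closed manifold $\Gamma$, the operator $-\Delta_{\boldsymbol\tau}+1+\Lambda_0$ is elliptic of order two \emph{and strictly positive} (its quadratic form is $\|\nabla_{\boldsymbol\tau}\psi\|_{L^2(\Gamma)}^2+\|\psi\|_{L^2(\Gamma)}^2+\|\nabla\phi_\psi\|_{L^2(\Omega)}^2$), hence invertible from $H^{s+2}(\Gamma)$ onto $H^s(\Gamma)$; this positivity is exactly what lets you drop the lower-order remainder that bare ellipticity would leave in the estimate, and you should say so explicitly rather than leaving it implicit in the phrase ``elliptic regularity on $\Gamma$ produces.'' The chain $\|\partial_{\bf n}\phi_H\|_{H^{s+1/2}(\Gamma)}\le C\|H\|_{H^s(\Omega)}$ followed by $H^{s+2}(\Gamma)\hookrightarrow H^{s+3/2}(\Gamma)$ and Dirichlet regularity for $\phi$ closes the loop correctly. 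Two small caveats: first, the statement bounds ``every solution,'' so you also need uniqueness for the homogeneous problem to rule out a kernel, which your $s=0$ coercivity computation already delivers (testing the homogeneous system gives $\psi=0$ and $\phi=\mathrm{const}$ with zero trace); second, your explanation of the excluded indices $s+\tfrac12\in\mathbb N$ is speculative --- nothing in your pseudodifferential route actually breaks at those values, and the exclusion is an artifact of the method used in \cite{M-Z}, so you should either drop that remark or flag it as inherited from the cited statement rather than forced by your proof.
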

\begin{Lemma} {\rm(See \cite{2})}
\label{poincare U}
Let $\Omega = \mathbb{T}^{N} \times (-1,1), N=1, 2$, then there exists a constant $C>0$, it holds
\begin{align}
\label{poincare u}
\|{\bf u}\|_{L^2(\Omega)}\le C\|\nabla_{\bf n}{\bf u}\|_{L^2(\Omega)},
\end{align}
for any ${\bf u}\in V=\{{\bf u}\in H^1(\Omega)\:\big|\:\rm div{\bf u}=0,{\bf u}\cdot{\bf n}=0$ {\rm on} $\Gamma$\}.
\end{Lemma}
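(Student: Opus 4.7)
The plan is to exploit the special product geometry of $\Omega = \mathbb{T}^N \times (-1, 1)$. Because $\Gamma = \mathbb{T}^N \times \{-1, 1\}$ consists of two flat components on which the outward unit normal is the constant vector $\pm {\bf e}_{N+1}$, the normal derivative $\partial_{\bf n}$ extends naturally to $\partial_{x_{N+1}}$ on all of $\Omega$, and the slip condition ${\bf u}\cdot{\bf n} = 0$ on $\Gamma$ reduces to the Dirichlet condition $u_{N+1}(x', \pm 1) = 0$, where I split ${\bf u} = (u', u_{N+1})$ with $u' = (u_1, \ldots, u_N)$ the tangential part and $x = (x', x_{N+1})$ the corresponding coordinates. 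Under this identification the incompressibility relation becomes $\nabla_{x'} \cdot u' + \partial_{x_{N+1}} u_{N+1} = 0$.

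First I would handle the normal component. Since $u_{N+1}$ vanishes at $x_{N+1} = \pm 1$, the classical one-dimensional Poincar\'e inequality applied slice by slice in $x' \in \mathbb{T}^N$ and then integrated yields
\begin{equation*}
\|u_{N+1}\|_{L^2(\Omega)}^2 \le C \|\partial_{x_{N+1}} u_{N+1}\|_{L^2(\Omega)}^2.
\end{equation*}
Next, for the tangential components, I would pass to Fourier series in the periodic variable $x' \in \mathbb{T}^N$. For each nonzero mode $k \in \mathbb{Z}^N \setminus \{0\}$, Parseval combined with the spectral gap of $-\Delta_{x'}$ and the divergence-free identity $\nabla_{x'} \cdot u' = -\partial_{x_{N+1}} u_{N+1}$ allows one to control the Fourier coefficient $\hat{u}'(k, \cdot)$ in $L^2(-1, 1)$ by $\partial_{x_{N+1}} u_{N+1}$, which was already bounded in the preceding step. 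For the zero tangential Fourier mode, incompressibility forces $\partial_{x_{N+1}} \hat{u}_{N+1}(0, \cdot) = 0$ and together with $\hat{u}_{N+1}(0, \pm 1) = 0$ one concludes $\hat{u}_{N+1}(0, \cdot) \equiv 0$; the remaining zero-mode tangential part $\hat{u}'(0, \cdot)$ is controlled via a one-dimensional Poincar\'e inequality in $x_{N+1}$ using the structural assumptions built into the space $V$ in \cite{2}.

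The main obstacle will be treating the zero tangential mode of $u'$, which is not directly constrained by either the boundary condition ${\bf u}\cdot{\bf n} = 0$ or the divergence-free identity. Addressing this requires an implicit zero-mean convention on $V$ or, equivalently, an interpretation of $\nabla_{\bf n} {\bf u}$ that picks up enough information to bound the constant-in-$x'$ part of $u'$; the hard work in \cite{2} is precisely at this point. Once this delicate mode is handled, the mode-wise bounds combine by Parseval into the desired estimate $\|{\bf u}\|_{L^2(\Omega)} \le C \|\nabla_{\bf n} {\bf u}\|_{L^2(\Omega)}$.
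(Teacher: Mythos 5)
The paper does not actually prove this lemma --- it imports it from \cite{2} --- so there is no in-paper argument to compare against; judging your proposal on its own terms, it has a step that fails and it also runs into the fact that the statement, read literally, cannot be true. The failing step is your treatment of the nonzero tangential Fourier modes when $N=2$: the identity $ik\cdot\hat{u}'(k,\cdot)=-\partial_{x_{3}}\hat{u}_{3}(k,\cdot)$ controls only the longitudinal component $k\cdot\hat{u}'(k,\cdot)$ of the two-dimensional vector $\hat{u}'(k,\cdot)$, and says nothing about the component orthogonal to $k$. The field ${\bf u}=(\sin(2\pi x_2),0,0)$ on $\mathbb{T}^2\times(-1,1)$ is divergence free, satisfies ${\bf u}\cdot{\bf n}=0$, lives entirely in nonzero tangential modes, and has $\partial_{x_3}{\bf u}\equiv 0$; so incompressibility cannot rescue the tangential part even away from the zero mode. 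The zero-mode obstruction you flag at the end is likewise not a technicality that \cite{2} resolves: ${\bf u}=(c,0,\dots,0)$ belongs to $V$ exactly as defined in the lemma and annihilates the right-hand side, so no proof of the inequality as printed can exist without an extra hypothesis.

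The missing idea is that the estimate needs a tangential boundary term. The inequality that is both true and actually used throughout the paper (the lemma is always invoked in tandem with the dissipation term $\beta\|{\bf u}_{\boldsymbol\tau}\|_{L^2(\Gamma)}^2$, e.g.\ in Lemmas \ref{LNS L2}, \ref{final} and \ref{rgl6}) is
\begin{align*}
\|{\bf u}\|_{L^2(\Omega)}^2\le C\left(\|\nabla_{\bf n}{\bf u}\|_{L^2(\Omega)}^2+\|{\bf u}_{\boldsymbol\tau}\|_{L^2(\Gamma)}^2\right),
\end{align*}
and once the boundary term is present the proof is elementary and needs neither Fourier series nor incompressibility: write $u_j(x',y)=u_j(x',-1)+\int_{-1}^{y}\partial_s u_j(x',s)\,{\rm d}s$ for each component, square, and integrate over $\Omega$; for $j\le N$ the trace $u_j(\cdot,-1)$ is a tangential component controlled by $\|{\bf u}_{\boldsymbol\tau}\|_{L^2(\Gamma)}$, while for $j=N+1$ it vanishes by ${\bf u}\cdot{\bf n}=0$ (your treatment of the normal component is the one piece of the argument that is correct and needs no boundary term). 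I would therefore either prove the corrected inequality by this slicing argument, or verify the precise hypotheses under which \cite{2} states its version (e.g.\ a zero-mean or boundary condition strong enough to exclude the constant tangential flows) before relying on the lemma as written.
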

\begin{Lemma} {\rm(See \cite{12, 13})}
\label{regularity u}
Let $\Omega\subset\mathbb R^N(N=2,3)$ be a bounded domain with smooth boundary $\Gamma$. Then there exists a constant $C>0$ independent of ${\bf u}$, such that
\begin{align}
\|{\bf u}\|_{H^s}\le C\left(\|\nabla\times{\bf u}\|_{H^{s-1}}+\|{\rm div}{\bf u}\|_{H^{s-1}}+\|{\bf u}\cdot{\bf n}\|_{H^{s-\frac{1}{2}}(\Gamma)}+\|{\bf u}\|_{H^{s-1}}
\right),
\end{align}
for any ${\bf u}\in H^s(\Omega)$, $s\ge 1$.
\end{Lemma}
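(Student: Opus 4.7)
The plan is to establish the estimate by induction on $s\ge 1$, leveraging the vector identity $-\Delta {\bf u} = \nabla\times(\nabla\times{\bf u}) - \nabla(\mathrm{div}\,{\bf u})$, which represents ${\bf u}$ as the solution of an elliptic problem whose data are $(\nabla\times{\bf u},\,\mathrm{div}\,{\bf u})$ in $\Omega$ and the normal trace ${\bf u}\cdot{\bf n}$ on $\Gamma$. The bound then follows from standard elliptic regularity once we verify that this constitutes a well-posed boundary-value problem, the lower-order term $\|{\bf u}\|_{H^{s-1}}$ absorbing the finite-dimensional kernel of harmonic vector fields and all commutators arising from curvature.

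For the base case $s=1$, I would invoke the Helmholtz decomposition ${\bf u}=\nabla q+{\bf v}$, where $q$ solves the Neumann problem $\Delta q=\mathrm{div}\,{\bf u}$ in $\Omega$ with $\partial_{\bf n} q={\bf u}\cdot{\bf n}$ on $\Gamma$. The compatibility condition $\int_\Omega\mathrm{div}\,{\bf u}\,{\rm d}x=\int_\Gamma {\bf u}\cdot{\bf n}\,{\rm d}S$ is automatic by the divergence theorem, and standard Neumann estimates yield
$$
\|\nabla q\|_{H^1}\le C\bigl(\|\mathrm{div}\,{\bf u}\|_{L^2}+\|{\bf u}\cdot{\bf n}\|_{H^{1/2}(\Gamma)}\bigr).
$$
The solenoidal remainder ${\bf v}={\bf u}-\nabla q$ satisfies $\mathrm{div}\,{\bf v}=0$, ${\bf v}\cdot{\bf n}=0$ on $\Gamma$, and $\nabla\times{\bf v}=\nabla\times{\bf u}$; its $H^1$-norm is controlled by $\|\nabla\times{\bf u}\|_{L^2}+\|{\bf u}\|_{L^2}$ through the classical div-curl estimate of Bourguignon--Brezis/Foias--Temam, whose proof rests on an integration-by-parts identity relating $\|\nabla{\bf v}\|_{L^2}^2$ to $\|\nabla\times{\bf v}\|_{L^2}^2$ modulo curvature terms on $\Gamma$ that are controlled by a trace inequality.

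For the inductive step, assuming the estimate at level $s-1$, I would employ a partition of unity on $\overline{\Omega}$ and treat interior and boundary pieces separately. In the interior, applying $\partial^\alpha$ to $-\Delta{\bf u}=\nabla\times(\nabla\times{\bf u})-\nabla(\mathrm{div}\,{\bf u})$ and using standard interior elliptic regularity delivers the required control on interior derivatives up to order $s$. Near $\Gamma$, I would flatten the boundary in a local chart, estimate tangential derivatives $\partial_{\boldsymbol\tau}^{\,s-1}{\bf u}$ by differentiating the identity tangentially and invoking the inductive hypothesis on the commutators and source, and then recover normal derivatives algebraically from the expressions for $\mathrm{div}\,{\bf u}$ and $\nabla\times{\bf u}$ in the adapted frame, using the boundary regularity ${\bf u}\cdot{\bf n}\in H^{s-1/2}(\Gamma)$ to supply the missing trace of the normal component.

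The expected main obstacle is the boundary analysis: unlike the Dirichlet problem, only the scalar normal component ${\bf u}\cdot{\bf n}$ is prescribed, so one must verify that the first-order system $(\mathrm{div},\,\nabla\times,\,{\bf n}\cdot)$ forms an elliptic boundary-value system in the Agmon--Douglis--Nirenberg sense, and carefully bookkeep the curvature-dependent commutators generated when tangential differentiation is carried out in curvilinear coordinates adapted to $\Gamma$. All such commutators are of order at most $s-1$ and are thus absorbed by the $\|{\bf u}\|_{H^{s-1}}$ term on the right-hand side, closing the induction.
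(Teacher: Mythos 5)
The paper does not prove this lemma: it is quoted directly from the references \cite{12, 13} (Bourguignon--Brezis and Foias--Saut), so there is no in-paper argument to compare against. Your sketch reproduces the standard route underlying those references --- Helmholtz/Neumann decomposition to peel off the gradient part controlled by $\|{\rm div}\,{\bf u}\|$ and $\|{\bf u}\cdot{\bf n}\|_{H^{1/2}(\Gamma)}$, the elementary integration-by-parts div--curl identity for the tangential solenoidal remainder at the base level, and then tangential differentiation plus algebraic recovery of normal derivatives from the first-order system $({\rm div},\,\nabla\times,\,{\bf n}\cdot)$, whose ADN ellipticity is exactly the point of \cite{12} --- and the outline is sound, with the commutator bookkeeping correctly absorbed into the $\|{\bf u}\|_{H^{s-1}}$ term via the inductive hypothesis. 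The only loose ends are minor: the induction as written covers integer $s$ only (non-integer $s$ would need interpolation, though the paper only invokes the lemma for integer $s$), and in the base case you should note that $\|{\bf v}\|_{L^2}\le\|{\bf u}\|_{L^2}+\|\nabla q\|_{L^2}$ so that the lower-order term for the remainder is genuinely controlled by the stated right-hand side.
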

\begin{Lemma} {\rm(See \cite{14, 16})}
\label{ellptic}
Let $\Omega\subset\mathbb R^N(N=2,3)$ and $\omega=\nabla\times{\bf u}$. Suppose ${\bf u}\in H^2$ and ${\bf u}\cdot{\bf n}=0$ on $\Gamma$. Then we have
\begin{align}
\left[\mathbb S({\bf u})\cdot{\bf n}
\right]\cdot{\boldsymbol\tau}=(\omega\times{\bf n})\cdot{\boldsymbol\tau}-2u\cdot\frac{\partial{\bf n}}{\partial{\boldsymbol\tau}},\quad for\;N=3,
\end{align}
and
\begin{align}
\left[\mathbb S({\bf u})\cdot{\bf n}
\right]\cdot{\boldsymbol\tau}=\omega-2u\cdot\frac{\partial{\bf n}}{\partial{\boldsymbol\tau}},\quad for\;N=2,
\end{align}
where $\Big|\dfrac{\partial{\bf n}}{\partial{\boldsymbol\tau}}\Big|$ is the normal curvature in the $\boldsymbol\tau$ direction when $N=3$, $\dfrac{\partial{\bf n}}{\partial{\boldsymbol\tau}}=\kappa{\boldsymbol\tau}$, while $\kappa$ is the curvature of $\Gamma$ when $N=2$.
\end{Lemma}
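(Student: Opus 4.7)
The plan is a direct Cartesian-index computation that reduces both identities to a single algebraic manipulation: contract the symmetric gradient with ${\bf n}$ and ${\boldsymbol\tau}$, then exploit the fact that ${\bf u}\cdot{\bf n}\equiv 0$ on $\Gamma$ to convert a mixed tangential-normal derivative into a curvature-type term. First I would expand the left-hand side in coordinates. Writing $[\mathbb S({\bf u})]_{ij}=\partial_i u_j+\partial_j u_i$ (absorbing the viscosity constant) and introducing the directional derivatives $(\partial_{\bf n}{\bf u})_i=n_k\partial_k u_i$ and $(\partial_{\boldsymbol\tau}{\bf u})_i=\tau_k\partial_k u_i$, one has
\begin{align*}
[\mathbb S({\bf u})\cdot{\bf n}]\cdot{\boldsymbol\tau}
=\tau_i n_j(\partial_i u_j+\partial_j u_i)
={\bf n}\cdot(\partial_{\boldsymbol\tau}{\bf u})+{\boldsymbol\tau}\cdot(\partial_{\bf n}{\bf u}).
\end{align*}

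Next, since ${\bf u}\cdot{\bf n}$ vanishes identically on $\Gamma$, its tangential derivative also vanishes, and the Leibniz rule gives
\begin{align*}
0=\partial_{\boldsymbol\tau}({\bf u}\cdot{\bf n})={\bf n}\cdot(\partial_{\boldsymbol\tau}{\bf u})+{\bf u}\cdot\frac{\partial{\bf n}}{\partial{\boldsymbol\tau}},
\end{align*}
so that $[\mathbb S({\bf u})\cdot{\bf n}]\cdot{\boldsymbol\tau}={\boldsymbol\tau}\cdot(\partial_{\bf n}{\bf u})-{\bf u}\cdot\frac{\partial{\bf n}}{\partial{\boldsymbol\tau}}$. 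For $N=3$ I would then expand the right-hand side of the claim via the Levi-Civita contraction $\epsilon_{ijk}\epsilon_{jlm}=\delta_{kl}\delta_{im}-\delta_{km}\delta_{il}$:
\begin{align*}
(\omega\times{\bf n})\cdot{\boldsymbol\tau}
=\tau_i\,\epsilon_{ijk}\epsilon_{jlm}\,n_k\partial_l u_m
={\boldsymbol\tau}\cdot(\partial_{\bf n}{\bf u})-{\bf n}\cdot(\partial_{\boldsymbol\tau}{\bf u})
={\boldsymbol\tau}\cdot(\partial_{\bf n}{\bf u})+{\bf u}\cdot\frac{\partial{\bf n}}{\partial{\boldsymbol\tau}},
\end{align*}
where the last equality reuses the boundary identity derived above. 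Subtracting the two displayed expressions yields the 3D claim $[\mathbb S({\bf u})\cdot{\bf n}]\cdot{\boldsymbol\tau}=(\omega\times{\bf n})\cdot{\boldsymbol\tau}-2{\bf u}\cdot\frac{\partial{\bf n}}{\partial{\boldsymbol\tau}}$.

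In 2D the scalar vorticity is $\omega=\partial_1 u_2-\partial_2 u_1$; choosing ${\boldsymbol\tau}$ as the positively oriented tangent so that $({\boldsymbol\tau},{\bf n})$ forms a right-handed frame, one checks directly that $\omega$ plays exactly the role $(\omega\times{\bf n})\cdot{\boldsymbol\tau}$ played in 3D, and the same manipulation yields $[\mathbb S({\bf u})\cdot{\bf n}]\cdot{\boldsymbol\tau}=\omega-2{\bf u}\cdot\frac{\partial{\bf n}}{\partial{\boldsymbol\tau}}$. The concluding identification $\frac{\partial{\bf n}}{\partial{\boldsymbol\tau}}=\kappa{\boldsymbol\tau}$ in 2D is the Frenet--Serret relation for an arclength-parametrized plane curve. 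The main obstacle is essentially bookkeeping: the only genuinely geometric input is the tangential derivative of ${\bf u}\cdot{\bf n}=0$, which is what converts the mixed term ${\bf n}\cdot(\partial_{\boldsymbol\tau}{\bf u})$ into ${\bf u}\cdot\frac{\partial{\bf n}}{\partial{\boldsymbol\tau}}$ and makes the whole identity a purely boundary statement rather than an interior one. The only point requiring care is the sign convention in the 2D case, where the scalar-vorticity orientation must match the orientation of $({\boldsymbol\tau},{\bf n})$, otherwise the identity picks up a sign.
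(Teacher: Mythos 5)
The paper does not prove this lemma itself (it is imported from \cite{14, 16}), so there is no in-paper argument to compare against; your computation is the standard direct one and it is correct. The index manipulations check out (in particular $(\omega\times{\bf n})\cdot{\boldsymbol\tau}={\boldsymbol\tau}\cdot(\partial_{\bf n}{\bf u})-{\bf n}\cdot(\partial_{\boldsymbol\tau}{\bf u})$ and the conversion of ${\bf n}\cdot(\partial_{\boldsymbol\tau}{\bf u})$ into $-{\bf u}\cdot\frac{\partial{\bf n}}{\partial{\boldsymbol\tau}}$ via $\nabla_{\boldsymbol\tau}({\bf u}\cdot{\bf n})=0$, which is exactly the identity the paper itself uses elsewhere on $\Gamma$), and your caveat about the orientation of $({\boldsymbol\tau},{\bf n})$ in the 2D reduction is the right one to flag.
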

\begin{Lemma}
\label{PGN}
Let $\Omega=\mathbb T\times(-1,1)\subset\mathbb R^2$, and $U=\Omega$ or $\Gamma$, for $k\ge 1$, it holds that
\begin{align}
&\|\nabla_{\boldsymbol\tau}^k F\|_{L^2(U)}\le C\|\nabla\nabla_{\boldsymbol\tau}^kF\|_{L^2(U)},\quad\quad\quad\quad \forall\;\nabla_{\boldsymbol\tau}^kF\in H^1(U),\nonumber
\\
&\|F\|_{L^4(\Omega)}\le C\|F\|_{L^2(\Omega)}^\frac{1}{2}\|F\|_{H^1(\Omega)}^\frac{1}{2},\;\:\quad\quad\quad\forall\;F\in H^1(\Omega),\nonumber
\\
&\|F\|_{L^\infty(\Omega)}\le C\|F\|_{L^2(\Omega)}^\frac{1}{2}\|F\|_{H^2(\Omega)}^\frac{1}{2},\;\quad\quad\quad\forall\;F\in H^2(\Omega),\nonumber
\\
&\|F\|_{L^\infty(\Gamma)}\le C\|F\|_{L^2(\Gamma)}^\frac{1}{2}\|F\|_{H^1(\Gamma)}^\frac{1}{2},\;\;\quad\quad\quad\forall\;F\in H^1(\Gamma),\nonumber
\\
&\|F\|_{L^2(\Gamma)}\le C\|F\|_{L^2(\Omega)}^\frac{1}{2}\|F\|_{H^1(\Omega)}^\frac{1}{2},\quad\quad\quad\;\;\;\forall\;F\in H^1(\Omega).\nonumber
\end{align}
\end{Lemma}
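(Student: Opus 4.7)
The first inequality is a Poincar\'e-type bound that uses the periodic structure of the channel, while the other four are standard Gagliardo-Nirenberg, Agmon, and trace-interpolation estimates in dimensions one and two that follow from Lemmas \ref{Trace} and \ref{GN} together with elementary arguments.

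For the Poincar\'e inequality, I would denote $(x, y) \in \mathbb{T} \times (-1, 1)$ so that $\nabla_{\boldsymbol\tau}$ coincides with $\partial_x$ both in $\Omega$ and on each component $\mathbb{T} \times \{\pm 1\}$ of $\Gamma$. Since $k \ge 1$, periodicity in $x$ yields
\[
\int_{\mathbb{T}} \partial_x^k F(x, y)\, dx = 0, \qquad \text{for a.e. } y \in (-1, 1),
\]
so the one-dimensional Poincar\'e-Wirtinger inequality on the circle $\mathbb{T}$ gives $\|\partial_x^k F(\cdot, y)\|_{L^2(\mathbb{T})} \le C \|\partial_x^{k+1} F(\cdot, y)\|_{L^2(\mathbb{T})}$. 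Squaring and integrating in $y$ produces the desired bound on $\Omega$; the same argument on each of the two circles forming $\Gamma$ treats the case $U = \Gamma$, since the trace $\partial_x^k F(\cdot, \pm 1)$ also has zero mean in $x$ by periodicity.

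The $L^4$ bound on $\Omega$ is an immediate consequence of Lemma \ref{GN} with $s = 0$, $s_1 = 0$, $s_2 = 1$, $p_1 = p_2 = 2$, $p = 4$, $\theta = 1/2$. The two-dimensional Agmon bound $\|F\|_{L^\infty(\Omega)} \le C \|F\|_{L^2(\Omega)}^{1/2} \|F\|_{H^2(\Omega)}^{1/2}$ follows by interpolating $\|F\|_{H^{1+\epsilon}(\Omega)}$ between $L^2(\Omega)$ and $H^2(\Omega)$ via Lemma \ref{GN} and invoking the two-dimensional Sobolev embedding $H^{1+\epsilon}(\Omega) \hookrightarrow L^\infty(\Omega)$, then sending $\epsilon \to 0^+$. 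The one-dimensional Agmon bound $\|F\|_{L^\infty(\Gamma)} \le C \|F\|_{L^2(\Gamma)}^{1/2} \|F\|_{H^1(\Gamma)}^{1/2}$ is the corresponding statement in dimension one on each circle, and is likewise a direct application of Lemma \ref{GN}.

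For the trace interpolation (the fifth inequality), the cleanest route is elementary: by the fundamental theorem of calculus, for each $y \in (-1, 1)$ one has
\[
F^2(x, 1) = F^2(x, y) + 2\int_y^1 F\, \partial_y F(x, s)\, ds.
\]
Integrating in $y$ over $(-1, 1)$, then in $x$ over $\mathbb{T}$, and applying Cauchy-Schwarz yields $\|F(\cdot, 1)\|_{L^2(\mathbb{T})}^2 \le C \|F\|_{L^2(\Omega)} \|F\|_{H^1(\Omega)}$, and the analogous identity at $y = -1$ covers the lower boundary. The only conceptual step in the entire lemma is recognizing that periodicity in the tangential direction supplies the zero-mean property needed for the first inequality; the remaining bounds are routine substitutions into the tools already collected in this section.
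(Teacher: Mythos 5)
Your proposal is correct in substance and, for the first inequality, rests on the same key observation as the paper: periodicity in the tangential direction forces $\int \nabla_{\boldsymbol\tau}^k F = 0$, so a Poincar\'e--Wirtinger inequality applies. You implement this slice-by-slice in $x$ and integrate in $y$, whereas the paper applies Poincar\'e directly on $\Omega$ (resp.\ $\Gamma$) using the zero mean over the whole domain; the two are interchangeable here. For the fifth inequality you genuinely diverge: the paper chains the trace embedding $\|F\|_{L^2(\Gamma)}\le C\|F\|_{H^{1/2}(\Omega)}$ (Lemma \ref{Trace}) with the interpolation \eqref{L21}, while you give the elementary fundamental-theorem-of-calculus argument $F^2(x,\pm1)=F^2(x,y)+2\int F\,\partial_y F$. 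Your route is more self-contained and avoids invoking a trace operator at the borderline regularity $H^{1/2}$, which is arguably cleaner; the paper's route is shorter given the lemmas already on the table. For the three middle inequalities the paper simply says they "follow from Gagliardo--Nirenberg," and your substitutions into Lemma \ref{GN} for the $L^4$ and one-dimensional $L^\infty$ bounds are exactly that. The one step I would not let stand as written is your derivation of the two-dimensional Agmon bound by embedding $H^{1+\epsilon}(\Omega)\hookrightarrow L^\infty(\Omega)$ and "sending $\epsilon\to0^+$": the embedding constant blows up as $\epsilon\to0$, so the limit does not go through. The inequality itself is the standard Agmon estimate and is proved instead by a high/low frequency splitting (bound $\|\widehat F\|_{L^1}$ by $CR\|F\|_{L^2}+CR^{-1}\|F\|_{H^2}$ on the channel and optimize in $R$) or by the chain $\|F\|_{L^\infty}^2\le C\|F\|_{L^4}\|F\|_{W^{1,4}}$ combined with the $L^4$ interpolation; either fix is routine and does not affect the rest of your argument.
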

\begin{proof}[\bf Proof.]
We only show that the first and the last inequalities, since other inequalities follow from Gagliardo-Nirenberg inequality. Note that $\int_\Omega\nabla_{\boldsymbol\tau}^kF{\rm d}x=\int_\Gamma\nabla_{\boldsymbol\tau}^kF{\rm d}S=0$. Thus, the first inequality holds. It follows from Lemma \ref{Trace} and (\ref{L21}) that
\[
\|F\|_{L^2(\Gamma)}\le C\|F\|_{H^\frac{1}{2}(\Omega)}\le C\|F\|_{L^2(\Omega)}^\frac{1}{2}\|F\|_{H^1(\Omega)}^\frac{1}{2}.
\]
This completes the proof of Lemma \ref{PGN}.
\end{proof}

\setcounter{equation}{0}
\section{NSAC system with GNBC and dynamic boundary condition}
\label{section3}
In this section, we consider the following NSAC system
\begin{equation}
\label{NSAC}
\begin{cases}
{\bf u}_t+({\bf u}\cdot\nabla){\bf u}+\nabla p -{\rm div}{\mathbb S}({\bf u})=-\rm div(\nabla\phi\otimes\nabla\phi) ,&\rm in\quad\Omega\times(0,T),
\\
{\rm div}{\bf u}=0,&\rm in\quad\Omega\times(0,T),
\\
\phi_t+{\bf u}\cdot\nabla\phi=\bar\mu-\mu,&\rm in\quad\Omega\times(0,T),
\\
\mu=-\Delta\phi+f(\phi),&\rm in\quad\Omega\times(0,T),
\end{cases}
\end{equation}
equipped with GNBC and dynamic boundary condition, i.e. ${\mathcal L}(\phi)=-\gamma\Delta_{\boldsymbol\tau}\phi+\partial_{\bf n}\phi+\gamma_{fs}^\prime(\phi)$,
\begin{equation}
\label{dynamic}
\begin{cases}
{\bf u}\cdot{\bf n}=0,&\rm on\quad\Gamma\times(0,T),
\\
\beta{\bf u}_{\boldsymbol {\tau}}+(\mathbb S({\bf u})\cdot{\bf n})_{\boldsymbol{\tau}}=\left(-\gamma\Delta_{\boldsymbol\tau}\psi
+\partial_{\bf n}\phi+\gamma_{fs}^\prime(\psi)
\right)\nabla_{\boldsymbol\tau}\psi,&\rm on\quad\Gamma\times(0,T),
\\
\psi_t+{\bf u}_{\boldsymbol{\tau}}\cdot\nabla_{\boldsymbol{\tau}}\psi
=\gamma\Delta_{\boldsymbol\tau}\psi
-\partial_{\bf n}\phi-\gamma_{fs}^\prime(\psi),&\rm on\quad\Gamma\times(0,T),
\\
\phi\big|_\Gamma=\psi,&\rm on\quad(0,T),
\end{cases}
\end{equation}
and the initial conditions
\begin{equation}
\label{initial condition}
({\bf u},\phi)\big|_{t=0}=({\bf u}_0,\phi_0),\quad \rm in\;\Omega,\quad\psi\big|_{t=0}=\phi_0\big|_\Gamma=\psi_0,\quad\rm on\;\Gamma.
\end{equation}

The main results of this section can be stated as follows:
\begin{Theorem}
\label{local}
Let $\Omega\subset\mathbb{R}^N (N=2, 3)$ be a bounded domain with smooth boundary $\Gamma$, assume that the initial data $({\bf u}_0,\phi_0,\psi_0)\in{\mathbb D}_1$. Then there is a positive constant $T^*$, which may depends on the initial data but is independent of $\gamma$, such that there exists a unique local-in-time strong solution $({\bf u},\phi,\psi)$ to the initial boundary value problem $(\ref{NSAC})$--$(\ref{initial condition})$, satisfying
\[
\begin{aligned}
&{\bf u}\in L^\infty(0,T^*;H^2)\cap L^2(0,T^*;H^3),\qquad\qquad {\bf u}_t\in L^\infty(0,T^*;L^2)\cap L^2(0,T^*;H^1),
\\
&\phi\in L^\infty(0,T^{*};H^3),\;\Delta\phi\in L^2(0,T^*;H^2),
\qquad\phi_t\in L^\infty(0,T^*;H^1)\cap L^2(0,T^*;H^2),
\\
&\psi\in L^\infty(0,T^*;H^3(\Gamma))\cap L^2(0,T^*;H^4(\Gamma)),~\quad\psi_{tt}\in L^2(0,T^*;L^2(\Gamma)),
\\
&\psi_t\in L^\infty(0,T^*;H^1(\Gamma))\cap L^2(0,T^*;H^2(\Gamma)),\quad(\mu-\bar\mu)_t\in L^2(0,T^*;L^2),
\\
&\mu-\bar\mu\in L^\infty(0,T^*;H^1)\cap L^2(0,T^*;H^2),\qquad\  \nabla p\in L^\infty(0,T^*;L^2)\cap L^2(0,T^*;H^1).
\end{aligned}
\]
\end{Theorem}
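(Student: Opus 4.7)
The plan is to construct the solution by a Picard-type iteration closed with the contraction mapping principle, exploiting the partial decoupling between the Navier--Stokes block and the Allen--Cahn block. Fix $T_1\in(0,1]$ and a radius $R>\|({\bf u}_0,\phi_0,\psi_0)\|_{H^2\times H^3\times H^3(\Gamma)}$, and work on the closed ball
\[
\mathcal{B}_{R,T_1}=\Bigl\{({\bf u},\phi,\psi):\ \|{\bf u}\|_{L^\infty_tH^2}^2+\|\phi\|_{L^\infty_tH^3}^2+\|\psi\|_{L^\infty_tH^3(\Gamma)}^2\le R^2,\ \text{trace-compatible}\Bigr\},
\]
endowed with a weaker metric such as $L^\infty_tH^1\times L^\infty_tH^1\times L^\infty_tH^1(\Gamma)$ so that it is complete. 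For each iterate $(\tilde{\bf u},\tilde\phi,\tilde\psi)\in\mathcal{B}_{R,T_1}$ I would define $({\bf u},\phi,\psi)$ by first solving the linear parabolic dynamic-boundary problem
\[
\phi_t+\tilde{\bf u}\cdot\nabla\phi=\bar\mu-\mu,\ \ \mu=-\Delta\phi+f(\tilde\phi)\ \text{in }\Omega,\quad \psi_t-\gamma\Delta_{\boldsymbol\tau}\psi+\partial_{\bf n}\phi=-\gamma_{fs}'(\tilde\psi)-\tilde{\bf u}_{\boldsymbol\tau}\cdot\nabla_{\boldsymbol\tau}\psi\ \text{on }\Gamma,
\]
with $\phi|_\Gamma=\psi$ (well-posed by Galerkin approximation since $\gamma>0$ provides boundary parabolicity), and then solving the Stokes-type problem with GNBC for $({\bf u},p)$ forced by $-(\tilde{\bf u}\cdot\nabla)\tilde{\bf u}-\operatorname{div}(\nabla\phi\otimes\nabla\phi)$ in the interior and by the uncompensated Young stress $\mathcal{L}(\psi)\nabla_{\boldsymbol\tau}\psi$ on $\Gamma$.

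The substance of the proof is the uniform a priori estimates showing that this solution map preserves $\mathcal{B}_{R,T_1}$ for $T_1$ small enough. I would proceed in the cascade: (i) a basic energy identity modeled on the dissipative balance derived in the introduction, giving $L^\infty_tL^2\cap L^2_tH^1$ control on the three variables; (ii) testing each evolution equation against its time derivative, using the positivity of $\beta$ and of $\gamma\Delta_{\boldsymbol\tau}$, to obtain $L^\infty_tL^2\cap L^2_tH^1$ bounds on $({\bf u}_t,\phi_t,\psi_t)$; (iii) upgrading ${\bf u}$ to $L^\infty_tH^2\cap L^2_tH^3$ via the standard Stokes theory with GNBC, i.e.\ the div--curl estimate of Lemma 2.8 combined with the boundary vorticity identity of Lemma 2.9; (iv) recovering $H^3$-regularity for $(\phi,\psi)$ by viewing the pair, at each time, as a solution of a coupled elliptic system of the type in Lemma 2.6, with interior source $H=\phi_t+\tilde{\bf u}\cdot\nabla\phi-\bar\mu+f(\tilde\phi)$ and boundary datum built from $\psi_t,\gamma_{fs}'(\tilde\psi)$ and the tangential transport term; (v) differentiating tangentially once more and bootstrapping to reach the stated regularity. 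As signaled in the introduction, the higher-order estimates near $\Gamma$ are to be localized with cut-off functions into an interior patch (standard second-order theory) and a boundary collar in which the GNBC and dynamic condition are handled by tangential differentiation.

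The main obstacle, precisely as flagged in the introduction, is the $H^3$-bound on $\phi$. The natural energy furnishes only $\mu-\bar\mu\in L^\infty_tH^1$, so the naive inversion $-\Delta\phi=\mu-f(\phi)$ yields merely $\phi\in L^\infty_tH^2$. The elliptic lemma (Lemma 2.6) is exactly designed to close this gap: regarding $(\phi,\psi)$ as the solution of an elliptic system whose boundary datum involves $\partial_{\bf n}\phi-\gamma\Delta_{\boldsymbol\tau}\psi$ (itself controlled by $\psi_t$ plus lower-order terms from the dynamic condition) upgrades the regularity by one full order, while the product estimates of Lemma 2.5 and the Gagliardo--Nirenberg inequality (Lemma 2.2) absorb the nonlinearities $f(\tilde\phi)$, $\gamma_{fs}'(\tilde\psi)$ and $\tilde{\bf u}\cdot\nabla\phi$. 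Crucially, $\gamma\Delta_{\boldsymbol\tau}\psi$ enters only as a non-negative dissipation in the energy identities and on the favourable side of the elliptic estimate, so all constants depend on the domain, $\beta$, $R$ and the initial norms but not on $\gamma$; this is the source of the claimed $\gamma$-independence of $T^*$, which will be essential when letting $\gamma\to 0^+$ in the next section.

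The contraction step consists of running the same cascade on the difference of two iterates, measured in the weaker norm. Each coupling term is bilinear with one factor in the strong norm (bounded by $R$) and the other in the difference norm, so integration in time gives a prefactor of $T_1^{1/2}$, and the map becomes a strict contraction once $T_1$ is chosen small in terms of $R$. The fixed point is the desired strong solution with the regularity listed in the theorem, and uniqueness of strong solutions follows by applying the same difference estimate to two solutions with identical initial data and closing with Gronwall's inequality (Lemma 2.3).
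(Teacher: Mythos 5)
Your proposal follows essentially the same route as the paper: decouple the system into a linearized Allen--Cahn block with dynamic boundary condition and a linearized Stokes block with GNBC, run the cascade of a priori estimates (basic energy identity, time-derivative estimates, elliptic upgrade of $(\phi,\psi)$ via Lemma \ref{estimates}, Stokes regularity for $\mathbf{u}$, tangential differentiation with interior/boundary cut-offs), and close by a contraction argument. The only cosmetic differences are that the paper freezes the transport terms entirely at the previous iterate (taking $H=-\tilde{\mathbf u}\cdot\nabla\tilde\phi$ rather than keeping $\tilde{\mathbf u}\cdot\nabla\phi$ linear in the new unknown) and contracts directly in the strong norm of $X(0,T;K_1,K_2)$ rather than in a weaker metric on a strong-norm ball; both variants are standard and workable.

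One claim in your write-up is not supportable and you should not lean on it: the assertion that \emph{all} constants are independent of $\gamma$ because $\gamma\Delta_{\boldsymbol\tau}$ ``enters only on the favourable side.'' The $H^{3}(\Gamma)$ (and $H^{3}(\Omega)$) upgrade through the elliptic system of Lemma \ref{estimates} requires rescaling the boundary operator $-\gamma\Delta_{\boldsymbol\tau}\psi+\psi+\partial_{\bf n}\phi$ to unit coefficient, and the resulting constants degrade like $\gamma^{-1}$; indeed every lemma in the paper's Section 3 records a constant depending on $\gamma^{-1}$, and the paper explicitly states that the relaxation case \emph{cannot} be reached by letting $\gamma\to 0^{+}$ in these estimates (it instead introduces a separate $\delta$-approximation in a channel with a compensating bulk term $\delta\Delta_{\boldsymbol\tau}\phi$). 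This does not affect the validity of the local existence proof for a fixed $\gamma>0$, which is all Theorem \ref{local} requires of the construction, but the program you sketch of passing to the limit $\gamma\to0^{+}$ from these bounds would fail.
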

\begin{Theorem}
\label{dynamic global}
Assume that $\Omega=\mathbb T\times(-1,1)\subset\mathbb R^2$, and the initial data $({\bf u}_0,\phi_0,\psi_0)\in{\mathbb D}_1$. Then there exist a unique global-in-time strong solution $({\bf u},\phi,\psi)$ to the initial boundary value problem $(\ref{NSAC})$--$(\ref{initial condition})$ such that, for all $T\in(0,+\infty)$,
\begin{align}
&{\bf u}\in L^\infty(0,T;H^2)\cap L^2(0,T;H^3),\hspace{4em}{\bf u}_t\in L^\infty(0,T;L^2)\cap L^2(0,T;H^1),                                 \nonumber
\\
&\phi\in L^\infty(0,T;H^3)\cap L^2(0,T;H^4),\hspace{4em}\phi_t\in L^\infty(0,T;H^1)\cap L^2(0,T;H^2),                 \nonumber
\\
&\psi\in L^\infty(0,T;H^3(\Gamma))\cap L^2(0,T;H^4(\Gamma)),\quad\psi_t\in L^\infty(0,T;H^1(\Gamma))\cap L^2(0,T;H^2(\Gamma)),              \nonumber
\\
&\mu-\bar\mu\in L^\infty(0,T;H^1)\cap L^2(0,T;H^2),\qquad(\mu-\bar\mu)_t\in L^2(0,T;L^2).
\nonumber
\end{align}
\end{Theorem}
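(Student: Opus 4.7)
The plan is to promote the local strong solution produced by Theorem \ref{local} to a global one by establishing a priori bounds on every finite interval $[0,T]$ that control all the norms appearing in the statement and do not blow up in finite time. Once such bounds are in place, the usual continuation argument (re-solving from a time close to the hypothetical maximal existence time $T^*$ with the solution itself as new initial datum) forces $T^*=+\infty$. Thus the task reduces to deriving $T$-uniform a priori estimates under the extra hypothesis $\Omega=\mathbb T\times(-1,1)$, which is used precisely because the channel geometry implies curvature $\kappa\equiv 0$, hence $\nabla_{\boldsymbol\tau}{\bf n}\equiv 0$ (Lemma \ref{poincare U} and Lemma \ref{ellptic}).

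I would begin with the formal energy identity (\ref{energy-0}) to obtain the baseline bounds ${\bf u}\in L^\infty(0,T;L^2)$, $\nabla{\bf u}\in L^2(0,T;L^2)$ (via Korn, Lemma \ref{Korn}), together with $\phi\in L^\infty(0,T;H^1)$, $\psi\in L^\infty(0,T;H^1(\Gamma))$ and $\mu-\bar\mu\in L^2(0,T;L^2)$, $\mathcal L(\psi)\in L^2(0,T;L^2(\Gamma))$. Mass conservation of $\phi$ (and the derived identities $\int_\Omega(\mu-\bar\mu)_t=0$) is invoked throughout to convert $\bar\mu$ and its derivatives into harmless quantities. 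This step requires no geometric hypothesis on $\Omega$.

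Next, I would climb the regularity ladder by testing tangentially differentiated equations against $\nabla_{\boldsymbol\tau}^k{\bf u}$. The central point is that in the 2D channel, identities of the form (\ref{p-equality}) are available without any pressure information, because $\nabla_{\boldsymbol\tau}{\bf u}\cdot{\bf n}=-{\bf u}\cdot\nabla_{\boldsymbol\tau}{\bf n}=0$ on $\Gamma$; this eliminates every boundary contribution coming from the pressure during the integrations by parts. Because the chemical potential only has $L^2$-in-time control a priori, I cannot imitate the NSCH strategy of transferring regularity from $\nabla\mu$ to $\phi$. Instead I would freeze ${\bf u}$, $\phi_t$, $\psi_t$, $f(\phi)$, $\gamma'_{fs}(\psi)$ and read the system (\ref{NSAC})$_{3,4}$--(\ref{dynamic})$_3$ as the elliptic problem
\begin{equation*}
-\Delta\phi=\mu-f(\phi)\ \text{in }\Omega,\qquad -\gamma\Delta_{\boldsymbol\tau}\psi+\partial_{\bf n}\phi=-\psi_t-{\bf u}_{\boldsymbol\tau}\cdot\nabla_{\boldsymbol\tau}\psi-\gamma'_{fs}(\psi)\ \text{on }\Gamma,
\end{equation*}
to which Lemma \ref{estimates} applies, trading $H^s$ control of the right-hand sides for $H^{s+2}$ control of $(\phi,\psi)$. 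Together with Lemma \ref{regularity u} to recover $\|{\bf u}\|_{H^s}$ from $\nabla\times{\bf u}$, ${\rm div}\,{\bf u}$, and ${\bf u}\cdot{\bf n}=0$, this furnishes the spatial components of the stated regularity.

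Time-derivative estimates are then produced by differentiating (\ref{NSAC})$_1$ and (\ref{NSAC})$_3$ in $t$ and testing against ${\bf u}_t$ and $(\mu-\bar\mu)$ respectively, while $\partial_t$ of (\ref{dynamic})$_3$ tested against $\psi_t$ absorbs the surface-diffusion piece. The multiplicative nonlinearities $({\bf u}\cdot\nabla){\bf u}$, ${\bf u}\cdot\nabla\phi$, ${\rm div}(\nabla\phi\otimes\nabla\phi)$, $f(\phi)$, $\gamma'_{fs}(\psi)$ and their time-derivatives are all handled by the two-dimensional Gagliardo--Nirenberg and product estimates (Lemmas \ref{GN}, \ref{product}, \ref{PGN}) combined with a final Gronwall closure (Lemma \ref{Gronwall}); once ${\bf u}$ and $\phi$ are controlled, $\nabla p$ is recovered a posteriori by viewing (\ref{NSAC})$_1$ as a stationary Stokes problem. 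The principal obstacle I expect is closing the coupled loop: each higher-order $\phi$-estimate enters the momentum equation through the capillary stress $-{\rm div}(\nabla\phi\otimes\nabla\phi)$, while each higher-order ${\bf u}$-estimate feeds back through $({\bf u}\cdot\nabla)\phi$ and ${\bf u}_{\boldsymbol\tau}\cdot\nabla_{\boldsymbol\tau}\psi$. Making the energy-type inequalities genuinely absorb these cross terms into the viscous, chemical-potential and tangential Laplacian dissipations — without producing constants that degenerate as $\gamma\to 0$ or as $T\to\infty$ beyond what the 2D Gagliardo--Nirenberg interpolation controls — is where the delicate book-keeping has to be done.
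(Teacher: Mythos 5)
Your proposal follows essentially the same route as the paper's proof: the basic energy identity, tangential-derivative estimates that exploit the flat channel boundary to annihilate the pressure terms as in (\ref{p-equality}), the elliptic reformulation of (\ref{NSAC})$_{3,4}$--(\ref{dynamic})$_3$ handled by Lemma \ref{estimates} to upgrade $\phi$ and $\psi$, time-differentiated estimates closed by Gronwall, recovery of the top-order velocity norms through the vorticity (Lemmas \ref{regularity u} and \ref{ellptic}), and a standard continuation argument. The only superfluous worry is the $\gamma\to 0$ degeneration of constants, which the theorem does not require to avoid (the paper's bounds explicitly depend on $\gamma^{-1}$).
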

\begin{Remark}
Under the assumptions of Theorem $\ref{relaxation local}$, the existence of global strong solutions in the 3D case can also be established, with a comparatively simpler proof.
\end{Remark}
\subsection{Local well-posedness}\quad
To obtain the local well-posedness of $(\ref{NSAC})$--$(\ref{initial condition})$, we use linearization to decouple the original system into two linearized equations.
First, we define the following spaces which will be used in this section:
\[
Y_1 = \mleft\{\,{\bf u} \,\middle|
\begin{array}{c}
{\bf u}\in L^\infty(0,T;H^2)\cap L^2(0,T;H^3),\\
{\bf u}_t\in L^\infty(0,T;L^2)\cap L^2(0,T;H^1), \\
\rm div{\bf u}=0, \quad\rm in\;\Omega\times(0,T),\\
{\bf u}\cdot{\bf n}=0, \quad\rm on\;\Gamma\times(0,T),\\
{\bf u}\big|_{t=0}={\bf u}_0,\quad\rm {in}\;\Omega.
\end{array}
\mright\}
\]
with the norm
\[
\|{\bf u}\|_{Y_1}^2 =\sup\limits_{0\le t \le T}\left(\|{\bf u}\|_{H^2}^2+\|{\bf u}_t\|_{L^2}^2\right)+\int_{0}^{T}\left(\|{\bf u}_{H^3}^2+\|{\bf u}_t\|_{H^1}^2\right) {\rm d}t,
\]
and
\[
Y_2 = \mleft\{\,(\phi,\psi) \,\middle|
\begin{array}{c}
\phi\in L^\infty(0,T;H^3),\;\phi_t\in L^\infty(0,T;H^1)\cap L^2(0,T;H^2),\\
\psi\in L^\infty(0,T;H^3(\Gamma))\cap L^2(0,T;H^4(\Gamma)), \\
\psi_t\in L^\infty(0,T;H^1(\Gamma))\cap L^2(0,T;H^2(\Gamma)),\\
\phi\big|_{\Gamma}=\psi,\quad\phi\big|_{t=0}=\phi_0,\;\rm {in}\; \Omega,\quad\psi\big|_{t=0}=\psi_0,\;\rm on\; \Gamma.
\end{array}
\mright\}
\]
with the norm
\begin{equation*}
\begin{aligned}
\|(\phi,\psi)\|_{Y_2}^2 =&\sup\limits_{0\le t \le T}\left(\|\phi\|_{H^3}^2+\|\phi_t\|_{H^1}^2+\|\psi_t\|_{H^1(\Gamma)}^2
+\|\psi\|_{H^3(\Gamma)}^2\right) \\
&+\int_{0}^{T}\left(\|\phi_t\|_{H^2}^2+\|\psi\|_{H^4(\Gamma)}^2+\|\psi_t\|_{H^2(\Gamma)}^2
\right) {\rm d}t.
\end{aligned}
\end{equation*}
Hence, for some given positive constants $K_i\:(i=1,2)$ which will be determined later, we define
\[
X(0,T;K_1,K_2) = \mleft\{\,({\bf u},\phi,\psi)\in Y_1\times Y_2 \,\middle|
\begin{array}{c}
\|{\bf u}\|_{Y_1}^2\le K_1,
 \|(\phi,\psi)\|_{Y_2}^2\le K_2.
\end{array}
\mright\}
\]
with the norm
\begin{equation*}
\begin{aligned}
\|({\bf u},\phi,\psi)\|_X^2 &=\|{\bf u}\|_{Y_1}^2+\|(\phi,\psi)\|_{Y_2}^2.
\end{aligned}
\end{equation*}
\subsubsection{\bf The linearized Allen-Cahn equations}\quad
We next discuss the initial boundary problem for the following linearized problem of the Allen-Cahn equations
\begin{align}
\label{LAC}
\begin{cases}
\phi_t={\bar{\mu}}-\mu+H,&\rm in\quad\Omega\times(0,T),
\\
\mu=-\Delta\phi+f,&\rm in\quad\Omega\times(0,T),
\\
\psi_t-\gamma\Delta_{\boldsymbol {\tau}}\psi=-\partial_{\bf n}\phi+h,&\rm on\quad\Gamma\times(0,T),
\\
\phi\big|_\Gamma=\psi,&\rm in\quad(0,T),
\\
\phi\big|_{t=0}=\phi_0,&\rm in\quad\Omega,
\\
\psi\big|_{t=0}=\psi_0,&\rm on\quad\Gamma,
\end{cases}
\end{align}
where $H=-\tilde{\bf u}\cdot\nabla\tilde\phi$, $f=\tilde{\phi}^{3}-\tilde{\phi}$, $h=-\gamma_{fs}^\prime(\tilde\psi)-\tilde{\bf u}_{\boldsymbol\tau}\cdot\nabla_{\boldsymbol\tau}\tilde\psi$ and $(\tilde{\bf u},\tilde\phi,\tilde\psi)\in X(0,T;K_1,K_2)$.
\begin{Proposition}
\label{Prop3.1}
Let $(\tilde{\bf u},\tilde\phi,\tilde\psi)\in X(0,T;K_1,K_2)$, $H\in L^\infty(0,T;H^1)\cap L^2(0,T;H^2)$, $H_t\in L^2(0,T;L^2)$, $f\in L^\infty(0,T;H^1)\cap L^2(0,T;H^2)$, $h\in L^\infty(0,T;L^2(\Gamma))\cap L^2(0,T;H^2(\Gamma))$, $ f_{t} \in { L^2(0,T; L^2) } $, and $h_t\in L^2(0,T;L^2(\Gamma))$ for some fixed $T ( 0<T<1 )$. Assume that the initial condition $({\bf u}_0,\phi_0,\psi_0)\in {\mathbb D}_1$, then there exists a unique solution $(\phi,\psi)$ of $(\ref{LAC})$ such that
\begin{align}
\label{P1}
\|(\phi,\psi)\|_{Y_2}^2&\le C\left(\|\psi_0\|_{H^3(\Gamma)}^4
+\|\phi_0\|_{H^3}^6+\|{\bf u}_0\|_{H^2}^4+1
\right)+C\left(\|H\|_{H^1}^2+\|f\|_{H^1}^2+\|h\|_{L^2(\Gamma)}^2
\right)                             \nonumber
\\
&\quad+C\int_{0}^{T}\left(
\|H\|_{H^2}^2+\|f\|_{H^2}^2+\|h\|_{H^2(\Gamma)}^2
+\|H_t\|_{L^2}^2+\|f_t\|_{L^2}^2+\|h_t\|_{L^2(\Gamma)}^2
\right) {\rm d}t,
\end{align}
where $C$ is a positive constant depending on $\Omega$ and $\gamma^{-1}$, but independent of $K_1$ and $K_2$.
\end{Proposition}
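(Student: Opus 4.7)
The plan is to treat Proposition \ref{Prop3.1} as a purely linear a priori estimate, combined with a standard Galerkin construction for existence and a subtraction argument for uniqueness. First I would substitute $\mu = -\Delta\phi+f$ into the bulk equation to recast (\ref{LAC}) as a parabolic problem with a (dynamic) boundary condition:
\[
\phi_t-\Delta\phi = \bar\mu-f+H \quad \text{in }\Omega,\qquad \psi_t-\gamma\Delta_{\boldsymbol\tau}\psi+\partial_{\bf n}\phi=h \quad\text{on }\Gamma,\qquad \phi|_\Gamma=\psi.
\]
Mass conservation $\int_\Omega\phi\,{\rm d}x=\int_\Omega\phi_0\,{\rm d}x$ is preserved because $\int_\Omega H\,{\rm d}x=0$ (since $\text{div}\,\tilde{\bf u}=0$ and $\tilde{\bf u}\cdot{\bf n}=0$) and $\int_\Omega(\bar\mu-\mu)\,{\rm d}x=0$.

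The heart of the argument is a ladder of energy estimates that exploits the natural cancellation built into the dynamic boundary condition. Testing the bulk equation against $\phi$ produces $-\int_\Gamma\partial_{\bf n}\phi\,\psi\,{\rm d}S$ after integration by parts, while testing the boundary equation against $\psi$ produces $+\int_\Gamma\partial_{\bf n}\phi\,\psi\,{\rm d}S$; these terms cancel, yielding
\[
\tfrac12\tfrac{d}{dt}\bigl(\|\phi\|_{L^2}^2+\|\psi\|_{L^2(\Gamma)}^2\bigr)+\|\nabla\phi\|_{L^2}^2+\gamma\|\nabla_{\boldsymbol\tau}\psi\|_{L^2(\Gamma)}^2 = \int_\Omega(\bar\mu-f+H)\phi+\int_\Gamma h\psi,
\]
which is the backbone estimate. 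Repeating the procedure with the pair $(\phi_t,\psi_t)$ as test functions, and then with $(\phi_{tt},\psi_{tt})$ as test functions for the time-differentiated equations, produces the bounds on $\phi_t\in L^\infty_tH^1\cap L^2_tH^2$ and $\psi_t\in L^\infty_tH^1(\Gamma)\cap L^2_tH^2(\Gamma)$ in terms of $H_t,f_t,h_t$ and the initial traces $\phi_t(0),\psi_t(0)$. The powers $\|\phi_0\|_{H^3}^6$, $\|\psi_0\|_{H^3(\Gamma)}^4$, $\|{\bf u}_0\|_{H^2}^4$ on the right of (\ref{P1}) come precisely from expressing these initial traces through $\phi_t(0)=\bar\mu(0)-\mu(0)+H(0)$ and $\psi_t(0)=\gamma\Delta_{\boldsymbol\tau}\psi_0-\partial_{\bf n}\phi_0-\gamma_{fs}'(\psi_0)-\tilde{\bf u}_{\boldsymbol\tau}(0)\cdot\nabla_{\boldsymbol\tau}\psi_0$, in which the nonlinear terms $\phi_0^3$ and $\gamma_{fs}'(\psi_0)$ generate the stated polynomial growth after taking $H^1$-norms.

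The second step is to promote spatial regularity via Lemma \ref{estimates}. At each fixed $t$, the pair $(\phi,\psi)$ solves an elliptic system of exactly the type covered there, with bulk forcing $-\phi_t+H+\bar\mu-f$ and boundary forcing $\psi-\psi_t+h-(1-\gamma)\Delta_{\boldsymbol\tau}\psi$; rewriting to match the normalization of Lemma \ref{estimates} produces a constant with explicit $\gamma^{-1}$ dependence (this is the only place where $\gamma^{-1}$ enters). Applying the lemma at orders $s=1$ and $s=2$ and inserting the previously established time-derivative bounds delivers $\phi\in L^\infty_tH^3$, $\Delta\phi\in L^2_tH^2$, and $\psi\in L^\infty_tH^3(\Gamma)\cap L^2_tH^4(\Gamma)$, which is precisely the $Y_2$-regularity.

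The main technical obstacle is keeping every nonlinear looking right-hand-side term linear in the unknown — for example, $\int_\Omega H\phi$ or $\int_\Gamma h\psi$ at higher levels — while absorbing cross-terms using only constants that are independent of $K_1,K_2$. This is achieved by Hölder, Lemma \ref{GN}, the trace inclusion of Lemma \ref{Trace}, and the boundary dissipation $\gamma\|\nabla_{\boldsymbol\tau}\psi\|_{L^2(\Gamma)}^2$ used to soak up lower-order boundary contributions, followed by Lemma \ref{Gronwall} together with the assumption $T<1$ to close the estimate. Once (\ref{P1}) is in hand, existence is obtained through a Galerkin scheme based on eigenfunctions of the self-adjoint elliptic operator associated to Lemma \ref{estimates} (with the dynamic boundary condition incorporated into the operator domain), passing to the limit using the uniform bounds; uniqueness follows by forming the difference of two solutions, which satisfies the same system with $H=f=h=0$ and zero initial data, and invoking the basic energy estimate together with Lemma \ref{Gronwall}.
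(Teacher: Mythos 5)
Your overall architecture (energy estimate with the bulk--boundary cancellation, then time-differentiated estimates tested against $\phi_t,\psi_t$ and $\phi_{tt},\psi_{tt}$, with the initial-data powers traced back to $\phi_t(0)$ and $\psi_t(0)$) matches the paper's Lemmas \ref{Lemma1} and \ref{d-tt} closely, and your existence/uniqueness scheme is a reasonable substitute for the paper's citation of \cite{C-C}. The divergence is in how the spatial regularity is recovered, and that is where your plan has a genuine gap.

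First, you propose to obtain $\phi\in L^\infty(0,T;H^3)$ and $\psi\in L^\infty(0,T;H^3(\Gamma))$ by applying Lemma \ref{estimates} at $s=1$ pointwise in time. The boundary forcing in your elliptic reformulation is $-\psi_t+h+\psi$ (up to the $\gamma$-normalization), so this application requires $h(t)\in H^1(\Gamma)$ with a bound uniform in $t$. But the hypotheses of Proposition \ref{Prop3.1} only give $h\in L^\infty(0,T;L^2(\Gamma))\cap L^2(0,T;H^2(\Gamma))$, which interpolates into $L^4(0,T;H^1(\Gamma))$, not $L^\infty(0,T;H^1(\Gamma))$; and the right-hand side of \eqref{P1} correspondingly contains only $\|h\|_{L^2(\Gamma)}^2$ and $\int_0^T\|h\|_{H^2(\Gamma)}^2\,{\rm d}t$. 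The paper avoids this by \emph{not} using the elliptic estimate for the $L^\infty_t$ third-order bounds: in Lemmas \ref{L-222} and \ref{Lemma4} it derives $\sup_t\left(\|\nabla^3\phi\|_{L^2}^2+\gamma\|\nabla_{\boldsymbol\tau}^3\psi\|_{L^2(\Gamma)}^2\right)$ by localized (cut-off function) energy estimates in which $h$ enters only through $\int_0^T\|\nabla_{\boldsymbol\tau}^2h\|_{L^2(\Gamma)}^2\,{\rm d}t$ after Young's inequality, reserving Lemma \ref{estimates} for the final $L^2_t$-in-time bound on $\|\phi\|_{H^4}$ and $\gamma^2\|\psi\|_{H^4(\Gamma)}^2$. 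Your route could be repaired (e.g., by first establishing the $L^2_tH^4\cap H^1_tH^2\hookrightarrow C_tH^3$ embedding from the $s=2$ application plus the time-derivative bounds), but as written the $s=1$ step fails under the stated hypotheses.

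Second, your normalization "boundary forcing $\psi-\psi_t+h-(1-\gamma)\Delta_{\boldsymbol\tau}\psi$" is circular: moving $(1-\gamma)\Delta_{\boldsymbol\tau}\psi$ to the right-hand side puts a term of the same order as the quantity you are estimating ($\|\psi\|_{H^{s+2}(\Gamma)}$) into the forcing, and it cannot be absorbed unless $|1-\gamma|$ is small, which is not assumed. The correct handling (as in the paper's \eqref{ty} and \eqref{phi-psi-44}) keeps $-\gamma\Delta_{\boldsymbol\tau}\psi+\psi+\partial_{\bf n}\phi$ on the left and accepts a constant depending on $\gamma^{-1}$, with the resulting loss visible in the $\gamma$-weights ($\gamma\|\psi\|_{H^{s+2}(\Gamma)}^2$, $\gamma^2\|\nabla_{\boldsymbol\tau}^4\psi\|_{L^2(\Gamma)}^2$) on the left-hand sides of the paper's estimates.
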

The existence and uniqueness of the solutions to the problem (\ref{LAC}) can be derived from \cite{C-C} (see Theorem 2.3 and 2.4 therein).
To complete the analysis, it remains for us to establish the a priori estimates outlined in (\ref{P1}).
The proof starts with deriving lower-order a priori bounds for the pair $(\phi,\psi)$.
\begin{Lemma}
\label{Lemma1}
Let $(\phi,\psi)$ be the smooth solution to $(\ref{LAC})$ on $[0,T]$, then it holds that
\begin{align}
\label{eq.1}
&\sup\limits_{0\le t \le T}\left(\|\phi\|_{H^1}^2+\|\psi\|_{L^2(\Gamma)}^2+\gamma\|\nabla_{\boldsymbol\tau}\psi\|_{L^2(\Gamma)}^2
\right)+\int_{0}^{T}\left(\|\psi_t\|_{L^2(\Gamma)}^2+\|\mu-\bar \mu\|_{L^2}^2
\right) {\rm d}t                                                \nonumber
\\
&\le C\left( \|\phi_0\|_{H^1}^2+\|\psi_0\|_{H^1(\Gamma)}^2
\right)
+C\int_{0}^{T}\left(
\|H\|_{L^2}^2+\|f\|_{L^2}^2+\|h\|_{L^2(\Gamma)}^2
\right) {\rm d}t,
\end{align}
where $C$ is a generic constant that only depends on $\Omega$ and $\gamma^{-1}$, but not on $K_1$ and $K_2$.
\end{Lemma}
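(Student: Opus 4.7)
\emph{Proof plan.} The strategy is the standard energy test with $\mu-\bar\mu$ for the interior equation, coupled with integration by parts on the closed surface $\Gamma$ and an explicit formula for $\bar\mu$. Testing $(\ref{LAC})_1$ against $\mu-\bar\mu$ and using $\int_\Omega\phi_t\,dx=\int_\Omega H\,dx$ (obtained by averaging the equation) gives
\begin{align*}
\int_\Omega\phi_t\mu\,dx-\bar\mu\int_\Omega H\,dx+\|\mu-\bar\mu\|_{L^2}^2=\int_\Omega H(\mu-\bar\mu)\,dx.
\end{align*}
Substituting $\mu=-\Delta\phi+f$, integrating by parts and using $\phi_t|_\Gamma=\psi_t$ (since $\phi|_\Gamma=\psi$), the first interior term becomes $\tfrac12\tfrac{d}{dt}\|\nabla\phi\|_{L^2}^2+\int_\Omega f\phi_t\,dx-\int_\Gamma\psi_t\partial_{\bf n}\phi\,dS$. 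The crucial manipulation is on $\Gamma$: inserting $\partial_{\bf n}\phi=-\psi_t+\gamma\Delta_{\boldsymbol\tau}\psi+h$ from $(\ref{LAC})_3$ and integrating by parts on the closed manifold $\Gamma$ yields
\begin{align*}
-\int_\Gamma\psi_t\partial_{\bf n}\phi\,dS=\|\psi_t\|_{L^2(\Gamma)}^2+\tfrac{\gamma}{2}\tfrac{d}{dt}\|\nabla_{\boldsymbol\tau}\psi\|_{L^2(\Gamma)}^2-\int_\Gamma\psi_t h\,dS,
\end{align*}
which produces exactly the coercive quantities appearing on the left-hand side of \eqref{eq.1}.

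To control the leftover inner products on the right-hand side, I would first bound $|\bar\mu|$. Averaging $(\ref{LAC})_2$ over $\Omega$, applying the divergence theorem together with $(\ref{LAC})_3$ and $\int_\Gamma\Delta_{\boldsymbol\tau}\psi\,dS=0$ (Stokes on a closed manifold), one obtains
\begin{align*}
|\Omega|\bar\mu=\int_\Gamma\psi_t\,dS-\int_\Gamma h\,dS+\int_\Omega f\,dx,
\end{align*}
so $|\bar\mu|\le C(\|\psi_t\|_{L^2(\Gamma)}+\|h\|_{L^2(\Gamma)}+\|f\|_{L^2})$. The awkward term $\int_\Omega f\phi_t\,dx$ is rewritten using $(\ref{LAC})_1$ as $-\int_\Omega f(\mu-\bar\mu)\,dx+\int_\Omega fH\,dx$ (the $\bar\mu$ pieces cancel). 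Young's inequality with a sufficiently small parameter $\varepsilon$ then absorbs the $\varepsilon\|\psi_t\|_{L^2(\Gamma)}^2$ and $\varepsilon\|\mu-\bar\mu\|_{L^2}^2$ contributions into the left, leaving only the data terms $\|H\|_{L^2}^2$, $\|f\|_{L^2}^2$, $\|h\|_{L^2(\Gamma)}^2$ on the right. Integrating on $[0,T]$ gives the control of $\|\nabla\phi\|_{L^\infty_tL^2}^2$, $\gamma\|\nabla_{\boldsymbol\tau}\psi\|_{L^\infty_tL^2(\Gamma)}^2$, $\|\psi_t\|_{L^2_tL^2(\Gamma)}^2$ and $\|\mu-\bar\mu\|_{L^2_tL^2}^2$.

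It remains to upgrade $\|\nabla\phi\|_{L^2}^2$ to $\|\phi\|_{H^1}^2$ and to add $\|\psi\|_{L^2(\Gamma)}^2$. Testing $(\ref{LAC})_1$ with $\phi$ gives
\begin{align*}
\tfrac12\tfrac{d}{dt}\|\phi\|_{L^2}^2=-\int_\Omega\phi(\mu-\bar\mu)\,dx+\int_\Omega\phi H\,dx,
\end{align*}
so Young's inequality combined with Gr\"onwall's Lemma \ref{Gronwall} on $[0,T]$ with $T<1$ produces the $L^\infty_tL^2$ bound on $\phi$ in terms of $\|\phi_0\|_{L^2}^2$ and the already-controlled $\int_0^T\|\mu-\bar\mu\|_{L^2}^2\,dt$. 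For $\psi=\phi|_\Gamma$, the trace-interpolation inequality of Lemma \ref{PGN} gives $\|\psi\|_{L^2(\Gamma)}^2\le C\|\phi\|_{L^2}\|\phi\|_{H^1}$, which inherits the desired bound from $\phi$.

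\emph{Main difficulty.} The only non-routine point is the absence of mass conservation at the linearized level: $\int_\Omega\phi_t\,dx=\int_\Omega H\,dx\ne 0$ in general, so the bad term $\bar\mu\int_\Omega H\,dx$ does not vanish automatically as it does in the conservative full system. The remedy is the explicit divergence-theorem formula for $\bar\mu$ above; without it, the bound on $|\bar\mu|$ in terms of data and dissipative quantities would not be available, and the estimate could not be closed.
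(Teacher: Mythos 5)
Your proof is correct and follows essentially the same route as the paper's: the same two energy identities (an $L^2$-level test against $\phi$ and a gradient-level test, the latter obtained by pairing with $\mu-\bar\mu$, which is equivalent to the paper's pairing with $\Delta\phi$ after substituting $(\ref{LAC})_2$), the same divergence-theorem formula for $\bar\mu$, and Young plus Gr\"onwall. Two remarks. First, the point you single out as the main difficulty --- that $\int_\Omega\phi_t\,{\rm d}x=\int_\Omega H\,{\rm d}x$ need not vanish --- is moot in this setting: in $(\ref{LAC})$ one has $H=-\tilde{\bf u}\cdot\nabla\tilde\phi$ with ${\rm div}\,\tilde{\bf u}=0$ and $\tilde{\bf u}\cdot{\bf n}=0$ on $\Gamma$, so $\int_\Omega H\,{\rm d}x=0$ (the paper's identity (\ref{H})) and the term $\bar\mu\int_\Omega H\,{\rm d}x$ drops out identically; your explicit bound on $|\bar\mu|$ is still needed, but for the term $\bar\mu\int_\Omega\phi\,{\rm d}x$ in the $L^2$-level test, exactly as in the paper's estimate (\ref{bar mu}). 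Second, you recover $\|\psi\|_{L^2(\Gamma)}^2$ a posteriori from the trace-interpolation inequality $\|\psi\|_{L^2(\Gamma)}^2\le C\|\phi\|_{L^2}\|\phi\|_{H^1}$ (which is valid on smooth bounded domains, not only on the channel of Lemma \ref{PGN}), whereas the paper gets it for free by integrating by parts in the $\phi$-test and using $(\ref{LAC})_3$, which converts $\int_\Gamma\partial_{\bf n}\phi\,\psi\,{\rm d}S$ into $-\tfrac12\tfrac{\rm d}{{\rm d}t}\|\psi\|_{L^2(\Gamma)}^2-\gamma\|\nabla_{\boldsymbol\tau}\psi\|_{L^2(\Gamma)}^2+\int_\Gamma h\psi\,{\rm d}S$; both variants close the estimate, though the paper's version also produces the dissipation of $\|\nabla\phi\|_{L^2}^2$ and $\gamma\|\nabla_{\boldsymbol\tau}\psi\|_{L^2(\Gamma)}^2$ in $L^1_t$ that your route discards.
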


\begin{proof}[\bf Proof.]
Multiplying $(\ref{LAC})_1$ by $\phi$, integrating the result over $\Omega$ by parts, and using $(\ref{LAC})_2$ and $(\ref{LAC})_3$, we have
\begin{align}
\label{eq9}
&\frac{1}{2}\frac{\rm d}{{\rm d}t}\left(
\|\phi\|_{L^2}^2+\|\psi\|_{L^2(\Gamma)}^2
\right)
+\|\nabla\phi\|_{L^2}^2
+\gamma\|\nabla_{\boldsymbol\tau}\psi\|_{L^2(\Gamma)}^2 \nonumber          \\
&=\bar\mu\int_{\Omega}\phi {\rm d}x+\int_{\Omega}(H-f)\phi {\rm d}x+\int_\Gamma h\psi {\rm d}S      \nonumber           \\
&\le C\|\phi\|_{L^2}^2+C\|\psi\|_{L^2(\Gamma)}^2
+\frac{1}{4}\|\psi_t\|_{L^2(\Gamma)}^2
+C\left(\|H\|_{L^2}^2+\|f\|_{L^2}^2+\|h\|_{L^2(\Gamma)}^2
\right),
\end{align}
where we have used the following fact that
\begin{align}
\label{bar mu}
\bar\mu&=-\overline{\Delta\phi}+\bar f
=\frac{1}{|\Omega|}\int_\Gamma-\partial_{\bf n}\phi {\rm d}S+\bar f            \nonumber
\\
&=\frac{1}{|\Omega|}\int_\Gamma\left(-\gamma\Delta_
{\boldsymbol\tau}\psi+\psi_t-h
\right) {\rm d}S+\bar f             \nonumber
\\
&=\frac{1}{|\Omega|}\int_\Gamma\left(\psi_t-h\right){\rm d}S+\bar f \nonumber
\\
&\le C(|\Omega|,|\Gamma|)\left(
\|\psi_t\|_{L^2(\Gamma)}+\|h\|_{L^2(\Gamma)}
+\|f\|_{L^2}
\right).
\end{align}
Multiplying $(\ref{LAC})_1$ by $\Delta\phi$, integrating the result over $\Omega$, we have
\begin{equation}
\label{n1}
\int_\Omega\phi_t\Delta\phi {\rm d}x=\int_\Omega(\bar\mu-\mu)\Delta\phi {\rm d}x+\int_\Omega H\Delta\phi {\rm d}x.
\end{equation}
The left-hand side of \eqref{n1} can be written as
\begin{align}
\int_\Omega\phi_t\Delta\phi {\rm d}x&=-\int_\Omega\nabla\phi_t\cdot\nabla\phi {\rm d}x+\int_\Gamma\partial_{\bf n}\phi\psi_t {\rm d}S            \nonumber
\\
&=-\frac{1}{2}\frac{\rm d}{{\rm d}t}\|\nabla\phi\|_{L^2}^2
+\int_\Gamma\psi_t(\gamma\Delta_{\boldsymbol\tau}
\psi-\psi_t+h){\rm d}S                                           \nonumber
\\
&=-\frac{1}{2}\frac{\rm d}{{\rm d}t}\left(\|\nabla\phi\|_{L^2}^2
+\gamma\|\nabla_{\boldsymbol\tau}\psi\|_{L^2(\Gamma)}^2 \right)-\|\psi_t\|_{L^2(\Gamma)}^2+\int_\Gamma\psi_t h {\rm d}S.                                                     \nonumber
\end{align}
Due to the incompressibility of ${\bf u}$ and boundary conditions ${\bf u} \cdot {\bf n} = 0$ on $\Gamma$,   there is
\begin{align}
\label{H}
\int_\Omega H{\rm d}x=-\int_\Omega \tilde{\bf u}\cdot\nabla\tilde\phi {\rm d}x=\int_\Omega \rm div\tilde{\bf u}\tilde\phi {\rm d}x-\int_\Gamma\tilde{\bf u}\cdot{\bf n}\tilde\phi {\rm d}x=0.
\end{align}
Note that
\begin{align}
\label{mu-bar mu}
\int_\Omega(\mu-\bar\mu){\rm d}x=\int_\Omega\mu {\rm d}x-\int_\Omega\bar\mu {\rm d}x=0.
\end{align}
Substituting $(\ref{LAC})_2$ into the right-hand side of (\ref{n1}), using \eqref{H}  and \eqref{mu-bar mu}, we see that
\begin{align}
&\int_\Omega(\bar\mu-\mu)\Delta\phi {\rm d}x+\int_\Omega H\Delta\phi {\rm d}x \nonumber
\\
&=\int_\Omega\left[(\bar\mu-\mu)+H\right]\left[(\bar\mu-\mu)-\bar\mu+f\right] {\rm d}x                     \nonumber
\\
&=\|\mu-\bar\mu\|_{L^2}^2-\bar\mu\int_\Omega(\bar\mu-\mu) {\rm d}x +\int_\Omega f(\bar\mu-\mu){\rm d}x   \nonumber
\\
&\quad+\int_\Omega H(\bar\mu-\mu){\rm d}x-\bar\mu\int_\Omega H
{\rm d}x+\int_\Omega fH{\rm d}x                       \nonumber
\\
&=\|\mu-\bar\mu\|_{L^2}^2+\int_\Omega f(\bar\mu-\mu){\rm d}x+\int_\Omega H(\bar\mu-\mu){\rm d}x+\int_\Omega fH{\rm d}x.   \nonumber
\end{align}
Then, we obtain
\begin{align}
\label{eq23}
&\frac{1}{2}\frac{\rm d}{{\rm d}t}\left(
\|\nabla\phi\|_{L^2}^2+\gamma\|\nabla_{\boldsymbol\tau}\psi\|_{L^2(\Gamma)}^2
\right)
+\|\mu-\bar\mu\|_{L^2}^2
+\|\psi_t\|_{L^2(\Gamma)}^2                     \nonumber
\\
&=\int_{\Omega}f(\mu-\bar\mu) {\rm d}x-\int_{\Omega}Hf {\rm d}x+\int_{\Omega}H(\mu-\bar\mu) {\rm d}x+\int_\Gamma h\psi_t {\rm d}S \nonumber
\\
&\le \frac{1}{2}\|\mu-\bar\mu\|_{L^2}^2+
\frac{1}{2}\|\psi_t\|_{L^2(\Gamma)}^2
+C\left(\|H\|_{L^2}^2+\|f\|_{L^2}^2+\|h\|_{L^2(\Gamma)}^2
\right).
\end{align}
Combining (\ref{eq9}) with (\ref{eq23}), it follows from Gronwall inequality that (\ref{eq.1}) holds. Then, we complete the proof of Lemma \ref{Lemma1}.
\end{proof}
The following lemma provides the highest-order a priori estimates, which are crucial for our subsequent arguments.
\begin{Lemma}\label{d-tt}
Let $(\phi,\psi)$ be the smooth solution to $(\ref{LAC})$ on $[0,T]$, then it holds that
\begin{align}
\label{eq4}
&\sup\limits_{0\le t \le T}\left( \|\phi_t\|_{H^1}^2+\|\psi_t\|_{L^2(\Gamma)}^2+\gamma\|\nabla_{\boldsymbol\tau}\psi_t\|_{L^2(\Gamma)}^2
+\|\mu-\bar\mu\|_{H^1}^2
\right)  \nonumber
\\
&\quad+\int_{0}^{T}\left(\|(\mu-\bar\mu)_t\|_{L^2}^2+
\|\psi_{tt}\|_{L^2(\Gamma)}^2+\|\Delta\phi_t\|_{L^2}^2
\right) {\rm d}t                                        \nonumber
\\
&\le C \left(
\|\psi_0\|_{H^3(\Gamma)}^4
+\|\phi_0\|_{H^3}^6+\|{\bf u}_0\|_{H^2}^4+\|H\|_{H^1}^2+1
\right)  \nonumber
\\
&\quad+C\int_{0}^{T}\left(
\|H_t\|_{L^2}^2+\|f_t\|_{L^2}^2+\|h_t\|_{L^2(\Gamma)}^2
\right) {\rm d}t                                          \nonumber
\\
&=C_1 ,
\end{align}
where $C$ is a generic constant that only depends on $\Omega$ and $\gamma^{-1}$, but not on $K_1$ and $K_2$.
\end{Lemma}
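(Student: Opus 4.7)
\textbf{Proof plan for Lemma \ref{d-tt}.} The strategy is to differentiate the linearized system (\ref{LAC}) with respect to $t$ and then test by $\phi_{tt}$, so that the boundary integrals produced by integration by parts are absorbed via the time-differentiated dynamic boundary condition. Concretely, differentiating $(\ref{LAC})_{1,2,3}$ in time gives
\begin{align*}
\phi_{tt} &= \bar\mu_t - \mu_t + H_t, \qquad \mu_t = -\Delta\phi_t + f_t,\\
\psi_{tt} - \gamma\Delta_{\boldsymbol\tau}\psi_t &= -\partial_{\bf n}\phi_t + h_t,
\end{align*}
and using $\int_\Omega \phi_t\,{\rm d}x = 0$ (inherited from mass conservation and (\ref{H})) one also has $\int_\Omega \phi_{tt}\,{\rm d}x = 0$.

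\textbf{Main energy estimate.} I multiply $\phi_{tt} = \bar\mu_t - \mu_t + H_t$ by $\phi_{tt}$ and integrate over $\Omega$. The average cancels since $\phi_{tt}$ has zero mean, so one is left with $\|\phi_{tt}\|_{L^2}^2 = -\int_\Omega \mu_t \phi_{tt}\,{\rm d}x + \int_\Omega H_t\phi_{tt}\,{\rm d}x$. Substituting $\mu_t = -\Delta\phi_t + f_t$ and integrating by parts produces the boundary contribution $\int_\Gamma \partial_{\bf n}\phi_t\,\psi_{tt}\,{\rm d}S$. Replacing $\partial_{\bf n}\phi_t$ via the time-differentiated boundary equation generates exactly the three terms
\[
-\tfrac12\tfrac{\rm d}{{\rm d}t}\|\nabla\phi_t\|_{L^2}^2 - \|\psi_{tt}\|_{L^2(\Gamma)}^2 - \tfrac{\gamma}{2}\tfrac{\rm d}{{\rm d}t}\|\nabla_{\boldsymbol\tau}\psi_t\|_{L^2(\Gamma)}^2 + \int_\Gamma h_t\psi_{tt}\,{\rm d}S,
\]
after one further integration by parts on $\Gamma$ for the Laplace-Beltrami term. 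Young's inequality then absorbs $\tfrac12\|\phi_{tt}\|_{L^2}^2$ and $\tfrac12\|\psi_{tt}\|_{L^2(\Gamma)}^2$ on the left and leaves $\|f_t\|_{L^2}^2, \|h_t\|_{L^2(\Gamma)}^2, \|H_t\|_{L^2}^2$ on the right. Integrating in time yields the bounds on $\sup_t\|\nabla\phi_t\|_{L^2}^2$, $\sup_t\gamma\|\nabla_{\boldsymbol\tau}\psi_t\|_{L^2(\Gamma)}^2$, and $\int_0^T(\|\phi_{tt}\|_{L^2}^2 + \|\psi_{tt}\|_{L^2(\Gamma)}^2)\,{\rm d}t$.

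\textbf{Recovering the remaining norms.} Lemma \ref{Lemma1} already delivers $\sup_t(\|\phi_t\|_{L^2}^2 + \|\psi_t\|_{L^2(\Gamma)}^2)$ via $\int_0^T\|\psi_t\|_{L^2(\Gamma)}^2\,{\rm d}t$ and an independent $L^2$ estimate that I would run by testing $\phi_{tt}$-equation against $\phi_t$; combining with the gradient bound above gives $\sup_t\|\phi_t\|_{H^1}^2$. From $(\ref{LAC})_1$, I read off $\mu - \bar\mu = H - \phi_t$, whence $\sup_t\|\mu-\bar\mu\|_{H^1}^2 \le C(\|H\|_{L^\infty H^1}^2 + \sup_t\|\phi_t\|_{H^1}^2)$. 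Differentiating once more, $(\mu-\bar\mu)_t = H_t - \phi_{tt}$, which transfers the $L^2_tL^2$ bound on $\phi_{tt}$ to $\int_0^T\|(\mu-\bar\mu)_t\|_{L^2}^2\,{\rm d}t$. For $\|\Delta\phi_t\|_{L^2}$ I use $\Delta\phi_t = f_t - \mu_t = f_t - (\mu-\bar\mu)_t - \bar\mu_t$, bounding $|\bar\mu_t|$ by the formula analogous to (\ref{bar mu}) applied to the time-differentiated system, i.e.\ $|\bar\mu_t| \le C(\|\psi_{tt}\|_{L^2(\Gamma)} + \|h_t\|_{L^2(\Gamma)} + \|f_t\|_{L^2})$, all of which are already controlled.

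\textbf{Initial data and the main obstacle.} The initial values $\|\nabla\phi_t(0)\|_{L^2}$ and $\|\nabla_{\boldsymbol\tau}\psi_t(0)\|_{L^2(\Gamma)}$ required when integrating the differential inequality in time must be converted to data norms through the equations at $t=0$: $\phi_t(0) = \bar\mu_0 - \mu_0 + H(0) = \Delta\phi_0 - f(0) + \overline{(\cdot)} + H(0)$ and similarly for $\psi_t(0)$ via $(\ref{LAC})_3$. This is where the high powers $\|\phi_0\|_{H^3}^6$, $\|\psi_0\|_{H^3(\Gamma)}^4$, $\|{\bf u}_0\|_{H^2}^4$ in $C_1$ arise, since $f(0) = \tilde\phi(0)^3 - \tilde\phi(0)$ and $H(0) = -\tilde{\bf u}(0)\cdot\nabla\tilde\phi(0)$ are cubic/quadratic in the data and the boundary trace theorem is invoked. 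The chief technical nuisance is the $\gamma^{-1}$-dependence of the constant: in controlling $\bar\mu_t$ and in the initial-data reduction one repeatedly needs to invert $\gamma\Delta_{\boldsymbol\tau}$ or divide by $\gamma$ when isolating terms, so the whole estimate degenerates as $\gamma\to 0$, which is precisely the point noted in the introduction and motivates the separate treatment of the relaxation boundary condition in Section 4.
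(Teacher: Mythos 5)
Your proposal is correct and follows essentially the same route as the paper: the paper tests the time-differentiated system with the multipliers $(-\mu_t,\phi_{tt},\psi_{tt})$, which is the same cancellation you obtain by testing $\phi_{tt}=\bar\mu_t-\mu_t+H_t$ against $\phi_{tt}$ (your version puts $\|\phi_{tt}\|_{L^2}^2$ on the left where the paper gets $\|(\mu-\bar\mu)_t\|_{L^2}^2$, equivalent via $(\mu-\bar\mu)_t=H_t-\phi_{tt}$), and the recovery of $\|\mu-\bar\mu\|_{H^1}$, $\Delta\phi_t$, $\bar\mu_t$ and the initial-data reduction all match the paper's steps. The only minor inaccuracy is your attribution of the $\gamma^{-1}$-dependence to "inverting $\gamma\Delta_{\boldsymbol\tau}$": in fact $\gamma\Delta_{\boldsymbol\tau}\psi_t$ integrates to zero over the closed boundary in the $\bar\mu_t$ computation, and the $\gamma^{-1}$ enters only when converting the $\gamma$-weighted norms (e.g. $\gamma\|\psi_t\|_{L^2(\Gamma)}^2$ from the lower-order test against $\phi_t$) into unweighted ones.
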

\begin{proof}[\bf Proof.]
First, differentiating $(\ref{LAC})_{1,2,3}$ with respect to $t$, we get
\begin{align}
\label{partial t}
\begin{cases}
\phi_{tt}=({\bar{\mu}}-\mu)_t+H_t,&\rm in\quad\Omega\times(0,T),
\\
\mu_t=-\Delta\phi_t+f_t,&\rm in\quad\Omega\times(0,T),
\\
\psi_{tt}-\gamma\Delta_{\boldsymbol {\tau}}\psi_t=-\partial_{\bf n}\phi_t+h_t,&\rm on\quad\Gamma\times(0,T).
\end{cases}
\end{align}
It follows from $(\ref{LAC})_1$, (\ref{mu-bar mu}) and integration by part that
\begin{align}
\label{phit}
\int_\Omega\phi_t {\rm d}x&=\int_\Omega\left[(\bar\mu-\mu)-\tilde{\bf u}\cdot\nabla\tilde\phi\right] {\rm d}x                        \nonumber
\\
&=\int_\Omega(\bar\mu-\mu) {\rm d}x+\int_\Omega{\rm div}\tilde{\bf u}\cdot\tilde\phi {\rm d}x-\int_\Gamma\tilde{\bf u}\cdot{\bf n}\tilde\phi {\rm d}S=0.
\end{align}
Multiplying $(\ref{partial t})_1$ by $\phi_t$, integrating the result over $\Omega$ by parts, and then using $(\ref{partial t})_2$ and $(\ref{phit})$, one has
\begin{align}
\label{eq26}
&\frac{1}{2}\frac{\rm d}{{\rm d}t}\left(
\|\phi_t\|_{L^2}^2+\gamma\|\psi_t\|_{L^2(\Gamma)}^2
\right)+\|\nabla\phi_t\|_{L^2}^2+\gamma\|\nabla_{\boldsymbol\tau}\psi_t\|
_{L^2(\Gamma)}^2                                \nonumber
\\
&=\int_\Gamma\psi_t h_t {\rm d}S-\int_\Omega f_t\phi_t {\rm d}x+\int_\Omega H_t\phi_t {\rm d}x                             \nonumber
\\
&\le C\|\psi_t\|_{L^2(\Gamma)}^2+C\|\phi_t\|_{L^2}^2
+C\left(\|H_t\|_{L^2}^2+\|f_t\|_{L^2}^2+\|h_t\|_{L^2(\Gamma)}^2
\right).
\end{align}
\vskip2mm
Next, thanks to (\ref{mu-bar mu}), we have
\begin{equation}
\label{eq34}
-\int_\Omega\mu_t(\bar\mu-\mu)_t{\rm d}x=\|(\mu-\bar\mu)_t\|_{L^2}^2-\bar\mu_t\int_\Omega(\bar\mu-\mu)_t{\rm d}x=\|(\mu-\bar\mu)_t\|_{L^2}^2.
\end{equation}
Multiplying $(\ref{partial t})_1$, $(\ref{partial t})_2$ and $(\ref{partial t})_3$ by $-\mu_t$, $\phi_{tt}$ and $\psi_{tt}$ respectively, integrating the results over $\Omega$ by parts and using (\ref{eq34}), we get
\begin{align}
\label{eq3}
&\frac{1}{2}\frac{\rm d}{{\rm d}t}\left(
\|\nabla\phi_t\|_{L^2}^2+\gamma\|\nabla_{\boldsymbol\tau}\psi_t\|_{L^2(\Gamma)}^2
\right)
+\|(\mu-\bar\mu)_t\|_{L^2}^2+\|\psi_{tt}\|_{L^2(\Gamma)}^2
\nonumber      \\
&=\int_{\Omega}H_t\mu_t {\rm d}x-\int_{\Omega}f_t\phi_{tt} {\rm d}x+\int_\Gamma \psi_{tt}h_t {\rm d}S                  \nonumber
\\
&=\int_{\Omega}\left[H_t(\mu-\bar\mu)_t+\bar\mu_t H_t\right] {\rm d}x-\int_{\Omega}f_t\left[(\bar\mu-\mu)_t+H_t\right] {\rm d}x
+\int_\Gamma \psi_{tt}h_t {\rm d}S                    \nonumber
\\
&=\int_{\Omega}H_t(\mu-\bar\mu)_t {\rm d}x +0 -\int_{\Omega}f_t(\bar\mu-\mu)_t{\rm d}x-\int_{\Omega}f_tH_t {\rm d}x
+\int_\Gamma \psi_{tt}h_t {\rm d}S                    \nonumber
\\
&\le \frac{1}{2}\|(\mu-\bar\mu)_t\|_{L^2}^2
+\frac{1}{2}\|\psi_{tt}\|_{L^2(\Gamma)}^2
+C\left(\|H_t\|_{L^2}^2+\|f_t\|_{L^2}^2+\|h_t\|_{L^2(\Gamma)}^2
\right),
\end{align}
where we have used (\ref{H}).
Thus, summing (\ref{eq3}) together with (\ref{eq26}), we obtain
\begin{align}
\label{eq14}
&\sup\limits_{0\le t \le T}\left( \|\phi_t\|_{H^1}^2+\|\psi_t\|_{L^2(\Gamma)}^2
+\gamma\|\nabla_{\boldsymbol\tau}\psi_t\|_{L^2(\Gamma)}^2
\right)+\int_{0}^{T}\left(
\|(\mu-\bar\mu)_t\|_{L^2}^2+\|\psi_{tt}\|_{L^2(\Gamma)}^2
\right) {\rm d}t                                              \nonumber
\\
&\le C \left(
\|\psi_0\|_{H^3(\Gamma)}^4+\|\phi_0\|_{H^3}^6+\|{\bf u}_0\|_{H^2}^4+1
\right)+C\int_{0}^{T}\left(
\|H_t\|_{L^2}^2+\|f_t\|_{L^2}^2+\|h_t\|_{L^2(\Gamma)}^2
\right) {\rm d}t.
\end{align}

Furthermore, it follows from $(\ref{LAC})_1$ and (\ref{eq14}) that
\begin{align}
\label{eq27}
&\|\mu-\bar\mu\|_{H^1}^2 \le C\left(\|\phi_t\|_{H^1}^2+\|H\|_{H^1}^2\right)
\le C_1.
\end{align}
Moreover, we deduce from $(\ref{LAC})_2$ that
\begin{align}
\bar\mu_t&=-\overline{\Delta\phi_t}+\bar f_t=\frac{1}{|\Omega|}\int_{\Gamma}-\partial_{\bf n}\phi_t {\rm d}S+\bar f_t            \nonumber
\\
&=\frac{1}{|\Omega|}\int_\Gamma\left(
\psi_{tt}-\gamma\Delta_{\boldsymbol\tau}\psi_t-h_t
\right) {\rm d}S+\bar f_t          \nonumber
\\
&=\frac{1}{|\Omega|}\int_\Gamma\left(
\psi_{tt}-h_t\right) {\rm d}S+\bar f_t          \nonumber
\\
&\le C(|\Omega|,|\Gamma|)
\left(
\|\psi_{tt}\|_{L^2(\Gamma)}+\|h_t\|_{L^2(\Gamma)}
+\|f_t\|_{L^2}
\right),                                                \nonumber
\end{align}
which together with (\ref{eq14}) yields
\begin{equation}
\label{eq8}
\int_{0}^{T}\|\Delta\phi_t\|_{L^2}^2 {\rm d}t
\le \int_{0}^{T}\left( \|(\mu-\bar\mu)_t\|_{L^2}^2
+\|\bar\mu_t\|_{L^2}^2+\|f_t\|_{L^2}^2
\right) {\rm d}t
\le C_1.
\end{equation}
Combining (\ref{eq8}) with (\ref{eq14}) and (\ref{eq27}) leads to (\ref{eq4}). Hence, the proof of Lemma \ref{d-tt} is completed.
\end{proof}

Now we proceed to derive spatial-derivative estimates for $\phi$ and $\psi$.
\begin{Lemma} \label{L-222}
Let $(\phi,\psi)$ be the smooth solution to $(\ref{LAC})$ on $[0,T]$, then it holds that
\begin{align}
\label{eq lamma3}
&\sup\limits_{0\le t \le T}
\left(
\|\nabla^2\phi\|_{L^2}^2
+\gamma\|\nabla_{\boldsymbol\tau}^2\psi\|_{L^2(\Gamma)}^2
\right)
+\int_{0}^{T}\left(
\|\nabla\phi_t\|_{L^2}^2
+\|\nabla_{\boldsymbol\tau}\psi_t\|_{L^2(\Gamma)}^2
\right) {\rm d}t
\nonumber \\
&\le CC_1+C\left(\|f\|_{L^2}^2+\|h\|_{L^2(\Gamma)}^2
\right)+C\int_{0}^{T}\left(
\|\nabla H\|_{L^2}^2+\|\nabla f\|_{L^2}^2
+\|\nabla_{\boldsymbol\tau} h\|_{L^2(\Gamma)}^2
\right) {\rm d}t
\nonumber \\
& = C_{2},
\end{align}
where $C$ is a generic constant that only depends on $\Omega$ and  $\gamma^{-1}$, but not on $K_1$ and $K_2$.
\end{Lemma}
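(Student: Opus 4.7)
\textbf{Plan for Lemma \ref{L-222}.} The strategy is to treat $(\ref{LAC})_{2}$ together with the boundary relation $(\ref{LAC})_{3}$ and the trace identity $\phi|_\Gamma=\psi$ as a coupled elliptic system, so that Lemma \ref{estimates} delivers the spatial $H^2$ bound; the time-integral terms will then follow, essentially for free, from the sup-in-time estimates of Lemma \ref{d-tt}.

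For Step 1 (the spatial $H^2$ bound), I would rewrite $(\ref{LAC})_{3}$ as $-\gamma\Delta_{\boldsymbol\tau}\psi+\partial_{\bf n}\phi=h-\psi_t$, so that together with $-\Delta\phi=\mu-f$ and $\phi|_\Gamma=\psi$ the system fits the framework of Lemma \ref{estimates} (the coefficient $\gamma$ is absorbed into the constant, which is compatible with the declared dependence of $C$ on $\gamma^{-1}$). Invoking that lemma with $s=0$ produces
\[
\|\phi\|_{H^2}^2+\|\psi\|_{H^2(\Gamma)}^2\le C\bigl(\|\mu\|_{L^2}^2+\|f\|_{L^2}^2+\|\psi_t\|_{L^2(\Gamma)}^2+\|h\|_{L^2(\Gamma)}^2+\|\psi\|_{L^2(\Gamma)}^2\bigr).
\]
I would then split $\|\mu\|_{L^2}^2\le 2\|\mu-\bar\mu\|_{L^2}^2+2|\Omega|\bar\mu^2$, control $|\bar\mu|$ through the formula (\ref{bar mu}) by $\|\psi_t\|_{L^2(\Gamma)}+\|h\|_{L^2(\Gamma)}+\|f\|_{L^2}$, and invoke $\|\mu-\bar\mu\|_{L^2}^2,\|\psi_t\|_{L^2(\Gamma)}^2,\|\psi\|_{L^2(\Gamma)}^2\le C_1$ from Lemmas \ref{Lemma1} and \ref{d-tt}. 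This yields precisely the desired sup-in-time bound
$\sup_{[0,T]}(\|\nabla^2\phi\|_{L^2}^2+\gamma\|\nabla_{\boldsymbol\tau}^2\psi\|_{L^2(\Gamma)}^2)\le CC_1+C(\|f\|_{L^2}^2+\|h\|_{L^2(\Gamma)}^2)$.

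For Step 2 (the time integrals), taking $\nabla$ of $(\ref{LAC})_{1}$ gives $\nabla\phi_t=-\nabla(\mu-\bar\mu)+\nabla H$, so since $\sup_t\|\nabla(\mu-\bar\mu)\|_{L^2}^2\le C_1$ by Lemma \ref{d-tt} and $T<1$, one obtains $\int_0^T\|\nabla\phi_t\|_{L^2}^2\,dt\le CC_1+C\int_0^T\|\nabla H\|_{L^2}^2\,dt$. For $\nabla_{\boldsymbol\tau}\psi_t$, I would apply $\nabla_{\boldsymbol\tau}$ to $(\ref{LAC})_{3}$ and test against $\nabla_{\boldsymbol\tau}\psi_t$: the $\nabla_{\boldsymbol\tau}\partial_{\bf n}\phi$ term is handled by the trace inequality together with the $H^2$ bound on $\phi$ from Step 1, while $\nabla_{\boldsymbol\tau}h$ produces the stated $\int_0^T\|\nabla_{\boldsymbol\tau}h\|_{L^2(\Gamma)}^2\,dt$ contribution; alternatively, one can simply use $\gamma\|\nabla_{\boldsymbol\tau}\psi_t\|_{L^2(\Gamma)}^2\le C_1$ from Lemma \ref{d-tt} and integrate. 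The $\int_0^T\|\nabla f\|_{L^2}^2\,dt$ contribution arises when bounding $\int_0^T\|\nabla\mu\|_{L^2}^2\,dt$ via $\nabla\mu=-\nabla\Delta\phi+\nabla f$ coupled with the $H^2$ control of $\phi$.

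The main technical obstacle is the correct invocation of Lemma \ref{estimates} when the Laplace–Beltrami term carries the coefficient $\gamma$: this either requires a straightforward extension of that lemma in which the constant picks up $\gamma^{-1}$-dependence, or a rescaling argument. Once this point is settled, the remaining work is purely bookkeeping, with the previously established bounds on $\|\mu-\bar\mu\|_{H^1}$, $\|\phi_t\|_{H^1}$, and $\gamma\|\nabla_{\boldsymbol\tau}\psi_t\|_{L^2(\Gamma)}$ supplying everything needed to close the estimate $C_2$.
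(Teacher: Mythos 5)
Your argument is correct, but it takes a genuinely different route from the paper. The paper proves Lemma \ref{L-222} by a localized energy method: it adds $(\ref{LAC})_2$ into $(\ref{LAC})_1$, differentiates, and tests against $\zeta^2\nabla\phi_t$ (interior, with $\zeta$ vanishing on $\Gamma$) and against $\tilde\zeta^2\nabla_{\boldsymbol\tau}\phi_t$ (near the boundary), so that the quantities $\frac{\rm d}{{\rm d}t}\|\zeta\nabla^2\phi\|_{L^2}^2$, $\frac{\rm d}{{\rm d}t}\gamma\|\nabla_{\boldsymbol\tau}^2\psi\|_{L^2(\Gamma)}^2$ and the dissipation integrals $\int_0^T(\|\nabla\phi_t\|_{L^2}^2+\|\nabla_{\boldsymbol\tau}\psi_t\|_{L^2(\Gamma)}^2)\,{\rm d}t$ all emerge from a single energy identity closed by Gronwall; the normal--normal derivative is then recovered pointwise from $(\ref{LAC})_2$ as in (\ref{eq10}). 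You instead read the equations as the stationary elliptic system of Lemma \ref{estimates} with data $\mu-f$ and $h-\psi_t+\psi$, and feed in the bounds $\|\mu-\bar\mu\|_{H^1}^2\le C_1$, $\|\psi_t\|_{L^2(\Gamma)}^2\le C_1$ from Lemma \ref{d-tt}; the time integrals then indeed follow for free, since (\ref{eq4}) already gives $\sup_t\|\nabla\phi_t\|_{L^2}^2\le C_1$ and $\sup_t\gamma\|\nabla_{\boldsymbol\tau}\psi_t\|_{L^2(\Gamma)}^2\le C_1$ and $T<1$, with the $\gamma^{-1}$ absorbed into the admissible constant. This is legitimate, shorter, and is in fact the same device the paper itself uses later (in the display preceding (\ref{phi-psi-44}) within Lemma \ref{Lemma4}, and in Lemma \ref{Phi2}), including the same silent treatment of the coefficient $\gamma$ on $\Delta_{\boldsymbol\tau}$ via a constant depending on $\gamma^{-1}$ --- the one caveat you correctly flag. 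What the paper's energy method buys in exchange for its extra length is that the cut-off machinery generalizes directly to the higher-order estimates of Lemma \ref{Lemma4}, where the elliptic lemma alone does not immediately control $\int_0^T\|\nabla^2\phi_t\|_{L^2}^2\,{\rm d}t$; for the present lemma, however, your shortcut closes the estimate completely.
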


\noindent
{\bf Proof.} \quad The proof consists of two parts.
\\
{\it\bfseries Step 1. Interior estimates.} Assume that $\zeta=\zeta(x)$ is smooth and $\zeta=0$ on $\Gamma$. Adding $(\ref{LAC})_2$ into $(\ref{LAC})_1$, we obtain
\begin{equation}
\label{1+2}
\phi_t=\bar\mu+\Delta\phi-f+H.
\end{equation}
Differentiating it with respect to $x$ leads to
\begin{equation}
\label{eq13}
\nabla\phi_t=\nabla\Delta\phi-\nabla f+\nabla H.
\end{equation}
Then, testing (\ref{eq13}) by $\zeta^2\nabla\phi_t$, we get
\begin{align*}
&\frac{1}{2}\frac{\rm d}{{\rm d}t}\|\zeta\nabla^2\phi\|_{L^2}^2
+\|\zeta\nabla\phi_t\|_{L^2}^2
\\
&=-\int_\Omega 2\zeta\nabla\phi_t \cdot \nabla^2\phi  \cdot \nabla\zeta {\rm d}x+\int_\Omega\zeta^2\nabla\phi_t\cdot(\nabla H-\nabla f){\rm d}x
\\
&\le C\|\zeta\nabla^2\phi\|_{L^2}^2
+C\left( \|\nabla\phi_t\|_{L^2}^2+\|\nabla H\|_{L^2}^2+\|\nabla f\|_{L^2}^2 \right).
\end{align*}
Then, together with Gronwall inequality and (\ref{eq4}), we obtain
\begin{align}
\label{eq11}
\sup\limits_{0\le t \le T} \|\zeta\nabla^2\phi\|_{L^2}^2+\int_{0}^{T}\|\zeta\nabla\phi_t\|_{L^2}^2{\rm d}t
\le CC_1+C\int_{0}^{T}\left(
\|\nabla H\|_{L^2}^2+\|\nabla f\|_{L^2}^2
\right){\rm d}t.
\end{align}
{\it\bfseries Step 2. Near the boundary estimates.} We begin with estimating tangential derivative. Assume a point $p_{0} \in\Gamma$ and $\tilde\zeta=\tilde\zeta(x)$ is smooth function vanishing outside of a small neighborhood of $p_{0}$. Taking the tangential derivative of (\ref{1+2}) and $(\ref{LAC})_3$ with respect to $x$ leads to
\begin{align}
\begin{cases}
\label{eq5}
\nabla_{\boldsymbol\tau}\phi_t=\nabla_{\boldsymbol\tau}\Delta\phi
-\nabla_{\boldsymbol\tau} f+\nabla_{\boldsymbol\tau} H,&\rm in\quad\Omega\times(0,T),
\\
\nabla_{\boldsymbol\tau}\psi_t-\gamma\nabla_{\boldsymbol\tau}\Delta_{\boldsymbol \tau}\psi=-\nabla_{\boldsymbol\tau}\partial_{\bf n}\phi+\nabla_{\boldsymbol\tau}h,&\rm on\quad\Gamma\times(0,T).
\end{cases}
\end{align}
And then, testing $(\ref{eq5})_1$ by $\tilde\zeta^2\nabla_{\boldsymbol\tau}\phi_t$, noting that $\tilde\zeta(x)=1$ on the boundary, then integration by parts and $(\ref{eq5})_2$ show that
\begin{align}
&\frac{1}{2}\frac{\rm d}{{\rm d}t}\left(
\|\tilde\zeta\nabla_{\boldsymbol\tau}\nabla\phi\|_{L^2}^2
+\gamma\|\nabla_{\boldsymbol\tau}^2\psi\|_{L^2(\Gamma)}^2
\right)
+\|\tilde\zeta\nabla_{\boldsymbol\tau}\phi_t\|_{L^2}^2 +\|\nabla_{\boldsymbol\tau}\psi_t\|_{L^2(\Gamma)}^2
\nonumber
\\
&=-\int_\Omega2\tilde\zeta\nabla_{\boldsymbol\tau}\phi_t \cdot \nabla_{\boldsymbol\tau}\nabla\phi \cdot \nabla\tilde\zeta {\rm d}x +\int_\Omega\tilde\zeta^2\nabla_{\boldsymbol\tau}\phi_t\cdot(\nabla_{\boldsymbol\tau} H-\nabla_{\boldsymbol\tau} f){\rm d}x
 +\int_\Gamma\nabla_{\boldsymbol\tau}\psi_t
\cdot\nabla_{\boldsymbol\tau}h {\rm d}S                       \nonumber
\\
&\le \frac{1}{2}
\|\nabla_{\boldsymbol\tau}\psi_t\|_{L^2(\Gamma)}^2
+C\|\tilde\zeta\nabla_{\boldsymbol\tau}\nabla\phi\|_{L^2}^2+C\left(
\|\nabla\phi_t\|_{L^2}^2+\|\nabla_{\boldsymbol\tau} H\|_{L^2}^2+\|\nabla_{\boldsymbol\tau} f\|_{L^2}^2+\|\nabla_{\boldsymbol\tau} h\|_{L^2(\Gamma)}^2
\right),                                               \nonumber
\end{align}
which together with Gronwall inequality and (\ref{eq4}) implies
\begin{align}
\label{eq7}
&\sup\limits_{0\le t \le T}
\left(
\|\tilde\zeta\nabla_{\boldsymbol\tau}\nabla\phi\|_{L^2}^2
+\gamma\|\nabla_{\boldsymbol\tau}^2\psi\|_{L^2(\Gamma)}^2
\right)
+\int_{0}^{T}\left(
\|\tilde\zeta\nabla_{\boldsymbol\tau}\phi_t\|_{L^2}^2
+\|\nabla_{\boldsymbol\tau}\psi_t\|_{L^2(\Gamma)}^2
\right) {\rm d}t                                           \nonumber
\\
&\le CC_1+C\int_{0}^{T}\left(
\|\nabla H\|_{L^2}^2+\|\nabla f\|_{L^2}^2
+\|\nabla_{\boldsymbol\tau} h\|_{L^2(\Gamma)}^2
\right) {\rm d}t.
\end{align}
\vskip2mm
Finally, the differentiation in the normal direction can be estimated by using the differential equation $(\ref{LAC})_2$, there is
\begin{align}
\label{eq10}
\|\tilde\zeta\partial_{\bf n}\partial_{\bf n}\phi\|_{L^2}^2
&=\|\tilde\zeta(-\Delta_{\boldsymbol\tau}\phi-\mu+f)\|_{L^2}^2
\nonumber
\\
&\le C\left(
\|\tilde\zeta\nabla_{\boldsymbol\tau}\nabla\phi\|_{L^2}^2
+\|\mu-\bar\mu\|_{L^2}^2+\|\bar\mu\|_{L^2}^2+\|f\|_{L^2}^2
\right)                                              \nonumber
\\
&\le CC_1+C\left(\|f\|_{L^2}^2+\|h\|_{L^2(\Gamma)}^2
\right)+C\int_{0}^{T}\left(
\|\nabla H\|_{L^2}^2+\|\nabla f\|_{L^2}^2
+\|\nabla_{\boldsymbol\tau} h\|_{L^2(\Gamma)}^2
\right) {\rm d}t,
\end{align}
where we have used (\ref{bar mu}), (\ref{eq4}) and (\ref{eq7}). Consequently, (\ref{eq10}) together with (\ref{eq11}) and (\ref{eq7}) shows (\ref{eq lamma3}). This completes the proof of Lemma \ref{L-222}.
\hfill$\Box$
\begin{Lemma}
\label{Lemma4}
Let $(\phi,\psi)$ be the smooth solution to $(\ref{LAC})$ on $[0,T]$, then it holds that
\begin{align}
\label{eq32}
&\sup\limits_{0\le t \le T}
\left(
\|\nabla^3\phi\|_{L^2}^2
+\gamma\|\nabla_{\boldsymbol\tau}^3\psi\|_{L^2(\Gamma)}^2
\right)                                           \nonumber
\\
&+\int_{0}^{T}\left(
\|\nabla^2\phi_t\|_{L^2}^2+\|\nabla^2\Delta\phi\|_{L^2}^2
+\|\nabla^2_{\boldsymbol\tau}\psi_t\|_{L^2(\Gamma)}^2
+\gamma^2\|\nabla^4_{\boldsymbol\tau}\psi\|
_{L^2(\Gamma)}^2+\|\mu-\bar\mu\|_{H^2}^2
\right) {\rm d}t                                        \nonumber
\\
&\le  C( \|\phi_0\|_{H^3}^2 + C_{2} )+C\int_{0}^{T} \left(\| \nabla^{2}H\|_{L^2}^2+\| \nabla^{2}f\|_{L^2}^2 +\|\nabla^2_{\boldsymbol\tau} h\|_{L^2(\Gamma)}^2
\right) {\rm d}t,
\end{align}
where $C$ is a generic constant that only depends on $\Omega$ and $\gamma^{-1}$, but not on $K_1$ and $K_2$.
\end{Lemma}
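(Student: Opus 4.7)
The plan is to mimic the two-step (interior / near-boundary) strategy of Lemma \ref{L-222}, but at one higher order, and then recover the remaining quantities (normal derivatives, $\|\mu-\bar\mu\|_{H^2}$, $\|\nabla_{\boldsymbol\tau}^4\psi\|_{L^2(\Gamma)}$, $\|\nabla^2\Delta\phi\|_{L^2}$) by algebraic manipulation of the equations together with the elliptic regularity statement of Lemma \ref{estimates}.

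\textbf{Step 1 (interior estimate).} Starting from \eqref{1+2}, I would apply $\nabla^2$ and test with $\zeta^2\nabla^2\phi_t$ for a smooth cutoff $\zeta$ vanishing on $\Gamma$. Since $\bar\mu$ is spatially constant, $\nabla^2\bar\mu=0$, so no boundary terms arise and integration by parts produces
\[
\frac{1}{2}\frac{\mathrm d}{\mathrm dt}\|\zeta\nabla^3\phi\|_{L^2}^2+\|\zeta\nabla^2\phi_t\|_{L^2}^2
\le C\|\zeta\nabla^3\phi\|_{L^2}^2+C\left(\|\nabla^2 H\|_{L^2}^2+\|\nabla^2 f\|_{L^2}^2+\|\nabla^2\phi\|_{H^1}^2\right),
\]
up to commutator terms involving $\nabla\zeta$ that are absorbed by $C_2$ from Lemma \ref{L-222}. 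Gronwall then yields the interior part of \eqref{eq32}.

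\textbf{Step 2 (tangential estimate near the boundary).} Let $\tilde\zeta$ be a boundary cutoff with $\tilde\zeta\equiv 1$ near $\Gamma$. Applying $\nabla_{\boldsymbol\tau}^2$ to \eqref{1+2} and to $(\ref{LAC})_3$ and then testing the first equation by $\tilde\zeta^2\nabla_{\boldsymbol\tau}^2\phi_t$, the boundary integral arising in integration by parts matches precisely $\int_\Gamma\nabla_{\boldsymbol\tau}^2\phi_t\cdot\nabla_{\boldsymbol\tau}^2\partial_{\bf n}\phi\,\mathrm dS$; substituting $-\nabla_{\boldsymbol\tau}^2\partial_{\bf n}\phi=\nabla_{\boldsymbol\tau}^2\psi_t-\gamma\nabla_{\boldsymbol\tau}^2\Delta_{\boldsymbol\tau}\psi-\nabla_{\boldsymbol\tau}^2 h$ and moving the $\gamma$-term to the left via another integration by parts on $\Gamma$ generates $\tfrac{\gamma}{2}\tfrac{\mathrm d}{\mathrm dt}\|\nabla_{\boldsymbol\tau}^3\psi\|_{L^2(\Gamma)}^2$ and $\|\nabla_{\boldsymbol\tau}^2\psi_t\|_{L^2(\Gamma)}^2$. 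All remaining terms are absorbed into $C_2$, $\|\nabla^2 H\|_{L^2}^2$, $\|\nabla^2 f\|_{L^2}^2$, $\|\nabla_{\boldsymbol\tau}^2 h\|_{L^2(\Gamma)}^2$, and small multiples of the left-hand side; Gronwall closes the bound.

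\textbf{Step 3 (normal derivatives and the remaining quantities).} To promote the tangential control to full $\nabla^3\phi$ I use $(\ref{LAC})_2$ in the form $\partial_{\bf n}^2\phi=-\Delta_{\boldsymbol\tau}\phi-\mu+f$ and then differentiate tangentially; combined with Steps 1--2 this yields $\sup_t\|\nabla^3\phi\|_{L^2}^2$. For $\|\mu-\bar\mu\|_{H^2}$ I use $\mu-\bar\mu=H-\phi_t$ directly and the $H^2$ bound on $\phi_t$ from Steps 1--2 plus Lemma \ref{estimates} applied to $-\Delta\phi_t=(\bar\mu-\mu)_t+H_t-f_t$ (with boundary datum controlled by Lemma \ref{d-tt}). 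The estimate $\int_0^T\|\nabla^2\Delta\phi\|_{L^2}^2\,\mathrm dt$ follows from $\Delta\phi=f-\mu$ once $\|\mu\|_{H^2}$ is bounded. Finally, $\gamma^2\int_0^T\|\nabla_{\boldsymbol\tau}^4\psi\|_{L^2(\Gamma)}^2\mathrm dt$ comes from applying the surface regularity embedded in Lemma \ref{estimates} (or directly from $\gamma\Delta_{\boldsymbol\tau}\psi=\psi_t+\partial_{\bf n}\phi-h$), using the $H^2(\Gamma)$-control of $\psi_t$, $h$ and the trace of $\nabla^3\phi$.

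\textbf{Expected main obstacle.} The delicate point is the near-boundary Step 2: one must identify the boundary integral produced by $\tilde\zeta^2\nabla_{\boldsymbol\tau}^2\phi_t$-testing exactly with the surface contribution $\gamma\|\nabla_{\boldsymbol\tau}^3\psi\|_{L^2(\Gamma)}^2$ and $\|\nabla_{\boldsymbol\tau}^2\psi_t\|_{L^2(\Gamma)}^2$, while carefully tracking commutators $[\nabla_{\boldsymbol\tau}^2,\tilde\zeta^2]$ and the fact that on a curved boundary the tangential and normal derivatives do not commute (producing lower-order remainders which must be absorbed by $C_2$). This is also where the constant's dependence on $\gamma^{-1}$ is introduced, since $\gamma$-weighted tangential terms are used as energies on the right-hand side but become non-weighted in lower-order absorption arguments.
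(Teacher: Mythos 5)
Your proposal follows essentially the same route as the paper: interior and near-boundary estimates with cutoff functions at one order higher than Lemma \ref{L-222}, exact cancellation of the boundary integral against $\frac{\gamma}{2}\frac{\mathrm d}{\mathrm dt}\|\nabla_{\boldsymbol\tau}^3\psi\|_{L^2(\Gamma)}^2+\|\nabla_{\boldsymbol\tau}^2\psi_t\|_{L^2(\Gamma)}^2$ via the differentiated boundary condition, recovery of the normal derivatives from $(\ref{LAC})_2$, the identity $\mu-\bar\mu=H-\phi_t$ for the $H^2$ bound on $\mu-\bar\mu$, and Lemma \ref{estimates} for the $\gamma^2\|\nabla_{\boldsymbol\tau}^4\psi\|_{L^2(\Gamma)}^2$ term. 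The one caveat is in your Step 1: testing with $\zeta^2\nabla^2\phi_t$ produces a commutator containing $\|\nabla^2\phi_t\|_{L^2}$ \emph{without} the cutoff, which is not controlled by $C_2$; the paper sidesteps this by instead testing $\nabla$ of \eqref{1+2} with $\zeta^2\Delta\nabla\phi_t$ (so the commutator only involves $\|\nabla\phi_t\|_{L^2}$, already bounded by $C_1$), and with your choice one would have to absorb that term through the finite-covering identity \eqref{eq19} simultaneously with the boundary step rather than treating the two steps sequentially.
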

\begin{proof}[\bf Proof.] The proof consists of two parts.
\\
{\it\bfseries Step 1. Interior estimates.} Assume that $\zeta=\zeta(x)$ is smooth and $\zeta=0$ on $\Gamma$. Multiplying (\ref{eq13}) by $\zeta^2\Delta\nabla\phi_t$ and integrating the result over $\Omega$ by parts, one has
\begin{align}
\label{eq15}
& \frac{1}{2}\frac{\rm d}{{\rm d}t}\|\zeta\nabla\Delta\phi\|_{L^2}^2
+\|\zeta\nabla^2\phi_t\|_{L^2}^2
\nonumber \\
&=-\int_\Omega2\zeta\nabla\phi_t\cdot\nabla^2\phi_t\cdot\nabla\zeta {\rm d}x+\int_\Omega\zeta^2\nabla^2\phi_t:(\nabla^2 H-\nabla^2 f) {\rm d}x       \nonumber
\\
&\quad+\int_\Omega2\zeta(\nabla H-\nabla f)\cdot\nabla^2\phi_t\cdot\nabla\zeta {\rm d}x                                     \nonumber \\
&\le \frac{1}{2}\|\zeta\nabla^2\phi_t\|_{L^2}^2+C\left(\|\nabla\phi_t\|_{L^2}^2
+\|H\|_{H^2}^2+\|f\|_{H^2}^2
\right).
\end{align}
Integrating (\ref{eq15}) over $(0,T)$, which together with (\ref{eq4}) yields
\begin{equation}
\label{eq41}
\sup\limits_{0\le t \le T}\|\zeta\nabla\Delta\phi\|_{L^2}^2
+\int_{0}^{T}\|\zeta\nabla^2\phi_t\|_{L^2}^2 {\rm d}t
\le \|\phi_0\|_{H^3}^2 + C_{2} +C\int_{0}^{T} \left(\| \nabla^{2}H\|_{L^2}^2+\| \nabla^{2}f\|_{L^2}^2
\right) {\rm d}t .
\end{equation}
Then, direct calculation indicates
\begin{align}
\label{eq40}
\|\zeta\nabla^3\phi\|_{L^2}^2
&=-\int_\Omega\zeta^2\nabla^2\Delta\phi:\nabla^2\phi {\rm d}x-\int_\Omega2\zeta
\nabla^2\phi : \nabla^3\phi \cdot \nabla\zeta {\rm d}x
\nonumber
\\
&=\int_\Omega\zeta^2\nabla\Delta\phi\cdot\nabla\Delta\phi {\rm d}x +\int_\Omega 2\zeta
\nabla\Delta\phi \cdot \nabla^2\phi \cdot \nabla\zeta {\rm d}x-\int_\Omega2\zeta
\nabla^2\phi : \nabla^3\phi \cdot \nabla\zeta {\rm d}x                      \nonumber
\\
&=\|\zeta\nabla\Delta\phi\|_{L^2}^2 +\int_\Omega 2\zeta
\nabla\Delta\phi \cdot \nabla^2\phi \cdot \nabla\zeta {\rm d}x-\int_\Omega2\zeta
\nabla^2\phi : \nabla^3\phi \cdot \nabla\zeta {\rm d}x                    \nonumber
\\
&\le \frac{1}{2}\|\zeta\nabla^3\phi\|_{L^2}^2
+C\|\zeta\nabla\Delta\phi\|_{L^2}^2+C\|\nabla^2\phi\|_{L^2}^2
\nonumber
\\
&\le \frac{1}{2}\|\zeta\nabla^3\phi\|_{L^2}^2
 + C( \|\phi_0\|_{H^3}^2 + C_{2} )+C\int_{0}^{T} \left(\| \nabla^{2}H\|_{L^2}^2+\| \nabla^{2}f\|_{L^2}^2
\right) {\rm d}t  ,
\end{align}
where we have used (\ref{eq lamma3}) and (\ref{eq41}).
It follows from (\ref{eq41}) and (\ref{eq40}) that
\begin{equation}
\label{eq21}
\sup\limits_{0\le t \le T}\|\zeta\nabla^3\phi\|_{L^2}^2
+\int_{0}^{T}\|\zeta\nabla^2\phi_t\|_{L^2}^2 {\rm d}t\le C( \|\phi_0\|_{H^3}^2 + C_{2} )+C\int_{0}^{T} \left(\| \nabla^{2}H\|_{L^2}^2+\| \nabla^{2}f\|_{L^2}^2
\right) {\rm d}t .
\end{equation}
\vskip2mm
Moreover, taking the derivative of (\ref{eq13}) with respect to $x$ leads to
\begin{equation*}
\nabla^2\phi_t=\nabla^2\Delta\phi-\nabla^2 f+\nabla^2 H,
\end{equation*}
for which we have
\begin{align}
\label{eq30}
\int_{0}^{T}\|\zeta\nabla^2\Delta\phi\|_{L^2}^2 {\rm d}t & \le C \int_{0}^{T}\left\Vert\zeta\left(\nabla^2\phi_t+\nabla^2 f-\nabla^2 H\right)\right\Vert_{L^2}^2 {\rm d}t
\nonumber \\
& \le C( \|\phi_0\|_{H^3}^2 + C_{2} )+C\int_{0}^{T} \left(\| \nabla^{2}H\|_{L^2}^2+\| \nabla^{2}f\|_{L^2}^2
\right) {\rm d}t .
\end{align}
{\it\bfseries Step 2. Near the boundary estimates.} We also start by estimating tangential derivative. Assume a point $ p_{0} \in\Gamma$ and $\tilde\zeta=\tilde\zeta(x)$ is smooth function vanishing outside of a small neighborhood of $p_{0}$. Taking the tangential derivative of (\ref{1+2}) and $(\ref{LAC})_3$ with respect to $x$ twice leads to
\begin{align}
\begin{cases}
\label{eq16}
\nabla^2_{\boldsymbol\tau}\phi_t=\nabla^2_{\boldsymbol\tau}\Delta\phi
-\nabla^2_{\boldsymbol\tau} f+\nabla^2_{\boldsymbol\tau} H,&\rm in\quad\Omega\times(0,T),
\\
\nabla^2_{\boldsymbol\tau}\psi_t-\gamma\nabla^2_{\boldsymbol\tau}\Delta_{\boldsymbol \tau}\psi=-\nabla^2_{\boldsymbol\tau}\partial_{\bf n}\phi+\nabla^2_{\boldsymbol\tau}h,&\rm on\quad\Gamma\times(0,T).
\end{cases}
\end{align}
Thereafter, multiplying $(\ref{eq16})_1$ by $\tilde\zeta^2\nabla^2_{\boldsymbol\tau}\phi_t$, integrating over $\Omega$ by parts, one has
\begin{align}
\label{eq20}
&\frac{1}{2}\frac{\rm d}{{\rm d}t} \left(
\|\tilde\zeta\nabla^2_{\boldsymbol\tau}\nabla\phi\|_{L^2}^2
+\gamma\|\nabla^3_{\boldsymbol\tau}\psi\|_{L^2(\Gamma)}^2
\right)
+\|\tilde\zeta\nabla^2_{\boldsymbol\tau}\phi_t\|_{L^2}^2
+\|\nabla^2_{\boldsymbol\tau}\psi_t\|_{L^2(\Gamma)}^2
\nonumber
\\
&=-\int_\Omega2\tilde\zeta \nabla^2_{\boldsymbol\tau}\phi_t :
\nabla^2_{\boldsymbol\tau} \nabla\phi \cdot \nabla\tilde{\zeta} {\rm d}x  +\int_\Omega\tilde\zeta^2\nabla^2_{\boldsymbol\tau}\phi_t
:(\nabla^2_{\boldsymbol\tau} H-\nabla^2_{\boldsymbol\tau} f) {\rm d}x
+\int_\Gamma\nabla^2_{\boldsymbol\tau}\psi_t
:\nabla^2_{\boldsymbol\tau} h {\rm d}S                      \nonumber
\\
&\le
\frac{1}{2}\|\nabla^2_{\boldsymbol\tau}\psi_t\|_{L^2(\Gamma)}^2
+C\|\tilde\zeta\nabla^2_{\boldsymbol\tau}\nabla\phi\|_{L^2}^2
+\varepsilon \|\nabla^2\phi_t\|_{L^2}^2+C\left(
\|\nabla^2 H\|_{L^2}^2+\|\nabla^2 f\|_{L^2}^2+\|\nabla^2_{\boldsymbol\tau} h\|_{L^2(\Gamma)}^2
\right).
\end{align}
To arrive at (\ref{eq20}), it is sufficient to estimate $\|\nabla^2\phi_t\|_{L^2}^2$.
Directly calculated by using integration by parts formula, we get
\begin{align}
\label{eq51}
\|\zeta\nabla^2\phi_t\|_{L^2}^2&=\|\zeta\Delta\phi_t\|_{L^2}^2
+\int_\Omega2\zeta\nabla\zeta\cdot\nabla\phi_t\Delta\phi_t {\rm d}x-\int_\Omega 2\zeta\nabla\phi_t \cdot \nabla^2\phi_t \cdot \nabla\zeta {\rm d}x
\nonumber
\\
&\le\frac{1}{2}\|\zeta\nabla^2\phi_t\|_{L^2}^2
+C\|\zeta\Delta\phi_t\|_{L^2}^2+C\|\nabla\phi_t\|_{L^2}^2.
\end{align}
For the estimates near the boundary, we infer from $\Delta\phi_t=\Delta_{\boldsymbol\tau}\phi_t+\partial_{\bf n}\partial_{\bf n}\phi_t$ that
\begin{equation*}
\|\tilde\zeta\partial_{\bf n}\partial_{\bf n}\phi_t\|_{L^2}^2\le C\left(
\|\tilde\zeta\Delta_{\boldsymbol\tau}\phi_t\|_{L^2}^2
+\|\tilde\zeta\Delta\phi_t\|_{L^2}^2
\right)
\le C\left(\|\tilde\zeta\nabla_{\boldsymbol\tau}^2\phi_t\|_{L^2}^2
+\|\tilde\zeta\Delta\phi_t\|_{L^2}^2
\right),
\end{equation*}
which together with (\ref{eq51}) and finite covering theorem yields
\begin{align}
\label{eq19}
\|\nabla^2\phi_t\|_{L^2}^2 &\le\tilde C\left(
\|\zeta\nabla^2\phi_t\|_{L^2}^2
+\|\tilde\zeta\partial_{\bf n}\partial_{\bf n}\phi_t\|_{L^2}^2
+\|\tilde\zeta\nabla_{\boldsymbol\tau}^2\phi_t\|_{L^2}^2
\right)                                                 \nonumber
\\
&\le\tilde C\left(
\|\Delta\phi_t\|_{L^2}^2+\|\nabla\phi_t\|_{L^2}^2
+\|\tilde\zeta\nabla_{\boldsymbol\tau}^2\phi_t\|_{L^2}^2
\right).
\end{align}
Putting \eqref{eq19} into \eqref{eq20}, and then choosing $\varepsilon$ enough small such that $\tilde{C}\varepsilon < 1/2$, we infer  from Gronwall inequality and (\ref{eq4}) that
\begin{align}
\label{eq22}
&\sup\limits_{0\le t \le T}\left(
\|\tilde\zeta\nabla^2_{\boldsymbol\tau}\nabla\phi\|_{L^2}^2
+\gamma\|\nabla^3_{\boldsymbol\tau}\psi\|_{L^2(\Gamma)}^2
\right)+\int_{0}^{T}\left(\|\tilde\zeta\nabla^2_{\boldsymbol\tau}\phi_t\|_{L^2}^2
+\|\nabla^2_{\boldsymbol\tau}\psi_t\|_{L^2(\Gamma)}^2
\right) {\rm d}t
\nonumber \\
& \le  C( \|\phi_0\|_{H^3}^2 + C_{2} )+C\int_{0}^{T} \left(\| \nabla^{2}H\|_{L^2}^2+\| \nabla^{2}f\|_{L^2}^2 +\|\nabla^2_{\boldsymbol\tau} h\|_{L^2(\Gamma)}^2
\right) {\rm d}t .
\end{align}
Further, collecting all of the estimates $(\ref{eq4})$, $(\ref{eq19})$ and $(\ref{eq22})$, one gets that
\begin{align}
\label{eq39}
\|\nabla^2\phi_t\|_{L^2}^2 & \le C\left(
\|\Delta\phi_t\|_{L^2}^2+\|\nabla\phi_t\|_{L^2}^2
+\|\tilde\zeta\nabla_{\boldsymbol\tau}^2\phi_t\|_{L^2}^2
\right)
\nonumber \\
& \le C( \|\phi_0\|_{H^3}^2 + C_{2} )+C\int_{0}^{T} \left(\| \nabla^{2}H\|_{L^2}^2+\| \nabla^{2}f\|_{L^2}^2 +\|\nabla^2_{\boldsymbol\tau} h\|_{L^2(\Gamma)}^2
\right) {\rm d}t,
\end{align}
which implies
\begin{align} \label{mu-222}
\int_{0}^{T}\|\nabla^2(\mu-\bar\mu)\|_{L^2}^2{\rm d}t & \le C\int_{0}^{T}\left(
\|\nabla^2\phi_t\|_{L^2}^2+\|\nabla^2H\|_{L^2}^2
\right) {\rm d}t
\nonumber \\
& \le C( \|\phi_0\|_{H^3}^2 + C_{2} )+C\int_{0}^{T} \left(\| \nabla^{2}H\|_{L^2}^2+\| \nabla^{2}f\|_{L^2}^2 +\|\nabla^2_{\boldsymbol\tau} h\|_{L^2(\Gamma)}^2
\right) {\rm d}t.
\end{align}
\vskip2mm
Finally, differentiating $(\ref{LAC})_2$ with respect to $x$ leads to
\[
\nabla\mu=-\nabla\Delta\phi+\nabla f.
\]
Let us rewrite it as
\[
\nabla\partial_{\bf n}\partial_{\bf n}\phi=-\nabla\Delta_{\boldsymbol\tau}\phi-\nabla\mu+\nabla f.
\]
which yields
\begin{align}
\|\tilde\zeta\nabla\partial_{\bf n}\partial_{\bf n}\phi\|_{L^2}^2
&=\|\tilde\zeta\left(
-\nabla\Delta_{\boldsymbol\tau}\phi-\nabla\mu+\nabla f
\right)\|_{L^2}^2                                     \nonumber
\\
&\le C\left(
\|\tilde\zeta\nabla^2_{\boldsymbol\tau}\nabla\phi\|_{L^2}^2
+\|\nabla(\mu-\bar\mu)\|_{L^2}^2+\|\nabla f\|_{L^2}^2
\right)
\le C_2,                                          \nonumber
\end{align}
where we have used (\ref{eq4}) and (\ref{eq22}). Besides, we deduce from (\ref{LAC}) that
\begin{align*}
\begin{cases}
-\Delta\phi = \mu - f,&\rm in\quad\Omega\times(0,T),
\\
-\gamma\Delta_{\boldsymbol {\tau}}\psi + \partial_{\bf n}\phi + \psi= - \psi_t +h + \psi,&\rm on\quad\Gamma\times(0,T),
\\
\phi\big|_\Gamma=\psi,&\rm in \quad(0,T),
\end{cases}
\end{align*}
which together with Lemma \ref{estimates}, $(\ref{bar mu})$, $(\ref{eq4})$, $(\ref{eq lamma3})$, $(\ref{eq22})$, \eqref{mu-222},  implies
\begin{align}\label{phi-psi-44}
& \int_{0}^{T} \left( \| \nabla^{4} \phi\|_{L^{2}}^{2} + \gamma^{2} \| \nabla_{\boldsymbol {\tau}}^{4} \psi\|_{L^{2}(\Gamma)}^{2}  \right) {\rm d}t
\nonumber \\
& \le   C  \int_{0}^{T} \left( \| \mu \|_{H^{2}}^{2} +  \| f \|_{H^{2}}^{2} +  \| \psi_{t} \|_{H^{2}(\Gamma)}^{2} + \| h \|_{H^{2}(\Gamma)}^{2} + \| \psi \|_{H^{2}(\Gamma)}^{2} \right) {\rm d}t
\nonumber \\
& \le C  \int_{0}^{T} \left( \| \mu - \bar{\mu} \|_{H^{2}}^{2} + \| \bar{\mu} \|_{L^{2}}^{2} +  \| f \|_{H^{2}}^{2} +  \| \psi_{t} \|_{H^{2}(\Gamma)}^{2} + \| h \|_{H^{2}(\Gamma)}^{2} + \| \psi \|_{H^{2}(\Gamma)}^{2} \right) {\rm d}t
\nonumber \\
& \le   C( \|\phi_0\|_{H^3}^2 + C_{2} )+C\int_{0}^{T} \left(\| \nabla^{2}H\|_{L^2}^2+\| \nabla^{2}f\|_{L^2}^2 +\|\nabla^2_{\boldsymbol\tau} h\|_{L^2(\Gamma)}^2
\right) {\rm d}t.
\end{align}
Combining with (\ref{eq21}), (\ref{eq30}) and $(\ref{eq22})$--\eqref{phi-psi-44} yields (\ref{eq32}). This completes the proof of Lemma \ref{Lemma4}.
\end{proof}
\subsubsection{\bf The linearized Navier-Stokes equations}\quad
Next, we consider the initial boundary problem for the linearized Navier-Stokes equations
\begin{align}
\label{LNS}
\begin{cases}
{\bf u}_t-{\rm div}{\mathbb S}({\bf u})+\nabla p=G,&{\rm in}\quad\Omega\times(0,T),
\\
{\rm div}{\bf u}=0,&{\rm in}\quad\Omega\times(0,T),
\\
{\bf u}\cdot{\bf n}=0,&{\rm on}\quad\Gamma\times(0,T),
\\
\beta{\bf u}_{\boldsymbol\tau}+({\mathbb S}({\bf u})\cdot{\bf n})_{\boldsymbol{\tau}}=g,&{\rm on}\quad\Gamma\times(0,T),
\\
{\bf u}\big|_{t=0}={\bf u}_0,&{\rm in}\quad\Omega,
\end{cases}
\end{align}
where $G=-\tilde{\bf u}\cdot\nabla\tilde{\bf u}-\rm div(\nabla\tilde\phi\otimes\nabla\tilde\phi)$, $g=\left(-\gamma\Delta_{\boldsymbol\tau}\tilde\psi+\partial_{\bf n}\tilde\phi+\gamma_{fs}^{\prime}(\tilde\psi)\right)
\nabla_{\boldsymbol\tau}\tilde\psi$ and $(\tilde{\bf u},\tilde\phi,\tilde\psi)\in X(0,T;K_1,K_2)$.
\begin{Proposition}
\label{Prop3.2}
Let $(\tilde{\bf u},\tilde\phi,\tilde\psi)\in X(0,T;K_1,K_2)$, $G\in L^\infty(0,T;L^2)\cap L^2(0,T;H^1)$, $G_t\in L^2(0,T;L^2)$, $g\in L^\infty(0,T;H^\frac{1}{2}(\Gamma))\cap L^2(0,T;H^\frac{3}{2}(\Gamma))$ and $g_t\in L^2(0,T;L^2(\Gamma))$ for some fixed $T(0<T<1)$. Assume that the initial condition $({\bf u}_0,\phi_0,\psi_0)\in {\mathbb D}_1$, then there exists a unique solution ${\bf u}$ of $(\ref{LNS})$ such that
\begin{align}
\label{P2}
\|{\bf u}\|_{Y_1}^2
&\le Ce^{\epsilon^{-1}K_1^2K_2^2T}\left(\|{\bf u}_0\|_{H^2}^4+\|\phi_0\|_{H^3}^4+1+\|G\|_{L^2}^2
+\|g\|_{H^{\frac{1}{2}}(\Gamma)}^2
\right)                                              \nonumber
\\
&\quad+Ce^{\epsilon^{-1}K_1^2K_2^2T}\int_{0}^{T}
\left[\epsilon K_1^{-2}K_2^{-2}\left(\|G_t\|_{L^2}^2+\|g_t\|_{L^2(\Gamma)}^2\right)
+\|G\|_{H^1}^2+\|g\|_{H^{\frac{3}{2}}(\Gamma)}^2
\right]{\rm d}t,
\end{align}
where $C$ is a positive constant depending on $\Omega$ and $\beta$, but independent of $K_1$ and $K_2$.
\end{Proposition}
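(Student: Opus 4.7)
\textbf{Proof plan for Proposition \ref{Prop3.2}.} Since (\ref{LNS}) is a linear parabolic Stokes system with Navier--slip boundary conditions, existence and uniqueness of a solution with the stated regularity follow from a Faedo--Galerkin construction using the eigenbasis of the Stokes operator equipped with (\ref{LNS})$_{3,4}$; the a priori bound (\ref{P2}), which I outline below for smooth solutions, justifies the passage to the limit and yields uniqueness by applying the same energy method to the difference of two solutions. I will therefore concentrate on the derivation of (\ref{P2}).

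The estimate is assembled in three layers. First, testing (\ref{LNS})$_1$ by $\mathbf{u}$ and using $\mathrm{div}\,\mathbf{u}=0$, $\mathbf{u}\cdot\mathbf{n}=0$ (so that $\nabla p$ drops out), and (\ref{LNS})$_4$ produces
\begin{align*}
\tfrac{1}{2}\tfrac{\mathrm d}{\mathrm dt}\|\mathbf{u}\|_{L^2}^2+\tfrac{1}{2}\|\mathbb{S}(\mathbf{u})\|_{L^2}^2+\beta\|\mathbf{u}_{\boldsymbol\tau}\|_{L^2(\Gamma)}^2=\langle G,\mathbf{u}\rangle+\langle g,\mathbf{u}_{\boldsymbol\tau}\rangle_\Gamma,
\end{align*}
and Korn's inequality (Lemma \ref{Korn}) together with trace--Young estimates yields the $L^\infty(0,T;L^2)\cap L^2(0,T;H^1)$ bound on $\mathbf{u}$. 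Second, differentiating (\ref{LNS})$_1$ in $t$ and testing by $\mathbf{u}_t$ gives the analogous identity with $(\mathbf{u}_t,G_t,g_t)$ in place of $(\mathbf{u},G,g)$; the boundary term $\langle g_t,(\mathbf{u}_t)_{\boldsymbol\tau}\rangle_\Gamma$ is handled by the trace interpolation $\|(\mathbf{u}_t)_{\boldsymbol\tau}\|_{L^2(\Gamma)}\le C\|\mathbf{u}_t\|_{L^2}^{1/2}\|\mathbf{u}_t\|_{H^1}^{1/2}$ followed by Young's inequality with the weight $\epsilon K_1^{-2}K_2^{-2}$, and $\langle G_t,\mathbf{u}_t\rangle$ is treated symmetrically. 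This produces the small prefactor $\epsilon K_1^{-2}K_2^{-2}$ in front of $\|G_t\|_{L^2}^2+\|g_t\|_{L^2(\Gamma)}^2$ and pushes the remaining $L^2$ term into Gronwall with multiplier $\epsilon^{-1}K_1^2K_2^2$, which is the origin of the exponential factor in (\ref{P2}). The initial value $\mathbf{u}_t(0)$ is read off from the equation, $\mathbf{u}_t(0)=\mathrm{div}\,\mathbb{S}(\mathbf{u}_0)-\nabla p(0)+G(0)$, and is bounded via the stationary Stokes formula for $p(0)$ and the regularity $(\mathbf{u}_0,\phi_0)\in\mathbb{D}_1$; this accounts for the polynomial initial-data contribution $\|\mathbf{u}_0\|_{H^2}^4+\|\phi_0\|_{H^3}^4+1$.

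To reach the $L^\infty(0,T;H^2)\cap L^2(0,T;H^3)$ norm of $\mathbf{u}$ and the $L^\infty(0,T;L^2)\cap L^2(0,T;H^1)$ norm of $\nabla p$, I view (\ref{LNS}) at each fixed $t$ as a stationary Stokes problem with right-hand side $G-\mathbf{u}_t$ and boundary flux $g$. The Agmon--Douglis--Nirenberg/Solonnikov type elliptic regularity for the Stokes system with Navier--slip boundary conditions (see the references cited just after (\ref{NBC})) gives
\begin{align*}
\|\mathbf{u}\|_{H^{s+2}}+\|\nabla p\|_{H^s}\le C\bigl(\|G-\mathbf{u}_t\|_{H^s}+\|g\|_{H^{s+\frac{1}{2}}(\Gamma)}+\|\mathbf{u}\|_{L^2}\bigr),\qquad s=0,1.
\end{align*}
Substituting the bounds from the previous two steps and integrating in time closes the $Y_1$ norm and produces the two $\|G\|_{L^2}^2,\|g\|_{H^{1/2}(\Gamma)}^2$ and $\|G\|_{H^1}^2,\|g\|_{H^{3/2}(\Gamma)}^2$ contributions in (\ref{P2}).

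The main difficulty is the bookkeeping of the $K_1,K_2$ dependence: since $G$ and $g$ are built from $(\tilde{\mathbf{u}},\tilde\phi,\tilde\psi)\in X(0,T;K_1,K_2)$, their natural norms scale polynomially in $K_1$ and $K_2$, and one must arrange the Young splittings so that \emph{only} $\|G_t\|_{L^2}^2$ and $\|g_t\|_{L^2(\Gamma)}^2$ inherit the small prefactor $\epsilon K_1^{-2}K_2^{-2}$, while the absorbed dissipative portion feeds a Gronwall coefficient of size $\epsilon^{-1}K_1^2K_2^2$. This precise structure is indispensable for the ensuing Banach contraction argument, because $T$ may then be chosen so small that $e^{\epsilon^{-1}K_1^2K_2^2T}$ stays close to $1$ and the full nonlinear problem (\ref{NSAC})--(\ref{initial condition}) can be solved by a fixed-point iteration in $X(0,T;K_1,K_2)$.
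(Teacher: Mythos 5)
Your proposal follows essentially the same route as the paper: a basic energy estimate for $\mathbf{u}$, a time-differentiated energy estimate for $\mathbf{u}_t$ in which the boundary term is absorbed via the trace interpolation $\|\mathbf{u}_t\|_{L^2(\Gamma)}^2\le C\|\mathbf{u}_t\|_{L^2}\|\mathbf{u}_t\|_{H^1}$ and a Young splitting with weights $\epsilon K_1^{-2}K_2^{-2}$ versus $\epsilon^{-1}K_1^2K_2^2$ (producing the exponential Gronwall factor), the initial value $\mathbf{u}_t(0)$ read off from the equation to generate the $\|\mathbf{u}_0\|_{H^2}^4+\|\phi_0\|_{H^3}^4$ terms, and finally the stationary Stokes elliptic regularity with Navier-slip boundary conditions applied to the right-hand side $G-\mathbf{u}_t$ at levels $s=0,1$ to recover $\|\mathbf{u}\|_{H^2},\|\mathbf{u}\|_{H^3}$ and the pressure. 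This matches the paper's Lemmas \ref{LNS L1}--\ref{LNS L4} in both structure and the key technical points.
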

The well-posedness of the problem (\ref{LNS}) can be found in \cite{6} (see Theorem 2.3 therein).
Now we only need to do the a priori estimates in (\ref{P2}).
The first estimate is about the $L^\infty(0,T;L^2)$-norm of the velocity $\bf u$.
\begin{Lemma}
\label{LNS L1}
Let ${\bf u}$ be the smooth solution to $(\ref{LNS})$ on $[0,T]$, then it holds that
\begin{align}
\label{L2.0}
&\sup\limits_{0\le t \le T}
\|{\bf u}\|_{L^2}^2+\int_{0}^{T}\left(
\|\nabla{\bf u}\|_{L^2}^2
+\|{\bf u}_{\boldsymbol\tau}\|_{L^2(\Gamma)}^2
\right) {\rm d}t                                               \nonumber
\\
&\le C(\beta^{-1})\left[\|{\bf u}_0\|_{L^2}^2+C\int_{0}^{T}\left(
\|G\|_{L^2}^2+\|g\|_{L^2(\Gamma)}^2
\right) {\rm d}t
\right],
\end{align}
where $C$ is a generic constant that depends on $\Omega$ and $\beta$, but not on $K_1$ and $K_2$.
\end{Lemma}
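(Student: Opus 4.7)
The plan is to run a standard energy estimate by testing the first equation of \eqref{LNS} against $\bf u$ itself and carefully bookkeeping the boundary contributions coming from GNBC.

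First I would multiply $(\ref{LNS})_1$ by ${\bf u}$ and integrate over $\Omega$. The pressure term vanishes because of $(\ref{LNS})_2$ and $(\ref{LNS})_3$: integration by parts gives $\int_\Omega\nabla p\cdot{\bf u}\,{\rm d}x=-\int_\Omega p\,{\rm div}\,{\bf u}\,{\rm d}x+\int_\Gamma p\,({\bf u}\cdot{\bf n})\,{\rm d}S=0$. For the viscous term, integration by parts and the symmetry of $\mathbb S({\bf u})$ produce $\frac{1}{2\eta}\|\mathbb S({\bf u})\|_{L^2}^2$ in the interior plus a boundary flux $-\int_\Gamma(\mathbb S({\bf u})\cdot{\bf n})\cdot{\bf u}\,{\rm d}S$. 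Since ${\bf u}\cdot{\bf n}=0$ on $\Gamma$, we have ${\bf u}={\bf u}_{\boldsymbol\tau}$ on $\Gamma$, so this flux equals $-\int_\Gamma(\mathbb S({\bf u})\cdot{\bf n})_{\boldsymbol\tau}\cdot{\bf u}_{\boldsymbol\tau}\,{\rm d}S$; substituting the Navier slip condition $(\mathbb S({\bf u})\cdot{\bf n})_{\boldsymbol\tau}=g-\beta{\bf u}_{\boldsymbol\tau}$ converts it into $\beta\|{\bf u}_{\boldsymbol\tau}\|_{L^2(\Gamma)}^2-\int_\Gamma g\cdot{\bf u}_{\boldsymbol\tau}\,{\rm d}S$. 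Collecting terms yields the basic identity
\begin{align*}
\frac12\frac{\rm d}{{\rm d}t}\|{\bf u}\|_{L^2}^2+\frac{1}{2\eta}\|\mathbb S({\bf u})\|_{L^2}^2+\beta\|{\bf u}_{\boldsymbol\tau}\|_{L^2(\Gamma)}^2=\int_\Omega G\cdot{\bf u}\,{\rm d}x+\int_\Gamma g\cdot{\bf u}_{\boldsymbol\tau}\,{\rm d}S.
\end{align*}

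Next I would apply Korn's inequality (Lemma \ref{Korn}) to replace $\|\mathbb S({\bf u})\|_{L^2}^2$ by $C\|\nabla{\bf u}\|_{L^2}^2$ on the left-hand side. The right-hand side is then controlled by Cauchy--Schwarz and Young: $\int_\Omega G\cdot{\bf u}\,{\rm d}x\le\frac12\|{\bf u}\|_{L^2}^2+\frac12\|G\|_{L^2}^2$, while the boundary term is split with a $\beta$-weighted Young inequality as $\int_\Gamma g\cdot{\bf u}_{\boldsymbol\tau}\,{\rm d}S\le\frac{\beta}{2}\|{\bf u}_{\boldsymbol\tau}\|_{L^2(\Gamma)}^2+\frac{1}{2\beta}\|g\|_{L^2(\Gamma)}^2$, which lets me absorb half of the $\beta$-dissipation into the left-hand side. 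This produces a differential inequality of the form
\begin{align*}
\frac{\rm d}{{\rm d}t}\|{\bf u}\|_{L^2}^2+C\|\nabla{\bf u}\|_{L^2}^2+\beta\|{\bf u}_{\boldsymbol\tau}\|_{L^2(\Gamma)}^2\le\|{\bf u}\|_{L^2}^2+\|G\|_{L^2}^2+\frac{1}{\beta}\|g\|_{L^2(\Gamma)}^2.
\end{align*}

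Finally, Gronwall's inequality (Lemma \ref{Gronwall}) on $[0,T]$ with $T<1$ gives the stated bound, with the constant depending on $\beta^{-1}$ (and on $\Omega$, $\eta$) through the Young-splitting step and the Korn constant. No step is genuinely delicate here; the only place requiring a bit of care is the identification of the boundary flux via GNBC, which is exactly what supplies the dissipative term $\beta\|{\bf u}_{\boldsymbol\tau}\|_{L^2(\Gamma)}^2$ rather than a sign-indefinite contribution, and the choice of Young weight to preserve it.
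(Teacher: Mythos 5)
Your proposal is correct and follows essentially the same route as the paper's proof: test $(\ref{LNS})_1$ with ${\bf u}$, kill the pressure term via incompressibility and ${\bf u}\cdot{\bf n}=0$, convert the viscous boundary flux through the Navier slip condition into $\beta\|{\bf u}_{\boldsymbol\tau}\|_{L^2(\Gamma)}^2-\int_\Gamma g\cdot{\bf u}_{\boldsymbol\tau}\,{\rm d}S$, then absorb with a $\beta$-weighted Young inequality, apply Korn, and conclude with Gronwall. The only cosmetic difference is your explicit tracking of the viscosity factor $\eta$ in front of $\|\mathbb S({\bf u})\|_{L^2}^2$, which the paper suppresses by normalizing $\eta=1$.
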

\begin{proof}[\bf Proof.]
Multiplying $(\ref{LNS})_1$ by ${\bf u}$, and integrating the result over $\Omega$ lead to
\begin{equation}
\label{eq29}
\int_\Omega{\bf u}_t\cdot{\bf u} {\rm d}x-\int_\Omega{\rm div}{\mathbb S}({\bf u})\cdot{\bf u} {\rm d}x+\int_\Omega\nabla p\cdot{\bf u} {\rm d}x =\int_\Omega G\cdot{\bf u} {\rm d}x,
\end{equation}
First, due to integration by parts and boundary condition $(\ref{LNS})_4$, one has
\begin{align}
\label{eq24}
-\int_\Omega{\rm div}{\mathbb S}({\bf u})\cdot{\bf u}{\rm d}x&=-\int_\Omega\partial_i(\partial_i u_j+\partial_j u_i)u_j {\rm d}x
\nonumber
\\
&=\int_\Omega(\partial_i u_j+\partial_j u_i)\partial_i u_j
{\rm d}x-\int_\Gamma(\partial_i u_j+\partial_j u_i)n_i u_j {\rm d}S
\nonumber
\\
&=\frac{1}{2}\int_\Omega(\partial_i u_j+\partial_j u_i)(\partial_i u_j+\partial_j u_i)
{\rm d}x-\int_\Gamma [{\mathbb S}({\bf u})\cdot{\bf n}]\cdot {\bf u}{\rm d}S                                              \nonumber
\\
&=\frac{1}{2}\int_\Omega|{\mathbb S}({\bf u})|^2 {\rm d}x-\int_\Gamma({\mathbb S}({\bf u})\cdot{\bf n})_{\boldsymbol\tau}\cdot{\bf u}_{\boldsymbol\tau}{\rm d}S  \nonumber
\\
&=\frac{1}{2}\|{\mathbb S}({\bf u})\|_{L^2}^2-\int_\Gamma
(g-\beta{\bf u}_{\boldsymbol\tau})\cdot{\bf u}_{\boldsymbol\tau}{\rm d}S  \nonumber
\\
&=\frac{1}{2}\|{\mathbb S}({\bf u})\|_{L^2}^2+\beta\|{\bf u}_{\boldsymbol\tau}\|_{L^2(\Gamma)}^2-\int_\Gamma
g\cdot{\bf u}_{\boldsymbol\tau}{\rm d}S,
\end{align}
where we have used the following fact
\begin{align}
\label{su}
\int_\Gamma({\mathbb S}({\bf u})\cdot{\bf n})\cdot{\bf u}{\rm d}S&=\int_\Gamma \Big\{  \Big[ \Big({\mathbb S}({\bf u})\cdot{\bf n}\Big)\cdot{\bf n}  \Big] {\bf n}+\Big({\mathbb S}({\bf u})\cdot{\bf n}\Big)_{\boldsymbol\tau}  \Big\} \cdot\Big\{ ({\bf u}\cdot{\bf n}) {\bf n}+{\bf u}_{\boldsymbol\tau} \Big\}{\rm d}S      \nonumber
\\
&=\int_\Gamma({\mathbb S}({\bf u})\cdot{\bf n})_{\boldsymbol\tau}\cdot{\bf u}_{\boldsymbol\tau}{\rm d}S,
\end{align}
since ${\bf u}\cdot{\bf n}=0$ on $\Gamma$.
Then, by the incompressibility of ${\bf u}$, we deduce that
\begin{align}
\label{eq25}
\int_\Omega\nabla p\cdot{\bf u}{\rm d}x=-\int_\Omega p{\rm div}{\bf u}{\rm d}x+\int_\Gamma p{\bf u}\cdot{\bf n}{\rm d}S=0.
\end{align}
Then, substituting (\ref{eq24}) and (\ref{eq25}) into (\ref{eq29}), using Cauchy-Schwartz's inequality, we obtain
\begin{align}
\frac{1}{2}\frac{\rm d}{{\rm d}t}\|{\bf u}\|_{L^2}^2+\beta\|{\bf u}_{\boldsymbol\tau}\|_{L^2(\Gamma)}^2+\frac{1}{2}\|{\mathbb S}({\bf u})\|_{L^2}^2&=\int_\Omega{\bf u}\cdot G {\rm d}x+\int_\Gamma g\cdot{\bf u}_{\boldsymbol\tau} {\rm d}S
\nonumber
\\
&\le \frac{\beta}{2}\|{\bf u}_{\boldsymbol\tau}\|_{L^2(\Gamma)}^2
+C(\beta^{-1})\|g\|_{L^2(\Gamma)}^2+C\|{\bf u}\|_{L^2}^2+C\|G\|_{L^2}^2,
\nonumber
\end{align}
together with Gronwall inequality and Korn's inequality, we arrive at
\begin{align}
&\sup\limits_{0\le t \le T}
\|{\bf u}\|_{L^2}^2+\int_{0}^{T}\left(
\|\nabla{\bf u}\|_{L^2}^2
+\|{\bf u}_{\boldsymbol\tau}\|_{L^2(\Gamma)}^2
\right) {\rm d}t                                              \nonumber
\\
&\le C(\beta^{-1})\left[\|{\bf u}_0\|_{L^2}^2+C\int_{0}^{T}\left(
\|G\|_{L^2}^2+\|g\|_{L^2(\Gamma)}^2
\right) {\rm d}t
\right]. \nonumber
\end{align}
Hence, the proof of Lemma \ref{LNS L1} is completed.
\end{proof}
\begin{Lemma}
\label{LNS L2}
Let ${\bf u}$ be the smooth solution to $(\ref{LNS})$ on $[0,T]$, then it holds that
\begin{align}
\label{L2.1}
&\sup\limits_{0\le t \le T}
\|{\bf u}_t\|_{L^2}^2+\int_{0}^{T}\left(
\|\nabla{\bf u}_t\|_{L^2}^2
+\beta\|{\bf u}_{{\boldsymbol\tau}t}\|_{L^2(\Gamma)}^2
\right) {\rm d}t                                            \nonumber
\\
&\le Ce^{\epsilon^{-1}K_1^2K_2^2T}\left[\|{\bf u}_0\|_{H^2}^4+\|\phi_0\|_{H^3}^4+1
+\int_{0}^{T}
\epsilon K_1^{-2}K_2^{-2}\left(\|G_t\|_{L^2}^2+\|g_t\|_{L^2(\Gamma)}^2
\right) {\rm d}t \right]                                            \nonumber
\\
& =C_3,
\end{align}
where $C$ is a generic constant that depends on $\Omega$ and $\beta$, but not on $K_1$ and $K_2$.
\end{Lemma}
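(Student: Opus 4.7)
The plan is to mimic the basic energy estimate of Lemma \ref{LNS L1}, but now applied to the time-differentiated linearized problem. First I would differentiate $(\ref{LNS})_{1,2,3,4}$ formally with respect to $t$, which yields
\begin{align*}
{\bf u}_{tt} - \mathrm{div}\,\mathbb{S}({\bf u}_t) + \nabla p_t = G_t \quad \text{in } \Omega, \qquad \mathrm{div}\,{\bf u}_t = 0, \quad {\bf u}_t \cdot {\bf n} = 0, \quad \beta {\bf u}_{\boldsymbol\tau t} + (\mathbb{S}({\bf u}_t)\cdot {\bf n})_{\boldsymbol\tau} = g_t.
\end{align*}
Testing the first equation against ${\bf u}_t$, the pressure term vanishes exactly as in \eqref{eq25} (since ${\bf u}_t$ inherits the divergence-free and no-penetration conditions), while the viscous term is treated through the identity \eqref{eq24} applied to ${\bf u}_t$. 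The outcome is the basic differential identity
\begin{align*}
\frac{1}{2}\frac{\rm d}{{\rm d}t}\|{\bf u}_t\|_{L^2}^2 + \frac{1}{2}\|\mathbb{S}({\bf u}_t)\|_{L^2}^2 + \beta\|{\bf u}_{\boldsymbol\tau t}\|_{L^2(\Gamma)}^2 = \int_\Omega G_t \cdot {\bf u}_t \,{\rm d}x + \int_\Gamma g_t \cdot {\bf u}_{\boldsymbol\tau t} \,{\rm d}S.
\end{align*}

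Next I would handle the right-hand side. The boundary term is absorbed via Young's inequality into $\tfrac{\beta}{2}\|{\bf u}_{\boldsymbol\tau t}\|_{L^2(\Gamma)}^2 + C(\beta^{-1})\|g_t\|_{L^2(\Gamma)}^2$, which is exactly the shape allowed in the target bound (with the prefactor $\epsilon K_1^{-2}K_2^{-2}$ introduced via a further Young splitting so that $\|g_t\|_{L^2(\Gamma)}^2$ appears with that coefficient). The volume term is split as $\int G_t \cdot {\bf u}_t \le \tfrac{\epsilon K_1^{-2}K_2^{-2}}{2}\|G_t\|_{L^2}^2 + \tfrac{\epsilon^{-1} K_1^{2} K_2^{2}}{2}\|{\bf u}_t\|_{L^2}^2$; the second piece is the Gronwall multiplier whose time integral produces the factor $e^{\epsilon^{-1} K_1^2 K_2^2 T}$. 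Applying Korn's inequality (Lemma \ref{Korn}) to the $\|\mathbb{S}({\bf u}_t)\|_{L^2}^2$ term converts it into $\|\nabla {\bf u}_t\|_{L^2}^2$, whereafter Lemma \ref{Gronwall} yields the integrated estimate.

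The remaining and main task is to close the estimate by bounding $\|{\bf u}_t(0)\|_{L^2}^2$. For this I would interpret the momentum equation at $t=0$ as ${\bf u}_t(0) = \mathrm{div}\,\mathbb{S}({\bf u}_0) - \nabla p_0 + G(0)$, where $G(0) = -{\bf u}_0 \cdot \nabla {\bf u}_0 - \mathrm{div}(\nabla \phi_0 \otimes \nabla \phi_0)$. The initial data lies in $\mathbb{D}_1$, so ${\bf u}_0 \in H^2$ and $\phi_0 \in H^3$ give $G(0) \in L^2$ with a norm bounded by $\|{\bf u}_0\|_{H^2}^2 + \|\phi_0\|_{H^3}^2$; squaring then produces the quartic factor $\|{\bf u}_0\|_{H^2}^4 + \|\phi_0\|_{H^3}^4 + 1$ appearing in $C_3$. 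The pressure $p_0$ is recovered from a Neumann-type elliptic problem for the Stokes operator under the GNBC, using the compatibility condition coming from $\mathrm{div}\,{\bf u}_0 = 0$ and ${\bf u}_0 \cdot {\bf n} = 0$, yielding $\|\nabla p_0\|_{L^2} \le C(\|{\bf u}_0\|_{H^2} + \|G(0)\|_{L^2})$.

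The hard part will be the initial-data step, since $\|\nabla p_0\|_{L^2}$ is not directly available from $\mathbb{D}_1$ and must be extracted from the auxiliary elliptic problem compatible with the slip condition; the other subtlety is calibrating the Young parameters so that the nonlinear RHS bound has precisely the $\epsilon K_1^{-2}K_2^{-2}$ weight on $(\|G_t\|_{L^2}^2, \|g_t\|_{L^2(\Gamma)}^2)$ demanded by Proposition \ref{Prop3.2}, while preserving the $\beta \|{\bf u}_{\boldsymbol\tau t}\|_{L^2(\Gamma)}^2$ on the left. Once these two points are handled, Korn's and Gronwall's inequalities deliver \eqref{L2.1} directly.
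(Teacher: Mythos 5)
Your overall skeleton (differentiate in $t$, test with ${\bf u}_t$, kill the pressure by orthogonality, use the identity \eqref{eq24} for the viscous term, then Korn + Gronwall) matches the paper. But there is a genuine gap in your treatment of the boundary term $\int_\Gamma g_t\cdot{\bf u}_{{\boldsymbol\tau}t}\,{\rm d}S$. You propose to absorb it as $\tfrac{\beta}{2}\|{\bf u}_{{\boldsymbol\tau}t}\|_{L^2(\Gamma)}^2+C(\beta^{-1})\|g_t\|_{L^2(\Gamma)}^2$ and then obtain the weight $\epsilon K_1^{-2}K_2^{-2}$ on $\|g_t\|_{L^2(\Gamma)}^2$ ``via a further Young splitting.'' This cannot work: Young's inequality forces the product of the two coefficients to be of order one, so putting the small weight $\epsilon K_1^{-2}K_2^{-2}$ on $\|g_t\|_{L^2(\Gamma)}^2$ necessarily puts the large weight $\epsilon^{-1}K_1^{2}K_2^{2}$ on $\|{\bf u}_{{\boldsymbol\tau}t}\|_{L^2(\Gamma)}^2$, which can be absorbed neither by the fixed dissipation $\beta\|{\bf u}_{{\boldsymbol\tau}t}\|_{L^2(\Gamma)}^2$ on the left nor by Gronwall (the Gronwall quantity is $\|{\bf u}_t\|_{L^2(\Omega)}^2$, not a boundary norm). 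The small weight is not cosmetic — it is exactly what makes the later self-map estimate $\int_0^T K_1^{-2}K_2^{-2}(\|G_t\|^2+\|g_t\|^2)\,{\rm d}t\le C$ close, since $\int_0^T\|G_t\|^2\,{\rm d}t$ is only $O(K_1^2+K_2^2)$ with no gain in $T$. The missing ingredient is the trace–interpolation step of the paper: $\|{\bf u}_{{\boldsymbol\tau}t}\|_{L^2(\Gamma)}^2\le C\|{\bf u}_t\|_{L^2}\|{\bf u}_t\|_{H^1}$ (Lemma \ref{Trace} together with \eqref{L21}), after which the large factor $\epsilon^{-1}K_1^2K_2^2$ lands on $\|{\bf u}_t\|_{L^2}\|{\bf u}_t\|_{H^1}$, is split into $\tfrac14\|\mathbb S({\bf u}_t)\|_{L^2}^2$ plus a multiple of $\|{\bf u}_t\|_{L^2}^2$, and is finally paid for by the exponential $e^{\epsilon^{-1}K_1^2K_2^2T}$ in Gronwall.

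On the initial data, your route through $\|\nabla p_0\|_{L^2}$ and an auxiliary Neumann problem is unnecessary and is not what the paper does: since ${\bf u}_t$ is divergence free and tangent to $\Gamma$, the paper evaluates $\|\partial_t{\bf u}(\cdot,0)\|_{L^2}^2=\lim_{t\to0+}\langle -{\bf u}\cdot\nabla{\bf u}-\nabla p+{\rm div}\,\mathbb S({\bf u})-{\rm div}(\nabla\phi\otimes\nabla\phi),\partial_t{\bf u}\rangle$ and the pressure pairing vanishes identically, giving \eqref{initial ut} directly from $({\bf u}_0,\phi_0)\in H^2\times H^3$. Your version can be made to work but imports an elliptic estimate for $p_0$ that the argument never needs.
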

\begin{proof}[\bf Proof.]
Differentiating $(\ref{LNS})_1$ with respect to $t$, there is
\begin{equation*}
{\bf u}_{tt}-{\rm div}{\mathbb S}({\bf u}_t)+\nabla p_t=G_t.
\end{equation*}
Multiplying it by ${\bf u}_t$, and integrating the result over $\Omega$ by parts, similar to (\ref{eq24}) and (\ref{eq25}), we obtain
\begin{align}
-\int_\Omega{\bf u}_t\cdot{\rm div}{\mathbb S}({\bf u}_t)
{\rm d}x&=\frac{1}{2}\int_\Omega|{\mathbb S}({\bf u}_t)|^2
{\rm d}x-\int_\Gamma \left({\mathbb S}({\bf u}_t)\cdot{\bf n}\right)\cdot {\bf u}_t {\rm d}S                             \nonumber
\\
&=\frac{1}{2}\|{\mathbb S}({\bf u}_t)\|_{L^2}^2
-\int_\Gamma\left(g_t-\beta{\bf u}_{{\boldsymbol\tau}t}
\right)
\cdot{\bf u}_{{\boldsymbol\tau}t} {\rm d}S                    \nonumber
\\
&=\frac{1}{2}\|{\mathbb S}({\bf u}_t)\|_{L^2}^2
+\beta\|{\bf u}_{{\boldsymbol\tau}t}\|_{L^2(\Gamma)}^2
-\int_\Gamma g_t\cdot{\bf u}_{{\boldsymbol\tau}t} {\rm d}S,
\nonumber
\end{align}
and
\begin{align}
\label{nabla pt}
\int_\Omega{\bf u}_t\cdot\nabla p_t {\rm d}x  = -\int_\Omega p_t {\rm div} {\bf u}_t  {\rm d}x+\int_\Gamma p_t {\bf u}_t \cdot {\bf n} {\rm d}S=0.
\end{align}
According to the above equalities, Lemma \ref{Trace}, (\ref{L21}) and (\ref{poincare u}), we get
\begin{align}
\label{L2.2}
&\frac{1}{2}\frac{\rm d}{{\rm d}t}\|{\bf u}_t\|_{L^2}^2+\frac{1}{2}\|{\mathbb S}({\bf u}_t)\|_{L^2}^2+\beta\|{\bf u}_{{\boldsymbol\tau}t}\|_{L^2(\Gamma)}^2=\int_\Omega{\bf u}_t\cdot G_t {\rm d}x+\int_\Gamma g_t\cdot{\bf u}_{{\boldsymbol\tau}t} {\rm d}S                           \nonumber
\\
&\le\epsilon\left(K_1^2K_2^2\right)^{-1}\|G_t\|_{L^2}^2+\epsilon^{-1}K_1^2K_2^2\|{\bf u}_t\|_{L^2(\Gamma)}^2 \nonumber
\\
&\quad+\epsilon^{-1}K_1^2K_2^2
\|{\bf u}_{{\boldsymbol\tau}t}\|_{L^2(\Gamma)}^2
+\epsilon\left(K_1^2K_2^2\right)^{-1}\|g_t\|_{L^2(\Gamma)}^2                       \nonumber
\\
&\le C\epsilon \left(K_1^2K_2^2\right)^{-1}\|G_t\|_{L^2}^2+\epsilon^{-1}K_1^2K_2^2\|{\bf u}_t\|_{L^2(\Gamma)}^2 \nonumber
\\
&\quad+C\epsilon^{-1}K_1^2K_2^2\|{\bf u}_t\|_{L^2}\|{\bf u}_t\|_{H^1}
+\epsilon \left(K_1^2K_2^2\right)^{-1}\|g_t\|_{L^2(\Gamma)}^2                      \nonumber
\\
&\le \frac{1}{4}\|\mathbb S({\bf u}_t)\|_{L^2}^2+C\epsilon^{-1}K_1^2K_2^2\|{\bf u}_t\|_{L^2}^2+\epsilon \left(K_1^2K_2^2\right)^{-1}\left(\|G_t\|_{L^2}^2+\|g_t\|_{L^2(\Gamma)}^2\right).
\end{align}
In addition, the initial data $\|\partial_t{\bf u}(\cdot,0)\|_{L^2}$ will be constructed via $({\bf u}_0,\phi_0,\psi_0)$ as follows
\begin{align}
&\|\partial_t{\bf u}(\cdot,0)\|_{L^2}^2=\lim_{t \to 0+}\|\partial_t{\bf u}\|_{L^2}^2=\lim_{t \to 0+}\langle-{\bf u}\cdot\nabla{\bf u}-\nabla p+{\rm div}\mathbb S({\bf u})-{\rm div}(\nabla\phi\otimes\nabla\phi),\partial_t{\bf u}\rangle,  \nonumber
\end{align}
Therefore, one can infer
\begin{align}
\label{initial ut}
\|\partial_t{\bf u}(\cdot,0)\|_{L^2}^2&\le C\|{\bf u}_0\|_{H^2}^2\left(\|{\bf u}_0\|_{H^2}^2+1\right)+C\|\phi_0\|_{H^3}^4,
\end{align}
which together with \eqref{L2.2}, Gronwall inequality, Korn's inequality  yields (\ref{L2.1}). Hence, we complete the proof of Lemma \ref{LNS L2}.
\end{proof}
\vskip2mm
Now, we will use Lemma \ref{LNS L2} to prove the bound on the $L^\infty(0,T;L^2)$-norm of $\nabla{\bf u}$:
\begin{Lemma}
\label{LNS L3}
Let ${\bf u}$ be the smooth solution to $(\ref{LNS})$ on $[0,T]$, then it holds that
\begin{align}
\label{eq444}
&\sup\limits_{0\le t \le T}\left(
\|\nabla{\bf u}\|_{L^2}^2+\beta\|{\bf u}_{\boldsymbol\tau}\|_{L^2(\Gamma)}^2
\right) +\int_{0}^{T}\|{\bf u}_t\|_{L^2}^2 {\rm d}t
\le CC_3+C \int_{0}^{T}\left(\|G\|_{L^2}^2+\|g\|_{L^2(\Gamma)}^2
\right) {\rm d}t ,
\end{align}
where $C$ is a generic constant that depends on $\Omega$ and $\beta$, but not on $K_1$ and $K_2$.
\end{Lemma}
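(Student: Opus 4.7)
The plan is to test the velocity equation $(\ref{LNS})_1$ against ${\bf u}_t$. This promotes the dissipative term into a time derivative of $\tfrac14\|\mathbb S({\bf u})\|_{L^2}^2$ (plus a boundary contribution coming from the slip condition) while simultaneously producing the desired $\|{\bf u}_t\|_{L^2}^2$ on the left, mirroring the identity (\ref{eq24}) already used in Lemma \ref{LNS L1} but with the roles of ${\bf u}$ and ${\bf u}_t$ switched. The key observation is that Lemma \ref{LNS L2} already delivers the $L^2(0,T;L^2(\Gamma))$ bound on ${\bf u}_{\boldsymbol\tau t}$, which is exactly what is needed to close the boundary forcing term that would otherwise be out of reach.

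First, the pressure term vanishes: integrating by parts,
$\int_\Omega\nabla p\cdot{\bf u}_t\,{\rm d}x=-\int_\Omega p\,{\rm div}\,{\bf u}_t\,{\rm d}x+\int_\Gamma p\,({\bf u}_t\cdot{\bf n})\,{\rm d}S=0$,
using ${\rm div}\,{\bf u}=0$ and the fact that ${\bf u}\cdot{\bf n}=0$ on $\Gamma$ implies ${\bf u}_t\cdot{\bf n}=0$. For the viscous term, the same manipulation as in (\ref{eq24}) gives
\[
-\int_\Omega{\rm div}\,\mathbb S({\bf u})\cdot{\bf u}_t\,{\rm d}x
=\tfrac14\tfrac{{\rm d}}{{\rm d}t}\|\mathbb S({\bf u})\|_{L^2}^2
+\tfrac{\beta}{2}\tfrac{{\rm d}}{{\rm d}t}\|{\bf u}_{\boldsymbol\tau}\|_{L^2(\Gamma)}^2
-\int_\Gamma g\cdot{\bf u}_{\boldsymbol\tau t}\,{\rm d}S,
\]
where the boundary integral is obtained via the Navier condition $(\ref{LNS})_4$ exactly as in (\ref{su}). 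Collecting these identities yields the differential relation
\[
\|{\bf u}_t\|_{L^2}^2+\tfrac14\tfrac{{\rm d}}{{\rm d}t}\|\mathbb S({\bf u})\|_{L^2}^2+\tfrac{\beta}{2}\tfrac{{\rm d}}{{\rm d}t}\|{\bf u}_{\boldsymbol\tau}\|_{L^2(\Gamma)}^2=\int_\Omega G\cdot{\bf u}_t\,{\rm d}x+\int_\Gamma g\cdot{\bf u}_{\boldsymbol\tau t}\,{\rm d}S.
\]

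Integrating in time over $(0,t)$, I would then bound the right-hand side as follows. The bulk forcing is immediate: $\int_0^t\int_\Omega G\cdot{\bf u}_t\le \tfrac12\int_0^t\|{\bf u}_t\|_{L^2}^2+\tfrac12\int_0^t\|G\|_{L^2}^2$, the first piece being absorbed on the left. For the delicate boundary piece $\int_0^t\int_\Gamma g\cdot{\bf u}_{\boldsymbol\tau t}\,{\rm d}S$, Cauchy--Schwarz produces $\|g\|_{L^2(\Gamma)}\|{\bf u}_{\boldsymbol\tau t}\|_{L^2(\Gamma)}$, and then Young's inequality with weight $\beta$ gives
\[
\int_0^t\!\!\int_\Gamma g\cdot{\bf u}_{\boldsymbol\tau t}\,{\rm d}S\,{\rm d}s\le \tfrac{1}{2\beta}\!\int_0^t\beta\|{\bf u}_{\boldsymbol\tau t}\|_{L^2(\Gamma)}^2{\rm d}s+\tfrac{\beta}{2}\!\int_0^t\|g\|_{L^2(\Gamma)}^2{\rm d}s\le C(\beta^{-1})C_3+C(\beta)\!\int_0^t\|g\|_{L^2(\Gamma)}^2{\rm d}s,
\]
using precisely the $L^2(0,T;L^2(\Gamma))$ bound on ${\bf u}_{\boldsymbol\tau t}$ from Lemma \ref{LNS L2}. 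This is the only step that does real work: without Lemma \ref{LNS L2} the factor ${\bf u}_{\boldsymbol\tau t}$ would be uncontrolled.

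Finally, the initial contributions $\tfrac14\|\mathbb S({\bf u}_0)\|_{L^2}^2+\tfrac{\beta}{2}\|{\bf u}_{0\boldsymbol\tau}\|_{L^2(\Gamma)}^2\le C\|{\bf u}_0\|_{H^2}^2$ are dominated by $C_3$, and Korn's inequality (Lemma \ref{Korn}) replaces $\|\mathbb S({\bf u})\|_{L^2}^2$ on the left by $C\|\nabla{\bf u}\|_{L^2}^2$. Taking supremum in $t\in[0,T]$ produces (\ref{eq444}). The main obstacle, as noted, is the boundary term $\int_\Gamma g\cdot{\bf u}_{\boldsymbol\tau t}\,{\rm d}S$, which here is overcome not by integration by parts in time (which would introduce $g_t$) but by invoking the already-established bound on $\beta\|{\bf u}_{\boldsymbol\tau t}\|_{L^2(\Gamma)}^2$ from the preceding lemma.
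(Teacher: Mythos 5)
Your proposal is correct and follows essentially the same route as the paper: test $(\ref{LNS})_1$ with ${\bf u}_t$, use the identities (\ref{eq24})-type for the viscous term and (\ref{nabla p}) for the pressure, absorb $\tfrac12\|{\bf u}_t\|_{L^2}^2$, and close the boundary term via Lemma \ref{LNS L2}. The only (immaterial) difference is that you control $\int_\Gamma g\cdot{\bf u}_{t\boldsymbol\tau}\,{\rm d}S$ directly through the $L^2(0,T;L^2(\Gamma))$ bound on ${\bf u}_{t\boldsymbol\tau}$ from (\ref{L2.1}), whereas the paper passes through the trace inequality $\|{\bf u}_{t\boldsymbol\tau}\|_{L^2(\Gamma)}^2\le C\|{\bf u}_t\|_{H^1}^2$ and then uses the same lemma.
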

\begin{proof}[\bf Proof.]
Multiplying $(\ref{LNS})_1$ by ${\bf u}_t$, and integrating the result over $\Omega$ lead to
\begin{equation}
\label{eq45}
\|{\bf u}_t\|_{L^2}^2-\int_\Omega{\bf u}_t\cdot{\rm div}{\mathbb S}({\bf u}){\rm d}x+\int_\Omega{\bf u}_t\cdot\nabla p {\rm d}x =\int_\Omega{\bf u}_t\cdot G {\rm d}x.
\end{equation}
Using integration by parts and boundary conditions $(\ref{LNS})_{2,4}$, we obtain
\begin{align}
-\int_\Omega{\bf u}_t\cdot{\rm div}{\mathbb S}({\bf u}){\rm d}x
&=\int_\Omega(\partial_i u_j+\partial_j u_i)\partial_t\partial_i u_j
{\rm d}x-\int_\Gamma(\partial_i u_j+\partial_j u_i)n_i \partial_t u_j {\rm d}S                                       \nonumber
\\
&=\frac{1}{2}\int_\Omega(\partial_i u_j+\partial_j u_i)\partial_t(\partial_i u_j+\partial_j u_i)
{\rm d}x-\int_\Gamma({\mathbb S}({\bf u})\cdot{\bf n})\cdot {\bf u}_t{\rm d}S                                               \nonumber
\\
&=\frac{1}{4}\frac{\rm d}{{\rm d}t}\|{\mathbb S}({\bf u})\|_{L^2}^2-\int_\Gamma({\mathbb S}({\bf u})\cdot{\bf n})_{\boldsymbol\tau}\cdot{\bf u}_{t{\boldsymbol\tau}}{\rm d}S  \nonumber
\\
&=\frac{1}{4}\frac{\rm d}{{\rm d}t}\|{\mathbb S}({\bf u})\|_{L^2}^2
-\int_\Gamma(g-\beta{\bf u}_{\boldsymbol\tau})
\cdot{\bf u}_{t{\boldsymbol\tau}}{\rm d}S                       \nonumber
\\
&=\frac{1}{4}\frac{\rm d}{{\rm d}t}\|{\mathbb S}({\bf u})\|_{L^2}^2+\frac{\beta}{2}\frac{\rm d}{{\rm d}t}\|{\bf u}_{\boldsymbol\tau}\|_{L^2(\Gamma)}^2
-\int_\Gamma g\cdot{\bf u}_{t{\boldsymbol\tau}}{\rm d}S,\nonumber
\end{align}
and
\begin{align}
\label{nabla p}
\int_\Omega{\bf u}_t\cdot\nabla p {\rm d}x   =-\int_\Omega {\rm div} {\bf u}_{t}  p {\rm d}x+\int_\Gamma {\bf u}_{t} \cdot {\bf n} p {\rm d}S=0.
\end{align}
Then, we obtain
\begin{align}
&\frac{1}{2}\frac{\rm d}{{\rm d}t}\left(\frac{1}{2}\|{\mathbb S}({\bf u})\|_{L^2}^2+\beta\|{\bf u}_{\boldsymbol\tau}\|_{L^2(\Gamma)}^2
\right)+\|{\bf u}_t\|_{L^2}^2                         \nonumber
\\
&=\int_\Omega{\bf u}_t\cdot G {\rm d}x+\int_\Gamma g\cdot{\bf u}_{t{\boldsymbol\tau}}{\rm d}S                             \nonumber
\\
&\le \frac{1}{2}\|{\bf u}_t\|_{L^2}^2+C\left(\|G\|_{L^2}^2+\|g\|_{L^2(\Gamma)}^2
+\|{\bf u}_{t{\boldsymbol\tau}}\|_{L^2(\Gamma)}^2
\right)                                               \nonumber
\\
&\le \frac{1}{2}\|{\bf u}_t\|_{L^2}^2+C\left(\|G\|_{L^2}^2+\|g\|_{L^2(\Gamma)}^2
+\|{\bf u}_t\|_{H^1}^2
\right),                                              \nonumber
\end{align}
where we have used Cauchy-Schwartz's inequality and Lemma \ref{Trace}. Together with Gronwall inequality, 
 and (\ref{L2.1}), we have
\begin{align}
&\sup\limits_{0\le t \le T}\left(\|\nabla{\bf u}\|_{L^2}^2+\beta\|{\bf u}_{\boldsymbol\tau}\|_{L^2(\Gamma)}^2
\right)
+\int_{0}^{T}\|{\bf u}_t\|_{L^2}^2 {\rm d}t                   \nonumber
\\
&\le C\int_{0}^{T}\left(\|G\|_{L^2}^2+\|g\|_{L^2(\Gamma)}^2
+\|{\bf u}_t\|_{H^1}^2
\right) {\rm d}t                                              \nonumber
\\
&\le CC_3+C \int_{0}^{T}\left(\|G\|_{L^2}^2+\|g\|_{L^2(\Gamma)}^2
\right) {\rm d}t.                                             \nonumber
\end{align}
Thus, the proof of Lemma \ref{LNS L3} is completed.
\end{proof}
\begin{Lemma}
\label{LNS L4}
Let ${\bf u}$ be the smooth solution to $(\ref{LNS})$ on $[0,T]$, then it holds that
\begin{align}
\label{eq49}
&\sup\limits_{0\le t \le T}\left(\|{\bf u}\|_{H^2}^2+\|\nabla p\|_{L^2}^2
\right)
+\int_{0}^{T}\left(\|{\bf u}\|_{H^3}^2+\|\nabla p\|_{H^1}^2
\right) {\rm d}t                                         \nonumber
\\
&\le C\left(C_3+\|G\|_{L^2}^2+\|g\|_{H^{\frac{1}{2}}(\Gamma)}^2 \right) +C\int_{0}^{T}\left(\|G\|_{H^1}^2+\|g\|_{H^{\frac{3}{2}}(\Gamma)}^2
\right) {\rm d}t,
\end{align}
where $C$ is a generic constant that depends on $\Omega$ and $\beta$, but not on $K_1$ and $K_2$.
\end{Lemma}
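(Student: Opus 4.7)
The plan is to regard \eqref{LNS} as an instantaneous stationary Stokes problem with Navier-type boundary conditions and to apply the associated elliptic regularity, transferring all the time-dependence to the right-hand side via Lemma \ref{LNS L2}.

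First I would rewrite equation \eqref{LNS}$_1$ at each fixed time $t$ as
\begin{equation*}
-{\rm div}\,\mathbb S({\bf u}) + \nabla p = G - {\bf u}_t, \quad {\rm div}\,{\bf u} = 0 \text{ in } \Omega,
\end{equation*}
with boundary conditions ${\bf u}\cdot{\bf n} = 0$ and $\beta {\bf u}_{\boldsymbol\tau} + (\mathbb S({\bf u})\cdot{\bf n})_{\boldsymbol\tau} = g$ on $\Gamma$. The classical $W^{s,2}$-regularity theory for the Stokes operator with Navier slip-with-friction boundary conditions (see references \cite{6, 16}, which also underlie Lemma \ref{regularity u}) then yields, for each $t$,
\begin{equation*}
\|{\bf u}\|_{H^{s+2}} + \|\nabla p\|_{H^s} \le C\bigl(\|G - {\bf u}_t\|_{H^s} + \|g\|_{H^{s+1/2}(\Gamma)} + \|{\bf u}\|_{H^{s+1}}\bigr),\qquad s = 0, 1,
\end{equation*}
with $C$ depending only on $\Omega$ and $\beta$ (the lower-order term $\|{\bf u}\|_{H^{s+1}}$ can be absorbed or bounded by previous lemmas).

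Next, taking $s = 0$, squaring, and taking the supremum over $t \in [0,T]$, I would combine with Lemma \ref{LNS L2} (which gives $\sup_t \|{\bf u}_t\|_{L^2}^2 \le C_3$) and Lemma \ref{LNS L1} (which controls $\sup_t \|{\bf u}\|_{L^2}^2$ by $\|G\|_{L^2}^2$ and $\|g\|_{L^2(\Gamma)}^2$ in integrated form, and hence crudely by the quantities in the stated estimate) to obtain
\begin{equation*}
\sup_{0\le t \le T}\bigl(\|{\bf u}\|_{H^2}^2 + \|\nabla p\|_{L^2}^2\bigr) \le C\bigl(C_3 + \|G\|_{L^2}^2 + \|g\|_{H^{1/2}(\Gamma)}^2\bigr).
\end{equation*}
Then, taking $s = 1$, squaring, and integrating over $(0,T)$, I would use $\int_0^T \|{\bf u}_t\|_{H^1}^2 \, dt \le C_3$ from Lemma \ref{LNS L2} and the just-obtained $L^\infty(0,T;H^2)$ bound for ${\bf u}$ (which, since $T < 1$, contributes at most a term of the form already appearing in the estimate) to conclude
\begin{equation*}
\int_0^T \bigl(\|{\bf u}\|_{H^3}^2 + \|\nabla p\|_{H^1}^2\bigr)\,dt \le C\Bigl(C_3 + \int_0^T (\|G\|_{H^1}^2 + \|g\|_{H^{3/2}(\Gamma)}^2)\,dt\Bigr).
\end{equation*}
Summing the two displays gives \eqref{eq49}.

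The only delicate point — and what I expect to be the main obstacle — is invoking the Stokes regularity with the correct matching of boundary-data regularity ($g \in H^{s+1/2}(\Gamma)$) and making sure the constant depends only on $\Omega$ and $\beta$ and not on $K_1, K_2$. The pressure estimate itself is recovered in standard fashion: once ${\bf u} \in H^{s+2}$ is known, $\nabla p = G - {\bf u}_t + {\rm div}\,\mathbb S({\bf u}) \in H^s$ directly from the equation, and the boundary compatibility together with $\int_\Omega p\, dx = 0$ fixes $p$ uniquely. All other manipulations are routine, and no new structural ingredient beyond what has been established in Lemmas \ref{LNS L1}--\ref{LNS L3} and the cited Stokes theory is needed.
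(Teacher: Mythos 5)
Your proposal is correct and follows essentially the same route as the paper: the paper also freezes time, rewrites \eqref{LNS} as a stationary Stokes system with Navier-type boundary conditions and source $G-{\bf u}_t$, invokes the elliptic regularity of Beir\~ao~da~Veiga \cite{1} (Theorem 1.2) at the levels $s=0$ and $s=1$, and closes the estimate using the bounds $\sup_t\|{\bf u}_t\|_{L^2}^2\le C_3$ and $\int_0^T\|\nabla{\bf u}_t\|_{L^2}^2\,{\rm d}t\le C_3$ from Lemma \ref{LNS L2} together with Lemma \ref{LNS L1} for the lower-order terms. The only cosmetic difference is the citation for the Stokes regularity (the paper uses \cite{1} rather than \cite{6,16}), which does not affect the argument.
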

\begin{proof}[\bf Proof.]
Thanks to the linearity of \eqref{LNS}, the proof of $(\ref{eq49})$ is a straightforward application of
\cite{1} (see Theorem 1.2 therein). Indeed, by rewriting the equations for $\bf u$ as
\begin{align}
\begin{cases}
-{\rm div}{\mathbb S}({\bf u})+\nabla p=G-{\bf u}_t,&\rm in\quad\Omega\times(0,T),
\\
{\rm div}{\bf u}=0,&\rm in\quad\Omega\times(0,T),
\\
{\bf u}\cdot{\bf n}=0,&\rm on\quad\Gamma\times(0,T),
\\
\beta{\bf u}_{\boldsymbol\tau}+({\mathbb S}({\bf u})\cdot{\bf n})_{\boldsymbol{\tau}}=g,&\rm on\quad\Gamma\times(0,T),
\nonumber
\end{cases}
\end{align}
then, we deduce for almost any $t\in[0,T]$,
\begin{align}
\label{eq50}
||{\bf u}||_{H^2}^2+||\nabla p||_{L^2}^2
&\le C||G||_{L^2}^2+C||{\bf u}_t||_{L^2}^2+C||g||_{H^{\frac{1}{2}}(\Gamma)}^2 \nonumber
\\
&\le CC_3+C||G||_{L^2}^2+C||g||_{H^{\frac{1}{2}}(\Gamma)}^2,
\end{align}
where we have used (\ref{L2.1}) and (\ref{eq444}). Exploiting \cite{1} (Theorem 1.2) again, there is
\begin{align}
||\nabla{\bf u}||_{H^2}^2+||\nabla p||_{H^1}^2&\le C\left(||\nabla G||_{L^2}^2+||\nabla{\bf u}_t||_{L^2}^2+||\nabla_{\boldsymbol\tau} g||_{H^{\frac{1}{2}}(\Gamma)}^2
\right) \nonumber
\\
&\le C\left(||G||_{H^1}^2+||\nabla{\bf u}_t||_{L^2}^2+||g||_{H^{\frac{3}{2}}(\Gamma)}^2
\right),                                                 \nonumber
\end{align}
which together with $(\ref{L2.0})$ and $(\ref{L2.1})$ implies
\begin{align}
\label{eq46}
\int_{0}^{T}\left(||{\bf u}||_{H^3}^2+||\nabla p||_{H^1}^2
\right) {\rm d}t
&\le C\int_{0}^{T}\left(||G||_{H^1}^2+||\nabla{\bf u}_t||_{L^2}^2+||g||_{H^{\frac{3}{2}}(\Gamma)}^2
\right){\rm d}t                                           \nonumber
\\
&\le CC_3+ C\int_{0}^{T}\left(||G||_{H^1}^2+||g||_{H^{\frac{3}{2}}(\Gamma)}^2
\right){\rm d}t.
\end{align}
Combining (\ref{eq50}) with (\ref{eq46}), we complete the proof of Lemma \ref{LNS L4}.
\end{proof}
\subsubsection{\bf Proof of Theorem \ref{local}}\quad
For $(\tilde{\bf u},\tilde\phi,\tilde\psi)\in X(0,T;K_1,K_2)$, the Allen-Cahn equations (\ref{LAC}) and the Navier-Stokes equations (\ref{LNS}) admit a unique solution $(\phi,\psi)\in Y_2$ and ${\bf u}\in Y_1$ respectively. Thus, we can easily deduce $({\bf u},\phi,\psi)\in Y$. Now we consider the mapping $\Lambda$ defined on $Y$ as following
\begin{align}
\Lambda: (\tilde{\bf u},\tilde\phi,\tilde\psi)\mapsto({\bf u},\phi,\psi).                                       \nonumber
\end{align}
We shall prove that $\Lambda$ has a fixed point in $X(0,T;K_1,K_2)$ for $T$ small enough, where the positive constants $K_1$, $K_2$ and $T$ will be determined below. Let $({\bf u},\phi,\psi)=\Lambda(\tilde{\bf u},\tilde\phi,\tilde\psi)$,
some a priori estimates are requested which allow us to accomplish the proof.
\vskip2mm
First, we prove $\Lambda$ is a self-mapping. To this aim, we need to estimate linearized terms of Allen-Cahn equations (\ref{LAC}).
Direct calculation shows that
\begin{align}
\int_{0}^{T}\|H\|_{H^2}^2 {\rm d}t
&=\int_{0}^{T}\left(\|H\|_{L^2}^2+\|\nabla H\|_{L^2}^2+\|\nabla^2 H\|_{L^2}^2
\right) {\rm d}t                                              \nonumber
\\
&=\int_{0}^{T}\left(\|-\tilde{\bf u}\cdot\nabla\tilde\phi\|_{L^2}^2+\|-\nabla\tilde{\bf u}\cdot\nabla\tilde\phi-\tilde{\bf u}\cdot\nabla^2\tilde\phi\|_{L^2}^2
\right) {\rm d}t                                              \nonumber
\\
&\quad +C\int_{0}^{T}\|-\nabla^2\tilde{\bf u}\cdot\nabla\tilde\phi-2\nabla\tilde{\bf u}\cdot\nabla^2\tilde\phi-\tilde{\bf u}\cdot\nabla^3\tilde\phi\|_{L^2}^2{\rm d}t                                                      \nonumber
\\
&\le C\int_{0}^{T}\left(\|\tilde{\bf u}\|_{L^\infty}^2\|\nabla\tilde\phi\|_{L^2}^2+\|\nabla\tilde{\bf u}\|_{L^3}^2\|\nabla\tilde\phi\|_{L^6}^2+\|\tilde{\bf u}\|_{L^\infty}^2\|\nabla^2\tilde\phi\|_{L^2}^2
\right) {\rm d}t                                              \nonumber
\\
&\quad +C\int_{0}^{T}\left(\|\nabla^2\tilde{\bf u}\|_{L^2}^2\|\nabla\tilde\phi\|_{L^\infty}^2+\|\nabla\tilde{\bf u}\|_{L^3}^2\|\nabla^2\tilde\phi\|_{L^6}^2+\|\tilde{\bf u}\|_{L^\infty}^2\|\nabla^3\tilde\phi\|_{L^2}^2
\right) {\rm d}t                                              \nonumber
\\
&\le C\int_{0}^{T}\|\tilde{\bf u}\|_{H^2}^2\|\nabla\tilde\phi\|_{H^2}^2{\rm d}t \nonumber
\\
&\le CK_1K_2T,                                          \nonumber
\\[1em]
\int_{0}^{T}\|H_t\|_{L^2}^2 {\rm d}t
&=\int_{0}^{T}\|(-\tilde{\bf u}\cdot\nabla\tilde\phi)_t\|_{L^2}^2 {\rm d}t
\le C\int_{0}^{T}\left(\|\tilde{\bf u}_t\cdot\nabla\tilde\phi\|_{L^2}^2+\|\tilde{\bf u}\cdot\nabla\tilde\phi_t\|_{L^2}^2
\right) {\rm d}t                                              \nonumber
\\
&\le C\int_{0}^{T}\left(\|\tilde{\bf u}_t\|_{L^2}^2\|\nabla\tilde\phi\|_{L^\infty}^2+\|\tilde{\bf u}\|_{L^\infty}^2\|\nabla\tilde\phi_t\|_{L^2}^2
\right) {\rm d}t                                              \nonumber
\\
&\le C\int_{0}^{T}\left(\|\tilde{\bf u}_t\|_{L^2}^2\|\nabla\tilde\phi\|_{H^2}^2+\|\tilde{\bf u}\|_{H^2}^2\|\nabla\tilde\phi_t\|_{L^2}^2
\right) {\rm d}t                                              \nonumber
\\
&\le CK_1K_2T.                                          \nonumber
\end{align}
Similarly, we have
\begin{align}
\int_{0}^{T}\|\nabla H_t\|_{L^2}^2 {\rm d}t
& \le C\int_{0}^{T}\left(\|\nabla\tilde{\bf u}_t\cdot\nabla\tilde\phi\|_{L^2}^2+\|\tilde{\bf u}_t\cdot\nabla^2\tilde\phi\|_{L^2}^2+\|\nabla\tilde{\bf u}\cdot\nabla\tilde\phi_t\|_{L^2}^2+\|\tilde{\bf u}\cdot\nabla^2\tilde\phi_t\|_{L^2}^2
\right) {\rm d}t                                              \nonumber
\\
&\le C\int_{0}^{T}\left(\|\nabla\tilde{\bf u}_t\|_{L^2}^2\|\nabla\tilde\phi\|_{L^\infty}^2+\|\tilde{\bf u}_t\|_{L^6}^2\|\nabla^2\tilde\phi\|_{L^3}^2\right) {\rm d}t \nonumber
\\
&\quad+C\int_{0}^{T}\left(\|\nabla\tilde{\bf u}\|_{L^6}^2\|\nabla\tilde\phi_t\|_{L^3}^2+\|\tilde{\bf u}\|_{L^\infty}^2\|\nabla^2\tilde\phi_t\|_{L^2}^2
\right) {\rm d}t                                              \nonumber
\\
&\le C\int_{0}^{T}\left(\|\tilde{\bf u}_t\|_{H^1}^2\|\nabla\tilde\phi\|_{H^2}^2+\|\tilde{\bf u}\|_{H^2}^2\|\nabla\tilde\phi_t\|_{H^1}^2\right) {\rm d}t \nonumber
\\
&\le CK_1K_2,                                           \nonumber
\end{align}
which implies
\begin{align}
\|H\|_{H^1}
&=\left\Vert\int_{0}^{t}H_t(x,s){\rm d}s+H(x,0)\right\Vert_{H^1}
\le \int_{0}^{t}\|H_t\|_{H^1}{\rm d}s+\|H_0\|_{H^1}
\nonumber \\
&\le T^{\frac{1}{2}}\|H_t\|_{L^2(0,T;H^1)}+\|H_0\|_{H^1}
\le CK_1^{\frac{1}{2}}K_2^{\frac{1}{2}}T^{\frac{1}{2}}+C\|{\bf u}_0\|_{H^2}^2+C\|\phi_0\|_{H^2}^2,
\nonumber
\end{align}
where
\begin{align}
\|H_0\|_{H^1}&=\|H_0\|_{L^2}+\|\nabla H_0\|_{L^2}  \nonumber
\\
&=\|-\tilde{\bf u}_0\cdot\nabla\tilde\phi_0\|_{L^2}+\|-\nabla\tilde{\bf u}_0\cdot\nabla\tilde\phi_0-\tilde{\bf u}_0\cdot\nabla^2\tilde\phi_0\|_{L^2}
\nonumber
\\
&\le C\|\tilde{\bf u}_0\|_{L^6}\|\nabla\tilde\phi_0\|_{L^3}+C\left(\|\nabla\tilde{\bf u}_0\|_{L^6}\|\nabla\tilde\phi_0\|_{L^3}
+\|\tilde{\bf u}_0\|_{L^\infty}\|\nabla^2\tilde\phi_0\|_{L^2}
\right)                                               \nonumber
\\
&\le C\|\tilde{\bf u}_0\|_{H^1}\|\nabla\tilde\phi_0\|_{H^1}
+C\left(\|\nabla\tilde{\bf u}_0\|_{H^1}\|\nabla\tilde\phi_0\|_{H^1}
+\|\tilde{\bf u}_0\|_{H^2}\|\nabla^2\tilde\phi_0\|_{L^2}
\right)                                               \nonumber
\\
&\le C\|{\bf u}_0\|_{H^2}^2+C\|\phi_0\|_{H^2}^2.      \nonumber
\end{align}
On the related estimates of $f$, we get
\begin{align}
\int_{0}^{T}\|f\|_{H^2}^2 {\rm d}t
&=\int_{0}^{T}\left(\|\tilde\phi^3-\tilde\phi\|_{L^2}^2
+\|(3\tilde\phi^2-1)\nabla\tilde\phi\|_{L^2}^2
+\| 6\tilde\phi|\nabla\tilde\phi|^2+(3\tilde\phi^2-1)
\nabla^2\tilde\phi\|_{L^2}^2
\right) {\rm d}t                                              \nonumber
\\
&\le C\int_{0}^{T}\left(\|\tilde\phi\|_{L^6}^6+\|\tilde\phi\|_{L^2}^2
+\|3\tilde\phi^2-1\|_{L^\infty}^2\|\nabla\tilde\phi\|_{L^2}^2
\right) {\rm d}t                                              \nonumber
\\
&\quad
+C\int_{0}^{T}\left(\|\tilde\phi\|_{L^\infty}^2\|\nabla\tilde\phi\|_{L^4}^4
+\|3\tilde\phi^2-1\|_{L^\infty}^2\|\nabla^2\tilde\phi\|_{L^2}^2
\right) {\rm d}t                                              \nonumber
\\
&\le C\int_{0}^{T}\left(\|\phi\|_{H^2}^6+\|\phi\|_{H^2}^2\right) {\rm d}t \nonumber
\\
&\le CK_2^3T.                                           \nonumber
\end{align}
Meanwhile
\begin{align}
\int_{0}^{T}\|f_t\|_{H^1}^2 {\rm d}t
&=\int_{0}^{T}\left(\|f_t\|_{L^2}^2+\|\nabla f_t\|_{L^2}^2
\right) {\rm d}t                                              \nonumber
\\
&=\int_{0}^{T}\left(
\|(3\tilde\phi^2-1)\tilde\phi_t\|_{L^2}^2
+\|6\tilde\phi\nabla\tilde\phi\tilde\phi_t+(3\tilde\phi^2-1)
\nabla\tilde\phi_t\|_{L^2}^2
\right) {\rm d}t                                              \nonumber
\\
&\le C\int_{0}^{T}\left(
\|3\tilde\phi^2-1\|_{L^\infty}^2\|\tilde\phi_t\|_{L^2}^2
+\|\tilde\phi\|_{L^\infty}^2\|\nabla\tilde\phi\|_{L^\infty}^2\|\tilde\phi_t\|_{L^2}^2
\right) {\rm d}t \nonumber
\\
&\quad+C\int_{0}^{T}\|3\tilde\phi^2-1\|_{L^\infty}^2\|\nabla\tilde\phi_t\|_{L^2}^2{\rm d}t                                              \nonumber
\\
&\le C\int_{0}^{T}\left(\|\tilde\phi\|_{H^2}^4\|\tilde\phi_t\|_{H^1}^2+\|\tilde\phi_t\|_{H^1}^2
\right) {\rm d}t  \nonumber
\\
&\le CK_2^3T,                                           \nonumber
\end{align}
which implies
\begin{align}
\|f(\tilde\phi)\|_{H^1}
&=\left\Vert\int_{0}^{t}f_t(\tilde\phi(x,s)){\rm d}s
+f(\tilde\phi(x,s))\right\Vert_{H^1}                    \nonumber
\\
&\le \int_{0}^{t}\|f_t(\tilde\phi)\|_{H^1}{\rm d}s
+\|f(\tilde\phi(x,0))\|_{H^1}  \le CK_2^{\frac{3}{2}}T+C\left(\|\phi_0\|_{H^2}^3+\|\phi_0\|_{H^1}
\right),                                                \nonumber
\end{align}
where we have used the following fact:
\begin{align}
\|f(\tilde\phi(x,0))\|_{H^1}&=\|\tilde\phi_0^3-\tilde\phi_0\|_{L^2}
+\|(3\tilde\phi_0^2-1)\nabla\tilde\phi_0\|_{L^2}
\nonumber
\\
&\le C\left(\|\tilde\phi_0\|_{L^6}^3+\|\tilde\phi_0\|_{L^2}
+\|3\tilde\phi_0^2-1\|_{L^\infty}\|\nabla\tilde\phi_0\|_{L^2}
\right)                                                 \nonumber
\\
&\le C\left(\|\tilde\phi_0\|_{H^1}^3
+\|\tilde\phi_0\|_{H^2}^2\|\nabla\tilde\phi_0\|_{L^2}+\|\tilde\phi_0\|_{H^1}
\right)                                                 \nonumber
\\
&\le C\left(\|\phi_0\|_{H^2}^3+\|\phi_0\|_{H^1}
\right).                                                \nonumber
\end{align}
It follows from Lemma \ref{Trace} that
\begin{align}
\|h\|_{H^1(\Gamma)}^2
&=\left\Vert-\gamma_{fs}^\prime(\tilde\psi)-\tilde{\bf u}_{\boldsymbol\tau}\cdot\nabla_{\boldsymbol\tau}\tilde\psi\right\Vert
_{L^2(\Gamma)}^2+\left\Vert-\gamma_{fs}^{(2)}(\tilde\psi)
\nabla_{\boldsymbol\tau}\tilde\psi
-\nabla_{\boldsymbol\tau}\tilde{\bf u}_{\boldsymbol\tau}\cdot\nabla_{\boldsymbol\tau}\tilde\psi
-\tilde{\bf u}_{\boldsymbol\tau}\cdot\nabla^2_{\boldsymbol\tau}\tilde\psi\right\Vert
_{L^2(\Gamma)}^2                                  \nonumber
\\
&\le C\left(1+\|\tilde{\bf u}_{\boldsymbol\tau}\|_{L^3(\Gamma)}^2\|\nabla_{\boldsymbol\tau}\tilde\psi\|
_{L^6(\Gamma)}^2+\|\nabla_{\boldsymbol\tau}\tilde\psi\|_{L^2(\Gamma)}^2
\right)                                                   \nonumber
\\
&\quad + C\left(
\|\nabla_{\boldsymbol\tau}\tilde{\bf u}_{\boldsymbol\tau}\|_{L^2(\Gamma)}^2\|\nabla_{\boldsymbol\tau}\tilde\psi\|
_{L^\infty(\Gamma)}^2
+\|\tilde{\bf u}_{\boldsymbol\tau}\|_{L^6(\Gamma)}^2
\|\nabla^2_{\boldsymbol\tau}\tilde\psi\|_{L^3(\Gamma)}^2
\right)                                                   \nonumber
\\
&\le C\left(1+\|\tilde{\bf u}\|_{H^2}^2\|\nabla_{\boldsymbol\tau}\tilde\psi\|_{H^2(\Gamma)}^2
+\|\nabla_{\boldsymbol\tau}\tilde\psi\|_{L^2(\Gamma)}^2\right) \nonumber
\\
&\le CK_2^2.                                              \nonumber
\end{align}
Moreover, noting that $\Gamma$ is a two-dimensional manifold, using (\ref{L21}) we have
\begin{align}
\|\nabla^2_{\boldsymbol\tau}h\|_{L^2(\Gamma)}^2
&=\left\Vert\nabla_{\boldsymbol\tau}\left(-\gamma_{fs}^{(2)}(\tilde\psi)
\nabla_{\boldsymbol\tau}\tilde\psi
-\nabla_{\boldsymbol\tau}\tilde{\bf u}_{\boldsymbol\tau}\cdot\nabla_{\boldsymbol\tau}\tilde\psi
-\tilde{\bf u}_{\boldsymbol\tau}\cdot\nabla^2_{\boldsymbol\tau}\tilde\psi\right)\right\Vert
_{L^2(\Gamma)}^2                                  \nonumber
\\
&\le C\left(\|\nabla_{\boldsymbol\tau}\tilde\psi\|_{L^4(\Gamma)}^4
+\|\nabla^2_{\boldsymbol\tau}\tilde\psi\|_{L^2(\Gamma)}^2
+\|\nabla^2_{\boldsymbol\tau}\tilde{\bf u}_{\boldsymbol\tau}\|_{L^2(\Gamma)}^2
\|\nabla_{\boldsymbol\tau}\tilde\psi\|_{L^\infty(\Gamma)}^2
\right)                                                  \nonumber
\\
&\quad + C\left(\|\nabla_{\boldsymbol\tau}\tilde{\bf u}_{\boldsymbol\tau}\|_{L^3(\Gamma)}^2
\|\nabla^2_{\boldsymbol\tau}\tilde\psi\|_{L^6(\Gamma)}^2
+\|\tilde{\bf u}_{\boldsymbol\tau}\|_{L^\infty(\Gamma)}^2
\|\nabla^3_{\boldsymbol\tau}\tilde\psi\|_{L^2(\Gamma)}^2
\right)                                                  \nonumber
\\
&\le C\left(\|\nabla_{\boldsymbol\tau}\tilde\psi\|_{H^1(\Gamma)}^4
+\|\nabla^2_{\boldsymbol\tau}\tilde\psi\|_{L^2(\Gamma)}^2+\|\tilde{\bf u}_{\boldsymbol\tau}\|_{H^2(\Gamma)}^2\|\nabla_{\boldsymbol\tau}\tilde\psi\|_{H^2(\Gamma)}^2
\right)  \nonumber
\\
&\le C\left(K_2^2+K_1K_2+K_2\|\nabla^2_{\boldsymbol\tau}\tilde{\bf u}_{\boldsymbol\tau}\|_{L^2(\Gamma)}^2
\right)                                                  \nonumber
\\
&\le C\left(K_2^2+K_1K_2+K_2\|\nabla^2\tilde{\bf u}\|_{H^{\frac{1}{2}}}^2
\right)                                                  \nonumber
\\
&\le C\left(K_2^2+K_1K_2+K_1^{\frac{1}{2}}K_2\|\nabla^2\tilde{\bf u}\|_{H^1}
\right),                                                 \nonumber
\end{align}
which implies
\begin{align}
\int_{0}^{T}\|h\|_{H^2(\Gamma)}^2 {\rm d}t
&\le C\int_{0}^{T}\left(K_2^2+K_1K_2+K_1^{\frac{1}{2}}K_2\|\nabla^2\tilde{\bf u}\|_{H^1}
\right) {\rm d}t                                                \nonumber
\\
&\le C\left[K_2^2T+K_1K_2T+K_1^{\frac{1}{2}}K_2\left(\int_{0}^{T}1^2 {\rm d}t
\right)^{\frac{1}{2}} \left(\int_{0}^{T}\|\nabla^2\tilde{\bf u}\|_{H^1}^2 {\rm d}t
\right)^{\frac{1}{2}}
\right]                                                   \nonumber
\\
&\le C\left(K_2^2T+K_1K_2T^{\frac{1}{2}}
\right).                                                  \nonumber
\end{align}
Besides, from
\begin{align}
\int_{0}^{T}\|h_t\|_{L^2(\Gamma)}^2 {\rm d}t
&=\int_{0}^{T}\left\Vert-\gamma_{fs}^{(2)}(\tilde\psi)\tilde\psi_t-\tilde{\bf u}_{{\boldsymbol\tau}t}\cdot\nabla_{\boldsymbol\tau}\tilde\psi
-\tilde{\bf u}_{\boldsymbol\tau}\cdot\nabla_{\boldsymbol\tau}\tilde\psi_t
\right\Vert_{L^2(\Gamma)}^2{\rm d}t                   \nonumber
\\
&\le C\int_{0}^{T}\left(\|\tilde\psi_t\|_{L^2(\Gamma)}^2
+\|\tilde{\bf u}_{{\boldsymbol\tau}t}\|_{L^2(\Gamma)}^2\|\nabla
_{\boldsymbol\tau}\tilde\psi\|
_{L^\infty(\Gamma)}^2+\|\tilde{\bf u}_{\boldsymbol\tau}\|_{L^\infty(\Gamma)}^2\|\nabla
_{\boldsymbol\tau}\tilde\psi_t\|
_{L^2(\Gamma)}^2
\right) {\rm d}t                                                \nonumber
\\
&\le C\int_{0}^{T}\left(K_2+K_1K_2+K_2\|\tilde{\bf u}_t\|_{H^{\frac{1}{2}}}^2+K_2\|\nabla^2
\tilde{\bf u}\|_{H^{\frac{1}{2}}}^2
\right) {\rm d}t                                                \nonumber
\\
&\le C\int_{0}^{T}\left(K_2+K_1K_2+K_2\|\tilde{\bf u}_t\|_{L^2}\|\tilde{\bf u}_t\|_{H^1}+K_2\|\nabla^2\tilde{\bf u}\|_{L^2}\|\nabla^2\tilde{\bf u}\|_{H^1}
\right) {\rm d}t                                                \nonumber
\\
&\le C\left(K_2T+K_1K_2T^{\frac{1}{2}}
\right),                                                \nonumber
\end{align}
we deduce that
\begin{align}
\|h\|_{L^2(\Gamma)}
&=\left\Vert\int_{0}^{t}h_t(x,s){\rm d}s
+h(x,0)\right\Vert_{L^2(\Gamma)}
\le \int_{0}^{t}\|h_t\|_{L^2(\Gamma)}{\rm d}s
+\|h_0\|_{L^2(\Gamma)}                          \nonumber
\\
&\le T^{\frac{1}{2}}\|h_t\|_{L^2(0,T;L^2(\Gamma))}+C\left(1+\|{\bf u}_0\|_{H^2}
\|\phi_0\|_{H^3}\right),                                \nonumber
\end{align}
where
\begin{align}
\|h_0\|_{L^2(\Gamma)}&=\left\Vert-\gamma_{fs}^\prime(\tilde\psi_0)
-\tilde{\bf u}_{{\boldsymbol\tau}0}\cdot\nabla_{\boldsymbol\tau}\tilde\psi_0
\right\Vert_{L^2(\Gamma)}                       \nonumber
\\
&\le C\left(1+\|\tilde{\bf u}_{{\boldsymbol\tau}0}\|_{L^6(\Gamma)}
\|\nabla_{\boldsymbol\tau}\tilde\psi_0\|_{L^3(\Gamma)}
\right)                                                 \nonumber
\\
&\le C\left(1+\|{\bf u}_0\|_{H^2}
\|\phi_0\|_{H^3}
\right).                                                \nonumber
\end{align}
\vskip2mm
Then, taking the above estimates into consideration, we obtain
\begin{align}
\label{A.1}
&\|H\|_{H^1}^2+\|f\|_{H^1}^2+\|h\|_{L^2(\Gamma)}^2
+\int_{0}^{T}\left(
\|H\|_{H^2}^2+\|f\|_{H^2}^2+\|h\|_{H^2(\Gamma)}^2
\right) {\rm d}t                                              \nonumber
\\
&\quad +\int_{0}^{T}\left(
\|H_t\|_{L^2}^2+ \|f_t\|_{L^2}^2 +\|h_t\|_{L^2(\Gamma)}^2
\right) {\rm d}t                                              \nonumber
\\
&\le C\left(\|{\bf u}_0\|_{H^2}^4+\|\phi_0\|_{H^3}^6+1+K_2^3T+K_1K_2T^{\frac{1}{2}}
\right).
\end{align}
It follows from (\ref{P1}) in Proposition \ref{Prop3.1} and (\ref{A.1}) that
\begin{align}
&\|(\phi,\psi)\|_{Y_2}^2\le\tilde C_1\left(K_2^3T+K_1K_2T^{\frac{1}{2}}+1
\right),                                              \nonumber
\end{align}
where $\tilde C_1=\tilde C_1\left(\|\phi_0\|_{H^3}^6,
\|{\bf u}\|_{H^2}^4,\|\psi_0\|_{H^3(\Gamma)}^4\right)$. Choosing
\begin{align}
K_2=3\tilde C_1                               \nonumber
\end{align}
and $T$ suitable small such that
\begin{align}
K_2^3T<1\;{\rm and}\;K_1K_2T^\frac{1}{2}<1,    \nonumber
\end{align}
then we deduce
\begin{align}
\label{A.2}
\|(\phi,\psi)\|_{Y_2}^2\le K_2.
\end{align}
\vskip2mm
Next, we are going to estimate linearized terms of Navier-Stokes equations (\ref{LNS}).
Calculating directly then we get
\begin{align}
\int_{0}^{T}\|G\|_{H^1}^2 {\rm d}t
&=\int_{0}^{T}\left(\|-\tilde{\bf u}\cdot\nabla\tilde{\bf u}-\nabla\tilde\phi\cdot\Delta\tilde\phi\|_{L^2}^2
+\|\nabla(-\tilde{\bf u}\cdot\nabla\tilde{\bf u}-\nabla\tilde\phi\cdot\Delta\tilde\phi)\|_{L^2}^2
\right) {\rm d}t                                             \nonumber
\\
&\le C\int_{0}^{T}\left(\|\tilde{\bf u}\|_{L^\infty}^2\|\nabla\tilde{\bf u}\|_{L^2}^2+\|\nabla\tilde\phi\|_{L^\infty}^2\|\Delta\phi\|_{L^2}^2
+\|\nabla\tilde{\bf u}\|_{L^4}^4+\|\tilde{\bf u}\|_{L^\infty}^2\|\nabla^2\tilde{\bf u}\|_{L^2}^2
\right) {\rm d}t                                             \nonumber
\\
&\quad +C\int_{0}^{T}\left(\|\nabla^2\tilde\phi\|_{L^6}^2\|\Delta\tilde\phi\|_{L^3}^2
+\|\nabla\tilde\phi\|_{L^\infty}^2\|\nabla\Delta\tilde\phi\|_{L^2}^2
\right) {\rm d}t                                             \nonumber
\\
&\le C\int_{0}^{T}\left(\|\tilde{\bf u}\|_{H^2}^4+\|\nabla\tilde\phi\|_{H^2}^4\right) {\rm d}t \le C(K_1^2+K_2^2)T,                                  \nonumber
\\[1em]
\int_{0}^{T}\|G_t\|_{L^2}^2 {\rm d}t
&\le C\int_{0}^{T}\left(\|\tilde{\bf u}_t\|_{L^6}^2\|\nabla\tilde{\bf u}\|_{L^3}^2+\|\tilde{\bf u}\|_{L^\infty}^2\|\nabla\tilde{\bf u}_t\|_{L^2}^2\right) {\rm d}t
\nonumber
\\
&\quad+C\int_{0}^{T}\left(\|\nabla\tilde\phi_t\|_{L^6}^2\|\Delta\phi\|_{L^3}^2
+\|\nabla\tilde\phi\|_{L^\infty}^2\|\Delta\tilde\phi_t\|_{L^2}^2
\right) {\rm d}t                                               \nonumber
\\
&\le C\int_{0}^{T}\left(K_1\|\tilde{\bf u}_t\|_{H^1}^2+K_2\|\nabla\tilde\phi_t\|_{H^1}^2
\right) {\rm d}t \le C(K_1^2+K_2^2), \nonumber
\end{align}
from which we can infer that
\begin{align}
\|G\|_{L^2}
&=\left\Vert\int_{0}^{t}G_t(x,s){\rm d}s+G(x,0)\right\Vert_{L^2}
\nonumber
\\
&\le \int_{0}^{t}\|G_t\|_{L^2}{\rm d}s+\|G_0\|_{L^2}           \nonumber
\\
&\le C(K_1^2+K_2^2)^\frac{1}{2}T^{\frac{1}{2}}+C\left(\|{\bf u}_0\|_{H^2}^2+\|\phi_0\|_{H^3}^2\right),               \nonumber
\end{align}
where
\begin{align}
\|G_0\|_{L^2}&=\|-\tilde{\bf u}_0\cdot\nabla\tilde{\bf u}_0-\nabla\tilde\phi_0\cdot\Delta\tilde\phi_0\|_{L^2}
\nonumber
\\
&\le C\left(\|\tilde{\bf u}\|_{L^\infty}\|\nabla\tilde{\bf u}_0\|_{L^2}+\|\nabla\tilde\phi_0\|_{L^\infty}\|\Delta\tilde\phi_0\|_{L^2}
\right)                                                  \nonumber
\\
&\le C\left(\|{\bf u}_0\|_{H^2}^2+\|\phi_0\|_{H^3}^2
\right).                                                 \nonumber
\end{align}
By using Lemma \ref{Trace} and (\ref{L5.2}) for $r=s_1=\frac{1}{2}$, $r+\frac{2}{2}=\frac{3}{2}<2=s_2$, we obtain
\begin{align}
\|g\|_{H^\frac{1}{2}(\Gamma)}^2&=\left\Vert\left(-\gamma\Delta_{\boldsymbol\tau}\tilde\psi
+\partial_{\bf n}\tilde\phi+\gamma_{fs}^\prime(\tilde\psi)
\right)\nabla_{\boldsymbol\tau}\tilde\psi\right\Vert_{H^\frac{1}{2}(\Gamma)}^2
\nonumber
\\
&\le\|-\gamma\Delta_{\boldsymbol\tau}\tilde\psi
+\partial_{\bf n}\tilde\phi+\gamma_{fs}^\prime(\tilde\psi)\|_{H^\frac{1}{2}(\Gamma)}^2
\|\nabla_{\boldsymbol\tau}\tilde\psi\|_{H^2(\Gamma)}^2          \nonumber
\\
&\le C\left(\|\gamma\Delta_{\boldsymbol\tau}\tilde\psi\|_{H^\frac{1}{2}(\Gamma)}^2
+\|\partial_{\bf n}\tilde\phi\|_{H^\frac{1}{2}(\Gamma)}^2+1
\right)\|\tilde\psi\|_{H^3(\Gamma)}^2                         \nonumber
\\
&\le C\left(\|\Delta\tilde\phi\|_{H^1}^2
+\|\nabla\tilde\phi\|_{H^1}^2+1
\right)\|\tilde\psi\|_{H^3(\Gamma)}^2                         \nonumber
\\
&\le CK_2^2.                                                          \nonumber
\end{align}
Similarly, combining (\ref{product}) for $k=1$ with (\ref{L5.1}) for $r=s_1=s_2=\frac{3}{2}>\frac{2}{2}$, it follows from Lemma \ref{Trace} that
\begin{align}
\int_{0}^{T}\|g\|_{H^{\frac{3}{2}}(\Gamma)}^2{\rm d}t
&\le C\int_{0}^{T}\left(
\|\gamma\Delta_{\boldsymbol\tau}\tilde\psi\|_{H^{\frac{3}{2}}(\Gamma)}^2
+\|\partial_{\bf n}\tilde\phi\|_{H^{\frac{3}{2}}(\Gamma)}^2
+\|\gamma_{fs}^\prime(\tilde\psi)\|_{H^{\frac{3}{2}}(\Gamma)}^2
\right)
\|\nabla_{\boldsymbol\tau}\tilde\psi\|_{H^{\frac{3}{2}}(\Gamma)}^2{\rm d}t
\nonumber
\\
&\le C\int_{0}^{T}\left(\|\Delta_{\boldsymbol\tau}\tilde\psi\|_{H^1(\Gamma)}
\|\Delta_{\boldsymbol\tau}\tilde\psi\|_{H^2(\Gamma)}
+\|\nabla\tilde\phi\|_{H^2}^2+1
\right)\|\nabla\tilde\phi\|_{H^2}^2{\rm d}t                  \nonumber
\\
&\le C\int_{0}^{T}K_2^\frac{3}{2}\|\tilde\psi\|_{H^4(\Gamma)}{\rm d}t+CK_2^2T
\nonumber
\\
&\le CK_2^\frac{3}{2}\left(\int_{0}^{T}1^2 {\rm d}t
\right)^{\frac{1}{2}} \left(\int_{0}^{T}\|\tilde\psi\|_{H^4(\Gamma)}^2 {\rm d}t
\right)^{\frac{1}{2}}+CK_2^2T  \le CK_2^2T^\frac{1}{2}.   \nonumber
\end{align}
Calculating directly shows
\begin{align}
g_t=\left(-\gamma\Delta_{\boldsymbol\tau}\tilde\psi_t+\partial_{\bf n}\tilde\phi_t
+\gamma_{fs}^{(2)}(\tilde\psi)\tilde\psi_t
\right)\nabla_{\boldsymbol\tau}\tilde\psi+\left(-\gamma\Delta_{\boldsymbol\tau}\tilde\psi+\partial_{\bf n}\tilde\phi
+\gamma_{fs}^\prime(\tilde\psi)
\right)\nabla_{\boldsymbol\tau}\tilde\psi_t,                        \nonumber
\end{align}
which together with Lemma \ref{Trace} yields
\begin{align}
\int_{0}^{T}\|g_t\|_{L^2(\Gamma)}^2 {\rm d}t
&\le \int_{0}^{T}\left(\|\gamma\Delta_{\boldsymbol\tau}\tilde\psi_t\|_{L^2(\Gamma)}^2
+\|\partial_{\bf n}\tilde\phi_t\|_{L^2(\Gamma)}^2
+C\|\tilde\psi_t\|_{L^2(\Gamma)}^2
\right)\|\nabla_{\boldsymbol\tau}\tilde\psi\|_{L^\infty(\Gamma)}^2 {\rm d}t
\nonumber
\\
&\quad +\int_{0}^{T}\left(\|\gamma\Delta_{\boldsymbol\tau}\tilde\psi\|_{L^4(\Gamma)}^2
+\|\partial_{\bf n}\tilde\phi\|_{L^4(\Gamma)}^2
+C\|\tilde\psi\|_{L^4(\Gamma)}^2
\right)\|\nabla_{\boldsymbol\tau}\tilde\psi_t\|_{L^4(\Gamma)}^2 {\rm d}t
\nonumber
\\
&\le C\int_{0}^{T}\left(\|\nabla_{\boldsymbol\tau}^2\tilde\psi_t\|_{L^2(\Gamma)}^2
+\|\nabla\tilde\phi_t\|_{H^1}^2
+C\|\tilde\psi_t\|_{L^2(\Gamma)}^2
\right)\|\nabla_{\boldsymbol\tau}\tilde\psi\|_{H^2(\Gamma)}^2 {\rm d}t
\nonumber
\\
&\quad +C\int_{0}^{T}\left(\|\Delta_{\boldsymbol\tau}\tilde\psi\|_{H^1(\Gamma)}^2
+\|\nabla\tilde\phi\|_{H^2}^2
+C\|\tilde\psi\|_{H^1(\Gamma)}^2
\right)\|\nabla_{\boldsymbol\tau}\tilde\psi_t\|_{H^1(\Gamma)}^2 {\rm d}t
\nonumber
\\
&\le C\int_{0}^{T}K_2\left(\|\tilde\psi_t\|_{H^2(\Gamma)}^2
+\|\nabla\tilde\phi_t\|_{H^1}^2    \nonumber
\right) {\rm d}t \le CK_2^2. \nonumber
\end{align}
\vskip2mm
Now, putting all the above estimates together, we have
\begin{align}
&\|G\|_{L^2}^2+\|g\|_{H^\frac{1}{2}(\Gamma)}^2+\int_{0}^{T}\left[
K_1^{-2}K_2^{-2}\left(\|G_t\|_{L^2}^2+\|g_t\|_{L^2(\Gamma)}^2
\right)+\|G\|_{H^1}^2+\|g\|_{H^{\frac{3}{2}}(\Gamma)}^2
\right]{\rm d}t                                              \nonumber
\\
&\le C\left(K_1^2T+K_2^2T^\frac{1}{2}+K_2^2+\|{\bf u}_0\|_{H^2}^4+\|\phi_0\|_{H^3}^4
\right),             \nonumber
\end{align}
which together with (\ref{P2}) in Proposition \ref{Prop3.2}, where we choosing $\epsilon=1$ yields
\begin{align}
\|{\bf u}\|_{Y_1}^2\le\tilde C_2e^{K_1^2K_2^2T}\left(1+K_1^2T+K_2^2T^\frac{1}{2}
\right)+\tilde C_2e^{K_1^2K_2^2T}K_2^2,                \nonumber
\end{align}
where $\tilde C_2=\tilde C_2\left(\|{\bf u}_0\|_{H^2}^4,\|\phi_0\|_{H^3}^4
\right)$. Choosing
\begin{align}
K_1>3\tilde C_2e+9\tilde C_2e\tilde C_1^2            \nonumber
\end{align}
and $T$ suitable small such that
\begin{align}
K_1^2K_2^2T<1,\;K_1^2T<1,\;{\rm and}\;K_2^2T^\frac{1}{2}<1.    \nonumber
\end{align}
Then, we deduce that
\begin{align}
\label{A.4}
\|{\bf u}\|_{Y_1}^2\le K_1.
\end{align}
Therefore, choosing $K_1$, $K_2$ and $T$ in this way, defining $T_1^*={\rm min}\{K_2^{-4},(K_1K_2)^{-2}\}$. From (\ref{A.2}) and (\ref{A.4}), for every $T<T_1^*$, we conclude that $\Lambda$ is a self-mapping, which maps from $X(0,T;K_1,K_2)$ to itself.
\vskip2mm
And then, we shall prove that $\Lambda$ is a contraction mapping with respect to the norm $\|\cdot\|_X$. Let $(\tilde{\bf u}_i,\tilde\phi_i,\tilde\psi_i)\in X(0,T;K_1,K_2), i=1,2$, and $\Lambda(\tilde{\bf u}_i,\tilde\phi_i,\tilde\psi_i)=({\bf u}_i,\phi_i,\psi_i)$. Then let ${\bf u}={\bf u}_1-{\bf u}_2$, $\phi=\phi_1-\phi_2$, $\psi=\psi_1-\psi_2$, $\mu=\mu_1-\mu_2$, $p=p_1-p_2$, $\tilde{\bf u}=\tilde{\bf u}_1-\tilde{\bf u}_2$, $\tilde\phi=\tilde\phi_1-\tilde\phi_2$, $\tilde\psi=\tilde\psi_1-\tilde\psi_2$, $\tilde\mu=\tilde\mu_1-\tilde\mu_2$ and $\tilde p=\tilde p_1-\tilde p_2$ satisfy
\begin{align*}
\begin{cases}
\phi_t={\bar{\mu}}-\mu+\bar H,&\rm in\quad\Omega\times(0,T),
\\
\mu=-\Delta\phi+\bar f,&\rm in\quad\Omega\times(0,T),
\\
\psi_t-\gamma\Delta_{\boldsymbol {\tau}}\psi=-\partial_{\bf n}\phi+\bar h,&\rm on\quad\Gamma\times(0,T),
\\
\phi\big|_\Gamma=\psi,&\rm in\quad(0,T),
\\
\phi\big|_{t=0}=0,&\rm in\quad\Omega,
\\
\psi\big|_{t=0}=0,&\rm on\quad\Gamma,
\end{cases}
\end{align*}
and
\begin{align*}
\begin{cases}
{\bf u}_t-{\rm div}{\mathbb S}({\bf u})+\nabla p=\bar G,&\rm in\quad\Omega\times(0,T),
\\
{\rm div}{\bf u}=0,&\rm in\quad\Omega\times(0,T),
\\
{\bf u}\cdot{\bf n}=0,&\rm on\quad\Gamma\times(0,T),
\\
\beta{\bf u}_{\boldsymbol\tau}+({\mathbb S}({\bf u})\cdot{\bf n})_{\boldsymbol{\tau}}=\bar g,&\rm on\quad\Gamma\times(0,T),
\\
{\bf u}\big|_{t=0}=0,&\rm in\quad\Omega,
\end{cases}
\end{align*}
where
\begin{align}
\bar H&=-\tilde{\bf u}_1\cdot\nabla\tilde\phi_1-(-\tilde{\bf u}_2\cdot\nabla\tilde\phi_2)                             =-\tilde{\bf u}\cdot\nabla\tilde\phi_1-\tilde{\bf u}_2\cdot\nabla\tilde\phi.
\nonumber
\\
\bar f&=(\tilde\phi_1^3+\tilde\phi_1)-(\tilde\phi_2^3+\tilde\phi_2)
=\tilde\phi(\tilde\phi_1^2+\tilde\phi_1\tilde\phi_2+\tilde\phi_2^2-1).
\nonumber
\\
\bar h&=-\gamma_{fs}^\prime(\tilde\psi_1)-\tilde{\bf u}_{1{\boldsymbol\tau}}\cdot\nabla_{\boldsymbol\tau}\tilde\psi_1
-(-\gamma_{fs}^\prime(\tilde\psi_2)-\tilde{\bf u}_{2{\boldsymbol\tau}}\cdot\nabla_{\boldsymbol\tau}\tilde\psi_2)
\nonumber
\\
&=\gamma_{fs}^\prime\left(\theta\tilde\psi_1+(1-\theta)\tilde\psi_2\right)\tilde\psi-\tilde{\bf u}_{\boldsymbol\tau}\cdot\nabla_{\boldsymbol\tau}\tilde\psi_1-\tilde{\bf u}_{2{\boldsymbol\tau}}\cdot\nabla_{\boldsymbol\tau}\tilde\psi,\quad\theta\in(0,1).
\nonumber
\\
\bar G&=-\tilde{\bf u}_1\cdot\nabla\tilde{\bf u}_1-\rm div(\nabla\tilde\phi_1\otimes\nabla\tilde\phi_1)-\left(-\tilde{\bf u}_2\cdot\nabla\tilde{\bf u}_2-\rm div(\nabla\tilde\phi_2\otimes\nabla\tilde\phi_2)
\right)                                                  \nonumber
\\
&=-\tilde{\bf u}\cdot\nabla\tilde{\bf u}_1-\tilde{\bf u}_2\cdot\nabla\tilde{\bf u}-\nabla\tilde\phi\cdot\Delta\tilde\phi_1
-\nabla\tilde\phi_2\cdot\Delta\tilde\phi.                \nonumber
\\
\bar g&=\left[-\gamma\Delta_{\boldsymbol\tau}\tilde\psi_1+\partial_{\bf n}\tilde\phi_1
+\gamma_{fs}^\prime(\tilde\psi_1)
\right]\nabla_{\boldsymbol\tau}\tilde\psi_1-\left[
-\gamma\Delta_{\boldsymbol\tau}\tilde\psi_2+\partial_{\bf n}\tilde\phi_2
+\gamma_{fs}^\prime(\tilde\psi_2)
\right]\nabla_{\boldsymbol\tau}\tilde\psi_2              \nonumber
\\
&=-\gamma\Delta_{\boldsymbol\tau}\tilde\psi\nabla_{\boldsymbol\tau}\tilde\psi_1
-\gamma\Delta_{\boldsymbol\tau}\tilde\psi_2\nabla_{\boldsymbol\tau}\tilde\psi
+\partial_{\bf n}\tilde\phi\nabla_{\boldsymbol\tau}\tilde\psi_1
+\partial_{\bf n}\tilde\phi_2\nabla_{\boldsymbol\tau}\tilde\psi  \nonumber
\\
&\quad+\gamma_{fs}^\prime\left(\theta\tilde\psi_1+(1-\theta)\tilde\psi_2\right)\tilde\psi
\nabla_{\boldsymbol\tau}\tilde\psi_1+\gamma_{fs}^\prime(\tilde\psi_2)
\nabla_{\boldsymbol\tau}\tilde\psi,\quad\theta\in(0,1).                       \nonumber
\end{align}
Moreover, there are some key properties
\begin{align}
\label{A.5}
\langle\phi\rangle=\langle\bar H\rangle=\bar H(x,0)=\bar h(x,0)=\bar f(x,0)=\bar G(x,0)=\bar g(x,0)=0.
\end{align}
\vskip2mm
Then, we should establish some necessary a priori estimates to complete the proof. Direct calculations show that
\begin{align}
\int_{0}^{T}\|\bar H\|_{H^2}^2 {\rm d}t
&\le C\int_{0}^{T}\left(\|\tilde{\bf u}\|_{L^\infty}^2\|\nabla\tilde\phi_1\|_{L^2}^2+\|\tilde{\bf u}_2\|_{L^\infty}^2\|\nabla\tilde\phi\|_{L^2}^2+\|\nabla\tilde{\bf u}\|_{L^6}^2\|\nabla\tilde\phi_1\|_{L^3}^2
\right) {\rm d}t                                              \nonumber
\\
&\quad +C\int_{0}^{T}\left(
\|\tilde{\bf u}\|_{L^\infty}^2\|\nabla^2\tilde\phi_1\|_{L^2}^2
+\|\nabla\tilde{\bf u}_2\|_{L^6}^2\|\nabla\tilde\phi\|_{L^3}^2+\|\tilde{\bf u}_2\|_{L^\infty}^2\|\nabla^2\tilde\phi\|_{L^2}^2
\right) {\rm d}t                                              \nonumber
\\
&\quad +C\int_{0}^{T}\left(
\|\nabla^2\tilde{\bf u}\|_{L^2}^2\|\nabla\tilde\phi_1\|_{L^\infty}^2
+\|\nabla\tilde{\bf u}\|_{L^6}^2\|\nabla^2\tilde\phi_1\|_{L^3}^2
+\|\tilde{\bf u}\|_{L^\infty}^2\|\nabla^3\tilde\phi_1\|_{L^2}^2
\right) {\rm d}t                                              \nonumber
\\
&\quad +C\int_{0}^{T}\left( \|\nabla^2\tilde{\bf u}_2\|_{L^2}^2\|\nabla\tilde\phi\|_{L^\infty}^2+\|\nabla\tilde{\bf u}_2\|_{L^3}^2\|\nabla^2\tilde\phi\|_{L^6}^2+\|\tilde{\bf u}_2\|_{L^\infty}^2\|\nabla^3\tilde\phi\|_{L^2}^2
\right) {\rm d}t                                              \nonumber
\\
&\le C(K_1+K_2)T\|(\tilde{\bf u}, \tilde\phi, \tilde\psi)\|_X^2.
\nonumber
\\[1em]
\int_{0}^{T}\|\bar H_t\|_{L^2}^2 {\rm d}t
&\le C\int_{0}^{T}\left(\|\tilde{\bf u}_t\|_{L^2}^2\|\nabla\tilde\phi_1\|_{L^\infty}^2+\|\tilde{\bf u}\|_{L^\infty}^2\|\nabla\tilde\phi_{1t}\|_{L^2}^2
\right) {\rm d}t                                              \nonumber
\\
&\quad +\int_{0}^{T}\left(\|\tilde{\bf u}_{2t}\|_{L^2}^2\|\nabla\tilde\phi\|_{L^\infty}^2+\|\tilde{\bf u}_2\|_{L^\infty}^2\|\nabla\tilde\phi_t\|_{L^2}^2
\right) {\rm d}t                                              \nonumber
\\
&\le C(K_1+K_2)T\|(\tilde{\bf u}, \tilde\phi, \tilde\psi)\|_X^2.
\nonumber
\\[1em]
\int_{0}^{T}\|\nabla\bar H_t\|_{L^2}^2 {\rm d}t
&\le C\int_{0}^{T}\left(\|\nabla\tilde{\bf u}_t\|_{L^2}^2\|\nabla\tilde\phi_1\|_{L^\infty}^2+\|\tilde{\bf u}_t\|_{L^6}^2\|\nabla^2\tilde\phi_1\|_{L^3}^2+\|\tilde{\bf u}\|_{L^6}^2\|\nabla\tilde\phi_{1t}\|_{L^3}^2
\right) {\rm d}t                                              \nonumber
\\
&\quad +\int_{0}^{T}\left(\|\tilde{\bf u}\|_{L^\infty}^2\|\nabla^2\tilde\phi_{1t}\|_{L^2}^2+\|\nabla\tilde{\bf u}_{2t}\|_{L^2}^2\|\nabla\tilde\phi\|_{L^\infty}^2+\|\tilde{\bf u}_{2t}\|_{L^6}^2\|\nabla^2\tilde\phi\|_{L^3}^2
\right) {\rm d}t                                              \nonumber
\\
&\quad+\int_{0}^{T}\left(\|\tilde{\bf u}_2\|_{L^6}^2\|\nabla\tilde\phi_t\|_{L^3}^2+\|\tilde{\bf u}_2\|_{L^\infty}^2\|\nabla^2\tilde\phi_t\|_{L^2}^2
\right) {\rm d}t                                              \nonumber
\\
&\le C(K_1+K_2)\|(\tilde{\bf u}, \tilde\phi, \tilde\psi)\|_X^2,
\nonumber
\end{align}
which together with (\ref{A.5}) implies
\begin{align}
\|\bar H\|_{H^1}
&=\left\Vert\int_{0}^{t}\bar H_t(x,s){\rm d}s\right\Vert_{H^1}
\le \int_{0}^{t}\|\bar H_t\|_{H^1}{\rm d}s
\le T^{\frac{1}{2}}\|\bar H_t\|_{L^2(0,T;H^1)}          \nonumber
\\
&\le C(K_1+K_2)^{\frac{1}{2}}T^{\frac{1}{2}}\|(\tilde{\bf u}, \tilde\phi, \tilde\psi)\|_X.
\nonumber
\end{align}
Similarly, we obtain
\begin{equation*}
\int_{0}^{T}\left(\|\bar f\|_{H^2}^2+\|\bar f_t\|_{H^1}^2
\right) {\rm d}t
\le CK_2^2 T\|(\tilde{\bf u}, \tilde\phi, \tilde\psi)\|_X^2,
\end{equation*}
which yields
\begin{align}
\|\bar f\|_{H^1}
&=\left\Vert\int_{0}^{t}f_t(\tilde\phi(x,s)){\rm d}s\right\Vert_{H^1}
\le \int_{0}^{t}\|f_t(\tilde\phi)\|_{H^1}{\rm d}s \nonumber
\\
&\le C T^{\frac{1}{2}}\|f_t(\tilde\phi)\|_{L^2(0,T;H^1)}\le CK_2T\|(\tilde{\bf u}, \tilde\phi, \tilde\psi)\|_X. \nonumber
\end{align}
It follows from Lemma \ref{Trace} that
\begin{align}
\int_{0}^{T}\|\bar h\|_{H^1(\Gamma)}^2 {\rm d}t
&\le C\int_{0}^{T}\left(\|\tilde\psi\|_{L^2(\Gamma)}^2+\|\tilde{\bf u}_{\boldsymbol\tau}\|_{L^2(\Gamma)}^2
\|\nabla_{\boldsymbol\tau}\tilde\psi_1\|_{L^\infty(\Gamma)}^2
\right) {\rm d}t                                           \nonumber
\\
&\quad +C\int_{0}^{T}\left(\|\tilde{\bf u}_{2{\boldsymbol\tau}}\|_{L^2(\Gamma)}^2
\|\nabla_{\boldsymbol\tau}\tilde\psi\|_{L^\infty(\Gamma)}^2+\|\nabla\tilde\psi\|_{L^2(\Gamma)}^2
\right) {\rm d}t                                           \nonumber
\\
&\quad +C\int_{0}^{T}\left(\|\nabla_{\boldsymbol\tau}\tilde{\bf u}_{\boldsymbol\tau}\|_{L^2(\Gamma)}^2
\|\nabla_{\boldsymbol\tau}\tilde\psi_1\|_{L^\infty(\Gamma)}^2+\|\tilde{\bf u}_{\boldsymbol\tau}\|_{L^3(\Gamma)}^2
\|\nabla^2_{\boldsymbol\tau}\tilde\psi_1\|_{L^6(\Gamma)}^2
\right) {\rm d}t                                           \nonumber
\\
&\quad+C\int_{0}^{T}\left(
\|\nabla_{\boldsymbol\tau}\tilde{\bf u}_{2{\boldsymbol\tau}}\|_{L^2(\Gamma)}^2
\|\nabla_{\boldsymbol\tau}\tilde\psi\|_{L^\infty(\Gamma)}^2+\|\tilde{\bf u}_{2{\boldsymbol\tau}}\|_{L^3(\Gamma)}^2
\|\nabla^2_{\boldsymbol\tau}\tilde\psi\|_{L^6(\Gamma)}^2\right) {\rm d}t \nonumber
\\
&\le C(K_1+K_2)T\|(\tilde{\bf u}, \tilde\phi, \tilde\psi)\|_X^2.
\nonumber
\\[1em]
\int_{0}^{T}\|\nabla^2_{\boldsymbol\tau}\bar h\|_{L^2(\Gamma)}^2 {\rm d}t &\le C\int_{0}^{T}\left(
\|\nabla^2_{\boldsymbol\tau}\tilde\psi\|_{L^2(\Gamma)}^2
+\|\nabla^2_{\boldsymbol\tau}\tilde{\bf u}_{\boldsymbol\tau}\|_{L^2(\Gamma)}^2
\|\nabla_{\boldsymbol\tau}\tilde\psi_1\|_{L^\infty(\Gamma)}^2
\right) {\rm d}t                                           \nonumber
\\
&\quad +C\int_{0}^{T}\left(
\|\nabla_{\boldsymbol\tau}\tilde{\bf u}_{\boldsymbol\tau}\|_{L^3(\Gamma)}^2
\|\nabla^2_{\boldsymbol\tau}\tilde\psi_1\|_{L^6(\Gamma)}^2
+\|\tilde{\bf u}_{\boldsymbol\tau}\|_{L^\infty(\Gamma)}^2
\|\nabla^3_{\boldsymbol\tau}\tilde\psi_1\|_{L^2(\Gamma)}^2
\right) {\rm d}t                                           \nonumber
\\
&\quad +C\int_{0}^{T}\left(\|\tilde{\bf u}_{2{\boldsymbol\tau}}\|_{L^\infty(\Gamma)}^2
\|\nabla^3_{\boldsymbol\tau}\tilde\psi\|_{L^2(\Gamma)}^2+\|\nabla_{\boldsymbol\tau}\tilde{\bf u}_{2{\boldsymbol\tau}}\|_{L^3(\Gamma)}^2
\|\nabla^2_{\boldsymbol\tau}\tilde\psi\|_{L^6(\Gamma)}^2
\right) {\rm d}t                                           \nonumber
\\
&\quad +C\int_{0}^{T}\|\nabla^2_{\boldsymbol\tau}\tilde{\bf u}_{2{\boldsymbol\tau}}\|_{L^2(\Gamma)}^2
\|\nabla_{\boldsymbol\tau}\tilde\psi\|_{L^\infty(\Gamma)}^2 {\rm d}t \nonumber
\\
&\le C\int_{0}^{T}\left(
\|\nabla^2_{\boldsymbol\tau}\tilde\psi\|_{L^2(\Gamma)}^2
+K_2\|\tilde{\bf u}\|_{H^2}\|\tilde{\bf u}\|_{H^3}
+K_1^{\frac{1}{2}}\|\tilde{\bf u}_2\|_{H^3}\|\nabla_{\boldsymbol\tau}\tilde\psi\|_{H^2(\Gamma)}^2
\right) {\rm d}t                                            \nonumber
\\
&\le C(K_1+K_2)T^{\frac{1}{2}}\|(\tilde{\bf u}, \tilde\phi, \tilde\psi)\|_X^2,
\nonumber
\end{align}
where we have used (\ref{L21}) for $k=2$. Similarly, due to (\ref{L21}) for $k=0,2$, we get
\begin{align}
\int_{0}^{T}\|\bar h_t\|_{L^2(\Gamma)}^2 {\rm d}t
&\le C\int_{0}^{T}\left(\|\tilde\psi_t\|_{L^2(\Gamma)}^2
+\|\tilde{\bf u}_{{\boldsymbol\tau}t}\|_{L^2(\Gamma)}^2\|\nabla
_{\boldsymbol\tau}\tilde\psi_1\|
_{L^\infty(\Gamma)}^2
\right) {\rm d}t                                             \nonumber
\\
&\quad +C\int_{0}^{T}\left(\|\tilde{\bf u}_{\boldsymbol\tau}\|_{L^\infty(\Gamma)}^2\|\nabla
_{\boldsymbol\tau}\tilde\psi_{1t}\|
_{L^2(\Gamma)}^2+\|\tilde{\bf u}_{2{\boldsymbol\tau}t}\|_{L^2(\Gamma)}^2\|\nabla
_{\boldsymbol\tau}\tilde\psi\|
_{L^\infty(\Gamma)}^2\right) {\rm d}t \nonumber
\\
&\quad+C\int_{0}^{T}\|\tilde{\bf u}_{2{\boldsymbol\tau}}\|_{L^\infty(\Gamma)}^2\|\nabla
_{\boldsymbol\tau}\tilde\psi_t\|
_{L^2(\Gamma)}^2 {\rm d}t                  \nonumber
\\
&\le C\int_{0}^{T}\left(
\|\tilde\psi_t\|_{L^2(\Gamma)}^2+K_2\|\tilde{\bf u}_t\|_{L^2}\|\tilde{\bf u}_t\|_{H^1}+K_2\|\tilde{\bf u}\|_{H^2}\|\tilde{\bf u}\|_{H^3}
\right) {\rm d}t                                              \nonumber
\\
&\quad +C\int_{0}^{T}\left(K_1^{\frac{1}{2}}\|\tilde{\bf u}_{2t}\|_{H^1}
\|\nabla_{\boldsymbol\tau}\tilde\psi\|_{H^2(\Gamma)}^2
+K_1^{\frac{1}{2}}\|\tilde{\bf u}_2\|_{H^3}
\|\nabla_{\boldsymbol\tau}\tilde\psi_t\|_{L^2(\Gamma)}^2
\right) {\rm d}t                                             \nonumber
\\
&\le C(K_1+K_2)T^{\frac{1}{2}}\|(\tilde{\bf u}, \tilde\phi, \tilde\psi)\|_X^2,
\nonumber
\end{align}
Go a step further, we obtain
\begin{align}
\|\bar h\|_{L^2(\Gamma)}
&=\left\Vert\int_{0}^{t}\bar h_t(x,s){\rm d}s\right\Vert_{L^2(\Gamma)}
\le \int_{0}^{t}\|\bar h_t\|_{L^2(\Gamma)}{\rm d}s
\le T^{\frac{1}{2}}\|\bar h_t\|_{L^2(0,T;L^2(\Gamma)}
\nonumber
\\
&\le C(K_1+K_2)^{\frac{1}{2}}T^{\frac{3}{4}}\|(\tilde{\bf u}, \tilde\phi, \tilde\psi)\|_X.                                       \nonumber
\end{align}
Combining with the above estimates, we draw a conclusion
\begin{align}
&\|\bar H\|_{H^1}^2+\|\bar f\|_{H^1}^2+\|\bar h\|_{L^2(\Gamma)}^2
+\int_{0}^{T}\left(
\|\bar H\|_{H^2}^2+\|\bar f\|_{H^2}^2+\|\bar h\|_{H^2(\Gamma)}^2
\right) {\rm d}t                                             \nonumber
\\
&\quad +\int_{0}^{T}\left(
\|\bar H_t\|_{L^2}^2+ \|\bar f_t\|_{L^2}^2+\|\bar h_t\|_{L^2(\Gamma)}^2
\right) {\rm d}t                                             \nonumber
\\
&\le C(K_1+K_2^2)T^{\frac{1}{2}}\|(\tilde{\bf u}, \tilde\phi, \tilde\psi)\|_X^2.
\nonumber
\end{align}
Recalling (\ref{P1}) in proposition \ref{Prop3.1}, replacing $(H,f,h)$ by $(\bar H,\bar f,\bar h)$, thus we obtain
\begin{align}
\label{A.9}
\|(\phi,\psi)\|_{Y_2}^2\le C\left(K_1+K_2^2
\right)T^\frac{1}{2}\|(\tilde{\bf u},\tilde\phi,\tilde\psi)\|_X^2.
\end{align}
\vskip2mm
Proceeding in the same way for estimating $\|\bar G\|_{L^\infty(0,T;L^2)\cap L^2(0,T;H^1)}^2$, $\|\bar G_t\|_{L^2(0,T;L^2)}^2$,\\ $\|\bar g\|_{L^\infty(0,T;H^\frac{1}{2}(\Gamma))\cap L^2(0,T;H^{\frac{3}{2}}(\Gamma))}^2$ and $\|\bar g_t\|_{L^2(0,T;L^2(\Gamma))}^2$, we obtain
\begin{align*}
\int_{0}^{T}\|\bar G\|_{H^1}^2 {\rm d}t \le C(K_1+K_2)T\|(\tilde{\bf u}, \tilde\phi, \tilde\psi)\|_X^2,
\\[1em]
\int_{0}^{T}\|\bar G_t\|_{L^2}^2 {\rm d}t \le C(K_1+K_2)\|(\tilde{\bf u}, \tilde\phi, \tilde\psi)\|_X^2,
\end{align*}
which implies
\begin{align}
\|\bar G\|_{L^2}
=\left\Vert\int_{0}^{t}\bar G_t(x,s){\rm d}s\right\Vert_{L^2}
&\le \int_{0}^{t}\|\bar G_t\|_{L^2}{\rm d}s
\le T^{\frac{1}{2}}\|\bar G_t\|_{L^2(0,T;L^2)} \nonumber
\\
&\le C(K_1+K_2)^{\frac{1}{2}}T^{\frac{1}{2}}\|(\tilde{\bf u}, \tilde\phi, \tilde\psi)\|_X.
\end{align}
It follows from (\ref{L21}), (\ref{L5.1}) and Lemma \ref{Trace} that
\begin{align}
\int_{0}^{T}\|\bar g\|_{H^{\frac{3}{2}}(\Gamma)}^2 {\rm d}t
&\le C\int_{0}^{T}\left(
\|\gamma\Delta_{\boldsymbol\tau}\tilde\psi\|_{H^{\frac{3}{2}}(\Gamma)}^2
\|\nabla_{\boldsymbol\tau}\tilde\psi_1\|_{H^{\frac{3}{2}}(\Gamma)}^2
+\|\gamma\Delta_{\boldsymbol\tau}\tilde\psi_2\|_{H^{\frac{3}{2}}(\Gamma)}^2
\|\nabla_{\boldsymbol\tau}\tilde\psi\|_{H^{\frac{3}{2}}(\Gamma)}^2
\right) {\rm d}t                                            \nonumber
\\
&\quad+C\int_{0}^{T}\left(\|\partial_{\bf n}\tilde\phi\|_{H^{\frac{3}{2}}(\Gamma)}^2
\|\nabla_{\boldsymbol\tau}\tilde\psi_1\|_{H^{\frac{3}{2}}(\Gamma)}^2
+\|\partial{\bf_n}\tilde\phi_2\|_{H^{\frac{3}{2}}(\Gamma)}^2
\|\nabla_{\boldsymbol\tau}\tilde\psi\|_{H^{\frac{3}{2}}(\Gamma)}^2
\right) {\rm d}t                                              \nonumber
\\
&\quad+C\int_{0}^{T}\left(\|\tilde\psi\|_{H^{\frac{3}{2}}(\Gamma)}^2
\|\nabla_{\boldsymbol\tau}\tilde\psi_1\|_{H^{\frac{3}{2}}(\Gamma)}^2
+\|\gamma_{fs}^\prime(\tilde\psi_2)\|_{H^{\frac{3}{2}}(\Gamma)}^2
\|\nabla_{\boldsymbol\tau}\tilde\psi\|_{H^{\frac{3}{2}}(\Gamma)}^2
\right) {\rm d}t                                                \nonumber
\\
&\le C\int_{0}^{T}
\|\Delta_{\boldsymbol\tau}\tilde\psi\|_{H^1(\Gamma)}
\|\Delta_{\boldsymbol\tau}\tilde\psi\|_{H^2(\Gamma)}
\|\nabla\tilde\phi_1\|_{H^2}^2 {\rm d}t           \nonumber
\\
&\quad+C\int_{0}^{T}\|\Delta_{\boldsymbol\tau}\tilde\psi_2\|_{H^1(\Gamma)}
\|\Delta_{\boldsymbol\tau}\tilde\psi_2\|_{H^2(\Gamma)}
\|\nabla\tilde\phi\|_{H^2}^2 {\rm d}t  \nonumber
\\
&\quad+C\int_{0}^{T}\left(
\|\nabla\tilde\phi\|_{H^2}^2\|\nabla\tilde\phi_1\|_{H^2}^2
+\|\nabla\tilde\phi_2\|_{H^2}^2\|\nabla\tilde\phi\|_{H^2}^2
\right) {\rm d}t                                             \nonumber
\\
&\quad+C\int_{0}^{T}\left(\|\tilde\phi\|_{H^2}^2
\|\nabla\tilde\phi_1\|_{H^2}^2+C\|\nabla\tilde\phi\|_{H^2}^2\right) {\rm d}t \nonumber
\\
&\le CK_2\|(\tilde{\bf u}, \tilde\phi, \tilde\psi)\|_X\int_{0}^{T}
\|\Delta_{\boldsymbol\tau}\tilde\psi\|_{H^2(\Gamma)} {\rm d}t
+CK_2^\frac{1}{2}\|(\tilde{\bf u}, \tilde\phi, \tilde\psi)\|_X^2
\int_{0}^{T}\|\Delta_{\boldsymbol\tau}\tilde\psi_2\|_{H^2(\Gamma)} {\rm d}t
\nonumber
\\
&\quad+ C(K_2+1)T\|(\tilde{\bf u}, \tilde\phi, \tilde\psi)\|_X^2
\nonumber
\\
&\le C(K_2T^\frac{1}{2}+1)\|(\tilde{\bf u}, \tilde\phi, \tilde\psi)\|_X^2.
\nonumber
\\[1em]
\int_{0}^{T}\|\bar g_t\|_{L^2(\Gamma)}^2 {\rm d}t
&\le C\int_{0}^{T}\left(\|\gamma\Delta_{\boldsymbol\tau}\tilde\psi_t\|_{L^2(\Gamma)}^2
\|\nabla_{\boldsymbol\tau}\tilde\psi_1\|_{L^\infty(\Gamma)}^2
+\|\gamma\Delta_{\boldsymbol\tau}\tilde\psi\|_{L^\infty(\Gamma)}^2
\|\nabla_{\boldsymbol\tau}\tilde\psi_{1t}\|_{L^2(\Gamma)}^2
\right) {\rm d}t                                            \nonumber
\\
&\quad +C\int_{0}^{T}\left(\|\gamma\Delta_{\boldsymbol\tau}\tilde\psi_{2t}\|_{L^2(\Gamma)}^2
\|\nabla_{\boldsymbol\tau}\tilde\psi\|_{L^\infty(\Gamma)}^2
+\|\gamma\Delta_{\boldsymbol\tau}\tilde\psi_2\|_{L^\infty(\Gamma)}^2
\|\nabla_{\boldsymbol\tau}\tilde\psi_t\|_{L^2(\Gamma)}^2
\right) {\rm d}t                                            \nonumber
\\
&\quad+C\int_{0}^{T}\left(\|\partial_{\bf n}\tilde\phi_t\|_{L^2(\Gamma)}^2
\|\nabla_{\boldsymbol\tau}\tilde\psi_1\|_{L^\infty(\Gamma)}^2
+\|\partial_{\bf n}\tilde\phi\|_{L^4(\Gamma)}^2
\|\nabla_{\boldsymbol\tau}\tilde\psi_{1t}\|_{L^4(\Gamma)}^2
\right) {\rm d}t                                             \nonumber
\\
&\quad+C\int_{0}^{T}\left(\|\partial_{\bf n}\tilde\phi_{2t}\|_{L^2(\Gamma)}^2
\|\nabla_{\boldsymbol\tau}\tilde\psi\|_{L^\infty(\Gamma)}^2
+\|\partial_{\bf n}\tilde\phi_2\|_{L^4(\Gamma)}^2
\|\nabla_{\boldsymbol\tau}\tilde\psi_t\|_{L^4(\Gamma)}^2
\right) {\rm d}t                                             \nonumber
\\
&\quad +C\int_{0}^{T}
\left(\|\tilde\psi_t\|_{L^2(\Gamma)}^2
\|\nabla_{\boldsymbol\tau}\tilde\psi_1\|_{L^\infty(\Gamma)}^2
+\|\tilde\psi\|_{L^\infty(\Gamma)}^2
\|\nabla_{\boldsymbol\tau}\tilde\psi_{1t}\|_{L^2(\Gamma)}^2
+\|\nabla_{\boldsymbol\tau}\tilde\psi_t\|_{L^2(\Gamma)}^2
\right) {\rm d}t                                              \nonumber
\\
&\le CK_2\|(\tilde{\bf u}, \tilde\phi, \tilde\psi)\|_X^2.  \nonumber
\end{align}
In the end, we shall estimate $\|\bar g\|_{L^\infty(0,T;H^{\frac{1}{2}}(\Gamma))}^2$. Direct calculation shows
\begin{equation*}
\|\bar g\|_{H^1(\Gamma)}
\le CK_2^\frac{1}{2}\|(\tilde{\bf u}, \tilde\phi, \tilde\psi)\|_X,
\end{equation*}
and
\begin{align}
\|\bar g\|_{L^2(\Gamma)}
&=\left\Vert\int_{0}^{t}\bar g_t(x,s){\rm d}s\right\Vert_{L^2(\Gamma)}
\le \int_{0}^{t}\|\bar g_t\|_{L^2(\Gamma)}{\rm d}s  \nonumber
\\
&\le T^{\frac{1}{2}}\|\bar g_t\|_{L^2(0,T;L^2(\Gamma))}
\le CK_2^\frac{1}{2}T^\frac{1}{2}\|(\tilde{\bf u}, \tilde\phi, \tilde\psi)\|_X, \nonumber
\end{align}
from which we obtain
\begin{equation*}
\|\bar g\|_{H^{\frac{1}{2}}(\Gamma)}^2
\le C\|\bar g\|_{L^2(\Gamma)}\|\bar g\|_{H^1(\Gamma)}
\le CK_2T^{\frac{1}{2}}\|(\tilde{\bf u}, \tilde\phi, \tilde\psi)\|_X^2,
\end{equation*}
where we have used (\ref{L21}) for $k=0$.
Through the above estimates, we conclude
\begin{align}
\label{A.8}
&\|\bar G\|_{L^2}^2+\|\bar g\|_{H^\frac{1}{2}(\Gamma)}^2
+\int_{0}^{T}\left[
\epsilon K_1^{-2}K_2^{-2}\left(\|\bar G_t\|_{L^2}^2+\|\bar g_t\|_{L^2(\Gamma)}^2
\right)
+\|\bar G\|_{H^1}^2+\|\bar g\|_{H^{\frac{3}{2}}(\Gamma)}^2
\right] {\rm d}t                                            \nonumber
\\
&\le C\left(K_1T+K_2T^{\frac{1}{2}}+\epsilon
\right)\|(\tilde{\bf u}, \tilde\phi, \tilde\psi)\|_X^2.
\end{align}
We go back to (\ref{P2}) in Proposition \ref{Prop3.2}, replace $(G,g)$ by $(\bar G,\bar g)$, use (\ref{A.8}) then we have
\begin{align}
\label{A.6}
\|{\bf u}\|_{Y_1}^2\le Ce^{\epsilon^{-1}K_1^2K_2^2T}\left(K_1T+K_2T^{\frac{1}{2}}+\epsilon
\right)\|(\tilde{\bf u}, \tilde\phi, \tilde\psi)\|_X^2.
\end{align}
Hence, (\ref{A.6}) together with (\ref{A.9}), we obtain
\begin{align}
\|({\bf u},\phi,\psi)\|_X^2\le\tilde C_3e^{\epsilon^{-1}K_1^2K_2^2T}\left(K_1T+K_2^2T^{\frac{1}{2}}+\epsilon
\right)\|(\tilde{\bf u}, \tilde\phi, \tilde\psi)\|_X^2.  \nonumber
\end{align}
Choosing $T$ suitable small such that $\epsilon^{-1}K_1^2K_2^2T<1$, and further satisfy
\begin{align}
\tilde C_3eK_1T^\frac{1}{2}<\frac{1}{6},\;\tilde C_3eK_2^2T^\frac{1}{2}<\frac{1}{6},
\end{align}
then, we choosing $\epsilon=\frac{1}{6e\tilde C_3}$ to satisfy $\tilde C_3e\epsilon<\frac{1}{6}$. Thus, we take $T$ as following
\begin{align}
\left(6\tilde C_3eK_1K_2\right)^4T<1,\;{\rm i.e.}\; T<T_2^*=\left(
6e\tilde C_3K_1K_2
\right)^{-4},                                 \nonumber
\end{align}
which meets the above conditions and implies
\begin{align}
\|\Lambda(\tilde{\bf u},\tilde\phi,\tilde\psi)\|_X^2=\|({\bf u},\phi,\psi)\|_X^2\le\frac{1}{2}\|(\tilde{\bf u},\tilde\phi,\tilde\psi)\|_X^2,
\nonumber
\end{align}
Finally, define $T^*={\rm min}\{T_1^*,T_2^*\}={\rm min}\Big\{K_2^{-4},\left(6e\tilde C_3K_1K_2
\right)^{-4}\Big\}=\left(6e\tilde C_3K_1K_2
\right)^{-4}$, we have proved $\Lambda$ is a contraction mapping. Hence, by the Banach's fixed point theorem, the map $\Lambda$ has a fixed point $({\bf u},\phi,\psi)$ which is the solution of system $(\ref{NSAC})$--$(\ref{initial condition})$. The proof of Theorem \ref{local} is completed.
$\hfill\square$
\subsection{Global well-posedness}
Given the established local existence of the unique strong solutions and the a priori global estimates, the existence of a global solution follows directly via a standard continuation argument. Furthermore, the uniqueness of the solution can be rigorously established through energy estimates.
So we only give the a priori global estimates to the problem $(\ref{NSAC})$--$(\ref{initial condition})$.
Now, we start with the following basic energy equality.
\begin{Lemma}\label{Energy}
Let $({\bf u},\phi,\psi)$ be the smooth solution of $(\ref{NSAC})$--$(\ref{initial condition})$ on $[0,T]$, then it holds that
\begin{align}
\label{energy}
\sup\limits_{0\le t \le T} E(t)+\int_{0}^{T}\left( \frac{1}{2} \|\mathbb S({\bf u})\|_{L^2}^2+\beta\|{\bf u}_{\boldsymbol\tau}\|_{L^2(\Gamma)}^2+\|\mu-\bar\mu\|_{L^2}^2+\|\mathcal L(\psi)\|_{L^2(\Gamma)}^2
\right) {\rm d}t= E(0),
\end{align}
where
\begin{align}
 E(t)=\|{\bf u}\|_{L^2}^2+\|\nabla\phi\|_{L^2}^2+\gamma\|\nabla_{\boldsymbol\tau}\psi\|_{L^2(\Gamma)}^2
+\frac{1}{2}\|\phi^2-1\|_{L^2}^2+2\int_\Gamma\gamma_{fs}(\psi){\rm d}S.\nonumber
\end{align}
\end{Lemma}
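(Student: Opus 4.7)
The approach is the standard energy method: test each of the three dynamical equations against a carefully chosen multiplier, integrate by parts in $\Omega$ or on $\Gamma$, and exploit three cross-term cancellations built into the coupling between the GNBC and the dynamic boundary condition.

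First, I would test the momentum equation $(\ref{NSAC})_1$ with ${\bf u}$. By ${\rm div}\,{\bf u}=0$ and ${\bf u}\cdot{\bf n}=0$ the convective and pressure terms vanish, and integration by parts on the viscous stress combined with the GNBC $(\ref{dynamic})_2$ yields
\begin{equation*}
\tfrac12\tfrac{{\rm d}}{{\rm d}t}\|{\bf u}\|_{L^2}^2+\tfrac12\|\mathbb S({\bf u})\|_{L^2}^2+\beta\|{\bf u}_{\boldsymbol\tau}\|_{L^2(\Gamma)}^2-\int_\Gamma\mathcal L(\psi)\nabla_{\boldsymbol\tau}\psi\cdot{\bf u}_{\boldsymbol\tau}\,{\rm d}S=-\int_\Omega{\rm div}(\nabla\phi\otimes\nabla\phi)\cdot{\bf u}\,{\rm d}x.
\end{equation*}
For the capillary force I would use the identity ${\rm div}(\nabla\phi\otimes\nabla\phi)=\Delta\phi\,\nabla\phi+\tfrac12\nabla|\nabla\phi|^2$ (the gradient piece drops by incompressibility and impermeability), substitute $\Delta\phi=f(\phi)-\mu$, and note that $f(\phi)\nabla\phi=\nabla F(\phi)$ also integrates away, leaving the single coupling $\int_\Omega\mu\nabla\phi\cdot{\bf u}\,{\rm d}x$ on the right.

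Next, I would test the Allen--Cahn equation $(\ref{NSAC})_3$ against $\mu$. Mass conservation ($\int_\Omega\phi_t\,{\rm d}x=0$, $\int_\Omega(\mu-\bar\mu)\,{\rm d}x=0$, $\int_\Omega{\bf u}\cdot\nabla\phi\,{\rm d}x=0$) turns the right-hand side into $-\|\mu-\bar\mu\|_{L^2}^2$; applying $\mu=-\Delta\phi+f(\phi)$ and integrating by parts then gives
\begin{equation*}
\tfrac12\tfrac{{\rm d}}{{\rm d}t}\|\nabla\phi\|_{L^2}^2+\tfrac{{\rm d}}{{\rm d}t}\!\int_\Omega F(\phi)\,{\rm d}x-\int_\Gamma\psi_t\,\partial_{\bf n}\phi\,{\rm d}S+\int_\Omega\mu\,{\bf u}\cdot\nabla\phi\,{\rm d}x+\|\mu-\bar\mu\|_{L^2}^2=0.
\end{equation*}
Testing the dynamic boundary condition $(\ref{dynamic})_3$ against $\mathcal L(\psi)=-\gamma\Delta_{\boldsymbol\tau}\psi+\partial_{\bf n}\phi+\gamma_{fs}'(\psi)$ and integrating by parts on $\Gamma$ (using $\psi=\phi|_\Gamma$) yields
\begin{equation*}
\tfrac{\gamma}{2}\tfrac{{\rm d}}{{\rm d}t}\|\nabla_{\boldsymbol\tau}\psi\|_{L^2(\Gamma)}^2+\tfrac{{\rm d}}{{\rm d}t}\!\int_\Gamma\gamma_{fs}(\psi)\,{\rm d}S+\int_\Gamma\psi_t\,\partial_{\bf n}\phi\,{\rm d}S+\int_\Gamma\mathcal L(\psi)\nabla_{\boldsymbol\tau}\psi\cdot{\bf u}_{\boldsymbol\tau}\,{\rm d}S+\|\mathcal L(\psi)\|_{L^2(\Gamma)}^2=0.
\end{equation*}

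Summing the three identities, three pairs of cross terms cancel exactly by design: the capillary coupling $\int_\Omega\mu\nabla\phi\cdot{\bf u}\,{\rm d}x$ (between the NS and AC tests), the uncompensated Young stress $\int_\Gamma\mathcal L(\psi)\nabla_{\boldsymbol\tau}\psi\cdot{\bf u}_{\boldsymbol\tau}\,{\rm d}S$ (between the NS and dynamic BC tests), and the trace $\int_\Gamma\psi_t\,\partial_{\bf n}\phi\,{\rm d}S$ (between the AC and dynamic BC tests). Using $F(\phi)=\tfrac14(\phi^2-1)^2$ so that $2\int_\Omega F(\phi)\,{\rm d}x=\tfrac12\|\phi^2-1\|_{L^2}^2$, the remainder is a clean dissipation identity of the form $\tfrac12\tfrac{{\rm d}}{{\rm d}t}E(t)+\tfrac12\|\mathbb S({\bf u})\|_{L^2}^2+\beta\|{\bf u}_{\boldsymbol\tau}\|_{L^2(\Gamma)}^2+\|\mu-\bar\mu\|_{L^2}^2+\|\mathcal L(\psi)\|_{L^2(\Gamma)}^2=0$, which integrates in time to (\ref{energy}); the resulting monotonicity of $t\mapsto E(t)$ identifies $\sup_{[0,T]}E(t)$ with $E(0)$. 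The main (and essentially only) obstacle is careful bookkeeping of the boundary contributions from the three integrations by parts so that the three cross terms cancel exactly; once the GNBC, the dynamic boundary condition, and the trace identification $\phi|_\Gamma=\psi$ are applied consistently, the algebraic structure is precisely the formal energy balance (\ref{energy-0}) announced in the introduction.
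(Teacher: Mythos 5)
Your proposal is correct and follows essentially the same route as the paper: test the momentum equation with $\mathbf u$, the Allen--Cahn equation with $\mu-\bar\mu$, exploit mass conservation and the divergence/impermeability cancellations, and let the three cross terms (capillary coupling, uncompensated Young stress, and $\int_\Gamma\psi_t\,\partial_{\bf n}\phi\,{\rm d}S$) cancel. The only cosmetic difference is that you test the dynamic boundary condition as a third separate identity, whereas the paper substitutes $\psi_t+{\bf u}_{\boldsymbol\tau}\cdot\nabla_{\boldsymbol\tau}\psi=-\mathcal L(\psi)$ directly into the boundary term of the momentum balance; the algebra is identical.
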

\begin{proof}[\bf Proof.]
First, multiplying $(\ref{NSAC})_3$ by $(\mu-\bar\mu)$ and integrating the result over $\Omega$, one has
\begin{align}
\label{e1}
\int_\Omega\phi_t(\mu-\bar\mu){\rm d}x+\int_\Omega{\bf u}\cdot\nabla\phi(\mu-\bar\mu)
{\rm d}x=-\|\mu-\bar\mu\|_{L^2}^2.
\end{align}
The first term on the left-hand side together with (\ref{phit}) leads to
\begin{align}
&\int_\Omega\phi_t(\mu-\bar\mu){\rm d}x+\int_\Omega{\bf u}\cdot\nabla\phi(\mu-\bar\mu){\rm d}x \nonumber
\\
&=-\int_\Omega\Delta\phi\phi_t {\rm d}x+\int_\Omega f(\phi)\phi_t {\rm d}x+\int_\Omega{\bf u}\cdot\nabla\phi(-\Delta\phi+f) {\rm d}x  \nonumber
\\
&=-\int_\Omega\Delta\phi\phi_t {\rm d}x+\int_\Omega(\phi^3-\phi)\phi_t{\rm d}x-\int_\Omega{\bf u}\cdot\nabla\phi\Delta\phi {\rm d}x+\int_\Omega{\bf u}\cdot\nabla\phi(\phi^3-\phi) {\rm d}x  \nonumber
\\
&=\int_\Omega\nabla\phi\cdot\nabla\phi_t {\rm d}x-\int_\Gamma\psi_t\partial_{\bf n}\phi {\rm d}S+\int_\Omega(\phi^3-\phi)\phi_t{\rm d}x-\int_\Omega{\bf u}\cdot\nabla\phi\Delta\phi {\rm d}x \nonumber
\\
&=\frac{1}{2}\left(\|\nabla\phi\|_{L^2}^2+\frac{1}{2}\|\phi^2-1\|_{L^2}^2\right) -\int_\Gamma\psi_t\partial_{\bf n}\phi {\rm d}S-\int_\Omega{\rm div}(\nabla\phi\otimes\nabla\phi)\cdot{\bf u}{\rm d}x,\nonumber
\end{align}
where we have used the following fact:
\begin{align}
\int_\Omega(\phi^3-\phi)\nabla\phi\cdot{\bf u} {\rm d}x  &=\frac{1}{4}\int_\Omega\nabla(\phi^2-1)^2\cdot{\bf u} {\rm d}x
\nonumber
\\
&=-\frac{1}{4}\int_\Omega(\phi^2-1)^2{\rm div}{\bf u}{\rm d}x+\frac{1}{4}\int_\Gamma(\phi^2-1)^2{\bf u}\cdot{\bf n}{\rm d}S=0.                                            \nonumber
\end{align}
Then, substituting the above equalities into (\ref{e1}), we get
\begin{align}
\label{e4}
&\frac{1}{2}\frac{\rm d}{{\rm d}t}\left(\|\nabla\phi\|_{L^2}^2
+\frac{1}{2}\|\phi^2-1\|_{L^2}^2
\right)
+\|\mu-\bar\mu\|_{L^2}^2=\int_\Gamma\partial_{\bf n}\phi\psi_t{\rm d}S+\int_\Omega{\rm div}(\nabla\phi\otimes\nabla\phi)\cdot{\bf u}{\rm d}x.
\end{align}
\vskip2mm
Next, multiplying $(\ref{NSAC})_1$ by $\bf u$, and integrating the result over $\Omega$ by parts,  one finds
\begin{align}
\label{e2}
&\frac{1}{2}\frac{\rm d}{{\rm d}t}\|{\bf u}\|_{L^2}^2+\frac{1}{2}\|{\mathbb S}({\bf u})\|_{L^2}^2+\beta\|{\bf u}_{\boldsymbol\tau}\|_{L^2(\Gamma)}^2
\nonumber
\\
&=\int_\Gamma\mathcal L(\psi)\nabla_{\boldsymbol\tau}\psi\cdot{\bf u}_{\boldsymbol\tau}{\rm d}S-\int_\Omega{\rm div}(\nabla\phi\otimes\nabla\phi)\cdot{\bf u} {\rm d}x \nonumber
\\
&=\int_\Gamma\mathcal L(\psi)\left(-\mathcal L(\psi)-\psi_t
\right){\rm d}S-\int_\Omega{\rm div}(\nabla\phi\otimes\nabla\phi)\cdot{\bf u} {\rm d}x
\nonumber
\\
&=-\|\mathcal L(\psi)\|_{L^2(\Gamma)}^2-\int_\Gamma\left(
-\gamma\Delta_{\boldsymbol\tau}\psi+\partial_{\bf n}\phi+\gamma_{fs}^\prime(\psi)
\right)\psi_t{\rm d}S-\int_\Omega{\rm div}(\nabla\phi\otimes\nabla\phi)\cdot{\bf u} {\rm d}x
\nonumber
\\
&=-\|\mathcal L(\psi)\|_{L^2(\Gamma)}^2-\frac{1}{2}\frac{\rm d}{{\rm d}t}\left(\gamma
\|\nabla_{\boldsymbol\tau}\psi\|_{L^2(\Gamma)}^2+2\int_\Gamma\gamma_{fs}(\psi){\rm d}S
\right)                  \nonumber
\\
&\quad-\int_\Gamma\partial_{\bf n}\phi\psi_t{\rm d}S
-\int_\Omega{\rm div}(\nabla\phi\otimes\nabla\phi)\cdot{\bf u} {\rm d}x.
\end{align}
Combining (\ref{e4}) with (\ref{e2}), then together with Gronwall inequality shows (\ref{energy}). Hence, we complete the proof of  Lemma \ref{Energy}.
\end{proof}
\begin{Lemma}
\label{Phi2}
Let $({\bf u},\phi,\psi)$ be the smooth solution to $(\ref{NSAC})$--$(\ref{initial condition})$ on $[0,T]$, then it holds that
\begin{align}
\label{Phi h2}
\|\phi\|_{H^2}^2+\gamma\|\psi\|_{H^2(\Gamma)}^2\le C\left(\|\mu-\bar\mu\|_{L^2}^2+\|\mathcal L(\psi)\|_{L^2(\Gamma)}^2+1
\right),
\end{align}
where $C$ is a positive constant that depends on $\Omega$, $\gamma^{-1}$ and $T$.
\end{Lemma}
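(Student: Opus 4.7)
The plan is to view the interior identity $\mu = -\Delta\phi + f(\phi)$ together with the boundary identity $\mathcal{L}(\psi) = -\gamma\Delta_{\boldsymbol\tau}\psi + \partial_{\bf n}\phi + \gamma_{fs}'(\psi)$ as a coupled elliptic boundary value problem for the pair $(\phi,\psi)$ and apply the elliptic regularity result of Lemma \ref{estimates}. Specifically, I rewrite the system in the form
\begin{align*}
-\Delta\phi &= H, \qquad H := (\mu-\bar\mu) + \bar\mu - f(\phi),\\
-\Delta_{\boldsymbol\tau}\psi + \psi + \partial_{\bf n}\phi &= h,
\end{align*}
and use the boundary equation to solve for $\Delta_{\boldsymbol\tau}\psi$, giving
$$h = \psi + \gamma^{-1}\mathcal{L}(\psi) + (1-\gamma^{-1})\partial_{\bf n}\phi - \gamma^{-1}\gamma_{fs}'(\psi).$$
Lemma \ref{estimates} with $s=0$ then yields $\|\phi\|_{H^2} + \|\psi\|_{H^2(\Gamma)} \le C(\|H\|_{L^2} + \|h\|_{L^2(\Gamma)})$, and the task reduces to controlling the right-hand side by $\|\mu-\bar\mu\|_{L^2}$, $\|\mathcal{L}(\psi)\|_{L^2(\Gamma)}$, and an absolute constant.

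The scalar $\bar\mu$ is handled exactly as in \eqref{bar mu}: integrating the boundary equation over $\Gamma$ and using $\int_\Gamma \Delta_{\boldsymbol\tau}\psi\,{\rm d}S = 0$ gives $\int_\Gamma \partial_{\bf n}\phi\,{\rm d}S = \int_\Gamma(\mathcal{L}(\psi) - \gamma_{fs}'(\psi))\,{\rm d}S$, whence $|\bar\mu| \le C(\|\mathcal{L}(\psi)\|_{L^2(\Gamma)} + \|f(\phi)\|_{L^1} + 1)$. The nonlinearity $\|f(\phi)\|_{L^2} = \|\phi^3-\phi\|_{L^2}$ is bounded by an absolute constant via the Sobolev embedding $H^1 \hookrightarrow L^6$ together with the uniform bounds on $\|\nabla\phi\|_{L^2}$ and $\|\phi^2-1\|_{L^2}$ (hence on $\|\phi\|_{L^4}$) furnished by the energy identity of Lemma \ref{Energy}. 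The surface term $\|\gamma_{fs}'(\psi)\|_{L^2(\Gamma)}$ is uniformly bounded since $\gamma_{fs}'$ is pointwise bounded in view of its explicit trigonometric form, and $\|\psi\|_{L^2(\Gamma)}$ is likewise controlled by $E(t) \le E(0)$.

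The main obstacle is the $(1-\gamma^{-1})\partial_{\bf n}\phi$ contribution to $h$, which is of the same order as the quantity $\|\phi\|_{H^2}$ we are trying to bound. The plan is to absorb it into the left-hand side via the trace-interpolation estimate
$$\|\partial_{\bf n}\phi\|_{L^2(\Gamma)} \le C\|\nabla\phi\|_{H^{1/2}} \le C\|\phi\|_{H^1}^{1/2}\|\phi\|_{H^2}^{1/2} \le \varepsilon\|\phi\|_{H^2} + C_\varepsilon\|\phi\|_{H^1},$$
with $\varepsilon$ chosen sufficiently small, and to bound the residual $\|\phi\|_{H^1}$ by Lemma \ref{Energy} again. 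Squaring the resulting inequality and lumping all quantities controlled by constants depending only on the initial energy, $\gamma^{-1}$, $\Omega$, and $T$ into a single $+1$ term yields the claimed bound. The factor $\gamma$ on the surface term on the left-hand side of \eqref{Phi h2} is compatible with (and a natural consequence of) the $\gamma^{-1}$-dependence allowed in the constant $C$: every $\gamma^{-1}$ produced by dividing the boundary equation by $\gamma$ can be rebalanced into a $\gamma$ weight on $\|\psi\|_{H^2(\Gamma)}^2$ after squaring.
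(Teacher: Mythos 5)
Your proposal is correct and follows the same overall strategy as the paper: recast $(\ref{NSAC})_4$ together with the dynamic boundary relation as the coupled elliptic system of Lemma \ref{estimates}, control $\bar\mu$ by integrating the boundary identity over $\Gamma$ exactly as in \eqref{bar mu}, and bound $f(\phi)$, $\gamma_{fs}'(\psi)$ and $\|\psi\|_{L^2(\Gamma)}$ by the conserved energy of Lemma \ref{Energy}. The one place you diverge is the treatment of the coefficient $\gamma$ on the Laplace--Beltrami term. The paper keeps the boundary equation in the form $-\gamma\Delta_{\boldsymbol\tau}\psi+\psi+\partial_{\bf n}\phi=\mathcal L(\psi)-\gamma_{fs}'(\psi)+\psi$ (see \eqref{ty}), so that $\partial_{\bf n}\phi$ stays inside the elliptic boundary operator and the right-hand side consists only of controlled quantities; the price is that Lemma \ref{estimates} is invoked for the $\gamma$-weighted operator with a constant depending on $\gamma^{-1}$, which is where the $\gamma\|\psi\|_{H^2(\Gamma)}^2$ weight in \eqref{Phi h2} comes from. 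You instead normalize the coefficient to $1$ so as to match the literal statement of Lemma \ref{estimates}, at the cost of producing the top-order remainder $(1-\gamma^{-1})\partial_{\bf n}\phi$ in $h$; your absorption of this term via the trace-interpolation $\|\partial_{\bf n}\phi\|_{L^2(\Gamma)}^2\le C\|\nabla\phi\|_{L^2}\|\nabla\phi\|_{H^1}\le \varepsilon\|\phi\|_{H^2}^2+C_\varepsilon\|\phi\|_{H^1}^2$ (cf. the last inequality of Lemma \ref{PGN}) together with the energy bound on $\|\phi\|_{H^1}$ is legitimate, and the resulting $\gamma^{-1}$-dependence of the constant is permitted by the statement. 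So your route is slightly longer but is more scrupulous about the hypotheses of the cited elliptic lemma, while the paper's route avoids the absorption argument entirely.
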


\begin{proof}[\bf Proof.]
First, using Poincar$\rm\acute{e}$ inequality and the fact that $\int_\Omega\phi_t {\rm d}x=0$, we deduce
\begin{align}
\label{Phi}
\|\phi\|_{L^2}^2\le \|\phi-\bar\phi\|_{L^2}^2+\|\bar\phi\|_{L^2}^2\le C\|\nabla\phi\|_{L^2}^2+C|\bar\phi_0|^2\le C.
\end{align}
Then, from $(\ref{NSAC})_4$ and the definition of $\mathcal L(\psi)=-\gamma\Delta_{\boldsymbol\tau}\phi+\partial_{\bf n}\phi+\gamma_{fs}^\prime(\psi)$, we infer that
\begin{equation}
\label{ty}
\begin{cases}
-\Delta\phi=\mu-(\phi^3-\phi),\;&{\rm in}\;\Omega,
\\
-\gamma\Delta_{\boldsymbol\tau}\psi+\psi+\partial_{\bf n}\phi=\mathcal L(\psi)-\gamma_{fs}^\prime(\psi)+\psi,\;&{\rm on}\;\Gamma,
\\
\phi\big|_\Gamma=\psi,\;&{\rm on}\;\Gamma,
\end{cases}
\end{equation}
whence, together Lemma \ref{estimates}, we obtain
\begin{align}
\label{H4}
&\|\phi\|_{H^2}^2+\gamma\|\psi\|_{H^2(\Gamma)}^2 \nonumber
\\
&\le C(\gamma^{-1})\left(\|\mu\|_{L^2}^2+\|\phi^3\|_{L^2}^2+\|\phi\|_{L^2}^2+\|\mathcal L(\psi)\|_{L^2(\Gamma)}^2+\|\psi\|_{L^2(\Gamma)}^2+1
\right)
\nonumber
\\
&\le C(\gamma^{-1})\left(\|\mu-\bar\mu\|_{L^2}^2+\|\mathcal L(\psi)\|_{L^2(\Gamma)}^2+1
\right),
\end{align}
where we have used (\ref{energy}), (\ref{Phi}), Lemma \ref{Trace} and the following fact that
\begin{align}
\bar\mu&=\frac{1}{|\Omega|}\int_\Omega(-\Delta\phi+f){\rm d}x=\frac{1}{|\Omega|}\int_\Gamma
-\partial_{\bf n}\phi {\rm d}S+\frac{1}{|\Omega|}\int_\Omega(\phi^3-\phi){\rm d}x    \nonumber
\\
&=\frac{1}{|\Omega|}\int_\Gamma\left[-\mathcal L(\psi)-\gamma\Delta_{\boldsymbol\tau}\psi+\gamma_{fs}^\prime(\psi)
\right]{\rm d}S+\frac{1}{|\Omega|}\int_\Omega(\phi^3-\phi){\rm d}x    \nonumber
\\
&=\frac{1}{|\Omega|}\int_\Gamma\left[-\mathcal L(\psi)+\gamma_{fs}^\prime(\psi)
\right]{\rm d}S+\frac{1}{|\Omega|}\int_\Omega(\phi^3-\phi){\rm d}x    \nonumber
\\
&\le C\left(\|\mathcal L(\psi)\|_{L^2(\Gamma)}+\|\phi^3\|_{L^2}+\|\phi\|_{L^2}+1
\right).       \nonumber
\end{align}
Thus, we complete the proof of Lemma \ref{Phi2}.
\end{proof}
\begin{Lemma}
\label{3}
Let $({\bf u},\phi,\psi)$ be the smooth solution to $(\ref{NSAC})$--$(\ref{initial condition})$ on $[0,T]$, then it holds that
\begin{align}
\label{H10}
&\sup\limits_{0\le t \le T}\left(
\|\nabla_{\boldsymbol\tau}\nabla\phi\|_{L^2}^2+\gamma\|\nabla_{\boldsymbol\tau}^2\psi\|_{L^2(\Gamma)}^2
+\|\nabla_{\boldsymbol\tau}{\bf u}\|_{L^2}^2
\right)                          \nonumber
\\
&\quad+\int_{0}^{T}\left(\|\nabla_{\boldsymbol\tau}\mathcal L(\psi)\|_{L^2(\Gamma)}^2+\|\mathbb S(\nabla_{\boldsymbol\tau}{\bf u})\|_{L^2}^2+\|\nabla_{\boldsymbol\tau}{\bf u}_{\boldsymbol\tau}\|_{L^2(\Gamma)}^2
\right) {\rm d}t\le C,
\end{align}
where $C$ is a positive constant that depends on $\Omega$, $\beta^{-1}$, $\gamma^{-1}$ and $T$.
\end{Lemma}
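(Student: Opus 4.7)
The plan is to derive a tangentially-differentiated analogue of the basic energy identity (\ref{energy}), exploiting the 2D channel geometry of $\Omega=\mathbb{T}\times(-1,1)$: the unit normal $\mathbf{n}$ and tangent $\boldsymbol\tau$ are constant vector fields, the boundary curvature $\kappa$ vanishes (Lemma \ref{poincare U}), and $\nabla_{\boldsymbol\tau}$ commutes with $\partial_{\mathbf{n}}$. Differentiating ${\bf u}\cdot\mathbf{n}=0$ and ${\rm div}\,{\bf u}=0$ tangentially then gives $\nabla_{\boldsymbol\tau}{\bf u}\cdot\mathbf{n}=0$ on $\Gamma$ and ${\rm div}\,\nabla_{\boldsymbol\tau}{\bf u}=0$ in $\Omega$, so that the pressure integral $\int_\Omega\nabla\nabla_{\boldsymbol\tau}p\cdot\nabla_{\boldsymbol\tau}{\bf u}\,{\rm d}x$ vanishes by exactly the calculation in (\ref{p-equality}). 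This is what makes the channel case work without appealing to pressure regularity.

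I would then apply $\nabla_{\boldsymbol\tau}$ to $(\ref{NSAC})_{1,3,4}$ and to the boundary relations in (\ref{dynamic}) and perform three coupled energy tests. First, testing the differentiated momentum equation by $\nabla_{\boldsymbol\tau}{\bf u}$ produces $\frac{1}{2}\frac{d}{dt}\|\nabla_{\boldsymbol\tau}{\bf u}\|_{L^2}^2+\frac{1}{2}\|\mathbb{S}(\nabla_{\boldsymbol\tau}{\bf u})\|_{L^2}^2+\beta\|\nabla_{\boldsymbol\tau}{\bf u}_{\boldsymbol\tau}\|_{L^2(\Gamma)}^2$ (Korn, Lemma \ref{Korn}), together with the Young-stress boundary contribution $\int_\Gamma\nabla_{\boldsymbol\tau}(\mathcal L(\psi)\nabla_{\boldsymbol\tau}\psi)\cdot\nabla_{\boldsymbol\tau}{\bf u}_{\boldsymbol\tau}\,{\rm d}S$ coming from the differentiated GNBC and the bulk capillary forcing $-\int_\Omega\nabla_{\boldsymbol\tau}{\rm div}(\nabla\phi\otimes\nabla\phi)\cdot\nabla_{\boldsymbol\tau}{\bf u}\,{\rm d}x$. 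Second, testing the differentiated phase equation by $\nabla_{\boldsymbol\tau}(\mu-\bar\mu)$ and using $\nabla_{\boldsymbol\tau}\mu=-\Delta\nabla_{\boldsymbol\tau}\phi+\nabla_{\boldsymbol\tau}f(\phi)$ generates $\frac{1}{2}\frac{d}{dt}\|\nabla_{\boldsymbol\tau}\nabla\phi\|_{L^2}^2$ and $\|\nabla_{\boldsymbol\tau}(\mu-\bar\mu)\|_{L^2}^2$, modulo a surface term $\int_\Gamma\nabla_{\boldsymbol\tau}\psi_t\,\nabla_{\boldsymbol\tau}\partial_{\mathbf{n}}\phi\,{\rm d}S$ that will be reshaped using the differentiated boundary equation $\nabla_{\boldsymbol\tau}\psi_t+\nabla_{\boldsymbol\tau}({\bf u}_{\boldsymbol\tau}\cdot\nabla_{\boldsymbol\tau}\psi)=-\nabla_{\boldsymbol\tau}\mathcal L(\psi)$. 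Third, testing that differentiated boundary equation by $\nabla_{\boldsymbol\tau}\mathcal L(\psi)$ and expanding $\mathcal L(\psi)=-\gamma\Delta_{\boldsymbol\tau}\psi+\partial_{\mathbf{n}}\phi+\gamma_{fs}'(\psi)$ yields $\|\nabla_{\boldsymbol\tau}\mathcal L(\psi)\|_{L^2(\Gamma)}^2$ and $\frac{1}{2}\gamma\frac{d}{dt}\|\nabla_{\boldsymbol\tau}^2\psi\|_{L^2(\Gamma)}^2$, and the cross-terms between (i)--(iii) cancel exactly as in the bulk/surface cancellation used in the proof of Lemma \ref{Energy}, capillary bulk pairing with Young-stress surface.

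The main obstacle will be controlling the nonlinear residuals after $\nabla_{\boldsymbol\tau}$-differentiation, that is, the convection commutators $\nabla_{\boldsymbol\tau}({\bf u}\cdot\nabla{\bf u})$, $\nabla_{\boldsymbol\tau}({\bf u}\cdot\nabla\phi)$, $\nabla_{\boldsymbol\tau}({\bf u}_{\boldsymbol\tau}\cdot\nabla_{\boldsymbol\tau}\psi)$, the polynomial forcings $\nabla_{\boldsymbol\tau}f(\phi)$, $\nabla_{\boldsymbol\tau}\gamma_{fs}'(\psi)$, and the residual cross-terms left by the imperfect cancellation of the capillary/Young stresses. These will be handled by Cauchy--Schwarz together with the 2D interpolation estimates of Lemma \ref{PGN} (valid precisely because of the channel structure) and the trace theorem (Lemma \ref{Trace}), using as a priori data the basic bound of Lemma \ref{Energy} and the $H^2$ estimate of Lemma \ref{Phi2}, whose RHS $\|\mu-\bar\mu\|_{L^2}^2+\|\mathcal L(\psi)\|_{L^2(\Gamma)}^2+1$ is time-integrable by (\ref{energy}). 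After absorbing the top-order dissipation $\|\mathbb{S}(\nabla_{\boldsymbol\tau}{\bf u})\|_{L^2}^2$, $\|\nabla_{\boldsymbol\tau}{\bf u}_{\boldsymbol\tau}\|_{L^2(\Gamma)}^2$ and $\|\nabla_{\boldsymbol\tau}\mathcal L(\psi)\|_{L^2(\Gamma)}^2$ back into the LHS via Young's inequality, a direct application of Gronwall's inequality (Lemma \ref{Gronwall}) will close the estimate and produce (\ref{H10}).
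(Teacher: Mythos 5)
Your proposal follows essentially the same route as the paper: tangential differentiation of the momentum, phase, and boundary equations, the channel-geometry cancellation of the pressure and convection terms, the same coupled energy tests producing the dissipation $\|\nabla_{\boldsymbol\tau}(\mu-\bar\mu)\|_{L^2}^2+\|\nabla_{\boldsymbol\tau}\mathcal L(\psi)\|_{L^2(\Gamma)}^2+\|\mathbb S(\nabla_{\boldsymbol\tau}{\bf u})\|_{L^2}^2$, the bulk/surface cancellation of capillary and Young stresses, and Gronwall. The only implicit step worth making explicit is that several nonlinear residuals force terms like $\varepsilon\|\nabla_{\boldsymbol\tau}\phi\|_{H^2}^2$, which the paper absorbs via a tangential analogue of Lemma \ref{Phi2} (estimate (\ref{tan phi h2}), from Lemma \ref{estimates}) bounding $\|\nabla_{\boldsymbol\tau}\phi\|_{H^2}^2$ by the dissipation itself — your citation of Lemma \ref{Phi2} covers this in spirit.
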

\begin{proof}[\bf Proof.]
Taking the tangential derivative of $(\ref{NSAC})_3$, $(\ref{NSAC})_4$ and $(\ref{dynamic})_3$ with respect with $x$ respectively leads to
 \begin{equation}
 \label{tangential 1}
 \begin{cases}
 \nabla_{\boldsymbol\tau}\phi_t+\nabla_{\boldsymbol\tau}({\bf u}\cdot\nabla\phi)=\nabla_{\boldsymbol\tau}(\bar\mu-\mu),\;&{\rm in}\;\Omega,
 \\
 \nabla_{\boldsymbol\tau}\mu=-\nabla_{\boldsymbol\tau}\Delta\phi+\nabla_{\boldsymbol\tau}f,\;&{\rm in}\;\Omega,
 \\
 \nabla_{\boldsymbol\tau}\psi_t+\nabla_{\boldsymbol\tau}({\bf u}_{\boldsymbol\tau}\cdot\nabla_{\boldsymbol\tau}\psi)=-\nabla_{\boldsymbol\tau}\mathcal L(\psi)=\gamma\nabla_{\boldsymbol\tau}\Delta_{\boldsymbol\tau}\psi
 -\nabla_{\boldsymbol\tau}\partial_{\bf n}\phi-\gamma_{fs}^{(2)}(\psi)\nabla_{\boldsymbol\tau}\psi,\;&{\rm on}\;\Gamma,
 \end{cases}
 \end{equation}
Multiplying $(\ref{tangential 1})_1$, $(\ref{tangential 1})_2$  and $(\ref{tangential 1})_3$ by $-\nabla_{\boldsymbol\tau}\mu$, $\nabla_{\boldsymbol\tau}\phi_t$ and $\nabla_{\boldsymbol\tau}\psi_t$ respectively, integrating it over $\Omega$ by parts, and then adding the result together, we get
\begin{align}
\label{H1}
&\frac{1}{2}\frac{\rm d}{{\rm d}t}\left(\|\nabla_{\boldsymbol\tau}\nabla\phi\|_{L^2}^2
+\gamma\|\nabla_{\boldsymbol\tau}^2\psi\|_{L^2(\Gamma)}^2
\right)+\|\nabla_{\boldsymbol\tau}(\mu-\bar\mu)\|_{L^2}^2+\|\nabla_{\boldsymbol\tau}\mathcal L(\psi)\|_{L^2(\Gamma)}^2               \nonumber
\\
&=-\int_\Omega\nabla_{\boldsymbol\tau}({\bf u}\cdot\nabla\phi)\nabla_{\boldsymbol\tau}\mu {\rm d}x-\int_\Omega(3\phi^2-1)\nabla_{\boldsymbol\tau}\phi\nabla_{\boldsymbol\tau}(\bar\mu-\mu){\rm d}x
\nonumber
\\
&\quad-\int_\Omega\left[6\phi|\nabla_{\boldsymbol\tau}\phi|^2+(3\phi^2-1)
\nabla_{\boldsymbol\tau}^2\phi
\right]{\bf u}\cdot\nabla\phi {\rm d}x     \nonumber
\\
&\quad-\int_{\Gamma}\nabla_{\boldsymbol\tau}\mathcal L(\psi)\left({\bf u}_{\boldsymbol\tau}\cdot\nabla_{\boldsymbol\tau}^2\psi+\nabla_{\boldsymbol\tau}{\bf u}_{\boldsymbol\tau}\cdot\nabla_{\boldsymbol\tau}\psi
\right){\rm d}S                            \nonumber
\\
&\quad+\int_{\Gamma}\gamma_{fs}^{(2)}(\psi)\nabla_{\boldsymbol\tau}\psi\left(
\nabla_{\boldsymbol\tau}\mathcal L(\psi)+\nabla_{\boldsymbol\tau}{\bf u}_{\boldsymbol\tau}\cdot\nabla_{\boldsymbol\tau}\psi+{\bf u}_{\boldsymbol\tau}\cdot\nabla_{\boldsymbol\tau}^2\psi
\right){\rm d}S.
\end{align}
\vskip2mm
Then, taking the tangential derivative of $(\ref{NSAC})_1$ and $(\ref{dynamic})_2$ respectively gives
\begin{equation}
\label{tangential 2}
\begin{cases}
\nabla_{\boldsymbol\tau}{\bf u}_t+\nabla_{\boldsymbol\tau}{\bf u}\cdot\nabla{\bf u}+{\bf u}\cdot\nabla\nabla_{\boldsymbol\tau}{\bf u}+\nabla\nabla_{\boldsymbol\tau}p-{\rm div}\mathbb S(\nabla_{\boldsymbol\tau}{\bf u})=-\nabla_{\boldsymbol\tau}\Delta\phi\nabla\phi-\Delta\phi\nabla_{\boldsymbol\tau}\nabla\phi,
\;&{\rm in}\;\Omega,
\\
\beta\nabla_{\boldsymbol\tau}{\bf u}_{\boldsymbol\tau}+\left(\mathbb S(\nabla_{\boldsymbol\tau}{\bf u})\cdot{\bf n}
\right)_{\boldsymbol\tau}=\nabla_{\boldsymbol\tau}\mathcal L(\psi)\nabla_{\boldsymbol\tau}\psi+\mathcal L(\psi)\nabla_{\boldsymbol\tau}^2\psi,\;&{\rm on}\;\Gamma.
\end{cases}
\end{equation}
Testing $(\ref{tangential 2})_1$ by $\nabla_{\boldsymbol\tau}{\bf u}$, it is worthy noting that $\Omega$ is a channel domain, due to the incompressibility of velocity ${\bf u}$ and boundary condition, we have
\begin{align}
\int_\Omega{\bf u}\cdot\nabla\nabla_{\boldsymbol\tau}{\bf u}\cdot\nabla_{\boldsymbol\tau}{\bf u}{\rm d}x&=\frac{1}{2}\int_\Omega{\bf u}\cdot\nabla|\nabla_{\boldsymbol\tau}{\bf u}|^2 {\rm d}x \nonumber
\\
&=-\frac{1}{2}\int_\Omega{\rm div}{\bf u}|\nabla_{\boldsymbol\tau}{\bf u}|^2 {\rm d}x+\frac{1}{2}\int_\Gamma{\bf u}\cdot{\bf n}|\nabla_{\boldsymbol\tau}{\bf u}|^2{\rm d}S=0,
\nonumber
\end{align}
and
\begin{align}
\int_\Omega\nabla\nabla_{\boldsymbol\tau}p\cdot\nabla_{\boldsymbol\tau}{\bf u}{\rm d}x=-\int_\Omega\nabla_{\boldsymbol\tau}p\cdot\nabla_{\boldsymbol\tau}{\rm div}{\bf u}{\rm d}x+\int_\Gamma\nabla_{\boldsymbol\tau}p\cdot\nabla_{\boldsymbol\tau}{\bf u}\cdot{\bf n}{\rm d}S=0. \nonumber
\end{align}
Thus, we infer that
\begin{align}
\label{H2}
&\frac{1}{2}\frac{\rm d}{{\rm d}t}\|\nabla_{\boldsymbol\tau}{\bf u}\|_{L^2}^2+\frac{1}{2}\|\mathbb S(\nabla_{\boldsymbol\tau}{\bf u})\|_{L^2}^2+\beta\|\nabla_{\boldsymbol\tau}{\bf u}_{\boldsymbol\tau}\|_{L^2(\Gamma)}^2  \nonumber
\\
&=-\int_\Omega\left(\nabla_{\boldsymbol\tau}{\bf u}\cdot\nabla{\bf u}
+\nabla_{\boldsymbol\tau}\Delta\phi\nabla\phi+\Delta\phi\nabla_{\boldsymbol\tau}\nabla\phi
\right)
\cdot\nabla_{\boldsymbol\tau}{\bf u}{\rm d}x             \nonumber
\\
&\quad+\int_\Gamma\nabla_{\boldsymbol\tau}\mathcal L(\psi)\nabla_{\boldsymbol\tau}\psi\cdot\nabla_{\boldsymbol\tau}{\bf u}_{\boldsymbol\tau} {\rm d}S
+\int_\Gamma\mathcal L(\psi)\nabla_{\boldsymbol\tau}^2\psi
\cdot\nabla_{\boldsymbol\tau}{\bf u}_{\boldsymbol\tau}{\rm d}S.
\end{align}
Adding (\ref{H1}) and (\ref{H2}) together, we obtain
\begin{align}
\label{H3}
&\frac{1}{2}\frac{\rm d}{{\rm d}t}\left(\|\nabla_{\boldsymbol\tau}\nabla\phi\|_{L^2}^2
+\gamma\|\nabla_{\boldsymbol\tau}^2\psi\|_{L^2(\Gamma)}^2+\|\nabla_{\boldsymbol\tau}{\bf u}\|_{L^2}^2
\right)     \nonumber
\\
&\quad+\|\nabla_{\boldsymbol\tau}(\mu-\bar\mu)\|_{L^2}^2+\|\nabla_{\boldsymbol\tau}\mathcal L(\psi)\|_{L^2(\Gamma)}^2+\frac{1}{2}\|\mathbb S(\nabla_{\boldsymbol\tau}{\bf u})\|_{L^2}^2+\beta\|\nabla_{\boldsymbol\tau}{\bf u}_{\boldsymbol\tau}\|_{L^2(\Gamma)}^2    \nonumber
\\
&=-\int_\Omega\nabla_{\boldsymbol\tau}({\bf u}\cdot\nabla\phi)\nabla_{\boldsymbol\tau}\mu {\rm d}x-\int_\Omega\left(\nabla_{\boldsymbol\tau}{\bf u}\cdot\nabla{\bf u}
+\nabla_{\boldsymbol\tau}\Delta\phi\nabla\phi+\Delta\phi\nabla_{\boldsymbol\tau}\nabla\phi
\right)
\cdot\nabla_{\boldsymbol\tau}{\bf u}{\rm d}x
\nonumber
\\
&\quad-\int_\Omega(3\phi^2-1)\nabla_{\boldsymbol\tau}\phi\nabla_{\boldsymbol\tau}(\bar\mu-\mu){\rm d}x
-\int_\Omega\left[6\phi|\nabla_{\boldsymbol\tau}\phi|^2+(3\phi^2-1)
\nabla_{\boldsymbol\tau}^2\phi
\right]{\bf u}\cdot\nabla\phi {\rm d}x \nonumber
\\
&\quad
-\int_\Gamma\nabla_{\boldsymbol\tau}\mathcal L(\psi){\bf u}_{\boldsymbol\tau}\cdot\nabla_{\boldsymbol\tau}^2\psi {\rm d}S-\int_\Gamma\mathcal L(\psi)\nabla_{\boldsymbol\tau}^2\psi\cdot\nabla_{\boldsymbol\tau}{\bf u}_{\boldsymbol\tau}{\rm d}S             \nonumber
\\
&\quad+\int_\Gamma\gamma_{fs}^{(2)}(\psi)\nabla_{\boldsymbol\tau}\psi\left(
\nabla_{\boldsymbol\tau}\mathcal L(\psi)+\nabla_{\boldsymbol\tau}{\bf u}_{\boldsymbol\tau}\cdot\nabla_{\boldsymbol\tau}\psi+{\bf u}_{\boldsymbol\tau}\cdot\nabla_{\boldsymbol\tau}^2\psi
\right){\rm d}S                          \nonumber
\\
&=\sum_{i=1}^{7}I_i.
\end{align}
\vskip2mm
Now, we focus on the estimates of the terms on the right-hand side of (\ref{H3}). First, due to Lemma \ref{PGN}, (\ref{energy}), (\ref{Phi}) and Korn's inequality, we obtain
\begin{align}
I_1&=-\int_\Omega\left(\nabla_{\boldsymbol\tau}{\bf u}\cdot\nabla\phi+{\bf u}\cdot\nabla_{\boldsymbol\tau}\nabla\phi\right)
\nabla_{\boldsymbol\tau}(\mu-\bar\mu){\rm d}x      \nonumber
\\
&\le\left(\|\nabla_{\boldsymbol\tau}{\bf u}\|_{L^4}\|\nabla\phi\|_{L^4}+\|{\bf u}\|_{L^4}\|\nabla_{\boldsymbol\tau}\nabla\phi\|_{L^4}
\right)\|\nabla_{\boldsymbol\tau}(\mu-\bar\mu)\|_{L^2} \nonumber
\\
&\le\frac{1}{8}\|\nabla_{\boldsymbol\tau}(\mu-\bar\mu)\|_{L^2}^2 +C\|\nabla_{\boldsymbol\tau}{\bf u}\|_{L^2}\|\nabla_{\boldsymbol\tau}{\bf u}\|_{H^1}\|\nabla\phi\|_{L^2}\|\nabla\phi\|_{H^1} \nonumber
\\
&\quad+C\|{\bf u}\|_{L^2}\|{\bf u}\|_{H^1}\|\nabla_{\boldsymbol\tau}\nabla\phi\|_{L^2}\|\nabla_{\boldsymbol\tau}\nabla\phi\|_{H^1} \nonumber
\\
&\le\frac{1}{8}\|\nabla_{\boldsymbol\tau}(\mu-\bar\mu)\|_{L^2}^2 +C\left(\|\nabla_{\boldsymbol\tau}{\bf u}\|_{L^2}\|\nabla_{\boldsymbol\tau}{\bf u}\|_{H^1}\|\nabla\phi\|_{H^1}+\|{\bf u}\|_{H^1}\|\nabla_{\boldsymbol\tau}\nabla\phi\|_{L^2}\|\nabla_{\boldsymbol\tau}\nabla\phi\|_{H^1}
\right)                                         \nonumber
\\
&\le\frac{1}{8}\|\nabla_{\boldsymbol\tau}(\mu-\bar\mu)\|_{L^2}^2+\frac{1}{16}\|\mathbb S(\nabla_{\boldsymbol\tau}{\bf u})\|_{L^2}^2
+\varepsilon\|\nabla_{\boldsymbol\tau}\phi\|_{H^2}^2 \nonumber
\\
&\quad+C\|\nabla{\bf u}\|_{L^2}^2\|\nabla_{\boldsymbol\tau}\nabla\phi\|_{L^2}^2+C\|\nabla_{\boldsymbol\tau}{\bf u}\|_{L^2}^2\|\phi\|_{H^2}^2.
\nonumber
\\[1em]
I_2&\le\|\nabla_{\boldsymbol\tau}{\bf u}\|_{L^4}^2\|\nabla{\bf u}\|_{L^2}+\|\nabla_{\boldsymbol\tau}\Delta\phi\|_{L^2}\|\nabla\phi\|_{L^4}\|\nabla_{\boldsymbol\tau}{\bf u}\|_{L^4}
+\|\Delta\phi\|_{L^2}\|\nabla_{\boldsymbol\tau}\nabla\phi\|_{L^4}
\|\nabla_{\boldsymbol\tau}{\bf u}\|_{L^4}        \nonumber
\\
&\le C\|\nabla_{\boldsymbol\tau}{\bf u}\|_{L^2}\|\nabla_{\boldsymbol\tau}{\bf u}\|_{H^1}\|\nabla{\bf u}\|_{L^2} +C\|\nabla_{\boldsymbol\tau}\phi\|_{H^2}\|\nabla\phi\|_{L^2}^\frac{1}{2}
\|\nabla\phi\|_{H^1}^\frac{1}{2}
\|\nabla_{\boldsymbol\tau}{\bf u}\|_{L^2}^\frac{1}{2}\|\nabla_{\boldsymbol\tau}{\bf u}\|_{H^1}^\frac{1}{2}
\nonumber
\\
&\quad+C\|\phi\|_{H^2}\|\nabla_{\boldsymbol\tau}\nabla\phi\|_{L^2}^\frac{1}{2}
\|\nabla_{\boldsymbol\tau}\nabla\phi\|_{H^1}^\frac{1}{2}\|\nabla_{\boldsymbol\tau}{\bf u}\|_{L^2}^\frac{1}{2}\|\nabla_{\boldsymbol\tau}{\bf u}\|_{H^1}^\frac{1}{2}
\nonumber
\\
&\le C\|\nabla_{\boldsymbol\tau}{\bf u}\|_{L^2}\|\nabla_{\boldsymbol\tau}\nabla{\bf u}\|_{L^2}\|\nabla{\bf u}\|_{L^2}+\varepsilon\|\nabla_{\boldsymbol\tau}\phi\|_{H^2}^2+C\|\nabla\phi\|_{H^1}
\|\nabla_{\boldsymbol\tau}{\bf u}\|_{L^2}\|\nabla_{\boldsymbol\tau}\nabla{\bf u}\|_{L^2}
\nonumber
\\
&\quad+C\|\phi\|_{H^2}\|\nabla_{\boldsymbol\tau}\nabla\phi\|_{L^2}
\|\nabla_{\boldsymbol\tau}\nabla\phi\|_{H^1}+C\|\phi\|_{H^2}\|\nabla_{\boldsymbol\tau}{\bf u}\|_{L^2}\|\nabla_{\boldsymbol\tau}\nabla{\bf u}\|_{L^2}      \nonumber
\\
&\le\frac{1}{16}\|\mathbb S(\nabla_{\boldsymbol\tau}{\bf u})\|_{L^2}^2+\varepsilon\|\nabla_{\boldsymbol\tau}\phi\|_{H^2}^2+C
\|\nabla_{\boldsymbol\tau} {\bf u}\|_{L^2}^2\|\nabla{\bf u}\|_{L^2}^2 \nonumber
\\
&\quad+C\|\phi\|_{H^2}^2\|\nabla_{\boldsymbol\tau}{\bf u}\|_{L^2}^2+C\|\phi\|_{H^2}^2\|\nabla_{\boldsymbol\tau}\nabla\phi\|_{L^2}^2
\nonumber
\\
&\le\frac{1}{16}\|\mathbb S(\nabla_{\boldsymbol\tau}{\bf u})\|_{L^2}^2+\varepsilon\|\nabla_{\boldsymbol\tau}\phi\|_{H^2}^2+C
\left(\|\nabla{\bf u}\|_{L^2}^2+\|\phi\|_{H^2}^2\right)
\|\nabla_{\boldsymbol\tau}{\bf u}\|_{L^2}^2
+C\|\phi\|_{H^2}^2\|\nabla_{\boldsymbol\tau}\nabla\phi\|_{L^2}^2  \nonumber
\\[1em]
I_3&\le\|3\phi^2-1\|_{L^\infty} \|\nabla_{\boldsymbol\tau}\phi\|_{L^2}
\|\nabla_{\boldsymbol\tau}(\mu-\bar\mu)\|_{L^2}  \nonumber
\\
&\le\frac{1}{8}\|\nabla_{\boldsymbol\tau}(\mu-\bar\mu)\|_{L^2}^2 +C\left(\|\phi\|_{H^2}^2+1\right)\|\nabla_{\boldsymbol\tau}\phi\|_{L^2}^2  \nonumber
\\
&\le\frac{1}{8}\|\nabla_{\boldsymbol\tau}(\mu-\bar\mu)\|_{L^2}^2 +C\|\phi\|_{H^2}^2+C.
\nonumber
\\[1em]
I_4
&\le\left(\|\phi\|_{L^\infty}\|\nabla_{\boldsymbol\tau}\phi\|_{L^4}^2+
\|3\phi^2-1\|_{L^\infty}
\|\nabla_{\boldsymbol\tau}^2\phi\|_{L^2}\right)\|{\bf u}\|_{L^4}\|\nabla\phi\|_{L^4}  \nonumber
\\
&\le C\left(\|\phi\|_{H^2}\|\nabla_{\boldsymbol\tau}\phi\|_{L^2}
\|\nabla_{\boldsymbol\tau}\phi\|_{H^1}
+\|\nabla_{\boldsymbol\tau}^2\phi\|_{L^2}\right)\|{\bf u}\|_{L^2}^\frac{1}{2}\|{\bf u}\|_{H^1}^\frac{1}{2}\|\nabla\phi\|_{L^2}^\frac{1}{2}\|\nabla\phi\|_{H^1}^\frac{1}{2}
\nonumber
\\
&\quad+C\|\phi\|_{L^2}\|\phi\|_{H^2}\|\nabla_{\boldsymbol\tau}^2\phi\|_{L^2}\|{\bf u}\|_{L^2}^\frac{1}{2}\|{\bf u}\|_{H^1}^\frac{1}{2}\|\nabla\phi\|_{L^2}^\frac{1}{2}\|\nabla\phi\|_{H^1}^\frac{1}{2}
\nonumber
\\
&\le C\|\phi\|_{H^2}
\|\nabla_{\boldsymbol\tau}\phi\|_{H^1}\|{\bf u}\|_{H^1}^\frac{1}{2}\|\nabla\phi\|_{H^1}^\frac{1}{2}
+C\left(\|\phi\|_{H^2}+1
\right)
\|\nabla_{\boldsymbol\tau}^2\phi\|_{L^2}\|{\bf u}\|_{H^1}^\frac{1}{2}\|\nabla\phi\|_{H^1}^\frac{1}{2}
\nonumber
\\
&\le C\left(\|\phi\|_{H^2}+1
\right)
\|\nabla_{\boldsymbol\tau}\phi\|_{H^1}\|{\bf u}\|_{H^1}^\frac{1}{2}\|\nabla\phi\|_{H^1}^\frac{1}{2}   \nonumber
\\
&\le C\left(\|\phi\|_{H^2}^2+1\right)
\|\nabla_{\boldsymbol\tau}\nabla\phi\|_{L^2}^2+C\|\nabla{\bf u}\|_{L^2}^2+C\|\phi\|_{H^2}^2.   \nonumber
\end{align}
Next, on the basis of Lemma \ref{Trace} and Lemma \ref{PGN}, $I_5-I_7$ can be estimated as
\begin{align}
I_5
&\le\|\nabla_{\boldsymbol\tau}\mathcal L(\psi)\|_{L^2(\Gamma)}\|{\bf u}_{\boldsymbol\tau}\|_{L^\infty(\Gamma)}\|\nabla_{\boldsymbol\tau}^2\psi\|_{L^2(\Gamma)}
\nonumber
\\
&\le C\|\nabla_{\boldsymbol\tau}\mathcal L(\psi)\|_{L^2(\Gamma)}\|{\bf u}_{\boldsymbol\tau}\|_{L^2(\Gamma)}^\frac{1}{2}\|{\bf u}_{\boldsymbol\tau}\|_{H^1(\Gamma)}^\frac{1}{2}\|\nabla_{\boldsymbol\tau}^2\psi\|_{L^2(\Gamma)}
\nonumber
\\
&\le C\|\nabla_{\boldsymbol\tau}\mathcal L(\psi)\|_{L^2(\Gamma)}\left(
\|{\bf u}_{\boldsymbol\tau}\|_{L^2(\Gamma)}+\|{\bf u}_{\boldsymbol\tau}\|_{L^2(\Gamma)}^\frac{1}{2}\|\nabla_{\boldsymbol\tau}{\bf u}_{\boldsymbol\tau}\|_{L^2(\Gamma)}^\frac{1}{2}
\right)\|\nabla_{\boldsymbol\tau}^2\psi\|_{L^2(\Gamma)}  \nonumber
\\
&\le C\|\nabla_{\boldsymbol\tau}\mathcal L(\psi)\|_{L^2(\Gamma)}\left(
\|\nabla{\bf u}\|_{L^2}^\frac{1}{2}+\|\nabla{\bf u}\|_{L^2}^\frac{1}{4}\|\nabla_{\boldsymbol\tau}{\bf u}\|_{L^2}^\frac{1}{4}
\|\nabla\nabla_{\boldsymbol\tau}{\bf u}\|_{L^2}^\frac{1}{4}
\right)\|\nabla_{\boldsymbol\tau}^2\psi\|_{L^2(\Gamma)}  \nonumber
\\
&\le\frac{1}{12}\|\nabla_{\boldsymbol\tau}\mathcal L(\psi)\|_{L^2(\Gamma)}^2 + C\|\nabla{\bf u}\|_{L^2}  \|\nabla_{\boldsymbol\tau}^2\psi\|_{L^2(\Gamma)}^2
\nonumber \\
& \quad +C\|\nabla{\bf u}\|_{L^2}^\frac{1}{2}\|\nabla_{\boldsymbol\tau}{\bf u}\|_{L^2}^\frac{1}{2}\|\nabla\nabla_{\boldsymbol\tau}{\bf u}\|_{L^2}^\frac{1}{2} \|\nabla_{\boldsymbol\tau}^2\psi\|_{L^2(\Gamma)}^2
\nonumber
\\
&\le\frac{1}{12}\|\nabla_{\boldsymbol\tau}\mathcal L(\psi)\|_{L^2(\Gamma)}^2+\frac{1}{16}\|\mathbb S(\nabla_{\boldsymbol\tau}{\bf u})\|_{L^2}^2+C\|\nabla_{\boldsymbol\tau}^2\psi\|_{L^2(\Gamma)}^4                  \nonumber
\\
&\quad+C\|\nabla{\bf u}\|_{L^2}^2+C\|\nabla{\bf u}\|_{L^2}^2\|\nabla_{\boldsymbol\tau}{\bf u}\|_{L^2}^2.
\nonumber
\\[1em]
I_6
&\le\|\mathcal L(\psi)\|_{L^\infty(\Gamma)}\|\nabla_{\boldsymbol\tau}^2\psi\|_{L^2(\Gamma)}
\|\nabla_{\boldsymbol\tau}{\bf u}_{\boldsymbol\tau}\|_{L^2(\Gamma)}    \nonumber
\\
&\le C\|\mathcal L(\psi)\|_{L^2(\Gamma)}^\frac{1}{2}\|\mathcal L(\psi)\|_{H^1(\Gamma)}^\frac{1}{2}\|\nabla_{\boldsymbol\tau}^2\psi\|_{L^2(\Gamma)}
\|\nabla_{\boldsymbol\tau}{\bf u}\|_{H^\frac{1}{2}}    \nonumber
\\
&\le C\left(\|\mathcal L(\psi)\|_{L^2(\Gamma)}+\|\mathcal L(\psi)\|_{L^2(\Gamma)}^\frac{1}{2}\|\nabla_{\boldsymbol\tau}\mathcal L(\psi)\|_{L^2(\Gamma)}^\frac{1}{2}
\right)\|\nabla_{\boldsymbol\tau}^2\psi\|_{L^2(\Gamma)}\|\nabla_{\boldsymbol\tau}{\bf u}\|_{L^2}^\frac{1}{2}\|\nabla\nabla_{\boldsymbol\tau}{\bf u}\|_{L^2}^\frac{1}{2}
\nonumber
\\
&\le C\|\mathcal L(\psi)\|_{L^2(\Gamma)}^2\|\nabla_{\boldsymbol\tau}^2\psi\|_{L^2(\Gamma)}^2
+C\|\nabla_{\boldsymbol\tau}{\bf u}\|_{L^2}\|\nabla\nabla_{\boldsymbol\tau}{\bf u}\|_{L^2 }
\nonumber
\\
&\quad+C\|\mathcal L(\psi)\|_{L^2(\Gamma)}\|\nabla_{\boldsymbol\tau}\mathcal L(\psi)\|_{L^2(\Gamma)}\|\nabla_{\boldsymbol\tau}{\bf u}\|_{L^2}+C\|\nabla_{\boldsymbol\tau}^2\psi\|_{L^2(\Gamma)}^2\|\nabla\nabla_{\boldsymbol\tau}{\bf u}\|_{L^2}           \nonumber
\\
&\le\frac{1}{12}\|\nabla_{\boldsymbol\tau}\mathcal L(\psi)\|_{L^2(\Gamma)}^2+\frac{1}{16}\|\mathbb S(\nabla_{\boldsymbol\tau}{\bf u})\|_{L^2}^2+C\|\mathcal L(\psi)\|_{L^2(\Gamma)}^2\|\nabla_{\boldsymbol\tau}^2\psi\|_{L^2(\Gamma)}^2
\nonumber
\\
&\quad+C\|\nabla_{\boldsymbol\tau}{\bf u}\|_{L^2}^2+C\|\mathcal L(\psi)\|_{L^2(\Gamma)}^2\|\nabla_{\boldsymbol\tau}{\bf u}\|_{L^2}^2+C\|\nabla_{\boldsymbol\tau}^2\psi\|_{L^2(\Gamma)}^4  \nonumber
\\
&\le\frac{1}{12}\|\nabla_{\boldsymbol\tau}\mathcal L(\psi)\|_{L^2(\Gamma)}^2+\frac{1}{16}\|\mathbb S(\nabla_{\boldsymbol\tau}{\bf u})\|_{L^2}^2+C\left(\|\mathcal L(\psi)\|_{L^2(\Gamma)}^2+1\right)
\|\nabla_{\boldsymbol\tau}{\bf u}\|_{L^2}^2    \nonumber
\\
&\quad+C\left(\|\nabla_{\boldsymbol\tau}^2\psi\|_{L^2(\Gamma)}^2+\|\mathcal L(\psi)\|_{L^2(\Gamma)}^2
\right)\|\nabla_{\boldsymbol\tau}^2\psi\|_{L^2(\Gamma)}^2.   \nonumber
\\[1em]
I_7
&\le C\|\nabla_{\boldsymbol\tau}\psi\|_{L^2(\Gamma)}\|\nabla_{\boldsymbol\tau}\mathcal L(\psi)\|_{L^2(\Gamma)}+C\|\nabla_{\boldsymbol\tau}\psi\|_{L^\infty(\Gamma)}\|\nabla_{\boldsymbol\tau}{\bf u}_{\boldsymbol\tau}\|_{L^2(\Gamma)}\|\nabla_{\boldsymbol\tau}\psi\|_{L^2(\Gamma)}
\nonumber
\\
&\quad+C\|\nabla_{\boldsymbol\tau}\psi\|_{L^2(\Gamma)}\|{\bf u}_{\boldsymbol\tau}\|_{L^\infty(\Gamma)}\|\nabla_{\boldsymbol\tau}^2\psi\|_{L^2(\Gamma)}
\nonumber
\\
&\le C\|\nabla_{\boldsymbol\tau}\psi\|_{L^2(\Gamma)}\|\nabla_{\boldsymbol\tau}\mathcal L(\psi)\|_{L^2(\Gamma)}+C
\|\nabla_{\boldsymbol\tau}\psi\|_{H^1(\Gamma)}\|\nabla_{\boldsymbol\tau}{\bf u}_{\boldsymbol\tau}\|_{L^2(\Gamma)}      \nonumber
\\
&\quad+C\|{\bf u}_{\boldsymbol\tau}\|_{L^2(\Gamma)}^\frac{1}{2}\|{\bf u}_{\boldsymbol\tau}\|_{H^1(\Gamma)}^\frac{1}{2}\|\nabla_{\boldsymbol\tau}^2\psi\|_{L^2(\Gamma)}
\nonumber
\\
&\le C\|\nabla_{\boldsymbol\tau}\psi\|_{L^2(\Gamma)}\|\nabla_{\boldsymbol\tau}\mathcal L(\psi)\|_{L^2(\Gamma)}
+  C \|\nabla_{\boldsymbol\tau}^2\psi\|_{L^2(\Gamma)}
\|\nabla_{\boldsymbol\tau}{\bf u}_{\boldsymbol\tau}\|_{L^2(\Gamma)}
\nonumber
\\
&\quad+C(\|{\bf u}_{\boldsymbol\tau}\|_{L^2(\Gamma)}+\|{\bf u}_{\boldsymbol\tau}\|_{L^2(\Gamma)}^\frac{1}{2}\|\nabla_{\boldsymbol\tau}{\bf u}_{\boldsymbol\tau}\|_{L^2(\Gamma)}^\frac{1}{2} )\|\nabla_{\boldsymbol\tau}^2\psi\|_{L^2(\Gamma)}
\nonumber
\\
&\le\frac{1}{12}\|\nabla_{\boldsymbol\tau}\mathcal L(\psi)\|_{L^2(\Gamma)}^2+C\|\nabla_{\boldsymbol\tau}^2\psi\|_{L^2(\Gamma)}^2
+\frac{\beta}{4}\|\nabla_{\boldsymbol\tau}{\bf u}_{\boldsymbol\tau}\|_{L^2(\Gamma)}^2    \nonumber
\\
& \quad  +C\|{\bf u}_{\boldsymbol\tau}\|_{L^2(\Gamma)}\|\nabla_{\boldsymbol\tau}{\bf u}_{\boldsymbol\tau}\|_{L^2(\Gamma)} +C\|{\bf u}_{\boldsymbol\tau}\|_{L^2(\Gamma)}^2
\nonumber \\
&\le\frac{1}{12}\|\nabla_{\boldsymbol\tau}\mathcal L(\psi)\|_{L^2(\Gamma)}^2+\frac{\beta}{2}\|\nabla_{\boldsymbol\tau}{\bf u}_{\boldsymbol\tau}\|_{L^2(\Gamma)}^2+C\|\nabla_{\boldsymbol\tau}^2\psi\|_{L^2(\Gamma)}^2
+C\|{\bf u}_{\boldsymbol\tau}\|_{L^2(\Gamma)}^2.   \nonumber
\end{align}
It follows from (\ref{ty}) and Lemma \ref{estimates} that
\begin{align}
&\|\nabla_{\boldsymbol\tau}\phi\|_{H^2}^2+\gamma\|\nabla_{\boldsymbol\tau}\psi\|_{H^2(\Gamma)}^2 \nonumber
\\
&\le C(\gamma^{-1})\left(\|\nabla_{\boldsymbol\tau}(\mu-\bar\mu)\|_{L^2}^2+\|(3\phi^2-1)
\nabla_{\boldsymbol\tau}\phi\|_{L^2}^2+\|\nabla_{\boldsymbol\tau}\mathcal L(\psi)\|_{L^2(\Gamma)}^2+\|\nabla_{\boldsymbol\tau}\psi\|_{L^2(\Gamma)}^2+1
\right)                           \nonumber
\\
&\le C(\gamma^{-1})\left(\|\nabla_{\boldsymbol\tau}(\mu-\bar\mu)\|_{L^2}^2
+\|\phi^2-1\|_{L^2}^2\|\nabla_{\boldsymbol\tau}\phi\|_{L^\infty}^2+\|\nabla_{\boldsymbol\tau}\mathcal L(\psi)\|_{L^2(\Gamma)}^2+\|\nabla_{\boldsymbol\tau}\psi\|_{L^2(\Gamma)}^2+1
\right)                           \nonumber
\\
&\le C(\gamma^{-1})\left(\|\nabla_{\boldsymbol\tau}(\mu-\bar\mu)\|_{L^2}^2
+\|\nabla_{\boldsymbol\tau}\phi\|_{L^2}\|\nabla_{\boldsymbol\tau}\phi\|_{H^2}
+\|\nabla_{\boldsymbol\tau}\mathcal L(\psi)\|_{L^2(\Gamma)}^2+1
\right)                           \nonumber
\\
&\le \frac{\gamma}{2} \|\nabla_{\boldsymbol\tau}\phi\|_{H^2}^2+C(\gamma^{-1})\left(
\|\nabla_{\boldsymbol\tau}(\mu-\bar\mu)\|_{L^2}^2+\|\nabla_{\boldsymbol\tau}\mathcal L(\psi)\|_{L^2(\Gamma)}^2+1
\right),    \nonumber
\end{align}
which implies
\begin{align}
\label{tan phi h2}
\|\nabla_{\boldsymbol\tau}\phi\|_{H^2}^2+\gamma\|\nabla_{\boldsymbol\tau}\psi\|_{H^2(\Gamma)}^2\le C\left(\|\nabla_{\boldsymbol\tau}(\mu-\bar\mu)\|_{L^2}^2+\|\nabla_{\boldsymbol\tau}\mathcal L(\psi)\|_{L^2(\Gamma)}^2+1
\right).
\end{align}
Substituting $I_1$ -- $I_7$ into (\ref{H3}) and taking (\ref{tan phi h2}) into account, then  choosing $C\varepsilon<\frac{1}{4}$, we arrive at
\begin{align}
&\frac{\rm d}{{\rm d}t}\left(\|\nabla_{\boldsymbol\tau}\nabla\phi\|_{L^2}^2
+\gamma\|\nabla_{\boldsymbol\tau}^2\psi\|_{L^2(\Gamma)}^2+\|\nabla_{\boldsymbol\tau}{\bf u}\|_{L^2}^2
\right)     \nonumber
\\
&\quad+\|\nabla_{\boldsymbol\tau}(\mu-\bar\mu)\|_{L^2}^2+\|\nabla_{\boldsymbol\tau}\mathcal L(\psi)\|_{L^2(\Gamma)}^2+\|\mathbb S(\nabla_{\boldsymbol\tau}{\bf u})\|_{L^2}^2+\beta\|\nabla_{\boldsymbol\tau}{\bf u}_{\boldsymbol\tau}\|_{L^2(\Gamma)}^2    \nonumber
\\
&\le C\mathcal A_1(t)\left(\|\nabla_{\boldsymbol\tau}\nabla\phi\|_{L^2}^2
+\|\nabla_{\boldsymbol\tau}^2\psi\|_{L^2(\Gamma)}^2+\|\nabla_{\boldsymbol\tau}{\bf u}\|_{L^2}^2
\right)+C\mathcal B_1(t),
\end{align}
where
\begin{align}
&\mathcal A_1(t)=\|\psi\|_{H^2(\Gamma)}^2+\|\mathcal L(\psi)\|_{L^2(\Gamma)}^2+\|\nabla{\bf u}\|_{L^2}^2+\|\phi\|_{H^2}^2+1
\nonumber
\\
&\mathcal B_1(t)=\|{\bf u}_{\boldsymbol\tau}\|_{L^2}^2+\|\nabla{\bf u}\|_{L^2}^2+\|\phi\|_{H^2}^2+1.      \nonumber
\end{align}
It follows from (\ref{energy}), (\ref{Phi h2}) and Korn's inequality that $\mathcal A_1(t)\in L^1(0,T)$ and $\mathcal B_1(t)\in L^1(0,T)$. Finally, together with Gronwall inequality, we complete the proof of Lemma \ref{3}.
\end{proof}
\begin{Lemma}
\label{4*}
Let $({\bf u},\phi,\psi)$ be the smooth solution to $(\ref{NSAC})$--$(\ref{initial condition})$ on $[0,T]$, then it holds that
\begin{align}
\label{H13}
&\sup\limits_{0\le t \le T}\left(
\|\phi_t\|_{H^1}^2+\gamma\|\psi_t\|_{H^1(\Gamma)}^2
+\|{\bf u}_t\|_{L^2}^2
\right)                          \nonumber
\\
&\quad+\int_{0}^{T}\left(\|\partial_t\mathcal L(\psi)\|_{L^2(\Gamma)}^2+\|(\mu-\bar\mu)_t\|_{L^2}^2+\|\mathbb S({\bf u}_t)\|_{L^2}^2+\|{\bf u}_{t{\boldsymbol\tau}}\|_{L^2(\Gamma)}^2
\right) {\rm d}t\le C,
\end{align}
where $C$ is a positive constant that depends on $\Omega$, $\beta^{-1}$, $\gamma^{-1}$ and $T$.
\end{Lemma}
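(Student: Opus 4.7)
The plan is to mimic the structure of Lemma \ref{d-tt}, but now for the nonlinear coupled system, exploiting the bounds already obtained in Lemmas \ref{Energy}--\ref{3}. First, I would differentiate the equations $(\ref{NSAC})_{1,3,4}$ and the boundary conditions $(\ref{dynamic})_{2,3}$ in $t$ to produce the linearized-in-time system
\begin{align*}
\phi_{tt}+{\bf u}_t\cdot\nabla\phi+{\bf u}\cdot\nabla\phi_t &=(\bar\mu-\mu)_t, \\
\mu_t &=-\Delta\phi_t+f'(\phi)\phi_t, \\
\psi_{tt}-\gamma\Delta_{\boldsymbol\tau}\psi_t+\partial_{\bf n}\phi_t+\gamma_{fs}''(\psi)\psi_t &=-({\bf u}_{\boldsymbol\tau}\cdot\nabla_{\boldsymbol\tau}\psi)_t, \\
{\bf u}_{tt}+({\bf u}_t\cdot\nabla){\bf u}+({\bf u}\cdot\nabla){\bf u}_t+\nabla p_t-{\rm div}\mathbb S({\bf u}_t) &=-{\rm div}(\nabla\phi\otimes\nabla\phi)_t, \\
\beta{\bf u}_{t{\boldsymbol\tau}}+(\mathbb S({\bf u}_t)\cdot{\bf n})_{\boldsymbol\tau}&=\partial_t\bigl(\mathcal L(\psi)\nabla_{\boldsymbol\tau}\psi\bigr),
\end{align*}
together with ${\rm div}\,{\bf u}_t=0$ in $\Omega$ and ${\bf u}_t\cdot{\bf n}=0$ on $\Gamma$.

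Next, I would multiply the equation for $\phi_{tt}$ by $-\mu_t$, the equation for $\mu_t$ by $\phi_{tt}$, the equation for $\psi_{tt}$ by $\psi_{tt}$, and the equation for ${\bf u}_{tt}$ by ${\bf u}_t$, then integrate by parts and add the results. Mass conservation $\int_\Omega\phi_t\,{\rm d}x=0$ yields $-\int_\Omega\mu_t(\bar\mu-\mu)_t\,{\rm d}x=\|(\mu-\bar\mu)_t\|_{L^2}^2$ as in \eqref{eq34}; the pressure drops out as in \eqref{nabla pt} since ${\rm div}\,{\bf u}_t=0$ and ${\bf u}_t\cdot{\bf n}=0$; the convective terms $({\bf u}\cdot\nabla){\bf u}_t$ and ${\bf u}\cdot\nabla\phi_t$ produce only controlled lower-order contributions by incompressibility. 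After these cancellations, the basic energy identity reads
\begin{align*}
&\frac12\frac{\rm d}{{\rm d}t}\left(\|\phi_t\|_{L^2}^2+\|{\bf u}_t\|_{L^2}^2+\gamma\|\nabla_{\boldsymbol\tau}\psi_t\|_{L^2(\Gamma)}^2+\|\psi_t\|_{L^2(\Gamma)}^2\right)\\
&\qquad+\|(\mu-\bar\mu)_t\|_{L^2}^2+\tfrac12\|\mathbb S({\bf u}_t)\|_{L^2}^2+\beta\|{\bf u}_{t{\boldsymbol\tau}}\|_{L^2(\Gamma)}^2+\|\psi_{tt}\|_{L^2(\Gamma)}^2
\le(\text{nonlinear remainder}).
\end{align*}

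The main obstacle is to absorb the nonlinear remainder produced by $({\bf u}_t\cdot\nabla)\phi$, $({\bf u}_t\cdot\nabla){\bf u}$, ${\rm div}(\nabla\phi\otimes\nabla\phi)_t$, $f'(\phi)\phi_t$, $\gamma_{fs}''(\psi)\psi_t$, and especially the boundary term $\int_\Gamma\partial_t(\mathcal L(\psi)\nabla_{\boldsymbol\tau}\psi)\cdot{\bf u}_{t{\boldsymbol\tau}}\,{\rm d}S$ as well as $\int_\Gamma\partial_t({\bf u}_{\boldsymbol\tau}\cdot\nabla_{\boldsymbol\tau}\psi)\psi_{tt}\,{\rm d}S$. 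For the interior nonlinearities I would use H\"older together with Gagliardo--Nirenberg and Lemma \ref{PGN}, bounding e.g.\ $\|{\bf u}_t\cdot\nabla\phi\|_{L^2}\le\|{\bf u}_t\|_{L^4}\|\nabla\phi\|_{L^4}\le C\|{\bf u}_t\|_{L^2}^{1/2}\|{\bf u}_t\|_{H^1}^{1/2}\|\nabla\phi\|_{H^1}$, so that the derivatives of ${\bf u}_t$ can be absorbed on the left via Korn's inequality. For the boundary terms I would split $\partial_t\mathcal L(\psi)=-\gamma\Delta_{\boldsymbol\tau}\psi_t+\partial_{\bf n}\phi_t+\gamma_{fs}''(\psi)\psi_t$; the first summand combines with $\|\nabla_{\boldsymbol\tau}{\bf u}_{t{\boldsymbol\tau}}\|$-type norms controlled through trace and Gagliardo--Nirenberg (Lemma \ref{PGN}), while $\partial_{\bf n}\phi_t$ is handled by trace and the $H^2$-regularity $\|\phi_t\|_{H^2}$ recovered from $\mu_t=-\Delta\phi_t+f'(\phi)\phi_t$ and the elliptic estimate of Lemma \ref{estimates} applied to the auxiliary problem for $\phi_t,\psi_t$ analogous to \eqref{ty}. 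All remaining coefficients are controlled by the bounds $\|{\bf u}\|_{H^2},\|\phi\|_{H^3},\|\psi\|_{H^3(\Gamma)},\|\mathcal L(\psi)\|_{L^2(\Gamma)}\in L^\infty(0,T)\cap L^2(0,T;\cdot)$ from Lemmas \ref{Energy}--\ref{3}.

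Finally, to close the argument I must construct the initial values $\phi_t(0)$, $\psi_t(0)$ and ${\bf u}_t(0)$ in $H^1$, $H^1(\Gamma)$ and $L^2$ respectively. From $(\ref{NSAC})_3$, $(\ref{dynamic})_3$ and $(\ref{NSAC})_1$ evaluated at $t=0$, one has
\begin{align*}
\phi_t(0)&=\bar\mu_0-\mu_0-{\bf u}_0\cdot\nabla\phi_0,\\
\psi_t(0)&=\gamma\Delta_{\boldsymbol\tau}\psi_0-\partial_{\bf n}\phi_0-\gamma_{fs}'(\psi_0)-{\bf u}_{0{\boldsymbol\tau}}\cdot\nabla_{\boldsymbol\tau}\psi_0,\\
{\bf u}_t(0)&={\rm div}\mathbb S({\bf u}_0)-\nabla p_0-({\bf u}_0\cdot\nabla){\bf u}_0-{\rm div}(\nabla\phi_0\otimes\nabla\phi_0),
\end{align*}
which lie in the required spaces thanks to $({\bf u}_0,\phi_0,\psi_0)\in\mathbb D_1$ (cf.\ the bound \eqref{initial ut}). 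Collecting all estimates into a Gronwall inequality of the form
\[
\eta'(t)\le C\mathcal A_2(t)\eta(t)+C\mathcal B_2(t),\qquad\mathcal A_2,\mathcal B_2\in L^1(0,T),
\]
with $\eta(t)=\|\phi_t\|_{H^1}^2+\gamma\|\psi_t\|_{H^1(\Gamma)}^2+\|{\bf u}_t\|_{L^2}^2$, concludes the proof. The passage from $\|\phi_t\|_{L^2}^2$ to $\|\phi_t\|_{H^1}^2$, and the extraction of $\|\partial_t\mathcal L(\psi)\|_{L^2(\Gamma)}^2$ from $\|\psi_{tt}\|_{L^2(\Gamma)}^2+\|\partial_{\bf n}\phi_t\|_{L^2(\Gamma)}^2+\|{\bf u}_{\boldsymbol\tau}\cdot\nabla_{\boldsymbol\tau}\psi_t+{\bf u}_{t{\boldsymbol\tau}}\cdot\nabla_{\boldsymbol\tau}\psi\|_{L^2(\Gamma)}^2$, both proceed via the equations together with the elliptic estimate of Lemma \ref{estimates}.
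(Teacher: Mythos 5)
Your proposal is correct and follows the same overall strategy as the paper: differentiate the system in $t$, test the time-differentiated Allen--Cahn pair with $(-\mu_t,\phi_{tt})$, the boundary equation with $\psi_{tt}$, and the momentum equation with ${\bf u}_t$; use $\int_\Omega\phi_t\,{\rm d}x=0$ to produce $\|(\mu-\bar\mu)_t\|_{L^2}^2$, the divergence-free/impermeability conditions to kill the pressure and convective terms, the elliptic estimate of Lemma \ref{estimates} applied to the system for $(\phi_t,\psi_t)$ to recover $\|\phi_t\|_{H^2}+\gamma^{1/2}\|\psi_t\|_{H^2(\Gamma)}$ (this is exactly the paper's \eqref{H11}), and close with Gronwall using the $L^1(0,T)$ coefficients supplied by Lemmas \ref{Energy}--\ref{3}. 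Two remarks on where you diverge. First, your displayed ``basic energy identity'' carries $\tfrac{\rm d}{{\rm d}t}\|\phi_t\|_{L^2}^2$ under the time derivative, whereas the multiplier pair $(-\mu_t,\phi_{tt})$ actually produces $\tfrac{\rm d}{{\rm d}t}\|\nabla\phi_t\|_{L^2}^2$ (the $L^2$ part is then recovered by Poincar\'e since $\bar\phi_t=0$); your final Gronwall functional $\eta(t)=\|\phi_t\|_{H^1}^2+\cdots$ is the right one, so this is only a slip in the intermediate display. Second, and more substantively, the paper does not keep $\|\psi_{tt}\|_{L^2(\Gamma)}^2$ as the boundary dissipation: it substitutes $\psi_{tt}=-\partial_t\mathcal L(\psi)-{\bf u}_{t\boldsymbol\tau}\cdot\nabla_{\boldsymbol\tau}\psi-{\bf u}_{\boldsymbol\tau}\cdot\nabla_{\boldsymbol\tau}\psi_t$ into $-\int_\Gamma\partial_t\mathcal L(\psi)\psi_{tt}\,{\rm d}S$, which yields $\|\partial_t\mathcal L(\psi)\|_{L^2(\Gamma)}^2$ directly and, crucially, makes the cross term $\int_\Gamma\partial_t\mathcal L(\psi)\,{\bf u}_{t\boldsymbol\tau}\cdot\nabla_{\boldsymbol\tau}\psi\,{\rm d}S$ cancel exactly against the matching contribution from the time-differentiated GNBC in the momentum estimate. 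In your version this cross term survives and must be absorbed by hand; this is still doable via the trace interpolation $\|{\bf u}_{t\boldsymbol\tau}\|_{L^2(\Gamma)}^2\le C\|{\bf u}_t\|_{L^2}\|{\bf u}_t\|_{H^1}$ together with Young and Korn (as the paper does for a similar term in Lemma \ref{LNS L2}), since the naive bound $C\|{\bf u}_{t\boldsymbol\tau}\|_{L^2(\Gamma)}^2\|\nabla_{\boldsymbol\tau}\psi\|_{L^\infty(\Gamma)}^2$ cannot be absorbed into $\beta\|{\bf u}_{t\boldsymbol\tau}\|_{L^2(\Gamma)}^2$ alone; you should make this absorption explicit, as it is the one place your route genuinely requires an extra argument that the paper's cancellation renders unnecessary.
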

\begin{proof}[\bf Proof.]
First, using Poincar$\rm\acute{e}$ inequality and the fact $\int_\Omega\phi_t{\rm d}x=0$ to obtain
\begin{align}
\label{Phi-t}
\|\phi_t\|_{L^2}^2=\|\phi_t-\bar\phi_t\|_{L^2}^2\le C\|\nabla\phi_t\|_{L^2}^2,
\end{align}
which together with (\ref{ty}) and Lemma \ref{estimates} yields
\begin{align}
\label{H11}
&\|\phi_t\|_{H^2}^2+\gamma\|\psi_t\|_{H^2(\Gamma)}^2  \nonumber
\\
&\le C(\gamma^{-1})\left(
\|(\mu-\bar\mu)_t\|_{L^2}^2+\|\bar\mu_t\|_{L^2}^2+\|(3\phi^2-1)\phi_t\|_{L^2}^2
+\|\partial_t\mathcal L(\psi)\|_{L^2(\Gamma)}^2+\|\psi_t\|_{L^2(\Gamma)}^2
\right)                          \nonumber
\\
&\le C(\gamma^{-1})\left(
\|(\mu-\bar\mu)_t\|_{L^2}^2+\|(3\phi^2-1)\phi_t\|_{L^2}^2
+\|\partial_t\mathcal L(\psi)\|_{L^2(\Gamma)}^2+\|\psi_t\|_{L^2(\Gamma)}^2
\right)                          \nonumber
\\
&\le C(\gamma^{-1})\left(\|(\mu-\bar\mu)_t\|_{L^2}^2+\|\phi^2\|_{L^4}^2\|\phi_t\|_{L^4}^2+\|\phi_t\|_{L^2}^2
+\|\partial_t\mathcal L(\psi)\|_{L^2(\Gamma)}^2+\|\phi_t\|_{H^1}^2
\right)                          \nonumber
\\
&\le C(\gamma^{-1})\left(\|(\mu-\bar\mu)_t\|_{L^2}^2+\|\nabla\phi_t\|_{L^2}^2
+\|\partial_t\mathcal L(\psi)\|_{L^2(\Gamma)}^2
\right),
\end{align}
where we have used (\ref{energy}), Lemma \ref{Trace} and the following fact:
\begin{align}
\bar\mu_t&=\dfrac{1}{|\Omega|}\int_\Omega\mu_t{\rm d}x=\dfrac{1}{|\Omega|}\int_\Omega\left(
-\Delta\phi_t+f^\prime(\phi)\phi_t
\right){\rm d}x    \nonumber
\\
&=\dfrac{1}{|\Omega|}\int_\Gamma\partial_{\bf n}\phi_t {\rm d}S+\dfrac{1}{|\Omega|}\int_\Omega
(3\phi^2-1)\phi_t{\rm d}x  \nonumber
\\
&=\dfrac{1}{|\Omega|}\int_\Gamma\left(\gamma\Delta_{\boldsymbol\tau}\psi_t+\partial_t\mathcal L(\psi)-\gamma_{fs}^{(2)}(\psi)\psi_t
\right){\rm d}S+\dfrac{1}{|\Omega|}\int_\Omega(3\phi^2-1)\phi_t{\rm d}x  \nonumber
\\
&=\dfrac{1}{|\Omega|}\int_\Gamma\left(\partial_t\mathcal L(\psi)-\gamma_{fs}^{(2)}(\psi)\psi_t
\right){\rm d}S+\dfrac{1}{|\Omega|}\int_\Omega(3\phi^2-1)\phi_t{\rm d}x.  \nonumber
\end{align}
\vskip2mm
Next, differentiating $(\ref{NSAC})_1$ with respect to $t$ leads to
\begin{align}
{\bf u}_{tt}+{\bf u}_t\cdot\nabla{\bf u}+{\bf u}\cdot\nabla{\bf u}_t+\nabla p_t-{\rm div}\mathbb S({\bf u}_t)=-\Delta\phi_t\nabla\phi-\Delta\phi\nabla\phi_t.\nonumber
\end{align}
Testing it by ${\bf u}_t$, reviewing (\ref{nabla pt}) and noting
\begin{align}
-\int_\Omega{\rm div}\mathbb S({\bf u}_t)\cdot{\bf u}_t{\rm d}x&=\frac{1}{2}\|\mathbb S({\bf u}_t)\|_{L^2}^2-\int_\Gamma\left(\mathbb S({\bf u}_t)\cdot{\bf n}
\right)_{\boldsymbol\tau}\cdot{\bf u}_{t{\boldsymbol\tau}}{\rm d}S \nonumber
\\
&=\frac{1}{2}\|\mathbb S({\bf u}_t)\|_{L^2}^2-\int_\Gamma\left[
\partial_t\mathcal L(\psi)\nabla_{\boldsymbol\tau}\psi+\mathcal L(\psi)\nabla_{\boldsymbol\tau}\psi_t-\beta{\bf u}_{t{\boldsymbol\tau}}
\right]\cdot{\bf u}_{t{\boldsymbol\tau}}{\rm d}S                \nonumber
\\
&=\frac{1}{2}\|\mathbb S({\bf u}_t)\|_{L^2}^2+\beta\|{\bf u}_{t{\boldsymbol\tau}}\|_{L^2(\Gamma)}^2-\int_\Gamma\left[
\partial_t\mathcal L(\psi)\nabla_{\boldsymbol\tau}\psi+\mathcal L(\psi)\nabla_{\boldsymbol\tau}\psi_t
\right]\cdot{\bf u}_{t{\boldsymbol\tau}}{\rm d}S,   \nonumber
\end{align}
and
\begin{align}
\label{777}
\int_\Omega{\bf u}\cdot\nabla{\bf u}_t\cdot{\bf u}_t {\rm d}x=\frac{1}{2}\int_\Omega{\bf u}\cdot\nabla|{\bf u}_t|^2 {\rm d}x=-\frac{1}{2}\int_\Omega{\rm div}{\bf u}|{\bf u}_t|^2 {\rm d}x+\frac{1}{2}\int_\Gamma{\bf u}\cdot{\bf n}|{\bf u}_t|^2{\rm d}S=0.
\end{align}
Thus, we obtain
\begin{align}
\label{H8}
&\frac{1}{2}\frac{\rm d}{{\rm d}t}\|{\bf u}_t\|_{L^2}^2+\frac{1}{2}\|\mathbb S({\bf u}_t)\|_{L^2}^2+\beta\|{\bf u}_{t{\boldsymbol\tau}}\|_{L^2(\Gamma)}^2 \nonumber
\\
&=-\int_\Omega{\bf u}_t\cdot\nabla{\bf u}\cdot{\bf u}_t{\rm d}x-\int_\Omega\Delta\phi_t\nabla\phi\cdot{\bf u}_t{\rm d}x-\int_\Omega\Delta\phi\nabla\phi_t\cdot{\bf u}_t{\rm d}x \nonumber
\\
&\quad+\int_\Gamma\left[
\partial_t\mathcal L(\psi)\nabla_{\boldsymbol\tau}\psi+\mathcal L(\psi)\nabla_{\boldsymbol\tau}\psi_t
\right]\cdot{\bf u}_{t{\boldsymbol\tau}}{\rm d}S     \nonumber
\\
&=-\int_\Omega{\bf u}_t\cdot\nabla{\bf u}\cdot{\bf u}_t{\rm d}x+\int_\Omega\mu_t\nabla\phi\cdot{\bf u}_t{\rm d}x-\int_\Omega f^\prime(\phi)\phi_t\nabla\phi\cdot{\bf u}_t{\rm d}x   \nonumber
\\
&\quad-\int_\Omega\Delta\phi\nabla\phi_t\cdot{\bf u}_t{\rm d}x+\int_{\Gamma}\left[
\partial_t\mathcal L(\psi)\nabla_{\boldsymbol\tau}\psi+\mathcal L(\psi)\nabla_{\boldsymbol\tau}\psi_t
\right]\cdot{\bf u}_{t{\boldsymbol\tau}}{\rm d}S.
\end{align}
\vskip2mm
Then, differentiating $(\ref{NSAC})_3$, $(\ref{NSAC})_4$ and $(\ref{dynamic})_3$ with respect to $t$, one has
 \begin{equation}
 \label{Partial t}
 \begin{cases}
 \phi_{tt}=(\bar\mu-\mu)_t-{\bf u}_t\cdot\nabla\phi-{\bf u}\cdot\nabla\phi_t,\;&{\rm in}\;\Omega,
 \\
 \mu_t=-\Delta\phi_t+f^\prime(\phi)\phi_t,\;&{\rm in}\;\Omega,
 \\
 \psi_{tt}+{\bf u}_{t{\boldsymbol\tau}}\cdot\nabla_{\boldsymbol\tau}\psi+{\bf u}_{\boldsymbol\tau}\cdot\nabla_{\boldsymbol\tau}\psi_t=-\partial_t\mathcal L(\psi)=\gamma\Delta_{\boldsymbol\tau}\psi_t-\partial_{\bf n}\phi_t-\gamma_{fs}^{(2)}(\psi)\psi_t,\;&{\rm on}\;\Gamma,
 \end{cases}
 \end{equation}
Multiplying $(\ref{Partial t})_1$, and $(\ref{Partial t})_2$ by $-\mu_t$ and $\phi_{tt}$ respectively, adding the result together, and integrating it over $\Omega$ by parts, we have
\begin{align}
0&=-\int_\Omega(\bar\mu-\mu)_t\mu_t{\rm d}x+\int_\Omega{\bf u}_t\cdot\nabla\phi\mu_t{\rm d}x+\int_\Omega{\bf u}\cdot\nabla\phi_t\mu_t{\rm d}x-\int_\Omega\Delta\phi_t\phi_{tt}{\rm d}x+\int_\Omega f^\prime(\phi)\phi_t\phi_{tt}{\rm d}x   \nonumber
\\
&=\|(\mu-\bar\mu)_t\|_{L^2}^2 - \bar\mu_t\int_\Omega(\bar\mu-\mu)_t{\rm d}x+\int_\Omega{\bf u}_t\cdot\nabla\phi\mu_t{\rm d}x+\int_\Omega{\bf u}\cdot\nabla\phi_t\mu_t{\rm d}x \nonumber
\\
&\quad+\frac{1}{2}\frac{\rm d}{{\rm d}t}\|\nabla\phi_t\|_{L^2}^2
-\int_\Gamma\psi_{tt}\partial_{\bf n}\phi_t {\rm d}S+\int_\Omega f^\prime(\phi)\phi_t\phi_{tt}{\rm d}x, \nonumber
\end{align}
in which we obtain
\begin{align}
\label{H7}
&\frac{1}{2}\frac{\rm d}{{\rm d}t}\|\nabla\phi_t\|_{L^2}^2+\|(\mu-\bar\mu)_t\|_{L^2}^2 \nonumber
\\
&=-\int_\Omega{\bf u}_t\cdot\nabla\phi\mu_t{\rm d}x-\int_\Omega{\bf u}\cdot\nabla\phi_t\mu_t{\rm d}x+\int_\Gamma\psi_{tt}\partial_{\bf n}\phi_t {\rm d}S-\int_\Omega f^\prime(\phi)\phi_t\phi_{tt}{\rm d}x.
\end{align}
Multiplying $(\ref{Partial t})_3$ by $\psi_{tt}$, and integrating the result over $\Gamma$, one shows that
\begin{align}
\label{H5}
-\int_\Gamma\partial_t\mathcal L(\psi)\psi_{tt}{\rm d}S=\gamma\int_\Gamma\Delta_{\boldsymbol\tau}\psi_t\psi_{tt}{\rm d}S
-\int_\Gamma\partial_{\bf n}\phi_t\psi_{tt}{\rm d}S
-\int_\Gamma\gamma_{fs}^\prime(\psi)\psi_t\psi_{tt}{\rm d}S.
\end{align}
Substituting $\psi_{tt}$ with $(\ref{Partial t})_3$ into the left-hand side of (\ref{H5}), we get
\begin{align}
\label{H33}
-\int_\Gamma\partial_t\mathcal L(\psi)\psi_{tt}{\rm d}S&=\int_\Gamma\partial_t\mathcal L(\psi)\left[
\partial_t\mathcal L(\psi)+{\bf u}_{t{\boldsymbol\tau}}\cdot\nabla_{\boldsymbol\tau}\psi+{\bf u}_{\boldsymbol\tau}\cdot\nabla_{\boldsymbol\tau}\psi_t
\right]{\rm d}S                      \nonumber
\\
&=\|\partial_t\mathcal L(\psi)\|_{L^2(\Gamma)}^2+\int_\Gamma\partial_t\mathcal L(\psi)\left(
{\bf u}_{t{\boldsymbol\tau}}\cdot\nabla_{\boldsymbol\tau}\psi+{\bf u}_{\boldsymbol\tau}\cdot\nabla_{\boldsymbol\tau}\psi_t
\right){\rm d}S.
\end{align}
Proceeding in the same way for the last term on the right-hand side of (\ref{H5}), substituting the result and (\ref{H33}) into (\ref{H5}), rearranging terms gives
\begin{align}
\label{H6}
&\frac{\gamma}{2}\frac{\rm d}{{\rm d}t}\|\nabla_{\boldsymbol\tau}\psi_t\|_{L^2(\Gamma)}^2+\|\partial_t\mathcal L(\psi)\|_{L^2(\Gamma)}^2      \nonumber
\\
&=-\int_\Gamma\partial_{\bf n}\phi_t\psi_{tt}{\rm d}S-\int_\Gamma\partial_t\mathcal L(\psi)\left(
{\bf u}_{t{\boldsymbol\tau}}\cdot\nabla_{\boldsymbol\tau}\psi+{\bf u}_{\boldsymbol\tau}\cdot\nabla_{\boldsymbol\tau}\psi_t
\right){\rm d}S                \nonumber
\\
&\quad+\int_\Gamma\gamma_{fs}^\prime(\psi)\psi_t\left[\partial_t\mathcal L(\psi)+{\bf u}_{t{\boldsymbol\tau}}\cdot\nabla_{\boldsymbol\tau}\psi+{\bf u}_{\boldsymbol\tau}\cdot\nabla_{\boldsymbol\tau}\psi_t
\right]{\rm d}S.
\end{align}
Summing (\ref{H8}) and (\ref{H7}) together with (\ref{H6}), we conclude
\begin{align}
\label{H9}
&\frac{1}{2}\frac{\rm d}{{\rm d}t}\left(\|\nabla\phi_t\|_{L^2}^2
+\gamma\|\nabla_{\boldsymbol\tau}\psi_t\|_{L^2(\Gamma)}^2+\|{\bf u}_t\|_{L^2}^2
\right)           \nonumber
\\
&\quad+\|(\mu-\bar\mu)_t\|_{L^2}+\|\partial_t\mathcal L(\psi)\|_{L^2(\Gamma)}^2+\frac{1}{2}\|\mathbb S({\bf u}_t)\|_{L^2}^2+\beta\|{\bf u}_{t{\boldsymbol\tau}}\|_{L^2(\Gamma)}^2  \nonumber
\\
&=-\int_\Omega{\bf u}_t\cdot\nabla{\bf u}\cdot{\bf u}_t{\rm d}x-\int_\Omega\Delta\phi\nabla\phi_t\cdot{\bf u}_t{\rm d}x-\int_\Omega{\bf u}\cdot\nabla\phi_t\mu_t{\rm d}x \nonumber
\\
&\quad-\int_\Omega f^\prime(\phi)\phi_t\phi_{tt}{\rm d}x-\int_\Omega f^\prime(\phi)\phi_t\nabla\phi\cdot{\bf u}_t{\rm d}x  \nonumber
\\
&\quad+\int_\Gamma\mathcal L(\psi)\nabla_{\boldsymbol\tau}\psi_t
\cdot{\bf u}_{t{\boldsymbol\tau}}{\rm d}S-\int_\Gamma\partial_t\mathcal L(\psi){\bf u}_{\boldsymbol\tau}\cdot\nabla_{\boldsymbol\tau}\psi_t{\rm d}S   \nonumber
\\
&\quad-\int_\Gamma\gamma_{fs}^\prime(\psi)\psi_t\left[\partial_t\mathcal L(\psi)+{\bf u}_{t{\boldsymbol\tau}}\cdot\nabla_{\boldsymbol\tau}\psi+{\bf u}_{\boldsymbol\tau}\cdot\nabla_{\boldsymbol\tau}\psi_t
\right]{\rm d}S                       \nonumber
\\
&=\sum_{i=1}^{8}J_i.
\end{align}
\vskip2mm
Now, we estimate each term on the right-hand side of (\ref{H9}). Due to Lemma \ref{PGN} and Korn's inequality, one has
\begin{align}
J_1&\le C\|\nabla{\bf u}\|_{L^2}\|{\bf u}_t\|_{L^4}^2\le C\|\nabla{\bf u}\|_{L^2}\|{\bf u}_t\|_{L^2}\|{\bf u}_t\|_{H^1} \nonumber
\\
&\le\frac{1}{12}\|\mathbb S({\bf u}_t)\|_{L^2}^2+C\left(\|\nabla{\bf u}\|_{L^2}^2+1
\right)\|{\bf u}_t\|_{L^2}^2,   \nonumber
\\[1em]
J_2&\le\|\Delta\phi\|_{L^2}\|\nabla\phi_t\|_{L^4}\|{\bf u}_t\|_{L^4}  \nonumber
\\
&\le C\|\phi\|_{H^2}\|\nabla\phi_t\|_{L^2}^\frac{1}{2}\|\nabla\phi_t\|_{H^1}^\frac{1}{2}\|{\bf u}_t\|_{L^2}^\frac{1}{2}\|{\bf u}_t\|_{H^1}^\frac{1}{2}   \nonumber
\\
&\le C\|\phi\|_{H^2}\|\nabla\phi_t\|_{L^2}\|{\bf u}_t\|_{H^1}+C\|\phi\|_{H^2}\|\nabla\phi_t\|_{H^1}\|{\bf u}_t\|_{L^2} \nonumber
\\
&\le\frac{1}{12}\|\mathbb S({\bf u}_t)\|_{L^2}^2+\varepsilon\|\phi_t\|_{H^2}^2
+C\|\phi\|_{H^2}^2\|\nabla\phi_t\|_{L^2}^2+C\left(
\|\phi\|_{H^2}^2+1\right)\|{\bf u}_t\|_{L^2}^2.        \nonumber
\end{align}
Then, it follows from the incompressibility and the boundary condition that
\begin{align}
J_3&=-\int_\Omega{\bf u}\cdot\nabla\phi_t(\mu-\bar\mu)_t{\rm d}x \nonumber
\\
&\le\|{\bf u}\|_{L^4}\|\nabla\phi_t\|_{L^4}\|(\mu-\bar\mu)_t\|_{L^2} \nonumber
\\
&\le C\|{\bf u}\|_{L^2}^\frac{1}{2}\|{\bf u}\|_{H^1}^\frac{1}{2}\|\nabla\phi_t\|_{L^2}^\frac{1}{2}\|\nabla\phi_t\|_{H^1}^\frac{1}{2}
\|(\mu-\bar\mu)_t\|_{L^2}              \nonumber
\\
&\le\frac{1}{8}\|(\mu-\bar\mu)_t\|_{L^2}^2+C\|\nabla{\bf u}\|_{L^2}\|\nabla\phi_t\|_{L^2}\|\nabla\phi_t\|_{H^1}  \nonumber
\\
&\le\frac{1}{8}\|(\mu-\bar\mu)_t\|_{L^2}^2+\varepsilon\|\phi_t\|_{H^2}^2+C\|\nabla{\bf u}\|_{L^2}^2\|\nabla\phi_t\|_{L^2}^2.  \nonumber
\end{align}
By Poincar{\'e} inequality and (\ref{Phi-t}), $J_4$ and $J_5$ can be rewritten and estimated as
\begin{align}
J_4+J_5&=-\int_\Omega f^\prime(\phi)\phi_t\left(\phi_{tt}+\nabla\phi\cdot{\bf u}_t
\right){\rm d}x  \nonumber
\\
&=-\int_\Omega(3\phi^2-1)\phi_t\left[(\bar\mu-\mu)_t-{\bf u}\cdot\nabla\phi_t
\right]{\rm d}x  \nonumber
\\
&\le C\left(\|\phi^2\|_{L^\infty}+1
\right)\|\phi_t\|_{L^2}\|(\mu-\bar\mu)_t\|_{L^2}+C\left(\|\phi^2\|_{L^\infty}+1
\right)\|\phi_t\|_{L^4}\|{\bf u}\|_{L^4}\|\nabla\phi_t\|_{L^2} \nonumber
\\
&\le C\left(\|\phi\|_{L^2}\|\phi\|_{H^2}+1
\right)\|\phi_t\|_{L^2}\|(\mu-\bar\mu)_t\|_{L^2} \nonumber
\\
&\quad+C\left(\|\phi\|_{L^2}\|\phi\|_{H^2}+1
\right)\|\phi_t\|_{H^1}\|{\bf u}\|_{H^1}\|\nabla\phi_t\|_{L^2} \nonumber
\\
&\le C\left(\|\phi\|_{H^2}+1
\right)\|\nabla\phi_t\|_{L^2}\|(\mu-\bar\mu)_t\|_{L^2}+C\left(\|\phi\|_{H^2}+1
\right) \|\nabla\phi_t\|_{L^2}^{2} \|\nabla{\bf u}\|_{L^2}  \nonumber
\\
&\le\frac{1}{8}\|(\mu-\bar\mu)_t\|_{L^2}^2+ C\left(\|\phi\|_{H^2}^2 + \|\nabla{\bf u}\|_{L^2}^2 +1\right)\|\nabla\phi_t\|_{L^2}^2 . \nonumber
\end{align}
Thanks to the embedding $H^\frac{1}{2}(\Gamma)\hookrightarrow L^p(\Gamma)$ for $1\le p < \infty$, one achieves from Lemma \ref{PGN}, Lemma \ref{Trace}, (\ref{energy}), (\ref{H10}) and (\ref{Phi-t}) that
\begin{align}
J_6&\le\|\mathcal L(\psi)\|_{L^\infty(\Gamma)}\|\nabla_{\boldsymbol\tau}\psi_t\|_{L^2(\Gamma)}\|{\bf u}_{t{\boldsymbol\tau}}\|_{L^2(\Gamma)}\le C\|\mathcal L(\psi)\|_{H^1(\Gamma)}\|\nabla_{\boldsymbol\tau}\psi_t\|_{L^2(\Gamma)}\|{\bf u}_{t{\boldsymbol\tau}}\|_{L^2(\Gamma)}          \nonumber
\\
&\le\frac{\beta}{4}\|{\bf u}_{t{\boldsymbol\tau}}\|_{L^2(\Gamma)}^2+C\|\mathcal L(\psi)\|_{H^1(\Gamma)}^2\|\nabla_{\boldsymbol\tau}\psi_t\|_{L^2(\Gamma)}^2.
\nonumber
\\[1em]
J_7&\le \|\partial_t\mathcal L(\psi)\|_{L^2(\Gamma)}\|{\bf u}_{\boldsymbol\tau}\|_{L^\infty(\Gamma)}\|\nabla_{\boldsymbol\tau}\psi_t\|_{L^2(\Gamma)} \nonumber
\\
&\le\frac{1}{8}\|\partial_t\mathcal L(\psi)\|_{L^2(\Gamma)}^2+C\|{\bf u}_{\boldsymbol\tau}\|_{H^1(\Gamma)}^2\|\nabla_{\boldsymbol\tau}\psi_t\|_{L^2(\Gamma)}^2.
\nonumber
\\[1em]
J_8&\le C\|\psi_t\|_{L^2(\Gamma)}\left(\|\partial_t\mathcal L(\psi)\|_{L^2(\Gamma)}+\|{\bf u}_{t{\boldsymbol\tau}}\|_{L^2(\Gamma)}\|\nabla_{\boldsymbol\tau}\psi\|_{L^\infty(\Gamma)}
+\|{\bf u}_{\boldsymbol\tau}\|_{L^\infty(\Gamma)}\|\nabla_{\boldsymbol\tau}\psi_t\|_{L^2(\Gamma)}
\right)                 \nonumber
\\
&\le C\|\psi_t\|_{L^2(\Gamma)}\left(\|\partial_t\mathcal L(\psi)\|_{L^2(\Gamma)}+\|{\bf u}_{t{\boldsymbol\tau}}\|_{L^2(\Gamma)}\|\nabla_{\boldsymbol\tau}\psi\|_{H^1(\Gamma)}
+\|{\bf u}_{\boldsymbol\tau}\|_{H^1(\Gamma)}\|\nabla_{\boldsymbol\tau}\psi_t\|_{L^2(\Gamma)}
\right)                 \nonumber
\\
&\le\frac{1}{8}\|\partial_t\mathcal L(\psi)\|_{L^2(\Gamma)}^2+\frac{\beta}{4}\|{\bf u}_{t{\boldsymbol\tau}}\|_{L^2(\Gamma)}^2+C\|\psi_t\|_{L^2(\Gamma)}^2
\nonumber
\\
&\quad+C\|\psi_t\|_{L^2(\Gamma)}^2\|\nabla_{\boldsymbol\tau}\psi\|_{H^1(\Gamma)}^2+C\|{\bf u}_{\boldsymbol\tau}\|_{H^1(\Gamma)}^2\|\nabla_{\boldsymbol\tau}\psi_t\|_{L^2(\Gamma)}^2
\nonumber
\\
&\le\frac{1}{8}\|\partial_t\mathcal L(\psi)\|_{L^2(\Gamma)}^2+\frac{\beta}{4}\|{\bf u}_{t{\boldsymbol\tau}}\|_{L^2(\Gamma)}^2+C\|\psi_t\|_{L^2(\Gamma)}^2
\nonumber
\\
&\quad+C\|\nabla\phi_t\|_{L^2}^2\|\psi\|_{H^2(\Gamma)}^2+C\|{\bf u}_{\boldsymbol\tau}\|_{H^1(\Gamma)}^2\|\nabla_{\boldsymbol\tau}\psi_t\|_{L^2(\Gamma)}^2
\nonumber
\\
&\le\frac{1}{8}\|\partial_t\mathcal L(\psi)\|_{L^2(\Gamma)}^2+\frac{\beta}{4}\|{\bf u}_{t{\boldsymbol\tau}}\|_{L^2(\Gamma)}^2+C\|\nabla\phi_t\|_{L^2}^2
\nonumber
\\
&\quad+C\|\nabla\phi_t\|_{L^2}^2\|\psi\|_{H^2(\Gamma)}^2+C\|{\bf u}_{\boldsymbol\tau}\|_{H^1(\Gamma)}^2\|\nabla_{\boldsymbol\tau}\psi_t\|_{L^2(\Gamma)}^2. \nonumber
\end{align}
Then substituting $J_1$ -- $J_8$ into (\ref{H9}), and taking (\ref{H11}) into account, after choosing $C\varepsilon<\frac{1}{4}$, we obtain
\begin{align}
&\frac{1}{2}\frac{\rm d}{{\rm d}t}\left(\|\nabla\phi_t\|_{L^2}^2
+\gamma\|\nabla_{\boldsymbol\tau}\psi_t\|_{L^2(\Gamma)}^2+\|{\bf u}_t\|_{L^2}^2
\right)           \nonumber
\\
&\quad+\|(\mu-\bar\mu)_t\|_{L^2}+\|\partial_t\mathcal L(\psi)\|_{L^2(\Gamma)}^2+\frac{1}{2}\|\mathbb S({\bf u}_t)\|_{L^2}^2+\beta\|{\bf u}_{t{\boldsymbol\tau}}\|_{L^2(\Gamma)}^2  \nonumber
\\
&\le C\mathcal A_2(t)\left(\|\nabla\phi_t\|_{L^2}^2+\|{\bf u}_t\|_{L^2}^2+\|\nabla_{\boldsymbol\tau}\psi_t\|_{L^2(\Gamma)}
\right),
\end{align}
where
\begin{align}
\mathcal A_2(t)&=\|\phi\|_{H^2}^2+\|\psi\|_{H^2(\Gamma)}^2+\|\nabla{\bf u}\|_{L^2}^2+\|\mathcal L(\psi)\|_{H^1(\Gamma)}^2+\|{\bf u}_{\boldsymbol\tau}\|_{H^1(\Gamma)}^2+1.
\nonumber
\end{align}
It follows from (\ref{energy}), (\ref{Phi h2}) and (\ref{H10}) that $\mathcal A_2(t)\in L^1(0,T)$. Thus, using Gronwall inequality and (\ref{Phi-t}), and noting $\|\psi_t\|_{L^2(\Gamma)}^2\le C\|\phi_t\|_{H^1}^2$, we arrive at Lemma \ref{4*}. This completes the proof.
\end{proof}
\begin{Lemma}
Let $({\bf u},\phi,\psi)$ be the smooth solution to $(\ref{NSAC})$--$(\ref{initial condition})$ on $[0,T]$, then it holds that
\begin{align}
\label{H14}
&\sup\limits_{0\le t \le T}\left(
\|\mu-\bar\mu\|_{L^2}^2+\|\mathcal L(\psi)\|_{L^2(\Gamma)}^2
+\|\phi\|_{H^2}^2+\|\psi\|_{H^2(\Gamma)}^2
\right)                          \nonumber
\\
&\quad+\int_{0}^{T}\left(\|\phi_t\|_{H^1}^2+\gamma\|\nabla_{\boldsymbol\tau}\psi_t\|_{L^2(\Gamma)}^2
\right) {\rm d}t\le C,
\end{align}
where $C$ is a positive constant that depends on $\Omega$, $\beta^{-1}$, $\gamma^{-1}$ and $T$.
\end{Lemma}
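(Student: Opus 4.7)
The plan is to derive this estimate as a direct consequence of the bounds already established in Lemmas \ref{Energy}, \ref{Phi2}, \ref{3}, and \ref{4*}. The key observation is that the Allen--Cahn equation $(\ref{NSAC})_3$ and the dynamic boundary condition $(\ref{dynamic})_3$ can be rewritten pointwise as
\[\mu-\bar\mu=-\phi_t-{\bf u}\cdot\nabla\phi\quad\text{in }\Omega,\qquad \mathcal{L}(\psi)=-\psi_t-{\bf u}_{\boldsymbol\tau}\cdot\nabla_{\boldsymbol\tau}\psi\quad\text{on }\Gamma.\]
Taking $L^2$-norms, the problem reduces to controlling $\phi_t$, $\psi_t$ and the two convective terms uniformly in $t$.

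The time-derivative terms are supplied directly by Lemma \ref{4*}, which gives sup-in-$t$ bounds on $\|\phi_t\|_{H^1}$ and $\|\psi_t\|_{H^1(\Gamma)}$, and in particular on $\|\phi_t\|_{L^2}$ and $\|\psi_t\|_{L^2(\Gamma)}$. For the convective pieces I would apply the Gagliardo--Nirenberg chain of Lemma \ref{PGN},
\[\|{\bf u}\cdot\nabla\phi\|_{L^2}^2\le \|{\bf u}\|_{L^4}^2\|\nabla\phi\|_{L^4}^2\le C\|{\bf u}\|_{L^2}\|{\bf u}\|_{H^1}\|\nabla\phi\|_{L^2}\|\nabla\phi\|_{H^1},\]
together with a trace-based analogue on $\Gamma$ using $\|{\bf u}_{\boldsymbol\tau}\|_{L^\infty(\Gamma)}\le C\|{\bf u}_{\boldsymbol\tau}\|_{L^2(\Gamma)}^{1/2}\|{\bf u}_{\boldsymbol\tau}\|_{H^1(\Gamma)}^{1/2}$. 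The $L^2$ factors are controlled by Lemma \ref{Energy}; the tangential $H^1$ ingredients are supplied by Lemma \ref{3}.

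Once $\sup_{[0,T]}(\|\mu-\bar\mu\|_{L^2}^2+\|\mathcal{L}(\psi)\|^2_{L^2(\Gamma)})$ is in hand, Lemma \ref{Phi2} immediately upgrades it to the desired uniform bounds on $\|\phi\|_{H^2}^2+\|\psi\|^2_{H^2(\Gamma)}$. For the time-integral term, Lemma \ref{4*} already gives sup-in-$t$ control of $\|\phi_t\|_{H^1}^2+\gamma\|\psi_t\|^2_{H^1(\Gamma)}$; integration over $[0,T]$ produces the required $L^2_t$ bound, absorbed into $C=C(T)$.

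The principal technical obstacle is producing uniform-in-$t$ control of the full $H^1$ norm of ${\bf u}$, since Lemma \ref{3} only furnishes the tangential piece $\|\nabla_{\boldsymbol\tau}{\bf u}\|_{L^2}$. On the 2D channel the divergence-free condition together with ${\bf u}\cdot{\bf n}=0$ on $\Gamma$ converts one normal derivative into a tangential one, leaving only $\partial_{\bf n}u_{\boldsymbol\tau}$, which can be recovered via Lemma \ref{regularity u} applied to the vorticity and divergence of ${\bf u}$, using the momentum equation to propagate $\|{\bf u}_t\|_{L^2}$ (bounded by Lemma \ref{4*}) and the Gagliardo--Nirenberg-interpolated nonlinearities through a Stokes-type regularity argument. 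A careful choice of coefficients in the interpolation ensures that the $\|\phi\|_{H^2}$-dependence on the right-hand side can be absorbed into the left via Lemma \ref{Phi2}, closing the estimate without circularity.
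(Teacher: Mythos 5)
Your overall architecture is genuinely different from the paper's. The paper does not estimate $\mu-\bar\mu$ and $\mathcal L(\psi)$ pointwise in time from the algebraic identities; it tests $(\ref{NSAC})_3$ by $(\bar\mu-\mu)_t$ and $(\ref{partial t})_{2,3}$ by $\phi_t$, $\psi_t$ (substituting $\psi_t=-\mathcal L(\psi)-{\bf u}_{\boldsymbol\tau}\cdot\nabla_{\boldsymbol\tau}\psi$) to produce a differential inequality for $\frac{\rm d}{{\rm d}t}\left(\|\mu-\bar\mu\|_{L^2}^2+\|\mathcal L(\psi)\|_{L^2(\Gamma)}^2\right)$ whose dissipation is exactly $\|\phi_t\|_{H^1}^2+\gamma\|\nabla_{\boldsymbol\tau}\psi_t\|_{L^2(\Gamma)}^2$, and then closes with Gronwall. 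The decisive advantage of that route is that the convective terms only need to be controlled in \emph{time-integrated} norms: the paper uses $\|{\bf u}\|_{H^1}\in L^2(0,T)$ (from the basic energy), $\|(\mu-\bar\mu)_t\|_{L^2}\in L^2(0,T)$ and $\|\nabla_{\boldsymbol\tau}{\bf u}_{\boldsymbol\tau}\|_{L^2(\Gamma)}\in L^2(0,T)$ (from Lemmas \ref{4*} and \ref{3}), all of which are already available.

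The gap in your proposal is the uniform-in-time control of ${\bf u}$ that your pointwise-in-time estimate of $\|{\bf u}\cdot\nabla\phi\|_{L^2}$ and $\|{\bf u}_{\boldsymbol\tau}\cdot\nabla_{\boldsymbol\tau}\psi\|_{L^2(\Gamma)}$ requires. At this stage of the bootstrap one only has $\sup_t\|{\bf u}\|_{L^2}$, $\sup_t\|\nabla_{\boldsymbol\tau}{\bf u}\|_{L^2}$ and $\sup_t\|{\bf u}_t\|_{L^2}$; the bound $\sup_t\|\nabla{\bf u}\|_{L^2}\le C$ is established only in Lemma \ref{final}, Step 1, and its proof explicitly invokes (\ref{H14}). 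Your proposed repair --- recovering $\partial_{\bf n}u_{\boldsymbol\tau}$ from the vorticity via a Stokes-type elliptic estimate on the momentum equation --- does not close: that estimate needs $\|{\rm div}(\nabla\phi\otimes\nabla\phi)\|_{L^2}=\|\Delta\phi\,\nabla\phi\|_{L^2}$ and the boundary datum $\|\mathcal L(\psi)\nabla_{\boldsymbol\tau}\psi\|_{H^{1/2}(\Gamma)}$ uniformly in $t$, i.e.\ precisely $\sup_t\|\phi\|_{H^2}$ (indeed essentially $\|\phi\|_{H^3}$ for the $L^\infty$ or $L^4$ factor on $\nabla\phi$) and $\sup_t\|\mathcal L(\psi)\|_{H^{1/2}(\Gamma)}$ --- the very quantities the lemma is proving, or stronger ones. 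These enter the Stokes estimate at full strength, with no small prefactor, so Young's inequality together with Lemma \ref{Phi2} cannot absorb them into the left-hand side as you assert. Unless you first prove $\sup_t\|\nabla{\bf u}\|_{L^2}\le C$ by an independent argument (e.g.\ a $\frac{\rm d}{{\rm d}t}\|\mathbb S({\bf u})\|_{L^2}^2$ energy estimate in which $\|\phi\|_{H^2}\in L^2(0,T)$ suffices), the argument as written is circular.
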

\begin{proof}[\bf Proof.]
We start testing $(\ref{NSAC})_3$ by $(\bar\mu-\mu)_t$ to obtain
\begin{align}
-\int_\Omega\phi_t\mu_t{\rm d}x=\int_\Omega\phi_t(\bar\mu-\mu)_t{\rm d}x
&=\frac{1}{2}\frac{\rm d}{{\rm d}t}\|\mu-\bar\mu\|_{L^2}^2-\int_\Omega{\bf u}\cdot\nabla\phi(\bar\mu-\mu)_t{\rm d}x.    \nonumber
\end{align}
Next, testing $(\ref{partial t})_2$ by $\phi_t$, one has
\begin{align}
\int_\Omega\mu_t\phi_t{\rm d}x&=-\int_\Omega\Delta\phi_t\phi_t{\rm d}x+\int_\Omega (3\phi^2-1)|\phi_t|^2{\rm d}x    \nonumber
\\
&=\|\nabla\phi_t\|_{L^2}^2-\int_\Gamma\partial_{\bf n}\phi_t\psi_t{\rm d}S+\int_\Omega (3\phi^2-1)|\phi_t|^2{\rm d}x.    \nonumber
\end{align}
Then, testing $(\ref{partial t})_3$ by $\psi_t$, we get
\begin{align}
\int_\Gamma\partial_t\mathcal L(\psi)\psi_t{\rm d}S&=-\gamma\int_\Gamma\Delta_{\boldsymbol\tau}\psi_t\psi_t{\rm d}S
+\int_\Gamma\partial_{\bf n}\phi_t\psi_t{\rm d}S
+\int_\Gamma\gamma_{fs}^{(2)}(\psi)|\psi_t|^2{\rm d}S  \nonumber
\\
&=\gamma\|\nabla_{\boldsymbol\tau}\psi_t\|_{L^2(\Gamma)}^2
+\int_\Gamma\partial_{\bf n}\phi_t\psi_t{\rm d}S
+\int_\Gamma\gamma_{fs}^{(2)}(\psi)|\psi_t|^2{\rm d}S,  \nonumber
\end{align}
whereas the left-hand side can be substituting $\psi_t$ with $\psi_t=-{\mathcal L}(\psi)-{\bf u}_{\boldsymbol\tau}\cdot\nabla_{\boldsymbol\tau}\psi$, thus we get
\begin{align}
\int_\Gamma\partial_t\mathcal L(\psi)\psi_t{\rm d}S
&=\int_\Gamma\partial_t\mathcal L(\psi)\left(-\mathcal L(\psi)-{\bf u}_{\boldsymbol\tau}\cdot\nabla_{\boldsymbol\tau}\psi
\right){\rm d}S    \nonumber
\\
&=-\frac{1}{2}\frac{\rm d}{{\rm d}t}\|\mathcal L(\psi)\|_{L^2(\Gamma)}^2-\int_\Gamma\partial_t\mathcal L(\psi){\bf u}_{\boldsymbol\tau}\cdot\nabla_{\boldsymbol\tau}\psi {\rm d}S. \nonumber
\end{align}
Adding the above equalities together, we obtain
\begin{align}
\label{H12}
&\frac{1}{2}\frac{\rm d}{{\rm d}t}\left(\|\mu-\bar\mu\|_{L^2}^2+\|\mathcal L(\psi)\|_{L^2(\Gamma)}^2
\right)+\|\nabla\phi_t\|_{L^2}^2+\gamma\|\nabla_{\boldsymbol\tau}\psi_t\|_{L^2(\Gamma)}^2
+\int_\Omega(3\phi^2-1)|\phi_t|^2{\rm d}x     \nonumber
\\
&=\int_\Omega{\bf u}\cdot\nabla\phi(\bar\mu-\mu)_t{\rm d}x-\int_\Gamma\gamma_{fs}^{(2)}(\psi)|\psi_t|^2{\rm d}S
-\int_\Gamma\partial_t\mathcal L(\psi){\bf u}_{\boldsymbol\tau}\cdot\nabla_{\boldsymbol\tau}\psi {\rm d}S.
\end{align}
Multiplying $(\ref{NSAC})_3$ by $\phi_t$, integrating the result over $\Omega$ leads to
\begin{align}
\label{H26}
2\|\phi_t\|_{L^2}^2&=2\int_\Omega(\bar\mu-\mu)\phi_t{\rm d}x-2\int_\Omega{\bf u}\cdot\nabla\phi\phi_t{\rm d}x.
\end{align}
Adding (\ref{H26}) to (\ref{H12}), since $f^\prime(\phi)=3\phi^2-1\ge -1$, using Lemma \ref{PGN}, Lemma \ref{poincare u}, (\ref{energy}), (\ref{H13}) and (\ref{Phi-t}), we have
\begin{align}
&\frac{1}{2}\frac{\rm d}{{\rm d}t}\left(\|\mu-\bar\mu\|_{L^2}^2+\|\mathcal L(\psi)\|_{L^2(\Gamma)}^2
\right)+\|\phi_t\|_{H^1}^2+\gamma\|\nabla_{\boldsymbol\tau}\psi_t\|_{L^2(\Gamma)}^2     \nonumber
\\
&\le 2\int_\Omega(\bar\mu-\mu)\phi_t{\rm d}x-2\int_\Omega{\bf u}\cdot\nabla\phi\phi_t{\rm d}x+\int_\Omega{\bf u}\cdot\nabla\phi(\bar\mu-\mu)_t{\rm d}x \nonumber
\\
&\quad-\int_\Gamma\gamma_{fs}^{(2)}(\psi)|\psi_t|^2{\rm d}S
-\int_\Gamma\partial_t\mathcal L(\psi){\bf u}_{\boldsymbol\tau}\cdot\nabla_{\boldsymbol\tau}\psi {\rm d}S   \nonumber
\\
&\le C\|\mu-\bar\mu\|_{L^2}\|\phi_t\|_{L^2}+C\|{\bf u}\|_{L^4}\|\nabla\phi\|_{L^4}\left(\|\phi_t\|_{L^2}+\|(\mu-\bar\mu)_t\|_{L^2}
\right)        \nonumber
\\
&\quad+C\|\psi_t\|_{L^2(\Gamma)}^2+\|\partial_t\mathcal L(\psi)\|_{L^2(\Gamma)}\|{\bf u}_{\boldsymbol\tau}\|_{L^\infty(\Gamma)}\|\nabla_{\boldsymbol\tau}\psi\|_{L^2(\Gamma)}
\nonumber
\\
&\le C\|\mu-\bar\mu\|_{L^2}\|\phi_t\|_{L^2}+C\|{\bf u}\|_{H^1}^\frac{1}{2}\|\nabla\phi\|_{H^1}^\frac{1}{2}\left(\|\phi_t\|_{L^2}+\|(\mu-\bar\mu)_t\|_{L^2}
\right)        \nonumber
\\
&\quad+C\|\partial_t\mathcal L(\psi)\|_{L^2(\Gamma)}\|{\bf u}_{\boldsymbol\tau}\|_{H^1(\Gamma)}\|\nabla_{\boldsymbol\tau}\psi\|_{L^2(\Gamma)}
+C      \nonumber
\\
&\le\frac{1}{2}\|\phi_t\|_{H^1}^2+C\|\mu-\bar\mu\|_{L^2}^2+C\left(\|\nabla{\bf u}\|_{L^2}^2+\|\phi\|_{H^2}^2+\|(\mu-\bar\mu)_t\|_{L^2}^2
\right) \nonumber
\\
&\quad+C\left(\|\partial_t\mathcal L(\psi)\|_{L^2(\Gamma)}^2+\|\nabla_{\boldsymbol\tau}{\bf u}_{\boldsymbol\tau}\|_{L^2(\Gamma)}^2+1
\right),       \nonumber
\end{align}
which together with Gronwall inequality, (\ref{energy}), (\ref{Phi h2}), (\ref{H10}) and (\ref{H13}) yields
\begin{align}
\sup\limits_{0\le t \le T}&\left(
\|\mu-\bar\mu\|_{L^2}^2+\|\mathcal L(\psi)\|_{L^2(\Gamma)}^2
\right)+\int_{0}^{T}\left(\|\phi_t\|_{H^1}^2+\gamma\|\nabla_{\boldsymbol\tau}\psi_t\|_{L^2(\Gamma)}^2
\right) {\rm d}t\le C(T).     \nonumber
\end{align}
 It combined with (\ref{Phi h2}) leads to (\ref{H14}). This completes the proof.
\end{proof}
\begin{Lemma}
\label{final}
Let $({\bf u},\phi,\psi)$ be the smooth solution to $(\ref{NSAC})$--$(\ref{initial condition})$ on $[0,T]$, then it holds that
\begin{align}
\label{H32}
\sup\limits_{0\le t \le T} E_1(t)+\int_{0}^{T} D_1(t) {\rm d}t\le C,
\end{align}
where
\begin{align}
& E_1=\|{\bf u}\|_{H^2}^2+\|\phi\|_{H^3}^2+\gamma\|\psi\|_{H^3(\Gamma)}^2+\|\mu-\bar\mu\|_{H^1}^2
+\|\nabla_{\boldsymbol\tau}\mathcal L(\psi)\|_{L^2}^2,  \nonumber
\\
& D_1=\|{\bf u}\|_{H^3}^2+\|\phi\|_{H^4}^2+\|\phi_t\|_{H^2}^2+\|\psi\|_{H^4(\Gamma)}^2
+\|\psi_t\|_{H^2(\Gamma)}^2+\|\mu-\bar\mu\|_{H^2}^2
+\|\mathcal L(\psi)\|_{H^2(\Gamma)}^2.  \nonumber
\end{align}
 and $C$ is a positive constant that depends on $\Omega$, $\beta^{-1}$, $\gamma^{-1}$ and $T$.
\end{Lemma}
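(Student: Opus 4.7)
\smallskip\noindent
\textbf{Proof proposal.} The strategy is to upgrade the tangential $H^1$-type estimates of Lemma \ref{3} by one additional tangential derivative, thereby obtaining dissipation for $\nabla_{\boldsymbol\tau}^2\mathcal L(\psi)$, $\nabla_{\boldsymbol\tau}^2(\mu-\bar\mu)$, and $\mathbb S(\nabla_{\boldsymbol\tau}^2{\bf u})$, and then to convert these tangential estimates into full Sobolev norms via the elliptic regularity of Lemma \ref{estimates} and the $H^s$-identity of Lemma \ref{regularity u}. First I would take $\nabla_{\boldsymbol\tau}^2$ of $(\ref{NSAC})_{3,4}$ and $(\ref{dynamic})_3$, and test respectively by $-\nabla_{\boldsymbol\tau}^2\mu$, $\nabla_{\boldsymbol\tau}^2\phi_t$, and $\nabla_{\boldsymbol\tau}^2\psi_t$. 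Simultaneously I would take $\nabla_{\boldsymbol\tau}^2$ of $(\ref{NSAC})_1$ and test by $\nabla_{\boldsymbol\tau}^2{\bf u}$. Because $\Omega=\mathbb T\times(-1,1)$ has $\kappa\equiv 0$, the pressure contribution vanishes through the identity $(\ref{p-equality})$ applied at order two, and the convective transport term $\int_\Omega({\bf u}\cdot\nabla)\nabla_{\boldsymbol\tau}^2{\bf u}\cdot\nabla_{\boldsymbol\tau}^2{\bf u}\,{\rm d}x$ vanishes by integration by parts as in $(\ref{777})$.

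The nonlinear terms split into three families: bulk couplings of the form
$\int_\Omega\nabla_{\boldsymbol\tau}^2({\bf u}\cdot\nabla\phi)\,\nabla_{\boldsymbol\tau}^2\mu\,{\rm d}x$
and
$\int_\Omega\nabla_{\boldsymbol\tau}^2\mathrm{div}(\nabla\phi\otimes\nabla\phi)\cdot\nabla_{\boldsymbol\tau}^2{\bf u}\,{\rm d}x$;
bulk terms from the cubic potential, namely $\nabla_{\boldsymbol\tau}^2 f(\phi)$; and boundary terms of the form
$\int_\Gamma\nabla_{\boldsymbol\tau}^2\big(\mathcal L(\psi)\nabla_{\boldsymbol\tau}\psi\big)\cdot\nabla_{\boldsymbol\tau}^2{\bf u}_{\boldsymbol\tau}\,{\rm d}S$
generated by $(\ref{dynamic})_2$ and its tangential differentiation. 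Each summand is to be bounded via the 2D Gagliardo--Nirenberg and trace inequalities of Lemma \ref{PGN} and Lemma \ref{Trace}, so that top-order derivatives are absorbed into the dissipations $\tfrac12\|\mathbb S(\nabla_{\boldsymbol\tau}^2{\bf u})\|_{L^2}^2+\beta\|\nabla_{\boldsymbol\tau}^2{\bf u}_{\boldsymbol\tau}\|_{L^2(\Gamma)}^2$, $\|\nabla_{\boldsymbol\tau}^2(\mu-\bar\mu)\|_{L^2}^2$, and $\|\nabla_{\boldsymbol\tau}^2\mathcal L(\psi)\|_{L^2(\Gamma)}^2$, while the remaining factors are multiplied by time-integrable coefficients provided by Lemmas \ref{Energy}--\ref{4*} (in particular $\|{\bf u}\|_{H^1}$, $\|\phi\|_{H^2}$, $\|\psi\|_{H^2(\Gamma)}$, $\|\mu-\bar\mu\|_{L^2}$, $\|\mathcal L(\psi)\|_{L^2(\Gamma)}$, and their dissipative counterparts $\|\nabla_{\boldsymbol\tau}{\bf u}\|_{H^1}$, $\|\nabla_{\boldsymbol\tau}\mathcal L(\psi)\|_{L^2(\Gamma)}$, $\|\phi_t\|_{H^1}$). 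To transfer the tangential control into full Sobolev norms, I would then read $(\ref{ty})$ as an elliptic problem: $H^1$-data $\mu-f$ combined with boundary datum $\mathcal L(\psi)-\gamma_{fs}'(\psi)+\psi$ yields $\|\phi\|_{H^3}+\gamma\|\psi\|_{H^3(\Gamma)}$ through Lemma \ref{estimates}, and $H^2$-data analogously yields the time-integrated $\|\phi\|_{H^4}+\gamma\|\psi\|_{H^4(\Gamma)}$. For ${\bf u}$, Lemma \ref{regularity u} reduces $\|{\bf u}\|_{H^2}$ and $\|{\bf u}\|_{H^3}$ to $\|\nabla\times{\bf u}\|_{H^{s-1}}$ plus lower-order pieces, which are recovered from the vorticity boundary identity in Lemma \ref{ellptic} together with $(\ref{dynamic})_2$ and the tangential estimates just established; alternatively one can view $(\ref{NSAC})_{1,2}$ as a stationary Stokes system with forcing $G-{\bf u}_t$ and apply the regularity of \cite{1} as in Lemma \ref{LNS L4}, using the $L^\infty(L^2)\cap L^2(H^1)$ bound on ${\bf u}_t$ from Lemma \ref{4*}.

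The main obstacle will be the trilinear boundary term $\int_\Gamma\nabla_{\boldsymbol\tau}^2\big(\mathcal L(\psi)\nabla_{\boldsymbol\tau}\psi\big)\cdot\nabla_{\boldsymbol\tau}^2{\bf u}_{\boldsymbol\tau}\,{\rm d}S$: its hardest piece $\int_\Gamma\nabla_{\boldsymbol\tau}^2\mathcal L(\psi)\,\nabla_{\boldsymbol\tau}\psi\cdot\nabla_{\boldsymbol\tau}^2{\bf u}_{\boldsymbol\tau}$ must be absorbed using both $\|\nabla_{\boldsymbol\tau}^2\mathcal L(\psi)\|_{L^2(\Gamma)}^2$ and $\|\nabla_{\boldsymbol\tau}^2{\bf u}_{\boldsymbol\tau}\|_{L^2(\Gamma)}^2$ together with an $L^\infty(\Gamma)$-bound on $\nabla_{\boldsymbol\tau}\psi$ that is under control thanks to Lemma \ref{3}; the complementary piece $\int_\Gamma\mathcal L(\psi)\,\nabla_{\boldsymbol\tau}^3\psi\cdot\nabla_{\boldsymbol\tau}^2{\bf u}_{\boldsymbol\tau}$ requires the $L^\infty$--$H^{1/2}$ control of $\mathcal L(\psi)$ on $\Gamma$ coupled with $\|\nabla_{\boldsymbol\tau}^3\psi\|_{L^2(\Gamma)}$, which is part of $E_1$. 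After summing all contributions, I expect a differential inequality of the form
\begin{align*}
\frac{\rm d}{{\rm d}t}E_1(t)+D_1(t)\le C\mathcal A_3(t)E_1(t)+C\mathcal B_3(t),
\end{align*}
where $\mathcal A_3,\mathcal B_3\in L^1(0,T)$ thanks to Lemmas \ref{Energy}--\ref{4*}, and Gronwall's inequality closes the estimate $(\ref{H32})$.
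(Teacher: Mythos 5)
Your proposal is workable in outline but it is \emph{not} the route the paper takes, and one step as you state it would not close. The paper never performs a second-order tangential energy estimate in this lemma. Instead it leans entirely on the time-derivative bounds already secured in Lemma \ref{4*} ($\|{\bf u}_t\|_{L^\infty_tL^2\cap L^2_tH^1}$, $\|\phi_t\|_{L^\infty_tH^1}$, $\int_0^T\|\partial_t\mathcal L(\psi)\|_{L^2(\Gamma)}^2$, $\int_0^T\|\psi_t\|_{H^2(\Gamma)}^2$) and then bootstraps: (i) $\|\nabla{\bf u}\|_{L^\infty_tL^2}$ by testing $(\ref{NSAC})_1$ with ${\bf u}_t$; (ii) $\nabla_{\boldsymbol\tau}\mathcal L(\psi)$ and $\nabla_{\boldsymbol\tau}^2\mathcal L(\psi)$ recovered \emph{algebraically} from the tangentially differentiated boundary evolution equation $(\ref{tangential 1})_3$ (and its second-order analogue), i.e.\ $\nabla_{\boldsymbol\tau}^k\mathcal L(\psi)=-\nabla_{\boldsymbol\tau}^k\psi_t-\nabla_{\boldsymbol\tau}^k({\bf u}_{\boldsymbol\tau}\cdot\nabla_{\boldsymbol\tau}\psi)$, using the $\psi_t$ bounds — no new energy identity is needed; (iii) $\|{\bf u}\|_{H^2}$ and $\|{\bf u}\|_{H^3}$ from $H^k$-regularity of the vorticity equation $(\ref{curl})$ with the Dirichlet datum supplied by Lemma \ref{ellptic} and the GNBC, combined with Lemma \ref{regularity u}, closed by an $\varepsilon$-absorption between $\|{\bf u}\|_{H^2}$ and $\|\nabla_{\boldsymbol\tau}\mathcal L(\psi)\|_{L^2(\Gamma)}$ (and between $\|{\bf u}\|_{H^3}$ and $\|\mathcal L(\psi)\|_{H^2(\Gamma)}$); (iv) $\|\phi\|_{H^3},\|\phi\|_{H^4},\|\psi\|_{H^3(\Gamma)},\|\psi\|_{H^4(\Gamma)}$ from the bulk–surface elliptic system $(\ref{ty})$ via Lemma \ref{estimates}. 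What the paper's route buys is precisely the avoidance of the trilinear boundary terms you single out as the main obstacle.

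On your route, the treatment of that obstacle is the step that fails as written: you propose to absorb $\int_\Gamma\nabla_{\boldsymbol\tau}^2\mathcal L(\psi)\,\nabla_{\boldsymbol\tau}\psi\cdot\nabla_{\boldsymbol\tau}^2{\bf u}_{\boldsymbol\tau}\,{\rm d}S$ ``using both'' dissipations $\|\nabla_{\boldsymbol\tau}^2\mathcal L(\psi)\|_{L^2(\Gamma)}^2$ and $\beta\|\nabla_{\boldsymbol\tau}^2{\bf u}_{\boldsymbol\tau}\|_{L^2(\Gamma)}^2$, but both carry fixed $O(1)$ coefficients while the cross term comes with the non-small factor $\|\nabla_{\boldsymbol\tau}\psi\|_{L^\infty(\Gamma)}$; a simultaneous absorption into two unit-size dissipations is impossible without smallness. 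Two remedies exist: first, when you actually add the $\phi$-system and ${\bf u}$-system identities, this piece cancels against the commutator term $-\int_\Gamma\nabla_{\boldsymbol\tau}^2\mathcal L(\psi)\,\nabla_{\boldsymbol\tau}^2{\bf u}_{\boldsymbol\tau}\cdot\nabla_{\boldsymbol\tau}\psi\,{\rm d}S$ coming from $\nabla_{\boldsymbol\tau}^2({\bf u}_{\boldsymbol\tau}\cdot\nabla_{\boldsymbol\tau}\psi)$ in the phase-field boundary equation, exactly as happens at first order between $(\ref{H1})$ and $(\ref{H2})$; second, for the surviving terms one must use the trace interpolation $\|\nabla_{\boldsymbol\tau}^2{\bf u}_{\boldsymbol\tau}\|_{L^2(\Gamma)}^2\le C\|\nabla_{\boldsymbol\tau}^2{\bf u}\|_{L^2}\|\nabla_{\boldsymbol\tau}^2{\bf u}\|_{H^1}$ so that only an $\varepsilon$-fraction of $\|\mathbb S(\nabla_{\boldsymbol\tau}^2{\bf u})\|_{L^2}^2$ is consumed (the device of $I_6$ in Lemma \ref{3}). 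Finally, note that even on your route the tangential estimates alone cannot produce $\|{\bf u}\|_{H^2}$ (the normal derivative $\partial_{\bf n}\partial_{\bf n}u_1$ is not reachable from $\nabla_{\boldsymbol\tau}^2{\bf u}$ and the divergence-free constraint), so the vorticity/Stokes elliptic step of the paper is unavoidable in either approach.
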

\begin{proof}[\bf Proof.]
The proof will be completed by several steps.
\\
{\it\bfseries Step 1. Estimates of $\|\nabla{\bf u}\|_{L^\infty(0,T;L^2)}$.}
Multiplying $(\ref{NSAC})_1$ by ${\bf u}_t$, and integrating the result over $\Omega$, one has
\begin{align}
\|{\bf u}_t\|_{L^2}^2+\int_\Omega{\bf u}\cdot\nabla{\bf u}\cdot{\bf u}_t{\rm d}x+\int_\Omega\nabla p\cdot{\bf u}_t{\rm d}x-\int_\Omega{\rm div}\mathbb S({\bf u})\cdot{\bf u}_t{\rm d}x=-\int_\Omega{\rm div}(\nabla\phi\otimes\nabla\phi)\cdot{\bf u}_t{\rm d}x.
\nonumber
\end{align}
By using boundary condition, we get
\begin{align}
-\int_\Omega{\rm div}\mathbb S({\bf u})\cdot{\bf u}_t{\rm d}x&=\frac{1}{4}\frac{\rm d}{{\rm d}t}\|\mathbb S({\bf u})\|_{L^2}^2-\int_\Gamma\left(\mathbb S({\bf u})\cdot{\bf n}
\right)\cdot{\bf u}_t{\rm d}S    \nonumber
\\
&=\frac{1}{4}\frac{\rm d}{{\rm d}t}\|\mathbb S({\bf u})\|_{L^2}^2-\int_\Gamma\left(\mathbb S({\bf u})\cdot{\bf n}
\right)_{\boldsymbol\tau}\cdot{\bf u}_{t{\boldsymbol\tau}}{\rm d}S \nonumber
\\
&=\frac{1}{4}\frac{\rm d}{{\rm d}t}\|\mathbb S({\bf u})\|_{L^2}^2-\int_\Gamma\left[
\mathcal L(\psi)\nabla_{\boldsymbol\tau}\psi-\beta{\bf u}_{\boldsymbol\tau}
\right]\cdot{\bf u}_{t{\boldsymbol\tau}}{\rm d}S    \nonumber
\\
&=\frac{1}{2}\frac{\rm d}{{\rm d}t}\left(\frac{1}{2}\|\mathbb S({\bf u})\|_{L^2}^2+\beta\|{\bf u}_{\boldsymbol\tau}\|_{L^2(\Gamma)}^2
\right)-\int_\Gamma\mathcal L(\psi)\nabla_{\boldsymbol\tau}\psi\cdot{\bf u}_{t{\boldsymbol\tau}}{\rm d}S. \nonumber
\end{align}
Thus, recalling (\ref{nabla p}), due to (\ref{H14}) and Lemma \ref{poincare u}, we obtain
\begin{align}
&\frac{1}{2}\frac{\rm d}{{\rm d}t}\left(\frac{1}{2}\|\mathbb S({\bf u})\|_{L^2}^2+\beta\|{\bf u}_{\boldsymbol\tau}\|_{L^2(\Gamma)}^2
\right)+\|{\bf u}_t\|_{L^2}^2               \nonumber
\\
&=-\int_\Omega{\bf u}\cdot\nabla{\bf u}\cdot{\bf u}_t {\rm d}x+\int_\Gamma\mathcal L(\psi)\nabla_{\boldsymbol\tau}\psi\cdot{\bf u}_{t{\boldsymbol\tau}}{\rm d}S-\int_\Omega\Delta\phi\nabla\phi\cdot{\bf u}_t{\rm d}x
\nonumber
\\
&\le\|{\bf u}\|_{L^4}\|\nabla{\bf u}\|_{L^2}\|{\bf u}_t\|_{L^4}+\|\mathcal L(\psi)\|_{L^\infty(\Gamma)}\|\nabla_{\boldsymbol\tau}\psi\|_{L^2(\Gamma)}\|{\bf u}_{t{\boldsymbol\tau}}\|_{L^2(\Gamma)}
+\|\nabla\phi\|_{L^4}\|\Delta\phi\|_{L^2}\|{\bf u}_t\|_{L^4}   \nonumber
\\
&\le C\|{\bf u}\|_{H^1}\|\nabla{\bf u}\|_{L^2}\|{\bf u}_t\|_{H^1}+C\|\mathcal L(\psi)\|_{H^1(\Gamma)}\|\nabla_{\boldsymbol\tau}\psi\|_{L^2(\Gamma)}\|{\bf u}_{t\boldsymbol\tau}\|_{L^2(\Gamma)}+C\|\nabla\phi\|_{H^1}\|\phi\|_{H^2}\|{\bf u}_t\|_{H^1}
\nonumber
\\
&\le C\|\nabla{\bf u}\|_{L^2}^2\left(\|\nabla{\bf u}_t\|_{L^2}^2+1\right)
+C\|\nabla_{\boldsymbol\tau}\mathcal L(\psi)\|_{L^2(\Gamma)}^2+C\|{\bf u}_{t\boldsymbol\tau}\|_{L^2(\Gamma)}^2+C\|\nabla{\bf u}_t\|_{L^2}^2+C,  \nonumber
\end{align}
which together with Gronwall inequality, (\ref{H10}), (\ref{H13}) and  Korn's inequality yields
\begin{align}
\label{H17}
\sup\limits_{0\le t \le T}\left(\|\nabla{\bf u}\|_{L^2}^2+\beta\|{\bf u}_{\boldsymbol\tau}\|_{L^2(\Gamma)}^2
\right)+\int_{0}^{T}\|{\bf u}_t\|_{L^2}^2{\rm d}t\le C(T).
\end{align}
\\
{\it\bfseries Step 2. Higher order estimates of $\phi$ and $\psi$.}
In order to obtain the higher order estimates of ${\bf u}$, we need to derive the higher order regularity of $\phi$ and $\psi$. First, it follows from $(\ref{NSAC})_3$ that
\begin{align}
\|\mu-\bar\mu\|_{H^1}^2&\le C\|\phi_t\|_{H^1}^2+C\|{\bf u}\cdot\nabla\phi\|_{H^1}^2
\nonumber
\\
&\le C+C\left(\|{\bf u}\|_{L^4}^2\|\nabla\phi\|_{L^4}^2+\|\nabla{\bf u}\|_{L^2}^2\|\nabla\phi\|_{L^\infty}^2+\|{\bf u}\|_{L^4}^2\|\nabla^2\phi\|_{L^4}^2
\right)    \nonumber
\\
&\le C+C\left(\|{\bf u}\|_{H^1}^2\|\nabla\phi\|_{H^1}^2+\|\nabla{\bf u}\|_{L^2}^2\|\nabla\phi\|_{L^2}\|\nabla\phi\|_{H^2}+\|{\bf u}\|_{H^1}^2\|\nabla^2\phi\|_{L^2}\|\nabla^2\phi\|_{H^1}
\right)    \nonumber
\\
&\le\varepsilon\|\phi\|_{H^3}^2+C,  \nonumber
\end{align}
which together with (\ref{ty}) and Lemma \ref{ellptic} implies
\begin{align}
\label{H16}
&\|\phi\|_{H^3}^2+\gamma\|\psi\|_{H^3(\Gamma)}^2  \nonumber
\\
&\le C\left(\|\mu-\bar\mu\|_{H^1}^2+\|\phi^3\|_{H^1}^2+\|\phi\|_{H^1}^2+\|\mathcal L(\psi)\|_{H^1(\Gamma)}^2+\|\psi\|_{H^1(\Gamma)}^2+1
\right)   \nonumber
\\
&\le C\left(\|\mu-\bar\mu)\|_{H^1}^2+\|\nabla_{\boldsymbol\tau}\mathcal L(\psi)\|_{L^2(\Gamma)}^2+1
\right)   \nonumber
\\
&\le C\varepsilon\|\phi\||_{H^3}^2+C\left(1+\|\nabla_{\boldsymbol\tau}\mathcal L(\psi)\|_{L^2(\Gamma)}^2
\right),
\end{align}
where we have used Lemma \ref{PGN}, (\ref{energy}), (\ref{H14}) and (\ref{H17}).
Then, taking $C\varepsilon<\frac{1}{2}$ yields
\begin{align}
\label{H28}
\|\phi\|_{H^3}^2+\gamma\|\psi\|_{H^3(\Gamma)}^2\le C\left(1+\|\nabla_{\boldsymbol\tau}\mathcal L(\psi)\|_{L^2(\Gamma)}^2
\right).
\end{align}
\vskip2mm
Besides, recalling (\ref{H11}), which together with (\ref{H13}) and (\ref{H14}) implies
\begin{align}
\label{H24}
&\int_{0}^{T}\left(\|\phi_t\|_{H^2}^2+\gamma\|\psi_t\|_{H^2(\Gamma)}^2
\right) {\rm d}t        \nonumber
\\
&\le C\int_{0}^{T}\left(\|(\mu-\bar\mu)_t\|_{L^2}^2+\|\nabla\phi_t\|_{L^2}^2
+\|\partial_t\mathcal L(\psi)\|_{L^2(\Gamma)}^2
\right) {\rm d}t         \nonumber
\\
&\le C\int_{0}^{T}\left(\|(\mu-\bar\mu)_t\|_{L^2}^2
+\|\partial_t\mathcal L(\psi)\|_{L^2(\Gamma)}^2+1
\right) {\rm d}t \le C(T).
\end{align}
\\
{\it\bfseries Step 3. Estimates of $\|{\bf u}\|_{L^\infty(0,T;H^2)}$.}
We start with recalling the basic interpolation inequality (see \cite{13})
\begin{align}
\|gh\|_{H^{2\theta}(\Gamma)}\le C\|g\|_{H^{\theta+\frac{1}{2}}(\Gamma)}\|h\|_{H^{\theta+\frac{1}{2}}(\Gamma)},
\quad\forall g,h\in H^{\theta+\frac{1}{2}}(\Gamma),\quad\theta\in\left(0,\frac{1}{2}
\right)  \nonumber
\end{align}
Taking the operator $\nabla\times$ to $(\ref{NSAC})_1$, let $\omega=\nabla\times{\bf u}$, we obtain
\begin{equation}
\label{curl}
\begin{cases}
\omega_t-\Delta\omega=-{\bf u}\cdot\nabla\omega-\nabla\times(\Delta\phi\nabla\phi),\quad&{\rm in}\;\Omega,
\\
\omega=\left[(2\kappa-\beta){\bf u}_{\boldsymbol\tau}+\mathcal L(\psi)\nabla_{\boldsymbol\tau}\psi
\right]\cdot{\boldsymbol\tau},\quad&{\rm on}\;\Gamma.
\end{cases}
\end{equation}
Applying the $H^k$-regularity theorem to $(\ref{curl})_1$ with Dirichlet boundary condition $(\ref{curl})_2$, one has
\begin{align}
\|\omega\|_{H^k}&\le C\left(\|-\omega_t+\nabla\times(\Delta\phi\nabla\phi)\|_{H^{k-2}}+\left\Vert
\left[(2\kappa-\beta){\bf u}_{\boldsymbol\tau}+\mathcal L(\psi)\nabla_{\boldsymbol\tau}\psi
\right]\cdot{\boldsymbol\tau}
\right\Vert_{H^{k-\frac{1}{2}}(\Gamma)}+\|\omega\|_{L^2}
\right) \nonumber
\\
&\le C(\kappa,\beta)\left(\|\nabla{\bf u}_t\|_{H^{k-2}}+\|\nabla\times(\Delta\phi\nabla\phi)\|_{H^{k-2}}+\|{\bf u}_{\boldsymbol\tau}\|_{H^{k-\frac{1}{2}}(\Gamma)}\right) \nonumber
\\
&\quad+C(\kappa,\beta)\left(\|\mathcal L(\psi)\nabla_{\boldsymbol\tau}\psi\|_{H^{k-\frac{1}{2}}(\Gamma)}
+\|\nabla{\bf u}\|_{L^2}
\right). \nonumber
\end{align}
Thus, using Lemma \ref{regularity u} and Lemma \ref{poincare u}, it follows from interpolation inequality for $\theta=\frac{1}{4}$ that
\begin{align}
\label{H27}
\|{\bf u}\|_{H^2}^2&\le C\left(\|\omega\|_{H^1}^2+\|{\bf u}\|_{H^1}^2
\right)  \nonumber
\\
&\le C(\kappa,\beta)\left(\|\nabla{\bf u}_t\|_{H^{-1}}^2+\|\nabla\times(\Delta\phi\nabla\phi)\|_{H^{-1}}^2+\|{\bf u}_{\boldsymbol\tau}\|_{H^\frac{1}{2}(\Gamma)}^2\right) \nonumber
\\
&\quad+C(\kappa,\beta)\left(\|\mathcal L(\psi)\nabla_{\boldsymbol\tau}\psi\|_{H^\frac{1}{2}(\Gamma)}^2
+\|\nabla{\bf u}\|_{L^2}^2
\right)  \nonumber
\\
&\le C(\kappa,\beta)\left(\|{\bf u}_t\|_{L^2}^2+\|\Delta\phi\nabla\phi\|_{L^2}^2+\|{\bf u}\|_{H^1}^2+\|\mathcal L(\psi)\|_{H^\frac{3}{4}(\Gamma)}^2\|\nabla_{\boldsymbol\tau}\psi\|_{H^\frac{3}{4}(\Gamma)}^2
+\|\nabla{\bf u}\|_{L^2}^2\right)   \nonumber
\\
&\le C(\kappa,\beta)\left(\|{\bf u}_t\|_{L^2}^2+\|\nabla{\bf u}\|_{L^2}^2+\|\Delta\phi\|_{L^4}^2\|\nabla\phi\|_{L^4}^2+\|\mathcal L(\psi)\|_{H^1(\Gamma)}^2\|\nabla_{\boldsymbol\tau}\psi\|_{H^1(\Gamma)}^2
\right)  \nonumber
\\
&\le C(\kappa,\beta)\left(\|\phi\|_{H^3}^2
+\|\nabla_{\boldsymbol\tau}\mathcal L(\psi)\|_{L^2(\Gamma)}^2+1
\right)  \nonumber
\\
&\le C(\kappa,\beta)\left(\|\nabla_{\boldsymbol\tau}\mathcal L(\psi)\|_{L^2(\Gamma)}^2+1
\right),
\end{align}
where we have used (\ref{H13}), (\ref{H14}), (\ref{H17}) (\ref{H16}) and (\ref{poincare u}).
Thanks to $(\ref{tangential 1})_3$, we infer that
\begin{align}
\label{H19}
\|\nabla_{\boldsymbol\tau}\mathcal L(\psi)\|_{L^2(\Gamma)}^2&\le C\left(
\|\nabla_{\boldsymbol\tau}\psi_t\|_{L^2(\Gamma)}^2
+\|\nabla_{\boldsymbol\tau}{\bf u}_{\boldsymbol\tau}\|_{L^2(\Gamma)}^2\|\nabla_{\boldsymbol\tau}\psi\|_{L^\infty(\Gamma)}^2
+\|{\bf u}_{\boldsymbol\tau}\|_{L^\infty(\Gamma)}^2\|\nabla^2_{\boldsymbol\tau}\psi\|_{L^2(\Gamma)}^2
\right)  \nonumber
\\
&\le C\left(
\|\nabla_{\boldsymbol\tau}\psi_t\|_{L^2(\Gamma)}^2
+\|\nabla_{\boldsymbol\tau}{\bf u}\|_{H^\frac{1}{2}}^2\|\nabla_{\boldsymbol\tau}\psi\|_{H^1(\Gamma)}^2
+\|{\bf u}\|_{H^\frac{3}{2}}^2\|\nabla^2_{\boldsymbol\tau}\psi\|_{L^2(\Gamma)}^2
\right)  \nonumber
\\
&\le C\left(\|\nabla{\bf u}\|_{L^2}\|\nabla{\bf u}\|_{H^1}+\|{\bf u}\|_{H^1}\|{\bf u}\|_{H^2}+1
\right)     \nonumber
\\
&\le\varepsilon\|\nabla^2{\bf u}\|_{L^2}^2+C(T).
\end{align}
Then, substituting (\ref{H19}) into (\ref{H27}), after choosing $C\varepsilon<\frac{1}{2}$, we conclude that
\begin{align}
\label{H18}
\sup\limits_{0\le t \le T}\left(\|{\bf u}\|_{H^2}^2+\|\nabla_{\boldsymbol\tau}\mathcal L(\psi)\|_{L^2(\Gamma)}^2\right)\le C(T).
\end{align}
\\
{\it\bfseries Step 4. Estimates of $\|\phi\|_{L^\infty(0,T;H^3)}$ and $\|\psi\|_{L^\infty(0,T;H^3(\Gamma))}$.}
With the regularity of ${\bf u}$ at hand, we can get the bounds of $\|\phi\|_{L^\infty(0,T;H^3)}$ and $\|\psi\|_{L^\infty(0,T;H^3(\Gamma))}$. It follows from (\ref{H28}) and (\ref{H18}) that
\begin{align}
\label{H20}
&\|\phi\|_{H^3}^2+\gamma\|\psi\|_{H^3(\Gamma)}^2\le C\left(1+\|\nabla_{\boldsymbol\tau}\mathcal L(\psi)\|_{L^2(\Gamma)}^2
\right)\le C(T),
\end{align}
Taking the operator $\nabla_{\boldsymbol\tau}^2$ to $(\ref{dynamic})_3$, one has
\begin{align}
\nabla_{\boldsymbol\tau}^2\mathcal L(\psi)=-\nabla_{\boldsymbol\tau}^2\psi_t-{\bf u}_{\boldsymbol\tau}\cdot\nabla_{\boldsymbol\tau}^3\psi-\nabla_{\boldsymbol\tau}{\bf u}_{\boldsymbol\tau}\cdot\nabla_{\boldsymbol\tau}^2\psi-\nabla_{\boldsymbol\tau}^2{\bf u}_{\boldsymbol\tau}\cdot\nabla_{\boldsymbol\tau}\psi, \nonumber
\end{align}
which together with (\ref{H18}), (\ref{H20}) and Lemma \ref{poincare u} implies
\begin{align}
\label{H23}
\|\mathcal L(\psi)\|_{H^2(\Gamma)}^2&\le C\left(
\|\nabla_{\boldsymbol\tau}^2\psi_t\|_{L^2(\Gamma)}^2+\|{\bf u}_{\boldsymbol\tau}\|_{L^\infty(\Gamma)}^2\|\nabla_{\boldsymbol\tau}^3\psi\|_{L^2(\Gamma)}^2
+\|\nabla_{\boldsymbol\tau}{\bf u}_{\boldsymbol\tau}\|_{L^2(\Gamma)}^2\|\nabla_{\boldsymbol\tau}^2\psi\|_{L^\infty(\Gamma)}^2
\right)  \nonumber
\\
&\quad+C\left(\|\nabla_{\boldsymbol\tau}^2{\bf u}_{\boldsymbol\tau}\|_{L^2(\Gamma)}^2\|\nabla_{\boldsymbol\tau}\psi\|_{L^\infty(\Gamma)}^2+1
\right) \nonumber
\\
&\le C\left(
\|\nabla_{\boldsymbol\tau}^2\psi_t\|_{L^2(\Gamma)}^2+\|{\bf u}_{\boldsymbol\tau}\|_{H^1(\Gamma)}^2\|\nabla_{\boldsymbol\tau}^3\psi\|_{L^2(\Gamma)}^2
+\|\nabla_{\boldsymbol\tau}{\bf u}_{\boldsymbol\tau}\|_{L^2(\Gamma)}^2\|\nabla_{\boldsymbol\tau}^2\psi\|_{H^1(\Gamma)}^2
\right)  \nonumber
\\
&\quad+C\left(\|\nabla_{\boldsymbol\tau}^2{\bf u}_{\boldsymbol\tau}\|_{L^2(\Gamma)}^2\|\nabla_{\boldsymbol\tau}\psi\|_{H^1(\Gamma)}^2+1
\right) \nonumber
\\
&\le C\left(
\|\nabla_{\boldsymbol\tau}^2\psi_t\|_{L^2(\Gamma)}^2+\|{\bf u}\|_{H^2}^2\|\psi\|_{H^3(\Gamma)}^2+\|\nabla^2{\bf u}\|_{L^2}\|\nabla^2{\bf u}\|_{H^1}\|\nabla_{\boldsymbol\tau}\psi\|_{H^1(\Gamma)}^2+1
\right)  \nonumber
\\
&\le C\left(\|\nabla_{\boldsymbol\tau}^2\psi_t\|_{L^2(\Gamma)}^2+\|\psi\|_{H^3(\Gamma)}^2+\|\nabla^2{\bf u}\|_{L^2}\|\nabla^2{\bf u}\|_{H^1}+1 \right)  \nonumber
\\
&\le\varepsilon\|{\bf u}\|_{H^3}^2+C\left(\|\nabla_{\boldsymbol\tau}^2\psi_t\|_{L^2(\Gamma)}^2+1
\right).
\end{align}
\vskip2mm
We use the same method as in step 3 above to estimate $\|\nabla^3{\bf u}\|_{L^2(0,T;L^2)}$. It follows from Lemma \ref{regularity u} and (\ref{L5.1}) for the case $r=s_1=s_2=\frac{3}{2}>\frac{2}{2}$ that
\begin{align}
\label{H21}
\|{\bf u}\|_{H^3}^2&\le C\left(\|\omega\|_{H^2}^2+\|{\bf u}\|_{H^2}^2
\right)    \nonumber
\\
&\le C(\kappa,\beta)\left(\|\nabla{\bf u}_t\|_{L^2}^2+\|\nabla(\Delta\phi\nabla\phi)\|_{L^2}^2+\|{\bf u}_{\boldsymbol\tau}\|_{H^\frac{3}{2}(\Gamma)}^2+\|\mathcal L(\psi)\nabla_{\boldsymbol\tau}\psi\|_{H^\frac{3}{2}(\Gamma)}^2+\|{\bf u}\|_{H^2}^2
\right)   \nonumber
\\
&\le C(\kappa,\beta)\left(\|\nabla{\bf u}_t\|_{L^2}^2+\|\nabla\Delta\phi\|_{L^2}^2\|\nabla\phi\|_{L^\infty}^2
+\|\Delta\phi\|_{L^4}^2\|\nabla^2\phi\|_{L^4}^2
\right)   \nonumber
\\
&\quad+C(\kappa,\beta)\left(\|{\bf u}\|_{H^2}^2+\|\mathcal L(\psi)\|_{H^\frac{3}{2}(\Gamma)}^2
\|\nabla_{\boldsymbol\tau}\psi\|_{H^\frac{3}{2}(\Gamma)}^2
\right)  \nonumber
\\
&\le C(\kappa,\beta)\left(\|\nabla{\bf u}_t\|_{L^2}^2+\|\nabla\Delta\phi\|_{L^2}^2\|\nabla\phi\|_{H^2}^2
+\|\Delta\phi\|_{H^1}^2\|\nabla^2\phi\|_{H^1}^2
\right)   \nonumber
\\
&\quad+C(\kappa,\beta)\left(\|{\bf u}\|_{H^2}^2+\|\mathcal L(\psi)\|_{H^1(\Gamma)}\|\mathcal L(\psi)\|_{H^2(\Gamma)}
\|\nabla_{\boldsymbol\tau}\psi\|_{H^1(\Gamma)}\|\nabla_{\boldsymbol\tau}\psi\|_{H^2(\Gamma)}
\right)  \nonumber
\\
&\le C(\kappa,\beta)\left(\|\nabla{\bf u}_t\|_{L^2}^2+\|\mathcal L(\psi)\|_{H^2(\Gamma)}^2+1
\right),
\end{align}
where we have used (\ref{H18}) and (\ref{H20}).
Then, substituting (\ref{H21}) into (\ref{H23}), taking $C\varepsilon<\frac{1}{2}$, we infer that
\begin{align}
\label{H30}
\int_{0}^{T}\|{\mathcal L}(\psi)\|_{H^2(\Gamma)}^2{\rm d}t\le C\int_{0}^{T}\left(\|\nabla_{\boldsymbol\tau}^2\psi_t\|_{L^2(\Gamma)}^2
+\|\nabla{\bf u}_t\|_{L^2}^2+1
\right) {\rm d}t \le C(T).
\end{align}
So we can get the result
\begin{align}
\label{H25}
\int_{0}^{T}\|{\bf u}\|_{H^3}^2{\rm d}t\le C\int_{0}^{T}\left(\|\nabla{\bf u}_t\|_{L^2}^2+\|\mathcal L(\psi)\|_{H^2(\Gamma)}^2\right) {\rm d}t\le C(T),
\end{align}
where we have used (\ref{H13}) and (\ref{H24}).
\vskip2mm
Finally, differentiating $(\ref{NSAC})_3$ with respect to $x$ twice leads to
\begin{align}
\nabla^2\phi_t+\nabla^2{\bf u}\cdot\nabla\phi+2\nabla{\bf u}\cdot\nabla^2\phi+{\bf u}\cdot\nabla^3\phi=\nabla^2(\bar\mu-\mu),   \nonumber
\end{align}
which together with (\ref{H24}), (\ref{H18}) and (\ref{H20}) implies
\begin{align}
\label{mu2}
&\int_{0}^{T}\|\nabla^2(\mu-\bar\mu)\|_{L^2}^2{\rm d}t \nonumber
\\
&\le C\int_{0}^{T}\left(\|\nabla^2\phi_t\|_{L^2}^2+\|\nabla^2{\bf u}\|_{L^2}^2\|\nabla\phi\|_{L^\infty}^2+\|\nabla{\bf u}\|_{L^4}^2\|\nabla^2\phi\|_{L^4}^2+\|{\bf u}\|_{L^\infty}^2\|\nabla^3\phi\|_{L^2}^2
\right) {\rm d}t   \nonumber
\\
&\le C\int_{0}^{T}\left(\|\nabla^2\phi_t\|_{L^2}^2+\|\nabla^2{\bf u}\|_{L^2}^2\|\nabla\phi\|_{H^2}^2+\|\nabla{\bf u}\|_{H^1}^2\|\nabla^2\phi\|_{H^1}^2+\|{\bf u}\|_{H^2}^2\|\nabla^3\phi\|_{L^2}^2
\right) {\rm d}t   \nonumber
\\
&\le C\int_{0}^{T}\left(\|\nabla^2\phi_t\|_{L^2}^2+\|{\bf u}\|_{H^2}^2\|\phi\|_{H^3}^2
\right) {\rm d}t\le C(T).
\end{align}
Moreover, by exploiting (\ref{ty}) and Lemma \ref{estimates}, we find that
\begin{align}
\label{H31}
&\int_{0}^{T}\left(\|\phi\|_{H^4}^2+\gamma\|\psi\|_{H^4(\Gamma)}^2
\right) {\rm d}t  \nonumber
\\
&\le C\int_{0}^{T}\left(\|\mu-\bar\mu\|_{H^2}^2+\|\phi^3\|_{H^2}^2+\|\phi\|_{H^2}^2+\|\mathcal L(\psi)\|_{H^2(\Gamma)}^2+\|\psi\|_{H^2(\Gamma)}^2+1
\right) {\rm d}t\le C(T).
\end{align}
Collecting (\ref{H18}), (\ref{H20}), $(\ref{H30})$-$(\ref{H31})$ all together, we arrive at (\ref{H32}). Hence, we complete the proof of Lemma \ref{final}.
\end{proof}
\setcounter{equation}{0}
\section{NSAC system with GNBC and relaxation boundary condition}
\label{section4}
In this section, we will establish the well-posedness of the NSAC system with GNBC and relaxation boundary condition in 3D case.
The NSAC equations
\begin{equation}
\label{NSAC1}
\begin{cases}
{\bf u}_t+({\bf u}\cdot\nabla){\bf u}+\nabla p ={\rm div}{\mathbb S}({\bf u})-{\rm div}(\nabla\phi\otimes\nabla\phi) ,&\rm in\quad\Omega\times(0,T),
\\
{\rm div}{\bf u}=0,&\rm in\quad\Omega\times(0,T),
\\
\phi_t+{\bf u}\cdot\nabla\phi=\bar\mu-\mu,&\rm in\quad\Omega\times(0,T),
\\
\mu=-\Delta\phi+f(\phi),&\rm in\quad\Omega\times(0,T)
\end{cases}
\end{equation}
is supplemented by the GNBC and relaxation boundary condition, i.e. $L(\phi)=\partial_{\bf n}\phi+\gamma_{fs}^\prime(\phi)$,
\begin{equation}
\label{boundary}
\begin{cases}
{\bf u}\cdot{\bf n}=0,&\rm on\quad\Gamma\times(0,T),
\\
\beta{\bf u}_{\boldsymbol {\tau}}+({\mathbb S}({\bf u})\cdot{\bf n})_{\boldsymbol{\tau}}=\left(\partial_{\bf n}\phi+\gamma_{fs}^\prime(\psi)
\right)\nabla_{\boldsymbol\tau}\psi,&\rm on\quad\Gamma\times(0,T),
\\
\psi_t+{\bf u}_{\boldsymbol{\tau}}\cdot\nabla_{\boldsymbol{\tau}}\psi=
-\partial_{\bf n}\phi-\gamma_{fs}^\prime(\psi),&\rm on\quad\Gamma\times(0,T),
\\
\phi\big|_\Gamma=\psi,&\rm on\quad(0,T),
\end{cases}
\end{equation}
and the initial conditions
\begin{equation}
\label{initial1}
({\bf u},\phi)\big|_{t=0}=({\bf u}_0,\phi_0),\;{\rm in}\;\Omega,\quad\psi\big|_{t=0}=\phi_0\big|_\Gamma=\psi_0,\;\rm on\;\Gamma.
\end{equation}

Before introduce the main results, we first give the definition of the following energy functional
\begin{align}
\tilde{\mathcal E}(t)&=\|{\bf u}\|_{H^2}^2+\|{\bf u}_t\|_{L^2}^2+\|\phi^2-1\|_{H^2}^2+\|\nabla\phi\|_{H^2}^2+\|\phi_t\|_{H^1}^2
+\|\mu-\bar\mu\|_{H^1}^2+ \|L(\psi)\|_{H^1(\Gamma)}^2  , \nonumber
\end{align}
and dissipative functional
\begin{align}
\tilde{\mathcal D}(t)&=\|{\bf u}\|_{H^3}^2+\|{\bf u}_t\|_{H^1}^2+\|\nabla_{\boldsymbol\tau}\phi\|_{H^2}^2+\|\nabla_{\boldsymbol\tau}^2\Delta\phi\|_{L^2}^2
+\|\phi_t\|_{H^2}^2
\nonumber
\\
&\quad+\|\nabla_{\boldsymbol\tau}^k(\mu-\bar\mu)\|_{L^2}^2+\|(\mu-\bar\mu)_t\|_{L^2}^2+\|\nabla_{\boldsymbol\tau}\partial_{\bf n}\phi\|_{H^1(\Gamma)}^2+\|\partial_{\bf n}\phi_t\|_{L^2(\Gamma)}^2,\quad k=1,2.   \nonumber
\end{align}

Our main results of this section are stated as follows.
\begin{Theorem}
\label{relaxation local}
Let $\Omega=\mathbb{T}^2\times(-1,1)\subset\mathbb R^3$, assume that $({\bf u}_0,\phi_0, \psi_{0} )\in{\mathbb D}_2$, then there exist $T_0$ small enough and a unique local-in-time strong solution $({\bf u},\phi, \psi )$ to the initial boundary value problem $(\ref{NSAC1})$--$(\ref{initial1})$ on $[0,T_0]$ for $T\in(0,T_0]$ satisfying
\begin{align}
&{\bf u}\in L^\infty(0,T_0;H^2)\cap L^2(0,T_0;H^3),\;{\bf u}_t\in L^\infty(0,T_0;L^2)\cap L^2(0,T_0;H^1),                                 \nonumber
\\
&\phi\in L^\infty(0,T_0;H^3),\hspace{7em}\phi_t\in L^\infty(0,T_0;H^1)\cap L^2(0,T_0 ;H^2),
\nonumber
\\
& \psi\in L^\infty(0,T_0;H^{\frac{5}{2}}(\Gamma)),\hspace{5.5em}\phi_t\in L^\infty(0,T_0;H^{\frac{1}{2}}(\Gamma)) \cap L^2(0,T_0 ;H^{\frac{3}{2}}(\Gamma)).
\nonumber
\end{align}
\end{Theorem}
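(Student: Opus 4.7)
The plan is to construct the strong solution via a vanishing surface diffusion approximation. For each $\delta \in (0,1]$ I would replace the hyperbolic relaxation boundary condition (\ref{boundary})$_3$ by the Allen-Cahn type dynamic condition $\psi_t + {\bf u}_{\boldsymbol\tau}\cdot\nabla_{\boldsymbol\tau}\psi = \delta\Delta_{\boldsymbol\tau}\psi - \partial_{\bf n}\phi - \gamma'_{fs}(\psi)$, and simultaneously add a tangential Laplacian into the bulk chemical potential: $\mu^\delta = -\Delta\phi - \delta\Delta_{\boldsymbol\tau}\phi + f(\phi)$. Because $\Omega = \mathbb T^2 \times (-1,1)$ has flat boundary, the Cartesian tangential directions on $\Gamma$ coincide with two of the interior coordinates, so $\Delta_{\boldsymbol\tau}$ commutes with the trace. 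Consequently, taking the trace of the modified $\mu$-equation and combining it with the modified boundary condition gives, after the $\delta$-terms cancel between bulk and surface, the identity $\bar\mu^\delta - \mu^\delta\big|_\Gamma = -\partial_{\bf n}\phi - \gamma'_{fs}(\psi) = -L(\psi)$, exactly as in the unperturbed relaxation case. This identity is what makes the perturbation compatible with the limit $\delta \to 0$: it links the bulk quantity $\mu - \bar\mu$ to the boundary quantity $L(\psi)$, providing a usable surrogate for the missing boundary regularity of $\phi$.

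For each fixed $\delta > 0$, the approximate system falls under the setting of Theorem \ref{local} (with $\gamma$ replaced by $\delta$), which yields a unique local strong solution $({\bf u}^\delta, \phi^\delta, \psi^\delta)$. The essential step is Proposition \ref{TH im}: one must establish a bound for $\tilde{\mathcal E}(t) + \int_0^t \tilde{\mathcal D}(s)\,ds$ on a time interval $[0,T_0]$ independent of $\delta$. I would mirror the hierarchy of Section 3, but organise the estimates around the quantity $L(\psi)$ rather than $\psi$ itself: using the trace identity, tangential estimates $\|\nabla_{\boldsymbol\tau}^k(\mu-\bar\mu)\|_{L^2}$ translate into $\|\nabla_{\boldsymbol\tau}^k L(\psi)\|_{L^2(\Gamma)}$, and elliptic regularity for the interior problem $-\Delta\phi = \mu - f(\phi) + \delta\Delta_{\boldsymbol\tau}\phi$ together with Lemma \ref{estimates} then upgrades these to $H^3(\Omega)$ control of $\phi$ and $H^{5/2}(\Gamma)$ control of $\psi$. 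Each time the extra $\delta\Delta_{\boldsymbol\tau}\phi$ term is integrated against a suitable test function, the boundary contribution that appears after integration by parts combines with the corresponding $\delta\Delta_{\boldsymbol\tau}\psi$ term in the dynamic boundary condition into a nonnegative dissipation of size $\delta\|\nabla_{\boldsymbol\tau}^{k+1}\psi\|_{L^2(\Gamma)}^2$, which can be discarded to give a $\delta$-free estimate.

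The main obstacle is controlling the nonlinear couplings—$({\bf u}\cdot\nabla)\phi$, the capillary stress $\operatorname{div}(\nabla\phi\otimes\nabla\phi)$, and the Young stress $L(\psi)\nabla_{\boldsymbol\tau}\psi$—in three dimensions with only $H^{5/2}(\Gamma)$ regularity of $\psi$ available. Here I would lean on the product and trace inequalities of Lemmas \ref{product} and \ref{Trace}, the channel Poincar\'e/Gagliardo-Nirenberg bounds of Lemma \ref{PGN}, and the fact that $\tilde{\mathcal E}$ controls $\phi^2-1$ in $H^2$ and ${\bf u}$ in $H^2 \hookrightarrow L^\infty$. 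The goal is to close a differential inequality of the form $\tfrac{d}{dt}\tilde{\mathcal E}(t) + \tilde{\mathcal D}(t) \le C\,P(\tilde{\mathcal E}(t))$, from which a uniform $T_0 = T_0(\|({\bf u}_0,\phi_0,\psi_0)\|_{\mathbb D_2}) > 0$ is extracted. Once the uniform bound is in hand, Aubin-Lions compactness yields a subsequence $({\bf u}^\delta, \phi^\delta, \psi^\delta) \to ({\bf u},\phi,\psi)$ strong enough to pass to the limit in every nonlinearity and in the trace identity, recovering a solution of the original problem with the claimed regularity. Uniqueness is then obtained by deriving the equations for the difference of two solutions and running a lowest-order energy estimate plus Gronwall; the only delicate step is the boundary term, which is again tamed by the identity $\bar\mu - \mu = -L(\psi)$ on $\Gamma$ that couples the hyperbolic surface equation to the dissipative bulk one.
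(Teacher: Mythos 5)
Your overall skeleton --- a $\delta$-regularization with compensating tangential Laplacians in the bulk and on the boundary, the trace identity linking $\bar\mu-\mu$ to $L(\psi)$, uniform-in-$\delta$ estimates, compactness, and an energy/Gronwall uniqueness argument --- is exactly the paper's strategy (the $\delta$-approximate problem $(\ref{delta NSAC_1})$--$(\ref{delta NBC_1})$, Proposition \ref{TH im}, and the passage to the limit). Two points, one minor and one substantive. The minor one: you place the regularization inside the chemical potential, $\mu^{\delta}=-\Delta\phi-\delta\Delta_{\boldsymbol\tau}\phi+f(\phi)$, and claim that the $\delta$-terms cancel to give $\bar\mu^{\delta}-\mu^{\delta}\big|_{\Gamma}=-L(\psi)$. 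As written this is false: comparing the trace of the bulk equation with the modified boundary condition gives $\bar\mu^{\delta}-\mu^{\delta}\big|_{\Gamma}=\delta\Delta_{\boldsymbol\tau}\psi-L(\psi)$. The paper instead puts $\delta\Delta_{\boldsymbol\tau}\phi$ into the evolution equation $(\ref{delta NSAC_1})_3$ and leaves $\mu=-\Delta\phi+f(\phi)$ untouched, so the clean identity $\bar\mu-\mu=-L(\psi)$ on $\Gamma$ holds for the very quantity that appears in the dissipation. Since $\overline{\Delta_{\boldsymbol\tau}\phi}=0$ on the channel, your formulation is algebraically equivalent to the paper's, so this is repairable, but the identity you then exploit must be stated for the unmodified $\mu$, not for $\mu^{\delta}$.

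The substantive gap is your route to the $H^{3}(\Omega)$ bound for $\phi$ (and hence $H^{5/2}(\Gamma)$ for $\psi$): you propose to invoke the elliptic estimate of Lemma \ref{estimates} for the bulk--surface system with surface diffusion coefficient $\delta$. That lemma, as used in Section 3, produces constants of the form $C(\delta^{-1})$ (see e.g.\ $(\ref{H4})$ and $(\ref{H11})$, whose constants depend on $\gamma^{-1}$), so the resulting bounds degenerate as $\delta\to 0$ and cannot feed into a $\delta$-independent closure of $\tilde{\mathcal E}+\int\tilde{\mathcal D}$. This is precisely the obstruction the authors point out when explaining why the dynamic-boundary global theory does not pass to the relaxation case by a limiting process. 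The paper circumvents it by never using Lemma \ref{estimates} in Section 4: tangential regularity is obtained by testing $\nabla\nabla_{\boldsymbol\tau}^{k}$ of the equations ($k=1,2$) and using the trace identity $(\ref{ll-r18})_3$ to convert the boundary term into the coercive contribution $\|\nabla_{\boldsymbol\tau}^{k}\partial_{\bf n}\phi\|_{L^{2}(\Gamma)}^{2}$, while the $\delta$-terms are kept only as a discarded nonnegative dissipation $\delta\|\nabla\nabla_{\boldsymbol\tau}^{k+1}\phi\|_{L^{2}}^{2}$; normal derivatives $\partial_{\bf n}\partial_{\bf n}\phi$ and $\nabla\partial_{\bf n}\partial_{\bf n}\phi$ are then read off directly from $(\ref{delta NSAC_1})_{3,4}$ rather than from an elliptic bulk--surface estimate. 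Without replacing your Lemma-\ref{estimates} step by an argument of this type, the uniform-in-$\delta$ bound --- and with it the whole limit $\delta\to0$ --- does not close.
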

\begin{Theorem}
\label{Th rg}
Let $\Omega=\mathbb{T}^2\times(-1,1)\subset\mathbb R^3$, assume that the initial values satisfy $({\bf u}_0,\phi_0, \psi_{0} )\in{\mathbb D}_3$,   $ \phi^{2}_{0} - 1  \in L^{2} $ and
\begin{align}
\label{small initial}
\|{\bf u}_0\|_{H^2}^2+\|\nabla\phi_0\|_{H^2}^2+\|\mu_0-\bar\mu_0\|_{H^1}^2+\|\phi_0^2-1\|_{L^2}^2
+|\nu\cos\theta_s|\le \varepsilon_0,
\end{align}
where $\varepsilon_0$ is a small positive constant.
Then there exists a unique global-in-time strong solution $({\bf u},\phi,  \psi )$ to the initial boundary value problem $(\ref{NSAC1})$--$(\ref{initial1})$ for all $t\ge 0$ ,
\begin{align}
\tilde{\mathcal E}(t)+\int_{0}^{t} {\tilde{\mathcal D}(s) } {\rm d}s\le C\varepsilon_0,
\end{align}
Moreover, there exists a positive constant $\alpha$ such that
\begin{align}
\label{big time}
\|{\bf u}(\cdot,t)\|_{L^2}^2+ \| L(\psi)(\cdot,t)\|_{L^2(\Gamma)}^2  +\|(\mu-\bar\mu)(\cdot,t)\|_{L^2}^2\le C\tilde{\mathcal E}(0)e^{-\alpha t},\quad for\;all\;t\ge0.
\end{align}
\end{Theorem}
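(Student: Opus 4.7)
\textbf{Proof proposal for Theorem \ref{Th rg}.} The plan is a standard continuation argument: starting from the unique local strong solution produced by Theorem \ref{relaxation local} (or more precisely by the key a priori estimate in Proposition \ref{TH im} which was used to construct it), extend to $[0,\infty)$ by establishing an a priori bound of the form $\tilde{\mathcal E}(t)+\int_0^t\tilde{\mathcal D}(s)\,\mathrm{d}s\le C\varepsilon_0$ that is uniform in time, then derive the exponential decay by a Gronwall-type argument. Concretely, set
\[
M(t)\;=\;\sup_{0\le s\le t}\tilde{\mathcal E}(s)+\int_0^t\tilde{\mathcal D}(s)\,\mathrm{d}s,
\]
assume by continuity that $M(t)\le\eta$ for some small $\eta>0$ on a maximal interval, and show that in fact $M(t)\le\tfrac12\eta$ on that interval, which forces the solution to extend to all time.

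The first step is to combine the basic energy identity (the analog of Lemma \ref{Energy} but with $\gamma=0$ and $\mathcal L$ replaced by $L$) with the higher-order identities used to prove Proposition \ref{TH im}. Multiplying the equations by appropriate test functions (${\bf u}$, $\mu-\bar\mu$, $\phi_t$, $-\partial_t(\mu-\bar\mu)$, etc., together with their tangential and time derivatives) gives a differential inequality of the schematic form
\begin{align*}
\frac{\mathrm d}{\mathrm dt}\tilde{\mathcal E}(t)+c_0\tilde{\mathcal D}(t)
\;\le\; C\bigl(\tilde{\mathcal E}(t)^{1/2}+\tilde{\mathcal E}(t)\bigr)\tilde{\mathcal D}(t)
+C|\nu\cos\theta_s|\bigl(\tilde{\mathcal E}(t)+\tilde{\mathcal D}(t)\bigr),
\end{align*}
where the constant $c_0>0$ comes from the dissipative terms $\|\mathbb S({\bf u})\|^2$, $\|{\bf u}_{\boldsymbol\tau}\|_{L^2(\Gamma)}^2$, $\|\mu-\bar\mu\|^2$, $\|L(\psi)\|^2_{L^2(\Gamma)}$ and their higher-order analogs. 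Since $M(t)\le\eta$ and $|\nu\cos\theta_s|\le\varepsilon_0$ are both small, the right-hand side is absorbed by the left-hand side, giving $\frac{\mathrm d}{\mathrm dt}\tilde{\mathcal E}+\tfrac{c_0}{2}\tilde{\mathcal D}\le 0$, which upon integration yields the desired bootstrap $M(t)\le C\varepsilon_0\le\tfrac12\eta$ once $\varepsilon_0$ is chosen small.

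The crucial ingredient that makes the absorption work is the near--phase-separation assumption. Since $\|\phi_0^2-1\|_{L^2}$ is small and the control of $\phi^2-1$ is propagated by the $\mu$-equation, one obtains $3\phi^2-1\ge 1$ on the relevant set, which gives strict coercivity
\[
\int_\Omega (\mu-\bar\mu)\phi_t\,\mathrm{d}x=-\|\mu-\bar\mu\|_{L^2}^2 \quad\text{and}\quad \int_\Omega (3\phi^2-1)|\phi_t|^2\,\mathrm{d}x\ge \|\phi_t\|_{L^2}^2 + \text{l.o.t.}
\]
(this is exactly the mechanism behind the sign used in the last lemma of Section 3), and hence produces a genuine dissipation on the bulk free energy $\tfrac12\|\phi^2-1\|_{L^2}^2$ that is missing in the generic case. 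The smallness of $|\nu\cos\theta_s|$ analogously tames the surface free energy contribution $\int_\Gamma\gamma_{fs}(\psi)\,\mathrm dS$, whose sign is indefinite.

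The main obstacle will be closing the highest tangential-derivative estimates on the boundary in the absence of surface diffusion: the hyperbolic boundary law $\psi_t+{\bf u}_{\boldsymbol\tau}\cdot\nabla_{\boldsymbol\tau}\psi=-L(\psi)$ cannot improve the regularity of $\psi$ by itself, so the $H^{3/2}(\Gamma)$-control of $\nabla_{\boldsymbol\tau}\psi$ must come from the relation $L(\psi)=\mu-\bar\mu-{\bf u}_{\boldsymbol\tau}\cdot\nabla_{\boldsymbol\tau}\psi$ combined with the $H^2$ interior control of $\mu-\bar\mu$ (which is where the channel geometry and Proposition \ref{TH im} are used). Once $\tilde{\mathcal E}$ is globally bounded, the exponential decay (\ref{big time}) follows by noting that the dissipation $\tilde{\mathcal D}$ controls $\|{\bf u}\|_{L^2}^2+\|\mu-\bar\mu\|_{L^2}^2+\|L(\psi)\|_{L^2(\Gamma)}^2$ up to Poincaré-type terms which are themselves controlled by the $\mu$-coercivity above, yielding $\frac{\mathrm d}{\mathrm dt}\tilde{\mathcal E}+\alpha\tilde{\mathcal E}\le 0$ for some $\alpha>0$ on the low-order quantities, and hence (\ref{big time}) after Gronwall.
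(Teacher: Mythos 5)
Your proposal follows essentially the same route as the paper's proof: a continuation/bootstrap argument built on the uniform small-energy estimate of Proposition \ref{proposition global} (absorbing $C(\varepsilon_0+\tilde{\mathcal E}^{1/2})\tilde{\mathcal D}$ into the dissipation), the coercivity $3\phi^2-1\ge 1$ obtained from the a priori smallness of $\tilde{\mathcal E}$, the smallness of $|\nu\cos\theta_s|$ to control the surface energy, and, for the decay \eqref{big time}, a weighted combination of the $L^2$ identities for ${\bf u}$, $\mu-\bar\mu$ and $L(\psi)$ giving a Gronwall inequality on exactly those low-order quantities. One correction: the boundary relation you invoke should read $L(\psi)=\mu-\bar\mu$ on $\Gamma$ (see \eqref{bb1}) — the transport term ${\bf u}_{\boldsymbol\tau}\cdot\nabla_{\boldsymbol\tau}\psi$ cancels exactly between the traced interior equation and the relaxation condition, and this exact cancellation, rather than a relation carrying an extra velocity term, is what the channel geometry provides.
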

\begin{Remark}
Since the global estimates for solutions to the problem $(\ref{NSAC})$--$(\ref{initial condition})$ depend on the interface coefficient $\gamma$, the global existence result cannot be extended to the solutions governed by the relaxation boundary condition through limiting processes.
Consequently, the global existence of solutions to the problem $(\ref{NSAC1})$--$(\ref{initial1})$ with large initial data in 2D case remains an open problem even in channels.
\end{Remark}
\begin{Remark}
\label{remark boundary}
As in \cite{D-L-L-Y}, rewrite the equation $(\ref{NSAC1})_3$ as follows
\begin{align}
\label{E2}
\phi_t+{\bf u}_{\boldsymbol\tau}\cdot\nabla_{\boldsymbol\tau}\phi+({\bf u}\cdot{\bf n})\partial_{\bf n}\phi=\bar\mu-\mu,\quad \rm in \;\Omega,
\end{align}
where ${\bf u}_{\boldsymbol\tau}=(u_1,u_2)$, $\nabla_{\boldsymbol\tau}=(\partial_x,\partial_y)$.
Then, taking the trace for $(\ref{E2})$, we have
\begin{align}
\label{E9}
\psi_t+{\bf u}_{\boldsymbol\tau}\cdot\nabla_{\boldsymbol\tau}\psi=\bar\mu-\mu,\quad \rm on \;\Gamma,
\end{align}
where we have used ${\bf u}\cdot{\bf n}=0,\rm on\; \Gamma$. Combining $(\ref{E9})$ with $(\ref{boundary})_3$, we obtain
\begin{align}
\label{bb1}
\bar\mu-\mu=-L(\psi)=-\partial_{\bf n}\phi-\gamma_{fs}^\prime(\psi),\quad \rm on \;\Gamma,
\end{align}
which is crucial for dealing with the boundary estimates in this section.
\end{Remark}
\subsection{Local well-posedness}

\subsubsection{\bf The \texorpdfstring{$\delta$}{}-approximate problem}\quad
In order to find a solution $({\bf u},\phi,  \psi)$ to the original problem $(\ref{NSAC1})$--$(\ref{initial1})$, we introduce the {\it $\delta$-approximate problem} as follows
\begin{equation}
\label{delta NSAC_1}
\begin{cases}
{\bf u}_t+({\bf u}\cdot\nabla){\bf u}+\nabla p ={\rm div}{\mathbb S}({\bf u})-{\rm div}(\nabla\phi\otimes\nabla\phi) ,&\rm in\quad\Omega\times(0,T),
\\
{\rm div}{\bf u}=0,&\rm in\quad\Omega\times(0,T),
\\
\phi_t-\delta\Delta_{\boldsymbol\tau}\phi=\bar\mu-\mu-{\bf u}\cdot\nabla\phi,&\rm in\quad\Omega\times(0,T),
\\
\mu=-\Delta\phi+f(\phi),&\rm in\quad\Omega\times(0,T),
\end{cases}
\end{equation}
where $\Delta_ {\boldsymbol\tau}\phi=(\partial_x^2+\partial_y^2)\phi$, with the boundary conditions
\begin{equation}
\label{delta NBC_1}
\begin{cases}
{\bf u}\cdot{\bf n}=0,&\rm on\quad\Gamma\times(0,T),
\\
\beta{\bf u}_{\boldsymbol {\tau}}+({\mathbb S}({\bf u})\cdot{\bf n})_{\boldsymbol{\tau}}=\left(\partial_{\bf n}\phi+\gamma_{fs}^\prime(\psi)
\right)\nabla_{\boldsymbol\tau}\psi,&\rm on\quad\Gamma\times(0,T),
\\
\psi_t-\delta\Delta_{\boldsymbol\tau}\psi=-{\bf u}_{\boldsymbol{\tau}}\cdot\nabla_{\boldsymbol{\tau}}\psi
-\partial_{\bf n}\phi-\gamma_{fs}^\prime(\psi),&\rm on\quad\Gamma\times(0,T),
\\
\phi\big|_\Gamma=\psi,&\rm on\quad(0,T),
\end{cases}
\end{equation}
and the initial conditions (\ref{initial1}). Here $0<\delta\le1$.
\begin{Remark}
The term $\delta\Delta_{\boldsymbol\tau}\phi$ added in equation $(\ref{delta NSAC_1})_3$ is designed to counteract the surface diffusion in boundary condition $(\ref{delta NBC_1})_3$. So we need to constrain the problem in 3D channels.
\end{Remark}

The existence and uniqueness of strong solutions to the $\delta$-approximate problem $(\ref{delta NSAC_1})$, $(\ref{delta NBC_1})$, $(\ref{initial1})$ parallel with the proof presented in Section 3.
Therefore, we omit the details of the proof and directly present the result as below.
\begin{Theorem}
\label{delta problem}
Let $\Omega=\mathbb{T}^2\times(-1,1)\subset\mathbb R^3$, assume that $({\bf u}_0,\phi_0, \psi_{0} )\in{\mathbb D}_2$. Then there exists a unique local-in-time strong solution $({\bf u},\phi,  \psi )$ to the initial boundary value problem $(\ref{delta NSAC_1})$, $(\ref{delta NBC_1})$, $(\ref{initial1})$ such that
\begin{align}
&{\bf u}\in L^\infty(0,T^*;H^2)\cap L^2(0,T^*;H^3),\hspace{5.5em}{\bf u}_t\in L^\infty(0,T^*;L^2)\cap L^2(0,T^*;H^1),
\nonumber \\
&\phi\in L^\infty(0,T^*;H^3)\cap L^2(0,T^*;H^4),\hspace{5.5em} \phi_t\in L^\infty(0,T^*;H^1)\cap L^2(0,T^*;H^2), \nonumber \\
& \sqrt{\delta}\psi\in L^\infty(0,T^*;H^{\frac52}(\Gamma))\cap L^2(0,T^*;H^{\frac72}(\Gamma)),\quad \psi_t\in L^\infty(0,T^*;H^{\frac12}(\Gamma))\cap L^2(0,T^*;H^{\frac32}(\Gamma)), \nonumber
\end{align}
for some $T^*>0$, which may depends on the initial data and $\delta$.
\end{Theorem}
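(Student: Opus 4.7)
The plan is to adapt verbatim the linearization plus Banach fixed-point scheme from the proof of Theorem \ref{local}, with the function spaces adjusted so that the trace component satisfies only $\psi\in L^\infty(0,T;H^{\frac{5}{2}}(\Gamma))$ rather than $H^{3}(\Gamma)$, in accordance with $\mathbb{D}_2$. First I would decouple the $\delta$-approximate problem: given an iterate $(\tilde{\bf u},\tilde\phi,\tilde\psi)$ in a closed ball $X(0,T;K_1,K_2)$, solve the linearized modified Allen--Cahn system
\begin{equation*}
\begin{cases}
\phi_t-\delta\Delta_{\boldsymbol\tau}\phi=\bar\mu-\mu+H,&\text{in }\Omega\times(0,T),\\
\mu=-\Delta\phi+f,&\text{in }\Omega\times(0,T),\\
\psi_t-\delta\Delta_{\boldsymbol\tau}\psi=-\partial_{\bf n}\phi+h,&\text{on }\Gamma\times(0,T),\\
\phi\big|_\Gamma=\psi,
\end{cases}
\end{equation*}
with $H=-\tilde{\bf u}\cdot\nabla\tilde\phi$, $f=\tilde\phi^{3}-\tilde\phi$, $h=-\gamma_{fs}^\prime(\tilde\psi)-\tilde{\bf u}_{\boldsymbol\tau}\cdot\nabla_{\boldsymbol\tau}\tilde\psi$, together with a linearized Navier--Stokes problem identical in form to (\ref{LNS}) with $G=-\tilde{\bf u}\cdot\nabla\tilde{\bf u}-{\rm div}(\nabla\tilde\phi\otimes\nabla\tilde\phi)$ and $g=(\partial_{\bf n}\tilde\phi+\gamma_{fs}^\prime(\tilde\psi))\nabla_{\boldsymbol\tau}\tilde\psi$. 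The nonlinear map $\Lambda:(\tilde{\bf u},\tilde\phi,\tilde\psi)\mapsto({\bf u},\phi,\psi)$ will be shown to be a self-map and a contraction on $X(0,T;K_1,K_2)$ for $T$ small enough (with constants allowed to depend on $\delta^{-1}$), delivering a unique fixed point.

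For the a priori estimates of the Allen--Cahn half I would repeat the chain of Lemmas \ref{Lemma1}--\ref{Lemma4}, testing successively by $\phi$, by $\Delta\phi$, by $-\mu_t$, $\phi_{tt}$ and $\psi_{tt}$ after differentiation in $t$, and finally by tangential derivatives followed by the elliptic estimate of Lemma \ref{estimates}. The structural point making this feasible is that inserting $\mu=-\Delta\phi+f$ into the bulk equation turns it into the anisotropic parabolic equation $\phi_t-(1+\delta)\Delta_{\boldsymbol\tau}\phi-\partial_{\bf n}^{2}\phi=\bar\mu-f+H$, whose tangential diffusion coefficient is strictly positive, while the boundary equation is parabolic in tangential directions on its own. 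Hence the energy cascade goes through and produces the extra dissipation $\delta\|\nabla_{\boldsymbol\tau}\phi\|_{L^2}^{2}+\delta\|\nabla_{\boldsymbol\tau}\psi\|_{L^2(\Gamma)}^{2}$ that absorbs the normal-derivative trace terms; the weights $\sqrt\delta$ appearing in the statement on $\psi$ are precisely what this dissipation supplies. The linearized Navier--Stokes half is unchanged from Proposition \ref{Prop3.2}.

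The main obstacle will be the bookkeeping forced by the reduced trace regularity $\psi_0\in H^{\frac{5}{2}}(\Gamma)$. Whenever the analogs of (\ref{A.1}) and of the contraction estimates call for $\|h\|_{L^{2}(0,T;H^{2}(\Gamma))}$, $\|g\|_{L^{2}(0,T;H^{\frac{3}{2}}(\Gamma))}$ or the differences thereof, products such as $\tilde{\bf u}_{\boldsymbol\tau}\cdot\nabla_{\boldsymbol\tau}\tilde\psi$ and $\partial_{\bf n}\tilde\phi\,\nabla_{\boldsymbol\tau}\tilde\psi$ must be bounded in fractional Sobolev norms on the two-dimensional manifold $\Gamma$; the required inequalities are supplied by Lemma \ref{Trace}, the interpolation inequality (\ref{L21}), and the product estimate Lemma \ref{product}, but every appearance of $H^{3}(\Gamma)$ or $H^{4}(\Gamma)$ on $\tilde\psi$ in the Section~3 argument must be replaced by $H^{\frac{5}{2}}(\Gamma)$ or $H^{\frac{7}{2}}(\Gamma)$ and checked to still close. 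Once this is done, the self-map and contraction bounds mirror (\ref{A.2})--(\ref{A.6}), and Banach's fixed-point theorem yields the unique local strong solution with the regularity asserted in the theorem, with $T^{*}$ depending on the initial data and on $\delta$ through the $\delta^{-1}$-dependent constants generated in the Allen--Cahn estimates.
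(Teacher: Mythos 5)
Your proposal is correct and follows essentially the same route the paper intends: the paper explicitly omits the proof of Theorem \ref{delta problem}, stating only that it "parallels the proof presented in Section 3," i.e.\ the same linearization into an Allen--Cahn part and a Navier--Stokes part followed by a contraction-mapping argument, which is exactly what you carry out, including the correct observations that the added bulk term $\delta\Delta_{\boldsymbol\tau}\phi$ only strengthens the tangential dissipation (yielding the $\sqrt{\delta}$-weighted bounds on $\psi$) and that the constants, hence $T^{*}$, may depend on $\delta$. The only content you supply beyond the paper is the bookkeeping for the reduced trace regularity $\psi_{0}\in H^{\frac{5}{2}}(\Gamma)$, which is consistent with the paper's remark that fractional tangential derivatives are handled by Fourier transform on the channel.
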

\begin{remark}
Thanks to  the channel domain, we can define fractional derivatives by Fourier transform. The regularity of $\psi$ in Theorem \ref{delta problem} can be further refined as above.
\end{remark}

\subsubsection{\bf \texorpdfstring{$\delta$}{}-independent estimates}\quad
Define the energy functional
\begin{align}
{\mathcal E}(t)&=1+\|\phi_t\|_{H^1}^2+\|\phi\|_{H^3}^2+\|{\bf u}_t\|_{L^2}^2+\|{\bf u}\|_{H^2}^2 \nonumber
\\
&\quad+\delta\|\nabla\nabla_{\boldsymbol\tau}^2\phi\|_{L^2}^2
+\|\nabla_{\boldsymbol\tau}L(\psi)\|_{L^2(\Gamma)}^2
+\|\nabla_{\boldsymbol\tau}(\mu-\bar\mu)\|_{L^2}^2, \nonumber
\end{align}
and dissipative functional
\begin{align}
{\mathcal D}(t)&=\|\phi_t\|_{H^2}^2+\|{\bf u}_t\|_{H^1}^2
+\|{\bf u}\|_{H^3}^2+\delta\|\nabla\nabla_{\boldsymbol\tau}\phi_t\|_{L^2}^2+\delta\|\nabla\nabla_{\boldsymbol\tau} \phi\|_{L^2}^2+  \delta\|\nabla\nabla_{\boldsymbol\tau}^3 \psi\|_{L^2(\Gamma)}^2
\nonumber
\\
&\quad+\|\partial_{\bf n}\phi_t\|_{L^2(\Gamma)}^2
+\|\nabla_{\boldsymbol\tau}\partial_{\bf n}\phi\|_{H^1(\Gamma)}^2
+\|(\mu-\bar\mu)_t\|_{L^2}^2
+\|\nabla_{\boldsymbol\tau}^2(\mu-\bar\mu)\|_{L^2}^2.   \nonumber
\end{align}
We will show that the energy functional ${\mathcal E}(t)$ remains uniformly bounded over a $\delta$-independent existence time interval.
\begin{Proposition}
\label{TH im}
Under the assumptions of Theorem $\ref{relaxation local}$. Then there exists a time $T$ independent of $\delta$ such that
\begin{align}
\sup\limits_{0\le t \le T}{\mathcal E}(t)+\int_{0}^{T}{\mathcal D}(t) {\rm d}t\le 2C_0,  \nonumber
\end{align}
where $C_0$ is a constant that depend on $\Omega$, $\beta$, $\hat{\mathcal E}(0)$ and the initial value, but not on $\delta$.
\vskip2mm
Moreover, under the assumptions of Theorem $\ref{Th rg}$, if there exists a universal positive constants $\sigma_0$ such that $\tilde{\mathcal E}(0)\le\sigma_0$, then the solution $({\bf u},\phi)$ to $(\ref{NSAC1})$--$(\ref{initial1})$ on $[0,T]$ satisfying
\begin{align}
\sup\limits_{0\le t \le T}\tilde{\mathcal E}(t)+\int_{0}^{T}\tilde{\mathcal D}(t) {\rm d}t\le C_1\tilde{\mathcal E}(0). \nonumber
\end{align}
\end{Proposition}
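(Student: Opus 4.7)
The plan is to close a Gronwall-type estimate on $\mathcal{E}(t)$ by combining (i) interior $H^2$-in-space and $H^1$-in-time estimates for $({\bf u},\phi)$ along the lines of Section 3, with (ii) tangential-derivative estimates that produce $\delta$-independent bounds, by systematically exploiting the trace identity
\begin{align*}
\bar\mu-\mu=-L(\psi)=-\partial_{\bf n}\phi-\gamma_{fs}'(\psi), \qquad \text{on }\Gamma
\end{align*}
from Remark \ref{remark boundary} together with the channel geometry $\Omega=\mathbb T^2\times(-1,1)$ (so $\kappa\equiv 0$, $\nabla_{\boldsymbol\tau}{\bf n}=0$, and tangential derivatives commute with traces). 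The scheme mirrors Lemmas \ref{Energy}--\ref{final}, but every place where \cite{D-L-L-Y} or Section 3 invoked surface diffusion $\gamma\Delta_{\boldsymbol\tau}\psi$ to absorb a boundary term, the identity above will be used instead to trade it for an interior $\mu-\bar\mu$ quantity already controlled by $\mathcal D(t)$.

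\textbf{Step 1 (baseline energy).} Test $(\ref{delta NSAC_1})_1$ by ${\bf u}$, $(\ref{delta NSAC_1})_3$ by $\mu-\bar\mu$, and $(\ref{delta NBC_1})_3$ by $L(\psi)$; the $\delta\Delta_{\boldsymbol\tau}\phi$ term in the interior and the $\delta\Delta_{\boldsymbol\tau}\psi$ term on the boundary contribute two \emph{non-negative} dissipations $\delta\|\nabla_{\boldsymbol\tau}\phi\|_{L^2}^2$ and $\delta\|\nabla_{\boldsymbol\tau}\psi\|_{L^2(\Gamma)}^2$ that drop out of the right side (good for $\delta$-uniformity). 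This yields the analogue of (\ref{energy}) \emph{without} any $\gamma$-weighted term on the boundary, hence $\delta$-independent.

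\textbf{Step 2 (tangential estimates, the heart of the argument).} Apply $\nabla_{\boldsymbol\tau}$ to $(\ref{delta NSAC_1})_1,(\ref{delta NSAC_1})_3,(\ref{delta NSAC_1})_4$ and to the boundary condition $(\ref{delta NBC_1})_3$, then test by $\nabla_{\boldsymbol\tau}{\bf u}$, $-\nabla_{\boldsymbol\tau}\mu$, $\nabla_{\boldsymbol\tau}\phi_t$, and $\nabla_{\boldsymbol\tau}L(\psi)$ respectively. Because $\Omega$ is a channel, the pressure contribution
$\int_\Omega \nabla\nabla_{\boldsymbol\tau} p\cdot\nabla_{\boldsymbol\tau}{\bf u}\, dx=0$ vanishes exactly as in (\ref{p-equality}), and the convection term $\int_\Omega {\bf u}\cdot\nabla\nabla_{\boldsymbol\tau}{\bf u}\cdot\nabla_{\boldsymbol\tau}{\bf u}\, dx=0$ by incompressibility. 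The crucial cancellation happens on $\Gamma$: the boundary integrals $\int_\Gamma \nabla_{\boldsymbol\tau}L(\psi)\cdot(\cdots)\,dS$ generated by $(\ref{delta NBC_1})_2$ and by the $\partial_{\bf n}\phi$ piece in $(\ref{delta NBC_1})_3$ combine so that the only surviving nonlinearities are those controlled by the trace inequalities of Lemma \ref{PGN} and the product inequalities of Lemma \ref{product}. The $\delta$-terms give $\delta\|\nabla\nabla_{\boldsymbol\tau}^2\phi\|_{L^2}^2$ in the energy and $\delta\|\nabla\nabla_{\boldsymbol\tau}\phi_t\|_{L^2}^2+\delta\|\nabla_{\boldsymbol\tau}^3\psi\|_{L^2(\Gamma)}^2$ in the dissipation, both of which appear only with $\delta$-weights in $\mathcal E,\mathcal D$, preserving uniformity.

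\textbf{Step 3 (time-derivative and highest-order estimates).} Differentiate $(\ref{delta NSAC_1})$ in $t$ and test by $({\bf u}_t,\phi_t,-\mu_t,\psi_t)$; this produces the bounds $\|{\bf u}_t\|_{L^2}^2$, $\|\phi_t\|_{H^1}^2$, $\|(\mu-\bar\mu)_t\|_{L^2}^2$ and the boundary piece $\|\partial_{\bf n}\phi_t\|_{L^2(\Gamma)}^2$, again using (\ref{bb1}) on the boundary to replace any would-be $L(\psi)_t$ by interior quantities. Then recover $\|{\bf u}\|_{H^2}^2$, $\|{\bf u}\|_{H^3}^2$ via the curl system (\ref{curl}) exactly as in Step 3 of the proof of Lemma \ref{final}, and recover $\|\phi\|_{H^3}^2$ and $\|\nabla_{\boldsymbol\tau}\partial_{\bf n}\phi\|_{H^1(\Gamma)}^2$ by applying the elliptic estimate of Lemma \ref{estimates} to the problem
\begin{align*}
-\Delta\phi=\mu-f(\phi)\ \text{in }\Omega,\qquad \phi|_\Gamma=\psi,\qquad \partial_{\bf n}\phi=-L(\psi)+\gamma_{fs}'(\psi)\ \text{on }\Gamma,
\end{align*}
where the last identity is exactly (\ref{bb1}); this is what allows boundary Sobolev norms of $\psi$ to be traded for interior norms of $\mu-\bar\mu$ without ever using the absent surface Laplacian.

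\textbf{Step 4 (closing the first inequality).} Summing Steps 1--3 with small constants, bounding each nonlinear right-hand side by $C\mathcal E(t)^p\mathcal E(t)+\tfrac12\mathcal D(t)$ for some $p\ge 1$ through Lemmas \ref{GN}, \ref{PGN}, \ref{product}, we obtain $\frac{d}{dt}\mathcal E(t)+\mathcal D(t)\le C\mathcal E(t)^q$ for some $q\ge 1$, with $\mathcal E(0)\le C_0$. A continuity argument yields the $\delta$-independent time $T$ and the bound $2C_0$.

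\textbf{Step 5 (small-data variant for Theorem \ref{Th rg}).} Under the smallness $\tilde{\mathcal E}(0)\le\sigma_0$, repeat Steps 1--4 replacing $\mathcal E$ by $\tilde{\mathcal E}$ (which replaces $\|\phi\|_{H^3}^2$ by $\|\nabla\phi\|_{H^2}^2+\|\phi^2-1\|_{H^2}^2$). The key extra ingredient is that $f'(\phi)=3\phi^2-1\ge 1$ when $\phi$ is close to $\pm 1$, so the identity
\begin{align*}
-\int_\Omega f'(\phi)|\phi_t|^2\,dx = -\int_\Omega(3\phi^2-1)|\phi_t|^2\,dx
\end{align*}
appearing when one multiplies $(\ref{NSAC1})_3$ by $(\bar\mu-\mu)_t$ (cf. (\ref{H12})) gains the sign needed to control $\|\phi_t\|_{L^2}^2$ with the \emph{right} sign, closing the small-energy estimate with $C_1\tilde{\mathcal E}(0)$ on the right; the extra initial hypothesis on $|\nu\cos\theta_s|$ ensures $\gamma_{fs}'(\psi)$ contributes only a small multiple of $\tilde{\mathcal E}$.

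The principal obstacle I foresee is Step 2: when one differentiates the relaxation boundary condition tangentially, the would-be ``dissipation'' term on $\Gamma$ is not $\|\nabla_{\boldsymbol\tau}^2\psi\|_{L^2(\Gamma)}^2$ (as it would be with a dynamic BC) but only $\|\nabla_{\boldsymbol\tau}L(\psi)\|_{L^2(\Gamma)}^2$, and controlling the full $H^{3/2}$-trace of $\psi$ requires invoking (\ref{bb1}) to convert it into $\|\nabla_{\boldsymbol\tau}(\mu-\bar\mu)\|_{H^{1/2}(\Gamma)}^2$, which by Lemma \ref{Trace} and Lemma \ref{GN} is bounded by $\|\nabla_{\boldsymbol\tau}^2(\mu-\bar\mu)\|_{L^2}^{1/2}\|\nabla_{\boldsymbol\tau}(\mu-\bar\mu)\|_{L^2}^{1/2}$; the resulting highest-order term must be absorbed into the dissipation using the sharp interpolation $\|F\|_{L^2(\Gamma)}\le C\|F\|_{L^2}^{1/2}\|F\|_{H^1}^{1/2}$ from Lemma \ref{PGN}, and one must check that the constant can be made independent of $\delta$.
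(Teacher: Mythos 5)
Your overall architecture — basic energy, tangential estimates exploiting the trace identity $\bar\mu-\mu=-L(\psi)$ of Remark \ref{remark boundary}, time-derivative estimates, and a polynomial Gronwall/continuity argument in $\mathcal E(t)$ — matches the paper's proof (which assembles Lemmas \ref{Lemma4.1}--\ref{Lemma4.3} into $\sup\mathcal E\le C_0+C_1T(\sup\mathcal E)^{16}$), and you correctly identify the central design principle that the interior term $\delta\Delta_{\boldsymbol\tau}\phi$ is there to pair with the boundary term $\delta\Delta_{\boldsymbol\tau}\psi$ so that $\psi_t-\delta\Delta_{\boldsymbol\tau}\psi=-L(\psi)-{\bf u}_{\boldsymbol\tau}\cdot\nabla_{\boldsymbol\tau}\psi$ can be substituted on $\Gamma$. (A small inaccuracy in Step 1: the interior $\delta$-term does not simply produce a sign-definite $\delta\|\nabla_{\boldsymbol\tau}\phi\|_{L^2}^2$; it produces $\delta\|\nabla_{\boldsymbol\tau}\nabla\phi\|_{L^2}^2$ plus the sign-indefinite term $\delta\int_\Omega(3\phi^2-1)|\nabla_{\boldsymbol\tau}\phi|^2$, which must be absorbed, plus the boundary integral that realizes the cancellation.)

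However, Step 3 contains a genuine gap. First, the boundary condition you write for the elliptic recovery, $\partial_{\bf n}\phi=-L(\psi)+\gamma_{fs}'(\psi)$, simplifies by the definition $L(\psi)=\partial_{\bf n}\phi+\gamma_{fs}'(\psi)$ to $\partial_{\bf n}\phi=-\partial_{\bf n}\phi$, which is vacuous; the usable identity is $\partial_{\bf n}\phi=(\mu-\bar\mu)-\gamma_{fs}'(\psi)$ on $\Gamma$. Second, and more seriously, Lemma \ref{estimates} cannot be invoked here: it is an elliptic estimate for the system with the surface Laplacian $-\Delta_{\boldsymbol\tau}\psi$ present in the boundary condition, and its constant degenerates as the surface diffusion coefficient vanishes — this is precisely why the paper cannot pass from the dynamic to the relaxation boundary condition by a limiting argument, as stated in the introduction. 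Any $\delta$-uniform bound obtained this way would be illusory. What the paper actually does to recover $\|\phi\|_{H^3}$ is an anisotropic splitting: the tangential derivatives $\nabla\nabla_{\boldsymbol\tau}^k\phi$ ($k=1,2$) are estimated by energy identities of the form \eqref{ll-r21}, in which the boundary terms are rewritten via \eqref{bb1} so that $\|\nabla_{\boldsymbol\tau}^k\partial_{\bf n}\phi\|_{L^2(\Gamma)}^2$ appears as \emph{dissipation} on the left; the purely normal derivatives $\partial_{\bf n}\partial_{\bf n}\phi$ and $\nabla\partial_{\bf n}\partial_{\bf n}\phi$ are then recovered algebraically from the equation $\partial_{\bf n}\partial_{\bf n}\phi=\phi_t-(1+\delta)\Delta_{\boldsymbol\tau}\phi+{\bf u}\cdot\nabla\phi+f-\bar f+\overline{\Delta\phi}$ (see \eqref{ll-nnphi}--\eqref{ll-phi33}). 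Your closing remark shows you sensed the difficulty, but the fix you sketch (interpolating $\|\nabla_{\boldsymbol\tau}(\mu-\bar\mu)\|_{H^{1/2}(\Gamma)}$) still presupposes boundary regularity of $\mu$ that is not available with $\delta$-uniform constants; the tangential-then-normal recovery is the step your proposal is missing.
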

\vskip2mm
Proposition $\ref{TH im}$ is based on the following lemma $\ref{Lemma4.1}-\ref{Lemma4.3}$.
\begin{Lemma}
\label{Lemma4.1}
Let $({\bf u},\phi, \psi )$ be the smooth solution to $(\ref{delta NSAC_1})$, $(\ref{delta NBC_1})$, $(\ref{initial1})$ on $[0,T]$, then it holds that
\begin{align}
\label{E6}
\sup\limits_{0\le t \le T}\hat{\mathcal E}(t)+\int_{0}^{T}\hat{\mathcal D}(t) {\rm d}t
\le C\hat{\mathcal E}(0),
\end{align}
where
\begin{align}
&\hat{\mathcal E}(t)=\|{\bf u}\|_{L^2}^2+\|\phi\|_{H^1}^2+\frac{1}{2}\|\phi^2-1\|_{L^2}^2
+ 2\int_\Gamma\gamma_{fs}(\psi){\rm d}S   ,               \nonumber
\\
&\hat{\mathcal D}(T)=\|{\mathbb S}({\bf u})\|_{L^2}^2+\beta\|{\bf u}_{\boldsymbol\tau}\|_{L^2(\Gamma)}^2+\|\mu-\bar\mu\|_{L^2}^2
+\delta\|\nabla_{\boldsymbol\tau}\nabla\phi\|_{L^2}^2
+ \|L(\psi)\|_{L^2(\Gamma)}^2    ,                      \nonumber
\end{align}
and $C$ is constant that depends on $\Omega$, $\beta$, $\hat{\mathcal E}(0)$ and the initial value, but independent of $\delta$.
\end{Lemma}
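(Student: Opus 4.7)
\textbf{Proof proposal for Lemma \ref{Lemma4.1}.} The plan is to mimic the total energy balance of Lemma \ref{Energy}, adapted to the $\delta$-approximate system. I will test $(\ref{delta NSAC_1})_1$ by $\bf u$, test $(\ref{delta NSAC_1})_3$ by $\mu-\bar\mu$, and then substitute the boundary relation $(\ref{delta NBC_1})_3$ into the uncompensated Young stress term appearing on the boundary of the momentum identity. All three identities will be added; the convective and pressure terms vanish thanks to $\text{div}\,{\bf u}=0$ and ${\bf u}\cdot{\bf n}=0$, and the capillary term $\int_\Omega \text{div}(\nabla\phi\otimes\nabla\phi)\cdot{\bf u}\,dx$ cancels between the momentum and phase-field identities exactly as in Lemma \ref{Energy}.

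The key new features relative to Lemma \ref{Energy} are the two $\delta$-corrections. First, testing the bulk equation $\phi_t-\delta\Delta_{\boldsymbol\tau}\phi=\bar\mu-\mu-{\bf u}\cdot\nabla\phi$ by $\mu-\bar\mu$ and integrating by parts in the tangential (periodic) directions produces
\[
-\delta\int_\Omega\Delta_{\boldsymbol\tau}\phi\,(\mu-\bar\mu)\,dx
=\delta\|\nabla\nabla_{\boldsymbol\tau}\phi\|_{L^2}^{2}+\delta\!\int_\Omega f'(\phi)|\nabla_{\boldsymbol\tau}\phi|^{2}\,dx-\delta\!\int_\Gamma\nabla_{\boldsymbol\tau}\psi\cdot\partial_{\bf n}\nabla_{\boldsymbol\tau}\phi\,dS,
\]
using $\nabla_{\boldsymbol\tau}\Delta=\Delta\nabla_{\boldsymbol\tau}$ and $\mu=-\Delta\phi+f(\phi)$. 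Second, the boundary identity $(\ref{delta NBC_1})_3$ rewritten as $\psi_t+{\bf u}_{\boldsymbol\tau}\cdot\nabla_{\boldsymbol\tau}\psi=\delta\Delta_{\boldsymbol\tau}\psi-L(\psi)$ yields
\[
\int_\Gamma L(\psi)\nabla_{\boldsymbol\tau}\psi\cdot{\bf u}_{\boldsymbol\tau}\,dS
=-\|L(\psi)\|_{L^2(\Gamma)}^{2}-\int_\Gamma \partial_{\bf n}\phi\,\psi_t\,dS-\frac{d}{dt}\!\int_\Gamma\gamma_{fs}(\psi)\,dS+\delta\!\int_\Gamma L(\psi)\Delta_{\boldsymbol\tau}\psi\,dS,
\]
where the middle two terms absorb the surface free energy and cancel the boundary contribution $\int_\Gamma \partial_{\bf n}\phi\,\psi_t\,dS$ coming from the bulk integration by parts, exactly as in Lemma \ref{Energy}.

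The delicate step, and the main obstacle, is handling the two remaining $\delta$-boundary contributions $\delta\int_\Gamma L(\psi)\Delta_{\boldsymbol\tau}\psi\,dS$ and $-\delta\int_\Gamma\nabla_{\boldsymbol\tau}\psi\cdot\partial_{\bf n}\nabla_{\boldsymbol\tau}\phi\,dS$. Since on the flat boundary $\partial_{\bf n}$ commutes with $\nabla_{\boldsymbol\tau}$, one has $\partial_{\bf n}\nabla_{\boldsymbol\tau}\phi=\nabla_{\boldsymbol\tau}\partial_{\bf n}\phi$, so a tangential integration by parts gives $\int_\Gamma\nabla_{\boldsymbol\tau}\psi\cdot\partial_{\bf n}\nabla_{\boldsymbol\tau}\phi\,dS=-\int_\Gamma \Delta_{\boldsymbol\tau}\psi\,\partial_{\bf n}\phi\,dS$. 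Substituting $L(\psi)=\partial_{\bf n}\phi+\gamma_{fs}'(\psi)$, the two terms collapse to
\[
\delta\!\int_\Gamma\gamma_{fs}'(\psi)\Delta_{\boldsymbol\tau}\psi\,dS=-\delta\!\int_\Gamma\gamma_{fs}''(\psi)|\nabla_{\boldsymbol\tau}\psi|^{2}\,dS,
\]
which is harmless because $\gamma_{fs}''$ is uniformly bounded. This cancellation—relying crucially on the flat-boundary commutation of $\partial_{\bf n}$ and $\nabla_{\boldsymbol\tau}$ in the channel—is precisely why the construction works only on $\mathbb{T}^{2}\times(-1,1)$.

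Putting everything together yields
\[
\frac{1}{2}\frac{d}{dt}\hat{\mathcal E}(t)+\frac{1}{2}\|\mathbb S({\bf u})\|_{L^2}^{2}+\beta\|{\bf u}_{\boldsymbol\tau}\|_{L^2(\Gamma)}^{2}+\|\mu-\bar\mu\|_{L^2}^{2}+\|L(\psi)\|_{L^2(\Gamma)}^{2}+\delta\|\nabla\nabla_{\boldsymbol\tau}\phi\|_{L^2}^{2}\le R_\delta(t),
\]
where $R_\delta(t)$ consists of $\delta\int_\Omega f'(\phi)|\nabla_{\boldsymbol\tau}\phi|^{2}$ and $\delta\int_\Gamma\gamma_{fs}''(\psi)|\nabla_{\boldsymbol\tau}\psi|^{2}$. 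Using $f'(\phi)=3\phi^{2}-1\ge-1$, the bulk remainder is bounded by $\delta\|\nabla\phi\|_{L^2}^{2}$, while the trace inequality $\|\nabla_{\boldsymbol\tau}\psi\|_{L^2(\Gamma)}^{2}\le C\|\nabla\phi\|_{L^2}\|\nabla\nabla_{\boldsymbol\tau}\phi\|_{L^2}+C\|\nabla\phi\|_{L^2}^{2}$ lets me absorb half of the boundary remainder into $\delta\|\nabla\nabla_{\boldsymbol\tau}\phi\|_{L^2}^{2}$ and leaves a Gronwall-compatible remainder $C\delta\|\nabla\phi\|_{L^2}^{2}\le C\hat{\mathcal E}(t)$. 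Korn's inequality (Lemma \ref{Korn}) controls $\|\nabla{\bf u}\|_{L^2}^{2}$, and mass conservation $\int_\Omega\phi\,dx\equiv\int_\Omega\phi_0\,dx$ together with Poincaré upgrades $\|\nabla\phi\|_{L^2}$-control to $\|\phi\|_{H^1}$-control, so $\hat{\mathcal E}$ dominates the full $H^{1}$-norm of $\phi$. Applying Gronwall on $[0,T]$ delivers the claimed $\delta$-uniform estimate (\ref{E6}).
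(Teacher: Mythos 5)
Your proposal is correct and follows essentially the same route as the paper: test the momentum equation by ${\bf u}$ and the phase-field equation by $\mu-\bar\mu$, use the boundary relation $(\ref{delta NBC_1})_3$ to produce $-\|L(\psi)\|_{L^2(\Gamma)}^2$ and the surface free energy, and observe that the two $\delta$-boundary contributions combine into the harmless term $-\delta\int_\Gamma\gamma_{fs}^{(2)}(\psi)|\nabla_{\boldsymbol\tau}\psi|^2\,{\rm d}S$, which is then controlled by the trace/interpolation inequality and absorbed into $\delta\|\nabla\nabla_{\boldsymbol\tau}\phi\|_{L^2}^2$. The only difference is cosmetic — you perform the substitution of the boundary equation inside the Young-stress integral and invoke the flat-boundary commutation $\partial_{\bf n}\nabla_{\boldsymbol\tau}=\nabla_{\boldsymbol\tau}\partial_{\bf n}$, whereas the paper groups $(\psi_t-\delta\Delta_{\boldsymbol\tau}\psi)\partial_{\bf n}\phi$ directly; the resulting identities coincide.
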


\begin{proof}[\bf Proof.]
First, multiplying $(\ref{delta NSAC_1})_3$ by $(\mu-\bar\mu)$ and integrating the result over $\Omega$, one has
\begin{align}
\label{E1}
\int_\Omega\phi_t(\mu-\bar\mu){\rm d}x-\int_\Omega\delta
\Delta_{\boldsymbol\tau}\phi(\mu-\bar\mu){\rm d}x=-\|\mu-\bar\mu\|_{L^2}^2
-\int_\Omega{\bf u}\cdot\nabla\phi(\mu-\bar\mu)
{\rm d}x.
\end{align}
Recalling (\ref{phit}) and using integration by parts gives
\begin{align}
\label{E7}
\int_\Omega\phi_t {\rm d}x=\int_\Omega\delta\Delta_{\boldsymbol\tau}\phi {\rm d}x
+\int_\Omega(\bar\mu-\mu) {\rm d}x-\int_\Omega{\bf u}\cdot\nabla\phi {\rm d}x=0.
\end{align}Here we will only elaborate on the second term on the left of the (\ref{E1}), and the rest can be seen in Lemma \ref{Energy}. It follows from integration by parts that
\begin{align}
-\int_\Omega\delta
\Delta_{\boldsymbol\tau}\phi(\mu-\bar\mu){\rm d}x&=-\int_\Omega\delta
\Delta_{\boldsymbol\tau}\phi\mu {\rm d}x+\bar\mu\int_\Omega\delta
\Delta_{\boldsymbol\tau}\phi {\rm d}x                         \nonumber
\\
&=-\int_\Omega\delta
\Delta_{\boldsymbol\tau}\phi(-\Delta\phi+f){\rm d}x           \nonumber
\\
&=\int_\Omega\delta
\Delta_{\boldsymbol\tau}\phi\Delta\phi {\rm d}x-\delta\int_\Omega
\Delta_{\boldsymbol\tau}\phi f {\rm d}x                       \nonumber
\\
&=\delta\|\nabla_{\boldsymbol\tau}\nabla\phi\|_{L^2}^2
+ \int_\Gamma\delta\Delta_{\boldsymbol\tau}\psi\partial_{\bf n}\phi {\rm d}S   +\delta\int_\Omega(3\phi^2-1)|\nabla_{\boldsymbol\tau}\phi|^2 {\rm d}x.
\nonumber
\end{align}
Then, (\ref{E1}) could be rewritten as
\begin{align}
\label{E4}
&\frac{1}{2}\frac{\rm d}{{\rm d}t}\left(\|\nabla\phi\|_{L^2}^2
+\frac{1}{2}\|\phi^2-1\|_{L^2}^2
\right)
+\|\mu-\bar\mu\|_{L^2}^2+\delta\|\nabla_{\boldsymbol\tau}\nabla\phi\|_{L^2}^2
\nonumber
\\
&= \int_\Gamma(\psi_t-\delta\Delta_{\boldsymbol\tau}\psi)\partial_{\bf n}\phi {\rm d}S +\delta\int_\Omega(1-3\phi^2)|\nabla_{\boldsymbol\tau}\phi|^2 {\rm d}x+\int_\Omega{\rm div}(\nabla\phi\otimes\nabla\phi)\cdot{\bf u}{\rm d}x,
\end{align}
whereas the first term on the right of (\ref{E4}) can be rewritten from $L(\psi)=\partial_{\bf n}\phi+\gamma_{fs}^\prime(\psi)$ to
\begin{align}
&\int_\Gamma(\psi_t-\delta\Delta_{\boldsymbol\tau}\psi)\partial_{\bf n}\phi {\rm d}S=\int_\Gamma(\psi_t-\delta\Delta_{\boldsymbol\tau}\psi)
(L(\psi)-\gamma_{fs}^\prime(\psi)) {\rm d}S                 \nonumber
\\
&=-\frac{\rm d}{{\rm d}t}\int_\Gamma\gamma_{fs}(\psi){\rm d}S+\int_\Gamma\delta
\Delta_{\boldsymbol\tau}\psi\gamma_{fs}^\prime(\psi){\rm d}S
+\int_\Gamma(\psi_t-\delta\Delta_{\boldsymbol\tau}\psi)L(\psi) {\rm d}S                                                     \nonumber
\\
&=-\frac{\rm d}{{\rm d}t}\int_\Gamma\gamma_{fs}(\psi){\rm d}S
-\delta\int_\Gamma\gamma_{fs}^{(2)}(\psi)
|\nabla_{\boldsymbol\tau}\psi|^2{\rm d}S
+\int_\Gamma(-L(\psi)-{\bf u}_{\boldsymbol\tau}\cdot\nabla_{\boldsymbol\tau}\psi)L(\psi) {\rm d}S
\nonumber
\\
&=-\frac{\rm d}{{\rm d}t}\int_\Gamma\gamma_{fs}(\psi){\rm d}S
-\delta\int_\Gamma\gamma_{fs}^{(2)}(\psi)
|\nabla_{\boldsymbol\tau}\psi|^2{\rm d}S-\|L(\psi)\|_{L^2(\Gamma)}^2
-\int_\Gamma L(\psi){\bf u}_{\boldsymbol\tau}\cdot\nabla_{\boldsymbol\tau}\psi {\rm d}S.                    \nonumber
\end{align}
Hence, we find (\ref{E4}) becomes
\begin{align}
&\frac{1}{2}\frac{\rm d}{{\rm d}t}\left(
\|\nabla\phi\|_{L^2}^2+\frac{1}{2}\|\phi^2-1\|_{L^2}^2
+ 2\int_\Gamma\gamma_{fs}(\psi){\rm d}S
\right)+\|\mu-\bar\mu\|_{L^2}^2+\delta\|\nabla_{\boldsymbol\tau}\nabla\phi\|_{L^2}^2
+ \|L(\psi)\|_{L^2(\Gamma)}^2                      \nonumber
\\
&=\int_\Omega{\rm div}(\nabla\phi\otimes\nabla\phi)\cdot{\bf u}{\rm d}x- \int_\Gamma L(\psi){\bf u}_{\boldsymbol\tau}\cdot\nabla_{\boldsymbol\tau}\psi {\rm d}S  \nonumber
\\
&\quad+\delta\int_\Omega(1-3\phi^2)|\nabla_{\boldsymbol\tau}\phi|^2 {\rm d}x-\delta\int_\Gamma\gamma_{fs}^{(2)}(\psi)
|\nabla_{\boldsymbol\tau}\psi|^2{\rm d}S                \nonumber
\\
&\le \int_\Omega{\rm div}(\nabla\phi\otimes\nabla\phi)\cdot{\bf u}{\rm d}x- \int_\Gamma L(\psi){\bf u}_{\boldsymbol\tau}\cdot\nabla_{\boldsymbol\tau}\psi {\rm d}S +\delta\|\nabla_{\boldsymbol\tau}\phi\|_{L^2}^2
+C\delta\|\nabla_{\boldsymbol\tau}\psi\|_{L^2(\Gamma)}^2
\nonumber
\\
&\le \int_\Omega{\rm div}(\nabla\phi\otimes\nabla\phi)\cdot{\bf u}{\rm d}x- \int_\Gamma L(\psi){\bf u}_{\boldsymbol\tau}\cdot\nabla_{\boldsymbol\tau}\psi {\rm d}S+C\|\nabla\phi\|_{L^2}^2+C\delta\|\nabla_{\boldsymbol\tau}\phi\|
_{L^2}\|\nabla_{\boldsymbol\tau}\phi\|_{H^1}               \nonumber
\\
&\le \int_\Omega{\rm div}(\nabla\phi\otimes\nabla\phi)\cdot{\bf u}{\rm d}x- \int_\Gamma L(\psi){\bf u}_{\boldsymbol\tau}\cdot\nabla_{\boldsymbol\tau}\psi {\rm d}S +C\|\nabla\phi\|_{L^2}^2+ \frac{\delta}{2} \|\nabla\nabla_{\boldsymbol\tau}\phi\|
_{L^2}^2,  \nonumber
\end{align}
which together with (\ref{e2}) and Gronwall inequality yields
\begin{align}
\label{E8}
&\sup\limits_{0\le t \le T}\left (\|{\bf u}\|_{L^2}^2+\|\nabla\phi\|_{L^2}^2+\frac{1}{2}\|\phi^2-1\|_{L^2}^2
+2\int_\Gamma\gamma_{fs}(\psi){\rm d}S
\right)                                                \nonumber
\\
&\quad +\int_{0}^{T}\left(\|({\mathbb S}({\bf u})\|_{L^2}^2+\beta\|{\bf u}_{\boldsymbol\tau}\|_{L^2}^2+\|\mu-\bar\mu\|_{L^2}^2+ \|L(\psi)\|_{L^2(\Gamma)}^2
+\delta\|\nabla_{\boldsymbol\tau}\nabla\phi\|_{L^2}^2
\right) {\rm d}t  \le C\hat{\mathcal E}(0).
\end{align}
Furthermore, it follows from (\ref{E7}) and Poincar$\rm\acute{e}$ inequality that
\begin{align}
\|\phi\|_{L^2}^2\le \|\phi-\bar\phi\|_{L^2}^2
+\|\bar\phi\|_{L^2}^2\le C\left(\|\nabla\phi\|_{L^2}^2+|\bar\phi_0|^2
\right),                               \nonumber
\end{align}
which together with (\ref{E8}) shows (\ref{E6}). Therefore, we complete the proof.
\end{proof}
\begin{Lemma}\label{L-33}
Let $({\bf u},\phi, \psi )$ be the smooth solution to $(\ref{delta NSAC_1})$, $(\ref{delta NBC_1})$, $(\ref{initial1})$ on $[0,T]$, then it holds that
\begin{align}
\label{4.1}
&\sup\limits_{0\le t \le T} \left( \|\phi_t\|_{H^1}^2
+\|{\bf u}_t\|_{L^2}^2  \right)
+\int_{0}^{T} {\mathcal D}_1(t) {\rm d}t \le C\int_{0}^{T}  {\mathcal E}(t)^3  {\rm d}t+C,
\end{align}
where
\begin{align*}
{\mathcal D}_1(t)=\|\nabla {\bf u}_t\|_{L^2}^2 +\|\nabla {\bf u}\|_{H^2}^2 +\|{\bf u}_{t{\boldsymbol\tau}}\|_{L^2(\Gamma)}^2
 +\delta\|\nabla\nabla_{\boldsymbol\tau}\phi_t\|_{L^2}^2
+\|(\mu-\bar\mu)_t\|_{L^2}^2 +\|\partial_{\bf n}\phi_t\|_{L^2(\Gamma)}^2,
\end{align*}
and $C$ is constant that depends on $\Omega$, $\beta$, $\hat{\mathcal E}(0)$ and the initial value, but independent of $\delta$.
\end{Lemma}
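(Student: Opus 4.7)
The plan is to differentiate system (\ref{delta NSAC_1})--(\ref{delta NBC_1}) in $t$ and run coupled energy estimates with $\delta$-independent constants, mirroring the structure of Lemma \ref{d-tt} but with the surface equation enforced only in the relaxation form. Differentiation yields
\begin{align*}
&{\bf u}_{tt}+{\bf u}_t\cdot\nabla{\bf u}+{\bf u}\cdot\nabla{\bf u}_t+\nabla p_t-{\rm div}\,\mathbb{S}({\bf u}_t)=-{\rm div}(\nabla\phi_t\otimes\nabla\phi+\nabla\phi\otimes\nabla\phi_t),\\
&\phi_{tt}-\delta\Delta_{\boldsymbol{\tau}}\phi_t=(\bar\mu-\mu)_t-({\bf u}\cdot\nabla\phi)_t,\qquad \mu_t=-\Delta\phi_t+f'(\phi)\phi_t,\\
&\psi_{tt}-\delta\Delta_{\boldsymbol{\tau}}\psi_t=-({\bf u}_{\boldsymbol{\tau}}\cdot\nabla_{\boldsymbol{\tau}}\psi)_t-\partial_{\bf n}\phi_t-\gamma_{fs}^{(2)}(\psi)\psi_t,
\end{align*}
together with the differentiated GNBC $\beta{\bf u}_{t\boldsymbol{\tau}}+(\mathbb{S}({\bf u}_t)\cdot{\bf n})_{\boldsymbol{\tau}}=\partial_t\bigl[L(\psi)\nabla_{\boldsymbol{\tau}}\psi\bigr]$. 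I would test (i) ${\bf u}_t$ against the ${\bf u}_{tt}$-equation, (ii) the $\phi_{tt}$-equation against $-(\mu-\bar\mu)_t$ paired with the $\mu_t$-equation against $\phi_{tt}$, and (iii) $\psi_{tt}$ against the differentiated boundary equation. After the usual integrations by parts, using ${\rm div}\,{\bf u}_t=0$, ${\bf u}\cdot{\bf n}=0$ and the mass-conservation identities $\int_\Omega\phi_{tt}\,dx=\int_\Omega(\mu-\bar\mu)_t\,dx=0$, this produces the positive dissipation $\tfrac12\|\mathbb{S}({\bf u}_t)\|_{L^2}^2+\beta\|{\bf u}_{t\boldsymbol{\tau}}\|_{L^2(\Gamma)}^2+\|(\mu-\bar\mu)_t\|_{L^2}^2+\delta\|\nabla\nabla_{\boldsymbol{\tau}}\phi_t\|_{L^2}^2$ plus a cross-boundary term $\int_\Gamma\psi_{tt}\,\partial_{\bf n}\phi_t\,dS$ that must be coupled to the $\psi_{tt}$-equation in order to extract $\|\partial_{\bf n}\phi_t\|_{L^2(\Gamma)}^2$ into the dissipation.

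The key device keeping every constant independent of $\delta$ is the relaxation-boundary identity of Remark \ref{remark boundary}, namely $\bar\mu-\mu=-L(\psi)=-\partial_{\bf n}\phi-\gamma_{fs}'(\psi)$ on $\Gamma$, whence $\partial_{\bf n}\phi_t=(\mu-\bar\mu)_t-\gamma_{fs}^{(2)}(\psi)\psi_t$. This replaces every boundary trace of $\partial_{\bf n}\phi_t$ by quantities already belonging to $\mathcal{E}$ or $\mathcal{D}_1$, so that no surface regularization is required to close the boundary loop; in fact the approximate problem (\ref{delta NSAC_1})--(\ref{delta NBC_1}) was designed precisely so that this identity survives. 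All the interior nonlinearities $({\bf u}\cdot\nabla\phi)_t$, ${\rm div}(\nabla\phi_t\otimes\nabla\phi)\cdot{\bf u}_t$ and $f'(\phi)\phi_t\,\phi_{tt}$ are then bounded with $H^2\hookrightarrow L^\infty$ and $H^1\hookrightarrow L^6$ in the 3D channel, the trace theorem, and Lemma \ref{PGN}; each yields a contribution of the form $\le\varepsilon\,\mathcal{D}_1+C\,\mathcal{E}^3$ after Young's and interpolation inequalities. The missing $\|\nabla{\bf u}\|_{H^2}^2$ piece of $\mathcal{D}_1$ is recovered a posteriori by treating ${\bf u}$ as the solution of a steady Stokes-type problem with datum $-{\bf u}_t-({\bf u}\cdot\nabla){\bf u}-{\rm div}(\nabla\phi\otimes\nabla\phi)$ and invoking Lemmas \ref{regularity u}--\ref{ellptic}.

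The main obstacle will be the capillary boundary term $\int_\Gamma\partial_t\bigl[L(\psi)\nabla_{\boldsymbol{\tau}}\psi\bigr]\cdot{\bf u}_{t\boldsymbol{\tau}}\,dS$ produced by the ${\bf u}_t$-test: after differentiation it contains both $\partial_{\bf n}\phi_t$ and $\nabla_{\boldsymbol{\tau}}\psi_t$ on $\Gamma$ with no surface diffusion available to absorb them directly. My strategy is to first use the identity above to replace $\partial_{\bf n}\phi_t$ by $(\mu-\bar\mu)_t$ plus lower-order surface data, and then to interpolate $\nabla_{\boldsymbol{\tau}}\psi_t$ through the trace of $\nabla\phi_t\in H^1(\Omega)$ via Lemma \ref{PGN}; the unavoidable triple product of $H^1$-type norms is precisely what produces the cubic bound $\mathcal{E}^3$ on the right-hand side. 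After absorbing a small multiple of $\mathcal{D}_1$ into the left-hand side and integrating in time---with $\|{\bf u}_t(0)\|_{L^2}^2$ and $\|\phi_t(0)\|_{H^1}^2$ reconstructed from $({\bf u}_0,\phi_0,\psi_0)\in\mathbb{D}_2$ through the equations evaluated at $t=0$, as in (\ref{initial ut})---the announced inequality (\ref{4.1}) follows.
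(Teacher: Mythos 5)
Your overall skeleton is the paper's: differentiate in time, test the momentum equation against ${\bf u}_t$, recover $\|\nabla{\bf u}\|_{H^2}^2$ from the stationary Stokes estimate with datum $G-{\bf u}_t$, reconstruct $\|{\bf u}_t(0)\|_{L^2}$ and $\|\nabla\phi_t(0)\|_{L^2}$ from the equations at $t=0$, and lean on the differentiated relaxation identity $(\bar\mu-\mu)_t=-\partial_{\bf n}\phi_t-\gamma_{fs}^{(2)}(\psi)\psi_t$ on $\Gamma$ as the $\delta$-independent device. But the pairing you choose for the Allen--Cahn part --- testing the $\phi_{tt}$-equation against $-\mu_t$ (or $-(\mu-\bar\mu)_t$), the $\mu_t$-equation against $\phi_{tt}$, and the $\psi_{tt}$-equation against $\psi_{tt}$, i.e.\ the structure of Lemma \ref{d-tt} --- does not close here, and this is a genuine gap. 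That pairing produces the boundary coupling $-\int_\Gamma\partial_{\bf n}\phi_t\,\psi_{tt}\,{\rm d}S$. In the dynamic-boundary case this cancels against the $\psi_{tt}$-test, at the cost of generating $\gamma\frac{\rm d}{{\rm d}t}\|\nabla_{\boldsymbol\tau}\psi_t\|_{L^2(\Gamma)}^2$ and commutators such as $\int_\Gamma({\bf u}_{\boldsymbol\tau}\cdot\nabla_{\boldsymbol\tau}\psi_t)\psi_{tt}\,{\rm d}S$; here the analogous terms carry only the weight $\delta$, and $\|\nabla_{\boldsymbol\tau}\psi_t\|_{L^2(\Gamma)}$ requires (by trace interpolation) $\|\nabla_{\boldsymbol\tau}\phi_t\|_{H^1}$, which sits in ${\mathcal D}_1$ only as $\delta\|\nabla\nabla_{\boldsymbol\tau}\phi_t\|_{L^2}^2$ and is therefore not available $\delta$-independently at this stage. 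Your proposed fix --- ``replace $\partial_{\bf n}\phi_t$ by $(\mu-\bar\mu)_t$ plus lower-order surface data'' --- goes in the wrong direction: it requires the boundary \emph{trace} of $(\mu-\bar\mu)_t$, which is not controlled by $\|(\mu-\bar\mu)_t\|_{L^2(\Omega)}$ and would need $\|(\mu-\bar\mu)_t\|_{H^1}$, absent from both ${\mathcal E}$ and ${\mathcal D}_1$.

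The paper's resolution is to choose a different pairing: it computes $-\langle(\bar\mu-\mu)_t,\Delta\phi_t\rangle$ two ways (see \eqref{l-r7}). Integrating by parts and inserting the differentiated interior equation gives $\frac12\frac{\rm d}{{\rm d}t}\|\nabla\phi_t\|_{L^2}^2+\delta\|\nabla\nabla_{\boldsymbol\tau}\phi_t\|_{L^2}^2$ plus the single boundary term $-\int_\Gamma(\bar\mu-\mu)_t\,\partial_{\bf n}\phi_t\,{\rm d}S$; since the multiplier $(\bar\mu-\mu)_t$ restricted to $\Gamma$ is \emph{exactly} $-\partial_{\bf n}\phi_t-\gamma_{fs}^{(2)}(\psi)\psi_t$ by \eqref{bb1}, this boundary term becomes $+\|\partial_{\bf n}\phi_t\|_{L^2(\Gamma)}^2+\int_\Gamma\gamma_{fs}^{(2)}(\psi)\psi_t\partial_{\bf n}\phi_t\,{\rm d}S$, with only a zeroth-order $\psi_t$ left over (controlled by $\|\phi_t\|_{H^1}$). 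Meanwhile $\Delta\phi_t=-\mu_t+f_t$ turns the left-hand side into $-\|(\mu-\bar\mu)_t\|_{L^2}^2-\int_\Omega(\bar\mu-\mu)_tf_t\,{\rm d}x$. Thus all of $\|(\mu-\bar\mu)_t\|_{L^2}^2$, $\|\partial_{\bf n}\phi_t\|_{L^2(\Gamma)}^2$ and $\delta\|\nabla\nabla_{\boldsymbol\tau}\phi_t\|_{L^2}^2$ appear as dissipation from one identity, with no $\psi_{tt}$ and no tangential derivative of $\psi_t$ on the boundary ever entering. Unless you switch to this pairing (or an equivalent one whose boundary term carries $(\bar\mu-\mu)_t$ rather than $\psi_{tt}$ against $\partial_{\bf n}\phi_t$), the boundary loop in your argument does not close with constants independent of $\delta$.
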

\begin{proof}[\bf Proof.] The proof consists of the following steps.
\\
{\it\bfseries Step 1. Estimates of $\| {\bf u}_{t}\|_{L^\infty(0,T; L^2)}$.} Differentiating $(\ref{delta NSAC_1})_1$ and $(\ref{delta NBC_1})_2$ with respect to $t$ leads to
\begin{equation}
\label{ut}
\begin{cases}
{\bf u}_{tt}+{\bf u}_t\cdot\nabla{\bf u}+{\bf u}\cdot\nabla{\bf u}_t+\nabla p_t={\rm div}{\mathbb S}({\bf u}_t)-\Delta\phi_t\cdot\nabla\phi-\Delta\phi\cdot\nabla\phi_t,&\rm in\quad\Omega\times(0,T),
\\
\beta{\bf u}_{t\boldsymbol\tau}+({\mathbb S}({\bf u}_t)\cdot{\bf n})_{\boldsymbol{\tau}}= \left(\partial_{\bf n}\phi_t+\gamma_{fs}^{(2)}(\psi)\psi_t
\right)\nabla_{\boldsymbol\tau}\psi+\left(\partial_{\bf n}\phi+\gamma_{fs}^\prime(\psi)
\right)\nabla_{\boldsymbol\tau}\psi_t ,&\rm on\quad\Gamma\times(0,T).
\end{cases}
\end{equation}
Multiplying $(\ref{ut})_1$ by ${\bf u}_t$, integrating the result over $\Omega$ by parts, and recalling (\ref{nabla pt}) and (\ref{777}), we infer
\begin{align}
\label{D6}
&\frac{1}{2}\frac{\rm d}{{\rm d}t}\|{\bf u}_t\|_{L^2}^2+\frac{1}{2}\|{\mathbb S}({\bf u}_t)\|_{L^2}^2+\beta\|{\bf u}_{t{\boldsymbol\tau}}\|_{L^2(\Gamma)}^2        \nonumber
\\
&=-\int_\Omega{\bf u}_t\cdot\nabla{\bf u}\cdot{\bf u}_t {\rm d}x+ \int_\Gamma
\left(\partial_{\bf n}\phi_t+\gamma_{fs}^{(2)}(\psi)\psi_t\right
)\nabla_{\boldsymbol\tau}\psi \cdot{\bf u}_{t{\boldsymbol\tau}} {\rm d}S              \nonumber
\\
&\quad+\int_\Gamma
\left(\partial_{\bf n}\phi+\gamma_{fs}^\prime(\psi)\right)\nabla_{\boldsymbol\tau}\psi_t \cdot{\bf u}_{t{\boldsymbol\tau}}{\rm d}S -\int_\Omega\Delta\phi_t\nabla\phi\cdot{\bf u}_t{\rm d}x-\int_\Omega\Delta\phi\nabla\phi_t\cdot{\bf u}_t{\rm d}x \nonumber
\\
&=\sum_{i=1}^{5}N_i.
\end{align}
Now, we estimate each term on the right-hand side of (\ref{D6}).
First, due to Korn's inequality and Cauchy-Schwartz's inequality, one has
\begin{align}
N_1&=-\int_\Omega{\bf u}_t\cdot\nabla{\bf u}\cdot{\bf u}_t {\rm d}x
\le C\|{\bf u}_t\|_{L^2}\|\nabla{\bf u}\|_{L^4}\|{\bf u}_t\|_{L^4}
\le C\|{\bf u}_t\|_{L^2}\|\nabla{\bf u}\|_{H^1}\|{\bf u}_t\|_{H^1}
\nonumber
\\
&\le C\|{\bf u}_t\|_{L^2}^2\|{\bf u}\|_{H^2}^2+\varepsilon\|{\bf u}_t\|_{H^1}^2
\le\varepsilon\|\nabla{\bf u}_t\|_{L^2}^2+C{\mathcal E}(t)^2
\le\frac{1}{16}\|\mathbb S({\bf u}_t)\|_{L^2}^2+C{\mathcal E}(t)^2.\nonumber
\end{align}
By using Lemma \ref{Trace}, $N_2$, $N_3$ could be estimated as
\begin{align}
N_2&=\int_\Gamma
\left(\partial_{\bf n}\phi_t+\gamma_{f_s}^{(2)}(\psi)\psi_t\right)
\nabla_{\boldsymbol\tau}\psi\cdot{\bf u}_{t{\boldsymbol\tau}} {\rm d}S
\nonumber
\\
&\le \|\partial_{\bf n}\phi_t+\gamma_{fs}^{(2)}(\psi)\psi_t\|_{H^{-\frac{1}{2}}(\Gamma)}
\|\nabla_{\boldsymbol\tau}\psi\cdot{\bf u}_{t{\boldsymbol\tau}}\|_{H^\frac{1}{2}(\Gamma)}   \nonumber
\\
&\le\|\partial_{\bf n}\phi_t+\gamma_{fs}^{(2)}(\phi)\phi_t\|_{L^2}
\|\nabla\phi\cdot{\bf u}_t\|_{H^1} \nonumber
\\
&\le\left(\|\partial_{\bf n}\phi_t\|_{L^2}+\|\gamma_{fs}^{(2)}(\phi)\phi_t\|_{L^2}
\right) \left( \|\nabla\phi\|_{L^\infty}\|{\bf u}_t\|_{H^1} + \|\nabla^{2}\phi\|_{L^4}\|{\bf u}_t\|_{L^4} \right)        \nonumber
\\
& \le\left(\|\partial_{\bf n}\phi_t\|_{L^2}+\|\gamma_{fs}^{(2)}(\phi)\phi_t\|_{L^2}
\right)    \|\nabla \phi\|_{H^2} \|{\bf u}_t\|_{H^1}        \nonumber
\\
&\le\varepsilon\|{\bf u}_t\|_{H^1}^2
+C\left(\|\nabla\phi_t\|_{L^2}^2+\|\phi_t\|_{L^2}^2\right)\|\nabla\phi\|_{H^2}^2           \nonumber
\\
&\le\frac{1}{16}\|\mathbb S({\bf u}_t)\|_{L^2}^2+C{\mathcal E}(t)^2. \nonumber
\\[1em]
N_3&=\int_\Gamma
(\partial_{\bf n}\phi+\gamma_{fs}^\prime(\psi))\nabla_{\boldsymbol\tau}\psi_t \cdot{\bf u}_{t{\boldsymbol\tau}} {\rm d}S         \nonumber
\\
&\le \|\nabla_{\boldsymbol\tau}\psi_t\|_{H^{-\frac{1}{2}}(\Gamma)}\|(
\partial_{\bf n}\phi+\gamma_{fs}^\prime(\psi))
{\bf u}_{t{\boldsymbol\tau}}\|_{H^\frac{1}{2}(\Gamma)}    \nonumber
\\
&\le C\|\nabla\phi_t\|_{L^2}\|(
\partial_{\bf n}\phi+\gamma_{fs}^\prime(\phi))
{\bf u}_t\|_{H^1}   \nonumber
\\
&\le C\|\nabla\phi_t\|_{L^2}   \left( \|\partial_{\bf n}\phi+\gamma_{fs}^\prime(\phi)\|_{L^\infty}\|{\bf u}_t\|_{H^1} +  \|\nabla [\partial_{\bf n}\phi+\gamma_{fs}^\prime(\phi)]\|_{L^4}\|{\bf u}_t\|_{L^4}  \right)
\nonumber \\
& \le C\|\nabla\phi_t\|_{L^2} ( \|\nabla\phi\|_{H^2} +1 ) \|{\bf u}_t\|_{H^1}
\nonumber \\
&\le\varepsilon\|{\bf u}_t\|_{H^1}^2+C\left(
\|\nabla\phi\|_{H^2}^2+1\right)\|\nabla\phi_t\|_{L^2}^2  \nonumber
\\
&\le\frac{1}{16}\|\mathbb S({\bf u}_t)\|_{L^2}^2+C{\mathcal E}(t)^2. \nonumber
\end{align}
Note that
$$
\int_{\Omega} \nabla \phi \cdot {\bf u}_{t} {\rm d}x = - \int_{\Omega} \phi  {\rm div} {\bf u}_{t} {\rm d}x + \int_{\Gamma} \psi {\bf u}_{t}  \cdot {\bf n}  {\rm d}S = 0.
$$
Then, direct calculations show that
\begin{align}
N_4&= -\int_\Omega\Delta\phi_t\nabla\phi\cdot{\bf u}_t{\rm d}x =  \int_\Omega \left[ (\mu- \bar{\mu})_{t} - f_{t}\right]\nabla\phi\cdot{\bf u}_t{\rm d}x
\nonumber \\
& \le C\|\nabla\phi\|_{L^\infty} \|{\bf u}_t\|_{L^2} \left( \| (\mu- \bar{\mu})_{t} \|_{L^{2}} +  \|3\phi^{2}-1\|_{L^\infty} \|\phi_t\|_{L^2}  \right)
\nonumber \\
&\le \varepsilon_{1}\| (\mu- \bar{\mu})_{t} \|_{L^{2}}^{2} + C ( \| \phi\|_{H^2}^4 +1 ) \|\phi_t\|_{L^2}^{2} +C \| \nabla \phi\|_{H^2}^2 \|{\bf u}_t\|_{L^2}^2
\nonumber \\
&\le\varepsilon_{1} \| (\mu - \bar{\mu})_{t} \|_{L^{2}}^{2} +C {\mathcal E}(t)^3 .\nonumber
\\[1em]
N_5&=-\int_\Omega\Delta\phi\nabla\phi_t\cdot{\bf u}_t{\rm d}x
\le C\|\Delta\phi\|_{L^4}\|{\bf u}_t\|_{L^4}\|\nabla\phi_t\|_{L^2}
\nonumber \\
&\le C\|\Delta\phi\|_{H^1}\|{\bf u}_t\|_{H^1}\|\nabla\phi_t\|_{L^2} \le\varepsilon\|{\bf u}_t\|_{H^1}^2+C \|\phi\|_{H^3}^2  \|\nabla\phi_t\|_{L^2}^2
\nonumber
\\
&\le\frac{1}{16}\|\mathbb S({\bf u}_t)\|_{L^2}^2+C{\mathcal E}(t)^2. \nonumber
\end{align}
Substituting $N_1-N_5$ into (\ref{D6}), and using Korn's inequality, we conclude that
\begin{align}
\label{D7}
&\frac{\rm d}{{\rm d}t}\|{\bf u}_t\|_{L^2}^2+\|\nabla {\bf u}_t\|_{L^2}^2+\|{\bf u}_{t{\boldsymbol\tau}}\|_{L^2(\Gamma)}^2\le \varepsilon_{1} \tilde{C}_{1}\| (\mu- \bar{\mu})_{t} \|_{L^{2}}^{2}+C {\mathcal E}(t)^3 .
\end{align}
To use Gronwall inequality, we need to estimate $\|{\bf u}_t(0)\|_{L^2}^2$. Multiplying $(\ref{delta NSAC_1})_1$ by ${\bf u}_t$, using integration by parts and Cauchy-Schwartz's inequality, we obtain
\begin{align}
\|{\bf u}_t(t)\|_{L^2}^2&=\int_\Omega{\rm div}{\mathbb S}({\bf u})\cdot{\bf u}_t {\rm d}x-\int_\Omega\nabla p\cdot{\bf u}_t {\rm d}x-\int_\Omega{\bf u}\cdot\nabla{\bf u}\cdot{\bf u}_t {\rm d}x-\int_\Omega{\rm div}(\nabla\phi\otimes\nabla\phi)\cdot{\bf u}_t {\rm d}x                      \nonumber
\\
&\le\frac{1}{2}\|{\bf u}_t(t)\|_{L^2}^2+C\left(\|\Delta{\bf u}(t)\|_{L^2}^2+\|{\bf u}(t)\cdot\nabla{\bf u}(t)\|_{L^2}^2+\|{\rm div}(\nabla\phi\otimes\nabla\phi)(t)\|_{L^2}^2
\right).                              \nonumber
\end{align}
Taking $t\rightarrow 0$, we get
\begin{align}
\label{D30}
\|{\bf u}_t(0)\|_{L^2}^2 &\le C\left(\|\Delta{\bf u}_0\|_{L^2}^2+\|{\bf u}_0\|_{L^\infty}^2\|\nabla{\bf u}_0\|_{L^2}^2+\|\nabla\phi_0\|_{L^6}^2\|\Delta\phi_0\|_{L^3}^2
\right) \nonumber
\\
&\le C\left(\|{\bf u}_0\|_{H^2}^4+\|\phi_0\|_{H^3}^4+1
\right).
\end{align}
{\it\bfseries Step 2. Estimates of $ \|\nabla {\bf u}\|_{L^2(0,T;H^2)} $.} Next, exploiting \cite{1} (see Theorem 1.2) and (\ref{L5.1}), we deduce that
\begin{align}
\label{D9}
\|\nabla{\bf u}\|_{H^2}^2&\le C\left(\|\nabla({\bf u}\cdot\nabla{\bf u})\|_{L^2}^2+\|\nabla(\Delta\phi\nabla\phi)\|_{L^2}^2+\|\nabla{\bf u}_t\|_{L^2}^2
\right)                                                 \nonumber
\\
& \quad +C \| \left(\partial_{\bf n}\phi+\gamma_{fs}^\prime(\psi)
\right)\nabla_{\boldsymbol\tau}\psi
\|_{H^{\frac{3}{2}}(\Gamma)}^2    + C\| {\bf u}_{\boldsymbol\tau} \|_{H^{\frac{3}{2}}(\Gamma)}^2
\nonumber \\
&\le C\left(\|\nabla{\bf u}\|_{L^4}^4+\|{\bf u}\|_{L^\infty}^2\|\nabla^2{\bf u}\|_{L^2}^2+\|\nabla{\bf u}_t\|_{L^2}^2+\|\nabla\Delta\phi\|_{L^2}^2\|\nabla\phi\|_{L^\infty}^2
\right)    \nonumber
\\
&\quad+C\|\Delta\phi\|_{L^4}^2\|\nabla^2\phi\|_{L^4}^2+C\left(
\|\partial_{\bf n}\phi\|_{H^{\frac{3}{2}}(\Gamma)}^2+1
\right) \|\nabla_{\boldsymbol\tau}\psi\|_{H^{\frac{3}{2}}(\Gamma)}^2   + C\| {\bf u} \|_{H^{2}}^2  \nonumber
\\
&\le C\left(\|\nabla{\bf u}\|_{L^4}^4+\|{\bf u}\|_{L^\infty}^2\|\nabla^2{\bf u}\|_{L^2}^2+\|\nabla{\bf u}_t\|_{L^2}^2
+\|\nabla\Delta\phi\|_{L^2}^2\|\nabla\phi\|_{L^\infty}^2
\right)                                            \nonumber
\\
&\quad+C\left(\|\Delta\phi\|_{L^4}^2\|\nabla^2\phi\|_{L^4}^2
+\|\partial_{\bf n}\phi\|_{H^2}^2\|\nabla\phi\|_{H^2}^2+\|\nabla\phi\|_{H^2}^2
\right) + C\| {\bf u} \|_{H^{2}}^2          \nonumber
\\
&\le C\left(\|\nabla{\bf u}\|_{H^1}^4+\|{\bf u}\|_{H^2}^2\|\nabla^2{\bf u}\|_{L^2}^2+\|\nabla{\bf u}_t\|_{L^2}^2
+\|\nabla\Delta\phi\|_{L^2}^2\|\nabla\phi\|_{H^2}^2
\right)                                            \nonumber
\\
&\quad+C\left(\|\Delta\phi\|_{H^1}^2\|\nabla^2\phi\|_{H^1}^2
+\|\nabla\phi\|_{H^2}^4+\|\nabla\phi\|_{H^2}^2
\right) + C\| {\bf u} \|_{H^{2}}^2         \nonumber
\\
&\le C{\mathcal E}(t)^2+\tilde{C}_{2}\|\nabla{\bf u}_t\|_{L^2}^2.
\end{align}
{\it\bfseries Step 3. Estimates of $ \| \nabla \phi_{t}\|_{L^\infty(0,T;L^2)} $.} Differentiating $(\ref{delta NSAC_1})_{3,4}$ and $(\ref{bb1})$ with respect to $t$ leads to
\begin{equation}
\label{l-pt}
\begin{cases}
(\bar\mu-\mu)_t=\phi_{tt}-\delta\Delta_{\boldsymbol\tau}\phi_{t}+{\bf u}_t\cdot\nabla\phi+{\bf u}\cdot\nabla\phi_t,&\rm in\quad\Omega\times(0,T),
\\
\Delta\phi_t=-\mu_t+f_t,&\rm in\quad\Omega\times(0,T),
\\
(\bar\mu-\mu)_t=-\partial_{\bf n}\phi_t- \gamma_{fs}^{(2)}(\psi)\psi_t   ,&\rm on\quad\Gamma\times(0,T).
\end{cases}
\end{equation}
We next consider the following inner product
\begin{align}
\label{l-r7}
-\langle(\bar\mu-\mu)_t,\Delta\phi_t\rangle=\langle\nabla(\bar\mu-\mu)_t,\nabla\phi_t\rangle
-\int_\Gamma(\bar\mu-\mu)_t\partial_{\bf n}\phi_t {\rm d}S= L_1^\delta+L_2^\delta.
\end{align}
In order to compute $L_1^\delta$, exploiting the expression of $(\ref{l-pt})_1$, we have
\begin{align}
L_1^\delta&=\int_\Omega\nabla\left(\phi_{tt}-\delta\Delta_{\boldsymbol\tau}\phi_{t}+{\bf u}_t\cdot\nabla\phi+{\bf u}\cdot\nabla\phi_t
\right)\cdot\nabla\phi_t {\rm d}x  \nonumber
\\
&=\int_\Omega\left(\nabla\phi_{tt}-\delta \nabla \Delta_{\boldsymbol\tau}\phi_{t}+\nabla{\bf u}_t\cdot\nabla\phi+{\bf u}_t\cdot\nabla^2\phi+\nabla{\bf u}\cdot\nabla\phi_t+{\bf u}\cdot\nabla^2\phi_t
\right)\cdot\nabla\phi_t {\rm d}x  \nonumber
\\
&=\frac{1}{2}\frac{\rm d}{{\rm d}t} \|\nabla\phi_t\|_{L^2}^2 + \delta \| \nabla \nabla_{\boldsymbol\tau}\phi_t\|_{L^2}^2 +\int_\Omega\left(\nabla{\bf u}_t\cdot\nabla\phi+{\bf u}_t\cdot\nabla^2\phi+\nabla{\bf u}\cdot\nabla\phi_t
\right)\cdot\nabla\phi_t {\rm d}x,  \nonumber
\end{align}
where we have used integration by parts to get
\begin{align}
\int_\Omega{\bf u}\cdot\nabla^2\phi_t\cdot\nabla\phi_t{\rm d}x&=\frac{1}{2}\int_\Omega{\bf u}\cdot\nabla\left(|\nabla\phi_t|^2\right){\rm d}x  \nonumber
\\
&=-\frac{1}{2}\int_\Omega{\rm div}{\bf u}\left(|\nabla\phi_t|^2\right){\rm d}x+\frac{1}{2}\int_\Gamma{\bf u}\cdot{\bf n}\left(|\nabla\phi_t|^2\right){\rm d}S=0.  \nonumber
\end{align}
Thanks to $(\ref{l-pt})_3$, $L_2^\delta$ gives
\begin{align}
\label{L2L2}
L_2^\delta&=\int_\Gamma\left(\partial_{\bf n}\phi_t+\gamma_{fs}^{(2)}(\psi)\psi_t
\right)\partial_{\bf n}\phi_t {\rm d}S
=\|\partial_{\bf n}\phi_t\|_{L^2(\Gamma)}^2+\int_\Gamma\gamma_{fs}^{(2)}(\psi)\psi_t\partial_{\bf n}\phi_t {\rm d}S,
\end{align}
whereas the left-hand side of $(\ref{r7})$ could be written by $(\ref{l-pt})_2$ as
\begin{align}
\label{l-r8}
-\langle(\bar\mu-\mu)_t,\Delta\phi_t\rangle&=-\int_\Omega(\bar\mu-\mu)_t\left(
-\mu_t+f_t\right){\rm d}x    \nonumber
\\
&=-\|(\mu-\bar\mu)_t\|_{L^2}^2+\bar\mu_t\int_\Omega(\bar\mu-\mu)_t {\rm d}x-\int_\Omega(\bar\mu-\mu)_tf_t {\rm d}x \nonumber
\\
&=-\|(\mu-\bar\mu)_t\|_{L^2}^2-\int_\Omega(\bar\mu-\mu)_tf_t {\rm d}x,
\end{align}
where we have used the following fact
\[
\int_\Omega(\bar\mu-\mu)_t{\rm d}x=\frac{\rm d}{{\rm d}t}\int_\Omega(\bar\mu-\mu){\rm d}x=0.
\]
Substituting $L_1^\delta$, $L_2^\delta$ and $(\ref{l-r8})$ into $(\ref{l-r7})$, one has
\begin{align}
\label{D5}
&\frac{1}{2}\frac{\rm d}{{\rm d}t}\|\nabla\phi_t\|_{L^2}^2
+\delta\|\nabla\nabla_{\boldsymbol\tau}\phi_t\|_{L^2}^2
+\|(\mu-\bar\mu)_t\|_{L^2}^2 +\|\partial_{\bf n}\phi_t\|_{L^2(\Gamma)}^2
\nonumber
\\
&=-\int_\Omega\left(\nabla{\bf u}_t\cdot\nabla\phi+{\bf u}_t\cdot\nabla^2\phi+\nabla{\bf u}\cdot\nabla\phi_t
\right)\cdot\nabla\phi_t {\rm d}x \nonumber
\\
&\quad -\int_\Omega(\bar\mu-\mu)_tf_t {\rm d}x - \int_\Gamma\gamma_{fs}^{(2)}(\psi)\psi_t\partial_{\bf n}\phi_t {\rm d}S     \nonumber
\\
&\le C\|\nabla{\bf u}_t\|_{L^2}\|\nabla\phi\|_{L^\infty}\|\nabla\phi_t\|_{L^2}+C\|{\bf u}_t\|_{L^4}\|\nabla^2\phi\|_{L^4}\|\nabla\phi_t\|_{L^2}+C\|\nabla{\bf u}\|_{L^\infty}\|\nabla\phi_t\|_{L^2}^2    \nonumber
\\
&\quad
+C\|f_t\|_{L^2} \|(\mu-\bar\mu)_t\|_{L^2}
+C  \|\psi_t\|_{L^2(\Gamma)}      \| \partial_{\bf n}\phi_t\|_{L^2(\Gamma)}
\nonumber  \\
&\le\frac{1}{4} \|(\mu-\bar\mu)_t\|_{L^2}^{2} + \frac{1}{2} \| \partial_{\bf n}\phi_t\|_{L^2(\Gamma)}^{2}
+C\|\nabla{\bf u}_t\|_{L^2}\|\nabla\phi\|_{H^2}\|\nabla\phi_t\|_{L^2}                       \nonumber
\\
&\quad +C\|{\bf u}_t\|_{H^1}\|\nabla^2\phi\|_{H^1}\|\nabla\phi_t\|_{L^2} +C\|\nabla{\bf u}\|_{H^2}\|\nabla\phi_t\|_{L^2}^2
\nonumber \\
&\quad+ C \|3\phi^{2}-1\|_{L^\infty}^{2} \|\phi_t\|_{L^2}^{2} + C \|\phi_t\|_{H^1}^{2}
\nonumber \\
&\le \frac{1}{4} \|(\mu-\bar\mu)_t\|_{L^2}^{2} + \frac{1}{2} \| \partial_{\bf n}\phi_t\|_{L^2(\Gamma)}^{2}+C\|\phi_t\|_{H^1}^2
+ \varepsilon_{1}\left(\|{\bf u}_t\|_{H^1}^2+\|\nabla{\bf u}\|_{H^2}^2
\right)
\nonumber
\\
&\quad +C\|\nabla\phi\|_{H^2}^2\|\nabla\phi_t\|_{L^2}^2 +C\|\nabla^2\phi\|_{H^1}^2\|\nabla\phi_t\|_{L^2}^2+\|\nabla\phi_t\|_{L^2}^4
+  C (\|\phi\|_{H^2}^{4} + 1 ) \|\phi_t\|_{L^2}^{2} \nonumber
\\
&\le\frac{1}{4} \|(\mu-\bar\mu)_t\|_{L^2}^{2} + \frac{1}{2} \| \partial_{\bf n}\phi_t\|_{L^2(\Gamma)}^{2} +\varepsilon_{1}
\left(\|\nabla{\bf u}_t\|_{L^2}^2+\|\nabla{\bf u}\|_{H^2}^2
\right)
+C{\mathcal E}(t)^3 .
\end{align}
It follows from $(\ref{delta NSAC})_1$ that
\begin{align}
\label{D29}
\|\nabla\phi_t(0)\|_{L^2}^2
&\le\delta^2\|\nabla\Delta_{\boldsymbol\tau}\phi_0\|_{L^2}^2
+\|\nabla{\bf u}_0\cdot\nabla\phi_0\|_{L^2}^2+\|{\bf u}_0\cdot\nabla^2\phi_0\|_{L^2}^2+\|\nabla\Delta\phi_0\|_{L^2}^2+\|\nabla f_0\|_{L^2}^2                                      \nonumber
\\
&\le C\left(\|\phi_0\|_{H^3}^2+\|\nabla{\bf u}_0\|_{L^4}^2\|\nabla\phi_0\|_{L^4}^2+\|{\bf u}_0\|_{L^\infty}^2\|\nabla^2\phi_0\|_{L^2}^2+\|3\phi_0^2-1\|_{L^\infty}^2
\|\nabla\phi_0\|_{L^2}^2
\right)                                               \nonumber
\\
&\le C\left(\|\phi_0\|_{H^3}^2+\|{\bf u}_0\|_{H^2}^2\|\phi_0\|_{H^2}^2+\|\phi_0\|_{H^2}^4\|\nabla\phi_0\|_{L^2}^2
+\|\nabla\phi_0\|_{L^2}^2
\right)                                               \nonumber
\\
&\le C\left(\|\phi_0\|_{H^2}^6+\|\phi_0\|_{H^3}^2+\|{\bf u}_0\|_{H^2}^2\|\phi_0\|_{H^2}^2
\right ).
\end{align}
{\it\bfseries Step 4. Closure of the estimates.} Combining \eqref{D7}-\eqref{D9},  \eqref{D5} with \eqref{D29}, choosing $\varepsilon_{1}$ enough small such that $\varepsilon_{1} (\tilde{C}_{1} + \tilde{C}_{2}) < 1/4$, there is
\begin{align}
\label{D44}
&\frac{\rm d}{{\rm d}t}\left(\|\nabla\phi_t\|_{L^2}^2
+\|{\bf u}_t\|_{L^2}^2  \right) +\|\nabla {\bf u}_t\|_{L^2}^2+\|{\bf u}_{t{\boldsymbol\tau}}\|_{L^2(\Gamma)}^2
\nonumber \\
& \quad  +\delta\|\nabla\nabla_{\boldsymbol\tau}\phi_t\|_{L^2}^2
+\|(\mu-\bar\mu)_t\|_{L^2}^2 +\|\partial_{\bf n}\phi_t\|_{L^2(\Gamma)}^2
\le C{\mathcal E}(t)^3.
\end{align}
Due to ({\ref{E7}}) and Poincar$\rm\acute{e}$ inequality, one has
\begin{align}
\|\phi_t\|_{L^2}^2=\|\phi_t-\bar\phi_t\|_{L^2}^2\le C\|\nabla\phi_t\|_{L^2}^2,        \nonumber
\end{align}
which together with (\ref{D44}) and Gronwall inequality leads to \eqref{4.1}. Thus, we complete the proof of Lemma \ref{L-33}.
\end{proof}

\begin{Lemma}
\label{L-44}
Let $({\bf u},\phi, \psi )$ be the smooth solution to $(\ref{delta NSAC_1})$, $(\ref{delta NBC_1})$, $(\ref{initial1})$ on $[0,T]$, then it holds that
\begin{align}
\label{L-4.2}
&\sup\limits_{0\le t \le T}\left( \| \nabla {\bf u}\|_{L^2}^2+ \beta\|{\bf u}_{\boldsymbol\tau}\|_{L^2(\Gamma)}^2
\right)+\int_{0}^{T} \|{\bf u}_t\|_{L^2}^2  {\rm d}t \le C\int_{0}^{T}{\mathcal E}(t)^2{\rm d}t+C,
\end{align}
where $C$ is constant that depends on $\Omega$, $\beta$, $\hat{\mathcal E}(0)$ and the initial value, but independent of $\delta$.
\end{Lemma}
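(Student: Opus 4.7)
The plan is to test $(\ref{delta NSAC_1})_1$ by ${\bf u}_t$ and integrate over $\Omega$; this is the natural multiplier for producing $\|{\bf u}_t\|_{L^2}^2$ on the left together with the time derivative of $\|\nabla{\bf u}\|_{L^2}^2$. Following the computations already used in Lemma \ref{LNS L3}, the pressure term vanishes thanks to ${\rm div}\,{\bf u}=0$ and ${\bf u}_t\cdot{\bf n}=0$ on $\Gamma$, while the viscous term, after integration by parts and application of the GNBC $(\ref{delta NBC_1})_2$, yields
\begin{equation*}
-\int_\Omega{\rm div}\,\mathbb S({\bf u})\cdot{\bf u}_t\,dx=\frac{1}{2}\frac{d}{dt}\left(\frac{1}{2}\|\mathbb S({\bf u})\|_{L^2}^2+\beta\|{\bf u}_{\boldsymbol\tau}\|_{L^2(\Gamma)}^2\right)-\int_\Gamma L(\psi)\nabla_{\boldsymbol\tau}\psi\cdot{\bf u}_{t\boldsymbol\tau}\,dS.
\end{equation*}
Collecting all the terms, and rewriting ${\rm div}(\nabla\phi\otimes\nabla\phi)=\Delta\phi\,\nabla\phi+\tfrac12\nabla|\nabla\phi|^2$ (the gradient piece integrates to zero against ${\bf u}_t$), I obtain the identity
\begin{equation*}
\frac{1}{2}\frac{d}{dt}\left(\frac{1}{2}\|\mathbb S({\bf u})\|_{L^2}^2+\beta\|{\bf u}_{\boldsymbol\tau}\|_{L^2(\Gamma)}^2\right)+\|{\bf u}_t\|_{L^2}^2=-\int_\Omega({\bf u}\cdot\nabla){\bf u}\cdot{\bf u}_t\,dx-\int_\Omega\Delta\phi\,\nabla\phi\cdot{\bf u}_t\,dx+\int_\Gamma L(\psi)\nabla_{\boldsymbol\tau}\psi\cdot{\bf u}_{t\boldsymbol\tau}\,dS.
\end{equation*}

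Next I would estimate the three terms on the right. For the convective and capillary interior integrals I would apply H\"older's inequality together with the 3D Sobolev embeddings $H^2\hookrightarrow L^\infty$ and $H^1\hookrightarrow L^4$, obtaining bounds of the form $C\|{\bf u}\|_{H^2}\|\nabla{\bf u}\|_{L^2}\|{\bf u}_t\|_{L^2}$ and $C\|\phi\|_{H^3}\|\nabla\phi\|_{H^2}\|{\bf u}_t\|_{L^2}$; each is absorbed by $\tfrac18\|{\bf u}_t\|_{L^2}^2+C\mathcal E(t)^2$ via Young's inequality. For the boundary integral I would combine the trace inequality $\|{\bf u}_{t\boldsymbol\tau}\|_{L^2(\Gamma)}\le C\|{\bf u}_t\|_{H^1}$ with the product/trace estimate $\|L(\psi)\nabla_{\boldsymbol\tau}\psi\|_{L^2(\Gamma)}\le C\|L(\psi)\|_{H^1(\Gamma)}\|\nabla\phi\|_{H^2}\le C\mathcal E(t)$, yielding a bound of the form $\varepsilon\|{\bf u}_t\|_{H^1}^2+C\mathcal E(t)^2$.

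Finally I would integrate the resulting differential inequality in time on $[0,T]$, apply Korn's inequality to replace $\|\mathbb S({\bf u})\|_{L^2}^2$ by $\|\nabla{\bf u}\|_{L^2}^2$, and absorb the leftover $\varepsilon\int_0^T\|{\bf u}_t\|_{H^1}^2\,dt$ using Lemma \ref{L-33}, which already controls this quantity in terms of the dissipation and hence in terms of $\int_0^T\mathcal E(t)^2\,dt$ after choosing $\varepsilon$ small (the cubic-in-$\mathcal E$ estimate in Lemma \ref{L-33} is absorbed using the a priori smallness built into the energy). The initial contribution $\|\nabla{\bf u}_0\|_{L^2}^2+\beta\|{\bf u}_{0\boldsymbol\tau}\|_{L^2(\Gamma)}^2$ becomes part of the additive constant. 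The main obstacle is keeping the final exponent of $\mathcal E$ on the right-hand side at $2$ rather than $3$; this forces pairing the boundary factor $L(\psi)\nabla_{\boldsymbol\tau}\psi$ with the $H^1$-norm of ${\bf u}_t$ (handled separately through Lemma \ref{L-33}) instead of letting all of $\|{\bf u}_t\|_{L^2}$ pay the whole cost.
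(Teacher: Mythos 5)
Your overall strategy is the paper's: test $(\ref{delta NSAC_1})_1$ by ${\bf u}_t$, kill the pressure term, convert the viscous term into $\frac{1}{2}\frac{\rm d}{{\rm d}t}\bigl(\frac12\|\mathbb S({\bf u})\|_{L^2}^2+\beta\|{\bf u}_{\boldsymbol\tau}\|_{L^2(\Gamma)}^2\bigr)$ plus the boundary integral via $(\ref{delta NBC_1})_2$, and estimate the convective and capillary terms exactly as you describe; those parts are fine and match (\ref{D36}).

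The gap is in your treatment of the boundary term $\int_\Gamma L(\psi)\nabla_{\boldsymbol\tau}\psi\cdot{\bf u}_{t\boldsymbol\tau}\,{\rm d}S$. You pay for ${\bf u}_{t\boldsymbol\tau}$ with the trace bound $\|{\bf u}_{t\boldsymbol\tau}\|_{L^2(\Gamma)}\le C\|{\bf u}_t\|_{H^1}$, arrive at $\varepsilon\|{\bf u}_t\|_{H^1}^2+C\mathcal E(t)^2$, and then propose to absorb $\varepsilon\int_0^T\|{\bf u}_t\|_{H^1}^2\,{\rm d}t$ through Lemma \ref{L-33}. But Lemma \ref{L-33} only gives $\int_0^T\|\nabla{\bf u}_t\|_{L^2}^2\,{\rm d}t\le C\int_0^T\mathcal E(t)^3\,{\rm d}t+C$, so this route produces $\int_0^T\mathcal E(t)^3\,{\rm d}t$ on the right, not $\int_0^T\mathcal E(t)^2\,{\rm d}t$. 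Your parenthetical fix --- that the cubic term ``is absorbed using the a priori smallness built into the energy'' --- does not exist here: this is the $\delta$-independent \emph{local} existence argument, no smallness of the data is assumed, and indeed $\mathcal E(t)\ge 1$ by its very definition, so $\mathcal E^3$ can never be dominated by $\mathcal E^2$ by smallness. The paper sidesteps the issue by never invoking $\|{\bf u}_t\|_{H^1}$ at all: it pairs the boundary integrand in the duality $H^{\frac12}(\Gamma)\times H^{-\frac12}(\Gamma)$, estimating
$\big|\int_\Gamma L(\psi)\nabla_{\boldsymbol\tau}\psi\cdot{\bf u}_{t\boldsymbol\tau}\,{\rm d}S\big|
\le \|(\partial_{\bf n}\phi+\gamma_{fs}'(\psi))\nabla_{\boldsymbol\tau}\psi\|_{H^{\frac12}(\Gamma)}\,\|{\bf u}_{t\boldsymbol\tau}\|_{H^{-\frac12}(\Gamma)}$,
with $\|{\bf u}_{t\boldsymbol\tau}\|_{H^{-\frac12}(\Gamma)}\le C\|{\bf u}_t\|_{L^2}$ and $\|(\partial_{\bf n}\phi+\gamma_{fs}'(\phi))\nabla\phi\|_{H^{1}}\le C\mathcal E(t)$, so the whole term is $\le C\mathcal E(t)\|{\bf u}_t\|_{L^2}\le C\mathcal E(t)^2$ and the quadratic exponent survives. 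You should either adopt this duality pairing or accept that your version proves the lemma only with exponent $3$ (which would still suffice for Proposition \ref{TH im}, but is not the statement as written).
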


\begin{proof}[\bf Proof.]

For the later calculation, $\|\nabla{\bf u}\|_{L^\infty(0,T;L^2)}$ needs to be estimated. Multiplying $(\ref{delta NSAC_1})_1$ by ${\bf u}_t$, and integrating the result over $\Omega$, one has
\begin{align}
\|{\bf u}_t\|_{L^2}^2+\int_\Omega{\bf u}\cdot\nabla{\bf u}\cdot{\bf u}_t{\rm d}x+\int_\Omega\nabla p\cdot{\bf u}_t{\rm d}x=\int_\Omega{\rm div}\mathbb S({\bf u})\cdot{\bf u}_t{\rm d}x-\int_\Omega{\rm div}(\nabla\phi\otimes\nabla\phi)\cdot{\bf u}_t{\rm d}x.
\nonumber
\end{align}
By the same arguments as in Lemma \ref{final}, we deduce
\begin{align}
\label{D36}
&\frac{1}{2}\frac{\rm d}{{\rm d}t}\left(\frac{1}{2} \|\mathbb S({\bf u})\|_{L^2}^2+\beta\|{\bf u}_{\boldsymbol\tau}\|_{L^2(\Gamma)}^2
\right)+\|{\bf u}_t\|_{L^2}^2               \nonumber
\\
&=-\int_\Omega{\bf u}\cdot\nabla{\bf u}\cdot{\bf u}_t {\rm d}x+ \int_\Gamma\left(\partial_{\bf n}\phi+\gamma_{fs}^\prime(\psi)
\right) \nabla_{\boldsymbol\tau}\psi\cdot{\bf u}_{t{\boldsymbol\tau}} {\rm d}S -\int_\Omega\Delta\phi\nabla\phi\cdot{\bf u}_t{\rm d}x
\nonumber
\\
&\le C\|{\bf u}\|_{L^\infty}\|\nabla{\bf u}\|_{L^2}\|{\bf u}_t\|_{L^2}+C \|\left(
\partial_{\bf n}\phi+\gamma_{fs}^\prime(\psi)
\right)\nabla_{\boldsymbol\tau}\psi\|_{H^\frac{1}{2}(\Gamma)}    \|{\bf u}_{t{\boldsymbol\tau}}\|_{H^{-\frac{1}{2}}(\Gamma)}  \nonumber
\\
&\quad+C\|\Delta\phi\|_{L^2}\|\nabla\phi\|_{L^\infty}\|{\bf u}_t\|_{L^2}   \nonumber
\\
&\le C\|{\bf u}\|_{H^2}\|\nabla{\bf u}\|_{L^2}\|{\bf u}_t\|_{L^2} +C\|\Delta\phi\|_{L^2}\|\nabla\phi\|_{H^2}\|{\bf u}_t\|_{L^2}
\nonumber
\\
&\quad +C \left( \|\partial_{\bf n}\phi+\gamma_{fs}^\prime(\phi)\|_{L^\infty}\|\nabla\phi\|_{H^1}+ \|\nabla[\partial_{\bf n}\phi+\gamma_{fs}^\prime(\phi)]\|_{L^4}\|\nabla\phi\|_{L^4} \right) \|{\bf u}_t\|_{L^2}
\nonumber
\\
&\le C\|{\bf u}_t\|_{L^2}^2+C\|{\bf u}\|_{H^2}^2\|\nabla{\bf u}\|_{L^2}^2+C\left(\|\nabla\phi\|_{H^2}^2+1\right)\|\nabla\phi\|_{H^1}^2
+C\|\nabla\phi\|_{H^2}^2\|\Delta\phi\|_{L^2}^2    \nonumber
\\
&\le C{\mathcal E}(t)^2.
\end{align}
Integrating the result over $(0,T)$ and using Korn's inequality, we conclude
\begin{align}
&\sup\limits_{0\le t \le T}\left( \|\nabla{\bf u}\|_{L^2}^2+\beta\|{\bf u}_{\boldsymbol\tau}\|_{L^2(\Gamma)}^2
\right) +\int_{0}^{T}
\|{\bf u}_t\|_{L^2}^2 {\rm d}t\le C\int_{0}^{T}{\mathcal E}(t)^2{\rm d}t+C,  \nonumber
\end{align}
which  completes the proof of Lemma \ref{L-44}.
\end{proof}
\begin{Lemma}
\label{Lemma4.3}
Let $({\bf u},\phi,\psi)$ be the smooth solution to $(\ref{delta NSAC_1})$, $(\ref{delta NBC_1})$, $(\ref{initial1})$ on $[0,T]$, then it holds that
\begin{align}
\label{4.3}
&\sup\limits_{0\le t \le T} {\mathcal E}_2(t) +\int_{0}^{T}{\mathcal D}_2(t) {\rm d}t  \le C\int_{0}^{T}{\mathcal E}(t)^{16}{\rm d}t+C,
\end{align}
where
\begin{align}
{\mathcal E}_2(t)&= \|\phi\|_{H^3}^2+\|{\bf u}\|_{H^2}^2
+\delta\|\nabla\nabla_{\boldsymbol\tau}^2\phi\|_{L^2}^2
+ \|\nabla_{\boldsymbol\tau} L(\psi)\|_{L^2(\Gamma)}^2
+\|\nabla_{\boldsymbol\tau} (\mu-\bar\mu)\|_{L^2}^2,
\nonumber \\
{\mathcal D}_2(t)&=\|\phi_t\|_{H^2}^2
+ \delta\|\nabla\nabla_{\boldsymbol\tau}^3\psi\|_{L^2(\Gamma)}^2  +\|\nabla_{\boldsymbol\tau}^2(\mu-\bar\mu)\|_{L^2}^2
+\|\nabla_{\boldsymbol\tau}\partial_{\bf n}\phi\|_{H^1(\Gamma)}^2  ,       \nonumber
\end{align}
and $C$ is constant that depends on $\Omega$, $\beta$, $\hat{\mathcal E}(0)$ and the initial value, but independent of $\delta$.
\end{Lemma}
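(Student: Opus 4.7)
The plan is to extend the energy estimates of Lemmas \ref{Lemma4.1}--\ref{L-33} to the tangential first- and second-order levels, and to close the $H^3$-bound on $\phi$, the $H^2$-bound on ${\bf u}$ and the $H^2$-bound on $\phi_t$ by elliptic and vorticity arguments, in the spirit of Lemma \ref{final}. The crucial ingredient distinguishing the relaxation boundary condition from the dynamic one is the identity
\begin{equation*}
(\mu-\bar\mu)\big|_{\Gamma}=-L(\psi)
\end{equation*}
from Remark \ref{remark boundary}, which converts interior tangential control of $\mu-\bar\mu$ into boundary control of $L(\psi)$ and supplies the precise boundary data whenever $\partial_{\bf n}\phi$ appears after integration by parts. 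All constants are to be tracked $\delta$-uniformly so that, eventually, the limit $\delta\to 0^{+}$ can be carried out.

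First, apply $\nabla_{\boldsymbol\tau}$ to $(\ref{delta NSAC_1})_{3,4}$ and $(\ref{delta NBC_1})_3$ and test the resulting equations successively by $-\nabla_{\boldsymbol\tau}\mu$, $\nabla_{\boldsymbol\tau}\phi_t$ and $\nabla_{\boldsymbol\tau}\psi_t$. Summing and integrating by parts, the $-\Delta\phi$ contribution produces boundary traces $\int_{\Gamma}\nabla_{\boldsymbol\tau}\psi_t\,\nabla_{\boldsymbol\tau}\partial_{\bf n}\phi\,{\rm d}S$, which combine with $(\ref{delta NBC_1})_3$ and the boundary identity above to deliver the dissipation $\|\nabla_{\boldsymbol\tau}L(\psi)\|_{L^2(\Gamma)}^2$; simultaneously the $\delta\Delta_{\boldsymbol\tau}$ term yields $\delta\|\nabla\nabla_{\boldsymbol\tau}^2\phi\|_{L^2}^2$. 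All nonlinear remainders—from $\nabla_{\boldsymbol\tau}({\bf u}\cdot\nabla\phi)$, from $f'(\phi)\nabla_{\boldsymbol\tau}\phi$, and from the surface nonlinearity $\gamma_{fs}^{(2)}(\psi)\nabla_{\boldsymbol\tau}\psi$—are controlled by Lemmas \ref{Trace}, \ref{GN} and \ref{product}, each being either absorbable into the dissipation by Young's inequality or bounded by $C{\mathcal E}(t)^{k}$ with $k$ small. Repeating the same scheme with $\nabla_{\boldsymbol\tau}^{2}$ in place of $\nabla_{\boldsymbol\tau}$ yields the second-order dissipation $\|\nabla_{\boldsymbol\tau}^{2}(\mu-\bar\mu)\|_{L^2}^2$, which then upgrades to $\|\nabla_{\boldsymbol\tau}\partial_{\bf n}\phi\|_{H^1(\Gamma)}^2$ via the identity $\partial_{\bf n}\phi=L(\psi)-\gamma_{fs}^{\prime}(\psi)$ on $\Gamma$.

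Next, view $(\ref{delta NSAC_1})_4$ as the Neumann problem $-\Delta\phi=\mu-f(\phi)$ on $\Omega$ with boundary data $\partial_{\bf n}\phi=L(\psi)-\gamma_{fs}^{\prime}(\psi)$. Standard elliptic regularity, combined with the trace embedding (Lemma \ref{Trace}) and the tangential $H^1(\Gamma)$-control of $L(\psi)$ from the preceding step, yields $\|\phi\|_{H^3}^2\le C{\mathcal E}(t)^{k}$. Writing $\mu-\bar\mu=-\phi_t+\delta\Delta_{\boldsymbol\tau}\phi-{\bf u}\cdot\nabla\phi$ and differentiating tangentially then bounds $\|\nabla_{\boldsymbol\tau}(\mu-\bar\mu)\|_{L^2}^2$ pointwise in time against $\|\phi_t\|_{H^1}^2$ and the new $H^3$-bound on $\phi$. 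For ${\bf u}\in H^2$ I would follow Step~3 of Lemma \ref{final}: the vorticity $\omega=\nabla\times{\bf u}$ solves a Stokes-type Dirichlet problem with boundary data $\bigl[(2\kappa-\beta){\bf u}_{\boldsymbol\tau}+L(\psi)\nabla_{\boldsymbol\tau}\psi\bigr]\cdot\boldsymbol\tau$ (with $\kappa\equiv 0$ in the channel $\mathbb T^{2}\times(-1,1)$), so Lemmas \ref{regularity u}--\ref{ellptic} close the $H^2$-estimate. Finally, $\|\phi_t\|_{H^2}$ follows by differentiating $(\ref{delta NSAC_1})_3$ twice in $x$ and substituting the bounds just obtained together with the dissipation of Lemma \ref{L-33}.

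The main technical obstacle lies in the second-order tangential estimate: applying $\nabla_{\boldsymbol\tau}^{2}$ to the boundary equation and to the transport term ${\bf u}\cdot\nabla\phi$ generates numerous commutators among $\gamma_{fs}^{\prime}(\psi)$, $\nabla_{\boldsymbol\tau}\psi$ and ${\bf u}_{\boldsymbol\tau}$ on $\Gamma$, each of which must be bounded by sharp two-dimensional interpolation \emph{without} incurring any factor of $\delta^{-1}$. Since $\Omega\subset\mathbb R^{3}$, these cubic boundary interactions, together with the $\phi^{3}$-nonlinearity and the embedding $H^{1}\hookrightarrow L^{6}$, force the cumulative nonlinear factor on the right-hand side to saturate at the power ${\mathcal E}(t)^{16}$, accounting for the exponent in (\ref{4.3}).
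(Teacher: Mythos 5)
Your overall skeleton (tangential energy estimates at first and second order, exploiting the boundary identity $\bar\mu-\mu=-L(\psi)$ on $\Gamma$ to turn the boundary traces of $\partial_{\bf n}\phi$ into signed squares, then recovering the normal derivatives and applying Stokes regularity for ${\bf u}$) is the paper's strategy. But two of your closing steps would fail as described. First, the $H^3$-bound on $\phi$: you propose Neumann elliptic regularity for $-\Delta\phi=\mu-f(\phi)$ with boundary datum $\partial_{\bf n}\phi=L(\psi)-\gamma_{fs}'(\psi)$. To conclude $\phi\in H^3(\Omega)$ pointwise in time this requires $\partial_{\bf n}\phi\in H^{3/2}(\Gamma)$ pointwise in time, but the only boundary control available in $L^\infty_t$ is $\nabla_{\boldsymbol\tau}L(\psi)\in L^2(\Gamma)$, i.e.\ $\partial_{\bf n}\phi\in H^1(\Gamma)$; the $H^2(\Gamma)$-type control of $\partial_{\bf n}\phi$ only appears under the time integral. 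The paper avoids elliptic theory entirely: it writes $\partial_{\bf n}\partial_{\bf n}\phi=\phi_t-(1+\delta)\Delta_{\boldsymbol\tau}\phi+{\bf u}\cdot\nabla\phi+f-\bar f+\overline{\Delta\phi}$ (and its gradient) directly from $(\ref{delta NSAC_1})_{3,4}$, so that the double-normal derivatives are recovered from interior quantities already in ${\mathcal E}(t)$ together with the purely tangential bounds $\sup_t\|\nabla\nabla_{\boldsymbol\tau}^k\phi\|_{L^2}^2$, $k=1,2$. You should replace your elliptic step by this substitution.

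Second, the terms $\sup_t\bigl(\delta\|\nabla\nabla_{\boldsymbol\tau}^2\phi\|_{L^2}^2+\|\nabla_{\boldsymbol\tau}L(\psi)\|_{L^2(\Gamma)}^2+\|\nabla_{\boldsymbol\tau}(\mu-\bar\mu)\|_{L^2}^2\bigr)$ and the unweighted dissipation $\int_0^T\|\nabla\nabla_{\boldsymbol\tau}\phi_t\|_{L^2}^2\,{\rm d}t$ (needed for $\|\phi_t\|_{H^2}^2$ in ${\mathcal D}_2$) cannot be obtained by ``differentiating $(\ref{delta NSAC_1})_3$ twice in $x$ and substituting'': that substitution produces the full $\nabla^2(\mu-\bar\mu)$, of which only the tangential part is controlled, and Lemma \ref{L-33} only gives $\delta\|\nabla\nabla_{\boldsymbol\tau}\phi_t\|_{L^2}^2$, which degenerates as $\delta\to0$. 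The paper needs a separate energy identity obtained by pairing $\nabla\nabla_{\boldsymbol\tau}$ of $(\ref{delta NSAC_1})_3$ with $\nabla\nabla_{\boldsymbol\tau}\phi_t$ and $\partial_t\nabla_{\boldsymbol\tau}$ of $(\ref{delta NSAC_1})_4$ against $\nabla_{\boldsymbol\tau}(\bar\mu-\mu)$, using $(\ref{bb1})$ on the boundary; it is exactly this identity that simultaneously delivers the three sup-in-time quantities above and the unweighted $\|\nabla\nabla_{\boldsymbol\tau}\phi_t\|_{L^2}^2$ dissipation, after which $\partial_{\bf n}\partial_{\bf n}\phi_t$ is again recovered from the equation via $(\mu-\bar\mu)_t$. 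Finally, a smaller point: for $\|{\bf u}\|_{H^2}$ you invoke the vorticity--Dirichlet argument of Lemma \ref{final}, but that is the two-dimensional argument; in the channel $\mathbb T^2\times(-1,1)\subset\mathbb R^3$ the relation of Lemma \ref{ellptic} only prescribes the tangential part of $\omega\times{\bf n}$, so the Dirichlet problem for $\omega$ is not fully determined. The paper instead applies the slip-boundary Stokes regularity estimate of \cite{1} directly, which is the cleaner route here.
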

\begin{proof}[\bf Proof.] The proof consists of the following steps.
\\
{\it\bfseries Step 1. Estimates of $ \nabla \nabla_{\boldsymbol\tau}^k \phi$, $k=1, 2$.} For $k=1,2$, taking the operator $\nabla\nabla_{\boldsymbol\tau}^k$, $\nabla_{\boldsymbol\tau}^k$ and $\nabla_{\boldsymbol\tau}^k$ to $(\ref{delta NSAC_1})_{3,4}$ and $(\ref{bb1})$ respectively leads to
\begin{equation}
\label{ll-r18}
\begin{cases}
\nabla\nabla_{\boldsymbol\tau}^k(\bar\mu-\mu)
=\nabla\nabla_{\boldsymbol\tau}^k\phi_t- \delta \nabla\nabla_{\boldsymbol\tau}^k\Delta_{\boldsymbol\tau}\phi
+[\nabla\nabla_{\boldsymbol\tau}^k,{\bf u}\cdot\nabla]\phi
+{\bf u}\cdot\nabla^2\nabla_{\boldsymbol\tau}^k\phi,&\rm in\quad\Omega\times(0,T),
\\
\nabla_{\boldsymbol\tau}^k\Delta\phi=-\nabla_{\boldsymbol\tau}^k\mu
+\nabla_{\boldsymbol\tau}^k(\phi^3-\phi),&\rm in\quad\Omega\times(0,T),
\\
\nabla_{\boldsymbol\tau}^k(\bar\mu-\mu)= -\nabla_{\boldsymbol\tau}^k\partial_{\bf n}\phi
-\nabla_{\boldsymbol\tau}^k\gamma_{fs}^\prime(\psi) ,&\rm on\quad\Gamma\times(0,T),
\end{cases}
\end{equation}
where $[A,B]=AB-BA$ is commutators.  We next consider
\begin{align}
\label{ll-r19}
-\langle\nabla_{\boldsymbol\tau}^k(\bar\mu-\mu),\nabla_{\boldsymbol\tau}^k\Delta\phi\rangle
&=\langle\nabla\nabla_{\boldsymbol\tau}^k(\bar\mu-\mu),\nabla_{\boldsymbol\tau}^k\nabla\phi\rangle
-\int_\Gamma\nabla_{\boldsymbol\tau}^k(\bar\mu-\mu)\nabla_{\boldsymbol\tau}^k\partial_{\bf n}\phi {\rm d}S \nonumber
\\
&=P_1^\delta+P_2^\delta.
\end{align}
Replacing $(\ref{ll-r18})_1$ into $P_1^\delta$, we have
\begin{align}
P_1^\delta&=\int_\Omega\left(\nabla\nabla_{\boldsymbol\tau}^k\phi_t -\delta\nabla\nabla_{\boldsymbol\tau}^k \Delta_{\boldsymbol\tau}\phi+[\nabla\nabla_{\boldsymbol\tau}^k,{\bf u}\cdot\nabla]\phi
+{\bf u}\cdot\nabla^2\nabla_{\boldsymbol\tau}^k\phi
\right)\cdot\nabla\nabla_{\boldsymbol\tau}^k\phi {\rm d}x  \nonumber
\\
&=\frac{1}{2}\frac{\rm d}{{\rm d}t}\|\nabla\nabla_{\boldsymbol\tau}^k\phi\|_{L^2}^2 + \delta \|\nabla\nabla_{\boldsymbol\tau}^{k+1}\phi\|_{L^2}^2 +\int_\Omega
[\nabla\nabla_{\boldsymbol\tau}^k,{\bf u}\cdot\nabla]\phi
\cdot\nabla\nabla_{\boldsymbol\tau}^k\phi {\rm d}x,   \nonumber
\end{align}
where we have used the following fact that
\begin{align}
\int_\Omega{\bf u}\cdot\nabla^2\nabla_{\boldsymbol\tau}^k\phi
\cdot\nabla\nabla_{\boldsymbol\tau}^k\phi {\rm d}x&=\frac{1}{2}\int_\Omega{\bf u}\cdot\nabla|\nabla\nabla_{\boldsymbol\tau}^k\phi|^2{\rm d}x  \nonumber
\\
&=-\frac{1}{2}\int_\Omega{\rm div}{\bf u}|\nabla\nabla_{\boldsymbol\tau}^k\phi|^2{\rm d}x+\frac{1}{2}\int_\Gamma{\bf u}\cdot{\bf n}|\nabla\nabla_{\boldsymbol\tau}^k\phi|^2{\rm dS}=0. \nonumber
\end{align}
Then, using $(\ref{ll-r18})_3$, $P_2^\delta$ can be rewritten as
\begin{align}
\label{P2P2}
P_2^\delta&=\int_\Gamma\left(\nabla_{\boldsymbol\tau}^k\partial_{\bf n}\phi+ \nabla_{\boldsymbol\tau}\gamma_{fs}^\prime(\psi) \right)\nabla_{\boldsymbol\tau}^k\partial_{\bf n}\phi {\rm d}S   \nonumber
\\
&=\|\nabla_{\boldsymbol\tau}^k\partial_{\bf n}\phi\|_{L^2(\Gamma)}^2
+\int_\Gamma\nabla_{\boldsymbol\tau}^k\gamma_{fs}^\prime(\psi)\nabla_{\boldsymbol\tau}^k\partial_{\bf n}\phi {\rm d}S   ,
\end{align}
whereas the term on the left-hand side of (\ref{ll-r19}) can be rewritten from below by means of $(\ref{ll-r18})_2$
\begin{align}
\label{ll-r20}
-\langle\nabla_{\boldsymbol\tau}^k(\mu-\bar\mu),\nabla_{\boldsymbol\tau}^k\Delta\phi\rangle
&=-\int_\Omega\nabla_{\boldsymbol\tau}^k(\mu-\bar\mu)\left[-\nabla_{\boldsymbol\tau}^k\mu
+\nabla_{\boldsymbol\tau}^k(\phi^3-\phi)
\right]{\rm d}x  \nonumber
\\
&=-\|\nabla_{\boldsymbol\tau}^k(\mu-\bar\mu)\|_{L^2}^2
-\int_\Omega\nabla_{\boldsymbol\tau}^k(\mu-\bar\mu)\nabla_{\boldsymbol\tau}^k
(\phi^3-\phi){\rm d}x.
\end{align}
Substituting $P_1^\delta$, $P_2^\delta$, (\ref{ll-r20}) into (\ref{ll-r18}), we obtain
\begin{align}
\label{ll-r21}
&\frac{1}{2}\frac{\rm d}{{\rm d}t}\|\nabla\nabla_{\boldsymbol\tau}^k\phi\|_{L^2}^2
+ \delta \|\nabla\nabla_{\boldsymbol\tau}^{k+1}\phi\|_{L^2}^2 +\|\nabla_{\boldsymbol\tau}^k(\mu-\bar\mu)\|_{L^2}^2
+\|\nabla_{\boldsymbol\tau}^k\partial_{\bf n}\phi\|_{L^2(\Gamma)}^2 \nonumber
\\
&=-\int_\Omega
[\nabla\nabla_{\boldsymbol\tau}^k,{\bf u}\cdot\nabla]\phi
\cdot\nabla\nabla_{\boldsymbol\tau}^k\phi {\rm d}x-\int_\Omega\nabla_{\boldsymbol\tau}^k(\mu-\bar\mu)\nabla_{\boldsymbol\tau}^k
(\phi^3-\phi){\rm d}x- \int_\Gamma\nabla_{\boldsymbol\tau}^k\gamma_{fs}^\prime(\psi)\nabla_{\boldsymbol\tau}^k\partial_{\bf n}\phi {\rm d}S    \nonumber
\\
&=\sum_{i=1}^3 H_i^k.
\end{align}

Next, for the case $k=1$, the right-hand side of (\ref{ll-r21}) can be estimated as follows:
\begin{align}
H_1^1&=-\int_\Omega\left(\nabla\nabla_{\boldsymbol\tau}{\bf u}\cdot\nabla\phi+\nabla_{\boldsymbol\tau}{\bf u}\cdot\nabla^2\phi+\nabla{\bf u}\cdot\nabla\nabla_{\boldsymbol\tau}\phi
\right)\cdot\nabla\nabla_{\boldsymbol\tau}\phi {\rm d}x  \nonumber
\\
&\le C\left(\|\nabla^2{\bf u}\|_{L^2}\|\nabla\phi\|_{L^\infty}+\|\nabla{\bf u}\|_{L^4}\|\nabla^2\phi\|_{L^4}
\right)\|\nabla\nabla_{\boldsymbol\tau}\phi\|_{L^2} \nonumber
\\
&\le C\left(\|\nabla^2{\bf u}\|_{L^2}\|\nabla\phi\|_{H^2}+\|\nabla{\bf u}\|_{H^1}\|\nabla^2\phi\|_{H^1}
\right)\|\nabla\nabla_{\boldsymbol\tau}\phi\|_{L^2} \nonumber
\\
&\le C\mathcal{E} (t)^{\frac{3}{2}}.   \nonumber
\\[1em]
H_2^1&=-\int_\Omega \nabla_{\boldsymbol\tau}(\mu-\bar\mu)(3\phi^2-1)\nabla_{\boldsymbol\tau}\phi {\rm d}x \nonumber \\
& \le \frac{1}{2}\| \nabla_{\boldsymbol\tau}(\mu-\bar\mu) \|_{L^{2}}^{2} + C \|3\phi^2-1\|_{L^{\infty}}^{2} \|\nabla_{\boldsymbol\tau}\phi \|_{L^{2}}^{2}
\nonumber \\
& \le \frac{1}{2}\| \nabla_{\boldsymbol\tau}(\mu-\bar\mu) \|_{L^{2}}^{2} + C (\|\phi\|_{H^{2}}^{4} +1 ) \|\nabla\phi \|_{L^{2}}^{2}
 \nonumber \\
&\le \frac{1}{2}\| \nabla_{\boldsymbol\tau}(\mu-\bar\mu) \|_{L^{2}}^{2} + C\mathcal{E} (t)^{3}.
\nonumber
\\[1em]
H_3^1&=- \int_\Gamma\gamma_{fs}^{(2)}(\psi)\nabla_{\boldsymbol\tau}\psi\nabla_{\boldsymbol\tau}\partial_{\bf n}\phi {\rm d}S\le \frac{1}{2}\|\nabla_{\boldsymbol\tau}\partial_{\bf n}\phi\|_{L^2(\Gamma)}^{2} + C \|\nabla_{\boldsymbol\tau}\psi\|_{L^2(\Gamma)}^{2}
\nonumber \\
& \le \frac{1}{2}\|\nabla_{\boldsymbol\tau}\partial_{\bf n}\phi\|_{L^2(\Gamma)}^{2} + C \|\nabla_{\boldsymbol\tau}\phi\|_{H^1}^{2}  \le \frac{1}{2}\|\nabla_{\boldsymbol\tau}\partial_{\bf n}\phi\|_{L^2(\Gamma)}^{2} + C \mathcal{E} (t).
\nonumber
\end{align}
For the case $k=2$, similarly, we obtain
\begin{align}
H_1^2&=-\int_\Omega\left[
\nabla\nabla_{\boldsymbol\tau}^2({\bf u}\cdot\nabla\phi)
-{\bf u}\cdot\nabla^2\nabla_{\boldsymbol\tau}^2\phi
\right]\cdot\nabla\nabla_{\boldsymbol\tau}^2\phi {\rm d}x  \nonumber
\\
&=-\int_\Omega\left(\nabla\nabla_{\boldsymbol\tau}^2{\bf u}\cdot\nabla\phi+2\nabla\nabla_{\boldsymbol\tau}{\bf u}\cdot\nabla\nabla_{\boldsymbol\tau}\phi+\nabla_{\boldsymbol\tau}^2{\bf u}\cdot\nabla^2\phi
\right)\cdot\nabla\nabla_{\boldsymbol\tau}^2\phi {\rm d}x \nonumber
\\
&\quad-\int_\Omega\left(2\nabla_{\boldsymbol\tau}{\bf u}\cdot\nabla^2\nabla_{\boldsymbol\tau}\phi+\nabla{\bf u}\cdot\nabla\nabla_{\boldsymbol\tau}^2\phi
\right)\cdot\nabla\nabla_{\boldsymbol\tau}^2\phi {\rm d}x \nonumber
\\
&\le C\left(\|\nabla^3{\bf u}\|_{L^2}\|\nabla\phi\|_{L^\infty}+\|\nabla^2{\bf u}\|_{L^4}\|\nabla^2\phi\|_{L^4}+\|\nabla{\bf u}\|_{L^\infty}\|\nabla^3\phi\|_{L^2}
\right)\|\nabla\nabla_{\boldsymbol\tau}^2\phi\|_{L^2} \nonumber
\\
&\le C\|\nabla{\bf u}\|_{H^2}^{2} + C \|\nabla\phi\|_{H^2}^{2}\|\nabla\nabla_{\boldsymbol\tau}^2\phi\|_{L^2}^{2}  \le C\|\nabla{\bf u}\|_{H^2}^{2} + C \mathcal{E} (t)^{2} .
\nonumber
\\[1em]
H_2^2&=-\int_\Omega\nabla_{\boldsymbol\tau}^2(\mu-\bar\mu)\left[
(3\phi^2-1)\nabla_{\boldsymbol\tau}^2\phi+6\phi|\nabla_{\boldsymbol\tau}\phi|^2
\right]{\rm d}x
\nonumber \\
& \le \| \nabla_{\boldsymbol\tau}^2(\mu-\bar\mu) \|_{L^{2}} \left( \| 3\phi^2-1 \|_{L^{\infty}} \| \nabla_{\boldsymbol\tau}^2\phi \|_{L^{2}}  +  \| \phi\|_{L^{\infty}} \| \nabla_{\boldsymbol\tau} \phi \|_{L^{4}}^{2} \right)
\nonumber \\
& \le \| \nabla_{\boldsymbol\tau}^2(\mu-\bar\mu) \|_{L^{2}} \left[ (\| \phi\|_{H^{2}}^{2} + 1 ) \| \nabla_{\boldsymbol\tau}^2\phi \|_{L^{2}}  +  \| \phi\|_{H^{2}} \| \nabla_{\boldsymbol\tau} \phi \|_{H^{1}}^{2} \right]
\nonumber \\
& \le \frac{1}{2}\| \nabla_{\boldsymbol\tau}^2(\mu-\bar\mu) \|_{L^{2}}^{2} +  C \left[ (\| \phi\|_{H^{2}}^{4} + 1 ) \| \nabla_{\boldsymbol\tau}^2\phi \|_{L^{2}}^{2}  +  \| \phi\|_{H^{2}}^{2} \| \nabla_{\boldsymbol\tau} \phi \|_{H^{1}}^{4} \right]
\nonumber \\
& \le \frac{1}{2}\| \nabla_{\boldsymbol\tau}^2(\mu-\bar\mu) \|_{L^{2}}^{2} + C\mathcal{E} (t)^{3}.
\nonumber
\\[1em]
H_3^2&=-\int_\Gamma\left(
\gamma_{fs}^{(3)}(\psi) |\nabla_{\boldsymbol\tau}\psi|^2 +\gamma_{fs}^{(2)}(\psi)
\nabla_{\boldsymbol\tau}^2\psi
\right)\nabla_{\boldsymbol\tau}^2\partial_{\bf n}\phi {\rm d}S    \nonumber
\\
&\le C \left(\|\nabla_{\boldsymbol\tau}\psi\|_{L^4(\Gamma)}^2+\|\nabla_{\boldsymbol\tau}^2\psi\|_{L^2(\Gamma)}
\right)  \|\nabla_{\boldsymbol\tau}^2\partial_{\bf n}\phi\|_{L^2(\Gamma)}  \nonumber
\\
&\le \frac{1}{2} \|\nabla_{\boldsymbol\tau}^2\partial_{\bf n}\phi\|_{L^2(\Gamma)}^{2} +  C\left(\|\nabla_{\boldsymbol\tau}\phi\|_{H^2}^4+\|\nabla_{\boldsymbol\tau}^2\phi\|_{H^1}^{2} \right)
\nonumber
\\
& \le \frac{1}{2} \|\nabla_{\boldsymbol\tau}^2\partial_{\bf n}\phi\|_{L^2(\Gamma)}^{2} + C\mathcal{E} (t)^{2} .     \nonumber
\end{align}
Then, substituting $H_1^k$ -- $H_3^k$ $(k=1,2)$ into (\ref{ll-r21}), we obtain, for $k=1,2$,
\begin{align*}
&\frac{1}{2}\frac{\rm d}{{\rm d}t}\|\nabla\nabla_{\boldsymbol\tau}^k\phi\|_{L^2}^2+ \delta \|\nabla\nabla_{\boldsymbol\tau}^{k+1}\phi\|_{L^2}^2
+\|\nabla_{\boldsymbol\tau}^k(\mu-\bar\mu)\|_{L^2}^2
+\|\nabla_{\boldsymbol\tau}^k\partial_{\bf n}\phi\|_{L^2(\Gamma)}^2 \le C\|\nabla{\bf u}\|_{H^2}^{2} + C \mathcal{E} (t)^{3}.
\end{align*}
Integrating over (0,T) and using \eqref{4.1}, one has
\begin{align}\label{ll-r22}
&\sup\limits_{0\le t\le T}\|\nabla\nabla_{\boldsymbol\tau}^k\phi\|_{L^2}^2+ \int_{0}^{T}  \delta \|\nabla\nabla_{\boldsymbol\tau}^{k+1}\phi\|_{L^2}^2  {\rm d}t
\nonumber \\
& +  \int_{0}^{T} \left( \|\nabla_{\boldsymbol\tau}^k(\mu-\bar\mu)\|_{L^2}^2 +\|\nabla_{\boldsymbol\tau}^k\partial_{\bf n}\phi\|_{L^2(\Gamma)}^2 \right) {\rm d}t  \le  C \int_{0}^{T}  \mathcal{E} (t)^{3} {\rm d}t + C,  \hspace{0.5cm} \text{for}~k=1,2.
\end{align}

{\bf Step 2. Estimates of $ \partial_{\bf n} \partial_{\bf n} \phi$ and $ \nabla\partial_{\bf n}\partial_{\bf n} \phi$.}

Next, we consider the estimates of the normal direction of $\phi$.
Direct calculation shows that
\begin{align}
\frac{1}{2}\frac{\rm d}{{\rm d}t}\|\partial_{\bf n}\phi\|_{L^2(\Gamma)}^2
&=\int_\Gamma\partial_{\bf n}\phi\partial_{\bf n}\phi_t {\rm d}S
\le C\|\partial_{\bf n}\phi\|_{H^\frac{1}{2}(\Gamma)}^2
+C\|\partial_{\bf n}\phi_t\|_{H^{-\frac{1}{2}}(\Gamma)}^2\nonumber
\\
&\le C\|\nabla\phi\|_{H^1}^2+C\|\nabla\phi_t\|_{L^2}^2 \le C{\mathcal E}(t).
\nonumber
\end{align}
After integrating over  $(0,T)$, we can deduce
\begin{align}
\|\partial_{\bf n}\phi\|_{L^2(\Gamma)}^2\le C\int_{0}^{T}{\mathcal E}(t) {\rm d}t+C.\nonumber
\end{align}
It follows from $(\ref{delta NSAC_1})_{3,4}$ that
\begin{align}
\partial_{\bf n}\partial_{\bf n}\phi=\phi_t-\Delta_{\boldsymbol\tau}\phi
-\delta\Delta_{\boldsymbol\tau}\phi+{\bf u}\cdot\nabla\phi+f-\bar f+\overline{\Delta\phi},   \nonumber
\end{align}
which together with $0<\delta\le 1$ implies
\begin{align}\label{ll-nnphi}
\|\partial_{\bf n}\partial_{\bf n}\phi\|_{L^2}^2
&\le C\left(\|\Delta_{\boldsymbol\tau}\phi\|_{L^2}^2+\|\phi_t\|_{L^2}^2
+\|{\bf u}\cdot\nabla\phi\|_{L^2}^2+\|f\|_{L^2}+\|\bar f\|_{L^2}^2+\|\overline{\Delta\phi}\|_{L^2}^2
\right)                                                \nonumber
\\
&\le C\left(\|\nabla\nabla_{\boldsymbol\tau}\phi\|_{L^2}^2+\|\phi_t\|_{L^2}^2
+\|{\bf u}\|_{L^6}^2\|\nabla\phi\|_{L^3}^2+\|f\|_{L^2}^2
+\|\partial_{\bf n}\phi\|_{L^2(\Gamma)}^2
\right)                                                \nonumber
\\
&\le C\left(\|\nabla\nabla_{\boldsymbol\tau}\phi\|_{L^2}^2+\|\phi_t\|_{L^2}^2
+\|{\bf u}\|_{H^1}^2\|\nabla\phi\|_{L^2}\|\nabla\phi\|_{H^1}\right) \nonumber
\\
&\quad+C\left(\|\phi\|_{L^6}^6+\|\phi\|_{L^2}^2+\|\partial_{\bf n}\phi\|_{L^2(\Gamma)}^2
\right)
\nonumber
\\
&\le\frac{1}{2}\|\nabla\phi\|_{H^1}^2+ C\left(\|\nabla\nabla_{\boldsymbol\tau}\phi\|_{L^2}^2+\|\phi_t\|_{L^2}^2
+\|{\bf u}\|_{H^1}^4\|\nabla\phi\|_{L^2}^2\right) \nonumber
\\
&\quad+C\left(\|\phi\|_{H^1}^6+\|\phi\|_{L^2}^2
+\|\partial_{\bf n}\phi\|_{L^2(\Gamma)}^2\right)            \nonumber
\\
&\le\frac{1}{2} \| \partial_{\bf n}\partial_{\bf n} \phi\|_{L^2}^2+ C\left(\|\nabla\nabla_{\boldsymbol\tau}\phi\|_{L^2}^2+\|\phi_t\|_{L^2}^2
+\|{\bf u}\|_{H^1}^4\|\nabla\phi\|_{L^2}^2\right) \nonumber
\\
&\quad+C\left(\|\phi\|_{H^1}^6+\|\phi\|_{L^2}^2
+\|\partial_{\bf n}\phi\|_{L^2(\Gamma)}^2\right)   \nonumber
\\
&\le\frac{1}{2} \| \partial_{\bf n}\partial_{\bf n} \phi\|_{L^2}^2 +C\int_{0}^{T}{\mathcal E}(t)^4{\rm d}t+C,
\end{align}
where we have used \eqref{4.1}, \eqref{L-4.2} and  the fact $ \overline{\Delta\phi} = \int_{\Gamma} -\partial_{\bf n} \phi {\rm d}S \le C\|\partial_{\bf n}\phi\|_{L^2(\Gamma)}$. Thus, \eqref{ll-nnphi} together with (\ref{E6}) and ({\ref{ll-r22}}) yields
\begin{align}
\label{ll-phi2}
\|\phi\|_{H^2}^2\le C\int_{0}^{T} \mathcal{E}(t)^4{\rm d}t+C.
\end{align}

Similarly, from $(\ref{delta NSAC_1})_{3,4}$, we obtain
\begin{align}
\label{ll-D22}
\|\nabla\partial_{\bf n}\partial_{\bf n}\phi\|_{L^2}^2
&\le C\left(\|\nabla\Delta_{\boldsymbol\tau}\phi\|_{L^2}^2
+\|\nabla\phi_t\|_{L^2}^2
+\|\nabla{\bf u}\cdot\nabla\phi\|_{L^2}^2
+\|{\bf u}\cdot\nabla^2\phi\|_{L^2}^2+\|\nabla f\|_{L^2}^2
\right)                                                \nonumber
\\
&\le C\left(\|\nabla\nabla_{\boldsymbol\tau}^2\phi\|_{L^2}^2
+\|\nabla\phi_t\|_{L^2}^2
+\|\nabla{\bf u}\|_{L^2}^2\|\nabla\phi\|_{L^\infty}^2 \right) \nonumber
\\
&\quad+C\left(\|{\bf u}\|_{L^6}^2\|\nabla^2\phi\|_{L^3}^2+\|3\phi^2-1\|_{L^\infty}^2
\|\nabla\phi\|_{L^2}^2
\right)                                                \nonumber
\\
&\le C\left(\|\nabla\nabla_{\boldsymbol\tau}^2\phi\|_{L^2}^2
+\|\nabla\phi_t\|_{L^2}^2
+\|\nabla{\bf u}\|_{L^2}^2\|\nabla\phi\|_{L^2}^\frac{1}{2}\|\nabla\phi\|_{H^2}^\frac{3}{2}
\right)                                                \nonumber
\\
&\quad+C\left(\|{\bf u}\|_{H^1}^2\|\nabla^2\phi\|_{L^2}\|\nabla^2\phi\|_{H^1}+\|\phi\|_{H^2}^4
\|\nabla\phi\|_{L^2}^2+\|\nabla\phi\|_{L^2}^2\right)  \nonumber
\\
&\le\frac{1}{2}\left(\|\nabla\phi\|_{H^2}^2+\|\nabla^2\phi\|_{H^1}^2
\right)+C\left(\|\nabla\nabla_{\boldsymbol\tau}^2\phi\|_{L^2}^2
+\|\nabla\phi_t\|_{L^2}^2
\right)                                               \nonumber
\\
&\quad+C\left(\|\nabla{\bf u}\|_{L^2}^8\|\nabla\phi\|_{L^2}^2+\|{\bf u}\|_{H^1}^4\|\nabla^2\phi\|_{L^2}^2+\|\phi\|_{H^2}^4\|\nabla\phi\|_{L^2}^2
+\|\nabla^2\phi\|_{L^2}^2
\right)                                               \nonumber
\\
&\le \frac{1}{2} \|\nabla \partial_{\bf n} \partial_{\bf n} \phi\|_{L^2}^2+C\left(\|\nabla\nabla_{\boldsymbol\tau}^2\phi\|_{L^2}^2
+\|\nabla\phi_t\|_{L^2}^2+\|\nabla{\bf u}\|_{L^2}^8\right) \nonumber
\\
&\quad+C\left(\|{\bf u}\|_{H^1}^4\|\nabla^2\phi\|_{L^2}^2+\|\phi\|_{H^2}^4
+\|\nabla^2\phi\|_{L^2}^2
\right)                                               \nonumber
\\
&\le\frac{1}{2} \|\nabla \partial_{\bf n} \partial_{\bf n} \phi\|_{L^2}^2 +C\int_{0}^{T}{\mathcal E}(t)^{8}{\rm d}t+C.
\end{align}
Combining \eqref{ll-r22} and \eqref{ll-D22}, we have
\begin{align}\label{ll-phi33}
\|\nabla^3\phi\|_{L^2}\le C\|\nabla\nabla_{\boldsymbol\tau}{^2}\phi\|_{L^2}+C\|\nabla\partial_{\bf n}\partial_{\bf n}\phi\|_{L^2} \le C\int_{0}^{T}{\mathcal E}(t)^{8}{\rm d}t+C.
\end{align}
{\it\bfseries Step 3. Estimates of $\|{\bf u}\|_{L^\infty(0,T;H^2)}$}. A straight application of \cite{1} (see Theorem 1.2 therein), using Young inequality, we obtain
\begin{align}
\|{\bf u}\|_{H^2}^2 &\le C\left(\|{\bf u}\cdot\nabla{\bf u}\|_{L^2}^2+\|{\rm div}(\nabla\phi\otimes\nabla\phi)\|_{L^2}^2+\|{\bf u}_t\|_{L^2}^2 \right)
\nonumber \\
& \quad + C \|\left(\partial_{\bf n}\phi+\gamma_{fs}^\prime(\psi)
\right)\nabla_{\boldsymbol\tau}\psi \|_{H^{\frac{1}{2}}(\Gamma)}^2    + C\| {\bf u}_{\boldsymbol\tau} \|_{H^{\frac{1}{2}}(\Gamma)}^2
\nonumber \\
&\le C\left(\|{\bf u}\|_{L^\infty}^2\|\nabla{\bf u}\|_{L^2}^2+\|\Delta\phi\|_{L^2}^2\|\nabla\phi\|_{L^\infty}^2
+\|{\bf u}_t\|_{L^2}^2 \right)
\nonumber \\
& \quad +C \|\left(\partial_{\bf n}\phi+\gamma_{fs}^\prime(\phi)\right)\nabla\phi\|_{H^1}^2+ C \| {\bf u}\|_{H^{1}}^{2}
\nonumber \\
&\le C\left(\|{\bf u}\|_{L^2}^\frac{1}{2}\|{\bf u}\|_{H^2}^\frac{3}{2}\|\nabla{\bf u}\|_{L^2}^2+\|\Delta\phi\|_{L^2}^2\|\nabla\phi\|_{H^2}^2 +\|{\bf u}_t\|_{L^2}^2 \right)
\nonumber \\
& \quad +\|\partial_{\bf n}\phi+\gamma_{fs}^\prime(\phi)\|_{L^\infty}^2 \|\nabla\phi\|_{H^1}^2  + C \| \nabla\partial_{\bf n}\phi  + \nabla\phi\|_{L^{4}}^{2}\|\nabla\phi\|_{L^4}^{2}  + C \| {\bf u}\|_{H^{1}}^{2}
\nonumber \\
&\le\frac{1}{2}\|{\bf u}\|_{H^2}^2+C\|{\bf u}\|_{L^2}^2\|\nabla{\bf u}\|_{L^2}^8+C\left(\|\phi\|_{H^3}^4+\|{\bf u}_t\|_{L^2}^2+\|\nabla\phi\|_{H^2}^2 \|\nabla\phi\|_{H^1}^2 \right)
\nonumber \\
& \quad +C \|\nabla\phi\|_{H^1}^2 + C\| {\bf u}\|_{H^{1}}^{2}, \nonumber
\end{align}

which together with (\ref{E6}), (\ref{4.1}), \eqref{L-4.2}, \eqref{ll-phi2} and \eqref{ll-phi33} implies
\begin{align}
\label{ll-D8}
\|{\bf u}\|_{H^2}^2\le C\int_{0}^{T}{\mathcal E}(t)^{16}{\rm d}t+C.
\end{align}
{\it\bfseries Step 4. Estimates of $\|\phi_t\|_{L^2(0,T;H^2)}$.}
Taking the operator $\nabla\nabla_{\boldsymbol\tau} $, $\partial_t\nabla_{\boldsymbol\tau} $ and $\nabla_{\boldsymbol\tau} $ to $(\ref{delta NSAC_1})_3$, $(\ref{delta NSAC_1})_4$ and (\ref{bb1}) respectively leads to
\begin{equation}
\label{ll-r10}
\begin{cases}
\nabla\nabla_{\boldsymbol\tau}(\bar\mu-\mu)
=\nabla\nabla_{\boldsymbol\tau}\phi_t -\delta \nabla\nabla_{\boldsymbol\tau} \Delta_{\boldsymbol\tau} \phi
+[\nabla\nabla_{\boldsymbol\tau},{\bf u}\cdot\nabla]\phi
+{\bf u}\cdot\nabla^2\nabla_{\boldsymbol\tau}\phi,&\rm in\quad\Omega\times(0,T),
\\
\nabla_{\boldsymbol\tau}\Delta\phi_t=-\nabla_{\boldsymbol\tau}\mu_t
+\partial_t\nabla_{\boldsymbol\tau}(\phi^3-\phi),&\rm in\quad\Omega\times(0,T),
\\
\nabla_{\boldsymbol\tau}(\bar\mu-\mu) = -\nabla_{\boldsymbol\tau} L(\psi) =-\nabla_{\boldsymbol\tau}\partial_{\bf n}\phi
-\nabla_{\boldsymbol\tau}\gamma_{fs}^\prime(\psi),&\rm on\quad\Gamma\times(0,T),
\end{cases}
\end{equation}
where $[A,B]=AB-BA$ is commutators. We consider
\begin{align}
\label{ll-r11}
-\langle\nabla_{\boldsymbol\tau}(\bar\mu-\mu),\nabla_{\boldsymbol\tau}\Delta\phi_t\rangle
&=\langle\nabla\nabla_{\boldsymbol\tau}(\bar\mu-\mu),\nabla_{\boldsymbol\tau}\nabla\phi_t\rangle
-\int_\Gamma\nabla_{\boldsymbol\tau} (\bar\mu-\mu)\nabla_{\boldsymbol\tau}\partial_{\bf n}\phi_t {\rm d}S \nonumber
\\
&=Q_1+Q_2.
\end{align}
Putting $(\ref{ll-r10})_1$ into $Q_1$, we get
\begin{align}
Q_1&=\int_\Omega\left(\nabla\nabla_{\boldsymbol\tau}\phi_t  -\delta \nabla\nabla_{\boldsymbol\tau} \Delta_{\boldsymbol\tau} \phi +[\nabla\nabla_{\boldsymbol\tau},{\bf u}\cdot\nabla]\phi
+{\bf u}\cdot\nabla^2\nabla_{\boldsymbol\tau}\phi
\right)\cdot\nabla\nabla_{\boldsymbol\tau}\phi_t{\rm d}x  \nonumber
\\
&=\|\nabla\nabla_{\boldsymbol\tau}\phi_t\|_{L^2}^2 + \frac{\delta}{2} \frac{\rm d}{{\rm d}t} \|\nabla\nabla_{\boldsymbol\tau}^{2}\phi\|_{L^2}^2+\int_\Omega\left(
[\nabla\nabla_{\boldsymbol\tau},{\bf u}\cdot\nabla]\phi
+{\bf u}\cdot\nabla^2\nabla_{\boldsymbol\tau}\phi
\right)\cdot\nabla\nabla_{\boldsymbol\tau}\phi_t{\rm d}x.   \nonumber
\end{align}
Similarly, $Q_2$ could be rewritten by using $(\ref{ll-r10})_3$ as follows
\begin{align}
Q_2&=\int_\Gamma\nabla_{\boldsymbol\tau} L(\psi)\nabla_{\boldsymbol\tau} \partial_{\bf n}\phi_t {\rm d}S        \nonumber
\\
&=\int_\Gamma\nabla_{\boldsymbol\tau} L(\psi)\left(
\partial_t\nabla_{\boldsymbol\tau} L(\psi)-\partial_t\nabla_{\boldsymbol\tau} \gamma_{fs}^\prime(\psi)
\right){\rm d}S   \nonumber
\\
&=\frac{1}{2}\frac{\rm d}{{\rm d}t}\|\nabla_{\boldsymbol\tau} L(\psi)\|_{L^2(\Gamma)}^2
-\int_\Gamma\nabla_{\boldsymbol\tau} L(\psi)\partial_t\nabla_{\boldsymbol\tau} \gamma_{fs}^\prime(\psi) {\rm d}S.   \nonumber
\end{align}
Thanks to $(\ref{ll-r10})_2$, the left-hand side of (\ref{ll-r11}) can be calculated as follows
\begin{align}
\label{ll-r12}
-\langle\nabla_{\boldsymbol\tau} (\bar\mu-\mu),\nabla_{\boldsymbol\tau} \Delta\phi_t\rangle
&=\int_\Omega\nabla_{\boldsymbol\tau} (\bar\mu-\mu)\left[\nabla_{\boldsymbol\tau} \mu_t
-\partial_t\nabla_{\boldsymbol\tau} (\phi^3-\phi)
\right]{\rm d}x  \nonumber
\\
&=-\frac{1}{2}\frac{\rm d}{{\rm d}t}\|\nabla_{\boldsymbol\tau}(\mu-\bar\mu)\|_{L^2}^2
-\int_\Omega\nabla_{\boldsymbol\tau}(\bar\mu-\mu)\partial_t\nabla_{\boldsymbol\tau}
(\phi^3-\phi){\rm d}x.
\end{align}
Substituting $Q_1$, $Q_2$ and (\ref{ll-r12}) into (\ref{ll-r10}), we have
\begin{align}
\label{ll-r13}
&\frac{1}{2}\frac{\rm d}{{\rm d}t}\left( \delta \|\nabla\nabla_{\boldsymbol\tau}^{2}\phi\|_{L^2}^2 + \|\nabla_{\boldsymbol\tau}L(\psi)\|_{L^2(\Gamma)}^2
+\|\nabla_{\boldsymbol\tau}(\bar\mu-\mu)\|_{L^2}^2
\right)+\|\nabla\nabla_{\boldsymbol\tau}\phi_t\|_{L^2}^2 \nonumber
\\
&=-\int_\Omega\left(
[\nabla\nabla_{\boldsymbol\tau},{\bf u}\cdot\nabla]\phi
+{\bf u}\cdot\nabla^2\nabla_{\boldsymbol\tau}\phi
\right)\cdot\nabla\nabla_{\boldsymbol\tau}\phi_t{\rm d}x  \nonumber
\\
&\quad-\int_\Omega\nabla_{\boldsymbol\tau}(\bar\mu-\mu)\partial_t\nabla_{\boldsymbol\tau}
(\phi^3-\phi){\rm d}x+ \int_\Gamma\nabla_{\boldsymbol\tau}L(\psi)\partial_t\nabla_{\boldsymbol\tau}\gamma_{fs}^\prime(\psi) {\rm d}S     \nonumber
\\
&=\sum_{i=1}^3 R_i.
\end{align}
We next need to control the terms on the right-hand side of the above equality.
\begin{align}
R_1&=-\int_\Omega\left(
[\nabla\nabla_{\boldsymbol\tau},{\bf u}\cdot\nabla]\phi
+{\bf u}\cdot\nabla^2\nabla_{\boldsymbol\tau}\phi
\right)\cdot\nabla\nabla_{\boldsymbol\tau}\phi_t{\rm d}x  \nonumber
\\
&=-\int_\Omega\left(
\nabla\nabla_{\boldsymbol\tau}{\bf u}\cdot\nabla\phi+\nabla{\bf u}\cdot\nabla\nabla_{\boldsymbol\tau}\phi+\nabla_{\boldsymbol\tau}{\bf u}\cdot\nabla^2\phi
+{\bf u}\cdot\nabla^2\nabla_{\boldsymbol\tau}\phi
\right)\cdot\nabla\nabla_{\boldsymbol\tau}\phi_t{\rm d}x  \nonumber
\\
&\le C\left(\|\nabla\nabla_{\boldsymbol\tau}{\bf u}\|_{L^2}\|\nabla\phi\|_{L^\infty}+\|\nabla{\bf u}\|_{L^4}\|\nabla^2\phi\|_{L^4}+\|{\bf u}\|_{L^\infty}\|\nabla^2\nabla_{\boldsymbol\tau}\phi\|_{L^2}
\right)\|\nabla\nabla_{\boldsymbol\tau}\phi_t\|_{L^2} \nonumber
\\
&\le C\|{\bf u}\|_{H^2}\|\nabla\phi\|_{H^2} \|\nabla\nabla_{\boldsymbol\tau}\phi_t\|_{L^2} \le \frac{1}{4} \|\nabla\nabla_{\boldsymbol\tau}\phi_t\|_{L^2}^{2} + C \mathcal{E}(t)^{2} .     \nonumber
\\[1em]
R_2&=-\int_\Omega\nabla_{\boldsymbol\tau}(\bar\mu-\mu)\left[
6\phi\phi_t\nabla_{\boldsymbol\tau}\phi+(3\phi^2-1)\nabla_{\boldsymbol\tau}\phi_t
\right]{\rm d}x
\nonumber \\
& \le  \|  \nabla_{\boldsymbol\tau}(\bar\mu-\mu)  \|_{L^{2}} \left( \| \phi \|_{L^{\infty}} \| \phi_{t}\|_{L^{4}} \| \nabla_{\boldsymbol\tau}\phi \|_{L^{4}} + \| 3\phi^{2} -1 \|_{L^{\infty}} \| \nabla_{\boldsymbol\tau} \phi_{t}\|_{L^{2}} \right)
\nonumber \\
& \le  C \|  \nabla_{\boldsymbol\tau}(\bar\mu-\mu)  \|_{L^{2}} \left[ \| \phi \|_{H^{2}} \| \phi_{t}\|_{H^{1}} \| \nabla_{\boldsymbol\tau}\phi \|_{H^{1}} + (\| \phi \|_{H^{2}}^{2} + 1) \| \nabla_{\boldsymbol\tau} \phi_{t}\|_{L^{2}} \right]
\nonumber \\
& \le  C \mathcal{E}(t)^{2}.
\nonumber
\\[1em]
R_3&= \int_\Gamma\nabla_{\boldsymbol\tau}L(\psi)\left(
\gamma_{fs}^{(3)}(\psi)\psi_t\nabla_{\boldsymbol\tau}\psi+\gamma_{fs}^{(2)}(\psi)\nabla_{\boldsymbol\tau}\psi_t
\right){\rm d}S     \nonumber
\\
&\le C\|\nabla_{\boldsymbol\tau}L(\psi)\|_{L^2(\Gamma)}
\|\psi_t\|_{L^4(\Gamma)}
\|\nabla_{\boldsymbol\tau}\psi\|_{L^4(\Gamma)}+C\|\nabla_{\boldsymbol\tau}L(\psi)\|_{L^2(\Gamma)}
\|\nabla_{\boldsymbol\tau}\psi_t\|_{L^2(\Gamma)}     \nonumber
\\
&\le C \|\nabla_{\boldsymbol\tau}L(\psi)\|_{L^2(\Gamma)}
\|\psi_t\|_{H^1(\Gamma)} \|\nabla_{\boldsymbol\tau}\psi\|_{H^1(\Gamma)}+C \|\nabla_{\boldsymbol\tau}L(\psi)\|_{L^2(\Gamma)}\|\nabla_{\boldsymbol\tau}\psi_t\|_{L^2(\Gamma)}
\nonumber
\\
&\le C \|\nabla_{\boldsymbol\tau}L(\psi)\|_{L^2(\Gamma)}  (\|\phi_t\|_{H^1} + \| \nabla_{\boldsymbol\tau} \phi_t\|_{H^1})\|\nabla_{\boldsymbol\tau}\phi\|_{H^2}+C \|\nabla_{\boldsymbol\tau}L(\psi)\|_{L^2(\Gamma)}    \|\nabla_{\boldsymbol\tau}\phi_t\|_{H^1}
\nonumber
\\
&\le \frac{1}{4} \|\nabla\nabla_{\boldsymbol\tau}\phi_t\|_{L^2}^{2} + C\mathcal{E}(t)^{2} .     \nonumber
\end{align}
It is worth noting that
\begin{align}
\|\nabla_{\boldsymbol\tau}L(\psi_0)\|_{L^2(\Gamma)}^2\le C\left(\|\nabla_{\boldsymbol\tau}\partial_{\bf n}\phi_0\|_{L^2(\Gamma)}^2
+\|\nabla_{\boldsymbol\tau}\psi_0\|_{L^2(\Gamma)}^2
\right)\le C\|\phi_0\|_{H^3}^2. \nonumber
\end{align}
Then, substituting $R_1$ -- $R_3 $ into (\ref{ll-r13}), integrating the result over (0,T) to get
\begin{align}
\label{ll-r14}
&\sup\limits_{0\le t \le T}\left( \delta \|\nabla\nabla_{\boldsymbol\tau}^{2}\phi\|_{L^2}^2 + \|\nabla_{\boldsymbol\tau} L(\psi)\|_{L^2(\Gamma)}^2
+\|\nabla_{\boldsymbol\tau} (\bar\mu-\mu)\|_{L^2}^2 \right)
\nonumber \\
& \quad +\int_{0}^{T}\|\nabla\nabla_{\boldsymbol\tau} \phi_t\|_{L^2}^2 {\rm d}t \le C\int_{0}^{T} \mathcal {E}(t)^{2}  {\rm d}t + C.
\end{align}
Finally, it remains for us the normal direction estimates for $\nabla^2\phi_t$.
It follows from $(\ref{delta NSAC_1})_4$ that
\begin{align}
\bar\mu_t=\frac{1}{|\Omega|}\int_\Omega\mu_t{\rm d}x=\frac{1}{|\Omega|}\int_\Omega
\left(-\Delta\phi_t+f_t\right){\rm d}x=\frac{1}{|\Omega|}\int_\Gamma\partial_{\bf n}\phi_t {\rm d}S+\frac{1}{|\Omega|}\int_\Omega f_t{\rm d}x,
\nonumber
\end{align}
which together with \eqref{4.1} and \eqref{ll-r14} yields
\begin{align}
\label{ll-r15}
&\int_{0}^{T}\|\partial_{\bf n}\partial_{\bf n}\phi_t\|_{L^2}^2{\rm d}t \nonumber
\\
&\le C\int_{0}^{T}\left(
\|\Delta_{\boldsymbol\tau}\phi_t\|_{L^2}^2+\|(\mu-\bar\mu)_t\|_{L^2}^2+\|\bar\mu_t\|_{L^2}^2
+\|(3\phi^2-1)\phi_t\|_{L^2}^2
\right) {\rm d}t   \nonumber
\\
&\le\int_{0}^{T}\left(
\|\nabla\nabla_{\boldsymbol\tau}\phi_t\|_{L^2}^2+\|(\mu-\bar\mu)_t\|_{L^2}^2
+\|\partial_{\bf n}\phi_t\|_{L^2(\Gamma)}^2+\|3\phi^2-1\|_{L^\infty}^2\|\phi_t\|_{L^2}^2
\right) {\rm d}t   \nonumber
\\
&\le\int_{0}^{T}\left(
\|\nabla\nabla_{\boldsymbol\tau}\phi_t\|_{L^2}^2+\|(\mu-\bar\mu)_t\|_{L^2}^2
+\|\partial_{\bf n}\phi_t\|_{L^2(\Gamma)}^2+\|\nabla\phi_t\|_{L^2}^2
\right) {\rm d}t   \nonumber
\\
&\le C\int_{0}^{T} \mathcal {E}(t)^{2}  {\rm d}t + C.
\end{align}
Combining with \eqref{ll-phi2}, \eqref{ll-phi33}, \eqref{ll-D8}, \eqref{ll-r14} and \eqref{ll-r15} leads \eqref{4.3}, hence, the proof of Lemma \ref{Lemma4.3} is completed.
\end{proof}

\begin{proof}[\bf Proof of Proposition $\ref{TH im}$.]
To prove Proposition \ref{TH im}, we putting Lemma $\ref{Lemma4.1}-\ref{Lemma4.3}$ together and obtain that
\begin{align}
\sup\limits_{0\le t \le T}{\mathcal E}(t)\le C_0+C_1T  \left(\sup\limits_{0\le t \le T}  {\mathcal E}(t)
\right)^{16},                               \nonumber
\end{align}
where $C_0$ and $C_1$ are constants that depend on $\Omega$, $\beta$, $\hat{\mathcal E}(0)$ and initial value, but not on $\delta$. Just as in Section 9 of \cite{8}, which explain how our estimates allow the construction of solution on a time interval independent of $\delta$, thus we conclude
\begin{align}
\label{yizhi}
\sup\limits_{0\le t \le T_0}{\mathcal E}(t)\le 2C_0.
\end{align}
The proof of Proposition \ref{TH im} is completed.
\end{proof}
\subsubsection{\bf Proof of Theorem \ref{relaxation local}}\quad
It is easy to verify that the conditions of Theorem \ref{delta problem} are satisfied. Now, to prove Theorem \ref{relaxation local}, we first construct a sequence of $\delta$-approximate
solutions $({\bf u}^\delta,\phi^\delta, \psi^{\delta})$. Due to Theorem \ref{delta problem}, the initial boundary value problem (\ref{delta NSAC_1}), (\ref{delta NBC_1}), (\ref{initial1}) with initial data $({\bf u}_0^\delta,\phi_0^\delta, \psi_{0}^{\delta} )$ has a classical solution $({\bf u}^\delta,\phi^\delta, \psi^{\delta} )$ on $\Omega\times(0,T^*)$. Moreover, Proposition \ref{TH im} show that the solution $({\bf u}^\delta,\phi^\delta,  \psi^{\delta} )$ satisfies (\ref{yizhi}) for any $0<t<T_0$, and $C_0$ is independent of $\delta$. With all the estimates at hand, we can  easily extracts subsequences converging to some limit $({\bf u},\phi, \psi )$ in the weak sense. Then letting $\delta\rightarrow 0$, we deduce that $({\bf u},\phi,  \psi )$ is a strong solution to (\ref{delta NSAC_1}), (\ref{delta NBC_1}), (\ref{initial1}).
\vskip2mm
It remains to prove the uniqueness of solutions. To this aim, let $({\bf u}_1,\phi_1,  \psi_{1} )$ and $({\bf u}_2,\phi_2,  \psi_{2} )$ be two solutions of the original problem (\ref{NSAC1})--(\ref{initial1}). Let ${\bf u}={\bf u}_1-{\bf u}_2$, $\phi=\phi_1-\phi_2$, $\psi=\psi_1-\psi_2$, $\mu=\bar\mu_1-\bar\mu_2$ and $p=p_1-p_2$. Then, we deduce that $({\bf u},\phi,  \psi )$ and the corresponding $\mu$, $\nabla p$ satisfying
\begin{equation}
\label{U1}
\begin{cases}
{\bf u}_t-{\rm div}{\mathbb S}({\bf u})+\nabla p=-({\bf u}\cdot\nabla){\bf u}_1-({\bf u}_2\cdot\nabla){\bf u}-\Delta\phi\cdot\nabla\phi_1-\Delta\phi_2\cdot\nabla\phi ,&\rm in\quad\Omega\times(0,T),
\\
{\rm div}{\bf u}=0,&\rm in\quad\Omega\times(0,T),
\\
\phi_t=\bar\mu-\mu-{\bf u}\cdot\nabla\phi_1-{\bf u}_2\cdot\nabla\phi,&\rm in\quad\Omega\times(0,T),
\\
\mu=-\Delta\phi+\phi(\phi_1^2+\phi_1\phi_2+\phi_2^2-1),&\rm in\quad\Omega\times(0,T),
\end{cases}
\end{equation}
with the boundary conditions
\begin{equation}
\label{U2}
\begin{cases}
{\bf u}\cdot{\bf n}=0,&\rm on\quad\Gamma\times(0,T),
\\
\beta{\bf u}_{\boldsymbol {\tau}}+({\mathbb S}({\bf u})\cdot{\bf n})_{\boldsymbol{\tau}}=g,&\rm on\quad\Gamma\times(0,T),
\\
\psi_t=-{\bf u}_{\boldsymbol{\tau}}\cdot\nabla_{\boldsymbol{\tau}}\psi_1-{\bf u}_{2{\boldsymbol{\tau}}}\cdot\nabla_{\boldsymbol{\tau}}\psi
-\partial_{\bf n}\phi-\gamma_{fs}^{(2)}(\theta\psi_1+(1-\theta)\psi_2)\psi
,&\rm on\quad\Gamma\times(0,T),
\\
\phi\big|_\Gamma=\psi,&\rm on\quad(0,T),
\end{cases}
\end{equation}
where
\begin{align}
g=\partial_{\bf n}\phi\nabla_{\boldsymbol\tau}\psi_1
+\partial_{\bf n}\phi_2\nabla_{\boldsymbol\tau}\psi
+\gamma_{fs}^{(2)}(\theta\psi_1+(1-\theta)\psi_2)\psi\nabla_{\boldsymbol\tau}\psi_1
+\gamma_{fs}^\prime(\psi_2)\nabla_{\boldsymbol\tau}\psi,\;
\theta\in(0,1)                                  \nonumber
\end{align}
and the initial conditions
\begin{equation}
\label{U3}
({\bf u},\phi)\big|_{t=0}=(0,0),                                                      \;\rm in\;\Omega.
\end{equation}
\vskip2mm
First, multiplying $(\ref{U1})_1$ by ${\bf u}$ and integrating the result over $\Omega$ by parts, using (\ref{poincare u}) we
obtain
\begin{align}
\label{U7}
&\frac{1}{2}\frac{\rm d}{{\rm d}t}\|{\bf u}\|_{L^2}^2+\frac{1}{2}\|\mathbb S({\bf u})\|_{L^2}^2+\beta\|{\bf u}_{\boldsymbol\tau}\|_{L^2(\Gamma)}^2    \nonumber
\\
&=\int_\Omega\left(-{\bf u}\cdot\nabla{\bf u}_1-{\bf u}_2\cdot\nabla{\bf u}-\Delta\phi\nabla\phi_1-\Delta\phi_2\nabla\phi
\right)\cdot{\bf u}{\rm d}x                            \nonumber
\\
&\quad+\int_\Gamma\left[\partial_{\bf n}\phi\nabla_{\boldsymbol\tau}\psi_1
+\partial_{\bf n}\phi_2\nabla_{\boldsymbol\tau}\psi
+\gamma_{fs}^{(2)}(\theta\psi_1+(1-\theta)\psi_2)\psi\nabla_{\boldsymbol\tau}\psi_1
+\gamma_{fs}^\prime(\psi_2)\nabla_{\boldsymbol\tau}\psi
\right]\cdot{\bf u}_{\boldsymbol\tau}{\rm d}S         \nonumber
\\
&=\int_\Omega\left(-{\bf u}\cdot\nabla{\bf u}_1-{\bf u}_2\cdot\nabla{\bf u}+\nabla\phi\cdot\nabla^2\phi_1-\Delta\phi_2\nabla\phi
\right)\cdot{\bf u}{\rm d}x +\int_\Omega\nabla\phi\nabla\phi_1:\nabla{\bf u}{\rm d}x \nonumber
\\
&\quad+ \int_\Gamma\left[
\partial_{\bf n}  \phi_2 \nabla_{\boldsymbol\tau} \psi +\gamma_{fs}^{(2)}
(\theta\psi_1+(1-\theta)\psi_2)\psi\nabla_{\boldsymbol\tau}\psi_1
+\gamma_{fs}^{(2)}(\psi_2)\nabla_{\boldsymbol\tau}\psi
\right]\cdot{\bf u}_{\boldsymbol\tau}{\rm d}S
\nonumber \\
&\le\|{\bf u}\|_{L^2}^2\|\nabla{\bf u}_1\|_{L^\infty}+\|{\bf u}_2\|_{L^\infty}\|\nabla{\bf u}\|_{L^2}\|{\bf u}\|_{L^2}+\left(\|\nabla^2\phi_1\|_{L^4}+\|\Delta\phi_2\|_{L^4}\right)\|\nabla\phi\|_{L^2}\|{\bf u}\|_{L^4} \nonumber
\\
&\quad+\|\nabla\phi\|_{L^2}\|\nabla\phi_1\|_{L^\infty}\|\nabla{\bf u}\|_{L^2}+\|\partial_{\bf n}\phi\|_{H^{-\frac{1}{2}}(\Gamma)} \|\nabla_{\boldsymbol\tau}\psi_1\cdot{\bf u}_{\boldsymbol\tau}\|_{H^\frac{1}{2}(\Gamma)}   \nonumber
\\
&\quad +  \left( \| \partial_{\bf n}\phi_2 {\bf u}_{\boldsymbol\tau} \|_{H^\frac{1}{2}(\Gamma)} + \| \gamma_{fs}^{(2)}(\psi_2)  {\bf u}_{\boldsymbol\tau}   \|_{H^\frac{1}{2}(\Gamma)}  \right) \| \nabla_{\boldsymbol\tau} \psi \|_{H^{-\frac{1}{2}}(\Gamma)}
\nonumber \\
& \quad +\| \gamma_{fs}^{(2)} (\theta\psi_1+(1-\theta)\psi_2)\psi \|_{H^{-\frac{1}{2}}(\Gamma)}  \|  \nabla_{\boldsymbol\tau}\psi_1 {\bf u}_{\boldsymbol\tau} \|_{H^{\frac{1}{2}}(\Gamma)}
\nonumber \\
&\le C\|\nabla{\bf u}_1\|_{H^2}\|{\bf u}\|_{L^2}^2+C\|{\bf u}_2\|_{H^2}\|\nabla{\bf u}\|_{L^2}\|{\bf u}\|_{L^2}+C\left(\|\nabla^2\phi_1\|_{H^1}+\|\Delta\phi_2\|_{H^1}\right)\|\nabla\phi\|_{L^2}\|{\bf u}\|_{H^1} \nonumber
\\
&\quad+C\|\nabla\phi\|_{L^2}\|\nabla\phi_1\|_{H^2}\|\nabla{\bf u}\|_{L^2} +C\|\phi\|_{L^2} \left(\|\nabla\phi_1\|_{L^\infty}\|{\bf u}\|_{H^1}+\|\nabla^2\phi_1\|_{L^4}\|{\bf u}\|_{L^4}\right) \nonumber
\\
&\quad+C\left[ \left(\| \partial_{\bf n} \phi_{2} \|_{L^{\infty}} + 1 \right) \| {\bf u} \|_{H^{1}}  + \left( \| \nabla \partial_{\bf n} \phi_{2}  \|_{L^{4}} + \| \nabla \phi_{2}  \|_{L^{4}}  \right) \| {\bf u} \|_{L^{4}} \right] \| \nabla \phi\|_{L^2}
\nonumber \\
&\le\epsilon\|\nabla{\bf u}\|_{L^2}^2+C\left(\|\nabla{\bf u}_1\|_{H^2}+\|{\bf u}_2\|_{H^2}^2+1 \right)\|{\bf u}\|_{L^2}^2+C\left(\|\nabla\phi_1\|_{H^2}^2+\|\nabla\phi_2\|_{H^2}^2 + 1 \right)\|\phi\|_{H^1}^2 \nonumber
\\
&\le\epsilon\|\nabla{\bf u}\|_{L^2}^2+C\left(\|\nabla{\bf u}_1\|_{H^2}+1 \right)\|{\bf u}\|_{L^2}^2 +C\|\phi\|_{H^1}^2.
\end{align}
\vskip2mm
Next, we consider
\begin{align}
\label{U4}
\langle\mu-\bar\mu,\Delta\phi\rangle=-\langle\nabla\mu,\nabla\phi \rangle+\int_\Gamma(\mu-\bar\mu)\partial_{\bf n}\phi {\rm d}S=H_1+H_2.
\end{align}
Substituting $(\ref{U1})_3$ into $H_1$, then integration by parts gives
\begin{align}
\label{U5}
H_1&=\int_\Omega\left(\nabla\phi_t+\nabla{\bf u}\cdot\nabla\phi_1+{\bf u}\cdot\nabla^2\phi_1+\nabla{\bf u}_2\cdot\nabla\phi+{\bf u}_2\cdot\nabla^2\phi
\right)\cdot\nabla\phi {\rm d}x                          \nonumber
\\
&=\frac{1}{2}\frac{\rm d}{{\rm d}t}\|\nabla\phi\|_{L^2}^2+\int_\Omega\left(
\nabla{\bf u}\cdot\nabla\phi_1+{\bf u}\cdot\nabla^2\phi_1+\nabla{\bf u}_2\cdot\nabla\phi
\right)\cdot\nabla\phi {\rm d}x,
\end{align}
since
\begin{align}
\int_\Omega{\bf u}_2\cdot\nabla^2\phi\cdot\nabla\phi {\rm d}x&=\frac{1}{2}\int_\Omega{\bf u}_2\cdot\nabla(|\nabla\phi|^2){\rm d}x                  \nonumber
\\
&=-\frac{1}{2}\int_\Omega{\rm div}{\bf u}_2(|\nabla\phi|^2){\rm d}x
+\frac{1}{2}\int_\Gamma{\bf u}_2\cdot{\bf n}(|\nabla\phi|^2){\rm d}S=0.                            \nonumber
\end{align}
We notice that $\mu-\bar\mu=\partial_{\bf n}\phi+ \gamma_{fs}^{(2)}(\theta\psi_1
+(1-\theta)\psi_2)\psi$ on $\Gamma$, thus we deduce
\begin{align}
H_2&=\int_\Gamma\left[\partial_{\bf n}\phi+\gamma_{fs}^{(2)}
(\theta\psi_1+(1-\theta)\psi_2)\psi
\right]\partial_{\bf n}\phi {\rm d}S                          \nonumber
\\
&=\|\partial_{\bf n}\phi\|_{L^2(\Gamma)}^2+\int_{\Gamma}
\gamma_{fs}^{(2)}(\theta\psi_1+(1-\theta)\psi_2)\psi\partial_{\bf n}\phi {\rm d}S.
\end{align}
On the other hand, it follows from $(\ref{U1})_4$ that
\begin{align}
\label{U6}
\int_\Omega(\mu-\bar\mu)\Delta\phi {\rm d}x&=\int_\Omega(\mu-\bar\mu)\left[\phi(\phi_1^2+\phi_1\phi_2+\phi_2^2-1)-\mu
\right]{\rm d}x                                          \nonumber
\\
&=-\|\mu-\bar\mu\|_{L^2}^2-\bar\mu\int_\Omega(\mu-\bar\mu){\rm d}x
+\int_\Omega(\mu-\bar\mu)(\phi_1^2+\phi_1\phi_2+\phi_2^2-1)\phi {\rm d}x                                                 \nonumber
\\
&=-\|\mu-\bar\mu\|_{L^2}^2+\int_\Omega(\mu-\bar\mu)
(\phi_1^2+\phi_1\phi_2+\phi_2^2-1)\phi {\rm d}x.
\end{align}
Replacing (\ref{U5})-(\ref{U6}) into (\ref{U4}) we then deduce
\begin{align}
\label{U10}
&\frac{1}{2}\frac{\rm d}{{\rm d}t}\|\nabla\phi\|_{L^2}^2+\|\mu-\bar\mu\|_{L^2}^2
+\|\partial_{\bf n}\phi\|_{L^2(\Gamma)}^2        \nonumber
\\
&=-\int_\Omega\left(
\nabla{\bf u}\cdot\nabla\phi_1+{\bf u}\cdot\nabla^2\phi_1+\nabla{\bf u}_2\cdot\nabla\phi
\right)\cdot\nabla\phi {\rm d}x-\int_\Gamma
\gamma_{fs}^{(2)}(\theta\psi_1+(1-\theta)\psi_2)\psi\partial_{\bf n}\phi {\rm d}S \nonumber
\\
&\quad+\int_\Omega(\mu-\bar\mu)
(\phi_1^2+\phi_1\phi_2+\phi_2^2-1)\phi {\rm d}x          \nonumber
\\
&\le \|\nabla{\bf u}\|_{L^2}\|\nabla\phi_1\|_{L^\infty}\|\nabla\phi\|_{L^2}+\|{\bf u}\|_{L^4}\|\nabla^2\phi_1\|_{L^4}\|\nabla\phi\|_{L^2}
+\|\nabla{\bf u}_2\|_{L^\infty}\|\nabla\phi\|_{L^2}^2
\nonumber
\\
&\quad+C\|\partial_{\bf n}\phi\|_{L^2(\Gamma)} \|\psi\|_{L^2(\Gamma)}
+\|\mu-\bar\mu\|_{L^2}
\|\phi_1^2+\phi_1\phi_2+\phi_2^2-1\|_{L^\infty}\|\phi\|_{L^2}
\nonumber
\\
&\le\epsilon\|\nabla{\bf u}\|_{L^2}^2+\frac{1}{2}\|\mu-\bar\mu\|_{L^2}^2
+\frac{1}{2}\|\partial_{\bf n}\phi\|_{L^2(\Gamma)}^2+C\|{\bf u}\|_{L^2}^2+C\|\psi\|_{L^2(\Gamma)}^2    \nonumber
\\
&\quad+\left(\|\nabla\phi_1\|_{L^\infty}^2+\|\nabla^2\phi_1\|_{L^4}^2
+\|\nabla{\bf u}_2\|_{L^\infty}
\right)\|\nabla\phi\|_{L^2}^2
+C\|\phi_1^2+\phi_1\phi_2+\phi_2^2-1\|_{L^\infty}^2\|\phi\|_{L^2}^2
\nonumber
\\
&\le\epsilon\|\nabla{\bf u}\|_{L^2}^2+\frac{1}{2}\|\mu-\bar\mu\|_{L^2}^2
+\frac{1}{2}\|\partial_{\bf n}\phi\|_{L^2(\Gamma)}^2+C\|{\bf u}\|_{L^2}^2+C\|\phi\|_{H^1}^2                      \nonumber
\\
&\quad+\left(\|\nabla\phi_1\|_{H^2}^2+\|\nabla^2\phi_1\|_{H^1}^2
+\|\nabla{\bf u}_2\|_{H^2}+\|\phi_1^2+\phi_1\phi_2+\phi_2^2-1\|_{H^2}^2
\right)\|\phi\|_{H^1}^2                             \nonumber
\\
&\le\epsilon\|\nabla{\bf u}\|_{L^2}^2+\frac{1}{2}\|\mu-\bar\mu\|_{L^2}^2
+\frac{1}{2}\|\partial_{\bf n}\phi\|_{L^2(\Gamma)}^2+C\|{\bf u}\|_{L^2}^2+C\left(\|\nabla{\bf u}_2\|_{H^2}+1
\right)\|\phi\|_{H^1}^2.
\end{align}
Putting (\ref{U7}) and (\ref{U10}) together, due to Korn's inequality, we obtain
\begin{align}
\label{U8}
&\frac{\rm d}{{\rm d}t}\left(\|{\bf u}\|_{L^2}^2+\|\nabla\phi\|_{L^2}^2
\right)+\|\mathbb S({\bf u})\|_{L^2}^2+\beta\|{\bf u}_{\boldsymbol\tau}\|_{L^2(\Gamma)}^2+\|\mu-\bar\mu\|_{L^2}^2
+\|\partial_{\bf n}\phi\|_{L^2(\Gamma)}^2         \nonumber
\\
&\le C\epsilon\|{\mathbb S}({\bf u})\|_{L^2}^2+C\left(\|\nabla{\bf u}_1\|_{H^2}+\|\nabla{\bf u}_2\|_{H^2}+1\right)\left(\|{\bf u}\|_{L^2}^2+\|\phi\|_{H^1}^2
\right).
\end{align}
It is worth noting that, from $(\ref{U1})_3$ and integration by parts, we have
\begin{align}
\frac{\rm d}{{\rm d}t}\int_\Omega\phi {\rm d}x=\int_\Omega\phi_t{\rm d}x=\int_\Omega\left(\bar\mu-\mu-{\bf u}\cdot\nabla\phi_1-{\bf u}_2\cdot\nabla\phi\right){\rm d}x=0,
\nonumber
\end{align}
which implies $\int_\Omega\phi {\rm d}x=\int_\Omega\phi_0 {\rm d}x=0$.
Then, using Poincar$\rm\acute{e}$ inequality, we obtain
\begin{align}
\label{U9}
\|\phi\|_{L^2}^2=\|\phi-\bar\phi\|_{L^2}^2\le C\|\nabla\phi\|_{L^2}^2.
\end{align}
Combining (\ref{U8}) with (\ref{U9}), choosing $C\epsilon<1/2$, then Gronwall inequality give
\begin{align}
{\bf u}=\phi=0,\;\rm a.e.\; in\;\Omega.           \nonumber
\end{align}
This completes the proof of Theorem \ref{relaxation local}.
$\hfill\square$
\subsection{Global well-posedness}
In this section, basing on the local well-posedness of the problem $(\ref{NSAC1})$--$(\ref{initial1})$,
we present the proof of global existence by using continuation argument.

\subsubsection{\bf Time-independent estimates}\quad
We first introduce a key proposition of this subsection.
\begin{Proposition}
\label{proposition global}
Under the assumption of Theorem $\ref{Th rg}$, there exists a positive constant $\sigma$ and a small positive constant $\varepsilon_0$ such that if $({\bf u},\phi, \psi )$ is the solution of $(\ref{NSAC1})$--$(\ref{initial1})$ on $\Omega\times(0,T)$ satisfying
\begin{align}
\label{small energy}
\sup\limits_{0\le t \le T}\tilde{\mathcal E}(t)\le\sigma,
\end{align}
then there holds
\begin{align}
\label{555}
\sup\limits_{0\le t \le T}\tilde{\mathcal E}(t)+\int_{0}^{T}\tilde{\mathcal D}(t) {\rm d}t\le C_2\left(\varepsilon_0+\tilde{\mathcal E}(0)
\right).
\end{align}
Moreover, there exists a positive constant $\alpha$ such that
\begin{align}
\|{\bf u}(\cdot,t)\|_{L^2}^2
+\|L(\psi)(\cdot,t)\|_{L^2(\Gamma)}^2  +\|(\mu-\bar\mu)(\cdot,t)\|_{L^2}^2\le C\tilde{\mathcal E}(0)e^{-\alpha t},\quad for\;all\;t\ge0.
\end{align}
\end{Proposition}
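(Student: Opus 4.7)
The plan is to establish the a priori bound \eqref{555} through a bootstrap argument on the $\delta$-approximate problem, then pass to the limit $\delta\to 0$ as in the proof of Theorem \ref{relaxation local}. The working assumption \eqref{small energy} together with the smallness of $|\nu\cos\theta_s|$ (which makes $\gamma_{fs}'$ and $\gamma_{fs}''$ pointwise small) will be used in two crucial ways: first, it ensures that $\phi$ stays so close to $\pm 1$ that $f'(\phi)=3\phi^2-1\ge 1$ pointwise, providing unconditional coercivity for all quadratic forms in $\phi_t$ and $(\mu-\bar\mu)$ that arise when one tests the Allen-Cahn equation with $\mu_t$ or $\phi_{tt}$; second, any nonlinear bulk or boundary term admits a factor of $\tilde{\mathcal E}(t)^{1/2}\le\sigma^{1/2}$ that can be absorbed into the left-hand side. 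The boundary identity $\bar\mu-\mu=-L(\psi)$ from Remark \ref{remark boundary} remains the central structural tool, since it is what lets us trade boundary information on $\psi$ (for which we have no diffusion) for interior information on $\mu-\bar\mu$.

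I would first reprove the basic energy identity of Lemma \ref{Lemma4.1}, but now augmented with the coercive contribution $\int_\Omega f'(\phi)|\nabla\phi|^2\,{\rm d}x$ and the correctly signed boundary term $\|L(\psi)\|_{L^2(\Gamma)}^2$, to obtain a closed bound on $\|{\bf u}\|_{L^2}^2+\|\nabla\phi\|_{L^2}^2+\|\phi^2-1\|_{L^2}^2$. Next, mimicking Lemmas \ref{L-33}-\ref{Lemma4.3}, I would derive: (i) the $\|{\bf u}_t\|_{L^2}^2+\|\nabla\phi_t\|_{L^2}^2$ estimate by differentiating the momentum and phase-field equations in $t$ and testing against $({\bf u}_t,-\mu_t,\phi_{tt})$, using \eqref{small energy} to absorb all cubic cross terms; (ii) the tangential estimates of orders $1$ and $2$ by testing $\nabla_{\boldsymbol\tau}^k(\bar\mu-\mu)=\nabla_{\boldsymbol\tau}^k\phi_t+\nabla_{\boldsymbol\tau}^k({\bf u}\cdot\nabla\phi)$ against $-\nabla_{\boldsymbol\tau}^k\Delta\phi$ and exploiting \eqref{ll-r20} so that $\|\nabla_{\boldsymbol\tau}^k L(\psi)\|_{L^2(\Gamma)}$ is produced as a good boundary dissipation; (iii) the normal-direction regularity $\|\partial_{\bf n}\partial_{\bf n}\phi\|_{H^s}$, $\|\nabla^3\phi\|_{L^2}$ and $\|{\bf u}\|_{H^2\cap H^3}$ from the elliptic equations $-\Delta\phi=\mu-f(\phi)$, $-\mathrm{div}\,{\mathbb S}({\bf u})+\nabla p=-{\bf u}\cdot\nabla{\bf u}-\mathrm{div}(\nabla\phi\otimes\nabla\phi)-{\bf u}_t$ with GNBC, in parallel with the strategy of Lemma \ref{final}. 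Because every nonlinear remainder has the schematic form $\sigma^{1/2}\,\tilde{\mathcal D}(t)$ under assumption \eqref{small energy}, choosing $\sigma$ (hence $\varepsilon_0$) sufficiently small lets the sum of these inequalities integrate to \eqref{555} with a constant $C_2$ depending only on $\Omega$, $\beta$ and the structural constants.

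For the exponential decay, I would construct a Lyapunov functional
\begin{align*}
\mathcal F(t)=\|{\bf u}\|_{L^2}^2+\|L(\psi)\|_{L^2(\Gamma)}^2+\|\mu-\bar\mu\|_{L^2}^2+\kappa\bigl(\|\nabla\phi\|_{L^2}^2+\tfrac12\|\phi^2-1\|_{L^2}^2\bigr),
\end{align*}
with $\kappa>0$ small. Testing $(\ref{NSAC1})_3$ by $-(\mu-\bar\mu)_t$ and $(\ref{boundary})_3$ by $L(\psi)_t$ (as in the derivation leading to \eqref{H12}), combined with the GNBC momentum estimate of \eqref{D36}, produces the differential inequality
\begin{align*}
\frac{\rm d}{{\rm d}t}\mathcal F(t)+c_1\bigl(\|{\mathbb S}({\bf u})\|_{L^2}^2+\beta\|{\bf u}_{\boldsymbol\tau}\|_{L^2(\Gamma)}^2+\|\phi_t\|_{H^1}^2+\|L(\psi)\|_{L^2(\Gamma)}^2\bigr)\le c_2\sigma^{1/2}\tilde{\mathcal D}(t),
\end{align*}
where the right-hand side comes from the nonlinear and uncompensated Young-stress terms. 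Using Korn's inequality, Lemma \ref{poincare U}, the boundary identity $\bar\mu-\mu=-L(\psi)$, and the Poincar\'e bound $\|\phi^2-1\|_{L^2}\le C\|(3\phi^2-1)\phi_t\|_{L^2}^{*}$ realized through testing $(\ref{NSAC1})_3$ by $\phi^2-1$ (as in \eqref{H26}), one obtains $\mathcal F(t)\le C_3$ times the dissipation. Integrating \eqref{555} then yields $\int_0^\infty \tilde{\mathcal D}(s)\,ds<\infty$ and $\frac{d}{dt}\mathcal F+\alpha\mathcal F\le 0$ for some $\alpha>0$, from which Gronwall's inequality delivers \eqref{big time}.

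The main obstacle is closing the higher-order bootstrap: the relaxation boundary condition provides no intrinsic boundary diffusion, so tangential control on $\psi$ and hence on $\partial_{\bf n}\phi$ must be recovered exclusively via the identity $\bar\mu-\mu=-L(\psi)$ and the channel geometry. One must carefully account for the cancellation \eqref{p-equality}, which forces the analysis to remain within $\Omega=\mathbb T^2\times(-1,1)$, and verify that every boundary term generated by integration by parts against $\nabla_{\boldsymbol\tau}^k$ either vanishes by $\kappa\equiv 0$ or is absorbed via \eqref{small initial} — this is the delicate matching that ultimately determines the smallness threshold $\varepsilon_0$ and the decay rate $\alpha$.
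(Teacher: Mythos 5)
Your plan for the a priori bound \eqref{555} follows essentially the same route as the paper: basic energy identity, time-derivative estimates obtained by testing the $t$-differentiated system against $({\bf u}_t,-\mu_t,\phi_{tt})$, tangential estimates of orders $1$ and $2$ driven by the boundary identity $\bar\mu-\mu=-L(\psi)$, normal-direction recovery from $-\Delta\phi=\mu-f(\phi)$, and Stokes elliptic regularity for ${\bf u}$, with every nonlinear remainder carrying a factor $\sigma^{1/2}\tilde{\mathcal D}(t)$ that is absorbed. Two remarks on that part: the detour through the $\delta$-approximate problem is unnecessary and slightly circular (the proposition is an a priori estimate for the already-constructed solution of \eqref{NSAC1}--\eqref{initial1}, and the $\delta$-solutions are only known to exist locally), and the cancellation \eqref{p-equality} plays no role here since the $H^2$ and $H^3$ bounds for ${\bf u}$ come from the stationary Stokes estimate rather than from tangential differentiation of the momentum equation.

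The genuine gap is in the exponential decay argument. Your Lyapunov functional includes $\kappa\bigl(\|\nabla\phi\|_{L^2}^2+\tfrac12\|\phi^2-1\|_{L^2}^2\bigr)$, and the closure of the Gronwall step requires $\mathcal F(t)\le C\times(\text{dissipation})$. This cannot hold: a stationary interface profile with $\mu\equiv\bar\mu$, ${\bf u}\equiv 0$, $\phi_t\equiv 0$ has vanishing dissipation but $\|\nabla\phi\|_{L^2}^2+\|\phi^2-1\|_{L^2}^2>0$, and the mass-conserving Allen--Cahn structure does not drive $\phi$ to a constant. The auxiliary inequality you invoke, $\|\phi^2-1\|_{L^2}\le C\|(3\phi^2-1)\phi_t\|_{L^2}$, is false for the same reason. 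This is precisely why the paper's decay functional is $\mathcal A_3(t)=\|{\bf u}\|_{L^2}^2+(C_3+1)\|L(\psi)\|_{L^2(\Gamma)}^2+(C_3+1)\|\mu-\bar\mu\|_{L^2}^2$ with \emph{no} phase-gradient contribution: each of its terms is dominated by the dissipation $\mathcal B_3$ (using Korn's inequality and Lemma \ref{poincare U} for $\|{\bf u}\|_{L^2}^2$, the pointwise bound $3\phi^2-1\ge1$ to place $\|\mu-\bar\mu\|_{L^2}^2$ itself in the dissipation, and the explicit presence of $\|L(\psi)\|_{L^2(\Gamma)}^2$ there), which is what makes $\frac{\rm d}{{\rm d}t}\mathcal A_3+\alpha\mathcal A_3\le0$ legitimate. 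To repair your argument, drop the $\kappa$-terms from $\mathcal F$ and combine the ${\bf u}$-energy identity with the $k=0$ tangential/chemical-potential identity exactly as in \eqref{r38}--\eqref{r40}.
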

Finally, we will explain here a conclusion that is very important for subsequent estimates, which can be easily got due to the priori assumption (\ref{small energy}) and smallness of $\sigma$ that
\begin{align}
\label{bounded phi}
\frac{2}{3}\le\phi^2\le 4,
\end{align}
which plays a key role to guarantee
\begin{align}
3\phi^2-1\ge 1>0.
\end{align}
\vskip2mm
The remainder of this subsection is devoted to the proof of Proposition \ref{proposition global}.
Our analysis commences with the derivation of the basic energy estimate.
\begin{Lemma}
\label{rgl1}
Under the assumptions of Proposition $\ref{proposition global}$, it holds that
\begin{align}
\label{r1}
\sup\limits_{0\le t \le T}\tilde{\mathcal E}_0(t)+\int_{0}^{T}\tilde{\mathcal D}_0(t) {\rm d}t
\le C\varepsilon_0,
\end{align}
where
\begin{align}
&\tilde{\mathcal E}_0(t)=\|{\bf u}\|_{L^2}^2+\|\nabla\phi\|_{L^2}^2+\frac{1}{2}\|\phi^2-1\|_{L^2}^2,               \nonumber
\\
&\tilde{\mathcal D}_0(t)=\|{\bf u}\|_{H^1}^2+\beta\|{\bf u}_{\boldsymbol\tau}\|_{L^2(\Gamma)}^2+\|\mu-\bar\mu\|_{L^2}^2
+\|L(\psi)\|_{L^2(\Gamma)}^2.      \nonumber
\end{align}
\end{Lemma}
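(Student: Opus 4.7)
The plan is to derive the standard dissipative energy identity for system (\ref{NSAC1})--(\ref{initial1}), in the spirit of Lemma \ref{Energy} but with the relaxation boundary condition in place of the dynamic one, and then to absorb the sign-indefinite surface contribution into the smallness of the initial data. Concretely, I would first test the momentum equation $(\ref{NSAC1})_1$ with $\mathbf{u}$. Using ${\rm div}\,\mathbf{u}=0$, $\mathbf{u}\cdot\mathbf{n}=0$ on $\Gamma$, and identity (\ref{su}), the viscous term yields $\tfrac12\|\mathbb{S}(\mathbf{u})\|_{L^2}^2 + \beta\|\mathbf{u}_{\boldsymbol\tau}\|_{L^2(\Gamma)}^2 - \int_\Gamma L(\psi)\nabla_{\boldsymbol\tau}\psi\cdot\mathbf{u}_{\boldsymbol\tau}\,dS$; the pressure term vanishes by the incompressibility/boundary conditions; and the capillary term leaves an interior piece $-\int_\Omega {\rm div}(\nabla\phi\otimes\nabla\phi)\cdot\mathbf{u}\,dx$.

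Second, I would test $(\ref{NSAC1})_3$ with $\mu-\bar\mu$. The right-hand side gives $-\|\mu-\bar\mu\|_{L^2}^2$. Expanding $\mu=-\Delta\phi+f(\phi)$, integrating by parts in the Laplacian term, and using $F(\phi)=\tfrac14(\phi^2-1)^2$ produces $\tfrac12\tfrac{d}{dt}(\|\nabla\phi\|_{L^2}^2+\tfrac12\|\phi^2-1\|_{L^2}^2)$, a boundary contribution $-\int_\Gamma \psi_t\,\partial_{\mathbf{n}}\phi\,dS$, and an interior term that exactly cancels the capillary term from step one (via the standard identity $\int_\Omega \mathbf{u}\cdot\nabla\phi\,(-\Delta\phi)\,dx = -\int_\Omega {\rm div}(\nabla\phi\otimes\nabla\phi)\cdot\mathbf{u}\,dx$, together with $\int_\Omega(\phi^3-\phi)\nabla\phi\cdot\mathbf{u}\,dx=0$). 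To handle the remaining boundary term, I substitute $\partial_{\mathbf{n}}\phi = L(\psi)-\gamma_{fs}'(\psi)$ and then use $(\ref{boundary})_3$ in the form $\psi_t = -L(\psi) - \mathbf{u}_{\boldsymbol\tau}\cdot\nabla_{\boldsymbol\tau}\psi$; this produces $\|L(\psi)\|_{L^2(\Gamma)}^2$, the time derivative of $\int_\Gamma \gamma_{fs}(\psi)\,dS$, and a term $\int_\Gamma L(\psi)\nabla_{\boldsymbol\tau}\psi\cdot\mathbf{u}_{\boldsymbol\tau}\,dS$ that cancels with the one remaining from step one.

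Adding the two identities and integrating from $0$ to $t$ yields
\begin{align*}
& \tfrac12\|\mathbf{u}(t)\|_{L^2}^2+\tfrac12\|\nabla\phi(t)\|_{L^2}^2+\tfrac14\|\phi^2(t)-1\|_{L^2}^2+\int_\Gamma\gamma_{fs}(\psi(t))\,dS \\
& \quad +\int_0^t\!\bigl(\tfrac12\|\mathbb{S}(\mathbf{u})\|_{L^2}^2+\beta\|\mathbf{u}_{\boldsymbol\tau}\|_{L^2(\Gamma)}^2+\|\mu-\bar\mu\|_{L^2}^2+\|L(\psi)\|_{L^2(\Gamma)}^2\bigr)\,ds = \tilde{\mathcal{E}}_0(0)/\text{const}+\int_\Gamma\gamma_{fs}(\psi_0)\,dS.
\end{align*}
The $\|\mathbf{u}\|_{H^1}^2$ piece of $\tilde{\mathcal D}_0$ then follows from Korn's inequality (Lemma \ref{Korn}) together with the channel Poincar\'e inequality (Lemma \ref{poincare U}), applied to $\mathbf{u}\in V$ pointwise in time.

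The one delicate point is that the surface free energy $\int_\Gamma\gamma_{fs}(\psi)\,dS$ is \emph{sign-indefinite}, so it cannot simply be dropped on the left. The saving observation is that $\gamma_{fs}(s)=-\tfrac{\nu}{2}\cos\theta_s\sin(\pi s/2)$ satisfies the pointwise bound $|\gamma_{fs}(s)|\le \tfrac{\nu}{2}|\cos\theta_s|$, so $|\int_\Gamma\gamma_{fs}(\psi)\,dS|\le C|\nu\cos\theta_s|\le C\varepsilon_0$ uniformly in time by hypothesis (\ref{small initial}). Moving the left-hand surface energy to the right and combining with the initial-data bound $\tilde{\mathcal{E}}_0(0)\le C\varepsilon_0$ (which is immediate from (\ref{small initial}) since $\|\mathbf{u}_0\|_{L^2}^2\le\|\mathbf{u}_0\|_{H^2}^2\le\varepsilon_0$, $\|\nabla\phi_0\|_{L^2}^2\le\varepsilon_0$, and $\|\phi_0^2-1\|_{L^2}^2\le\varepsilon_0$) yields exactly (\ref{r1}). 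Notice that the a priori smallness assumption (\ref{small energy}) and the $L^\infty$ control (\ref{bounded phi}) on $\phi$ are not needed at this basic level; they will enter in the subsequent higher-order estimates.
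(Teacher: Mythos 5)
Your proposal is correct and follows essentially the same route as the paper: test $(\ref{NSAC1})_1$ by $\mathbf{u}$ and $(\ref{NSAC1})_3$ by $\mu-\bar\mu$, cancel the capillary and uncompensated Young stress terms, rewrite the boundary term via $\partial_{\bf n}\phi=L(\psi)-\gamma_{fs}'(\psi)$ and $(\ref{boundary})_3$ to produce $\|L(\psi)\|_{L^2(\Gamma)}^2$ and the surface energy, then bound the sign-indefinite $\int_\Gamma\gamma_{fs}(\psi)\,{\rm d}S$ by $C|\nu\cos\theta_s|\le C\varepsilon_0$ and invoke Korn plus the channel Poincar\'e inequality for the $H^1$ dissipation. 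This matches the paper's argument for Lemma \ref{rgl1} in all essentials.
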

\begin{proof}[\bf Proof.]
Testing $(\ref{NSAC1})_3$ by $(\mu-\bar\mu)$, one has
\begin{align}
\label{r2}
\int_\Omega\phi_t(\mu-\bar\mu){\rm d}x+\int_\Omega{\bf u}\cdot\nabla\phi(\mu-\bar\mu){\rm d}x=-\|\mu-\bar\mu\|_{L^2}^2.
\end{align}
Due to the incompressibility of $\bf u$ and boundary condition ${\bf u}\cdot{\bf n}=0$ on $\Gamma$, we have
\begin{align}
\label{111}
\int_\Omega{\bf u}\cdot\nabla\phi {\rm d}x=-\int_\Omega{\rm div}{\bf u}\phi {\rm d}x+\int_\Gamma{\bf u}\cdot{\bf n}{\rm d}S=0,
\end{align}
which implies
\begin{align}
\label{r3}
\int_\Omega\phi_t {\rm d}x=\int_\Omega(\bar\mu-\mu) {\rm d}x-\int_\Omega{\bf u}\cdot\nabla\phi {\rm d}x=0.
\end{align}
Thus, we have
\begin{align}
\int_\Omega\phi_t(\mu-\bar\mu){\rm d}x&=\int_\Omega\phi_t\mu {\rm d}x-\bar\mu\int_\Omega\phi_t {\rm d}x  =-\int_\Omega\Delta\phi\phi_t {\rm d}x+\int_\Omega f(\phi)\phi_t {\rm d}x
\nonumber
\\
&=\int_\Omega\nabla\phi\cdot\nabla\phi_t {\rm d}x- \int_\Gamma\psi_t\partial_{\bf n}\phi {\rm d}S+\frac{1}{4}\frac{\rm d}{{\rm d}t}\int_\Omega(\phi^2-1)^2{\rm d}x    \nonumber
\\
&=\frac{1}{2}\frac{\rm d}{{\rm d}t}\left(
\|\nabla\phi\|_{L^2}^2+\frac{1}{2}\|\phi^2-1\|_{L^2}^2
\right)- \int_\Gamma\psi_t\partial_{\bf n}\phi {\rm d}S.   \nonumber
\end{align}
It follows from $(\ref{NSAC1})_4$ and (\ref{111}) that
\begin{align}\label{6666}
\int_\Omega{\bf u}\cdot\nabla\phi(\mu-\bar\mu){\rm d}x&=\int_\Omega\mu{\bf u}\cdot\nabla\phi {\rm d}x+\bar\mu\int_\Omega{\bf u}\cdot\nabla\phi {\rm d}x                                    \nonumber
\\
&=-\int_\Omega\Delta\phi\nabla\phi\cdot{\bf u}{\rm d}x+\int_\Omega(\phi^3-\phi)\nabla\phi\cdot{\bf u} {\rm d}x  \nonumber
\\
&=-\int_\Omega{\rm div}(\nabla\phi\otimes\nabla\phi)\cdot{\bf u}{\rm d}x,
\end{align}
where we have used the following fact
\begin{align}\label{666}
\int_\Omega(\phi^3-\phi)\nabla\phi\cdot{\bf u} {\rm d}x  &=\frac{1}{2}\int_\Omega\nabla(\phi^2-1)^2\cdot{\bf u} {\rm d}x
\nonumber
\\
&=-\frac{1}{2}\int_\Omega(\phi^2-1)^2{\rm div}{\bf u}{\rm d}x+\int_\Gamma(\phi^2-1)^2{\bf u}\cdot{\bf n}{\rm d}S
=0.
\end{align}
Substituting \eqref{6666} into (\ref{r2}), we get
\begin{align}
\label{r4}
\frac{1}{2}\frac{\rm d}{{\rm d}t}\left(\|\nabla\phi\|_{L^2}^2
+\frac{1}{2}\|\phi^2-1\|_{L^2}^2
\right)
+\|\mu-\bar\mu\|_{L^2}^2= \int_\Gamma\psi_t\partial_{\bf n}\phi {\rm d}S -\int_\Omega{\rm div}(\nabla\phi\otimes\nabla\phi)\cdot{\bf u}{\rm d}x.
\end{align}
Noting that
\begin{align}
\int_\Gamma\psi_t\partial_{\bf n}\phi {\rm d}S&=\int_\Gamma\psi_t
\left(L(\psi)-\gamma_{fs}^\prime(\psi)\right) {\rm d}S             =-\frac{\rm d}{{\rm d}t}\int_\Gamma\gamma_{fs}(\psi){\rm d}S
+\int_\Gamma\psi_tL(\psi) {\rm d}S                                                     \nonumber
\\
&=-\frac{\rm d}{{\rm d}t}\int_\Gamma\gamma_{fs}(\psi){\rm d}S
+\int_\Gamma(-L(\psi)-{\bf u}_{\boldsymbol\tau}\cdot\nabla_{\boldsymbol\tau}\psi)L(\psi) {\rm d}S
\nonumber
\\
&=-\frac{\rm d}{{\rm d}t}\int_\Gamma\gamma_{fs}(\psi){\rm d}S
-\|L(\psi)\|_{L^2(\Gamma)}^2
-\int_\Gamma L(\psi){\bf u}_{\boldsymbol\tau}\cdot\nabla_{\boldsymbol\tau}\psi {\rm d}S.                    \nonumber
\end{align}
Hence, (\ref{r4}) becomes
\begin{align}
\label{r5}
&\frac{1}{2}\frac{\rm d}{{\rm d}t}\left(
\|\nabla\phi\|_{L^2}^2+\frac{1}{2}\|\phi^2-1\|_{L^2}^2
+ 2\int_\Gamma\gamma_{fs}(\psi){\rm d}S
\right)
+\|\mu-\bar\mu\|_{L^2}^2
+\|L(\psi)\|_{L^2(\Gamma)}^2            \nonumber
\\
&=\int_\Omega{\rm div}(\nabla\phi\otimes\nabla\phi)\cdot{\bf u}{\rm d}x-\int_\Gamma L(\psi){\bf u}_{\boldsymbol\tau}\cdot\nabla_{\boldsymbol\tau}\psi {\rm d}S  .
\end{align}
Then, testing $(\ref{NSAC1})_1$ by $\bf u$ and rearranging, we have
\begin{align}
\label{r6}
\frac{1}{2}\frac{\rm d}{{\rm d}t}\|{\bf u}\|_{L^2}^2+\frac{1}{2}\|{\mathbb S}({\bf u})\|_{L^2}^2+\beta\|{\bf u}_{\boldsymbol\tau}\|_{L^2(\Gamma)}^2
= \int_\Gamma L(\psi){\bf u}_{\boldsymbol\tau}\cdot\nabla_{\boldsymbol\tau}\psi {\rm d}S  -\int_\Omega {\rm div}(\nabla\phi\otimes\nabla\phi)\cdot{\bf u} {\rm d} x.
\end{align}
Combining the (\ref{r5}) with (\ref{r6}), together with Gronwall inequality shows
\begin{align}
&\sup\limits_{0\le t \le T}\tilde{\mathcal E}_1(t)+\int_{0}^{T}\tilde{\mathcal D}_1(t) {\rm d}t=\tilde{\mathcal E}_1(0),              \nonumber
\end{align}
where
\begin{align}
\tilde{\mathcal E}_1(t)&=\|{\bf u}\|_{L^2}^2+\|\nabla\phi\|_{L^2}^2+\frac{1}{2}\|\phi^2-1\|_{L^2}^2
+2\int_\Gamma\gamma_{fs}(\psi){\rm d}S ,    \nonumber
\\
\tilde{\mathcal D}_1(t)&=\|{\mathbb S}({\bf u})\|_{L^2}^2+\beta\|{\bf u}_{\boldsymbol\tau}\|_{L^2}^2+\|\mu-\bar\mu\|_{L^2}^2+ \|L(\psi)\|_{L^2(\Gamma)}^2  .
\nonumber
\end{align}
Note that
\begin{align}
\Big|\int_\Gamma\gamma_{fs}(\psi){\rm d}S \Big|\le C|\nu \cos\theta_s|\le C \varepsilon_0,
\end{align}
$\tilde{\mathcal E}_0(0)\le\tilde{\mathcal E}_1(0)+C\varepsilon_0\le C\varepsilon_0$.
Furthermore, it follows Lemma \ref{poincare U} and Korn's inequality that $\tilde{\mathcal D}_1(t)\ge C\tilde{\mathcal D}_0(t)$. Therefore, we complete the proof of Lemma \ref{rgl1}.
\end{proof}
\begin{Lemma}
\label{rgl2}
Under the assumptions of Proposition $\ref{proposition global}$, it holds that
\begin{align}
\label{r25}
&\sup\limits_{0\le t \le T}\|\phi_t\|_{H^1}^2+\int_{0}^{T}\left(\|(\mu-\bar\mu)_t\|_{L^2}^2
+\|\partial_{\bf n}\phi_t\|_{L^2(\Gamma)}^2\right) {\rm d}t  \nonumber
\\
&\le C\tilde{\mathcal E}(0)+C\int_{0}^{T}\left(\varepsilon_0+\tilde{\mathcal E}(t)^{\frac{1}{2}}
\right)\tilde{\mathcal D}(t) {\rm d}t.
\end{align}
\end{Lemma}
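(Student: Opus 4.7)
The plan is to mirror the $\delta$-approximate estimate of Lemma \ref{L-33} in the $\delta=0$ setting, closing all nonlinearities using the bootstrap smallness $\tilde{\mathcal E}(t)\le\sigma$ together with $|\nu\cos\theta_s|\le\varepsilon_0$. The decisive ingredient is the boundary identity from Remark \ref{remark boundary}, $\bar\mu-\mu=-L(\psi)$ on $\Gamma$, whose time derivative $(\bar\mu-\mu)_t=-\partial_{\bf n}\phi_t-\gamma_{fs}^{(2)}(\psi)\psi_t$ is what generates the favorable term $\|\partial_{\bf n}\phi_t\|_{L^2(\Gamma)}^2$ on the left, while $|\nu\cos\theta_s|\le\varepsilon_0$ yields $|\gamma_{fs}^{(k)}(\psi)|\le C\varepsilon_0$ for $k\ge 1$, supplying the small prefactor on all $\gamma_{fs}$-boundary contributions. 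The central identity is obtained by evaluating $-\langle(\bar\mu-\mu)_t,\Delta\phi_t\rangle_{L^2(\Omega)}$ in two complementary ways: integration by parts together with the time-differentiated bulk equation $(\bar\mu-\mu)_t=\phi_{tt}+{\bf u}_t\cdot\nabla\phi+{\bf u}\cdot\nabla\phi_t$ and the boundary identity above on the one hand, and substitution of $\Delta\phi_t=(3\phi^2-1)\phi_t-\mu_t$ together with the mass-balance $\int_\Omega(\bar\mu-\mu)_t\,{\rm d}x=0$ on the other. Equating the two expressions produces the energy identity
\begin{align*}
\tfrac{1}{2}\tfrac{d}{dt}\|\nabla\phi_t\|_{L^2}^2+\|(\mu-\bar\mu)_t\|_{L^2}^2+\|\partial_{\bf n}\phi_t\|_{L^2(\Gamma)}^2=\mathcal R_{\rm bulk}+\mathcal R_{\rm bdry}+\mathcal R_{\rm cubic},
\end{align*}
with $\mathcal R_{\rm bulk}=-\int_\Omega\nabla({\bf u}_t\cdot\nabla\phi+{\bf u}\cdot\nabla\phi_t)\cdot\nabla\phi_t\,{\rm d}x$, $\mathcal R_{\rm bdry}=-\int_\Gamma\gamma_{fs}^{(2)}(\psi)\psi_t\partial_{\bf n}\phi_t\,{\rm d}S$, and $\mathcal R_{\rm cubic}=-\int_\Omega(\bar\mu-\mu)_t(3\phi^2-1)\phi_t\,{\rm d}x$.

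I would then bound each remainder by $(\varepsilon_0+\tilde{\mathcal E}^{1/2})\tilde{\mathcal D}$ modulo absorbable fractions of the left-hand dissipation. For $\mathcal R_{\rm bulk}$, the piece $\int_\Omega({\bf u}\cdot\nabla^2\phi_t)\cdot\nabla\phi_t$ vanishes after one integration by parts using $\mathrm{div}\,{\bf u}=0$ and ${\bf u}\cdot{\bf n}=0$, and each remaining product is estimated by H\"older and Sobolev by placing the highest-derivative factor ($\|{\bf u}\|_{H^3}$, $\|{\bf u}_t\|_{H^1}$, or $\|\phi_t\|_{H^2}$) into $\tilde{\mathcal D}^{1/2}$ and the lower-derivative factors ($\|\nabla\phi\|_{H^2}$ or $\|\nabla\phi_t\|_{L^2}$) into $\tilde{\mathcal E}^{1/2}$, giving $|\mathcal R_{\rm bulk}|\le C\tilde{\mathcal E}^{1/2}\tilde{\mathcal D}$. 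For $\mathcal R_{\rm bdry}$, the bound $|\gamma_{fs}^{(2)}|\le C\varepsilon_0$, Young's inequality, and the trace-interpolation $\|\psi_t\|_{L^2(\Gamma)}^2\le C\|\phi_t\|_{L^2}\|\phi_t\|_{H^1}\le C\|\phi_t\|_{H^2}^2\le C\tilde{\mathcal D}$ (using the zero-mean Poincar\'e $\|\phi_t\|_{L^2}\le C\|\nabla\phi_t\|_{L^2}$ that follows from $\int_\Omega\phi_t\,{\rm d}x=0$) together yield $|\mathcal R_{\rm bdry}|\le\tfrac{1}{4}\|\partial_{\bf n}\phi_t\|_{L^2(\Gamma)}^2+C\varepsilon_0\tilde{\mathcal D}$. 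For $\mathcal R_{\rm cubic}$, I split $3\phi^2-1=3(\phi^2-1)+2$; the first piece is controlled by borrowing smallness from $\|\phi^2-1\|_{L^\infty}\le C\|\phi^2-1\|_{H^2}\le C\tilde{\mathcal E}^{1/2}$ to give a contribution of size $C\tilde{\mathcal E}^{1/2}\tilde{\mathcal D}$, while the constant piece $-2\int_\Omega(\bar\mu-\mu)_t\phi_t\,{\rm d}x=2\int_\Omega\mu_t\phi_t\,{\rm d}x$ (using $\int\phi_t=0$) is integrated by parts against $\mu_t=-\Delta\phi_t+(3\phi^2-1)\phi_t$ to produce a positive contribution involving $\|\nabla\phi_t\|_{L^2}^2$ and $\int(3\phi^2-1)\phi_t^2$ whose coercivity $3\phi^2-1\ge 1$ from (\ref{bounded phi}) permits the resulting boundary term to be handled as in $\mathcal R_{\rm bdry}$.

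Finally, I integrate the resulting differential inequality on $(0,t)$, take the supremum in $t$, and apply Poincar\'e once more to upgrade $\|\nabla\phi_t\|_{L^2}^2$ to the full $\|\phi_t\|_{H^1}^2$ on the left. The initial datum is estimated directly from $(\ref{NSAC1})_3$ at $t=0$, which gives $\|\nabla\phi_t(0)\|_{L^2}^2\le C(\|\mu_0-\bar\mu_0\|_{H^1}^2+\|{\bf u}_0\|_{H^2}^2\|\nabla\phi_0\|_{H^2}^2)\le C\tilde{\mathcal E}(0)$ via the Sobolev product rule. The main obstacle is the delicate bookkeeping of $\mathcal R_{\rm cubic}$: because $\|3\phi^2-1\|_{L^\infty}$ is \emph{not} small in either $\varepsilon_0$ or $\tilde{\mathcal E}^{1/2}$, the required smallness must be harvested indirectly by splitting off the constant piece $2$, exploiting mass conservation to rewrite it as a $\mu_t$-pairing, and leveraging the coercivity $3\phi^2-1\ge 1$ so that the only remaining contribution matches the target form $(\varepsilon_0+\tilde{\mathcal E}^{1/2})\tilde{\mathcal D}$; this interplay between smallness from $\varepsilon_0$, smallness from $\tilde{\mathcal E}^{1/2}$, and coercivity of $f'(\phi)$ is precisely what distinguishes the argument from its $\delta$-regularized counterpart in Lemma \ref{L-33}.
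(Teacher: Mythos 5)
Your setup is exactly the paper's: the duality pairing $-\langle(\bar\mu-\mu)_t,\Delta\phi_t\rangle$ computed two ways, the vanishing transport term, the $\varepsilon_0$-smallness of the $\gamma_{fs}$-boundary term, and the bulk commutator bounds all match the proof of Lemma \ref{rgl2} (see \eqref{r7}--\eqref{r9}). The problem is your treatment of $\mathcal R_{\rm cubic}=-\int_\Omega(\bar\mu-\mu)_t(3\phi^2-1)\phi_t\,{\rm d}x$, which contains a genuine sign-logic error. After splitting off the constant and using $\int_\Omega\phi_t\,{\rm d}x=0$, the piece you isolate is
\begin{align*}
-2\int_\Omega(\bar\mu-\mu)_t\phi_t\,{\rm d}x=2\int_\Omega\mu_t\phi_t\,{\rm d}x
=2\|\nabla\phi_t\|_{L^2}^2-2\int_\Gamma\psi_t\partial_{\bf n}\phi_t\,{\rm d}S+2\int_\Omega(3\phi^2-1)|\phi_t|^2\,{\rm d}x,
\end{align*}
and this sits on the \emph{right-hand side} of your energy identity. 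A positive quantity on the right cannot be dropped, and moving it to the left produces $-2\|\nabla\phi_t\|_{L^2}^2-2\int_\Omega(3\phi^2-1)|\phi_t|^2$, which destroys rather than exploits coercivity. Nor can it be estimated in the required form: the best one gets is $\|\nabla\phi_t\|_{L^2}^2\le\|\phi_t\|_{H^1}\|\phi_t\|_{H^2}\le\tilde{\mathcal E}(t)^{1/2}\tilde{\mathcal D}(t)^{1/2}$, and $\int_0^T\tilde{\mathcal E}^{1/2}\tilde{\mathcal D}^{1/2}\,{\rm d}t$ is \emph{not} dominated by $C\tilde{\mathcal E}(0)+C\int_0^T(\varepsilon_0+\tilde{\mathcal E}^{1/2})\tilde{\mathcal D}\,{\rm d}t$ (the missing half-power of $\tilde{\mathcal D}$ is exactly what the time-independent bootstrap cannot tolerate). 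You correctly identified that $\|3\phi^2-1\|_{L^\infty}$ carries no smallness, but your proposed workaround does not resolve it.

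The paper's resolution is different and is the one step you are missing: instead of estimating $(\bar\mu-\mu)_t$ in $\mathcal R_{\rm cubic}$, substitute the time-differentiated bulk equation $(\bar\mu-\mu)_t=\phi_{tt}+{\bf u}_t\cdot\nabla\phi+{\bf u}\cdot\nabla\phi_t$ into it. The $\phi_{tt}$ contribution is recognized as an exact time derivative,
\begin{align*}
-\int_\Omega\phi_{tt}(3\phi^2-1)\phi_t\,{\rm d}x=-\frac{1}{2}\frac{\rm d}{{\rm d}t}\int_\Omega(3\phi^2-1)|\phi_t|^2\,{\rm d}x+3\int_\Omega\phi\,\phi_t|\phi_t|^2\,{\rm d}x,
\end{align*}
so the non-small weight $3\phi^2-1$ is absorbed into the left-hand time derivative $\frac{1}{2}\frac{\rm d}{{\rm d}t}\bigl(\|\nabla\phi_t\|_{L^2}^2+\int_\Omega(3\phi^2-1)|\phi_t|^2\,{\rm d}x\bigr)$ rather than estimated; the coercivity $3\phi^2-1\ge1$ is then used, after integrating in time, to recover $\sup_t\|\phi_t\|_{H^1}^2$ from this combined quantity. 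The leftover cubic term $3\int\phi\,\phi_t|\phi_t|^2$ and the transport remainders all carry a spare factor $\|\phi_t\|_{L^2}\le C\tilde{\mathcal E}^{1/2}$ and land in $C\tilde{\mathcal E}^{1/2}\tilde{\mathcal D}$ as required. You should replace your splitting $3\phi^2-1=3(\phi^2-1)+2$ by this substitution; the rest of your argument then goes through as written.
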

\begin{proof}[\bf Proof.]
Differentiating $(\ref{NSAC1})_{3,4}$ and $(\ref{bb1})$ with respect to $t$ leads to
\begin{equation}
\label{pt}
\begin{cases}
(\bar\mu-\mu)_t=\phi_{tt}+{\bf u}_t\cdot\nabla\phi+{\bf u}\cdot\nabla\phi_t,&\rm in\quad\Omega\times(0,T),
\\
\Delta\phi_t=-\mu_t+f_t,&\rm in\quad\Omega\times(0,T),
\\
(\bar\mu-\mu)_t=-\partial_{\bf n}\phi_t- \gamma_{fs}^{(2)}(\psi)\psi_t ,&\rm on\quad\Gamma\times(0,T).
\end{cases}
\end{equation}
We start estimating from
\begin{align}
\label{r7}
-\langle(\bar\mu-\mu)_t,\Delta\phi_t\rangle=\langle\nabla(\bar\mu-\mu)_t,\nabla\phi_t\rangle
-\int_\Gamma(\bar\mu-\mu)_t\partial_{\bf n}\phi_t {\rm d}S=L_1+L_2.
\end{align}
Recalling that $L_2=L_2^\delta$, where $L_2^\delta$ is defined as in (\ref{L2L2}), and
\begin{align}
L_1&=L_1^\delta+\int_\Omega\delta\Delta_{\boldsymbol\tau}\phi_t\cdot\nabla\phi_t{\rm d}x \nonumber
\\
&=\frac{1}{2}\frac{\rm d}{{\rm d}t}\|\nabla\phi_t\|_{L^2}^2+\int_\Omega\left(\nabla{\bf u}_t\cdot\nabla\phi+{\bf u}_t\cdot\nabla^2\phi+\nabla{\bf u}\cdot\nabla\phi_t
\right)\cdot\nabla\phi_t {\rm d}x, \nonumber
\end{align}
whereas the left-hand side of $(\ref{r7})$ could be written as in (\ref{l-r8}). Thus, we obtain
\begin{align}
\label{r9}
&\frac{1}{2}\frac{\rm d}{{\rm d}t}\|\nabla\phi_t\|_{L^2}^2+\|(\mu-\bar\mu)_t\|_{L^2}^2
+\|\partial_{\bf n}\phi_t\|_{L^2(\Gamma)}^2 \nonumber
\\
&=-\int_\Omega\left(\nabla{\bf u}_t\cdot\nabla\phi+{\bf u}_t\cdot\nabla^2\phi+\nabla{\bf u}\cdot\nabla\phi_t
\right)\cdot\nabla\phi_t {\rm d}x
\nonumber \\
&\quad-\int_\Omega(\bar\mu-\mu)_tf_t {\rm d}x- \int_\Gamma\gamma_{fs}^{(2)}(\psi)\psi_t\partial_{\bf n}\phi_t {\rm d}S
\nonumber \\
& =\sum_{i=1}^3A_i.
\end{align}
Now, we estimate each terms $A_i$ on the right-hand side of (\ref{r9}). We actually have
\begin{align}
A_1&\le\|\nabla{\bf u}_t\|_{L^2}\|\nabla\phi\|_{L^\infty}\|\nabla\phi_t\|_{L^2}
+\|{\bf u}_t\|_{L^4}\|\nabla^2\phi\|_{L^4}\|\nabla\phi_t\|_{L^2}
+\|\nabla{\bf u}\|_{L^\infty}\|\nabla\phi_t\|_{L^2}^2 \nonumber
\\
&\le C\|{\bf u}_t\|_{H^1}\|\nabla\phi\||_{H^2}\|\nabla\phi_t\|_{L^2}
+C\|{\bf u}\|_{H^3}\|\nabla\phi_t\|_{L^2}^2 \nonumber
\\
&\le C\tilde{\mathcal D}(t)^{\frac{1}{2}}\tilde{\mathcal E}(t)^{\frac{1}{2}}\tilde{\mathcal D}(t)^{\frac{1}{2}},   \nonumber
\\[1em]
A_2&=-\int_\Omega(\bar\mu-\mu)_t(3\phi^2-1)\phi_t {\rm d}x \nonumber
\\
&=-\int_\Omega\left(\phi_{tt}+{\bf u}_t\cdot\nabla\phi+{\bf u}\cdot\nabla\phi_t
\right)(3\phi^2-1)\phi_t {\rm d}x \nonumber
\\
&=-\frac{1}{2}\frac{\rm d}{{\rm d}t}\int_\Omega(3\phi^2-1)|\phi_t|^2 {\rm d}x+\int_\Omega3\phi\phi_t|\phi_t|^2{\rm d}x \nonumber
\\
&\quad-\int_\Omega\left({\bf u}_t\cdot\nabla\phi+{\bf u}\cdot\nabla\phi_t
\right)(3\phi^2-1)\phi_t {\rm d}x \nonumber
\\
&\le -\frac{1}{2}\frac{\rm d}{{\rm d}t}\int_\Omega(3\phi^2-1)|\phi_t|^2 {\rm d}x
+C\|\phi\|_{L^\infty}\|\phi_t\|_{L^2}\|\phi_t\|_{L^4}^2 \nonumber
\\
&\quad+C\|3\phi^2-1\|_{L^\infty}\|\phi_t\|_{L^2}\left(\|{\bf u}_t\|_{L^4}\|\nabla\phi\|_{L^4}+\|{\bf u}\|_{L^\infty}\|\nabla\phi_t\|_{L^2}
\right)  \nonumber
\\
&\le-\frac{1}{2}\frac{\rm d}{{\rm d}t}\int_\Omega(3\phi^2-1)|\phi_t|^2 {\rm d}x+C\|\phi_t\|_{L^2}\left(\|\phi_t\|_{H^1}^2+\|{\bf u}_t\|_{H^1}\|\nabla\phi\|_{H^1}+\|{\bf u}\|_{H^2}\|\nabla\phi_t\|_{L^2}
\right)  \nonumber
\\
&\le-\frac{1}{2}\frac{\rm d}{{\rm d}t}\int_\Omega(3\phi^2-1)|\phi_t|^2 {\rm d}x+C\tilde{\mathcal D}(t)^{\frac{1}{2}}\tilde{\mathcal E}(t)^{\frac{1}{2}}\tilde{\mathcal D}(t)^{\frac{1}{2}},   \nonumber
\\[1em]
A_3&\le C|\nu \cos \theta_s|  \|\psi_t\|_{L^2(\Gamma)} \|\partial_{\bf n}\phi_t\|_{L^2(\Gamma)}
\le C\varepsilon_0\|\phi_t\|_{H^1}\|\nabla\phi_t\|_{H^1}\le C\varepsilon_0\tilde{\mathcal D}(t). \nonumber
\end{align}
Then, substituting $A_1$ -- $A_3$ into (\ref{r9}), we have
\begin{align}
&\frac{1}{2}\frac{\rm d}{{\rm d}t}\left(\|\nabla\phi_t\|_{L^2}^2+\int_\Omega(3\phi^2-1)|\phi_t|^2 {\rm d}x\right)
+\|(\mu-\bar\mu)_t\|_{L^2}^2
+\|\partial_{\bf n}\phi_t\|_{L^2(\Gamma)}^2 \nonumber
\\
&\le C\left(\varepsilon_0+\tilde{\mathcal E}(t)^{\frac{1}{2}}
\right)\tilde{\mathcal D}(t). \nonumber
\end{align}
Integrating it over (0,T), noting that $3\phi^2-1\ge 1$, we obtain
\begin{align}
&\sup\limits_{0\le t \le T}\|\phi_t\|_{H^1}^2+\int_{0}^{T}\left(\|(\mu-\bar\mu)_t\|_{L^2}^2
+\|\partial_{\bf n}\phi_t\|_{L^2(\Gamma)}^2\right){\rm d}t \nonumber
\\
&\le C\tilde{\mathcal E}(0)+C\int_{0}^{T}\left(\varepsilon_0+\tilde{\mathcal E}(t)^{\frac{1}{2}}
\right)\tilde{\mathcal D}(t) {\rm d}t. \nonumber
\end{align}
Hence, we complete the proof of Lemma \ref{rgl2}.
\end{proof}
\begin{Lemma}\label{L477}
Under the assumptions of Proposition $\ref{proposition global}$, it holds that
\begin{align}
\label{r16}
&\sup\limits_{0\le t \le T}\left( \|\nabla_{\boldsymbol\tau}^k L(\psi)\|_{L^2(\Gamma)}^2 +\|\nabla_{\boldsymbol\tau}^k(\mu-\bar\mu)\|_{L^2}^2
\right)+\int_{0}^{T}\|\nabla\phi_t\|_{H^1}^2 {\rm d}t  \nonumber
\\
&\le C\left(\varepsilon_0+\tilde{\mathcal E}(0)\right)+C\int_{0}^{T}\left(\varepsilon_0+\tilde{\mathcal E}(t)^{\frac{1}{2}}
\right)\tilde{\mathcal D}(t) {\rm d}t,\quad k=0,1.
\end{align}
\end{Lemma}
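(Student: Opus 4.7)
The plan is to handle $k=0$ and $k=1$ in a unified manner via an energy identity driven by the boundary relation $\bar\mu-\mu=-L(\psi)$ on $\Gamma$ from Remark \ref{remark boundary}. This identity is what compensates for the missing surface diffusion in $(\ref{boundary})_3$: it allows boundary information about $L(\psi)$ to be traded against interior information about $\mu-\bar\mu$, and it is the reason we restrict to the channel $\Omega=\mathbb{T}^2\times(-1,1)$, where $\nabla_{\boldsymbol\tau}=(\partial_x,\partial_y)$ commutes with $\Delta$. First I would apply $\nabla_{\boldsymbol\tau}^k$ to $(\ref{NSAC1})_3$, to the time-differentiated $(\ref{NSAC1})_4$ (yielding $\Delta\phi_t=f_t-\mu_t$), and to $(\ref{bb1})$, obtaining in particular $\partial_t[\nabla_{\boldsymbol\tau}^k L(\psi)]=\partial_{\bf n}\nabla_{\boldsymbol\tau}^k\phi_t+\nabla_{\boldsymbol\tau}^k[\gamma_{fs}^{(2)}(\psi)\psi_t]$ on $\Gamma$.

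Next I would compute $I_k:=-\langle\nabla_{\boldsymbol\tau}^k(\bar\mu-\mu),\Delta\nabla_{\boldsymbol\tau}^k\phi_t\rangle$ in two ways. Integration by parts, combined with $\nabla\nabla_{\boldsymbol\tau}^k(\bar\mu-\mu)=\nabla\nabla_{\boldsymbol\tau}^k\phi_t+\nabla\nabla_{\boldsymbol\tau}^k(\mathbf u\cdot\nabla\phi)$ in the volume integral and the boundary identity $\nabla_{\boldsymbol\tau}^k(\bar\mu-\mu)=-\nabla_{\boldsymbol\tau}^k L(\psi)$, gives
\begin{align*}
I_k=\|\nabla\nabla_{\boldsymbol\tau}^k\phi_t\|_{L^2}^2+\tfrac12\tfrac{\rm d}{{\rm d}t}\|\nabla_{\boldsymbol\tau}^k L(\psi)\|_{L^2(\Gamma)}^2+\mathcal R_k^{\mathrm{int}}+\mathcal R_k^{\partial}.
\end{align*}
Using $\Delta\nabla_{\boldsymbol\tau}^k\phi_t=\nabla_{\boldsymbol\tau}^k f_t-\nabla_{\boldsymbol\tau}^k\mu_t$ and $\int_\Omega\nabla_{\boldsymbol\tau}^k(\mu-\bar\mu)\,{\rm d}x=0$, I also get $I_k=\tfrac12\tfrac{\rm d}{{\rm d}t}\|\nabla_{\boldsymbol\tau}^k(\mu-\bar\mu)\|_{L^2}^2+\langle\nabla_{\boldsymbol\tau}^k(\bar\mu-\mu),\nabla_{\boldsymbol\tau}^k f_t\rangle$. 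Equating the two expressions produces the dissipative differential inequality
\begin{align*}
\tfrac12\tfrac{\rm d}{{\rm d}t}\Bigl(\|\nabla_{\boldsymbol\tau}^k(\mu-\bar\mu)\|_{L^2}^2+\|\nabla_{\boldsymbol\tau}^k L(\psi)\|_{L^2(\Gamma)}^2\Bigr)+\|\nabla\nabla_{\boldsymbol\tau}^k\phi_t\|_{L^2}^2=-\mathcal R_k^{\mathrm{int}}-\mathcal R_k^{\partial}-\langle\nabla_{\boldsymbol\tau}^k(\bar\mu-\mu),\nabla_{\boldsymbol\tau}^k f_t\rangle.
\end{align*}

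Then I would estimate each remainder, distributing derivatives so that at most one factor carries an $L^\infty$ or $H^2$ norm (absorbed into $\tilde{\mathcal E}^{1/2}$) while the remaining factors supply $\tilde{\mathcal D}$; together with $|\gamma_{fs}^{(j)}|\le C|\nu\cos\theta_s|\le C\varepsilon_0$, Lemma \ref{Trace}, Lemma \ref{PGN}, Lemma \ref{product} and the trace and smallness of $\tilde{\mathcal E}$, each term is bounded by $C(\varepsilon_0+\tilde{\mathcal E}^{1/2})\tilde{\mathcal D}$. To recover $\int_0^T\|\nabla\phi_t\|_{H^1}^2\,{\rm d}t$ from the tangential output $\int_0^T\|\nabla\nabla_{\boldsymbol\tau}\phi_t\|_{L^2}^2\,{\rm d}t$, I control $\|\partial_{\bf n}\partial_{\bf n}\phi_t\|_{L^2}^2$ from $\partial_{\bf n}^2\phi_t=-\Delta_{\boldsymbol\tau}\phi_t-(\mu-\bar\mu)_t-\bar\mu_t+f_t$, with $\bar\mu_t$ handled via $\bar\mu_t=|\Omega|^{-1}\bigl(\int_\Gamma\partial_{\bf n}\phi_t\,{\rm d}S+\int_\Omega f_t\,{\rm d}x\bigr)$ and the $\|\partial_{\bf n}\phi_t\|_{L^2(\Gamma)}^2$ output of Lemma \ref{rgl2}. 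Integrating in time and observing that $\|\nabla_{\boldsymbol\tau}^k L(\psi_0)\|_{L^2(\Gamma)}^2\le C\tilde{\mathcal E}(0)+C\varepsilon_0$ by the trace imbedding yields $(\ref{r16})$.

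The main obstacle will be the $k=1$ interior remainder $\mathcal R_1^{\mathrm{int}}$, which, after Leibniz expansion of $\nabla\nabla_{\boldsymbol\tau}(\mathbf u\cdot\nabla\phi)$, contains third-order terms such as $\|\nabla^2\nabla_{\boldsymbol\tau}\mathbf u\|_{L^2}\|\nabla\phi\|_{L^\infty}\|\nabla\nabla_{\boldsymbol\tau}\phi_t\|_{L^2}$ and $\|\nabla\mathbf u\|_{L^\infty}\|\nabla\nabla_{\boldsymbol\tau}^2\phi\|_{L^2}\|\nabla\nabla_{\boldsymbol\tau}\phi_t\|_{L^2}$. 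These can only be absorbed if the overall scaling produces a genuine $\tilde{\mathcal E}^{1/2}\tilde{\mathcal D}$, not $\tilde{\mathcal D}$ alone, and thus the smallness hypothesis $\tilde{\mathcal E}\le\sigma$ enters in an essential way; a similar but milder issue arises in the boundary remainder $\mathcal R_k^{\partial}$, which must be handled entirely through the trace theorem since no tangential diffusive regularization of $\psi$ is available.
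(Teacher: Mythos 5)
Your skeleton is the same as the paper's: you form the inner product $-\langle\nabla_{\boldsymbol\tau}^k(\bar\mu-\mu),\nabla_{\boldsymbol\tau}^k\Delta\phi_t\rangle$, evaluate it once by integration by parts (using the tangentially differentiated Allen--Cahn equation in the bulk and $\bar\mu-\mu=-L(\psi)$ on $\Gamma$ to produce $\|\nabla\nabla_{\boldsymbol\tau}^k\phi_t\|_{L^2}^2+\tfrac12\tfrac{\rm d}{{\rm d}t}\|\nabla_{\boldsymbol\tau}^kL(\psi)\|_{L^2(\Gamma)}^2$) and once via $\Delta\phi_t=f_t-\mu_t$ to produce $\tfrac{\rm d}{{\rm d}t}\|\nabla_{\boldsymbol\tau}^k(\mu-\bar\mu)\|_{L^2}^2$, then recover $\partial_{\bf n}\partial_{\bf n}\phi_t$ from the equation together with the $\bar\mu_t$ trace formula. (Your displayed second expression for $I_k$ has both signs flipped, but the combined identity you then write down is the correct one, so this is only a typo.)

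The genuine gap is in your claim that \emph{every} remainder is bounded by $C(\varepsilon_0+\tilde{\mathcal E}^{1/2})\tilde{\mathcal D}$. This fails for the cubic term $\langle\nabla_{\boldsymbol\tau}^k(\bar\mu-\mu),\nabla_{\boldsymbol\tau}^k f_t\rangle$ with $f_t=(3\phi^2-1)\phi_t$: its leading piece is $\int_\Omega(3\phi^2-1)\,\nabla_{\boldsymbol\tau}^k(\bar\mu-\mu)\cdot\nabla_{\boldsymbol\tau}^k\phi_t\,{\rm d}x$, and near phase separation $3\phi^2-1\approx 2$ is an $O(1)$ coefficient, not an $O(\tilde{\mathcal E}^{1/2})$ one, while both remaining factors are dissipation-level quantities. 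The resulting bound is $C\tilde{\mathcal D}$ with a non-small constant (for $k=1$, $C\|\nabla_{\boldsymbol\tau}(\mu-\bar\mu)\|_{L^2}\|\nabla_{\boldsymbol\tau}\phi_t\|_{L^2}$), and after a Cauchy--Schwarz absorption of $\|\nabla\nabla_{\boldsymbol\tau}\phi_t\|^2$ you are left with $C\|\nabla_{\boldsymbol\tau}(\mu-\bar\mu)\|_{L^2}^2$, which is neither on the left-hand side of your differential identity nor yet controlled in $L^1_t$ at this stage; the closing step $\int\tilde{\mathcal D}\le C+\tfrac12\int\tilde{\mathcal D}$ then breaks down. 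The paper's fix is structural: substitute $\nabla_{\boldsymbol\tau}^k\phi_t=\nabla_{\boldsymbol\tau}^k(\bar\mu-\mu)-\nabla_{\boldsymbol\tau}^k({\bf u}\cdot\nabla\phi)$ from $(\ref{NSAC1})_3$ \emph{inside} this term, so that the dangerous quadratic part becomes $-\int_\Omega(3\phi^2-1)\,|\nabla_{\boldsymbol\tau}^k(\mu-\bar\mu)|^2\,{\rm d}x\le 0$ by the pointwise bound $3\phi^2-1\ge 1$ guaranteed by $(\ref{bounded phi})$, leaving only a genuinely trilinear cross term of size $\tilde{\mathcal E}^{1/2}\tilde{\mathcal D}$ (this is exactly the mechanism of $K_2^0$ and $K_2^1$ in the paper). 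You correctly flag that the commutator terms in $\mathcal R_1^{\mathrm{int}}$ need the smallness of $\tilde{\mathcal E}$, but those are in fact harmless since each carries a factor $\|\nabla\phi\|_{H^2}$ or $\|{\bf u}\|_{H^2}\le\tilde{\mathcal E}^{1/2}$; the real place where the phase-separation hypothesis is indispensable is the $f_t$ term, and your proposal as written does not supply the argument for it.
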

\begin{proof}[\bf Proof.]
For $k=0,1$, taking the operator $\nabla\nabla_{\boldsymbol\tau}^k$, $\partial_t\nabla_{\boldsymbol\tau}^k$ and $\nabla_{\boldsymbol\tau}^k$ to $(\ref{NSAC1})_3$, $(\ref{NSAC1})_4$ and (\ref{bb1}) respectively leads to
\begin{equation}
\label{r10}
\begin{cases}
\nabla\nabla_{\boldsymbol\tau}^k(\bar\mu-\mu)
=\nabla\nabla_{\boldsymbol\tau}^k\phi_t
+[\nabla\nabla_{\boldsymbol\tau}^k,{\bf u}\cdot\nabla]\phi
+{\bf u}\cdot\nabla^2\nabla_{\boldsymbol\tau}^k\phi,&\rm in\quad\Omega\times(0,T),
\\
\nabla_{\boldsymbol\tau}^k\Delta\phi_t=-\nabla_{\boldsymbol\tau}^k\mu_t
+\partial_t\nabla_{\boldsymbol\tau}^k(\phi^3-\phi),&\rm in\quad\Omega\times(0,T),
\\
\nabla_{\boldsymbol\tau}^k(\bar\mu-\mu)=
-\nabla_{\boldsymbol\tau}^kL(\psi)=-\nabla_{\boldsymbol\tau}^k\partial_{\bf n}\phi
-\nabla_{\boldsymbol\tau}^k\gamma_{fs}^\prime(\psi)  ,&\rm on\quad\Gamma\times(0,T),
\end{cases}
\end{equation}
where $[A,B]=AB-BA$ is commutators. We consider
\begin{align}
\label{r11}
-\langle\nabla_{\boldsymbol\tau}^k(\bar\mu-\mu),\nabla_{\boldsymbol\tau}^k\Delta\phi_t\rangle
&=\langle\nabla\nabla_{\boldsymbol\tau}^k(\bar\mu-\mu),\nabla_{\boldsymbol\tau}^k\nabla\phi_t\rangle
-\int_\Gamma\nabla_{\boldsymbol\tau}^k(\bar\mu-\mu)\nabla_{\boldsymbol\tau}^k\partial_{\bf n}\phi_t {\rm d}S \nonumber
\\
&=W_1+W_2.
\end{align}
Putting $(\ref{r10})_1$ into $W_1$, we get
\begin{align}
W_1&=\int_\Omega\left(\nabla\nabla_{\boldsymbol\tau}^k\phi_t+[\nabla\nabla_{\boldsymbol\tau}^k,{\bf u}\cdot\nabla]\phi
+{\bf u}\cdot\nabla^2\nabla_{\boldsymbol\tau}^k\phi
\right)\cdot\nabla\nabla_{\boldsymbol\tau}^k\phi_t{\rm d}x  \nonumber
\\
&=\|\nabla\nabla_{\boldsymbol\tau}^k\phi_t\|_{L^2}^2+\int_\Omega\left(
[\nabla\nabla_{\boldsymbol\tau}^k,{\bf u}\cdot\nabla]\phi
+{\bf u}\cdot\nabla^2\nabla_{\boldsymbol\tau}^k\phi
\right)\cdot\nabla\nabla_{\boldsymbol\tau}^k\phi_t{\rm d}x.   \nonumber
\end{align}
Similarly, $W_2$ could be rewritten by using $(\ref{r10})_3$ as follows
\begin{align}
W_2&=\int_\Gamma\nabla_{\boldsymbol\tau}^kL(\psi)\nabla_{\boldsymbol\tau}^k\partial_{\bf n}\phi_t {\rm d}S        \nonumber
\\
&=\int_\Gamma\nabla_{\boldsymbol\tau}^kL(\psi)\left(
\partial_t\nabla_{\boldsymbol\tau}^kL(\psi)-\partial_t\nabla_{\boldsymbol\tau}^k\gamma_{fs}^\prime(\psi)
\right){\rm d}S   \nonumber
\\
&=\frac{1}{2}\frac{\rm d}{{\rm d}t}\|\nabla_{\boldsymbol\tau}^kL(\psi)\|_{L^2(\Gamma)}^2
-\int_\Gamma\nabla_{\boldsymbol\tau}^kL(\psi)\partial_t\nabla_{\boldsymbol\tau}^k\gamma_{fs}^\prime(\psi) {\rm d}S.   \nonumber
\end{align}
Thanks to $(\ref{r10})_2$, the left-hand side of (\ref{r11}) can be calculated as follows
\begin{align}
\label{r12}
-\langle\nabla_{\boldsymbol\tau}^k(\bar\mu-\mu),\nabla_{\boldsymbol\tau}^k\Delta\phi_t\rangle
&=\int_\Omega\nabla_{\boldsymbol\tau}^k(\bar\mu-\mu)\left[\nabla_{\boldsymbol\tau}^k\mu_t
-\partial_t\nabla_{\boldsymbol\tau}^k(\phi^3-\phi)
\right]{\rm d}x  \nonumber
\\
&=-\frac{1}{2}\frac{\rm d}{{\rm d}t}\|\nabla_{\boldsymbol\tau}^k(\mu-\bar\mu)\|_{L^2}^2
-\int_\Omega\nabla_{\boldsymbol\tau}^k(\bar\mu-\mu)\partial_t\nabla_{\boldsymbol\tau}^k
(\phi^3-\phi){\rm d}x.
\end{align}
Substituting $W_1$, $W_2$ and (\ref{r12}) into (\ref{r10}), we have
\begin{align}
\label{r13}
&\frac{1}{2}\frac{\rm d}{{\rm d}t}\left(\|\nabla_{\boldsymbol\tau}^kL(\psi)\|_{L^2(\Gamma)}^2
+\|\nabla_{\boldsymbol\tau}^k(\bar\mu-\mu)\|_{L^2}^2
\right)+\|\nabla\nabla_{\boldsymbol\tau}^k\phi_t\|_{L^2}^2 \nonumber
\\
&=-\int_\Omega\left(
[\nabla\nabla_{\boldsymbol\tau}^k,{\bf u}\cdot\nabla]\phi
+{\bf u}\cdot\nabla^2\nabla_{\boldsymbol\tau}^k\phi
\right)\cdot\nabla\nabla_{\boldsymbol\tau}^k\phi_t{\rm d}x  \nonumber
\\
&\quad-\int_\Omega\nabla_{\boldsymbol\tau}^k(\bar\mu-\mu)\partial_t\nabla_{\boldsymbol\tau}^k
(\phi^3-\phi){\rm d}x+ \int_\Gamma\nabla_{\boldsymbol\tau}^kL(\psi)\partial_t\nabla_{\boldsymbol\tau}^k\gamma_{fs}^\prime(\psi) {\rm d}S  \nonumber
\\
&=\sum_{i=1}^3K_i^k,
\end{align}
and we need to control the terms on the right-hand side. First of all, for the case $k=0$, we have following estimates
\begin{align}
K_1^0&=-\int_\Omega\left(\nabla{\bf u}\cdot\nabla\phi+{\bf u}\cdot\nabla^2\phi
\right)\cdot\nabla\phi_t{\rm d}x  \nonumber
\\
&\le\left(\|\nabla{\bf u}\|_{L^2}\|\nabla\phi\|_{L^\infty}+\|{\bf u}\|_{L^4}\|\nabla^2\phi\|_{L^4}
\right)\|\nabla\phi_t\|_{L^2} \nonumber
\\
&\le C\|{\bf u}\|_{H^1}\|\nabla\phi\|_{H^2}\|\nabla\phi_t\|_{L^2}
\le C\tilde{\mathcal D}(t)^{\frac{1}{2}}\tilde{\mathcal E}(t)^{\frac{1}{2}}\tilde{\mathcal D}(t)^{\frac{1}{2}}.   \nonumber
\\[1em]
K_2^0&=-\int_\Omega(\bar\mu-\mu)(3\phi^2-1)\phi_t{\rm d}x \nonumber
\\
&=-\int_\Omega(\bar\mu-\mu)(3\phi^2-1)\left[(\bar\mu-\mu)-{\bf u}\cdot\nabla\phi
\right]{\rm d}x  \nonumber
\\
&=-\int_\Omega(3\phi^2-1)|\mu-\bar\mu|^2{\rm d}x+\int_\Omega(\bar\mu-\mu)(3\phi^2-1){\bf u}\cdot\nabla\phi {\rm d}x    \nonumber
\\
&\le-\int_\Omega(3\phi^2-1)|\mu-\bar\mu|^2{\rm d}x+\|3\phi^2-1\|_{L^\infty}\|\mu-\bar\mu\|_{L^2}\|{\bf u}\|_{L^4}\|\nabla\phi\|_{L^4}  \nonumber
\\
&\le-\int_\Omega(3\phi^2-1)|\mu-\bar\mu|^2{\rm d}x+C\|\mu-\bar\mu\|_{L^2}\|{\bf u}\|_{H^1}\|\nabla\phi\|_{H^1}  \nonumber
\\
&\le-\int_\Omega(3\phi^2-1)|\mu-\bar\mu|^2{\rm d}x+C\tilde{\mathcal E}(t)^{\frac{1}{2}}\tilde{\mathcal D}(t)^{\frac{1}{2}}\tilde{\mathcal D}(t)^{\frac{1}{2}}.     \nonumber
\\[1em]
K_3^0&=\int_\Gamma L(\psi)\gamma_{fs}^{(2)}(\psi)\psi_t {\rm d}S\le C|\nu \cos\theta_s| \|L(\psi)\|_{L^2(\Gamma)}\|\psi_t\|_{L^2(\Gamma)} \nonumber
\\
&\le C\varepsilon_0\|L(\psi)\|_{L^2(\Gamma)}\|\psi_t\|_{L^2(\Gamma)} \le C\varepsilon_0\tilde{\mathcal D}(t),  \nonumber
\end{align}
where we have used the boundedness of $\phi$. For the case $k=1$, in the same way, we get
\begin{align}
K_1^1&=-\int_\Omega\left(
[\nabla\nabla_{\boldsymbol\tau},{\bf u}\cdot\nabla]\phi
+{\bf u}\cdot\nabla^2\nabla_{\boldsymbol\tau}\phi
\right)\cdot\nabla\nabla_{\boldsymbol\tau}\phi_t{\rm d}x  \nonumber
\\
&=-\int_\Omega\left(
\nabla\nabla_{\boldsymbol\tau}{\bf u}\cdot\nabla\phi+\nabla{\bf u}\cdot\nabla\nabla_{\boldsymbol\tau}\phi+\nabla_{\boldsymbol\tau}{\bf u}\cdot\nabla^2\phi
+{\bf u}\cdot\nabla^2\nabla_{\boldsymbol\tau}\phi
\right)\cdot\nabla\nabla_{\boldsymbol\tau}\phi_t{\rm d}x  \nonumber
\\
&\le C\left(\|\nabla\nabla_{\boldsymbol\tau}{\bf u}\|_{L^2}\|\nabla\phi\|_{L^\infty}+\|\nabla{\bf u}\|_{L^\infty}\|\nabla^2\phi\|_{L^2}+\|{\bf u}\|_{L^\infty}\|\nabla^2\nabla_{\boldsymbol\tau}\phi\|_{L^2}
\right)\|\nabla\nabla_{\boldsymbol\tau}\phi_t\|_{L^2} \nonumber
\\
&\le C\|{\bf u}\|_{H^3}\|\nabla\phi\|_{H^2}\|\phi_t\|_{H^2}\le C\tilde{\mathcal E}(t)^{\frac{1}{2}}\tilde{\mathcal D}(t)^{\frac{1}{2}}\tilde{\mathcal D}(t)^{\frac{1}{2}}.     \nonumber
\\[1em]
K_2^1&=-\int_\Omega\nabla_{\boldsymbol\tau}(\bar\mu-\mu)\left[
6\phi\phi_t\nabla_{\boldsymbol\tau}\phi+(3\phi^2-1)\nabla_{\boldsymbol\tau}\phi_t
\right]{\rm d}x  \nonumber
\\
&=-\int_\Omega\nabla_{\boldsymbol\tau}(\bar\mu-\mu)(3\phi^2-1)\left[
\nabla_{\boldsymbol\tau}(\bar\mu-\mu)-\nabla_{\boldsymbol\tau}{\bf u}\cdot\nabla\phi-{\bf u}\cdot\nabla\nabla_{\boldsymbol\tau}\phi
\right]{\rm d}x  \nonumber
\\
&\quad-\int_\Omega6\phi\phi_t\nabla_{\boldsymbol\tau}\phi
\nabla_{\boldsymbol\tau}(\bar\mu-\mu){\rm d}x  \nonumber
\\
&=-\int_\Omega(3\phi^2-1)|\nabla_{\boldsymbol\tau}(\mu-\bar\mu)|^2{\rm d}x
-\int_\Omega6\phi\phi_t\nabla_{\boldsymbol\tau}\phi
\nabla_{\boldsymbol\tau}(\bar\mu-\mu){\rm d}x  \nonumber
\\
&\quad+\int_\Omega(3\phi^2-1)\nabla_{\boldsymbol\tau}(\bar\mu-\mu)\left(
\nabla_{\boldsymbol\tau}{\bf u}\cdot\nabla\phi+{\bf u}\cdot\nabla\nabla_{\boldsymbol\tau}\phi
\right){\rm d}x  \nonumber
\\
&\le-\int_\Omega(3\phi^2-1)|\nabla_{\boldsymbol\tau}(\mu-\bar\mu)|^2{\rm d}x
+C\|\phi\|_{L^\infty}\|\phi_t\|_{L^4}\|\nabla_{\boldsymbol\tau}\phi\|_{L^4}\|\nabla_{\boldsymbol\tau}(\mu-\bar\mu)\|_{L^2}
\nonumber
\\
&\quad+C\|3\phi^2-1\|_{L^\infty}\|\nabla_{\boldsymbol\tau}(\mu-\bar\mu)\|_{L^2}\left(
\|\nabla_{\boldsymbol\tau}{\bf u}\|_{L^4}\|\nabla\phi\|_{L^4}+\|{\bf u}\|_{L^\infty}\|\nabla\nabla_{\boldsymbol\tau}\phi\|_{L^2}
\right)   \nonumber
\\
&\le-\int_\Omega(3\phi^2-1)|\nabla_{\boldsymbol\tau}(\mu-\bar\mu)|^2{\rm d}x
+C\|\nabla\phi\|_{H^1} \|\phi_t\|_{H^1}\|\nabla_{\boldsymbol\tau}(\mu-\bar\mu)\|_{L^2}
\nonumber
\\
&\quad+C \|\nabla\phi\|_{H^1}\|\nabla_{\boldsymbol\tau}(\mu-\bar\mu)\|_{L^2}
\|{\bf u}\|_{H^2}   \nonumber
\\
&\le-\int_\Omega(3\phi^2-1)|\nabla_{\boldsymbol\tau}(\mu-\bar\mu)|^2{\rm d}x+C\tilde{\mathcal E}(t)^{\frac{1}{2}}\tilde{\mathcal D}(t)^{\frac{1}{2}}\tilde{\mathcal D}(t)^{\frac{1}{2}}.     \nonumber
\\[1em]
K_3^1&= \int_\Gamma\nabla_{\boldsymbol\tau}L(\psi)\left(
\gamma_{fs}^{(3)}(\psi)\psi_t\nabla_{\boldsymbol\tau}\psi+\gamma_{fs}^{(2)}(\psi)\nabla_{\boldsymbol\tau}\psi_t
\right){\rm d}S    \nonumber
\\
&\le \|\nabla_{\boldsymbol\tau}L(\psi)\|_{L^2(\Gamma)}
\|\gamma_{fs}^{(3)}(\psi)\|_{L^\infty(\Gamma)}\|\psi_t\|_{L^4(\Gamma)}
\|\nabla_{\boldsymbol\tau}\psi\|_{L^4(\Gamma)}   \nonumber
\\
&\quad+C \|\nabla_{\boldsymbol\tau}L(\psi)\|_{L^2(\Gamma)}
\|\gamma_{fs}^{(2)}(\psi)\|_{L^\infty(\Gamma)}
\|\nabla_{\boldsymbol\tau}\psi_t\|_{L^2(\Gamma)}    \nonumber
\\
&\le C|\nu{\rm cos}\theta_s| \|\nabla_{\boldsymbol\tau}L(\psi)\|_{L^2(\Gamma)}\left(
\|\psi_t\|_{H^1(\Gamma)}\|\nabla_{\boldsymbol\tau}\psi\|_{H^1(\Gamma)}+\|\nabla_{\boldsymbol\tau}\psi_t\|_{L^2(\Gamma)}
\right)   \nonumber
\\
&\le C\varepsilon_0 \|\nabla_{\boldsymbol\tau}L(\psi)\|_{L^2(\Gamma)}   \|\phi_t\|_{H^2}\|\nabla_{\boldsymbol\tau}\phi\|_{H^2}
+C\varepsilon_0 \|\nabla_{\boldsymbol\tau}L(\psi)\|_{L^2(\Gamma)} \|\nabla_{\boldsymbol\tau}\phi_t\|_{H^1}
\nonumber
\\
&\le C\varepsilon_0\tilde{\mathcal D}(t)^{\frac{1}{2}}\tilde{\mathcal D}(t)^{\frac{1}{2}}\tilde{\mathcal E}(t)^{\frac{1}{2}}+C\varepsilon_0\tilde{\mathcal D}(t).     \nonumber
\end{align}
It is worth noting $3\phi^2-1\ge 1$ and
\begin{align}
 \|\nabla_{\boldsymbol\tau}L(\psi_0)\|_{L^2(\Gamma)}^2 \le C\left(\|\nabla_{\boldsymbol\tau}\partial_{\bf n}\phi_0\|_{L^2(\Gamma)}^2
+\|\nabla_{\boldsymbol\tau}\psi_0\|_{L^2(\Gamma)}^2
\right)\le C\|\nabla\phi_0\|_{H^2}^2\le C\varepsilon_0. \nonumber
\end{align}
Then, substituting $K_1^k-K_3^k(k=0,1)$ into (\ref{r13}), integrating the result over (0,T) to get
\begin{align}
\label{r14}
&\sup\limits_{0\le t \le T}\left( \|\nabla_{\boldsymbol\tau}^kL(\psi)\|_{L^2(\Gamma)}^2
+\|\nabla_{\boldsymbol\tau}^k(\bar\mu-\mu)\|_{L^2}^2
\right)+\int_{0}^{T}\|\nabla\nabla_{\boldsymbol\tau}^k\phi_t\|_{L^2}^2 {\rm d}t \nonumber
\\
&\le C\left(\varepsilon_0+\tilde{\mathcal E}(0)\right)+C\int_{0}^{T}\left(\varepsilon_0+\tilde{\mathcal E}(t)^{\frac{1}{2}}
\right)\tilde{\mathcal D}(t) {\rm d}t.
\end{align}
\vskip2mm
Finally, it remains for us the normal direction estimates for $\nabla^2\phi_t$.
It follows from $(\ref{NSAC1})_4$ that
\begin{align}
\bar\mu_t=\frac{1}{|\Omega|}\int_\Omega\mu_t{\rm d}x=\frac{1}{|\Omega|}\int_\Omega
\left(-\Delta\phi_t+f_t\right){\rm d}x=\frac{1}{|\Omega|}\int_\Gamma\partial_{\bf n}\phi_t {\rm d}S+\frac{1}{|\Omega|}\int_\Omega f_t{\rm d}x,
\nonumber
\end{align}
which yields
\begin{align}
\label{r15}
&\int_{0}^{T}\|\partial_{\bf n}\partial_{\bf n}\phi_t\|_{L^2}^2{\rm d}t \nonumber
\\
&\le C\int_{0}^{T}\left(
\|\Delta_{\boldsymbol\tau}\phi_t\|_{L^2}^2+\|(\mu-\bar\mu)_t\|_{L^2}^2+\|\bar\mu_t\|_{L^2}^2
+\|(3\phi^2-1)\phi_t\|_{L^2}^2
\right) {\rm d}t   \nonumber
\\
&\le\int_{0}^{T}\left(
\|\nabla\nabla_{\boldsymbol\tau}\phi_t\|_{L^2}^2+\|(\mu-\bar\mu)_t\|_{L^2}^2
+\|\partial_{\bf n}\phi_t\|_{L^2(\Gamma)}^2+\|3\phi^2-1\|_{L^\infty}^2\|\phi_t\|_{L^2}^2
\right) {\rm d}t   \nonumber
\\
&\le\int_{0}^{T}\left(
\|\nabla\nabla_{\boldsymbol\tau}\phi_t\|_{L^2}^2+\|(\mu-\bar\mu)_t\|_{L^2}^2
+\|\partial_{\bf n}\phi_t\|_{L^2(\Gamma)}^2+\|\nabla\phi_t\|_{L^2}^2
\right) {\rm d}t   \nonumber
\\
&\le C\tilde{\mathcal E}(0)+C\int_{0}^{T}\left(\varepsilon_0+\tilde{\mathcal E}(t)^{\frac{1}{2}}
\right)\tilde{\mathcal D}(t) {\rm d}t,
\end{align}
where we used $(\ref{111})$ and the following result obtained from Poincar$\rm \acute{e}$ inequality
\begin{align}
\label{0}
\|\phi_t\|_{L^2}^2=\|\phi_t-\bar\phi_t\|_{L^2}^2\le C\|\nabla\phi_t\|_{L^2}^2.
\end{align}
Combining (\ref{r14}) with (\ref{r15}) leads to (\ref{r16}). This completes the proof of Lemma \ref{L477}.
\end{proof}
\begin{Lemma}
\label{444}
Under the assumptions of Proposition $\ref{proposition global}$, it holds that
\begin{align}
\label{r17}
&\sup\limits_{0\le t \le T}\left(\|\nabla\phi\|_{H^2}^2+\|\mu-\bar\mu\|_{H^1}^2\right) +\int_{0}^{T}\left(\|\nabla_{\boldsymbol\tau}^k(\mu-\bar\mu)\|_{L^2}^2
+\|\nabla_{\boldsymbol\tau}^k\partial_{\bf n}\phi\|_{L^2(\Gamma)}^2
\right) {\rm d}t  \nonumber
\\
&\le C\left(\varepsilon_0+\tilde{\mathcal E}(0)\right)+C\int_{0}^{T}\left(\varepsilon_0+\tilde{\mathcal E}(t)^{\frac{1}{2}}
\right)\tilde{\mathcal D}(t) {\rm d}t,\quad k=1,2.
\end{align}
\end{Lemma}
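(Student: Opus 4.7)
The strategy rests on three exact identities that are available throughout the problem: the Allen--Cahn equation $(\ref{NSAC1})_3$ in the form $\mu-\bar\mu=-\phi_t-{\bf u}\cdot\nabla\phi$ in $\Omega$; the chemical potential equation $\mu=-\Delta\phi+f(\phi)$ from $(\ref{NSAC1})_4$; and the boundary relation $(\ref{bb1})$ saying $\mu-\bar\mu=L(\psi)=\partial_{\bf n}\phi+\gamma_{fs}^\prime(\psi)$ on $\Gamma$. These let me trade between time derivatives of $\phi$, interior spatial derivatives of $\mu$, and boundary traces, while all remainder terms can be pushed into the good form $C(\varepsilon_0+\tilde{\mathcal E}(0))+C\int_0^T(\varepsilon_0+\tilde{\mathcal E}^{1/2})\tilde{\mathcal D}\,{\rm d}t$ using the a priori assumption~\eqref{small energy}.

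For the pointwise-in-time bounds, I would first estimate $\|\mu-\bar\mu\|_{H^1}^2$ directly via the Allen--Cahn identity: $\|\mu-\bar\mu\|_{H^1}^2\le C\|\phi_t\|_{H^1}^2+C\|{\bf u}\|_{H^2}^2\|\nabla\phi\|_{H^2}^2$, whose right-hand side is $\le C\tilde{\mathcal E}(t)+C\tilde{\mathcal E}(t)^2$; Lemma~\ref{rgl2} controls $\|\phi_t\|_{H^1}$, and the smallness of $\tilde{\mathcal E}$ takes care of the quadratic piece. Then for $\|\nabla\phi\|_{H^2}$ I would split $\|\nabla^3\phi\|_{L^2}^2\le C\|\nabla\nabla_{\boldsymbol\tau}^2\phi\|_{L^2}^2+C\|\nabla\partial_{\bf n}^2\phi\|_{L^2}^2$; the tangential piece sits inside $\|\nabla_{\boldsymbol\tau}\phi\|_{H^2}^2$, and the normal piece is controlled by differentiating $\partial_{\bf n}^2\phi=-\mu+f+\Delta_{\boldsymbol\tau}\phi$ in the form $\|\nabla\partial_{\bf n}^2\phi\|_{L^2}\le C\|\nabla(\mu-\bar\mu)\|_{L^2}+C\|\nabla f\|_{L^2}+C\|\nabla\Delta_{\boldsymbol\tau}\phi\|_{L^2}$ and invoking the bound on $\|\mu-\bar\mu\|_{H^1}$ just obtained.

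For the integrated tangential estimates, applying $\nabla_{\boldsymbol\tau}^k$ to $\mu=-\Delta\phi+f$ gives
\[
\nabla_{\boldsymbol\tau}^k(\mu-\bar\mu)=-\nabla_{\boldsymbol\tau}^k\Delta\phi+\nabla_{\boldsymbol\tau}^k f,\qquad k=1,2,
\]
so the interior estimate reduces to bounding $\int_0^T\|\nabla_{\boldsymbol\tau}^k\Delta\phi\|_{L^2}^2\,{\rm d}t$, which is a direct subset of $\int_0^T\tilde{\mathcal D}(t)\,{\rm d}t$ (via the $\|\nabla_{\boldsymbol\tau}\phi\|_{H^2}^2$ term for $k=1$ and the explicit $\|\nabla_{\boldsymbol\tau}^2\Delta\phi\|_{L^2}^2$ term for $k=2$), together with the lower-order contribution $\|\nabla_{\boldsymbol\tau}^k f\|_{L^2}$ handled by the uniform bound~(\ref{bounded phi}). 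For the boundary piece, I would combine $\nabla_{\boldsymbol\tau}^k\partial_{\bf n}\phi=\nabla_{\boldsymbol\tau}^k L(\psi)-\nabla_{\boldsymbol\tau}^k\gamma_{fs}^\prime(\psi)$ with $\nabla_{\boldsymbol\tau}^k L(\psi)=\nabla_{\boldsymbol\tau}^k(\mu-\bar\mu)\big|_\Gamma$, and apply the trace--interpolation $\|g\|_{L^2(\Gamma)}^2\le C\|g\|_{L^2(\Omega)}\|g\|_{H^1(\Omega)}$ to $g=\nabla_{\boldsymbol\tau}^k(\mu-\bar\mu)$.

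The main technical obstacle is closing the trace step for $k=2$, because $\|\nabla_{\boldsymbol\tau}^2(\mu-\bar\mu)\|_{H^1(\Omega)}$ brings in third-order spatial derivatives of $\phi$ that are not explicitly inside $\tilde{\mathcal D}$. I would handle this by going back to the Allen--Cahn identity once more, writing $\nabla\nabla_{\boldsymbol\tau}^2(\mu-\bar\mu)=-\nabla\nabla_{\boldsymbol\tau}^2\phi_t-\nabla\nabla_{\boldsymbol\tau}^2({\bf u}\cdot\nabla\phi)$, whose right-hand side is covered by $\|\phi_t\|_{H^2}^2$ plus products of the form $\|{\bf u}\|_{H^3}^2\|\nabla\phi\|_{H^2}^2$ and $\|{\bf u}\|_{H^2}^2\|\nabla\phi\|_{H^3}^2$; the smallness~\eqref{small energy} then forces these into the $(\varepsilon_0+\tilde{\mathcal E}^{1/2})\tilde{\mathcal D}$ shape required by the lemma. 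Together with a standard Gronwall argument for the pointwise-in-time part, this completes the proof.
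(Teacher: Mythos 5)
Your proposal has two genuine gaps, both traceable to the same missing idea: the paper does not obtain the estimates in this lemma by substitution and trace interpolation, but from a single energy identity. One pairs $\nabla_{\boldsymbol\tau}^k$ of the chemical-potential equation with $\nabla_{\boldsymbol\tau}^k(\bar\mu-\mu)$, i.e.\ computes $-\langle\nabla_{\boldsymbol\tau}^k(\bar\mu-\mu),\nabla_{\boldsymbol\tau}^k\Delta\phi\rangle$ in two ways: integrating by parts and inserting the Allen--Cahn equation produces $\tfrac12\tfrac{\rm d}{{\rm d}t}\|\nabla\nabla_{\boldsymbol\tau}^k\phi\|_{L^2}^2$ plus a boundary term that, via the identity $(\ref{bb1})$, becomes $+\|\nabla_{\boldsymbol\tau}^k\partial_{\bf n}\phi\|_{L^2(\Gamma)}^2$ up to a small correction; inserting $\mu=-\Delta\phi+f$ instead produces $-\|\nabla_{\boldsymbol\tau}^k(\mu-\bar\mu)\|_{L^2}^2$ plus a cubic term which, after one more substitution of the Allen--Cahn equation, contributes the additional coercive quantity $\tfrac12\tfrac{\rm d}{{\rm d}t}\int_\Omega(3\phi^2-1)|\nabla_{\boldsymbol\tau}^k\phi|^2{\rm d}x$ thanks to $3\phi^2-1\ge1$. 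All three good terms thus appear simultaneously with favorable signs, and every remainder carries the prefactor $(\varepsilon_0+\tilde{\mathcal E}^{1/2})$.

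The first concrete gap: the lemma asserts $\sup_{0\le t\le T}\|\nabla\phi\|_{H^2}^2$, but your plan controls the tangential third derivatives only by pointing to $\|\nabla_{\boldsymbol\tau}\phi\|_{H^2}^2$ inside $\tilde{\mathcal D}$, which gives time-integrated control, not an $L^\infty_t$ bound. The Gronwall step you invoke at the end has no differential inequality for $\|\nabla\nabla_{\boldsymbol\tau}^k\phi\|_{L^2}^2$ to act on; in the paper that quantity is exactly the energy in the identity above, which is what produces the sup-in-time bound.

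The second gap is structural. Bounding $\int_0^T\|\nabla_{\boldsymbol\tau}^k(\mu-\bar\mu)\|_{L^2}^2{\rm d}t$ by $\int_0^T\|\nabla_{\boldsymbol\tau}^k\Delta\phi\|_{L^2}^2{\rm d}t$, or $\|\nabla_{\boldsymbol\tau}^2\partial_{\bf n}\phi\|_{L^2(\Gamma)}^2$ by $\|\nabla_{\boldsymbol\tau}^2(\mu-\bar\mu)\|_{L^2}\|\nabla_{\boldsymbol\tau}^2(\mu-\bar\mu)\|_{H^1}$ and then by $\|\phi_t\|_{H^2}^2$ etc., only trades one component of $\tilde{\mathcal D}$ for another with an $O(1)$ constant. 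Such a bound is useless for closing Proposition~\ref{proposition global}: when the lemmas are summed, the full dissipation $\int\tilde{\mathcal D}$ must dominate the right-hand side, which is only possible because every lemma's error term carries the small factor $(\varepsilon_0+\tilde{\mathcal E}^{1/2})$. An inequality of the form $\int(\text{piece of }\tilde{\mathcal D})\le C\int(\text{other piece of }\tilde{\mathcal D})$ with $C$ not small cannot be absorbed. Moreover, your $k=2$ trace step generates $\|{\bf u}\|_{H^2}^2\|\nabla\phi\|_{H^3}^2$, and $\|\nabla\phi\|_{H^3}$ (full fourth derivatives, including $\partial_{\bf n}^4\phi$) is not contained in $\tilde{\mathcal D}$ at all, so that term is not controlled even with a small prefactor.
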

\begin{proof}[\bf Proof.]
First, for $k=1,2$, taking the operator $\nabla\nabla_{\boldsymbol\tau}^k$, $\nabla_{\boldsymbol\tau}^k$ and $\nabla_{\boldsymbol\tau}^k$ to $(\ref{NSAC1})_1$, $(\ref{NSAC1})_4$ and (\ref{bb1}) respectively, one has
\begin{equation}
\label{r18}
\begin{cases}
\nabla\nabla_{\boldsymbol\tau}^k(\bar\mu-\mu)
=\nabla\nabla_{\boldsymbol\tau}^k\phi_t
+[\nabla\nabla_{\boldsymbol\tau}^k,{\bf u}\cdot\nabla]\phi
+{\bf u}\cdot\nabla^2\nabla_{\boldsymbol\tau}^k\phi,&\rm in\quad\Omega\times(0,T),
\\
\nabla_{\boldsymbol\tau}^k\Delta\phi=-\nabla_{\boldsymbol\tau}^k\mu
+\nabla_{\boldsymbol\tau}^k(\phi^3-\phi),&\rm in\quad\Omega\times(0,T),
\\
\nabla_{\boldsymbol\tau}^k(\bar\mu-\mu)=
-\nabla_{\boldsymbol\tau}^kL(\psi)=-\nabla_{\boldsymbol\tau}^k\partial_{\bf n}\phi
-\nabla_{\boldsymbol\tau}^k\gamma_{fs}^\prime(\psi) ,&\rm on\quad\Gamma\times(0,T),
\end{cases}
\end{equation}
where $[A,B]=AB-BA$ is commutators.  We now consider
\begin{align}
\label{r19}
-\langle\nabla_{\boldsymbol\tau}^k(\bar\mu-\mu),\nabla_{\boldsymbol\tau}^k\Delta\phi\rangle
=\langle\nabla\nabla_{\boldsymbol\tau}^k(\bar\mu-\mu),\nabla_{\boldsymbol\tau}^k\nabla\phi\rangle
-\int_\Gamma\nabla_{\boldsymbol\tau}^k(\bar\mu-\mu)\nabla_{\boldsymbol\tau}^k\partial_{\bf n}\phi {\rm d}S
=P_1+P_2.
\end{align}
Recall $P_2=P_2^\delta$, where $P_2^\delta$ is defined as in (\ref{P2P2}), and we find
\begin{align}
P_1&=P_1^\delta+\int_\Omega\delta\nabla\nabla_{\boldsymbol\tau}^k\Delta_{\boldsymbol\tau} \phi\cdot\nabla\nabla_{\boldsymbol\tau}^k\phi{\rm d}x \nonumber
\\
&=\frac{1}{2}\frac{\rm d}{{\rm d}t}\|\nabla\nabla_{\boldsymbol\tau}^k\phi\|_{L^2}^2+\int_\Omega
[\nabla\nabla_{\boldsymbol\tau}^k,{\bf u}\cdot\nabla]\phi
\cdot\nabla\nabla_{\boldsymbol\tau}^k\phi {\rm d}x,   \nonumber
\end{align}
whereas the term on the left-hand side of (\ref{r19}) can be rewritten as in (\ref{ll-r20}). Then, we have
\begin{align}
\label{r21}
&\frac{1}{2}\frac{\rm d}{{\rm d}t}\|\nabla\nabla_{\boldsymbol\tau}^k\phi\|_{L^2}^2
+\|\nabla_{\boldsymbol\tau}^k(\bar\mu-\mu)\|_{L^2}^2
+\|\nabla_{\boldsymbol\tau}^k\partial_{\bf n}\phi\|_{L^2(\Gamma)}^2 \nonumber
\\
&=-\int_\Omega
[\nabla\nabla_{\boldsymbol\tau}^k,{\bf u}\cdot\nabla]\phi
\cdot\nabla\nabla_{\boldsymbol\tau}^k\phi {\rm d}x-\int_\Omega\nabla_{\boldsymbol\tau}^k(\bar\mu-\mu)\nabla_{\boldsymbol\tau}^k
(\phi^3-\phi){\rm d}x- \int_\Gamma\nabla_{\boldsymbol\tau}^k\gamma_{fs}^\prime(\psi)\nabla_{\boldsymbol\tau}^k\partial_{\bf n}\phi {\rm d}S    \nonumber
\\
&=\sum_{i=1}^3M_i^k,
\end{align}
\vskip2mm
Next, for the case $k=1$, the right-hand side of (\ref{r21}) can be estimated as follows:
\begin{align}
M_1^1&=-\int_\Omega\left(\nabla\nabla_{\boldsymbol\tau}{\bf u}\cdot\nabla\phi+\nabla_{\boldsymbol\tau}{\bf u}\cdot\nabla^2\phi+\nabla{\bf u}\cdot\nabla\nabla_{\boldsymbol\tau}\phi
\right)\cdot\nabla\nabla_{\boldsymbol\tau}\phi {\rm d}x  \nonumber
\\
&\le C\left(\|\nabla^2{\bf u}\|_{L^2}\|\nabla\phi\|_{L^\infty}+\|\nabla{\bf u}\|_{L^4}\|\nabla^2\phi\|_{L^4}
\right)\|\nabla\nabla_{\boldsymbol\tau}\phi\|_{L^2} \nonumber
\\
&\le C\left(\|\nabla^2{\bf u}\|_{L^2}\|\nabla\phi\|_{H^2}+\|\nabla{\bf u}\|_{H^1}\|\nabla^2\phi\|_{H^1}
\right)\|\nabla\nabla_{\boldsymbol\tau}\phi\|_{L^2} \nonumber
\\
&\le C\tilde{\mathcal D}(t)^{\frac{1}{2}}\tilde{\mathcal E}(t)^{\frac{1}{2}}\tilde{\mathcal D}(t)^{\frac{1}{2}}.   \nonumber
\\[1em]
M_2^1&=-\int_\Omega\nabla_{\boldsymbol\tau}(\bar\mu-\mu)(3\phi^2-1)\nabla_{\boldsymbol\tau}\phi {\rm d}x \nonumber
\\
&=-\int_\Omega\left(\nabla_{\boldsymbol\tau}\phi_t+\nabla_{\boldsymbol\tau}{\bf u}\cdot\nabla\phi+{\bf u}\cdot\nabla\nabla_{\boldsymbol\tau}\phi
\right)(3\phi^2-1)\nabla_{\boldsymbol\tau}\phi {\rm d}x  \nonumber
\\
&=-\frac{1}{2}\frac{\rm d}{{\rm d}t}\int_\Omega(3\phi^2-1)|\nabla_{\boldsymbol\tau}\phi|^2{\rm d}x
+\int_\Omega3\phi\phi_t|\nabla_{\boldsymbol\tau}\phi|^2{\rm d}x \nonumber
\\
&\quad-\int_\Omega\left(
\nabla_{\boldsymbol\tau}{\bf u}\cdot\nabla\phi+{\bf u}\cdot\nabla\nabla_{\boldsymbol\tau}\phi
\right)(3\phi^2-1)\nabla_{\boldsymbol\tau}\phi {\rm d}x    \nonumber
\\
&\le-\frac{1}{2}\frac{\rm d}{{\rm d}t}\int_\Omega(3\phi^2-1)|\nabla_{\boldsymbol\tau}\phi|^2{\rm d}x
+C\|\phi\|_{L^\infty}\|\phi_t\|_{L^2}\|\nabla_{\boldsymbol\tau}\phi\|_{L^4}^2 \nonumber
\\
&\quad+C\|3\phi^2-1\|_{L^\infty}\|\nabla_{\boldsymbol\tau}\phi\|_{L^4}\left(
\|\nabla_{\boldsymbol\tau}{\bf u}\|_{L^2}\|\nabla\phi\|_{L^4}+\|{\bf u}\|_{L^4}\|\nabla\nabla_{\boldsymbol\tau}\phi\|_{L^2}
\right)   \nonumber
\\
&\le-\frac{1}{2}\frac{\rm d}{{\rm d}t}\int_\Omega(3\phi^2-1)|\nabla_{\boldsymbol\tau}\phi|^2{\rm d}x
+C\|\phi_t\|_{L^2}\|\nabla_{\boldsymbol\tau}\phi\|_{H^1}^2 \nonumber
\\
&\quad+C\|\nabla_{\boldsymbol\tau}\phi\|_{H^1}\left( \|\nabla_{\boldsymbol\tau}{\bf u}\|_{L^2} \|\nabla\phi\|_{H^1}+\|{\bf u}\|_{H^1}\|\nabla\nabla_{\boldsymbol\tau}\phi\|_{L^2}
\right)    \nonumber
\\
&\le-\frac{1}{2}\frac{\rm d}{{\rm d}t}\int_\Omega(3\phi^2-1)|\nabla_{\boldsymbol\tau}\phi|^2{\rm d}x
+C\tilde{\mathcal D}(t)^{\frac{1}{2}}\tilde{\mathcal D}(t)^{\frac{1}{2}}\tilde{\mathcal E}(t)^{\frac{1}{2}}.     \nonumber
\\[1em]
M_3^1&= -\int_\Gamma\gamma_{fs}^{(2)}(\psi)\nabla_{\boldsymbol\tau}\psi\nabla_{\boldsymbol\tau}\partial_{\bf n}\phi {\rm d}S\le C\|\gamma_{fs}^{(2)}(\psi)\|_{L^\infty}\|\nabla_{\boldsymbol\tau}\psi\|_{L^2(\Gamma)}\|\nabla_{\boldsymbol\tau}\partial_{\bf n}\phi\|_{L^2(\Gamma)}   \nonumber
\\
&\le C|\nu \cos\theta_s| \|\nabla_{\boldsymbol\tau}\phi\|_{H^1}\|\nabla_{\boldsymbol\tau}\partial_{\bf n}\phi\|_{L^2(\Gamma)}\le C\varepsilon_0\tilde{\mathcal D}(t).  \nonumber
\end{align}
For the case $k=2$, similarly, we obtain
\begin{align}
M_1^2&=-\int_\Omega\left[
\nabla\nabla_{\boldsymbol\tau}^2({\bf u}\cdot\nabla\phi)
-{\bf u}\cdot\nabla^2\nabla_{\boldsymbol\tau}^2\phi
\right]\cdot\nabla\nabla_{\boldsymbol\tau}^2\phi {\rm d}x  \nonumber
\\
&=-\int_\Omega\left(\nabla\nabla_{\boldsymbol\tau}^2{\bf u}\cdot\nabla\phi+2\nabla\nabla_{\boldsymbol\tau}{\bf u}\cdot\nabla\nabla_{\boldsymbol\tau}\phi+\nabla_{\boldsymbol\tau}^2{\bf u}\cdot\nabla^2\phi
\right)\cdot\nabla\nabla_{\boldsymbol\tau}^2\phi {\rm d}x \nonumber
\\
&\quad-\int_\Omega\left(2\nabla_{\boldsymbol\tau}{\bf u}\cdot\nabla^2\nabla_{\boldsymbol\tau}\phi+\nabla{\bf u}\cdot\nabla\nabla_{\boldsymbol\tau}^2\phi
\right)\cdot\nabla\nabla_{\boldsymbol\tau}^2\phi {\rm d}x \nonumber
\\
&\le C\left(\|\nabla^3{\bf u}\|_{L^2}\|\nabla\phi\|_{L^\infty}+\|\nabla^2{\bf u}\|_{L^4}\|\nabla^2\phi\|_{L^4}+\|\nabla{\bf u}\|_{L^\infty}\|\nabla^3\phi\|_{L^2}
\right)\|\nabla\nabla_{\boldsymbol\tau}^2\phi\|_{L^2} \nonumber
\\
&\le C\|\nabla{\bf u}\|_{H^2}\|\nabla\phi\|_{H^2}\|\nabla\nabla_{\boldsymbol\tau}^2\phi\|_{L^2} \nonumber
\\
&\le C\tilde{\mathcal D}(t)^{\frac{1}{2}}\tilde{\mathcal E}(t)^{\frac{1}{2}}\tilde{\mathcal D}(t)^{\frac{1}{2}}.     \nonumber
\\[1em]
M_2^2&=-\int_\Omega\nabla_{\boldsymbol\tau}^2(\bar\mu-\mu)\left[
(3\phi^2-1)\nabla_{\boldsymbol\tau}^2\phi+6\phi|\nabla_{\boldsymbol\tau}\phi|^2
\right]{\rm d}x  \nonumber
\\
&=-\int_\Omega\left(\nabla_{\boldsymbol\tau}^2\phi_t+\nabla_{\boldsymbol\tau}^2{\bf u}\cdot\nabla\phi+2\nabla_{\boldsymbol\tau}{\bf u}\cdot\nabla\nabla_{\boldsymbol\tau}\phi+{\bf u}\cdot\nabla\nabla_{\boldsymbol\tau}^2\phi
\right)(3\phi^2-1)\nabla_{\boldsymbol\tau}^2\phi {\rm d}x \nonumber
\\
&\quad-\int_\Omega6\phi|\nabla_{\boldsymbol\tau}\phi|^2\nabla_{\boldsymbol\tau}^2(\bar\mu-\mu){\rm d}x
\nonumber
\\
&=-\frac{1}{2}\frac{\rm d}{{\rm d}t}\int_\Omega(3\phi^2-1)|\nabla_{\boldsymbol\tau}^2\phi|^2{\rm d}x
+\int_\Omega3\phi\phi_t|\nabla_{\boldsymbol\tau}^2\phi|^2{\rm d}x
-\int_\Omega6\phi|\nabla_{\boldsymbol\tau}\phi|^2\nabla_{\boldsymbol\tau}^2(\bar\mu-\mu){\rm d}x
\nonumber
\\
&\quad-\int_\Omega\left(\nabla_{\boldsymbol\tau}^2{\bf u}\cdot\nabla\phi+2\nabla_{\boldsymbol\tau}{\bf u}\cdot\nabla\nabla_{\boldsymbol\tau}\phi+{\bf u}\cdot\nabla\nabla_{\boldsymbol\tau}^2\phi
\right)(3\phi^2-1)\nabla_{\boldsymbol\tau}^2\phi {\rm d}x \nonumber
\\
&\le-\frac{1}{2}\frac{\rm d}{{\rm d}t}\int_\Omega(3\phi^2-1)|\nabla_{\boldsymbol\tau}^2\phi|^2{\rm d}x
+C\|\phi\|_{L^\infty}\|\nabla_{\boldsymbol\tau}^2\phi\|_{L^4}^2\left(
\|\phi_t\|_{L^2}+\|\nabla_{\boldsymbol\tau}^2(\mu-\bar\mu)\|_{L^2}
\right)  \nonumber
\\
&\quad+C\|3\phi^2-1\|_{L^\infty}\|\nabla_{\boldsymbol\tau}^2\phi\|_{L^2}\left(
\|\nabla_{\boldsymbol\tau}^2{\bf u}\|_{L^2}\|\nabla\phi\|_{L^\infty}+\|\nabla_{\boldsymbol\tau}{\bf u}\|_{L^4}\|\nabla\nabla_{\boldsymbol\tau}\phi\|_{L^4}
\right) \nonumber
\\
&\quad+C\|3\phi^2-1\|_{L^\infty}\|\nabla_{\boldsymbol\tau}^2\phi\|_{L^2}\|{\bf u}\|_{L^\infty}\|\nabla\nabla_{\boldsymbol\tau}^2\phi\|_{L^2} \nonumber
\\
&\le-\frac{1}{2}\frac{\rm d}{{\rm d}t}\int_\Omega(3\phi^2-1)|\nabla_{\boldsymbol\tau}^2\phi|^2{\rm d}x
+C\|\nabla_{\boldsymbol\tau}^2\phi\|_{H^1}^2\left(
\|\phi_t\|_{L^2}+\|\nabla_{\boldsymbol\tau}^2(\mu-\bar\mu)\|_{L^2}
\right)  \nonumber
\\
&\quad+C\|\nabla\phi\|_{H^2}\|\nabla_{\boldsymbol\tau}^2\phi\|_{L^2}\|{\bf u}\|_{H^2}
\nonumber
\\
&\le-\frac{1}{2}\frac{\rm d}{{\rm d}t}\int_\Omega(3\phi^2-1)|\nabla_{\boldsymbol\tau}^2\phi|^2{\rm d}x
+C\tilde{\mathcal E}(t)^{\frac{1}{2}}\tilde{\mathcal D}(t)^{\frac{1}{2}}\tilde{\mathcal D}(t)^{\frac{1}{2}}.     \nonumber
\\[1em]
M_3^2&=-\int_\Gamma\left(
\gamma_{fs}^{(3)}(\psi)|\nabla_{\boldsymbol\tau}\psi|^2+\gamma_{fs}^{(2)}(\psi)
\nabla_{\boldsymbol\tau}^2\psi
\right)\nabla_{\boldsymbol\tau}^2\partial_{\bf n}\phi {\rm d}S   \nonumber
\\
&\le C|\nu \cos\theta_s| \left(\|\nabla_{\boldsymbol\tau}\psi\|_{L^4(\Gamma)}^2+\|\nabla_{\boldsymbol\tau}^2\psi\|_{L^2(\Gamma)}
\right)\|\nabla_{\boldsymbol\tau}^2\partial_{\bf n}\phi\|_{L^2(\Gamma)}  \nonumber
\\
&\le C\varepsilon_0\left(\|\nabla_{\boldsymbol\tau}\phi\|_{H^2}^2+\|\nabla_{\boldsymbol\tau}^2\phi\|_{H^1}
\right)\|\nabla_{\boldsymbol\tau}^2\partial_{\bf n}\phi\|_{L^2(\Gamma)}
\nonumber
\\
&\le C\varepsilon_0\tilde{\mathcal D}(t)+C\tilde{\mathcal E}(t)^{\frac{1}{2}}\tilde{\mathcal D}(t)^\frac{1}{2}\tilde{\mathcal D}(t)^\frac{1}{2}.     \nonumber
\end{align}
Then, substituting $M_1^k$ - $M_3^k$ $(k=1,2)$ into (\ref{r21}), we obtain
\begin{align}
&\frac{1}{2}\frac{\rm d}{{\rm d}t}\left(\|\nabla\nabla_{\boldsymbol\tau}^k\phi\|_{L^2}^2
+\int_\Omega(3\phi^2-1)|\nabla_{\boldsymbol\tau}^2\phi|^k{\rm d}x
\right)+\|\nabla_{\boldsymbol\tau}^k(\bar\mu-\mu)\|_{L^2}^2
+\|\nabla_{\boldsymbol\tau}^k\partial_{\bf n}\phi\|_{L^2(\Gamma)}^2 \nonumber
\\
&\le C\left(\varepsilon_0+\tilde{\mathcal E}(t)^{\frac{1}{2}}
\right)\tilde{\mathcal D}(t). \nonumber
\end{align}
Integrating it over $(0,T)$, and noting $3\phi^2-1\ge 1$, one has
\begin{align}
\label{r22}
&\sup\limits_{0\le t \le T}\|\nabla\nabla_{\boldsymbol\tau}^k\phi\|_{L^2}^2 +\int_{0}^{T}\left(\|\nabla_{\boldsymbol\tau}^k(\mu-\bar\mu)\|_{L^2}^2
+\|\nabla_{\boldsymbol\tau}^k\partial_{\bf n}\phi\|_{L^2(\Gamma)}^2
\right) {\rm d}t  \nonumber
\\
&\le C\tilde{\mathcal E}(0)+C\int_{0}^{T}\left(\varepsilon_0+\tilde{\mathcal E}(t)^{\frac{1}{2}}
\right)\tilde{\mathcal D}(t) {\rm d}t,\quad k=1,2.
\end{align}
\vskip2mm
Finally, the differentiation in the normal direction can be estimated by using the differential equation $(\ref{NSAC1})_4$ as following
\begin{align}
\label{r23}
\|\partial_{\bf n}\partial_{\bf n}\phi\|_{L^2}^2&\le C\left(\|\Delta_{\boldsymbol\tau}\phi\|_{L^2}^2+\|\mu-\bar\mu\|_{L^2}^2
+\|\bar\mu\|_{L^2}^2+\|\phi^3-\phi\|_{L^2}^2
\right)   \nonumber
\\
&\le C\left(\|\nabla_{\boldsymbol\tau}^2\phi\|_{L^2}^2+\|\mu-\bar\mu\|_{L^2}^2+\|\partial_{\bf n}\phi\|_{L^2(\Gamma)}^2+\|\phi\|_{L^\infty}\|\phi^2-1\|_{L^2}^2
\right)  \nonumber
\\
&\le C\left(\|\nabla\nabla_{\boldsymbol\tau}\phi\|_{L^2}^2+\|\mu-\bar\mu\|_{L^2}^2
+\|L(\psi)\|_{L^2(\Gamma)}^2+|\nu \cos\theta_s|+\|\phi^2-1\|_{L^2}^2
\right)  \nonumber
\\
&\le C\left(\|\nabla\nabla_{\boldsymbol\tau}\phi\|_{L^2}^2+\|\mu-\bar\mu\|_{L^2}^2
+\|L(\psi)\|_{L^2(\Gamma)}^2+\varepsilon_0
\right),
\end{align}
in which we have used (\ref{r1}) and the following fact:
\begin{align}
\bar\mu=\frac{1}{|\Omega|}\int_\Omega\mu {\rm d}x=\frac{1}{|\Omega|}\int_\Omega
\left(-\Delta\phi+f\right){\rm d}x=\frac{1}{|\Omega|}\int_\Gamma\partial_{\bf n}\phi {\rm d}S+\frac{1}{|\Omega|}\int_\Omega f{\rm d}x.
\nonumber
\end{align}
Hence, (\ref{r23}) together with (\ref{r16}) and (\ref{r22}) yields
\begin{align}
\label{r24}
\|\nabla^2\phi\|_{L^2}^2\le C\left(\varepsilon_0+\tilde{\mathcal E}(0)\right)+C\int_{0}^{T}\left(\varepsilon_0+\tilde{\mathcal E}(t)^{\frac{1}{2}}
\right)\tilde{\mathcal D}(t) {\rm d}t.
\end{align}
Due to $(\ref{0})_1$, (\ref{r25}) and (\ref{r24}), we deduce that
\begin{align}
\label{r27}
&\|\mu-\bar\mu\|_{L^2}^2+\|\nabla(\mu-\bar\mu)\|_{L^2}^2 \nonumber
\\
&\le C\left(\|\phi_t\|_{H^1}^2+\|{\bf u}\cdot\nabla\phi\|_{H^1}^2
\right)  \nonumber
\\
&\le C\left(\|\phi_t\|_{H^1}^2+\|{\bf u}\|_{L^\infty}^2\|\nabla\phi\|_{L^2}^2+\|\nabla{\bf u}\|_{L^4}^2\|\nabla\phi\|_{L^4}^2
+\|{\bf u}\|_{L^\infty}^2\|\nabla^2\phi\|_{L^2}^2
\right)  \nonumber
\\
&\le C\left(\|\phi_t\|_{H^1}^2+\|{\bf u}\|_{H^2}^2\|\nabla\phi\|_{H^1}^2
\right)  \nonumber
\\
&\le C\left(\|\phi_t\|_{H^1}^2+\tilde{\mathcal E}(t)\|\nabla\phi\|_{H^1}^2
\right)  \nonumber
\\
&\le C\left(\|\phi_t\|_{H^1}^2+\|\nabla\phi\|_{H^1}^2
\right),
\end{align}
which together with (\ref{r1}), (\ref{r25}), (\ref{r22}) and (\ref{r24}) implies
\begin{align}
\label{r26}
\|\nabla\partial_{\bf n}\partial_{\bf n}\phi\|_{L^2}^2&\le C\left(
\|\nabla\Delta_{\boldsymbol\tau}\phi\|_{L^2}^2+\|\nabla(\mu-\bar\mu)\|_{L^2}^2
+\|(3\phi^2-1)\nabla\phi\|_{L^2}^2
\right)  \nonumber
\\
&\le C\left(\|\nabla\nabla_{\boldsymbol\tau}^2\phi\|_{L^2}^2+\|\phi_t\|_{H^1}^2
+\|\nabla\phi\|_{H^1}^2+\|3\phi^2-1\|_{L^\infty}^2\|\nabla\phi\|_{L^2}^2
\right)  \nonumber
\\
&\le C\left(\varepsilon_0+\tilde{\mathcal E}(0)\right)+C\int_{0}^{T}\left(\varepsilon_0+\tilde{\mathcal E}(t)^{\frac{1}{2}}
\right)\tilde{\mathcal D}(t) {\rm d}t.
\end{align}
Combining (\ref{r26}) with (\ref{r27}), (\ref{r22}) and (\ref{r24}) shows (\ref{r17}). This completes the proof of Lemma \ref{444}.
\end{proof}
\begin{Lemma}
\label{rgl5}
Under the assumptions of Proposition $\ref{proposition global}$, it holds that
\begin{align}
\label{r31}
&\sup\limits_{0\le t \le T}\|\phi^2-1\|_{H^2}^2
+\int_{0}^{T}\left(\|\nabla_{\boldsymbol\tau}\phi\|_{H^2}^2
+\|\nabla_{\boldsymbol\tau}^2\Delta\phi\|_{L^2}^2
\right) {\rm d}t  \nonumber
\\
&\le C\left(\varepsilon_0+\tilde{\mathcal E}(0)\right)+C\int_{0}^{T}\left(\varepsilon_0+\tilde{\mathcal E}(t)^{\frac{1}{2}}
\right)\tilde{\mathcal D}(t) {\rm d}t.
\end{align}
\end{Lemma}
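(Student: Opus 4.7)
The plan is to obtain this lemma as an almost algebraic consequence of the estimates already derived in Lemmas \ref{rgl1}--\ref{444}, together with elliptic regularity applied to the tangentially-differentiated equation $(\ref{NSAC1})_4$. The key structural observation is that the phase-field equation $-\Delta\phi = \mu - f(\phi)$ can be viewed as an elliptic problem with Dirichlet trace $\psi$, and since $\Omega = \mathbb{T}^2 \times (-1,1)$ we can freely commute with tangential derivatives $\nabla_{\boldsymbol\tau}^k$ without boundary artifacts.

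First I would handle $\|\phi^2 - 1\|_{H^2}^2$. Writing $\nabla(\phi^2-1) = 2\phi\nabla\phi$ and $\nabla^2(\phi^2-1) = 2\phi\nabla^2\phi + 2\nabla\phi\otimes\nabla\phi$, then invoking the pointwise bound $|\phi|\le 2$ from (\ref{bounded phi}) together with the embedding $H^1 \hookrightarrow L^4$, we get
\begin{align*}
\|\phi^2-1\|_{H^2}^2 &\le C\bigl(\|\phi^2-1\|_{L^2}^2 + \|\nabla\phi\|_{L^2}^2 + \|\nabla\phi\|_{H^1}^4 + \|\nabla^2\phi\|_{L^2}^2\bigr),
\end{align*}
which is controlled by Lemma \ref{rgl1} and Lemma \ref{444} together with the bound $\|\nabla\phi\|_{H^2}^2 \le C(\varepsilon_0 + \tilde{\mathcal E}(0)) + C\int_0^T(\varepsilon_0+\tilde{\mathcal E}^{1/2})\tilde{\mathcal D}\,dt$.

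Next, for $\int_0^T \|\nabla_{\boldsymbol\tau}^2\Delta\phi\|_{L^2}^2\,dt$, I would apply $\nabla_{\boldsymbol\tau}^2$ directly to $(\ref{NSAC1})_4$. Since $\bar\mu$ is spatially constant, $\nabla_{\boldsymbol\tau}^2\bar\mu = 0$, and thus
\begin{align*}
\nabla_{\boldsymbol\tau}^2\Delta\phi = -\nabla_{\boldsymbol\tau}^2(\mu-\bar\mu) + \nabla_{\boldsymbol\tau}^2(\phi^3-\phi).
\end{align*}
The first term is already integrable in time by Lemma \ref{444}. For the cubic nonlinearity, expand $\nabla_{\boldsymbol\tau}^2(\phi^3) = 6\phi|\nabla_{\boldsymbol\tau}\phi|^2 + 3\phi^2\nabla_{\boldsymbol\tau}^2\phi$ and use $\|\phi\|_{L^\infty} \le C$, $\|\nabla_{\boldsymbol\tau}\phi\|_{L^4}^2 \le C\|\nabla\phi\|_{H^1}^2$ and $\|\nabla_{\boldsymbol\tau}^2\phi\|_{L^2} \le C\|\nabla\phi\|_{H^1}$ to bound this by (smallness)$\times\tilde{\mathcal D}$ after time integration.

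Finally, for $\int_0^T\|\nabla_{\boldsymbol\tau}\phi\|_{H^2}^2\,dt$: the tangential gradient $\nabla_{\boldsymbol\tau}\phi$ satisfies $-\Delta(\nabla_{\boldsymbol\tau}\phi) = \nabla_{\boldsymbol\tau}(\mu - f(\phi))$ with Dirichlet datum $\nabla_{\boldsymbol\tau}\psi$ on $\Gamma$. An elliptic $H^3$-type estimate (combining the bulk control of $\|\nabla_{\boldsymbol\tau}^2\Delta\phi\|_{L^2}$ just obtained, the normal derivative bound $\|\nabla_{\boldsymbol\tau}\partial_{\bf n}\phi\|_{H^1(\Gamma)}^2$ from Lemma \ref{444}, and the boundary trace controlled via $\|\nabla_{\boldsymbol\tau}L(\psi)\|_{L^2(\Gamma)}^2 + |\nu\cos\theta_s|$) delivers the required bound. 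Assembling all three pieces and absorbing any remaining products of norms into $(\varepsilon_0 + \tilde{\mathcal E}(t)^{1/2})\tilde{\mathcal D}(t)$ via the a priori assumption (\ref{small energy}) completes the proof.

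I expect the main technical obstacle to lie not in the elliptic/interpolation machinery, which is fairly routine given the prior lemmas, but in carefully accounting for the nonlinear term $\nabla_{\boldsymbol\tau}^2(\phi^3)$: one must split it so that each piece is either bounded pointwise by a universal constant (using $|\phi|\le 2$) or contributes a factor of $\tilde{\mathcal E}^{1/2}$ and a factor of $\tilde{\mathcal D}^{1/2}$, never two factors of $\tilde{\mathcal D}^{1/2}$ that would prevent closure. The smallness condition $\varepsilon_0 \ll 1$ and the convexity gain from $3\phi^2 - 1 \ge 1$ (ensuring that $f'(\phi)$ does not flip sign) are the two structural ingredients that make this bookkeeping work.
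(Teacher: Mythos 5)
Your treatment of $\|\phi^2-1\|_{H^2}^2$ matches the paper and is fine. The genuine gap is in how you obtain the two time-integrated quantities. Your plan derives $\int_0^T\|\nabla_{\boldsymbol\tau}^2\Delta\phi\|_{L^2}^2\,{\rm d}t$ first, by writing $\nabla_{\boldsymbol\tau}^2\Delta\phi=-\nabla_{\boldsymbol\tau}^2(\mu-\bar\mu)+\nabla_{\boldsymbol\tau}^2(\phi^3-\phi)$ and bounding the term $(3\phi^2-1)\nabla_{\boldsymbol\tau}^2\phi$ via $\|\phi\|_{L^\infty}\le C$ and $\|\nabla_{\boldsymbol\tau}^2\phi\|_{L^2}\le C\|\nabla\phi\|_{H^1}$. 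But $\|3\phi^2-1\|_{L^\infty}$ is only $O(1)$, not small, and $\|\nabla\phi\|_{H^1}^2$ sits in $\tilde{\mathcal E}(t)$, not in $\tilde{\mathcal D}(t)$: the lemmas preceding this one control $\sup_t\|\nabla\phi\|_{H^2}^2$ but give no time-uniform bound on $\int_0^T\|\nabla_{\boldsymbol\tau}^k\phi\|_{L^2}^2\,{\rm d}t$ for $k=1,2$. So this term is either bounded by $T\sup_t\|\nabla\phi\|_{H^1}^2$ (which destroys the $T$-independence needed for the continuation argument) or left as $C\int_0^T\|\nabla_{\boldsymbol\tau}^2\phi\|_{L^2}^2\,{\rm d}t$ with a non-small constant, which cannot be absorbed into $C\int_0^T(\varepsilon_0+\tilde{\mathcal E}^{1/2})\tilde{\mathcal D}\,{\rm d}t$. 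The same obstruction reappears in your elliptic step for $\int_0^T\|\nabla_{\boldsymbol\tau}\phi\|_{H^2}^2\,{\rm d}t$, where $f'(\phi)\nabla_{\boldsymbol\tau}\phi$ is treated as a right-hand-side source whose time integral is not yet known.

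The missing idea — which you name in your closing remarks but never actually deploy — is to use $3\phi^2-1\ge1$ as a \emph{coercivity} mechanism rather than an $L^\infty$ bound. The paper first tests $\nabla_{\boldsymbol\tau}(\mu-\bar\mu)=-\nabla_{\boldsymbol\tau}\Delta\phi+(3\phi^2-1)\nabla_{\boldsymbol\tau}\phi$ against $\nabla_{\boldsymbol\tau}\phi$ (and the second-order analogue against $\nabla_{\boldsymbol\tau}^2\phi$): integration by parts produces $\|\nabla\nabla_{\boldsymbol\tau}^k\phi\|_{L^2}^2+\int_\Omega(3\phi^2-1)|\nabla_{\boldsymbol\tau}^k\phi|^2\ge\|\nabla_{\boldsymbol\tau}^k\phi\|_{H^1}^2$ on the left, while the right-hand side consists only of $\|\nabla_{\boldsymbol\tau}^k(\mu-\bar\mu)\|_{L^2}^2$ and the boundary term $\int_\Gamma\nabla_{\boldsymbol\tau}^k\partial_{\bf n}\phi\,\nabla_{\boldsymbol\tau}^k\psi\,{\rm d}S$, both of whose time integrals are already controlled by Lemma \ref{444}. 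This is what establishes $\int_0^T\|\nabla_{\boldsymbol\tau}^k\phi\|_{H^1}^2\,{\rm d}t$ ($k=1,2$) with the correct structure; only afterwards can $\int_0^T\|\nabla_{\boldsymbol\tau}^2\Delta\phi\|_{L^2}^2\,{\rm d}t$ be read off from the equation, since the offending term $C\int_0^T\|\nabla_{\boldsymbol\tau}^2\phi\|_{L^2}^2\,{\rm d}t$ is by then admissible. Your ordering of the steps is therefore circular as written, and the elliptic-regularity route cannot substitute for the variational one because no Dirichlet or Neumann datum gives time-integrability of the zeroth-order tangential derivatives.
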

\begin{proof}[\bf Proof.]
First, taking the tangential derivative of $(\ref{NSAC1})_4$ with respect to $x$ leads to
\begin{align}
\label{222}
\nabla_{\boldsymbol\tau}(\mu-\bar\mu)=-\nabla_{\boldsymbol\tau}\Delta\phi
+(3\phi^2-1)\nabla_{\boldsymbol\tau}\phi.
\end{align}
Multiplying (\ref{222}) by $\nabla_{\boldsymbol\tau}\phi$, integrating the result over $\Omega$ by parts, one has
\begin{align}
\int_\Omega\nabla_{\boldsymbol\tau}\mu\nabla_{\boldsymbol\tau}\phi {\rm d}x&=-\int_\Omega\nabla_{\boldsymbol\tau}\Delta\phi\nabla_{\boldsymbol\tau}\phi {\rm d}x+\int_\Omega(3\phi^2-1)|\nabla_{\boldsymbol\tau}\phi|^2 {\rm d}x \nonumber
\\
&=\|\nabla_{\boldsymbol\tau}\nabla\phi\|_{L^2}^2- \int_\Gamma\nabla_{\boldsymbol\tau}\partial_{\bf n}\phi\nabla_{\boldsymbol\tau}\psi {\rm d}S +\int_\Omega(3\phi^2-1)|\nabla_{\boldsymbol\tau}\phi|^2 {\rm d}x. \nonumber
\end{align}
Then, noting $3\phi^2-1\ge 1$, we infer
\begin{align}
\|\nabla_{\boldsymbol\tau}\phi\|_{L^2}^2+\|\nabla_{\boldsymbol\tau}\nabla\phi\|_{L^2}^2 &\le \int_\Gamma\nabla_{\boldsymbol\tau}\partial_{\bf n}\phi\nabla_{\boldsymbol\tau}\psi {\rm d}S  +\int_\Omega\nabla_{\boldsymbol\tau}(\mu-\bar\mu)\nabla_{\boldsymbol\tau}\phi {\rm d}x \nonumber
\\
&\le\frac{1}{4}\|\nabla_{\boldsymbol\tau}\phi\|_{L^2}^2+C\|\nabla_{\boldsymbol\tau}(\mu-\bar\mu)\|_{L^2}^2
+\|\nabla_{\boldsymbol\tau}\partial_{\bf n}\phi\|_{L^2(\Gamma)} \|\nabla_{\boldsymbol\tau}\psi\|_{L^2(\Gamma)}     \nonumber
\\
&\le\frac{1}{4}\|\nabla_{\boldsymbol\tau}\phi\|_{L^2}^2+C\|\nabla_{\boldsymbol\tau}(\mu-\bar\mu)\|_{L^2}^2
+C\|\nabla_{\boldsymbol\tau}\partial_{\bf n}\phi\|_{L^2(\Gamma)}\|\nabla_{\boldsymbol\tau}\phi\|_{H^1} \nonumber
\\
&\le\frac{1}{2}\|\nabla_{\boldsymbol\tau}\phi\|_{H^1}^2+C\|\nabla_{\boldsymbol\tau}(\mu-\bar\mu)\|_{L^2}^2
+C\|\nabla_{\boldsymbol\tau}\partial_{\bf n}\phi\|_{L^2(\Gamma)}^2, \nonumber
\end{align}
which together with (\ref{r17}) implies
\begin{align}
\label{r44}
\int_{0}^{T}\|\nabla_{\boldsymbol\tau}\phi\|_{H^1}^2{\rm d}t\le C\left(\varepsilon_0+\tilde{\mathcal E}(0)\right)+C\int_{0}^{T}\left(\varepsilon_0+\tilde{\mathcal E}(t)^{\frac{1}{2}}
\right)\tilde{\mathcal D}(t) {\rm d}t.
\end{align}
\vskip2mm
Then, taking the tangential derivative of $(\ref{222})$ with respect to $x$, we have
\begin{align}
\label{333}
\nabla_{\boldsymbol\tau}^2(\mu-\bar\mu)=-\nabla_{\boldsymbol\tau}^2\Delta\phi
+6\phi|\nabla_{\boldsymbol\tau}\phi|^2+(3\phi^2-1)\nabla_{\boldsymbol\tau}^2\phi,
\end{align}
which together with (\ref{r17}) and (\ref{r44}) yields
\begin{align}
\label{r28}
&\int_{0}^{T}\|\nabla_{\boldsymbol\tau}^2\Delta\phi\|_{L^2}^2{\rm d}t \nonumber
\\
&\le C\int_{0}^{T}\left(
\|\nabla_{\boldsymbol\tau}^2(\mu-\bar\mu)\|_{L^2}^2+\|3\phi^2-1\|_{L^\infty}^2
\|\nabla_{\boldsymbol\tau}^2\phi\|_{L^2}^2+\|\phi\|_{L^\infty}^2
\|\nabla_{\boldsymbol\tau}\phi\|_{L^\infty}^2\|\nabla_{\boldsymbol\tau}\phi\|_{L^2}^2
\right) {\rm d}t  \nonumber
\\
&\le C\int_{0}^{T}\left(\|\nabla_{\boldsymbol\tau}^2(\mu-\bar\mu)\|_{L^2}^2
+\|\nabla_{\boldsymbol\tau}^2\phi\|_{L^2}^2+\tilde{\mathcal E}(t)\|\nabla_{\boldsymbol\tau}\phi\|_{L^2}^2
\right) {\rm d}t  \nonumber
\\
&\le C\int_{0}^{T}\left(\|\nabla_{\boldsymbol\tau}^2(\mu-\bar\mu)\|_{L^2}^2
+\|\nabla_{\boldsymbol\tau}^2\phi\|_{L^2}^2+\|\nabla_{\boldsymbol\tau}\phi\|_{L^2}^2
\right) {\rm d}t  \nonumber
\\
&\le C\left(\varepsilon_0+\tilde{\mathcal E}(0)\right)+C\int_{0}^{T}\left(\varepsilon_0+\tilde{\mathcal E}(t)^{\frac{1}{2}}
\right)\tilde{\mathcal D}(t) {\rm d}t.
\end{align}
\vskip2mm
Next, multiplying (\ref{333}) by $\nabla_{\boldsymbol\tau}^2\phi$, integrating the result over $\Omega$, it follows from integration by parts that
\begin{align}
\int_\Omega\nabla_{\boldsymbol\tau}^2\mu\nabla_{\boldsymbol\tau}^2\phi {\rm d}x&=-\int_\Omega\nabla_{\boldsymbol\tau}^2\Delta\phi\nabla_{\boldsymbol\tau}^2\phi {\rm d}x+\int_\Omega6\phi|\nabla_{\boldsymbol\tau}\phi|^2 \nabla_{\boldsymbol\tau}^{2}\phi {\rm d}x+\int_\Omega(3\phi^2-1)|\nabla_{\boldsymbol\tau}^2\phi|^2 {\rm d}x \nonumber
\\
&=\|\nabla_{\boldsymbol\tau}^2\nabla\phi\|_{L^2}^2- \int_\Gamma\nabla_{\boldsymbol\tau}^2\partial_{\bf n}\phi\nabla_{\boldsymbol\tau}^2\psi {\rm d}S  +\int_\Omega6\phi|\nabla_{\boldsymbol\tau}\phi|^2\nabla_{\boldsymbol\tau}^2\phi {\rm d}x+\int_\Omega(3\phi^2-1)|\nabla_{\boldsymbol\tau}^2\phi|^2 {\rm d}x, \nonumber
\end{align}
which together with $3\phi^2-1\ge1$ implies
\begin{align}
\label{r45}
&\|\nabla_{\boldsymbol\tau}^2\phi\|_{L^2}^2+\|\nabla_{\boldsymbol\tau}^2\nabla\phi\|_{L^2}^2
\nonumber
\\
&\le \int_\Gamma\nabla_{\boldsymbol\tau}^2\partial_{\bf n}\phi\nabla_{\boldsymbol\tau}^2\psi {\rm d}S    +\int_\Omega\nabla_{\boldsymbol\tau}^2\phi\left(
\nabla_{\boldsymbol\tau}^2(\mu-\bar\mu)-6\phi|\nabla_{\boldsymbol\tau}\phi|^2
\right) {\rm d}x \nonumber
\\
&\le\frac{1}{4}\|\nabla_{\boldsymbol\tau}^2\phi\|_{L^2}^2+C\|\nabla_{\boldsymbol\tau}^2(\mu-\bar\mu)\|_{L^2}^2
+C\|\phi\|_{L^\infty}^2\|\nabla_{\boldsymbol\tau}\phi\|_{L^4}^4+\|\nabla_{\boldsymbol\tau}^2\partial_{\bf n}\phi\|_{L^2(\Gamma)} \|\nabla_{\boldsymbol\tau}^2\psi\|_{L^2(\Gamma)}    \nonumber
\\
&\le\frac{1}{4}\|\nabla_{\boldsymbol\tau}^2\phi\|_{L^2}^2+C\|\nabla_{\boldsymbol\tau}^2(\mu-\bar\mu)\|_{L^2}^2
+C\|\nabla_{\boldsymbol\tau}\phi\|_{H^1}^4+C\|\nabla_{\boldsymbol\tau}^2\partial_{\bf n}\phi\|_{L^2(\Gamma)}\|\nabla_{\boldsymbol\tau}^2\phi\|_{H^1} \nonumber
\\
&\le\frac{1}{4}\|\nabla_{\boldsymbol\tau}^2\phi\|_{L^2}^2+C\|\nabla_{\boldsymbol\tau}^2(\mu-\bar\mu)\|_{L^2}^2
+C\tilde{\mathcal E}(t)\|\nabla_{\boldsymbol\tau}\phi\|_{H^1}^2+C\|\nabla_{\boldsymbol\tau}^2\partial_{\bf n}\phi\|_{L^2(\Gamma)}\|\nabla_{\boldsymbol\tau}^2\phi\|_{H^1} \nonumber
\\
&\le\frac{1}{2}\|\nabla_{\boldsymbol\tau}^2\phi\|_{H^1}^2+C\|\nabla_{\boldsymbol\tau}^2(\mu-\bar\mu)\|_{L^2}^2
+C\|\nabla_{\boldsymbol\tau}\phi\|_{H^1}^2+C\|\nabla_{\boldsymbol\tau}^2\partial_{\bf n}\phi\|_{L^2(\Gamma)}^2.
\end{align}
Then, integrating (\ref{r45}) over $(0,T)$, combining the result with (\ref{r17}) and (\ref{r44}), we obtain
\begin{align}
\label{r46}
\int_{0}^{T}\|\nabla_{\boldsymbol\tau}^2\phi\|_{H^1}^2{\rm d}t\le C\left(\varepsilon_0+\tilde{\mathcal E}(0)\right)+C\int_{0}^{T}\left(\varepsilon_0+\tilde{\mathcal E}(t)^{\frac{1}{2}}
\right)\tilde{\mathcal D}(t) {\rm d}t.
\end{align}
As for the normal estimate, we have the following equation
\begin{align}
\label{r47}
\|\nabla^2\nabla_{\boldsymbol\tau}\phi\|_{L^2}^2&\le C\left(\|\nabla\nabla_{\boldsymbol\tau}^2\phi\|_{L^2}^2+\|\nabla\nabla_{\boldsymbol\tau}\partial_{\bf n}\phi\|_{L^2}^2\right)  \nonumber
\\
&\le C\left(\|\nabla\nabla_{\boldsymbol\tau}^2\phi\|_{L^2}^2+\|\nabla_{\boldsymbol\tau}^2\partial_{\bf n}\phi\|_{L^2}^2+\|\nabla_{\boldsymbol\tau}\partial_{\bf n}\partial_{\bf n}\phi\|_{L^2}^2\right)  \nonumber
\\
&\le C\left(\|\nabla\nabla_{\boldsymbol\tau}^2\phi\|_{L^2}^2+\|\nabla_{\boldsymbol\tau}\partial_{\bf n}\partial_{\bf n}\phi\|_{L^2}^2\right).
\end{align}
It follows from (\ref{222}) that
\begin{align}
\label{r48}
\int_{0}^{T}\|\nabla_{\boldsymbol\tau}\partial_{\bf n}\partial_{\bf n}\phi\|_{L^2}^2 {\rm d}t&\le C\int_{0}^{T}\left(\|\nabla_{\boldsymbol\tau}\Delta_{\boldsymbol\tau}\phi\|_{L^2}^2
+\|\nabla_{\boldsymbol\tau}(\mu-\bar\mu)\|_{L^2}^2
+\|(3\phi^2-1)\nabla_{\boldsymbol\tau}\phi\|_{L^2}^2\right) {\rm d}t \nonumber
\\
&\le C\int_{0}^{T}\left(\|\nabla_{\boldsymbol\tau}^3\phi\|_{L^2}^2
+\|\nabla_{\boldsymbol\tau}(\mu-\bar\mu)\|_{L^2}^2
+\|3\phi^2-1\|_{L^\infty}^2\|\nabla_{\boldsymbol\tau}\phi\|_{L^2}^2\right) {\rm d}t \nonumber
\\
&\le C\int_{0}^{T}\left(\|\nabla_{\boldsymbol\tau}^3\phi\|_{L^2}^2
+\|\nabla_{\boldsymbol\tau}(\mu-\bar\mu)\|_{L^2}^2
+\|\nabla_{\boldsymbol\tau}\phi\|_{L^2}^2\right) {\rm d}t  \nonumber
\\
&\le C\left(\varepsilon_0+\tilde{\mathcal E}(0)\right)+C\int_{0}^{T}\left(\varepsilon_0+\tilde{\mathcal E}(t)^{\frac{1}{2}}
\right)\tilde{\mathcal D}(t) {\rm d}t,
\end{align}
where we have used (\ref{r17}), (\ref{r44}) and (\ref{r46}).
\vskip2mm
Finally, it is worthy noting that $\tilde{\mathcal E}(t)\le 1$, due to (\ref{r1}) and (\ref{r17}), so we have
\begin{align}
\|\phi^2-1\|_{H^2}^2&\le C\|\phi^2-1\|_{L^2}^2+C\|\phi\nabla\phi\|_{L^2}^2+\|\phi\nabla^2\phi+|\nabla\phi|^2\|_{L^2}^2
\nonumber
\\
&\le C\|\phi^2-1\|_{L^2}^2+C\|\phi\|_{L^\infty}^2\|\nabla\phi\|_{L^2}^2
+C\|\phi\|_{L^\infty}\|\nabla^2\phi\|_{L^2}^2+C\|\nabla\phi\|_{L^4}^4 \nonumber
\\
&\le C\|\phi^2-1\|_{L^2}^2+C\|\nabla\phi\|_{H^1}^2+C\tilde{\mathcal E}(t)\|\nabla\phi\|_{H^1}^2 \nonumber
\\
&\le C\|\phi^2-1\|_{L^2}^2+C\|\nabla\phi\|_{H^1}^2  \nonumber
\\
&\le C\left(\varepsilon_0+\tilde{\mathcal E}(0)\right)+C\int_{0}^{T}\left(\varepsilon_0+\tilde{\mathcal E}(t)^{\frac{1}{2}}
\right)\tilde{\mathcal D}(t) {\rm d}t,  \nonumber
\end{align}
which together with (\ref{r44}), (\ref{r28}), (\ref{r46})-(\ref{r48}) shows (\ref{r31}). Hence, we complete the proof of Lemma \ref{rgl5}.
\end{proof}
\begin{Lemma}
\label{rgl6}
Under the assumptions of Proposition $\ref{proposition global}$, it holds that
\begin{align}
\label{r37}
&\sup\limits_{0\le t \le T}\left(\|{\bf u}\|_{H^2}^2+\|{\bf u}_t\|_{L^2}^2
\right)+\int_{0}^{T}\left(\|{\bf u}\|_{H^3}^2
+\|{\bf u}_t\|_{H^1}^2+\|{\bf u}_{t{\boldsymbol\tau}}\|_{L^2(\Gamma)}^2
\right) {\rm d}t  \nonumber
\\
&\le C\left(\varepsilon_0+\tilde{\mathcal E}(0)\right)+C\int_{0}^{T}\left(\varepsilon_0+\tilde{\mathcal E}(t)^{\frac{1}{2}}
\right)\tilde{\mathcal D}(t) {\rm d}t.
\end{align}
\end{Lemma}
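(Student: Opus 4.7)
The plan is to mirror the two-step strategy already used in Lemma \ref{L-33} (time-derivative estimate plus Stokes regularity), but now exploiting the a priori smallness \eqref{small energy} and the bounds on $L(\psi)$, $\nabla_{\boldsymbol\tau}L(\psi)$, $\mu-\bar\mu$, and $\nabla\phi$ already established in Lemmas \ref{rgl1}--\ref{rgl5} in order to produce a time-uniform estimate rather than merely a local one.

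First, I would differentiate $(\ref{NSAC1})_1$ and $(\ref{boundary})_2$ in time and test against ${\bf u}_t$. The usual integration by parts gives
\begin{align*}
\frac{1}{2}\frac{\rm d}{{\rm d}t}\|{\bf u}_t\|_{L^2}^2+\frac12\|{\mathbb S}({\bf u}_t)\|_{L^2}^2+\beta\|{\bf u}_{t\boldsymbol\tau}\|_{L^2(\Gamma)}^2
= -\int_\Omega {\bf u}_t\cdot\nabla {\bf u}\cdot {\bf u}_t\,{\rm d}x+\int_\Gamma \partial_t\bigl(L(\psi)\nabla_{\boldsymbol\tau}\psi\bigr)\cdot {\bf u}_{t\boldsymbol\tau}\,{\rm d}S
-\int_\Omega \partial_t(\Delta\phi\nabla\phi)\cdot {\bf u}_t\,{\rm d}x.
\end{align*}
The convection term is handled by Korn plus $\|{\bf u}\|_{H^2}\le \tilde{\mathcal E}(t)^{1/2}\le \sigma^{1/2}$, producing a factor absorbable into $\tilde{\mathcal E}(t)^{1/2}\tilde{\mathcal D}(t)$. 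The phase-field source $\partial_t(\Delta\phi\,\nabla\phi)$ splits into $\Delta\phi_t\,\nabla\phi+\Delta\phi\,\nabla\phi_t$; both factors are controlled in $H^1$ and $L^4$ by $\tilde{\mathcal E}(t)^{1/2}$, with the remaining one giving a $\tilde{\mathcal D}(t)^{1/2}$. The boundary integral, using $\partial_t L(\psi)=-\partial_{\bf n}\phi_t-\gamma_{fs}^{(2)}(\psi)\psi_t$ from $(\ref{l-pt})_3$ plus duality against $H^{1/2}(\Gamma)$, decomposes into products of $\|\nabla_{\boldsymbol\tau}\psi\|_{H^1(\Gamma)}$ (which is $O(\tilde{\mathcal E}^{1/2})$) or $\|L(\psi)\|_{H^1(\Gamma)}$ (which is $O(\varepsilon_0^{1/2}+\tilde{\mathcal E}(0)^{1/2}+\dots)$ via Lemma \ref{L477}) times $\|\phi_t\|_{H^1}$ or $\|\nabla\phi_t\|_{H^1}$ times $\|{\bf u}_{t\boldsymbol\tau}\|_{L^2(\Gamma)}$. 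All such terms can be bounded by $\epsilon\|\mathbb S({\bf u}_t)\|_{L^2}^2+\epsilon\|\partial_{\bf n}\phi_t\|_{L^2(\Gamma)}^2+C(\varepsilon_0+\tilde{\mathcal E}(t)^{1/2})\tilde{\mathcal D}(t)$, and the initial value $\|{\bf u}_t(0)\|_{L^2}^2\le C\tilde{\mathcal E}(0)$ is obtained as in \eqref{D30}. A Gronwall/integration step then yields
\begin{equation*}
\sup_{[0,T]}\|{\bf u}_t\|_{L^2}^2+\int_0^T\bigl(\|\nabla{\bf u}_t\|_{L^2}^2+\|{\bf u}_{t\boldsymbol\tau}\|_{L^2(\Gamma)}^2\bigr){\rm d}t\le C(\varepsilon_0+\tilde{\mathcal E}(0))+C\int_0^T(\varepsilon_0+\tilde{\mathcal E}(t)^{1/2})\tilde{\mathcal D}(t)\,{\rm d}t.
\end{equation*}

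Second, I would view the momentum equation as a stationary Stokes system with source $G=-{\bf u}\cdot\nabla{\bf u}-{\rm div}(\nabla\phi\otimes\nabla\phi)-{\bf u}_t$ and Navier boundary data $g=L(\psi)\nabla_{\boldsymbol\tau}\psi$, and apply the Stokes regularity result from \cite{1} (as in Lemma \ref{LNS L4}) at two levels:
\begin{equation*}
\|{\bf u}\|_{H^2}^2\le C\bigl(\|G\|_{L^2}^2+\|g\|_{H^{1/2}(\Gamma)}^2\bigr),\qquad \|{\bf u}\|_{H^3}^2\le C\bigl(\|G\|_{H^1}^2+\|g\|_{H^{3/2}(\Gamma)}^2\bigr).
\end{equation*}
For the pointwise-in-$t$ bound I control $\|G\|_{L^2}^2$ by $\|{\bf u}\|_{L^\infty}^2\|\nabla{\bf u}\|_{L^2}^2+\|\Delta\phi\|_{L^4}^2\|\nabla\phi\|_{L^4}^2+\|{\bf u}_t\|_{L^2}^2$; using the smallness $\tilde{\mathcal E}(t)\le\sigma$ the first two are $\le C\tilde{\mathcal E}(t)\cdot\tilde{\mathcal E}(t)$. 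The boundary trace $\|g\|_{H^{1/2}(\Gamma)}^2$ is handled by the product estimate of Lemma \ref{product}: $\|g\|_{H^{1/2}(\Gamma)}\le C\|L(\psi)\|_{H^1(\Gamma)}\|\nabla_{\boldsymbol\tau}\psi\|_{H^1(\Gamma)}$, and both factors are controlled through Lemma \ref{L477} and Lemma \ref{444}. For the integrated $H^3$ bound, $\|G\|_{H^1}^2$ needs the $H^2$ bounds on $\phi$ and ${\bf u}$ already available, while $\|g\|_{H^{3/2}(\Gamma)}^2\le C\|L(\psi)\|_{H^{3/2}(\Gamma)}^2\|\nabla_{\boldsymbol\tau}\psi\|_{H^{3/2}(\Gamma)}^2$ is bounded in $L^1(0,T)$ using $\nabla_{\boldsymbol\tau}L(\psi)\in L^\infty_tL^2(\Gamma)$, $\nabla_{\boldsymbol\tau}\partial_{\bf n}\phi\in L^2_tH^1(\Gamma)$ (from Lemma \ref{444}), and $\nabla_{\boldsymbol\tau}\phi\in L^2_tH^2$ (from Lemma \ref{rgl5}).

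The main obstacle is ensuring the correct smallness structure in the boundary contributions: every quadratic boundary term involving $L(\psi)$ or $\nabla_{\boldsymbol\tau}\psi$ must be shown to carry an extra factor of $\tilde{\mathcal E}(t)^{1/2}$ or $\varepsilon_0^{1/2}$, otherwise one would produce an uncontrolled $\int_0^T\tilde{\mathcal D}(t)\,{\rm d}t$ that cannot be absorbed. The key observation, already used implicitly in Lemmas \ref{rgl2}--\ref{444}, is that $\|\nabla_{\boldsymbol\tau}\psi\|_{H^{k}(\Gamma)}\lesssim \tilde{\mathcal E}(t)^{1/2}$ for $k\le 3/2$ and $\|L(\psi)\|_{L^2(\Gamma)}\lesssim \varepsilon_0^{1/2}+\tilde{\mathcal E}(0)^{1/2}+\dots$ via Lemma \ref{L477}; every product in the boundary integrals factors into one "small" piece (estimated by $\tilde{\mathcal E}^{1/2}$ or $\varepsilon_0^{1/2}$) times one piece dominated by $\tilde{\mathcal D}(t)^{1/2}$. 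Choosing the Young parameter small enough to absorb $\|\mathbb S({\bf u}_t)\|_{L^2}^2$, $\|{\bf u}\|_{H^3}^2$, and $\|\partial_{\bf n}\phi_t\|_{L^2(\Gamma)}^2$ into the left-hand side of the combined inequality then yields \eqref{r37}.
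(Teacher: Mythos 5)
Your overall strategy coincides with the paper's: a time-differentiated energy estimate for ${\bf u}_t$ (identical to the paper's Step 1, including the decomposition of the boundary term via $\partial_t L(\psi)=-\partial_{\bf n}\phi_t-\gamma_{fs}^{(2)}(\psi)\psi_t$ and duality in $H^{\pm 1/2}(\Gamma)$), followed by the stationary Stokes regularity of \cite{1} applied at the $H^2$ and $H^3$ levels with source $G=-{\bf u}\cdot\nabla{\bf u}-{\rm div}(\nabla\phi\otimes\nabla\phi)-{\bf u}_t$ and boundary datum $g=L(\psi)\nabla_{\boldsymbol\tau}\psi$. However, you have skipped the paper's intermediate step, namely testing the momentum equation itself by ${\bf u}_t$ to obtain
\begin{equation*}
\sup_{0\le t\le T}\bigl(\|\nabla{\bf u}\|_{L^2}^2+\beta\|{\bf u}_{\boldsymbol\tau}\|_{L^2(\Gamma)}^2\bigr)+\int_0^T\|{\bf u}_t\|_{L^2}^2\,{\rm d}t
\le C\bigl(\varepsilon_0+\tilde{\mathcal E}(0)\bigr)+C\int_0^T\bigl(\varepsilon_0+\tilde{\mathcal E}(t)^{1/2}\bigr)\tilde{\mathcal D}(t)\,{\rm d}t,
\end{equation*}
and this omission creates a genuine gap in your $H^2$ estimate. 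When you bound $\|{\bf u}\cdot\nabla{\bf u}\|_{L^2}^2\le\|{\bf u}\|_{L^\infty}^2\|\nabla{\bf u}\|_{L^2}^2$ and declare it ``$\le C\tilde{\mathcal E}(t)\cdot\tilde{\mathcal E}(t)$, using the smallness $\tilde{\mathcal E}(t)\le\sigma$,'' the resulting bound is only $C\sigma^2$, where $\sigma$ is a \emph{fixed} universal constant chosen once and for all in Proposition \ref{proposition global}; it does not shrink with $\varepsilon_0$ or $\tilde{\mathcal E}(0)$. A term of size $C\sigma^2$ on the right of \eqref{r37} would destroy the closing of the bootstrap $\sup_t\tilde{\mathcal E}(t)\le C_2(\varepsilon_0+\tilde{\mathcal E}(0))$. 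The same issue affects the factor $\|\nabla{\bf u}\|_{L^2}^2$ appearing when you absorb $\|{\bf u}\|_{H^2}^2$ from $\|{\bf u}\|_{L^\infty}^2$: you need $\sup_t\|\nabla{\bf u}\|_{L^2}^2$ itself in the admissible form, not merely $\le\sigma$, and neither Lemma \ref{rgl1} (which only controls $\int_0^T\|{\bf u}\|_{H^1}^2$ and $\sup_t\|{\bf u}\|_{L^2}^2$) nor the a priori assumption supplies this.

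The gap is fixable in two ways: either reinstate the paper's Step 2 as above, or replace it by the interpolation $\|\nabla{\bf u}\|_{L^2}^2\le C\|{\bf u}\|_{L^2}\|{\bf u}\|_{H^2}$ together with $\sup_t\|{\bf u}\|_{L^2}^2\le C\varepsilon_0$ from Lemma \ref{rgl1}, so that $\tilde{\mathcal E}(t)\|\nabla{\bf u}\|_{L^2}^2\le\frac14\|{\bf u}\|_{H^2}^2+C\sigma^2\|{\bf u}\|_{L^2}^2\le\frac14\|{\bf u}\|_{H^2}^2+C\varepsilon_0$, which can be absorbed and leaves a remainder of the correct size. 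As written, though, the passage from the Stokes inequality to the claimed sup-in-time bound on $\|{\bf u}\|_{H^2}^2$ does not go through.
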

\begin{proof}[\bf Proof.]
We complete the proof by several steps.
\\
{\it\bfseries Step 1. Estimates of ${\bf u}_t$.}
Differentiating $(\ref{NSAC1})_2$ with respect to $t$ leads to
\begin{align}
{\bf u}_{tt}+{\bf u}_t\cdot\nabla{\bf u}+{\bf u}\cdot\nabla{\bf u}_t+\nabla p_t={\rm div}{\mathbb S}({\bf u}_t)-\Delta\phi_t\cdot\nabla\phi-\Delta\phi\cdot\nabla\phi_t.
\nonumber
\end{align}
Multiplying it by ${\bf u}_t$, and integrating the result over $\Omega$, due to incompressibility of velocity ${\bf u}$ and boundary condition ${\bf u}\cdot{\bf n}=0$ on $\Gamma$, we derive
\begin{align}
&\frac{1}{2}\frac{\rm d}{{\rm d}t}\|{\bf u}_t\|_{L^2}^2+\frac{1}{2}\|{\mathbb S}({\bf u}_t)\|_{L^2}^2+\beta\|{\bf u}_{t{\boldsymbol\tau}}\|_{L^2(\Gamma)}^2        \nonumber
\\
&=-\int_\Omega{\bf u}_t\cdot\nabla{\bf u}\cdot{\bf u}_t {\rm d}x-\int_\Omega\Delta\phi_t\nabla\phi\cdot{\bf u}_t{\rm d}x-\int_\Omega\Delta\phi\nabla\phi_t\cdot{\bf u}_t{\rm d}x \nonumber
\\
&\quad+\int_\Gamma
\left(\partial_{\bf n}\phi_t+\gamma_{fs}^{(2)}(\psi)\psi_t\right
)\nabla_{\boldsymbol\tau}\psi\cdot{\bf u}_{t{\boldsymbol\tau}}{\rm d}S+\int_\Gamma
\left(\partial_{\bf n}\phi+\gamma_{fs}^\prime(\psi)\right)\nabla_{\boldsymbol\tau}\psi_t \cdot{\bf u}_{t{\boldsymbol\tau}}{\rm d}S                \nonumber
\\
&\le\|{\bf u}_t\|_{L^2}\left(\|\nabla{\bf u}\|_{L^4}\|{\bf u}_t\|_{L^4}+\|\Delta\phi_t\|_{L^2}\|\nabla\phi\|_{L^\infty}+\|\Delta\phi\|_{L^4}\|\nabla\phi_t\|_{L^4}
\right)   \nonumber
\\
&\quad+\left( \|\partial_{\bf n}\phi_t\nabla_{\boldsymbol\tau}\psi\|_{H^{-\frac{1}{2}}(\Gamma)}+\|\partial_{\bf n}\phi\nabla_{\boldsymbol\tau}\psi_t\|_{H^{-\frac{1}{2}}(\Gamma)}
\right)\|{\bf u}_{t{\boldsymbol\tau}}\|_{H^{\frac{1}{2}}(\Gamma)}  \nonumber
\\
&\quad+C\|\psi_t\|_{L^4(\Gamma)}
\|\nabla_{\boldsymbol\tau}\psi\|_{L^4(\Gamma)} \|{\bf u}_{t{\boldsymbol\tau}}\|_{L^2(\Gamma)}+ \|\gamma_{fs}^\prime(\psi)\|_{L^\infty(\Gamma)}
\|\nabla_{\boldsymbol\tau}\psi_t\|_{L^2(\Gamma)}    \|{\bf u}_{t{\boldsymbol\tau}}\|_{L^2(\Gamma)}  \nonumber
\\
&\le C\|{\bf u}_t\|_{L^2}\left(\|\nabla{\bf u}\|_{H^1}\|{\bf u}_t\|_{H^1}+\|\Delta\phi_t\|_{L^2}\|\nabla\phi\|_{H^2}+\|\Delta\phi\|_{H^1}\|\nabla\phi_t\|_{H^1}
\right)   \nonumber
\\
&\quad+C\left(\|\partial_{\bf n}\phi_t\|_{L^2}\|\nabla_{\boldsymbol\tau}\phi\|_{L^\infty}
+\|\partial_{\bf n}\phi\|_{L^\infty}\|\nabla_{\boldsymbol\tau}\phi_t\|_{L^2}
\right)\|{\bf u}_t\|_{H^1} \nonumber
\\
&\quad+C\|\phi_t\|_{H^2}\|\nabla_{\boldsymbol\tau}\phi\|_{H^2}\|{\bf u}_t\|_{H^1}
+C|\nu \cos\theta_s| \|\nabla_{\boldsymbol\tau}\phi_t\|_{H^1}\|{\bf u}_t\|_{H^1}  \nonumber
\\
&\le C\|{\bf u}_t\|_{H^1}\left(\|\nabla{\bf u}\|_{H^1}\|{\bf u}_t\|_{H^1}+\|\nabla\phi\|_{H^2}\|\nabla\phi_t\|_{H^1}
\right)+C\varepsilon_0\|\nabla\phi_t\|_{H^1}\|{\bf u}_t\|_{H^1} \nonumber
\\
&\le C\tilde{\mathcal D}(t)^{\frac{1}{2}}\tilde{\mathcal E}(t)^{\frac{1}{2}}\tilde{\mathcal D}(t)^{\frac{1}{2}}+C\varepsilon_0\tilde{\mathcal D}(t),    \nonumber
\end{align}
where we have used Lemma \ref{Trace} and (\ref{L21}). Then, due to Korn's inequality and (\ref{poincare u}), we obtain 
\begin{align}
\label{r34}
&\sup\limits_{0\le t \le T}\|{\bf u}_t\|_{L^2}^2
+\int_{0}^{T}\left(\|{\bf u}_t\|_{H^1}^2+\|{\bf u}_{t{\boldsymbol\tau}}\|_{L^2(\Gamma)}^2
\right) {\rm d}t \nonumber
\\
&\le C\tilde{\mathcal E}(0)+C\int_{0}^{T}\left(\varepsilon_0+\tilde{\mathcal E}(t)^{\frac{1}{2}}
\right)\tilde{\mathcal D}(t) {\rm d}t.
\end{align}
{\it\bfseries Step 2. Estimates of $\nabla{\bf u}$.}
Testing $(\ref{NSAC1})_2$ by ${\bf u}_t$, by using Lemma \ref{Trace}, one has
\begin{align}
&\frac{1}{2}\frac{\rm d}{{\rm d}t}\left(\frac{1}{2}\|\mathbb S({\bf u})\|_{L^2}^2+\beta\|{\bf u}_{\boldsymbol\tau}\|_{L^2(\Gamma)}^2
\right)+\|{\bf u}_t\|_{L^2}^2               \nonumber
\\
&=-\int_\Omega{\bf u}\cdot\nabla{\bf u}\cdot{\bf u}_t {\rm d}x-\int_\Omega\Delta\phi\nabla\phi\cdot{\bf u}_t{\rm d}x+ \int_\Gamma L(\psi)\nabla_{\boldsymbol\tau}\psi\cdot{\bf u}_{t{\boldsymbol\tau}}{\rm d}S     \nonumber
\\
&\le C\|{\bf u}\|_{L^\infty}\|\nabla{\bf u}\|_{L^2}\|{\bf u}_t\|_{L^2}+C\|\Delta\phi\|_{L^2}\|\nabla\phi\|_{L^\infty}\|{\bf u}_t\|_{L^2} \nonumber
\\
&\quad+ C \|L(\psi)\|_{L^4(\Gamma)}\|\nabla_{\boldsymbol\tau}\psi\|_{L^4(\Gamma)} \|{\bf u}_{t{\boldsymbol\tau}}\|_{L^2(\Gamma)}   \nonumber
\\
&\le C\|{\bf u}\|_{H^2}\|\nabla{\bf u}\|_{L^2}\|{\bf u}_t\|_{L^2}+C\|\Delta\phi\|_{L^2}\|\nabla\phi\|_{H^2}\|{\bf u}_t\|_{L^2} \nonumber
\\
&\quad+C\|L(\psi)\|_{H^1(\Gamma)}\|\nabla_{\boldsymbol\tau}\phi\|_{H^2}\|{\bf u}_t\|_{H^1}  \nonumber
\\
&\le C\tilde{\mathcal D}(t)^{\frac{1}{2}}\tilde{\mathcal E}(t)^{\frac{1}{2}}\tilde{\mathcal D}(t)^{\frac{1}{2}}.   \nonumber
\end{align}
Integrating it over (0,T), then combining with Korn's inequality and (\ref{r1}) shows
\begin{align}
\label{r33}
&\sup\limits_{0\le t \le T}\left(\|{\bf u}\|_{H^1}^2+\|{\bf u}_{\boldsymbol\tau}\|_{L^2(\Gamma)}^2
\right)+\int_{0}^{T}\|{\bf u}_t\|_{L^2}^2 {\rm d}t  \nonumber
\\
&\le C\left(\varepsilon_0+\tilde{\mathcal E}(0)\right)+C\int_{0}^{T}\left(\tilde{\mathcal E}(t)^{\frac{1}{2}}
\right)\tilde{\mathcal D}(t) {\rm d}t.
\end{align}
{\it\bfseries Step 3. Estimates of $\nabla^k{\bf u}, k=2,3$.}
Directly applying \cite{1} (see Theorem 1.2 therein) and noting that $\tilde{\mathcal E}(t)\le 1$, together with Lemma $\ref{Trace}$ to obtain
\begin{align}
\label{r35}
\|{\bf u}\|_{H^2}^2&\le C\left(\|{\bf u}\cdot\nabla{\bf u}\|_{L^2}^2+\|{\rm div}(\nabla\phi\otimes\nabla\phi)\|_{L^2}^2+\|{\bf u}_t\|_{L^2}^2+ \|\left(\partial_{\bf n}\phi+\gamma_{fs}^\prime(\psi)
\right)\nabla_{\boldsymbol\tau}\psi
\|_{H^{\frac{1}{2}}(\Gamma)}^2
\right)                                                 \nonumber
\\
&\le C\left(\|{\bf u}\|_{L^\infty}^2\|\nabla{\bf u}\|_{L^2}^2+\|\Delta\phi\|_{L^2}^2\|\nabla\phi\|_{L^\infty}^2
+\|{\bf u}_t\|_{L^2}^2+\left\Vert\left(\partial_{\bf n}\phi+\gamma_{fs}^\prime(\phi)\right)\nabla\phi\right\Vert_{H^1}^2
\right)                                                  \nonumber
\\
&\le C\left(\|{\bf u}\|_{H^2}^2\|\nabla{\bf u}\|_{L^2}^2+\|\Delta\phi\|_{L^2}^2\|\nabla\phi\|_{H^2}^2
+\|{\bf u}_t\|_{L^2}^2+\left\Vert\partial_{\bf n}\phi+\gamma_{fs}^\prime(\phi)\right\Vert_{L^\infty}^2
\|\nabla\phi\|_{H^1}^2
\right)                                                \nonumber
\\
&\le C\left(\|{\bf u}\|_{H^2}^2\|\nabla{\bf u}\|_{L^2}^2+\|\Delta\phi\|_{L^2}^2\|\nabla\phi\|_{H^2}^2
+\|{\bf u}_t\|_{L^2}^2+\|\nabla\phi\|_{H^2}^2\|\nabla\phi\|_{H^1}^2+\|\nabla\phi\|_{H^1}^2
\right)                                              \nonumber
\\
&\le C\left(\tilde{\mathcal E}(t)+1\right)\left(\|\nabla{\bf u}\|_{L^2}^2+\|\Delta\phi\|_{L^2}^2+\|{\bf u}_t\|_{L^2}^2+\|\nabla\phi\|_{H^1}^2
\right)  \nonumber
\\
&\le C\left(\varepsilon_0+\tilde{\mathcal E}(0)\right)+C\int_{0}^{T}\left(\varepsilon_0+\tilde{\mathcal E}(t)^{\frac{1}{2}}
\right)\tilde{\mathcal D}(t) {\rm d}t,
\end{align}
where we have used $(\ref{r17})$, $(\ref{r34})$ and $(\ref{r33})$.
 Similarly, using (\ref{L5.1}) for $r=s_1=s_2=\frac{3}{2}>\frac{2}{2}$, it follows from (\ref{poincare u}) and Korn's inequality that
\begin{align}
\|\nabla{\bf u}\|_{H^2}^2&\le C\left(\|\nabla\left({\bf u}\cdot\nabla{\bf u}\right)\|_{L^2}^2+\|\nabla(\nabla\phi\Delta\phi)\|_{L^2}^2+\|\nabla{\bf u}_t\|_{L^2}^2+ \|\left(\partial_{\bf n}\phi+\gamma_{fs}^\prime(\psi)
\right)\nabla_{\boldsymbol\tau}\psi
\|_{H^{\frac{3}{2}}(\Gamma)}^2
\right)                                                 \nonumber
\\
&\le C\left(\|\nabla{\bf u}\cdot\nabla{\bf u}+{\bf u}\cdot\nabla^2{\bf u}\|_{L^2}^2
+\|\nabla^2\phi\Delta\phi+\nabla\phi\nabla\Delta\phi\|_{L^2}^2
\right)                          \nonumber
\\
&\quad+C\|\nabla{\bf u}_t\|_{L^2}^2+C\left(\|\partial_{\bf n}\phi\|_{H^{\frac{3}{2}}(\Gamma)}^2
+ \|\gamma_{fs}^\prime(\psi)\|_{H^{\frac{3}{2}}(\Gamma)}^2
\right) \|\nabla_{\boldsymbol\tau}\psi\|_{H^{\frac{3}{2}}(\Gamma)}^2  \nonumber
\\
&\le C\left(\|\nabla{\bf u}\|_{L^4}^4+\|{\bf u}\|_{L^\infty}^2\|\nabla^2{\bf u}\|_{L^2}^2+\|\nabla^2\phi\|_{L^4}^2\|\Delta\phi\|_{L^2}^2
+\|\nabla\phi\|_{L^\infty}^2\|\nabla\Delta\phi\|_{L^2}^2
\right)                                            \nonumber
\\
&\quad+C\|\nabla{\bf u}_t\|_{L^2}^2+C\left(\|\nabla\phi\|_{H^2}^2+\|\phi^2-1\|_{H^2}^2
\right)\|\nabla_{\boldsymbol\tau}\phi\|_{H^2}^2
\nonumber
\\
&\le C\left(\|\nabla{\bf u}\|_{H^1}^4+\|{\bf u}\|_{H^2}^2\|\nabla^2{\bf u}\|_{L^2}^2
+\|\nabla^2\phi\|_{H^1}^2\|\Delta\phi\|_{L^2}^2+\|\nabla\phi\|_{H^2}^2\|\nabla\Delta\phi\|_{L^2}^2
\right)                                            \nonumber
\\
&\quad+ C \|\nabla{\bf u}_t\|_{L^2}^2 +C\left(\|\nabla\phi\|_{H^2}^2+\|\phi^2-1\|_{H^2}^2
\right)\|\nabla_{\boldsymbol\tau}\phi\|_{H^2}^2
\nonumber
\\
&\le C\|{\bf u}_t\|_{H^1}^2+C\tilde{\mathcal E}(t)\tilde{\mathcal D}(t),  \nonumber
\end{align}
which together with (\ref{r1}) implies
\begin{align}
\label{r36}
\int_{0}^{T}\|{\bf u}\|_{H^3}^2{\rm d}t\le C\left(\varepsilon_0+\tilde{\mathcal E}(0)\right)+C\int_{0}^{T}\left(\varepsilon_0+\tilde{\mathcal E}(t)^{\frac{1}{2}}
\right)\tilde{\mathcal D}(t) {\rm d}t,
\end{align}
Combining (\ref{r36}) with $(\ref{r34})-(\ref{r35})$ yields (\ref{r37}). Hence, we complete the proof.
\end{proof}
\vskip2mm
With the above estimates at hand, we are in position to prove Proposition \ref{proposition global}.
\\
\noindent{\rm\bfseries Proof of Proposition $\ref{proposition global}$.}\quad
Combining Lemma $\ref{rgl1}-\ref{rgl6}$ yields that
\begin{align}
\sup\limits_{0\le t \le T}\tilde{\mathcal E}(t)+\int_{0}^{T}\tilde{\mathcal D}(t) {\rm d}t&\le C_2\left(\varepsilon_0+\tilde{\mathcal E}(0)\right)+C_2\int_{0}^{T}\left(
\varepsilon_0+\tilde{\mathcal E}(t)^\frac{1}{2}\right)\tilde{\mathcal D}(t) {\rm d}t
\nonumber
\\
&\le C_2\left(\varepsilon_0+\tilde{\mathcal E}(0)\right)+\left(C_2\varepsilon_0+C_2\sigma^\frac{1}{2}\right)\int_{0}^{T}\tilde{\mathcal D}(t) {\rm d}t, \nonumber
\end{align}
where $C_2\sigma^\frac{1}{2}<1/4$ and $C_2\varepsilon_0<1/4$, then (\ref{555}) holds.
\vskip2mm
Next, multiplying $(\ref{NSAC1})_2$ by $\bf u$, integrating the result over $\Omega$ by parts, together with (\ref{111}) and (\ref{666}) yields
\begin{align}
&\frac{1}{2}\frac{\rm d}{{\rm d}t}\|{\bf u}\|_{L^2}^2+\frac{1}{2}\|{\mathbb S}({\bf u})\|_{L^2}^2+\beta\|{\bf u}_{\boldsymbol\tau}\|_{L^2(\Gamma)}^2
\nonumber
\\
&=\int_\Gamma L(\psi){\bf u}_{\boldsymbol\tau}\cdot\nabla_{\boldsymbol\tau}\psi {\rm d}S -\int_\Omega\Delta\phi\nabla\phi\cdot{\bf u} {\rm d}x \nonumber
\\
&=\int_\Gamma L(\psi){\bf u}_{\boldsymbol\tau}\cdot\nabla_{\boldsymbol\tau}\psi {\rm d}S +\int_\Omega\left[(\mu-\bar\mu)-f\right]\nabla\phi\cdot{\bf u}{\rm d}x+\bar\mu\int_\Omega\nabla\phi\cdot{\bf u}{\rm d}x \nonumber
\\
&=-\|L(\psi)\|_{L^2(\Gamma)}^2-\int_\Gamma L(\psi)\psi_t {\rm d}S +\int_\Omega(\mu-\bar\mu)\nabla\phi\cdot{\bf u}{\rm d}x
\nonumber
\\
&\le- \|L(\psi)\|_{L^2(\Gamma)}^2+\|L(\psi)\|_{L^2(\Gamma)}\|\psi_t\|_{L^2(\Gamma)}
+\|\mu-\bar\mu\|_{L^2}\|\nabla\phi\|_{L^\infty}\|{\bf u}\|_{L^2}         \nonumber
\\
&\le-\frac{1}{2}\|L(\psi)\|_{L^2(\Gamma)}^2
+C\|\psi_t\|_{L^2(\Gamma)}^2
+C\|\mu-\bar\mu\|_{L^2}^2+C\|\nabla\phi\|_{H^2}^2\|{\bf u}\|_{L^2}^2  \nonumber
\\
&\le-\frac{1}{2}\|L(\psi)\|_{L^2(\Gamma)}^2
+C\|\phi_t\|_{H^1}^2+C\|\mu-\bar\mu\|_{L^2}^2+C\tilde{\mathcal E}(t)\|\nabla{\bf u}\|_{L^2}^2, \nonumber
\end{align}
which together with Korn's inequality and (\ref{small energy}) implies 
\begin{align}
\label{r38}
&\frac{\rm d}{{\rm d}t}\|{\bf u}\|_{L^2}^2+\|\nabla{\bf u}\|_{L^2}^2+\|L(\psi)\|_{L^2(\Gamma)}^2 +\|{\bf u}_{\boldsymbol\tau}\|_{L^2(\Gamma)}^2 \nonumber
\\
&\le C_3\left(\|\mu-\bar\mu\|_{L^2}^2+\|\nabla\phi_t\|_{L^2}^2+\varepsilon_0\|\nabla{\bf u}\|_{L^2}^2\right).
\end{align}
Then, substituting $K_1^0-K_3^0$ into $(\ref{r13})$ for $k=0$, we obtain
\begin{align}
&\frac{1}{2}\frac{\rm d}{{\rm d}t}\left( \|L(\psi)\|_{L^2(\Gamma)}^2 +\|\mu-\bar\mu\|_{L^2}^2
\right)+\|\nabla\phi_t\|_{L^2}^2+\int_\Omega(3\phi^2-1)|\mu-\bar\mu|^2{\rm d}x \nonumber
\\
&\le C\left(\|\nabla{\bf u}\|_{L^2}\|\nabla\phi\|_{H^2}\|\nabla\phi_t\|_{L^2}+\|\mu-\bar\mu\|_{L^2}\|{\bf u}\|_{H^1}\|\nabla\phi\|_{H^1}+\varepsilon_0 \|L(\psi)\|_{L^2(\Gamma)}\|\psi_t\|_{L^2(\Gamma)}   \right)  \nonumber
\\
&\le C\left(\tilde{\mathcal E}(t)^\frac{1}{2}\|\nabla{\bf u}\|_{L^2}\|\nabla\phi_t\|_{L^2}+\tilde{\mathcal E}(t)^\frac{1}{2}\|\mu-\bar\mu\|_{L^2}\|\nabla{\bf u}\|_{L^2}+\varepsilon_0  \|L(\psi)\|_{L^2(\Gamma)} \|\nabla\phi_t\|_{L^2}\right) \nonumber
\\
&\le\frac{1}{2}\|\mu-\bar\mu\|_{L^2}^2
+\frac{1}{2}\|\nabla\phi_t\|_{L^2}^2+C\left(
\tilde{\mathcal E}(t)\|\nabla{\bf u}\|_{L^2}^2+\varepsilon_0\|L(\psi)\|_{L^2(\Gamma)}^2  \right), \nonumber
\end{align}
which together with $3\phi^2-1\ge1$ and (\ref{small energy}) yields
\begin{align}
\label{r39}
&\frac{\rm d}{{\rm d}t}\left( \|L(\psi)\|_{L^2(\Gamma)}^2 +\|\mu-\bar\mu\|_{L^2}^2
\right)+\|\nabla\phi_t\|_{L^2}^2+\|\mu-\bar\mu\|_{L^2}^2 \nonumber
\\
&\le C_4\left(\sigma\|\nabla{\bf u}\|_{L^2}^2+\varepsilon_0 \|L(\psi)\|_{L^2(\Gamma)}^2
\right).
\end{align}
Multiplying (\ref{r39}) by $C_3+1$, combining the result with (\ref{r38}) and choosing $\sigma$ and $\varepsilon_0$ small enough such that $C_3\varepsilon_0\le\frac{1}{8}$, $(C_3+1)C_4\sigma\le\frac{1}{8}$ and $(C_3+1)C_4\varepsilon_0\le\frac{1}{4}$, then we have
\begin{align}
\label{r40}
\frac{\rm d}{{\rm d}t}{\mathcal A}_3(t)+{\mathcal B}_3(t)\le 0,
\end{align}
where
\begin{align}
&{\mathcal A}_3(t)=\|{\bf u}\|_{L^2}^2+(C_3+1) \|L(\psi)\|_{L^2(\Gamma)}^2 +(C_3+1)\|\mu-\bar\mu\|_{L^2}^2,  \nonumber
\\
&{\mathcal B}_3(t)=\|\nabla\phi_t\|_{L^2}^2+\|\mu-\bar\mu\|_{L^2}^2+\frac{1}{2}\|\nabla{\bf u}\|_{L^2}^2+\|{\bf u}_{\boldsymbol\tau}\|_{L^2(\Gamma)}^2+\frac{1}{2} \|L(\psi)\|_{L^2(\Gamma)}^2 . \nonumber
\end{align}
It is easy to get ${\mathcal A}_3(t)\le C{\mathcal B}_3(t)$ from the Korn's inequality. Therefore, it can be obtained from (\ref{r40}) that there exists a positive constant $\alpha$ such that
\begin{align}
\|{\bf u}(\cdot,t)\|_{L^2}^2+\| L(\psi)(\cdot,t)\|_{L^2(\Gamma)}^2 +\|(\mu-\bar\mu)(\cdot,t)\|_{L^2}^2\le C\tilde{\mathcal E}(0)e^{-\alpha t},
\end{align}
for all $t>0$.
Thus, the proof of Proposition \ref{proposition global} is completed.
\hfill$\Box$
\subsubsection{\bf Proof of Theorem \ref{Th rg}}\quad
With the a priori estimates at hand, we are prepared for the proof of Theorem \ref{Th rg}. First, according to Proposition \ref{TH im}, there exists a $T_0>0$ such that the problem (\ref{NSAC1})--(\ref{initial1}) has a unique local-in-time strong solution $({\bf u},\phi, \psi)$. We will extend the local existence time $T_0$ to be infinity and therefore prove the global existence result. Recall the constant $\sigma_0$, $C_1$ in Proposition \ref{TH im} and $C_2$ in Proposition \ref{proposition global}. Without loss of generality, assume $C_1,C_2\ge1$. It follows from the definition of $\tilde{\mathcal E}(0)$ and (\ref{small initial}) that there exists a positive constant $C_0\ge1$ such that $\tilde{\mathcal E}(0)\le C_0\varepsilon_0$ for
\begin{align}
\label{r41}
\varepsilon_0={\rm min}\left\{\frac{1}{8C_3},\frac{1}{4C_2},\frac{1}{4(C_3+1)C_4},
\frac{\sigma_0}{2C_0C_1C_2},\frac{\sigma}{2C_0C_1C_2}
\right\}.
\end{align}
Let $T^*$ be the maximal time of existence for the strong solution of (\ref{NSAC1})-(\ref{initial1}), satisfying
$$\sup\limits_{0\le t \le T}\tilde{\mathcal E}(t)\le 2C_0C_1C_2\varepsilon_0,$$
and therefore $T^*\ge T_0$. The claim is that $T^*=\infty$,  otherwise, we assume  $T^*<\infty$ for a contradiction. First, for any $0<T\le T^*$, it follows from (\ref{r41}) that
\begin{align}
\sup\limits_{0\le t \le T}\tilde{\mathcal E}(t)\le 2C_0C_1C_2\varepsilon_0\le\sigma.
\end{align}
Then, from Proposition \ref{proposition global}, we obtain
\begin{align}
\label{r43}
\sup\limits_{0\le t \le T}\tilde{\mathcal E}(t)\le C_2(\varepsilon_0+\tilde{\mathcal E}(0))\le C_2(1+C_0)\varepsilon_0,
\end{align}
which implies
\begin{align}
\sup\limits_{0\le t \le T}\tilde{\mathcal E}(t)\le\sigma_0.
\end{align}
Now we take $\tilde{\mathcal E}(T^*)$ as the initial data at $T^*$ and then use Proposition \ref{TH im} and (\ref{r43}) to find that there exists a unique solution to (\ref{NSAC1})--(\ref{initial1}) on $[T^*,T^*+T_0]$ satisfying
\begin{align}
\sup\limits_{T^*\le t \le T^*+T_0}\tilde{\mathcal E}(t)\le C_1\tilde{\mathcal E}(T^*)\le C_1C_2(1+C_0)\varepsilon_0\le 2C_0C_1C_2\varepsilon_0.
\end{align}
This contradicts the assumption on $T^*$, so $T^*=\infty$. We finally show that $T^*$ could be infinity and complete the proof of the global existence of the strong solution and the estimates (\ref{big time}) follows from Proposition \ref{proposition global}.

\section*{Acknowledgments}
Li's work is supported by the National Natural Science Foundation of China (No.12371205)
and the Natural Science Foundation of Guangdong Province (No.2025A1515012026).


\end{document}